\date{}
\title{An axiomatic characterization of the Brownian map}
\author{Jason Miller and Scott Sheffield}
\def\@rst #1 #2other{#1}
\newcommand\MR[1]{\relax\ifhmode\unskip\spacefactor3000 \space\fi
  \MRhref{\expandafter\@rst #1 other}{#1}}
\newcommand{\MRhref}[2]{\href{http://www.ams.org/mathscinet-getitem?mr=#1}{MR#2}}
\newcommand{\CA}{{\mathcal A}}
\newcommand{\CF}{{\mathcal F}}
\newcommand{\CS}{{\mathcal S}}
\newif\ifdraft
\long\def\comment#1{}
\numberwithin{equation}{section}
\numberwithin{figure}{section}
\newtheorem{theorem}{Theorem}
\numberwithin{theorem}{section}
\newtheorem{lemma}[theorem]{Lemma}
\newtheorem{proposition}[theorem]{Proposition}
\theoremstyle{remark}\newtheorem{definition}[theorem]{Definition}
\theoremstyle{remark}\newtheorem{remark}[theorem]{Remark}
\newcommand{\R}{\mathbf{R}}
\newcommand{\C}{\mathbf{C}}
\newcommand{\D}{\mathbf{D}}
\newcommand{\Z}{\mathbf{Z}}
\newcommand{\T}{\mathbf{T}}
\newcommand{\N}{\mathbf{N}}
\newcommand{\snake}{{\mathcal S}}
\newcommand{\HH}{\mathbf{H}}
\newcommand{\h}{\HH}
\newcommand{\cadlag}{c\`adl\`ag}
\renewcommand{\S}{{\mathbf S}}
\definecolor{purple}{rgb}{0.7,0,0.7}
\definecolor{gray}{rgb}{0.6,0.6,0.6}
\definecolor{dgreen}{rgb}{0.0,0.4,0.0}
\definecolor{dblue}{rgb}{0.0,0.0,0.5}
\newcommand{\Q}{{\mathbf Q}}
\newcommand{\one}{{\mathbf 1}}
\newcommand{\CC}{{\mathcal C}}
\newcommand{\ol}{\overline}
\newcommand{\ul}{\underline}
\newcommand{\wh}{\widehat}
\newcommand{\wt}{\widetilde}
\newcommand{\CU}{{\mathcal U}}
\newcommand{\CG}{{\mathcal G}}
\newcommand{\CJ}{{\mathcal J}}
\newcommand{\CL}{{\mathcal L}}
\newcommand{\giv}{\,|\,}
\newcommand{\excursion}{{\mathbf n}}
\newcommand{\mustwo}{\mu_{\mathrm{SPH}}^2}
\newcommand{\mustwoplusone}{\mu_{\mathrm{SPH}}^{2+1}}
\newcommand{\musk}{\mu_{\mathrm{SPH}}^k}
\newcommand{\musa}{{\mu_{\mathrm{SPH}}^{A=1}}}
\newcommand{\mudonel}{\mu_{ \mathrm{DISK}}^{1, L}}
\newcommand{\mudl}{\mu_{\mathrm{DISK}}^L}
\newcommand{\muml}{\mu_{\mathrm{MET}}^L}
\def\mumlset#1{\mu_{\mathrm{MET}}^{#1}}
\newcommand{\tmustwo}{\wt{\mu}_{\mathrm{SPH}}^2}
\newcommand{\tmudonel}{\wt{\mu}_{ \mathrm{DISK}}^{1, L}}
\newcommand{\tmudl}{\wt{\mu}_{\mathrm{DISK}}^L}
\def\tmudlset#1{\wt{\mu}_{\mathrm{DISK}}^{#1}}
\def\tmudonelset#1{\wt{\mu}_{ \mathrm{DISK}}^{1, #1}}
\def\gmsspace{{\mathcal M}_{\mathrm{SPH}}}
\def\gmsspacer{{\mathcal M}_{\mathrm{SPH},r}}
\def\mmspace{{\mathcal M}}
\def\mmsigma{{\mathcal F}}
\newcommand{\mmsigmaspho}{{\mathcal F}_{\mathrm{SPH}}^{2,O}}
\newcommand{\matrixmspace}{\mathcal N}
\newcommand{\compactmatrix}{\mathcal C}
\newcommand{\fb}[2]{B^\bullet(#1,#2)}
\def\Cov{\mathop{\mathrm{Cov}}}
\def\diam{\mathop{\mathrm{diam}}}
\def\dist{\mathop{\mathrm{dist}}}
\newcommand{\SLE}{{\rm SLE}}
\newcommand{\QLE}{{\rm QLE}}
\newcommand{\var}{{\mathrm{var}}}
\def\Ito/{It\^o}
\def \P {{\mathbf P}}
\def \p {{\P}}
\def \E {{\mathbf E}}
\newcommand{\CT}{{\mathcal T}}
\newcommand{\snakeexcursionmeasure}{{\Pi}}
\newcommand{\gh}{{\mathcal X}}
\newcommand{\dgh}{d_{\mathrm {GH}}}
\renewcommand{\dh}{d_{\mathrm{H}}}
\newcommand{\spheresgeo}{{\mathcal Y}}
\newcommand{\spheresgeoclose}{\ol{\spheresgeo}}
\newcommand{\treespace}{\CT}
\newcommand{\treeequivspace}{\CA}
\begin{document}

\maketitle

\begin{abstract}
The Brownian map is a random sphere-homeomorphic metric measure space obtained by ``gluing together'' the continuum trees described by the $x$ and $y$ coordinates of the Brownian snake.  We present an alternative ``breadth-first'' construction of the Brownian map, which produces a surface from a certain decorated branching process.  It is closely related to the peeling process, the hull process, and the Brownian cactus.

Using these ideas, we prove that the Brownian map is the only random sphere-homeomorphic metric measure space with certain properties: namely, scale invariance and the conditional independence of the inside and outside of certain ``slices'' bounded by geodesics and metric ball boundaries.  We also formulate a characterization in terms of the so-called L\'evy net produced by a metric exploration from one measure-typical point to another.  This characterization is part of a program for proving the equivalence of the Brownian map and the Liouville quantum gravity sphere with parameter $\gamma= \sqrt{8/3}$.
\end{abstract}

\newpage
\tableofcontents
\newpage

\parindent 0 pt
\setlength{\parskip}{0.25cm plus1mm minus1mm}

\medbreak {\noindent\bf Acknowledgements.} 

We have benefited from conversations about this work with many people, a partial list of whom includes Omer Angel, Itai Benjamini, Nicolas Curien, Hugo Duminil-Copin, Amir Dembo, Bertrand Duplantier, Ewain Gwynne, Nina Holden, Jean-Fran{\c{c}}ois Le Gall, Gregory Miermont, R\'emi Rhodes, Steffen Rohde, Oded Schramm, Stanislav Smirnov, Xin Sun, Vincent Vargas, Menglu Wang, Samuel Watson, Wendelin Werner, David Wilson, and Hao Wu.

We also acknowledge two anonymous referees, whose helpful feedback led to many improvements to the article.

We would also like to thank the Isaac Newton Institute (INI) for Mathematical Sciences, Cambridge, for its support and hospitality during the program on Random Geometry where part of this work was completed.  J.M.'s work was also partially supported by DMS-1204894 and J.M.\ thanks Institut Henri Poincar\'e for support as a holder of the Poincar\'e chair, during which part of this work was completed.  S.S.'s work was also partially supported by DMS-1209044, DMS-1712862, a fellowship from the Simons Foundation, and EPSRC grants {EP/L018896/1} and {EP/I03372X/1}.

\section{Introduction}
\label{sec::introduction}

\subsection{Overview}

In recent years, numerous works have studied a random measure-endowed metric space called the {\em Brownian map}, which can be understood as the $n \to \infty$ scaling limit of the uniformly random quadrangulation (or triangulation) of the sphere with~$n$ quadrilaterals (or triangles).  We will not attempt a detailed historical account here. Miermont's recent St.\ Flour lecture notes are a good place to start for a general overview and a list of additional references \cite{miermontstflour}.\footnote{To give an extremely incomplete sampling of other papers relevant to this work, let us mention the early planar map enumerations of Tutte and Mullin \cite{tutte1962census,mullintrees,MR0218276}, a few early works on tree bijections by Schaeffer and others \cite{cori1981planar, jacquardschaeffer, schaeffermaps, bousquetmelouschaeffer, chassaingschaeffer},  early works on path-decorated surfaces by Duplantier and others \cite{dk1998rwks, ds1989fk, dup1998rw_qg}, the pioneering works by Watabiki and by Angel and Schramm on triangulations and the so-called peeling process \cite{watabikipeeling, angelUIPTpercolation, angelschrammUIPT}, Krikun's work on reversed branching processes \cite{krikun2005uniform}, the early Brownian map definitions of Marckert and Mokkadem \cite{marckert2006limit} and Le Gall and Paulin \cite{le2008scaling} (see also the work \cite{MR2399286} of Miermont), various relevant works by Duquesne and Le Gall on L\'evy trees and related topics \cite{dlg2002trees_levy,  duquesnelegall2005levytrees,duquesnelegallhausdorff, duquesnelegallrerooting}, the Brownian cactus of Curien, Le Gall, and Miermont \cite{browniancactus}, the stable looptrees of Curien and Kortchemski \cite{ck2013looptrees}, and several recent breakthroughs by Le Gall and Miermont \cite{legallgeodesics, legalluniqueanduniversal, miermontlimit, legalluniversallimit}.}

This paper will assemble a number of ideas from the literature and use them to derive some additional fundamental facts about the Brownian map: specifically, we explain how the Brownian map can be constructed from a certain branching ``breadth-first'' exploration.  This in turn will allow us to characterize the Brownian map as the only random metric measure space with certain properties.

Roughly speaking, in addition to some sort of scale invariance, the main property we require is the conditional independence of the inside and the outside of certain sets (namely, filled metric balls and ``slices'' of filled metric balls bounded between pairs of geodesics from the center to the boundary) given an associated boundary length parameter.  Section~\ref{subsec::discreteintuition} explains that certain discrete models satisfy discrete analogs of this conditional independence; so it is natural to expect their limits to satisfy a continuum version. Our characterization result is in some sense analogous to the characterization of the Schramm-Loewner evolutions ($\SLE$s) as the only random paths satisfying conformal invariance and the so-called domain Markov property~\cite{S0}, or the characterization of conformal loop ensembles (CLEs) as the only random collections of loops with a certain Markov property~\cite{MR2979861}.

The reader is probably familiar with the fact that in many random planar map models, when the total number of faces is of order  $n^4$, the length of a macroscopic geodesic path has order $n$, while the length of the outer boundary of a macroscopic metric ball has order $n^2$.  Similarly, if one rescales an instance of the Brownian map so that distance is multiplied by a factor of $C$, the area measure is multiplied by $C^4$, and the length of the outer boundary of a metric ball (when suitably defined) is multiplied by $C^2$ (see Section~\ref{sec::brownianmap}). One might wonder whether there are other continuum random surface models with other scaling exponents in place of the $4$ and the $2$ mentioned above, perhaps arising from other different types of discrete models. However, in this paper the exponents $4$ and $2$ are shown to be determined by the axioms we impose; thus a consequence of this paper is that any continuum random surface model with different exponents must fail to satisfy at least one of these axioms.

One reason for our interest in this characterization is that it plays a role in a larger program for proving the equivalence of the Brownian map and the Liouville quantum gravity (LQG) sphere with parameter $\gamma = \sqrt{8/3}$.  Both $\sqrt{8/3}$-LQG and the Brownian map describe random measure-endowed surfaces, but the former comes naturally equipped with a conformal structure, while the latter comes naturally equipped with the structure of a geodesic metric space.  The program provides a bridge between these objects, effectively endowing each one with the {\em other's} structure, and showing that once this is done, the laws of the objects agree with each other.

The rest of this program is carried out in  \cite{qlebm,quantum_spheres,qle_continuity, qle_determined}, all of which build on \cite{she2010zipper, ms2012imag1, ms2012imag2, ms2012imag3, ms2013imag4, ms2013qle, matingtrees} --- see also Curien's related work on this question \cite{cur2013glimpse}.  After using a quantum Loewner evolution (QLE) exploration to impose a metric structure on the LQG sphere, the papers \cite{qlebm, qle_continuity} together prove that the law of this metric has the properties that characterize the law of the Brownian map, and hence is equivalent to the law of the Brownian map.

\subsection{Relation with other work}

There are several independent works which were posted to the arXiv shortly after the present work that complement and partially overlap the work done here in interesting ways. Bertoin, Curien, and Kortchemski \cite{bck} have independently constructed a breadth-first exploration of the Brownian map, which may also lead to an independent proof that the Brownian map is uniquely determined by the information encoding this exploration. They draw from the theory of fragmentation processes to describe the evolution of the whole countable collection of unexplored component boundaries. They also explore the relationship to discrete breadth-first searches in some detail. Abraham and Le Gall \cite{alg} have studied an infinite measure on Brownian snake excursions in the positive half-line (with the individual Brownian snake paths stopped when they return to $0$). These excursions correspond to disks cut out by a metric exploration of the Brownian map, and play a role in this work as well.  Finally, Bettinelli and Miermont \cite{bettinelli_miermont_disks} have constructed and studied properties of {\em Brownian disks} with an interior marked point and a given boundary length $L$ (corresponding to the measure we call $\mudonel$; see Section~\ref{subsec::mapsdisksnets}) including a decomposition of these disks into geodesic slices, which is related to the decomposition employed here for metric balls of a given boundary length (chosen from the measure we call $\muml$). They show that as a point moves around the boundary of the Brownian disk, its distance to the marked point evolves as a type of Brownian bridge. In particular, this implies that the object they call the Brownian disk has finite diameter a.s.

We also highlight two more recent works. First, Le Gall in \cite{legallsubordination} provides an alternative approach to constructing the object we call the L\'evy net in this paper and explores a number of related ideas. The L\'evy net as defined in this paper is (in some sense) the set of points in the Brownian map observed by a metric exploration (``continuum peeling'') process from a point $x$ to a point $y$.  Roughly speaking, the approach in Le Gall's paper is to start with the continuum random tree used in the construction of the Brownian map (which encodes a space-filling path on the Brownian map) and then take the quotient w.r.t.\ an equivalence relation that makes two points the same if they belong to the closure of the same excursion into the complement of the L\'evy net (such an excursion always leaves and re-enters the L\'evy net at the same point). This equivalence relation is easy to describe directly using the Brownian snake, which makes the L\'evy net construction very direct. We also make note of a recent work by Bertoin, Budd, Curien, and Kortchemski \cite{betroinbuddcurienkortchemski} that studies (among other things) the fragmentation processes that appear in variants of the Brownian map that arise as scaling limits of surfaces with ``very large'' faces.

\subsection{Theorem statement}
\label{subsec::theoremstatement}

In this subsection, we give a quick statement of our main theorem. However, we stress that several of the objects involved in this statement (leftmost geodesics, the Brownian map, the various $\sigma$-algebras, etc.) will not be formally defined until later in the paper. Let $\gmsspace$ be the space of geodesic metric spheres that come equipped with a {\em good} measure (i.e., a finite measure that has no atoms and assigns positive mass to each open set). In other words, $\gmsspace$ is the space of (measure-preserving isometry classes of) triples $(S, d, \nu)$, where $d \colon S\times S \to [0,\infty)$ is a distance function on a set $S$ such that $(S,d)$ is topologically a sphere, and $\nu$ is a good measure on the Borel $\sigma$-algebra of $S$.

Denote by $\musa$ the standard unit area (sphere homeomorphic) Brownian map, which is a random variable that lives on the space $\gmsspace$. We will also discuss a closely related {\em doubly marked Brownian map} measure $\mustwo$ on the space $\gmsspace^2$ of elements of $\gmsspace$ that come equipped with two distinguished marked points $x$ and $y$. This $\mustwo$ is an {\em infinite} measure on the space of {\em finite} volume surfaces. The quickest way to describe it is to say that sampling from $\mustwo$ amounts to\begin{enumerate}
 \item letting $A$ be a positive real number whose law is the infinite measure $A^{-3/2}dA$,
\item  letting $(S, d, \nu)$ be an independent measure-endowed surface from the law $\musa$,
\item then letting $x$ and $y$ be two marked points on $S$ chosen independently from $\nu$,
\item then ``rescaling'' the doubly marked surface $(S, d, \nu, x, y)$ so that its area is $A$ (scaling area by $A$ and distances by $A^{1/4}$).
\end{enumerate} The measure  $\mustwo$ turns out to describe the natural ``grand canonical ensemble'' on doubly marked surfaces. We formulate our main theorems in terms of $\mustwo$ (although they can indirectly be interpreted as theorems about $\musa$ as well).

Given an element $(S, d, \nu, x, y) \in \gmsspace^2$, and some $r \geq 0$, let $B(x,r)$ denote the open metric ball with radius $r$ and center $x$. Let $\fb{x}{r}$ denote the {\em filled} metric ball of radius $r$ centered at $x$, as viewed from $y$. That is, $\fb{x}{r}$ is the complement of the $y$-containing component of the complement of $\ol{B(x,r)}$.  One can also understand $S \setminus \fb{x}{r}$ as the set of points $z$ such that there exists a path from $z$ to $y$ along which the function $d(x, \cdot)$ stays strictly larger than $r$. Note that if $0 < r < d(x,y)$ then $\fb{x}{r}$ is a closed set whose complement contains $y$ and is topologically a disk. In fact, one can show (see Proposition~\ref{prop::boundariesarecircles}) that the boundary $\partial \fb{x}{r}$ is topologically a circle, so that $\fb{x}{r}$ is topologically a closed disk.  We will sometimes interpret~$\fb{x}{r}$ as being itself a metric measure space with one marked point (the point~$x$) and a measure obtained by restricting~$\nu$ to~$\fb{x}{r}$. For this purpose, the metric we use on $\fb{x}{r}$ is the {\em interior-internal metric} on $\fb{x}{r}$ that it inherits from $(S,d)$ as follows: the distance between two points is the infimum of the~$d$ lengths of paths between them that (aside from possibly their endpoints) stay in the interior of~ $\fb{x}{r}$.  In most situations, one would expect this distance to be the same as the ordinary {\em interior metric}, in which the infimum is taken over all paths contained in~$\fb{x}{r}$, with no requirement that these paths stay in the interior. However, one can construct examples where this is not the case, i.e., where paths that hit the boundary on some (possibly fractal) set of times are shorter than the shortest paths that do not. In general, the interior-internal metric is less informative than the internal metric; given either metric, one can compute the~$d$ lengths of paths that remain in the interior; however the interior-internal metric does not determine the~$d$ lengths of curves that hit the boundary an uncountable number of times.  Whenever we make reference to metric balls (as in the statement of Theorem~\ref{thm::markovmapcharacterization} below) we understand them as marked metric measure spaces, endowed with the interior-internal metric induced by~$d$, and the restriction of~$\nu$.  (When we discuss ``slices'' bounded between two geodesics, it is more natural to use the ordinary internal metric. A minimal path between two points $x$ and $y$ can be constructed so that if it hits one of the two geodesic boundary arcs in two locations, then it traces the entire arc between those locations.)

We will later recall that in the doubly marked Brownian map, if we fix $r>0$, then on the event that $d(x,y) > r$, the circle $\partial \fb{x}{r}$ a.s.\ comes endowed with a certain ``boundary length measure'' (which scales like the square root of the area measure).  This is not too surprising given that the Brownian map is a scaling limit of random triangulations, and the discrete analog of a filled metric ball clearly comes with a notion of boundary length. We review this idea, along with more of the discrete intuition behind Theorem~\ref{thm::markovmapcharacterization}, in Section~\ref{subsec::discreteintuition}.

We will also see in Section~\ref{sec::preliminaries} that there is a certain $\sigma$-algebra on the space of doubly marked metric measure spaces (which induces a $\sigma$-algebra $ \mmsigma^2$ on $\gmsspace^2$) that is in some sense the ``weakest reasonable'' $\sigma$-algebra to use. We formulate Theorem~\ref{thm::markovmapcharacterization} in terms of that $\sigma$-algebra.  (In some sense, a weaker $\sigma$-algebra corresponds to a stronger theorem in this context, since if one has a measure defined on a stronger $\sigma$-algebra, one can always restrict it to a weaker $\sigma$-algebra. Theorem~\ref{thm::markovmapcharacterization} is a general characterization theorem for these restrictions.) We will also explain in Section~\ref{sec::preliminaries} why  $ \mmsigma^2$ is strong enough for practical purposes --- strong enough so that certain natural events and functions are measurable.

Specifically, we will explain in Section~\ref{sec::preliminaries} why the hypotheses in the theorem statement are meaningful (e.g., why objects like $\fb{x}{r}$, viewed as a metric measure space as described above, are measurable random variables), and we will explain the term ``leftmost'' (which makes sense once one of the two orientations of the sphere has been fixed). However, let us clarify one point upfront: whenever we discuss geodesics in this paper, we will refer to paths between two endpoints that have minimal length among {\em all} paths between those endpoints (i.e., they do not just have this property in some local sense).

\begin{theorem}
\label{thm::markovmapcharacterization}
The (infinite) doubly marked Brownian map measure $\mustwo$ is the only measure on $(\gmsspace^2, \mmsigma^2)$ with the following properties. (Here a sample from the measure is denoted by $(S,d,\nu,x,y)$.)
\begin{enumerate}
\item The law is invariant under the Markov operation that corresponds to forgetting $x$ (or $y$) and then resampling it from the measure $\nu$ (multiplied by a constant to make it a probability measure). In other words, given $(S,d,\nu)$, the points $x$ and $y$ are conditionally i.i.d.\ samples from the probability measure $\nu/\nu(S)$.
\item Fix $r > 0$ and let $\mathcal E_r$ be the event that $d(x,y) > r$.  Then $\mustwo(\mathcal E_r) \in (0, \infty)$, so that $\mustwo(\mathcal E_r)^{-1}$ times the restriction of $\mustwo$ to $\mathcal E_r$ is a probability measure.  Suppose that we have chosen an orientation of $S$ by tossing an independent fair coin.  Under this probability measure, the following are true for $s = r$ and also for $s = d(x,y) - r$.
\begin{enumerate}
\item There is an $\mmsigma^2$-measurable random variable that we denote by $L_s$ (which we interpret as a ``boundary length'' of $\partial \fb{x}{s}$) such that {\em given} $L_s$ and the orientation of $S$, the random oriented metric measure spaces $\fb{x}{s}$ and $S \setminus \fb{x}{s}$ are conditionally independent of each other.  In the case $s = r$, the conditional law of $S \setminus \fb{x}{s}$ depends only on the quantity $L_s$, and does so in a scale invariant way; i.e., there exists some fixed $a$ and $b$ such that the law given $L_s = C$ is the same as the law given $L_s = 1$ except that areas and distances are respectively scaled by $C^a$ and $C^b$. The same holds for the conditional law of $\fb{x}{s}$ in the case $s = d(x,y)-r$.
\item In the case that $s=d(x,y) - r$, there is a measurable function that takes $(S,d,\nu,x,y)$ as input and outputs $(S,d,\pi,x,y)$ where $\pi$ is a.s.\ a good measure (which we interpret as a {\em boundary length} measure) on $\partial \fb{x}{s}$ (which is necessarily homeomorphic to a circle) that has the following properties:
\begin{enumerate}[(i)]
\item The total mass of $\pi$ is a.s.\ equal to~$L_s$.
\item Suppose we first sample $(S,d,\nu,x,y)$, then produce $\pi$, then sample $z_1$ from $\pi$, and then position $z_2, z_3, \ldots, z_n$ so that $z_1, z_2, z_3, \ldots, z_n$ are evenly spaced around $\partial \fb{x}{s}$ according to $\pi$ following the orientation of $\partial \fb{x}{s}$. Then the $n$ ``slices'' produced by cutting $\fb{x}{s}$ along the leftmost geodesics from $z_i$ to $x$ are (given $L_s$) conditionally i.i.d.\ (as suggested by Figure~\ref{fig::shards} and Figure~\ref{fig::slicedecomposition}) and the law of each slice depends only on $L_s/n$, and does so in a scale invariant way (with the same exponents $a$ and $b$ as above).
\end{enumerate}
\end{enumerate}
\end{enumerate}
\end{theorem}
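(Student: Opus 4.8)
\emph{Proof proposal.} Two things must be established: that $\mustwo$ has the listed properties, and that it is the only measure that does. The first is essentially bookkeeping once the breadth-first construction of the body of the paper is available --- scale invariance of the unit-area Brownian map $\musa$ yields the $A^{-3/2}\,dA$ area decomposition defining $\mustwo$; property~1 is the known invariance of the Brownian map under re-rooting at a fresh $\nu$-sample; the boundary length measure on $\partial\fb{x}{r}$ and the conditional independence in property~2(a) express the Markovian nature of the metric (hull) exploration; and the slice decomposition in property~2(b) is the continuum form of the evident discrete decomposition of a filled ball into geodesic slices, made rigorous for Brownian disks by Bettinelli and Miermont. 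So the substance is uniqueness. Fix any $\mu$ on $(\gmsspace^2,\mmsigma^2)$ with the stated properties, with sample $(S,d,\nu,x,y)$, and $a,b$ the associated exponents.

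The first step is to run property~2 not at a single radius but, via a consistency argument, simultaneously at all radii: on $\mathcal E_r$ this produces the nested family $\{\fb{x}{t}\}_{0<t<d(x,y)}$ with boundary lengths $\{L_t\}$, and the conditional independence of $\fb{x}{t}$ and $S\setminus\fb{x}{t}$ given $L_t$, combined with monotonicity of the balls, forces $t\mapsto L_t$ to be a Markov process while reducing the law of $(S,d,\nu,x,y)$ to three ingredients: the law of $(L_t)$; the conditional law of the exterior $S\setminus\fb{x}{t}$ given $L_t$ (a single law up to the prescribed scaling); and the conditional laws, given the whole process, of the disks appearing as connected components of $\fb{x}{t}\setminus\fb{x}{t'}$. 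Property~2(b) then says that, given $L_s$, the interior $\fb{x}{s}$ is an independent gluing of $n$ i.i.d.\ slices whose common law depends only on $L_s/n$; letting $n\to\infty$ and comparing radii, this is an infinite-divisibility statement for the exploration increments, which with the Markov property identifies $(L_t)$ as (a time-reversal of) a continuous-state branching process, and scale invariance forces its branching mechanism to be a pure power $\psi(u)=c\,u^{\alpha}$ with $\alpha\in(1,2)$. The same scaling bookkeeping expresses $a$, $b$, and the exponent in the area law as explicit functions of $\alpha$, so that $\alpha$ is the only free parameter left.

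Fixing $\alpha=3/2$ is the crux. I would use the two constraints not yet exploited. First, the space obtained by gluing the $\psi$-stable L\'evy net --- which is what the quotient of $S$ by the cut-out-disk relation must be, by the previous step --- back to its cut-out disks has to be a topological sphere of \emph{finite} total measure; the jump structure of $(L_t)$ together with summability of the disk areas confines $\alpha$ to a range. Second, and decisively, property~1 imposes a reversibility/duality: the exploration from $x$ towards $y$ and the one from $y$ towards $x$ must be compatible along the $x$--$y$ geodesic, so that the two ends of the L\'evy net carry the same law; matching this singles out $\alpha=3/2$, hence $a=2$, $b=1/2$, and fixes $c$. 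It then remains to identify, given the L\'evy net, the conditional law of the filled-in disks --- equivalently of one slice, since scale invariance reduces a slice's law to a single representative. A slice carries its own conditional-independence structure: it can be cut into sub-slices, and metric balls inside it again satisfy analogues of property~2, so a recursion of exactly the same shape as the preceding steps, now run inside a slice, pins its law down uniquely; equivalently the cut-out disks are Brownian disks in the sense of $\mudonel$. Finally, the $3/2$-stable L\'evy net decorated by these Brownian disks and marked by $x$ and $y$ is precisely the output of the breadth-first construction of $\mustwo$, so $\mu=\mustwo$.

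The main obstacle is the identification $\alpha=3/2$: extracting enough from property~1, and from the requirement that the reassembled space be a finite-volume sphere, to eliminate the entire one-parameter family of candidate ``stable maps'' rather than merely recognizing its members. Closely related are the measurability points --- checking that $(L_t)$ and all the conditional laws above are genuinely $\mmsigma^2$-measurable, in particular handling the subtlety (flagged just before the theorem) that $\fb{x}{r}$ is equipped with the interior-internal rather than the internal metric --- and verifying that the slice recursion actually closes, i.e.\ that ``a slice satisfies the hypotheses of the theorem'' can be made precise enough to re-invoke the characterization itself.
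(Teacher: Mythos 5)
Your first stage does track the paper's route: using slice independence plus scale invariance to pin down the joint law of leftmost-geodesic merging times (Proposition~\ref{prop::poissonslicestructure}), matching these with CSBP extinction times (Lemma~\ref{lem::csbp_extinction_time}), and invoking Proposition~\ref{prop::strongstablecsbp} to conclude that $L_r$ is an $\alpha$-stable CSBP excursion for some $\alpha\in(1,2)$, so that the problem reduces to the L\'evy-net characterization (Theorem~\ref{thm::levynetbasedcharacterization}). The genuine gap is at what you yourself call the crux: fixing $\alpha=3/2$. Neither of your two proposed constraints does the job. The sphere-topology/finite-mass requirement eliminates nothing, since for \emph{every} $\alpha\in(1,2)$ the $\alpha$-L\'evy net with disks glued into its holes is a topological sphere (Proposition~\ref{prop::spherelevynet}) and can perfectly well carry a finite good measure. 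And the ``reversibility/duality along the $x$--$y$ geodesic'' is asserted without any mechanism that produces an equation in $\alpha$; indeed the paper remarks (Section~\ref{sec::bmintro}) that even the $x\leftrightarrow y$ symmetry of $\mustwo$ is not something it can verify by direct continuum considerations, and the actual proof never uses root/dual-root symmetry. What the resampling axiom really buys, and how $\alpha$ is actually determined, is quantitative: resampling $y$ between the two lobes of the figure eight formed at a jump time gives that the expected area of a cut-out disk of boundary length $L$ is proportional to $L^{2\alpha-1}$ (Lemma~\ref{lem::tmudlareaexpectation}, using the time-reversed jump law of Lemma~\ref{lem::levyreversal}); since the metric net carries no area, the conditional expected total area along the center-net exploration, $M_r^{2\alpha-1}+\sum_i|a_i|^{2\alpha-1}$, must be a martingale, and the explicit computation of Proposition~\ref{prop::centermartingale} shows this holds if and only if $\alpha=3/2$. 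Without a computation of this type your outline never eliminates the other members of the one-parameter family.

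Two further points where your argument would not close as written. First, in the setting of Theorem~\ref{thm::markovmapcharacterization} (as opposed to Theorem~\ref{thm::levynetbasedcharacterization}) one must also prove that the metric net carries zero $\nu$-mass, equivalently that $L_r\to 0$ as $r\uparrow d(x,y)$; the paper rules out the alternative by a Doob--Meyer/supermartingale argument whose compensator would have to be an integral of a power of $L_r$, which is then shown to contradict either slice independence or the scaling exponents. Your proposal does not address this. Second, your closing step --- running ``the same recursion inside a slice'' and re-invoking the characterization --- is not available: a slice or a cut-out disk is not a doubly marked sphere, so the theorem's hypotheses do not apply to it. The paper instead identifies the disk laws inside the uniqueness argument itself (Lemmas~\ref{lem::tmudldefined}--\ref{lem::mtjumplaw}), recovers the full surface from iterated center-net explorations via martingale convergence of conditional expected areas (using crucially that $\nu$ is atomless), and then still needs a separate first-moment comparison to upgrade the a priori one-sided inequality between the candidate metric and the Brownian map metric to almost sure equality. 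These are the places where the real work lies beyond your sketch.
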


As we will explain in more detail in Section~\ref{sec::preliminaries}, every doubly marked geodesic metric sphere comes with two possible ``orientations'' and one way to specify one of these orientations is to specify an ordered list of three additional distinct marked points on boundary of a filled metric ball (which effectively determine a ``clockwise'' direction). Two quintuply marked spheres defined this way can be said to be ``equivalent'' if they are equivalent as doubly marked spheres and the extra triples of points encode the same orientation --- and one can then limit attention to events that consist of unions of equivalence classes.  We will also show that both $\fb{x}{s}$ and $S \setminus \fb{x}{s}$ are topological disks.  One can specify an orientation of either by specifying three additional distinct marked points on the boundary and this is what we mean in the statement of Theorem~\ref{thm::markovmapcharacterization} when we say that these spaces come with an orientation.

We remark that one can formulate a version of Theorem~\ref{thm::markovmapcharacterization} in which one assumes that the space {\em comes} with an orientation (not necessarily chosen by a fair coin toss). As we explain in  Section~\ref{sec::preliminaries}, formulating statements about random oriented spheres requires us to extend the $\sigma$-algebra slightly to account for the extra bit of information that encodes the orientation. The reader may recall that the Brownian map can be interpreted as a random oriented metric sphere (since the Brownian snake construction produces a directed Peano curve that traces the boundary of a geodesic tree in what we can define to be the ``clockwise'' direction.) Although in the Brownian map the geodesics to the root are a.s.\ unique (from a.a.\ points) we are interested in random metric spheres for which this is not assumed to be the case \emph{a priori} --- and in these settings we will use the fact that one can always define \emph{leftmost geodesics} in a unique way.

We also remark that the statement that we have a way to assign a {\em boundary length} measure to $\partial \fb{x}{s}$ can be reformulated as the statement that we have a way to randomly assign a marked boundary point~$z$ to $\partial \fb{x}{s}$.  The boundary length measure is then~$L_s$ times the conditional law of~$z$ given $(S,d,\nu,x,y)$.

Among other things, the conditions of Theorem~\ref{thm::markovmapcharacterization} will ultimately imply that $L_r$ can be viewed as a process indexed by $r \in [0,d(x,y)]$, and that both $L_r$ and its time-reversal can be understood as excursions derived from Markov processes. We will see {\em a posteriori} that the time-reversal of $L_r$ is given by a certain time change of a $3/2$-stable L\'evy excursion with only positive jumps. One can also see {\em a posteriori} (when one samples from a measure which satisfies the axioms in the theorem --- i.e., from the Brownian map measure $\mustwo$) that the definition of the ``slices'' above is not changed if one replaces ``leftmost'' with ``rightmost'' because, in fact, from almost all points on $\partial \fb{x}{s}$ the geodesic to $x$ is unique. We remark that the last condition in Theorem~\ref{thm::markovmapcharacterization} can be understood as a sort of ``infinite divisibility'' assumption for the law of a certain filled metric ball, given its boundary length.

Before we prove Theorem~\ref{thm::markovmapcharacterization}, we will actually first formulate and prove another closely related result:  Theorem~\ref{thm::levynetbasedcharacterization}.  To explain roughly what Theorem~\ref{thm::levynetbasedcharacterization} says, note that for any element of $\gmsspace^2$, one can consider the union of the boundaries $\partial \fb{x}{r}$ taken over all $r \in [0,d(x,y)]$. This union is called the {\em metric net} from $x$ to $y$ and it comes equipped with certain structure (e.g.,\ there is a distinguished leftmost geodesic from any point on the net back to $x$).  Roughly speaking, Theorem~\ref{thm::levynetbasedcharacterization} states that $\mustwo$ is the only measure on $(\gmsspace^2, \mmsigma^2)$ with certain basic symmetries and the property that the infinite measure it induces on the space of metric nets corresponds to a special object called the {\em $\alpha$-(stable) L\'evy net} that we will define in Section~\ref{sec::surfacesfromtrees}.

\subsection{Outline}
\label{subsec::outline}

In Section~\ref{sec::preliminaries} we discuss some measure theoretic and geometric preliminaries. We begin by defining a {\em metric measure space} (a.k.a.\ {\em $mm$-space)} to be a triple $(S, d, \nu)$ where $(S,d)$ is a complete separable metric space, $\nu$ is a measure defined on its Borel $\sigma$-algebra, and $\nu(S) \in (0, \infty)$.\footnote{Elsewhere in the literature, e.g., in \cite{grevenpfaffelhuberwinter}, the definition of a metric measure space also requires that the measure be a {\em probability} measure, i.e., that $\nu(S) = 1$.  It is convenient for us to relax this assumption so that the definition includes area-measure-endowed surfaces whose total area is different from one. Practically speaking, the distinction does not matter much because one can always recover a probability measure by dividing the area measure by the total area. It simply means that we have one extra real parameter --- total mass --- to consider. Any topology or $\sigma$-algebra on the space of metric {\em probability}-measure spaces can be extended to the larger space we consider by taking its product with the standard Euclidean topology (and Borel-$\sigma$-algebra) on $\R$.} Let $\mmspace$ denote the space of all metric measure spaces. Let $\mmspace^k$ denote the set of metric measure spaces that come with an ordered set of $k$ marked points.

As mentioned above, before we can formally make a statement like ``The doubly marked Brownian map is the only measure on $\mmspace^2$ with certain properties'' we have to specify what we mean by a ``measure on $\mmspace^2$,'' i.e., what $\sigma$-algebra a measure is required to be defined on. The weaker the $\sigma$-algebra, the stronger the theorem, so we would ideally like to consider the weakest ``reasonable'' $\sigma$-algebra on $\mmspace$ and its marked variants. We argue in Section~\ref{sec::preliminaries} that the weakest reasonable $\sigma$-algebra on $\mmspace$ is the $\sigma$-algebra $\mmsigma$ generated by the so-called Gromov-weak topology. We recall that this topology can be generated by various natural metrics that make $\mmspace$ a complete separable metric space, including the so-called Gromov-Prohorov metric and the Gromov-$\underline \Box_1$ metric \cite{grevenpfaffelhuberwinter, gromovprohorovandgromovbox}.

We then argue that this $\sigma$-algebra is at least strong enough so that the statement of our characterization theorem makes sense: for example, since our characterization involves surfaces cut into pieces by ball boundaries and geodesics, we need to explain why certain simple functions of these pieces can be understood as measurable functions of the original surface. All of this requires a bit of a detour into metric geometry and measure theory, a detour that occupies the whole of Section~\ref{sec::preliminaries}. The reader who is not interested in the details may skip or skim most of this section.

In Section~\ref{sec::surfacesfromtrees}, we recall the tree gluing results from \cite{matingtrees}. In \cite{matingtrees} we proposed using the term {\em peanosphere}\footnote{The term emerged in a discussion with Kenyon. On the question of whether to capitalize ({\it \`a la} Laplacian, Lagrangian, Hamiltonian, Jacobian, Bucky Ball) or not ({\em \`a la} boson, fermion, newton, hertz, pascal, ohm, einsteinium, algorithm, buckminsterfullerene) the authors express no strong opinion.} to describe a space, topologically homeomorphic to the sphere, that comes endowed with a good measure and a distinguished space-filling loop (parameterized so that a unit of area measure is filled in a unit of time) that represents an interface between a continuum ``tree'' and ``dual tree'' pair. Several of the constructions in \cite{matingtrees} describe natural measures on the space of peanospheres, and we note that the Brownian map also fits into this framework.

Some of the constructions in \cite{matingtrees} also involve the {\em $\alpha$-stable looptrees} introduced by Curien and Kortchemski in \cite{ck2013looptrees}, which are in turn closely related to the L\'evy stable random trees explored by Duquesne and Le Gall \cite{dlg2002trees_levy, duquesnelegall2005levytrees, duquesnelegallhausdorff,  duquesnelegallrerooting}. For $\alpha \in (1,2)$ we show how to glue an $\alpha$-stable looptree ``to itself'' in order to produce an object that we call the $\alpha$-{\em stable L\'evy net}, or simply the  $\alpha$-{\em L\'evy net} for short.  The L\'evy net is a random variable which takes values in the space which consists of a planar real tree together with an equivalence relation which encodes how the tree is glued to itself.  It can be understood as something like a Peano {\em carpet}.  It is a space homeomorphic to a closed subset of the sphere (obtained by removing countably many disjoint open disks from the sphere) that comes with a natural measure and a path that fills the entire space; this path represents an interface between a geodesic tree (whose branches also have well-defined length parameterizations) and its dual (where in this case the dual object is the $\alpha$-stable looptree itself).

We then show how to explore the L\'evy net in a breadth-first way, providing an equivalent construction of the L\'evy net that makes sense for all $\alpha \in (1,2)$.  Our results about the L\'evy net apply for general $\alpha$ and can be derived independently of their relationship to the Brownian map. Indeed, the Brownian map is not explicitly mentioned at all in Section~\ref{sec::surfacesfromtrees}.

In Section~\ref{sec::brownianmap} we make the connection to the Brownian map.  To explain roughly what is done there, let us first recall recent works by Curien and Le Gall \cite{cl2014peeling, 2014arXiv1409.4026C} about the so-called {\em Brownian plane}, which is an infinite volume Brownian map that comes with a distinguished origin.  They consider the {\em hull process} $L_r$, where $L_r$ denotes an appropriately defined ``length'' of the outer boundary of the metric ball of radius $r$ centered at the origin, and show that $L_r$ can be understood in a certain sense as the time-reversal of a continuous state branching process (which is in turn a time change of a $3/2$-stable L\'evy process).  See also the earlier work by Krikun on reversed branching processes associated to an infinite planar map \cite{krikun2005uniform}.

Section~\ref{sec::brownianmap} will make use of {\em finite-volume} versions of the relationship between the Brownian map and $3/2$-stable L\'evy processes.  In these settings, one has two marked points~$x$ and~$y$ on a finite-diameter surface, and the process~$L_r$ indicates an appropriately defined ``length'' of $\partial \fb{x}{r}$.  The restriction of the Brownian map to the union of these boundary components is itself a random metric space (using the shortest path distance within the set itself).  In Section~\ref{subsec::mmsigma} we will show that one can view this space as corresponding to a real tree (which describes the leftmost geodesics from points on the filled metric ball boundaries $\partial \fb{x}{r}$ to $x$) together with an equivalence relation which describes which points in the leftmost geodesic tree are identified with each other.  We will show that this structure agrees in law with the $3/2$-L\'evy net.

Given a single instance of the Brownian map, and a single fixed point $x$, one may let the point $y$ vary over some countable dense set of points chosen i.i.d.\ from the associated area measure; then for each $y$ one obtains a different instance of the L\'evy net.  We will observe that, given this collection of coupled L\'evy net instances, it is possible to reconstruct the entire Brownian map.
Indeed, this perspective leads us to the ``breadth-first'' construction of the Brownian map.  (As we recall in Section~\ref{sec::brownianmap}, the conventional construction of the Brownian map from the Brownian snake involves a ``depth-first'' exploration of the geodesic tree associated to the Brownian map.)

The characterization will then essentially follow from the fact that $\alpha$-stable L\'evy processes (and the corresponding continuous state branching processes) are themselves characterized by certain symmetries (such as the Markov property and scale invariance; see Proposition~\ref{prop::strongstablecsbp}) and these correspond to geometric properties of the random metric space. An additional calculation will be required to prove that $\alpha = 3/2$ is the only value consistent with the axioms that we impose, and to show that this determines the other scaling exponents of the Brownian map.

\subsection{Discrete intuition}
\label{subsec::discreteintuition}

This paper does not address discrete models directly.  All of our theorems here are formulated and stated in the continuum.  However, it will be useful for intuition and motivation if we recall and sketch a few basic facts about discrete models. We will not include any detailed proofs in this subsection.

\subsubsection{Infinite measures on singly and doubly marked surfaces}
The literature on planar map enumeration begins with Mullin and Tutte in the 1960's \cite{tutte1962census,mullintrees,MR0218276}. The study of geodesics and the metric structure of random planar maps has roots in an influential bijection discovered by Schaeffer \cite{MR1465581}, and earlier by Cori and Vauquelin \cite{cori1981planar}.

The Cori-Vauquelin-Schaeffer construction is a way to encode a planar map by a pair of trees: the map $M$ is a quadrangulation, and a ``tree and dual tree'' pair on $M$ are produced from $M$ in a deterministic way.  One of the trees is a breadth-first search tree of $M$ consisting of geodesics; the other is a type of dual tree.\footnote{It is slightly different from the usual dual tree definition.  As in the usual case, paths in the dual tree never ``cross'' paths in the tree; however, the dual tree is defined on the same vertices as the tree itself; it has some edges that cross quadrilaterals diagonally and others that overlap the tree edges.}  In this setting, as one traces the boundary between the geodesic tree and the dual tree, one may keep track of the distance from the root in the dual tree, and the distance in the geodesic tree itself; Chassaing and Schaeffer showed that the scaling limit of this random two-parameter process is the continuum random path in $\R^2$ traced by the head of a Brownian snake \cite{chassaingschaeffer}, whose definition we recall in Section~\ref{sec::brownianmap}.
The Brownian map\footnote{The Brownian map was introduced in works by Marckert and Mokkadem and by Le Gall and Paulin \cite{marckert2006limit,le2008scaling}. For a few years, the term ``Brownian map'' was used to refer to any one of the subsequential Gromov-Hausdorff scaling limits of certain random planar maps. Works by Le Gall and by Miermont established the uniqueness of this limit, and proved its equivalence to the metric space constructed directly from the Brownian snake \cite{legalluniqueanduniversal, miermontlimit, legalluniversallimit}.}
is a random metric space produced directly from this continuum random path; see Section~\ref{sec::brownianmap}.

Let us remark that tracing the boundary of a tree counterclockwise can be intuitively understood as performing a ``depth-first search'' of the tree, where one chooses which branches to explore in a left-to-right order.   In a sense, the Brownian snake is associated to a {\em depth-first} search {\em of the tree of geodesics} associated to the Brownian map. We mention this in order to contrast it with the {\em breadth-first search} of the same geodesic tree that we will introduce later.

The scaling limit results mentioned above have been established for a number of types of random planar maps, but for concreteness, let us now focus our attention on triangulations. According to \cite[Theorem~2.1]{angelschrammUIPT} (applied with $m=0$, see also \cite{angelUIPTpercolation}), the number of triangulations (with no loops allowed, but multiple edges allowed) of a sphere with $n$ triangles and a distinguished oriented edge is given by
\begin{equation}
\label{eqn::probdecay}
\frac{2^{n+1} (3n)!}{n! (2n+2)!} \approx C (27/2)^n n^{-5/2}
\end{equation}
where $C > 0$ is a constant.  Let $\mu^1_{\mathrm{TRI}}$ be the probability measure on triangulations such that the probability of each specific $n$-triangle triangulation (with a distinguished oriented edge --- whose location one may treat as a ``marked point'') is proportional to $(27/2)^{-n}$.  Then~\eqref{eqn::probdecay} implies that the $\mu^1_{\mathrm{TRI}}$ probability of obtaining a triangulation with $n$ triangles decays asymptotically like a constant times $n^{-5/2}$.  One can define a new (non-probability) measure on random metric measure spaces $\mu^1_{\mathrm{TRI},k}$, where the area of each triangle is $1/k$ (instead of constant) but the measure is multiplied by a constant to ensure that the $\mu^1_{\mathrm{TRI},k}$ measure of the set of triangulations with area in the interval $(1,2)$ is given by $\int_1^2 x^{-5/2} dx$, and distances are scaled by $k^{-1/4}$.  As $k \to \infty$ the vague limit (as defined w.r.t.\ the Gromov-Hausdorff topology on metric spaces) is an infinite measure on the set of measure-endowed metric spaces. Note that we can represent any instance of one of these scaled triangulations as $(M, A)$ where $A$ is the total area of the triangulation and $M$ is the measure-endowed metric space obtained by rescaling the area of each triangle by a constant so that the total becomes $1$ (and rescaling all distances by the fourth root of that constant).

\begin{figure}[ht!]
\begin{center}
\includegraphics[scale=0.7]{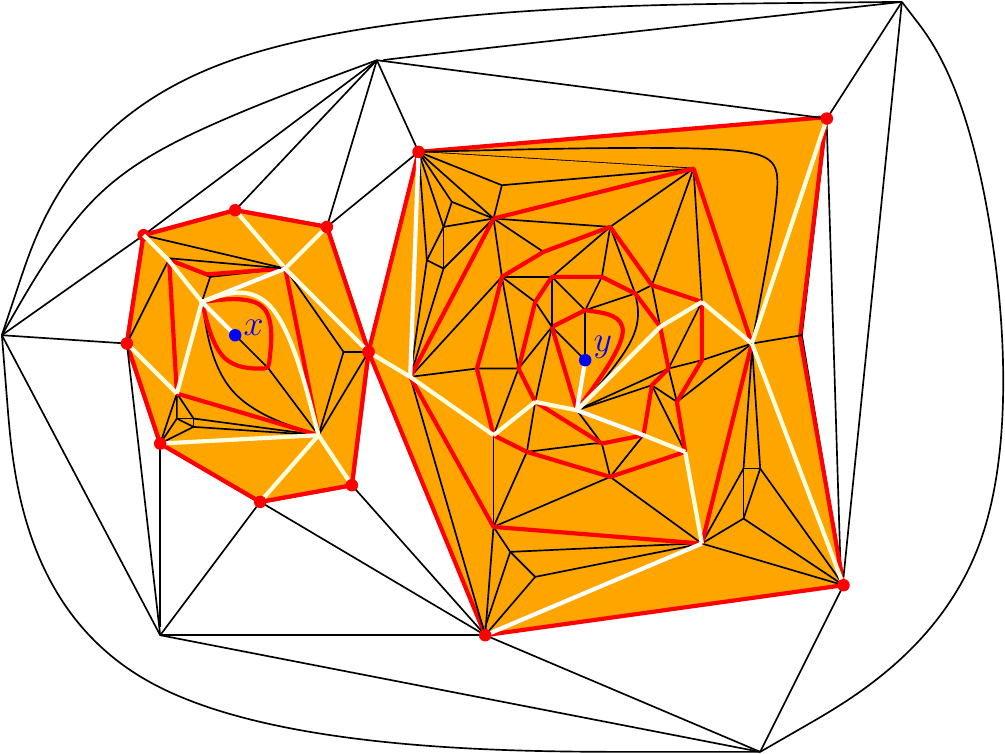}
\caption{\label{fig::shards}  Shown is a triangulation of the sphere (the outer three edges form one triangle) with two marked points: the blue dots labeled $x$ and $y$.  For a vertex $z$ and $r \in \N$, the metric ball $B(z,r)$ of radius $r$ consists of the union of all faces which contain a vertex whose distance to $z$ is at most $r-1$.  The red cycles are outer boundaries of metric balls centered at $x$ as viewed from $y$ (of radii $1$, $2$, $3$) and at $y$ as viewed from $x$ (of radii $1$, $2$, $3$, $4$, $5$).  From each point on the outer boundary of $\fb{x}{3}$ (resp.\ $\fb{y}{5}$) a geodesic toward $x$ (resp.\ $y$) is drawn in white.  The geodesic drawn is the ``leftmost possible'' one; i.e., to get from a point on the circle of radius $k$ to the circle of radius $k-1$, one always takes the leftmost edge (as viewed from the center point).  ``Cutting'' along white edges divides each of $\fb{x}{3}$ and $\fb{y}{5}$ into a collection of triangulated surfaces (one for each boundary edge) with left and right boundaries given by geodesic paths of the same length.  Within $\fb{x}{3}$ (resp.\ $\fb{y}{5}$), there happens to be a single longest slice of length $3$ (resp.\ $5$) reaching all the way from the boundary to $x$ (resp.\ $y$). Parts of the left and right boundaries of these longest slices are identified with each other when the slice is embedded in the sphere. This is related to the fact that all of the geodesics shown in white have ``merged'' by their final step. Between $\fb{x}{3}$ and $\fb{y}{5}$, there are $8+5 =13$ slices in total, one for each boundary edge. The white triangles outside of $\fb{x}{3} \cup \fb{y}{5}$ form a triangulated disk of boundary length $13$.}
\end{center}
\end{figure}

As $k \to \infty$ the measures $\mu^1_{\mathrm{TRI},k}$ converge vaguely to the measure $dM \otimes A^{-5/2} dA$, where $dM$ is the standard {\em unit volume} Brownian map measure (see \cite{legalluniqueanduniversal} for the case of triangulations and $2p$-angulations for $p \geq 2$ and \cite{miermontlimit} for the case of quadrangulations); a sample from $dM$ comes equipped with a single marked point.  
The measure  $dM \otimes A^{-5/2} dA$ can be understood as type of grand canonical or Boltzmann measure on the space of (singly marked) Brownian map instances.

Now suppose we consider the set of {\em doubly marked} triangulations such that in addition to the root vertex (the first point on the distinguished oriented edge), there is an additional distinguished or ``marked'' vertex somewhere on the triangulation off the root edge.  Since, given an $n$-triangle triangulation, there are (by Euler's formula) $n/2$ other vertices one could ``mark,'' we find that the number of these doubly marked triangulations is (up to constant factor) given by $n$ times the expression in~\eqref{eqn::probdecay}, i.e.
\begin{equation}
\label{eqn::probdecay2}
\frac{n 2^{n+1} (3n)!}{2n! (2n+2)!} \approx C (27/2)^n n^{-3/2}.
\end{equation}
Let $\mu^2_{\mathrm{TRI}}$ denote this probability measure on doubly marked surfaces (and let $\mu^2_{\mathrm{TRI},k}$ be the obvious the doubly marked analog of $\mu^1_{\mathrm{TRI},k}$).  Then the scaling limit of $\mu^2_{\mathrm{TRI},k}$ is an infinite measure of the form $dM \otimes A^{-3/2} dA$, where $M$ now represents a unit area {\em doubly marked} surface with distinguished points $x$ and $y$.  Note that if one ignores the point $y$, then the law $dM$ in this context is exactly the same as in the one marked point context.

Generalizing the above analysis to $k$ marked points, we will write $\musk$ to denote the natural limiting infinite measure on $k$-marked spheres, which can be understood (up to a constant factor) as the $k$-marked point version of the Brownian map.  To sample from $\musk$, one may
\begin{enumerate}
\item Choose $A$ from the infinite measure $A^{-7/2 + k}dA$.
\item Choose $M$ as an instance of the standard unit area Brownian map.
\item Sample $k$ points independently from the measure with which $M$ is endowed.
\item Rescale the resulting $k$-marked sphere so that it has area $A$.
\end{enumerate}
Of the measures $\musk$, we mainly deal with $\mustwo$ in this paper. As mentioned earlier, we also sometimes use the notation $\musa$ to describe the standard unit-area Brownian map measure, i.e., the measure described as $dM$ above.

\subsubsection{Properties of the doubly marked Brownian map}
\label{subsubsec::doublymarkedproperties}

In this section, we consider what properties of the measure $\mustwo$ on doubly marked measure-endowed metric spaces (as described above) can be readily deduced from considerations of the discrete models and the fact that $\mustwo$ is a scaling limit of such models. These will include the properties contained in the statement of Theorem~\ref {thm::markovmapcharacterization}. Although we will not provide fully detailed arguments here, we note that together with Theorem~\ref {thm::markovmapcharacterization}, this subsection can be understood as a justification of the fact that~$\mustwo$ is the only measure one can reasonably expect to see as a scaling limit of discrete measures such as~$\mu^2_{\mathrm{TRI}}$ (or more precisely as the vague limit of the rescaled measures~$\mu^2_{\mathrm{TRI},k}$). In principle it might be possible to use the arguments of this subsection along with Theorem~\ref {thm::markovmapcharacterization} (or the variant Theorem~\ref{thm::levynetbasedcharacterization}) to give an alternate proof of the fact that the measures~$\mu^2_{\mathrm{TRI}}$ have~$\mustwo$ as their scaling limit. To do this, one would have to show that any subsequential limit of the measures~$\mu^2_{\mathrm{TRI}}$ satisfies the hypotheses of Theorem~\ref {thm::markovmapcharacterization} (or Theorem~\ref{thm::levynetbasedcharacterization}).

We also remind the reader that one well known oddity of this subject is that to date there is no direct proof that the Brownian map (as constructed directly from the Brownian snake) satisfies root invariance. Rather, the existing proofs by Le Gall and Miermont derive root invariance as a consequence of discrete model convergence results \cite{legallgeodesics, legalluniqueanduniversal, miermontlimit, legalluniversallimit}.  Thus the fact that~$\mustwo$ itself satisfies the hypotheses of  Theorem~\ref {thm::markovmapcharacterization} (or Theorem~\ref{thm::levynetbasedcharacterization}) is a result whose existing proofs rely on planar map models.  Very roughly speaking, it is easy to see root invariance in the planar map models, and as one proves that the discrete models converge to the Brownian map (as in \cite{legalluniqueanduniversal, miermontlimit, legalluniversallimit}) one obtains that the Brownian map must be root invariant as well. In this paper, we cite this known fact (the root invariance of the Brownian map) and do not give an independent proof of that. Rather, the main results of this paper are in the other direction: we show that no measure {\em other than}~$\mustwo$ can  satisfy the either the hypotheses of  Theorem~\ref {thm::markovmapcharacterization} or the hypotheses of Theorem~\ref{thm::levynetbasedcharacterization}.

\begin{remark}
\label{rem::alternaterootinvarianceproof}
Although this is not needed for the current paper, we remark that in combination with later works by the authors, Theorem~\ref{thm::levynetbasedcharacterization} can be used to give to give a {\em purely continuum} (non-planar-map-based) proof of Brownian map root invariance based on the Liouville quantum gravity sphere (an object whose root invariance is easy to see directly \cite{matingtrees}).
In other words, the LQG sphere can be made to play the role that the random planar map plays in the earlier (and simpler) arguments by Le Gall and Miermont: it is an obviously-root-invariant object whose connection to the Brownian map can be used to prove the root invariance of the Brownian map itself.

More precisely, root invariance follows from Theorem~\ref{thm::levynetbasedcharacterization} of this paper and the main result of \cite{qle_continuity} because: \begin{enumerate}
\item Theorem~\ref{thm::levynetbasedcharacterization} states that no measure other than~$\mustwo$ satisfies certain hypotheses. 
\item \cite{qle_continuity} constructs a root-invariant measure (from the LQG sphere) and proves that it satisfies those hypotheses.
\item Ergo that measure is $\mustwo$ and $\mustwo$ is root-invariant.
\end{enumerate}

\end{remark}

Let us stress again that all of the properties discussed in this subsection can be proved rigorously for the doubly marked Brownian map measure~$\mustwo$. But for now we are simply using discrete intuition to argue (somewhat heuristically) that these are properties that any scaling limit of the measures~$\mu^2_{\mathrm{TRI}}$ should have.

Although $\mustwo$ is an infinite measure, we have that $\mustwo[ A > c ]$ is finite whenever $c > 0$.  Based on what we know about the discrete models, what other properties would we expect $\mustwo$ to have?  One such property is obvious; namely, the law $\mustwo$ should be invariant under the operation of resampling one (or both) of the two marked points from the (unit) measure on $M$.  This is a property that $\mu^2_{\mathrm{TRI}}$ clearly has.  If we fix $x$ (with its directed edge) and resample $y$ uniformly, or vice-versa, the overall measure is preserved.  Another way to say this is the following: to sample from $dM$, one may first sample $M$ as an unmarked unit-measure-endowed metric space (this space has no non-trivial automorphisms, a.s.) and then choose $x$ and $y$ uniformly from the measure on $M$.

\begin{figure}[ht!]
\begin{center}
\includegraphics[scale=0.85]{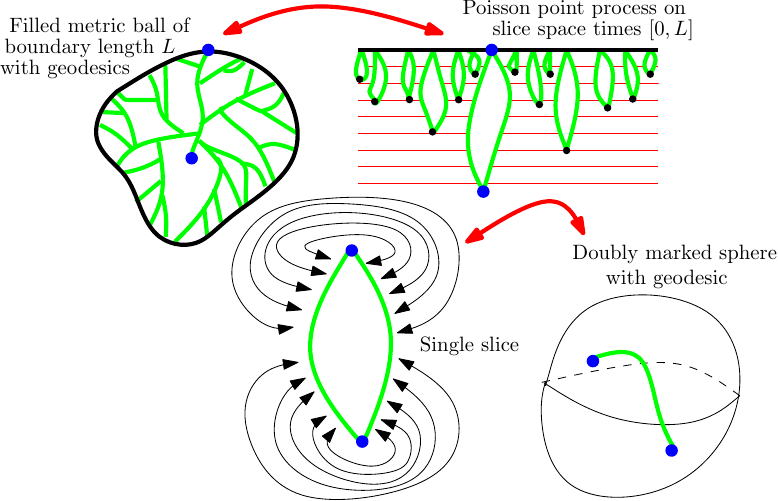}
\caption{\label{fig::slicedecomposition} {\bf Upper left:} A filled metric ball of the Brownian map (with boundary length $L$) can be decomposed into ``slices'' by drawing geodesics from the center to the boundary.  {\bf Upper right:} the slices are embedded in the plane so that along the boundary of each slice, the geodesic distance from the black outer boundary (in the left figure) corresponds to the Euclidean distance below the black line (in the right figure). We may glue the slices back together by identifying points on same horizontal segment (leftmost and rightmost points on a given horizontal level are also identified) to recover the filled metric ball. {\bf Bottom:} Lower figures explain the equivalence of the slice measure and $\mustwo$.}
\end{center}
\end{figure}

Before describing the next properties we expect $\mustwo$ to have, let us define 
$\fb{x}{r}$ to be the set of vertices $z$ with the property that every path from $z$ to $y$ includes a point whose distance from $x$ is less than or equal to $r$. This is the obvious discrete analog of the definition of $\fb{x}{r}$ given earlier. Informally, $\fb{x}{r}$ includes the radius $r$ metric ball centered at $x$ together with all of the components ``cut off'' from $y$ by the metric ball. It is not hard to see that vertices on the boundary of such a ball, together with the edges between them, form a cycle; examples of such boundaries are shown as the red cycles in Figure~\ref{fig::shards}.

Observe that if we condition on $\fb{x}{r}$, and on the event that $d(x,y) > r$ (so that $y \not \in \fb{x}{r}$), then the $\mu^2_{\mathrm{TRI},k}$ conditional law of the remainder of the surface depends only on the boundary length of $\fb{x}{r}$, which we denote by $L_r(x,y)$, or simply $L_r$ when the choice of $x$ and $y$ is understood.  This conditional law can be understood as the standard Boltzmann measure on singly marked triangulations of the disk with boundary length $L_r$, where the probability of each triangulation of the disk with $n$ triangles is proportional to $(27/2)^{-n}$. From this we conclude in particular that $L_r$ evolves as a Markovian process, terminating when $y$ is reached at step $d(x,y)$.  This leads us to a couple more properties one would expect the Brownian map to have, based on discrete considerations.

\begin{enumerate}
\item Fix a constant $r > 0$ and consider the restriction of $\mustwo$ to the event $d(x,y) > r$.  (We expect the total $\mustwo$ measure of this event to be finite.)  Then once $\fb{x}{r}$ is given, the conditional law of the singly marked surface comprising the complement of $\fb{x}{r}$ is a law that depends only a single real number, a ``boundary length'' parameter associated to $\fb{x}{r}$, that we call $L_r$.
\item This law depends on $L_r$ in a {\em scale invariant} way---that is, the random singly marked surface of boundary length $L$ and the random singly marked surface of boundary length $CL$ differ only in that distances and areas in the latter are each multiplied by some power of $C$.  (We do not specify for now what power that is.)  
\item The above properties also imply that the process $L_r$ (or at least its restriction to a countable dense set) evolves as a Markov process, terminating at time $d(x,y)$, and that the $\mustwo$ law of $L_r$ is that of the (infinite) excursion measure associated to this Markov process.
\end{enumerate}

The scale invariance assumptions described above do not specify the law of $L_r$.  They suggest that $\log L_r$ should be a time change of a L\'evy process, but this still leaves an infinite dimensional family of possibilities. In order to draw further conclusions about this law, let us consider the time-reversal of $L_r$, which should also be an excursion of a Markov process. (This is easy to see on a discrete level; suppose we do not decide in advance the value of $T =d(x,y)$, but we observe $L_{T-1}, L_{T-2}, \ldots$ as a process that terminates after $T$ steps.  Then the conditional law of $L_{T-k-1}$ given $L_{T-k},L_{T-k+1},\ldots,L_T$ is easily seen to depend only the value of $L_{T-k}$.) Given this reverse process up to a stopping time, what is the conditional law of the filled ball centered at $y$ with the corresponding radius?

On the discrete level, this conditional law is clearly the uniform measure (weighted by $(27/2)^{-n}$, where $n$ is the number of triangles, as usual) on triangulations of the boundary-length-$L$ disk in which there is a single fixed root and all points on the boundary are equidistant from that root. A sample from this law can be obtained by choosing $L$ independent ``slices'' and gluing them together, see Figure~\ref{fig::shards}. As illustrated in Figure~\ref{fig::slicedecomposition}, we expect to see a similar property in the continuum.  Namely, that given a boundary length parameter $L$, and a set of points along the boundary, the evolution of the lengths within each of the corresponding slices should be an independent process.

This suggests that the time-reversal of an $L_r$ excursion should be an excursion of a so-called {\em continuous state branching process}, as we will discuss in Section~\ref{subsec::csbp}. This property and scale invariance will determine the law of the $L_r$ process up to a single parameter that we will call $\alpha$.

In addition to the spherical-surface measures $\musk$ and $\musa$ discussed earlier, we will in the coming sections consider a few additional measures on disk-homeomorphic measure-endowed metric spaces with a given fixed ``boundary length'' value $L$. (For now we give only informal definitions; see Section~\ref{subsec::mapsdisksnets} for details.)
\begin{enumerate}
\item A probability measure $\mudl$ on boundary length $L$ surfaces that in some sense represents a ``uniform'' measure on all such surfaces --- just as $\musk$ in some sense represents a uniform measure on spheres with $k$ marked points.  It will be enough to define this for $L=1$, as the other values can be obtained by rescaling.
This $L=1$ measure is expected to be an $m \to \infty$ scaling limit of the probability measure on discrete disk-homeomorphic triangulations with boundary length $m$, where the probability of an $n$-triangle triangulation is proportional to $(27/2)^{-n}$.  (Note that for a given large $m$ value, one may divide area, boundary length, and distance by factors of $m^2$, $m$, and $m^{1/2}$ respectively to obtain an approximation of $\mudl$ with $L=1$.) (We remark that another construction of the measure we all $\mudl$ appears in the work by Abrams and Le Gall \cite{alg}.)
\item A measure $\mudonel$ on marked disks obtained by weighting $\mudl$ by area and then choosing an interior marked point uniformly from that area. In the context of  Theorem~\ref{thm::markovmapcharacterization}, this is the measure that should correspond to the conditional law of $S\setminus \fb{x}{r}$ given that the boundary length of $\fb{x}{r}$ is $L$.
\item A measure $\muml$ on disk-homeomorphic measure-endowed metric spaces with a given boundary length $L$ and an interior ``center point'' such that all vertices on the boundary are equidistant from that point. In other words, $\muml$ is a probability measure on the sort of surfaces that arises as a filled metric ball. Again, it should correspond to a scaling limit of a uniform measure (except that as usual the probability of an $n$-triangle triangulation is proportional to $(27/2)^{-n}$) on the set of all marked triangulations of a disk with a given boundary length and the property that all points on the boundary are equidistant from that marked point. This is the measure that satisfies the ``slice independence'' described at the end of the statement of Theorem~\ref{thm::markovmapcharacterization}.
\end{enumerate}

Suppose we fix $r >0$ and restrict the measure $\mustwo$ to the event that $d(x,y) > r$, so that $\mustwo$ becomes a finite measure. Then one expects that given the filled metric ball of radius $r$ centered at $x$, the conditional law of the component containing $y$ is a sample from $\mudonel$, where $L$ is a boundary length measure.  Similarly, suppose one conditions on the {\em outside} of the filled metric ball of radius $d(x,y) - r$ centered at $x$. Then the conditional law of the filled metric ball itself should be $\muml$. This is the measure that one expects (based on the intuition derived from Figures~\ref{fig::shards} and~\ref{fig::slicedecomposition} above) to have the ``slice independence'' property.

\section{Preliminaries}
\label{sec::preliminaries}

\subsection{Metric measure spaces}

A triple $(S, d, \nu)$ is called a {\bf metric measure space} (or {\bf mm-space}) if $(S,d)$ is a complete separable metric space and $\nu$ is a measure on the Borel $\sigma$-algebra generated by the topology generated by $d$, with $\nu(S) \in (0, \infty)$. We remark that one can represent the same space by the quadruple $(S, d, \wt \nu, m)$, where $m = \nu(S)$ and $\wt \nu = m^{-1} \nu$ is a probability measure.  This remark is important mainly because some of the literature on metric measure spaces requires $\nu$ to be a probability measure. Relaxing this requirement amounts to adding an additional parameter $m \in (0, \infty)$.

Two metric measure spaces are considered equivalent if there is a measure-preserving isometry from a full measure subset of one to a full measure subset of the other.  Let~$\mmspace$ be the space of equivalence classes of this form. Note that when we are given an element of~$\mmspace$, we have no information about the behavior of~$S$ away from the support of~$\nu$.

Next, recall that a measure on the Borel $\sigma$-algebra of a topological space is called {\bf good} if it has no atoms and it assigns positive measure to every open set. Let~$\gmsspace$ be the space of geodesic metric measure spaces that can be represented by a triple $(S,d,\nu)$ where $(S,d)$ is a geodesic metric space homeomorphic to the sphere and~$\nu$ is a good measure on~$S$.

Note that if $(S_1, d_1, \nu_1)$ and $(S_2, d_2, \nu_2)$ are two such representatives, then the a.e.\ defined measure-preserving isometry $\phi \colon S_1 \to S_2$ is necessarily defined on a dense set, and hence can be extended to the completion of its support in a unique way so as to yield a continuous function defined on all of $S_1$ (similarly for $\phi^{-1}$). Thus $\phi$ can be uniquely extended to an {\em everywhere} defined measure-preserving isometry.  In other words, the metric space corresponding to an element of $\gmsspace$ is uniquely defined, up to measure-preserving isometry.

As we are ultimately interested in probability measures on $\mmspace$, we will need to describe a $\sigma$-algebra on $\mmspace$. We will also show that $\gmsspace$ belongs to that $\sigma$-algebra, so that in particular it makes sense to talk about measures on $\mmspace$ that are supported on $\gmsspace$. We would like to have a $\sigma$-algebra that can be generated by a complete separable metric, since this would allow us to define regular conditional probabilities for random variables. We will introduce such a $\sigma$-algebra in Section~\ref{subsec::mmsigma}. We first discuss some basic facts about metric spheres in Section~\ref{subsec::metricsphereobservations}.

\subsection{Observations about metric spheres}
\label{subsec::metricsphereobservations}

Let $\gmsspace^k$ be the space of elements of $\gmsspace$ that come endowed with an ordered set of $k$ marked points $z_1, z_2, \ldots, z_k$.  When $j \leq k$ there is an obvious projection map from $\gmsspace^k$ to $\gmsspace^j$ that corresponds to ``forgetting'' the last $k-j$ coordinates. We will be particularly interested in the set $\gmsspace^2$ in this paper, and we often represent an element of $\gmsspace^2$ by $(S, d, \nu, x,y)$ where $x$ and $y$ are the two marked points. The following is a simple deterministic statement about geodesic metric spheres (i.e., it does not involve the measure $\nu$).

\begin{proposition}
\label{prop::boundariesarecircles}
 Suppose that $(S, d)$ is a geodesic metric space which is homeomorphic to $\S^2$ and that $x \in S$.  Then the following hold:
\begin{enumerate} 
\item Each of the components of $S \setminus \ol{B(x,r)}$ has a boundary that is a simple closed curve in $S$, homeomorphic to the circle $\S^1$.
\item Suppose that $\Lambda$ is a connected component of $\partial B(x,r)$.  Then the component of $S \setminus \Lambda$ that contains $x$ is homeomorphic to a disk. Moreover, there exists a homeomorphism from the unit disk to this component that extends  continuously to its boundary. (The restriction of the map to the boundary gives a map from $\S^1$ onto $\Lambda$, which can be interpreted as a closed curve in $S$. This curve may hit or retrace itself but --- since it is the boundary of a disk --- it does not cross itself.)
\end{enumerate}

\end{proposition}
\begin{proof}
We begin with the first item. Let~$U$ be one component of $S \setminus \ol{B(x,r)}$ and consider the boundary set~$\Gamma = \partial U$. We aim to show that~$\Gamma$ is homeomorphic to $\S^1$. Note that every point in~$\Gamma$ is of distance~$r$ from~$x$.

Since~$U$ is connected and has connected complement, it must be homeomorphic to~$\D$.  We claim that the set~$S \setminus \Gamma$ contains only two components: the component~$U$ and another component that is also homeomorphic to~$\D$.  To see this, let us define~$\wt U$ to be the component of~$S \setminus \Gamma$ containing~$x$. By construction,~$\partial \wt U \subseteq \Gamma$, so every point on~$\partial \wt{U}$ has distance~$r$ from~$x$. A geodesic from any {\em other} point in~$\Gamma$ to $x$ would have to pass through~$\partial \wt U$, and hence such a point would have to have distance greater than~$r$ from~$x$. Since all points in~$\Gamma$ have distance~$r$ from~$x$, we conclude that~$\partial \wt{U} = \Gamma$. Note that~$\wt U$ has connected complement, and hence is also homeomorphic to~$\D$.

The fact that~$\Gamma$ is the common boundary of two disjoint disks is not by itself enough to imply that~$\Gamma$ is homeomorphic to $\S^1$. There are still some strange counterexamples (e.g., topologist's sine curves, exotic prime ends, etc). To begin to rule out such things, our next step is to show that~$\Gamma$ is locally connected.

Suppose for contradiction that~$\Gamma$ is not locally connected.  By definition, this means that there exists a $z \in \Gamma$ such that $\Gamma$ is not locally connected at $z$, which in turn means that there exists an~$s > 0$ such that for every sub-neighborhood~$V \subseteq B(z,s)$ containing~$z$ the set~$V \cap \Gamma$ is disconnected.  Note that since~$\Gamma$ is connected the closure of every component of~$\Gamma \cap B(z,s)$ has non-empty intersection with~$\partial B(z,s)$, see Figure~\ref{fig::incursions}.  Since these components are closed within~$B(z,s)$, all but one of them must have positive distance from~$z$.   Moreover, for each~$\epsilon \in (0,s)$, the number of such components which intersect~$B(z,\epsilon)$ must be infinite since otherwise one could take $V$ to be the open set given by $B(z,s)$ minus the union of the components of $\Gamma \cap B(z,s)$ that do not hit $z$, and $V \cap \Gamma$ would be connected by construction, contradicting our non-local-connectedness assumption.

\begin{figure}[ht!]
\begin{center}
\includegraphics[scale=.8]{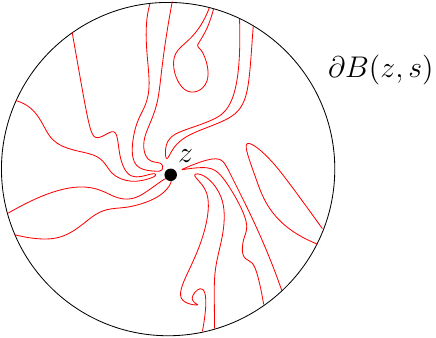}
\caption{\label{fig::incursions} Schematic drawing of $B(z,s)$ (which does not actually have to be ``round'' in a Euclidean sense, or even simply connected) together with $z$ and some possible components of $\Gamma \cap B(z,s)$ colored in red. In light of the non-local-connectedness assumption (assumed for purpose of deriving a contradiction) we have (for some $z$ and $s$) infinitely many red components intersecting $B(z,\epsilon)$ for each $\epsilon<s$. Note that $\Gamma$ is the common boundary of $U$ and $\wt{U}$, each of which is homeomorphic to a disk. Any point on a red component is incident to both $U$ and $\wt{U}$.}
\end{center}
\end{figure}

Now (still assuming that~$\Gamma$ is not locally connected), the above discussion implies that there must be an annulus~$A$ (i.e., a difference between the disk-homeomorphic complements of two concentric filled metric balls) centered at~$z$ such that~$A \cap \Gamma$ contains infinitely many connected components crossing it.  Let $\delta$ be equal to the width of $A$ (i.e., the distance between the inside and outside boundaries of $A$).  It is not hard to see from this that both~$A \cap U$ and~$A \cap \wt{U}$ contain infinitely many distinct components crossing~$A$, each of diameter at least $\delta$.

Let $A_I$ be the inner boundary of $A$ and let $A_M$ be the image of a simple loop $\phi$ in $A$ which has positive distance from $\partial A$ and surrounds $A_I$.  Fix $\epsilon > 0$.  We claim that the above implies that we can find $w \in A_I \cap B(x,r)$ and points $z_1,z_2 \in A_M \cap \partial \wt{U}$ with $d(z_1,z_2) < \epsilon$ such that a given geodesic $\gamma$ which connects $w$ and $x$ necessarily crosses a given geodesic $\eta$ which connects $z_1$ and $z_2$, see Figure~\ref{fig::annuluscrossing}. Indeed, let $(s_j,t_j)$ be the (pairwise disjoint) collection of intervals of time so that each $\phi((s_j,t_j))$ is a component of $A_M \cap \wt{U}$ which disconnects part of the inner boundary (i.e., in $A_I$) of a component of $A \cap \wt{U}$ from its outer boundary (i.e., in the outer boundary of $A$).  We note that for each of the infinitely many components $\wt{V}$ of $A \cap \wt{U}$, there exists at least one $j$ so that $\phi((s_j,t_j)) \subseteq \wt{V}$.  In particular, we can find such a $j$ so that $d(\phi(s_j),\phi(t_j)) < \epsilon$ by the continuity of $\phi$.  Let $z_1 = \phi(s_j)$, $z_2 = \phi(t_j)$, and let $w$ be a point on $A_I$ so that $\phi((s_j,t_j))$ disconnects $w$ from the outer boundary of $A$ in some component of $A \cap \wt{U}$.

\begin{figure}[ht!]
\begin{center}
\includegraphics[scale=.8]{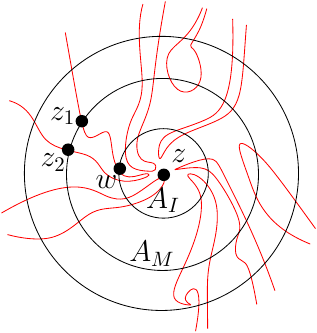}
\caption{\label{fig::annuluscrossing} Schematic drawing of $A_I$ and $A_M$ (which again do not actually have to be ``round'' in a Euclidean sense) together with $z$ and choices for $z_1$, $z_2$, and $w$. We assume that topological considerations imply that a geodesic from $w$ to $x$ (which cannot cross $\Gamma$ except at $x$) has to cross a geodesic connecting $z_1$ and $z_2$. Roughly speaking, one gets a contradiction by noting that this crossing point has to be close to $\Gamma$ (since $z_1$ and $z_2$ can be made arbitrarily close to each other) but also far from $\Gamma$ (since the geodesic starting at $w$ had to travel most of the distance from $A_I$ to $A_M$ before reaching the crossing point).}
\end{center}
\end{figure}

Since $w \in B(x,r)$, we have that $\gamma$ is contained in $B(x,r)$.  Let $v$ be a point on $\gamma \cap \eta$.  Then $d(x,w) = d(x,v) + d(v,w)$.  We claim that $d(v,w) < \epsilon$.  Indeed, if $d(v,w) \geq \epsilon$ then as $d(z_j,v) < \epsilon$ for $j=1,2$ we would have that
\[ d(x,z_j) \leq d(x,v) + d(v,z_j) < d(x,v) + \epsilon \leq d(x,v) + d(v,w) = d(x,w) < r.\]
This contradicts that $z_1,z_2 \notin B(x,r)$, which establishes the claim.  Since $d(v,w) < \epsilon$, we therefore have that 
\[ d(z_j,w) \leq d(z_j,v) + d(v,w) < 2\epsilon.\]
Since $\epsilon > 0$ was arbitrary and $A_I,A_M$ are closed, we therefore have that $A_M \cap A_I \neq \emptyset$.  This is a contradiction since we took $A_M$ to be disjoint from $A_I$.  Therefore $\Gamma$ is locally connected.

Note that the image of $\Gamma$ under a homeomorphism $S \to \S^2$ must be locally connected as well.  Moreover, there is a conformal map $\varphi$ from $\D$ to the image of $\wt{U}$, and a standard result from complex analysis (see e.g.\ \cite[Proposition~3.6]{LAW05}) states that since the image of $\Gamma$ is locally connected, the map $\varphi$ must extend continuously to its boundary. This tells us that $\Gamma$ is given by the image of a continuous curve $\psi \colon \S^1 \to S$.  It remains only to show $\psi(z_1) \neq \psi(z_2)$ for all $z_1,z_2 \in \S^1$.  This will complete the proof because then $\psi$ is a simple curve which parameterizes $\partial U$.

Assume for contradiction that there exists $z_1,z_2 \in \S^1$ distinct so that $\psi(z_1) = \psi(z_2)$.  We write $[z_1,z_2]$ for the counterclockwise segment of $\S^1$ which connects $z_1$ and $z_2$.  Then we have that $\psi$ restricted to each of $[z_1,z_2]$ and $\S^1 \setminus (z_1,z_2)$ is a loop and the two loops touch only at $\psi(z_1) = \psi(z_2)$ by the connectedness of $U$.  Therefore the loops are nested and only one of them separates $U$ from $x$.  We assume without loss of generality that $\psi|_{\S^1 \setminus (z_1,z_2)}$ separates~$U$ from~$x$.  Fix $w \in (z_1,z_2)$, let $\eta$ be a path from $x$ to $w$, and let $t_1$ (resp.\ $t_2$) be the first time that $\eta$ hits $\partial U$ (resp.\ $w$).  Then we have that $t_1 \neq t_2$.  Applying this to the particular case of a geodesic from $x$ to $w$, we see that the distance of $x$ to $w$ is strictly larger than the distance of $\partial U$ to $w$.  This a contradiction, which completes the proof of the first item in the theorem statement.  To prove the second item, we apply exactly the same argument above with $\Lambda$ in place of $\Gamma$ in order to show that $\Lambda$ is locally connected, which implies, as above, that the map from the unit disk to the $x$-containing component of $S \setminus \Lambda$ extends continuously to the boundary.
\end{proof}

As mentioned earlier, given a doubly marked geodesic metric space $(S, d, x, y)$ which is homeomorphic to~$\S^2$, we let $\fb{x}{r}$ denote the filled metric ball of radius~$r$ centered at~$x$, as viewed from~$y$. That is, $\fb{x}{r}$ is the complement of the $y$-containing component of $S \setminus \ol{B(x,r)}$.

Fix some~$r$ with $0 < r < d(x,y)$, and a point $z \in \partial \fb{x}{r}$.  Clearly, any geodesic from~$z$ to~$x$ is a path contained in~$\fb{x}{r}$. In general there may be more than one such geodesic, but the following proposition gives us a way to single out a unique geodesic.

\begin{proposition}
\label{prop::uniqueleftmost}
Suppose that $(S, d, x,y)$ is a doubly marked geodesic metric space which is homeomorphic to $\S^2$, that $0 < r < d(x,y)$, and that $\fb{x}{r}$ is the radius $r$ filled ball centered at $x$ and $z \in \partial \fb{x}{r}$. Assume that an orientation of $\partial \fb{x}{r}$ is fixed (so that one can distinguish the ``clockwise'' and ``counterclockwise'' directions). Then there exists a unique geodesic from $z$ to $x$ that is {\em leftmost} viewed from $x$  (i.e., furthest counterclockwise) when lifted and understood as an element of the universal cover of $\fb{x}{r} \setminus \{x\}$.
\end{proposition}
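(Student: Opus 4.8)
The plan is to work in the universal cover $\widetilde{W}$ of $W := \fb{x}{r}\setminus\{x\}$, which (since $\fb{x}{r}$ is a closed topological disk by Proposition~\ref{prop::boundariesarecircles}) is an infinite strip: topologically $\widetilde{W}$ is the half-open strip $\S^1_{\mathrm{univ}}\times(0,1]$ with the boundary circle $\partial\fb{x}{r}$ lifting to $\R\times\{1\}$ and the puncture at $x$ lifting to the open end $\R\times\{0\}$. Fix a lift $\widetilde z$ of $z$ to $\R\times\{1\}$. Every geodesic $\gamma$ from $z$ to $x$ in $S$ is actually contained in $\fb{x}{r}$ (it cannot leave, since leaving would mean passing through a point of $\partial\fb{x}{r}$, all of which have distance exactly $r$ from $x$ and one could shortcut), and $\gamma\setminus\{z,x\}$ lies in $W$, so it lifts uniquely to a path $\widetilde\gamma$ in $\widetilde{W}$ starting at $\widetilde z$ and ending at some lift of $x$, i.e., at a well-defined point of the end $\R\times\{0\}$; call the $\R$-coordinate of that endpoint the \emph{winding} $w(\gamma)\in\R$. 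The idea is that the leftmost geodesic is the one maximizing $w(\gamma)$ (with a sign convention so that ``counterclockwise/further left as seen from $x$'' corresponds to larger winding), and the two assertions to prove are (a) the supremum of $w(\gamma)$ over geodesics is attained, and (b) it is attained by exactly one geodesic.

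For existence, first I would note that the set $G$ of geodesics from $z$ to $x$ is nonempty (geodesic metric space, and $d(z,x)=r$) and compact in the uniform topology: it is a closed, equicontinuous (all are $1$-Lipschitz, domain $[0,r]$) family of maps into the compact space $S$, so by Arzelà–Ascoli every sequence has a uniformly convergent subsequence whose limit is again a geodesic from $z$ to $x$. Next, the winding $w\colon G\to\R$ is continuous: if $\gamma_n\to\gamma$ uniformly, then for large $n$ the paths $\gamma_n$ and $\gamma$ are uniformly close, hence homotopic in $W$ rel endpoints once we stay away from $x$ — more precisely, $\gamma_n$ and $\gamma$ agree at $z$ and the lifts $\widetilde\gamma_n$ converge uniformly to $\widetilde\gamma$ on $[0,r-\epsilon]$ for every $\epsilon$, and because the end $\R\times\{0\}$ is approached monotonically (the coordinate $d(x,\cdot)$ along a geodesic is strictly decreasing) the winding is already determined by any initial segment reaching distance $\epsilon$ from $x$; so $w(\gamma_n)\to w(\gamma)$. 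Also $w(G)$ is bounded: a geodesic of length $r$ cannot wind around the puncture arbitrarily many times, since each full turn around $x$ at ``radius'' (distance from $x$) roughly $\rho$ costs length bounded below by the length of the shortest noncontractible loop in $\{d(x,\cdot)=\rho\}\subset\fb{x}{r}$, and one can get a uniform lower bound on such costs by a compactness/covering argument on $S$ (this is where one uses that $S$ is a genuine geodesic sphere, not something pathological). Hence $w$ attains a maximum $w^\ast$ on the compact set $G$, proving existence of a leftmost geodesic.

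For uniqueness, suppose $\gamma_1,\gamma_2\in G$ both achieve winding $w^\ast$. Lift both starting at $\widetilde z$; then $\widetilde\gamma_1$ and $\widetilde\gamma_2$ have the same endpoint $\widetilde x$ on the end of the strip. I want to argue that if they were distinct, one could produce a geodesic with strictly larger winding, contradicting maximality. The mechanism: take the region ``between'' $\widetilde\gamma_1$ and $\widetilde\gamma_2$ in the strip (they may cross, so decompose into the finitely-many — or at worst a countable union of — bigons they bound); on each such bigon, replace the right boundary arc by the left boundary arc. Since both arcs are geodesic segments with the same endpoints, they have equal length, so the swap produces another path from $z$ to $x$ of length exactly $r$, i.e., again a geodesic — but one that is weakly to the left of both, and I'd argue strictly to the left unless $\gamma_1=\gamma_2$ identically. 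The cleanest packaging is: define $\gamma_{\mathrm{left}}$ by taking, at each ``time'' $t$, the leftmost (largest-winding) among the positions of all geodesics in $G$ at distance $r-t$ from $x$; show this concatenated object is itself a rectifiable path of length $r$ hence a geodesic (using that locally it coincides with pieces of the $\gamma\in G$ and the bigon-swap argument to rule out length loss at the switching times, which form a closed set), and that it is the unique maximizer.

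The main obstacle I expect is precisely this last uniqueness/``leftmost envelope'' step — making rigorous that the pointwise-leftmost selection over the compact family $G$ is again a geodesic and is well-defined (single-valued) as a curve, since the switching set between different $\gamma\in G$ could a priori be complicated (a fractal closed subset of $[0,r]$), and one must ensure no length is lost in the concatenation and that ``leftmost'' is unambiguous at every scale. I'd handle this by a monotonicity argument in the universal cover: the key lemma is that two geodesics from $z$ to $x$, once lifted from the common start $\widetilde z$, are \emph{ordered} away from their crossing points, and their bigons can be swapped without changing length, so the sup-envelope is attained by a limit of finitely-many swaps, which by compactness of $G$ lies in $G$. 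The rest — continuity of winding, boundedness, Arzelà–Ascoli — is routine. I would also remark at the end that the construction depends only on the chosen orientation and on $z$, not on any auxiliary data, which is all that is needed for the later cutting-into-slices arguments.
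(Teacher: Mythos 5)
Your underlying mechanism --- pass to the universal cover of $\fb{x}{r}\setminus\{x\}$, select leftmost positions attainable by geodesics, and use the fact that two geodesics crossing at a common point can be concatenated there without changing length --- is exactly what the paper's (very terse) proof does: for each $r'\in(0,r)$ it lifts $\partial \fb{x}{r'}$ to the universal cover, where it becomes a copy of $\R$, takes the leftmost point on that lift reachable by any geodesic from $z$ to $x$, and asserts that the union of these points over all $r'$ is the desired leftmost geodesic. Your ``cleanest packaging'' at the end is precisely this construction. The problem is the route you actually put up front, the terminal winding functional $w(\gamma)$. First, your justification that $w$ is well defined and bounded rests on a uniform positive lower bound for the cost of a full turn around $x$ at distance $\rho$, i.e.\ for the length of a noncontractible loop in $\{d(x,\cdot)=\rho\}$, uniformly in $\rho\in(0,r)$. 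No such bound exists even in the nicest examples: on the round sphere these lengths are of order $\rho$ and tend to $0$ as $\rho\to 0$, and in the spaces this proposition is ultimately applied to they certainly degenerate near $x$. So finiteness and convergence of the horizontal coordinate of the lift as the geodesic approaches the puncture is simply not established, and $w$ may not be well defined. The fix is that you never need it: for any \emph{fixed} level $r'\in(0,r)$, the map sending a geodesic in your compact family $G$ (Arzel\`a--Ascoli, which you correctly invoke) to its lifted position on the lift of $\partial\fb{x}{r'}$ is continuous, so the set of attainable positions at that level is compact, hence closed and bounded, and a leftmost point exists level by level with no global winding estimate.

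Second, even granting $w$, maximizing the terminal winding does not give uniqueness: two distinct geodesics from $z$ to $x$ can coalesce before reaching $x$ (geodesics in such spaces do merge) and hence have identical terminal behaviour in the cover, while bounding a bigon at intermediate levels. Swapping that bigon changes neither the length nor the terminal winding, so your proposed contradiction ``a strictly larger winding'' never materializes, and $w$ cannot distinguish the two candidates. Leftmostness has to be imposed at every level $r'$, not at the endpoint, and then existence and uniqueness both come from the envelope argument: if one geodesic is leftmost at level $r'$ and another at level $r''<r'$, they are either ordered in the cover or they cross; at a crossing point both are at the same distance from $x$, hence at the same parameter, so concatenation yields a geodesic leftmost at both levels, and a compactness/diagonal argument produces a geodesic leftmost at all levels simultaneously, which is unique by construction. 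This is the step you honestly flag as the main obstacle and sketch via bigon swaps; it is the real content of the proof (the paper dispatches it with ``it is not hard to see''), so your plan is on the right track, but the endpoint-winding formalism should be dropped rather than repaired, since as written both the existence justification and the uniqueness argument built on it fail.
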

\begin{proof}
Proposition~\ref{prop::boundariesarecircles} implies that $\fb{x}{r}$ is homeomorphic to $\ol{\D}$.  Therefore $\fb{x}{r} \setminus \{x\}$ is homeomorphic to $\ol{\D} \setminus \{0\}$.  It thus follows that the universal cover of $\fb{x}{r} \setminus \{x\}$ is homeomorphic to $\ol{\h}$.  Let $\pi \colon \ol{\h} \to \fb{x}{r} \setminus \{x\}$ be the associated projection map.  Let $z$ be as in the statement of the proposition and let $z' \in \R$ be a preimage of $z$ with respect to $\pi$ (i.e., $\pi(z') = z$).  Note that for each $r' \in (0,r)$, the lifting of $\partial \fb{x}{r'}$ to the universal cover $\ol{\h}$ is homeomorphic to~$\R$ (by Proposition~\ref{prop::boundariesarecircles} and since~$\R$ is the lifting of the circle to its universal cover).  Let $z_r'$ be the leftmost (i.e., furthest counterclockwise) point in $\ol{\h}$ reachable by the lifting of any geodesic connecting $z$ to $x$ taken to start from $z'$.  We claim that $s \mapsto \pi(z_{r-s}')$ for $s \in [0,r]$ forms the desired leftmost geodesic.  By definition, it is to the left of any geodesic connecting $z$ to $x$ as in the statement of the proposition.  It therefore suffices to show that it is in fact a geodesic from $z$ to $x$.

Suppose that $\eta_1,\eta_2$ are geodesics from $z$ to $x$.  Then there exists a geodesic $\eta$ from $z$ to $x$ which is to the left of $\eta_1,\eta_2$.  Indeed, let $\eta_1',\eta_2'$ be the liftings of $\eta_1,\eta_2$ to $\ol{\h}$ starting from $z'$.  Let $I = \cup_j (s_j,t_j)$ be the set of times so that $\eta_1' \neq \eta_2'$ where the $(s_j,t_j)$ are pairwise disjoint.  In each such interval, we have that $\eta_1',\eta_2'$ do not intersect and therefore one of the paths is to the left of other in $\ol{\h}$.  We take $\eta'$ in $(s_j,t_j)$ to be the leftmost of these two paths.  Outside of $I$, we take $\eta'$ be equal to the common value of $\eta_1$ and $\eta_2$.  Then we take $\eta = \pi(\eta')$.  Then $\eta$ is a geodesic from $z$ to $x$ which is to the left of $\eta_1$ and $\eta_2$.

For each $s \in (0,r)$, there exists a sequence of geodesics $(\eta_n)$ from $z$ to $x$ such that if $\eta_n'$ is the lifting of $\eta_n$ to $\ol{\h}$ starting from $z'$ then $\eta_n'(s)$ converges to $z_{r-s}'$ as $n \to \infty$.  By the Arzel\'a-Ascoli theorem, by passing to a subsequence, we may assume without loss of generality that $(\eta_n)$ converges in the limit to a geodesic $\eta$ connecting $z$ to $x$ whose lifting $\eta'$ starting from $z'$ passes through $z_{r-s}'$ at time $s$.  By combining this with the statement proved in the previous paragraph, we see that there exists a geodesic $\eta$ so that its lifting $\eta'$ to $\ol{\h}$ starting from $z'$ passes through all of the $z_{r-s}'$, as desired. 

\end{proof}

We next establish some ``rigidity'' results for metric spaces.  Namely, we will first show that there is no non-trivial isometry of a geodesic closed-disk-homeomorphic metric space which fixes the boundary.  We will then show that the identity map is the only orientation-preserving isometry of a triply marked geodesic sphere that fixes all of the marked points.  (Note that there can be many automorphisms of the unit sphere that fix two marked points if those points are on opposite poles.) We will note that it suffices to fix two points if one also fixes a distinguished geodesic between them.

\begin{proposition}
\label{prop::boundaryfixesisometry}
Suppose that $(S,d)$ is a geodesic metric space such that there exists a homeomorphism $\varphi \colon \ol{\D} \to S$.  Suppose that $\phi \colon S \to S$ is an isometry which fixes $\partial S := \varphi(\partial \D)$.  Then $\phi(z) = z$ for all $z \in S$.
\end{proposition}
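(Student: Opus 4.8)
The plan is to reduce the statement to a purely metric fact about $S$ that makes no reference to $\phi$. Since $\phi$ is an isometry that fixes every point of $\partial S$, for any $z\in S$ and any $w\in\partial S$ we have $d(\phi(z),w)=d(\phi(z),\phi(w))=d(z,w)$; that is, $z$ and $\phi(z)$ have the same distance function to $\partial S$. So it suffices to prove the following claim: if $z_1,z_2\in S$ satisfy $d(z_1,w)=d(z_2,w)$ for all $w\in\partial S$, then $z_1=z_2$. Taking $w=z_i$ disposes of the case $z_i\in\partial S$, so we may assume $z_1,z_2$ lie in the open disk $S\setminus\partial S$. Granting the claim, applying it with $z_2=\phi(z)$ gives $\phi(z)=z$ for every $z$, which is the Proposition.

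The substance of the argument is therefore the claim, which I would attack as follows. Note $S$ is compact (being homeomorphic to $\ol{\D}$), so $f(w):=d(z_1,w)=d(z_2,w)$ attains its minimum $\rho:=d(z_1,\partial S)=d(z_2,\partial S)>0$, and its minimizing set $N\subseteq\partial S$ is the \emph{common} set of nearest boundary points of $z_1$ and of $z_2$. Fix $w_0\in N$ and let $\sigma_i$ be a geodesic from $z_i$ to $w_0$; then $\sigma_i$ has length $\rho$ and, except for its terminal point $w_0$, stays in the open disk (an earlier boundary hit would be a strictly closer boundary point). A short computation using the triangle inequality shows that along $\sigma_i$ one has $d(\sigma_i(t),w_0)=d(\sigma_i(t),\partial S)=\rho-t$ exactly, so distance to $\partial S$ falls at unit speed and $w_0$ stays a nearest boundary point; consequently, if $\sigma_1$ and $\sigma_2$ meet at an interior point they meet at equal arclength parameter, which lets me argue they coincide on a terminal subinterval $[t^\ast,\rho]$. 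If $t^\ast=0$ the claim follows; if $t^\ast>0$ I am reduced to two distinct geodesics of equal length $t^\ast$ issuing from a common interior point $q$ and ending at $z_1\neq z_2$, together with the geodesic $\gamma$ from $z_1$ to $z_2$ (of length $\le 2t^\ast$). I would close this by a planar/Jordan-curve analysis inside the disk, feeding in the extra rigidity that for \emph{every} $w\in\partial S$ the geodesics from $z_1$ and from $z_2$ to $w$ form an isosceles geodesic triangle on base $\gamma$ (both legs of length $f(w)$): this forces the two halves of $\gamma$ at its midpoint to behave symmetrically with respect to the \emph{entire} boundary circle surrounding that midpoint, which in a disk is possible only if the two halves coincide, i.e.\ $z_1=z_2$.

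The main obstacle is precisely this geometric claim — the reduction in the first paragraph is routine. Within the claim, the delicate issues are the possible non-uniqueness of geodesics (from an $z_i$ to a boundary point, or between two boundary points) and degenerate behaviour such as geodesics running along stretches of $\partial S$, which genuinely occurs (a round hemisphere has an equatorial boundary that is itself a geodesic). To keep the relevant arcs well-behaved and to make the symmetry/incidence argument rigorous, I would fix an orientation of $S$ and work with a canonical choice of geodesic — the leftmost geodesic from each interior point to each boundary point, in the spirit of Proposition~\ref{prop::uniqueleftmost} — so that these arcs vary controllably with their endpoints and no spurious symmetry can hide in the choice. With such a selection in place, I expect the planar topology of the disk (two disjoint arcs realizing the forced incidence pattern cannot coexist) to finish the proof; making that ``forced incidence / isosceles'' step precise is where essentially all of the work lies.
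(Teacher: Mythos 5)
Your opening reduction is correct but it throws away most of the hypothesis: from the assumption that $\phi$ is an isometry of all of $S$ fixing $\partial S$ pointwise, you retain only that $z$ and $\phi(z)$ have the same distance to every boundary point. What is then left to prove is that on an arbitrary geodesic disk the map $z \mapsto d(z,\cdot)|_{\partial S}$ is injective. This is strictly stronger than the proposition (for two points with equal boundary-distance functions there is no isometry available that swaps them), it is not established anywhere in the paper, and it is not obvious that it is even true at this level of generality --- and in any case your sketch does not prove it. The decisive ``forced incidence / isosceles'' step is, as you yourself say, not an argument: the isosceles property is just a restatement of $d(z_1,w)=d(z_2,w)$, and no precise statement is offered for what ``the two halves of $\gamma$ behave symmetrically with respect to the boundary'' means, let alone why the disk topology forbids it. Some intermediate steps are also shaky as written: from the fact that $\sigma_1$ and $\sigma_2$ meet at equal arclength parameters it does not follow that they coincide on a terminal interval, since geodesics to $w_0$ need not be unique (this particular point is repairable --- reroute $\sigma_2$ along $\sigma_1$ after the first meeting time, which is again a geodesic --- but the repair is not made, and the non-uniqueness issue reappears in the endgame you leave open). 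So the proposal has a genuine gap exactly at its crux.

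For comparison, the paper's proof uses the isometry hypothesis in full rather than only distances to $\partial S$: since $\phi$ preserves all pairwise distances and fixes $\partial S$ pointwise (hence preserves its orientation), it maps the unique leftmost geodesic from a fixed point $z$ to a boundary point $x$ (the canonical selection in the spirit of Proposition~\ref{prop::uniqueleftmost}) onto itself, and fixes it pointwise because distances to $x$ are preserved along it. Hence the fixed-point set is connected and contains $\partial S$, each complementary component is an open topological disk mapped to itself by $\phi$, and Brouwer's fixed point theorem forces every such component to be empty. Note that your own appeal to leftmost geodesics cannot play this role in your setting: without an isometry relating $z_1$ and $z_2$ there is nothing transporting the canonical geodesic from $z_1$ to the one from $z_2$, so the rigidity you need has to come from somewhere else. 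Either prove the stronger injectivity claim rigorously (which would be a nontrivial piece of metric geometry in its own right) or revert to an argument that exploits the full isometry hypothesis as the paper does.
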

\begin{proof}
Fix $x_1,x_2,x_3 \in \partial S$ distinct.  Then $x_1,x_2,x_3$ determine an orientation of $\partial S$.  Thus for $x \in \partial S$ and $z \in S$, we have a well-defined leftmost geodesic $\gamma$ connecting~$z$ to~$x$ with respect to this orientation.  Since $\phi$ fixes $\partial S$, it preserves the orientation of $\partial S$.  In particular, if it is true that $\phi(z) = z$ then it follows that $\phi$ must fix $\gamma$ (for otherwise we would have more than one leftmost geodesic from $z$ to $x$).  We conclude that $\{z : \phi(z) = z\}$ is connected and connected to the boundary, and hence its complement must have only simply connected components.  Moreover, if $U$ is such a component then we have that $\phi(U) = U$.  Brouwer's fixed point theorem implies that none of these components can be non-empty, since there would necessarily be a fixed point inside.  This implies that $\phi(z) = z$ for all $z \in S$.
\end{proof}

\begin{proposition}
\label{prop::threepointsfixisometry}
Suppose that $(S,d,x_1,x_2,x_3)$ is a triply marked geodesic metric space with $x_1,x_2,x_3$ distinct which is topologically equivalent to $\S^2$.  We assume that~$S$ is oriented so that we can distinguish the clockwise and counterclockwise directions of simple loops.  Suppose that $\phi \colon S \to S$ is an orientation-preserving isometry with $\phi(x_j) = x_j$ for $j=1,2,3$.  Then $\phi(z) = z$ for all $z \in S$.  Similarly, if $(S,d,x_1, x_2)$ is a doubly marked space with $x_1,x_2$ distinct and $\gamma$ is a geodesic from $x_1$ to $x_2$, then the identity is the only orientation-preserving isometry that fixes $x_1$, $x_2$, and $\gamma$.
\end{proposition}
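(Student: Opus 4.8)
The strategy is: in each case obtain a geodesic $\gamma$ from $x_1$ to $x_2$ that is necessarily mapped onto itself by $\phi$, then cut $S$ along $\gamma$, realize the result as a geodesic metric space homeomorphic to the closed disk on which $\phi$ acts fixing the boundary, and invoke Proposition~\ref{prop::boundaryfixesisometry}. The first thing to record is an elementary remark used throughout: \emph{if $\phi$ fixes points $p,q \in S$ and $\phi(\eta)=\eta$ for some geodesic $\eta$ from $p$ to $q$, then $\phi$ fixes $\eta$ pointwise.} Indeed, length-minimizing paths are simple and every subpath of one is again length-minimizing, so arclength parametrization identifies $(\eta,d|_\eta)$ isometrically with an interval $[0,d(p,q)]$; the only self-isometry of an interval fixing both endpoints is the identity. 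In particular, in the doubly marked statement the given geodesic $\gamma$ is fixed pointwise by $\phi$; and in the triply marked statement it will suffice to produce \emph{any} $\phi$-invariant geodesic from $x_1$ to $x_2$ and then appeal to the doubly marked statement.

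Granting a $\phi$-invariant, hence pointwise-fixed, geodesic $\gamma$ from $x_1$ to $x_2$, I would argue as follows. Since $\gamma$ is a simple arc, $S \setminus \gamma$ is an open topological disk; let $\widehat S$ be its completion with respect to the interior-internal metric inherited from $(S,d)$. As with the ``slice'' constructions appearing elsewhere in the paper, $\widehat S$ is a geodesic metric space homeomorphic to a closed disk whose boundary circle consists of two copies of $\gamma$ joined at the endpoints $x_1,x_2$, and the isometry $\phi$ of $(S,d)$ — which preserves $\gamma$ and $d$, hence the interior-internal metric — extends to an isometry $\widehat\phi$ of $\widehat S$. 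Because $\phi$ fixes $\gamma$ pointwise and is orientation preserving, it cannot interchange the two sides of $\gamma$ (doing so would make $\phi$ act like a reflection near an interior point of $\gamma$, reversing orientation); therefore $\widehat\phi$ fixes $\partial\widehat S$ pointwise. Proposition~\ref{prop::boundaryfixesisometry} now yields $\widehat\phi=\mathrm{id}$, so $\phi$ is the identity on $S\setminus\gamma$, and hence on all of $S$.

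It remains, for the triply marked statement, to construct a $\phi$-invariant geodesic from $x_1$ to $x_2$, and this is where the orientation and the third point $x_3$ enter. The plan is to single out a canonical \emph{leftmost} geodesic $\gamma$ from $x_1$ to $x_2$: for $0<r<d(x_1,x_2)$ the set $\partial\fb{x_1}{r}$ is a circle (Proposition~\ref{prop::boundariesarecircles}) carrying the orientation induced from $S$, and passing to the universal cover of $S\setminus\{x_1,x_2\}$ — using the lift of $x_3$ to pick out the relevant sheet — one takes the geodesic lying furthest counterclockwise, as in the proof of Proposition~\ref{prop::uniqueleftmost}. Since this $\gamma$ is characterized purely in terms of $(S,d,x_1,x_2,x_3)$ and the orientation, all of which $\phi$ preserves, we get $\phi(\gamma)=\gamma$; by the first paragraph $\phi$ fixes $\gamma$ pointwise, and the previous paragraph finishes the proof. (If $x_3$ happens to lie on $\gamma$ this causes no trouble, since the pointwise-fixing of $\gamma$ uses only $x_1$ and $x_2$.) I expect the construction of this canonical leftmost geodesic to be the main point requiring care: one must rule out the possibility that ``furthest counterclockwise'' fails to be attained among the (a priori possibly many, possibly mutually crossing) geodesics from $x_1$ to $x_2$, and check that the leftmost points at the various distances from $x_1$ link up into an honest geodesic — the same issue resolved in Proposition~\ref{prop::uniqueleftmost}. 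A lesser technical point, shared with the paper's slice constructions, is the verification that cutting along a geodesic produces a bona fide geodesic metric space homeomorphic to the closed disk, so that Proposition~\ref{prop::boundaryfixesisometry} applies.
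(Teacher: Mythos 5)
Your reduction to producing a $\phi$-invariant geodesic from $x_1$ to $x_2$, and your treatment of the doubly marked statement, are correct and essentially what the paper does: it too cuts along $\gamma$ and applies Proposition~\ref{prop::boundaryfixesisometry}, and your two supporting observations (that fixing the endpoints and the set $\gamma$ forces $\phi$ to fix $\gamma$ pointwise, and that orientation preservation prevents the two boundary copies of $\gamma$ from being exchanged) are exactly the details behind the paper's ``immediate.''  The gap is in the triply marked half, i.e., in the construction of the invariant geodesic, which is the crux of the statement.  You propose a ``leftmost geodesic from $x_1$ to $x_2$'' defined in the universal cover of $S\setminus\{x_1,x_2\}$, ``using the lift of $x_3$ to pick out the relevant sheet,'' and you defer the remaining work to ``the same issue resolved in Proposition~\ref{prop::uniqueleftmost}.''  That proposition does not cover your situation, and the difficulty is not merely whether the leftmost competitor is attained.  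In Proposition~\ref{prop::uniqueleftmost} all competing geodesics share the endpoint $z\in\partial\fb{x}{r}$ and lie in $\fb{x}{r}$; one fixes a single lift of $z$ in the universal cover of $\fb{x}{r}\setminus\{x\}$, and only then is ``further counterclockwise'' a well-defined comparison.  For geodesics joining the two punctures $x_1,x_2$ there is no common basepoint: each geodesic has a $\Z$-orbit of lifts, so even the relation ``which of two geodesics is further counterclockwise'' is undefined until you specify an anchoring.  That is precisely what ``use the lift of $x_3$'' is meant to do, but you never say how (e.g., take for each geodesic the lift lying immediately counterclockwise of a fixed lift $\wt{x}_3$), nor handle geodesics passing through $x_3$, nor show that the leftmost envelope of these anchored lifts is attained, is itself a geodesic, and is its own anchored lift; and the $\phi$-invariance you invoke only follows once the construction is pinned down canonically.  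As written, the key object of the proof is asserted rather than constructed.

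The paper sidesteps all of this with a small device you could substitute directly into your argument: assume without loss of generality that $R:=d(x_1,x_2)\le d(x_1,x_3)$ and consider the filled ball $\fb{x_1}{R}$ viewed from $x_3$, so that $x_2\in\partial\fb{x_1}{R}$ (here Proposition~\ref{prop::boundariesarecircles} gives the boundary circle).  Proposition~\ref{prop::uniqueleftmost} then applies verbatim and yields the unique leftmost geodesic $\gamma$ from $x_2$ to $x_1$.  Since $\phi$ is an isometry fixing $x_1$ and $x_3$, it maps $\fb{x_1}{R}$ onto itself and, being orientation preserving, preserves the orientation of $\partial\fb{x_1}{R}$; since it also fixes $x_2$, uniqueness gives $\phi(\gamma)=\gamma$, and your doubly marked argument finishes the proof.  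The role of $x_3$ is thus to select the filled ball (and hence a setting where ``leftmost'' is already well defined), rather than to select a sheet of the universal cover of the twice-punctured sphere.
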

\begin{proof}
The latter statement is immediate from Proposition~\ref{prop::boundaryfixesisometry} applied to the disk obtained by cutting the sphere along $\gamma$.  To prove the former statement, we assume without loss of generality that $R = d(x_1,x_2) \leq d(x_1,x_3)$.

We first consider the case that $x_2$ is on a geodesic from $x_1$ to $x_3$.  Consider the filled metric ball $\fb{x_1}{R}$ (relative to $x_3$) so that $x_2 \in \partial \fb{x_1}{R}$.  Since we have assumed that~$S$ is oriented, we have that $\partial \fb{x_1}{R}$ is oriented, hence Proposition~\ref{prop::uniqueleftmost} implies that there exists a unique leftmost geodesic~$\gamma$ from~$x_1$ to~$x_2$.  Since~$\phi$ fixes $x_1,x_3$ and~$\phi$ is an isometry, it follows that~$\phi$ fixes $\partial \fb{x_1}{R}$.  Moreover, $\phi(\gamma)$ is a geodesic from $\phi(x_1) = x_1$ to $\phi(x_2) = x_2$.  As $\phi$ is orientation preserving, we must in fact have that $\phi(\gamma) = \gamma$.  Therefore the latter part of the proposition statement implies that, in this case, $\phi$ fixes all of~$S$.

We next consider the case that $x_2$ is not on a geodesic from $x_1$ to $x_3$.  Let $A$ be the union of all of the geodesics from $x_1$ to $x_3$ and note that $A$ is closed.  Moreover, the boundary of the component $U$ of $S \setminus A$ containing $x_2$ consists of two geodesics: one ($\gamma_L$) which passes to the left of $x_2$ and one ($\gamma_R$) which passes to the right of $x_2$.  Since $\phi$ fixes $x_1$, $x_2$, and $x_3$ it follows that $\phi$ fixes $U$.  As $\phi$ is orientation preserving, it also fixes both $\gamma_L$ and $\gamma_R$.  Therefore the latter part of the proposition statement implies that, in this case, $\phi$ fixes all of~$S$.
\end{proof}

We remark that the above argument implies that the identity is the only map that fixes $x$ and the restriction of $\gamma$ to {\em any} neighborhood about $x$. In other words, the identity is the only map that fixes $x$ and the equivalence class of geodesics $\gamma$ that end at $x$, where two geodesics considered equivalent if they agree in a neighborhood of $x$. This is analogous to the statement that a planar map on the sphere has no non-trivial automorphisms (as a map) once one fixes a single oriented edge. We next observe that Proposition~\ref{prop::boundaryfixesisometry} can be further strengthened.

\begin{proposition}
\label{prop::boundaryfixesisometry2}
In the context of Proposition~\ref{prop::boundaryfixesisometry}, if the isometry $\phi \colon S \to S$ is orientation preserving and fixes one point $x \in \partial S$ it must be the identity.
\end{proposition}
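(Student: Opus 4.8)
The plan is to reduce to Proposition~\ref{prop::boundaryfixesisometry} by showing that an orientation-preserving isometry $\phi$ that fixes a single point $x\in\partial S$ in fact fixes all of $\partial S$. Once we know $\phi|_{\partial S}=\mathrm{id}$, Proposition~\ref{prop::boundaryfixesisometry} applies verbatim and gives $\phi(z)=z$ for all $z\in S$, so the whole content of the statement lies in this reduction.

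First I would record the standard topological facts. Since $\phi$ is an isometry, hence a homeomorphism of $S$, and $S$ is homeomorphic to $\overline{\D}$, the map $\phi$ carries the manifold boundary $\partial S=\varphi(\partial \D)$ onto itself. Because $\phi$ is orientation preserving on $S$, the induced map $\phi|_{\partial S}$ is an orientation-preserving homeomorphism of the circle $\partial S$; and of course it is an isometry of the metric space $(\partial S,\rho)$, where $\rho$ is the restriction of $d$ to $\partial S\times\partial S$ (note $\rho$ induces the circle topology on $\partial S$, being simply the subspace metric).

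The heart of the matter is then a single elementary lemma about metric circles: a nontrivial orientation-preserving isometry of a metric circle has no fixed point. To prove it, I would identify the circle with $\R/\Z$, suppose $\psi$ is such an isometry fixing a point which we place at $0$, and lift $\psi$ to an increasing homeomorphism $h$ of $[0,1]$ with $h(0)=0$ and $h(1)=1$. If $\psi\neq\mathrm{id}$ then $h\neq\mathrm{id}$, so there is a maximal open interval $(a,b)\subseteq(0,1)$ with $h(a)=a$, $h(b)=b$ and no fixed point of $h$ inside; by continuity $h-\mathrm{id}$ has constant sign on $(a,b)$, and replacing $\psi$ by $\psi^{-1}$ if necessary we may assume $h(\theta)>\theta$ there. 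Then for $\theta\in(a,b)$ the orbit $h^n(\theta)$ is increasing and bounded above by $b$, and its limit is a fixed point in $[\theta,b]$, hence equals $b$. Consequently, for any $\theta_1,\theta_2\in(a,b)$ we get $\rho(\theta_1,\theta_2)=\rho(h^n(\theta_1),h^n(\theta_2))\to\rho(b,b)=0$ as $n\to\infty$, forcing $\theta_1=\theta_2$; this contradicts $a<b$. Applying the lemma to $\psi=\phi|_{\partial S}$, which fixes $x$, we conclude $\phi|_{\partial S}=\mathrm{id}$, and then Proposition~\ref{prop::boundaryfixesisometry} finishes the proof.

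I do not expect a serious obstacle here: the topological inputs (a self-homeomorphism of $S\cong\overline{\D}$ preserves $\partial S$ and induces an orientation-preserving circle homeomorphism) are routine, and the circle lemma above is self-contained. If anything needs care it is precisely the bookkeeping in that lemma — making sure the iterates $h^n(\theta)$ converge to an endpoint of the fixed-point-free interval — which is the one step I would write out in full; everything else is formal.
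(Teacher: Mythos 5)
Your proof is correct. The reduction is the same as the paper's: both arguments observe that, by Proposition~\ref{prop::boundaryfixesisometry}, it suffices to show that $\phi$ fixes $\partial S$ pointwise, and both use orientation preservation at exactly this stage. Where you differ is in how that boundary statement is proved. The paper argues directly on a complementary arc of the (closed, nonempty) fixed-point set: if $I$ is an open arc with fixed endpoints $z_1,z_2$ and no fixed interior point, then since $\phi$ preserves $I$ and the order along it, the \emph{first} point of $I$ at $d$-distance $\epsilon/2$ from $z_1$ is canonically defined and hence fixed, a contradiction. You instead isolate a standalone lemma — an orientation-preserving isometry of a metric circle with a fixed point is the identity — and prove it dynamically: lift to an increasing homeomorphism of $[0,1]$, take a component $(a,b)$ of the non-fixed set, note the forward orbits $h^n(\theta)$ converge to the endpoint $b$, and use the isometry property plus continuity of the metric to force $\rho(\theta_1,\theta_2)=0$ for all $\theta_1,\theta_2\in(a,b)$. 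Both mechanisms are elementary and both exploit that $\phi$ preserves each complementary arc (which is where orientation enters); yours has the mild advantage of packaging the key step as a reusable statement about arbitrary metrics inducing the circle topology, at the cost of the lifting/orbit bookkeeping, while the paper's canonical-point trick avoids iteration altogether.
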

\begin{proof}
By Proposition~\ref{prop::boundaryfixesisometry}, it suffices to check that~$\phi$ fixes the circle $\partial S$ pointwise (since~$\phi$ is a homeomorphism, it clearly fixes $\partial S$ as a set).  Note that the set $\{y \in \partial S : \phi(y) = y\}$ is closed and non-empty.  Suppose for contradiction that $\{ y \in \partial S : \phi(y) = y\}$ is not equal to all of $\partial S$.  Then there exists $I \subseteq \partial S$ connected which is relatively open in $\partial S$ such that $\phi$ fixes the endpoints $z_1,z_2$ of $I$ but does not fix any point in $I$ itself.  Fix $\epsilon > 0$ small so that there exists $z \in I$ with $d(z,z_1) = \epsilon$.  Then there is a well-defined first point $w \in I$ starting from $z_1$ with $d(z_1,w) = \epsilon/2$.  Since $\phi$ fixes $I$ as a set, it must be that $\phi(w)=w$.  This is a contradiction, which gives the result.
\end{proof}

We now return to our study of leftmost geodesics.

\begin{proposition}
\label{prop::leftmostconvergence}
Suppose that we are in the setting of Proposition~\ref{prop::uniqueleftmost}.  Suppose that $a \in  \partial \fb{x}{r}$ and that $(a_j)$ is a sequence of points in $\partial \fb{x}{r}$ which approach $a$ from the left.  For each $j$, we let $\gamma_j$ be the leftmost geodesic from $a_j$ to $x$ and $\gamma$ the leftmost geodesic from $a$ to $x$.  Then we have that $\gamma_j \to \gamma$ uniformly as $j \to \infty$.  Moreover, for all but countably many values of $a$ (which we will call {\bf jump values}) the same is true when the $a_j$ approach $a$ from the right.  If $a$ is one of these jump values, then the limit of the geodesics from $a_j$, as the $a_j$ approach $a$ from the right, is a non-leftmost geodesic from $a$ to $x$.
\end{proposition}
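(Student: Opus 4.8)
\textbf{Setup and strategy.} The plan is to analyze the family of leftmost geodesics $\{\gamma_z : z \in \partial \fb{x}{r}\}$ by lifting everything to the universal cover $\widetilde W$ of $\fb{x}{r} \setminus \{x\}$, as in Proposition~\ref{prop::uniqueleftmost}. There, each $\gamma_z$ lifts (once we fix a base sheet) to a path $\widetilde\gamma_z$, and ``leftmost'' means $\widetilde\gamma_z$ is pointwise the furthest-counterclockwise geodesic. The first step is a compactness observation: since $(S,d)$ is a compact geodesic space, any sequence of geodesics from points $a_j \to a$ has a subsequential uniform limit which is itself a geodesic from $a$ to $x$; so the content of the proposition is entirely about \emph{which} geodesic the limit is, and about one-sided continuity. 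I would set up a partial order / monotonicity structure: if $a_j$ approaches $a$ strictly from the left (counterclockwise side) then the lifts $\widetilde\gamma_{a_j}$ are ordered monotonically (each lies weakly to the left of the next, by the leftmost property and the fact that two leftmost geodesics cannot cross transversally — if they crossed, one could splice to produce a geodesic further left than one of them, contradicting leftmostness). Monotone sequences of geodesics in a compact space converge, so $\widetilde\gamma_{a_j}$ converges uniformly to some geodesic $\widetilde\gamma$ from (the lift of) $a$ to $x$, lying weakly left of every $\widetilde\gamma_b$ with $b$ left of $a$ and weakly right of the true leftmost $\widetilde\gamma_a$.

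\textbf{Left-continuity.} To upgrade ``weakly right of $\widetilde\gamma_a$'' to ``equals $\widetilde\gamma_a$'': suppose the limit $\widetilde\gamma$ were strictly to the right of $\widetilde\gamma_a$ somewhere. Then $\widetilde\gamma_a$ is a geodesic from $a$ to $x$ lying strictly left of all nearby $\widetilde\gamma_{a_j}$. But one can then perturb: take a point on $\widetilde\gamma_a$ near $a$ and observe that $\widetilde\gamma_a$ followed backwards to $a$ can be re-routed to pass through $a_j$ for $a_j$ close to $a$ (using that $a_j, a$ are both at distance $r$ from $x$ and the boundary arc between them is short, so $d(a_j,a)$ is small and can be absorbed), producing a geodesic from $a_j$ to $x$ that lies to the \emph{left} of $\widetilde\gamma_{a_j}$ — contradicting that $\widetilde\gamma_{a_j}$ is leftmost. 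Hence $\widetilde\gamma = \widetilde\gamma_a$, giving left-continuity. This is the step I expect to be the main obstacle: making the re-routing argument honest requires care, because a priori the leftmost geodesic can behave wildly near the boundary and one must control $d(a_j, a)$ versus how far left the re-routed path reaches — the cleanest route is probably to argue via the monotone limit $\widetilde\gamma$ directly (it is automatically a geodesic, automatically $\le \widetilde\gamma_{a_j}$ for all later $j$ in the order), then show $\widetilde\gamma$ is itself leftmost from $a$ by noting any geodesic strictly left of it would, for large $j$, also be (essentially, after small adjustment) a candidate geodesic from $a_j$ left of $\widetilde\gamma_{a_j}$.

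\textbf{The right limit and jump values.} For the right-hand approach, the same monotonicity gives that as $a_j \downarrow a$ from the right, $\widetilde\gamma_{a_j}$ decreases to a limiting geodesic $\widetilde\gamma^{+}_a$ from $a$ to $x$, which lies weakly \emph{right} of $\widetilde\gamma_a = \widetilde\gamma^{-}_a$. Both $\widetilde\gamma^{-}_a \le \widetilde\gamma^{+}_a$ are geodesics from $a$ to $x$; define $a$ to be a \emph{jump value} when they differ. Each jump value $a$ determines a nondegenerate ``gap'' — the region between $\widetilde\gamma^-_a$ and $\widetilde\gamma^+_a$ — and these gaps are pairwise disjoint open sets (again by non-crossing of leftmost geodesics: for $a \ne a'$ the whole families $\widetilde\gamma_{a_j}$ are separated, so the gaps cannot overlap). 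A disjoint collection of nonempty open subsets of the (separable) space $\widetilde W$ is countable, so there are only countably many jump values. Finally, when $a$ is \emph{not} a jump value we get $\widetilde\gamma^+_a = \widetilde\gamma^-_a = \widetilde\gamma_a$, i.e. full two-sided continuity; and when $a$ \emph{is} a jump value, $\widetilde\gamma^+_a$ is by construction a geodesic from $a$ to $x$ lying strictly to the right of the leftmost geodesic $\widetilde\gamma_a$, hence is a non-leftmost geodesic from $a$ to $x$, which is the last assertion. I would close by translating all statements about $\widetilde\gamma$'s back down to $S$ via the (local homeomorphism) covering projection, noting uniform convergence upstairs gives uniform convergence downstairs.
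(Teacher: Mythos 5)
There is a genuine gap in your left-continuity step, which is the heart of the first assertion. The sandwich you set up is the wrong one: you record only that the limit $\wt{\gamma}$ of the $\gamma_j$ is ``weakly right of the true leftmost $\gamma$,'' which is trivially true of \emph{every} geodesic from $a$ to $x$ and carries no information, and you then try to exclude strict inequality by re-routing $\gamma$ (the leftmost geodesic from $a$) through the nearby point $a_j$, ``absorbing'' the error $d(a_j,a)$. That re-routing cannot work: a path from $a_j$ that goes (near) $a$ and then follows $\gamma$ has length at least $d(a_j,a)+$ (length of the remaining piece of $\gamma$), which strictly exceeds $d(a_j,x)=r$, and geodesics have zero slack, so the re-routed path is not a geodesic from $a_j$ to $x$ and no contradiction with the leftmostness of $\gamma_j$ is obtained. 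The same objection applies to your fallback phrase ``essentially, after small adjustment.'' A useful sanity check: your argument for this step never uses the side from which the $a_j$ approach $a$, so if it were valid it would equally show that right limits are leftmost, i.e.\ that there are no jump values at all---contradicting the last assertion of the proposition (and the actual behavior of the Brownian map, where jump values occur at countably many points).

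The correct comparison, which is the one the paper uses and which makes the step immediate, is with $\gamma$ itself rather than among the $\gamma_j$: since each $a_j$ lies to the \emph{left} of $a$ and leftmost geodesics cannot cross (if $\gamma_j$ meets a point $p$ of $\gamma$, both must continue from $p$ along the leftmost geodesic from $p$ to $x$---otherwise splicing in that geodesic would contradict leftmostness---so they merge), each $\gamma_j$ lies weakly to the \emph{left} of $\gamma$. This closed condition passes to any Arzel\'a--Ascoli subsequential limit, so $\wt{\gamma}$ is a geodesic from $a$ to $x$ lying weakly left of $\gamma$; leftmostness of $\gamma$ then forces $\wt{\gamma}=\gamma$, with no perturbation argument needed. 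Note that you do state the non-crossing principle when ordering the $\gamma_{a_j}$ among themselves, but you never apply it to compare them with $\gamma$, which is exactly the missing ingredient (and your assertion that the limit is ``weakly left of every $\wt{\gamma}_b$ with $b$ left of $a$'' has the direction reversed). The remainder of your proposal---compactness via Arzel\'a--Ascoli, existence of monotone right limits, countability of jump values via pairwise disjoint nonempty open gaps $J_a$, and the identification of the right limit at a jump value as a non-leftmost geodesic from $a$ to $x$---is correct and coincides with the paper's argument.
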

\begin{proof}
Suppose that the $(a_j)$ in $\partial \fb{x}{r}$ approach $a \in \partial \fb{x}{r}$ from the left and~$(\gamma_j)$, $\gamma$ are as in the statement.  Suppose that $(\gamma_{j_k})$ is a subsequence of $(\gamma_j)$.  It suffices to show that $(\gamma_{j_k})$ has a subsequence which converges uniformly to~$\gamma$.  The Arzel\'a-Ascoli theorem implies that $(\gamma_{j_k})$ has a subsequence which converges uniformly to some limiting path $\wt{\gamma}$ connecting $a$ to $x$.  This path is easily seen to be a geodesic connecting $a$ to $x$ which is non-strictly to the left of $\gamma$.  Since $\gamma$ is leftmost, we conclude that $\gamma = \wt{\gamma}$.  This proves the first part of the proposition.

Suppose now that the $(a_j)$ approach $a$ from the right and let $\gamma_j,\gamma$ be as in the previous paragraph.  The Arzel\'a-Ascoli theorem implies that every subsequence of $(\gamma_j)$ has a further subsequence which converges uniformly to a geodesic connecting~$a$ to~$x$.  That the limit does not depend on the subsequence follows by monotonicity.

To prove the second part of the proposition, note that each jump value $a$ is associated with the non-empty open set $J_a \subseteq \fb{x}{r}$ which is between the leftmost geodesic from~$a$ to~$x$ and the uniform limit of leftmost geodesics along any sequence $(a_j)$ approaching~$a$ from the right.  Moreover, for distinct jump values $a,a'$ we must have that $J_a \cap J_{a'} = \emptyset$.  Therefore the set of jump values is countable.
\end{proof}

As in the proof of Proposition~\ref{prop::leftmostconvergence}, if~$a$ is a jump value, we let $J_a$ denote the open set bounded between the (distinct) left and right limits described in Proposition~\ref{prop::leftmostconvergence}, both of which are geodesics from $a$ to $x$.  Recall that if $a,a'$ are distinct jump values then~$J_a$, $J_{a'}$ are disjoint.  Moreover, observe that the union of the~$J_a$ (over all jump values~$a$) is the complement of the closure of the union of all leftmost geodesics.  As the point~$a$ moves around the circle, the leftmost geodesic from $a$ to $x$ may vary continuously (as it does when $(S,d)$ is a Euclidean sphere) but it may also have countably many times when it ``jumps'' over an open set~$J_a$ (as is a.s.\ the case when $(S,d, \nu)$ is an instance of  the Brownian map, see Section~\ref{sec::brownianmap}).

We next need to say a few words about ``cutting'' geodesic metric spheres along curves and/or ``welding'' closed geodesic metric disks together. Before we do this, let us consider the general question of what it means to take a quotient of a metric space w.r.t.\ an equivalence relation (see \cite[Chapter 3]{courseinmetricgeometry} for more discussion on this point). Given any metric space $(S,d)$ and any equivalence relation $\cong$ on $S$, one may define a distance function $\ol{d}$ between equivalence classes of $\cong$ as follows: if $a$ and $b$ are representatives of distinct equivalence classes, take $\ol{d}(a,b)$ to be the infimum, over even-length sequences $a = x_0, x_1, x_2, \ldots, x_{2k} = b$ with the property that $x_m \cong x_{m+1}$ for odd $m$, of the sum
\[ \sum_{m=0}^{k-1} d(x_{2m}, x_{2m+1}).\]
This $\ol{d}$ is {\em a priori} only a pseudometric on the set of equivalence classes of $\cong$ (i.e., it may be zero for some distinct $a$ and $b$). However, it defines a metric on the set of equivalence classes of $\cong^*$ where $a \cong^* b$ whenever $\ol{d}(a,b) = 0$. It is not hard to see that $\ol{d}$ is the largest pseudometric such that $\ol{d}(a,b) \leq d(a,b)$ for all $a,b$ and $d(a,b) = 0$ when $a \cong b$. The procedure described above is what we generally have in mind when we speaking of taking a quotient of a metric space w.r.t.\ an equivalence relation.

Now let us ask what happens if a geodesic metric sphere is {\em cut} along a simple loop~$\Gamma$, to produce two disks. Note that on each disk, there is an {\em interior-internal metric}, where the distance between points~$a$ and~$b$ is defined to be the length of the shortest path that stays entirely within the given disk. This distance is clearly finite when~$a$ and~$b$ are in the interior of the disk. (This can be deduced by taking a closed path from~$a$ to~$b$ bounded away from the disk boundary, covering it with open metric balls bounded away from the disk boundary, and taking a finite subcover.) However, when either~$a$ or~$b$ is on the boundary of the disk, it is not hard to see that (if the simple curve is windy enough) it could be infinite.

Let us now ask a converse question. What happens when we take the two metric disks and try to ``glue them together'' to recover the sphere? We can clearly recover the sphere as a topological space, but what about the metric? Before we address that point, note there is always {\em one} way to glue the disks back together to create a new metric space: namely, we may consider the disjoint union of the pair of disks to be a common metric space (with the distance between points on distinct disks formally set to be infinity) and then take a metric quotient (in the sense discussed above) w.r.t.\ the equivalence relation that identifies the boundary arcs.  This can be understood as the largest metric compatible with the boundary identification. In this metric, the distance between~$a$ and~$b$ is the infimum of the lengths (in the original metric) of paths from~$a$ to~$b$ that only cross~$\Gamma$ finitely many times. However, one can actually construct a geodesic metric sphere with a closed curve~$\Gamma$ and points~$a$ and~$b$ such that the shortest path from~$a$ to~$b$ that crosses~$\Gamma$ finitely many times is {\em longer} than the shortest path overall.\footnote{For example, consider the ordinary Euclidean metric sphere and let $\Gamma$ be the equator curve.  The equator comes with a Lebesgue length measure; let $A$ be a closed positive-Lebesgue-measure subset of the equator whose complement is dense within the equator. Let $d$ be the largest metric on $\C$ for which the $d$ length of any rectifiable path is the Euclidean length of the portion of that path that does not lie in $A$. (Informally, $d$ is the ordinary Euclidean metric modified so that there is no cost for travel within $A$.) Then $d$ is topologically equivalent to the original metric; but any path between points $a$ and $b$ that intersects $\Gamma$ only finitely many times will have the same length in both metrics, despite the fact that the distance between $a$ and $b$ may be different in the two metrics.} In other word, there are situations where cutting a metric sphere into two disks and gluing the disks back together (using the quotient procedure described above) does not reproduce the original sphere.

On the other hand, it is easy to see that this type of pathology does not arise if~$\Gamma$ is a curve comprised of a finite number of geodesic arcs, since one can easily find a geodesic~$\gamma$ between any points $a$ and $b$ that crosses no geodesic arc of~$\Gamma$ more than once. (If it crosses an arc multiple times, one may replace the portion of~$\gamma$ between the first and last hitting times by a portion of the arc itself.) The same applies if one has a disk cut into two pieces using a finite sequence of geodesic arcs.  This is an important point, since in this paper we will frequently need to glue together disk-homeomorphic ``slices'' whose boundaries are geodesic curves. The following proposition formalizes one example of such a statement.

\begin{proposition}
\label{prop::recover_sphere}
Suppose that $(S,d,x,y)$ is a doubly marked geodesic metric space which is homeomorphic to $\S^2$.  Suppose that $\gamma_1,\gamma_2$ are distinct geodesics which connect $x$ to $y$ and that $S \setminus (\gamma_1 \cup \gamma_2)$ has two components $U_1,U_2$.  For $j=1,2$, let $x_j$ (resp.\ $y_j$) be the first (resp.\ last) point on $\partial U_j$ visited by $\gamma_1$ (or equivalently by $\gamma_2$).  We then let $(U_j,d_j,x_j,y_j)$ be the doubly marked metric space where $d_j$ is given by the interior-internal metric induced by $d$ on $U_j$.  Let $\wt{S}$ be given by the disjoint union of $\ol{U}_1$ and $\ol{U}_2$ and let $\wt{d}$ be the distance on $\wt{S}$ which is defined by $\wt{d}(a,b) = d_j(a,b)$ if $a,b \in \ol{U}_j$ for some $j=1,2$, otherwise $\wt{d}(a,b) = \infty$.  We then define an equivalence relation $\cong$ on $\wt{S}$ by declaring that $a \cong b$ if either $a=b$ or if $a \in \partial U_1$ corresponds to the same point $b \in \partial U_2$ in $S$.  Let $\ol{d}$ be the largest metric compatible with $\wt{S}/\cong$.  Then $\ol{d} = d$.  That is, the metric gluing of the $(U_j,d_j,x_j,y_j)$ along their boundaries gives $(S,d,x,y)$.
\end{proposition}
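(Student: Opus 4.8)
The plan is to identify $\wt S/\cong$ with $S$ as a set and then prove the two inequalities $\ol d\ge d$ and $\ol d\le d$ separately. For the set identification: the inclusions $\ol U_j\hookrightarrow S$ induce a bijection $\wt S/\cong\;\to\;S$, since $S=\ol U_1\cup\ol U_2$ and $\cong$ identifies precisely the pairs of points of $\ol U_1\sqcup\ol U_2$ with a common image in $S$. Under this identification it suffices to prove $\ol d=d$ on $S\times S$; the marked points then take care of themselves, the two marked points of $\wt S/\cong$ being the images of $x_1\cong x_2$ and of $y_1\cong y_2$, i.e.\ $x$ and $y$.

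The inequality $\ol d\ge d$ is essentially free. Regard $d$, pulled back to $\wt S$, as a pseudometric; then $d\le\wt d$ everywhere — within a single $\ol U_j$ because the internal metric $d_j$ dominates $d$, and across the two disks because there $\wt d=\infty$ — and $d(a,b)=0$ whenever $a\cong b$. Since $\ol d$ is by construction the \emph{largest} pseudometric with these two properties, $d\le\ol d$. (Equivalently, run the chain definition directly: along an admissible chain $a=x_0,\dots,x_{2k}=b$ the triangle inequality in $(S,d)$ together with $d(x_{2m+1},x_{2m+2})=0$ gives $d(a,b)\le\sum_{m=0}^{k-1}d(x_{2m},x_{2m+1})\le\sum_{m=0}^{k-1}\wt d(x_{2m},x_{2m+1})$, and one infimizes over chains.)

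The substance is $\ol d\le d$, and this is where I would use that $\gamma_1,\gamma_2$ are globally length-minimizing. Fix $a,b\in S$ and a $d$-geodesic $\gamma\colon[0,L]\to S$ between them, $L=d(a,b)$. I would first show that $\gamma$ can be replaced, keeping its endpoints and its length, by a path $\gamma^\ast$ for which $[0,L]$ breaks into finitely many consecutive closed intervals $I_1,\dots,I_N$ (overlapping only at their endpoints) with each $\gamma^\ast(I_m)$ contained in a single $\ol U_{j_m}$, $j_m\in\{1,2\}$. Any junction $\gamma^\ast(I_m\cap I_{m+1})$ then lies in $\ol U_{j_m}\cap\ol U_{j_{m+1}}$, so its two relevant copies in $\wt S$ are $\cong$-identified. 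Granting this, put $c_0=a$, $c_N=b$, and $c_m=\gamma^\ast(I_m\cap I_{m+1})$ for $0<m<N$, each interior $c_m$ viewed in $\ol U_{j_m}$ for the segment ending there and in $\ol U_{j_{m+1}}$ for the one starting there. Since $\gamma^\ast|_{I_m}$ is a path inside $\ol U_{j_m}$ from $c_{m-1}$ to $c_m$, we have $d_{j_m}(c_{m-1},c_m)\le\length(\gamma^\ast|_{I_m})$; concatenating these with the $\cong$-identifications of the two copies of each $c_m$ (padding with trivial steps $c\cong c$ to match the parity required in the definition of $\ol d$) yields a chain showing
\[ \ol d(a,b)\;\le\;\sum_{m=1}^{N}d_{j_m}(c_{m-1},c_m)\;\le\;\sum_{m=1}^{N}\length\big(\gamma^\ast|_{I_m}\big)\;=\;L\;=\;d(a,b), \]
and infimizing over geodesics gives $\ol d\le d$ on $S\times S$.

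The main obstacle is the existence of such a $\gamma^\ast$ — forcing a geodesic to interact with $\gamma_1\cup\gamma_2$ only finitely often. The mechanism is the surgery already described just before the proposition: whenever $\gamma$ revisits one of the geodesics $\gamma_i$, say $\gamma(t_1),\gamma(t_2)\in\gamma_i$ with $t_1<t_2$, one swaps $\gamma|_{[t_1,t_2]}$ for the sub-arc of $\gamma_i$ from $\gamma(t_1)$ to $\gamma(t_2)$; this preserves length exactly, since $d(\gamma(t_1),\gamma(t_2))=t_2-t_1$ and $\gamma_i$ is length-minimizing. I would apply this first to $\gamma_1$, replacing the portion of $\gamma$ between its first and last visits to $\gamma_1$ so that $\gamma\cap\gamma_1$ becomes connected, and then to $\gamma_2$ on each of the (at most two) complementary pieces. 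The delicate point is that $\gamma_1$ and $\gamma_2$ may overlap and share the endpoints $x,y$, so one must check that these reroutings do not reintroduce infinitely many alternations between $U_1$ and $U_2$; I expect this to reduce to a short finiteness argument showing that afterwards $\gamma$ meets $\gamma_1\cup\gamma_2$ in finitely many connected pieces — on the complement of which $\gamma$ sits in a single $U_j$ by connectedness, while each such piece, being contained in $\gamma_1\cup\gamma_2$, can itself be cut into finitely many arcs each lying in $\ol U_1$ or $\ol U_2$. The points $x,y$, and the case where $a$ or $b$ itself lies on $\gamma_1\cup\gamma_2$, add only finitely many further cut points and are routine bookkeeping. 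This produces $\gamma^\ast$ and, with the first two paragraphs, finishes the proof that $\ol d=d$ (carrying the marked points along).
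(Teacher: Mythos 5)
Your argument is essentially the paper's: the paper gives no formal proof of this proposition, only the remark immediately preceding it (reroute a geodesic along sub-arcs of the $\gamma_i$ between its first and last hits, so that it interacts with each geodesic arc only along a connected piece, then read off the chain bound for the quotient pseudometric), and your two inequalities plus the surgery are exactly that argument spelled out. The one point you leave as "I expect this to reduce to a short finiteness argument" is genuine but true, and it closes more cleanly than by tracking the surgeries: if $p\in\gamma_1\cap\gamma_2$ then $d(x,p)+d(p,y)=d(x,y)$ forces $p=\gamma_1(d(x,p))=\gamma_2(d(x,p))$, so the coincidence set of the two geodesics is a common set of parameter times $T\subseteq[0,d(x,y)]$; for each complementary interval of $T$ the two geodesic segments meet only at their endpoints and form a Jordan curve whose side not meeting the rest of $\gamma_1\cup\gamma_2$ is an entire component of $S\setminus(\gamma_1\cup\gamma_2)$, so the hypothesis of exactly two components forces exactly one such "bubble," with the two geodesics coinciding along an $x$-tail and a $y$-tail outside it. Every point of the bubble lies in $\ol U_1\cap\ol U_2$ (each side of the Jordan curve contains a dense subset of one of the $U_j$), while each tail lies in the closure of the single component on its side; hence any sub-arc of $\gamma_1$ or $\gamma_2$ splits into at most three pieces, each contained in one $\ol U_j$, and your surgered path decomposes into finitely many admissible pieces with no delicate bookkeeping about reintroduced alternations. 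With that observation your proof is complete (modulo reading $d_j$ as the internal metric of $\ol U_j$, i.e.\ allowing paths along the boundary, which is what both you and the statement implicitly use).
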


For future reference, let us remark that another instance where this pathology will not arise is when $(S,d,x)$ is an instance of a Brownian map with a marked point $x$ and $\Gamma$ is the boundary of a filled metric ball centered at $x$. In that case, the definition of~$d$ given in Section~\ref{subsec::spheresassymetric} will imply that the length of the shortest path between points $a$ and $b$ is the infimum over the lengths of paths comprised of finitely many arcs, each of which is a segment of a geodesic from some point to $x$.  By definition, such a path clearly only crosses $\Gamma$ finitely many times. Note that the two situations discussed above (cutting along geodesics and along boundaries of filled metric balls) are precisely those that are needed to make sense of the statements in Theorem~\ref{thm::markovmapcharacterization}.

In this article we will not rule out the possibility that the interior-internal metric associated with $S \setminus \fb{x}{r}$ defines an infinite diameter metric space. ({\it Update:} This has been subsequently ruled out in the works \cite{bettinelli_miermont_disks,lg2019disksnake}.  Indeed, \cite{bettinelli_miermont_disks} gives a Brownian snake construction of the Brownian disk from which it is immediate that the Brownian disk has finite diameter and \cite{lg2019disksnake} shows that this definition is equivalent to the filled metric ball complement considered here.) Let us note, however, that one can recover the entire collection of geodesics back to~$x$ (hence~$d$) from the interior-internal metrics associated with $S \setminus \fb{x}{r}$ and~$\fb{x}{r}$.  In particular, if $z \in S \setminus \fb{x}{r}$ then by the very definition of $\fb{x}{r}$ we have that the distance between~$z$ and $\partial \fb{x}{r}$ is finite and given by $d(x,z)-r$.  Moreover, the shortest paths from $z$ to $\partial \fb{x}{r}$ in $S \setminus \fb{x}{r}$ comprise of the initial $(d(x,z)-r)$-length segments of the geodesics from~$z$ to~$x$.  It is clearly the case that the remaining $r$-length segments of the geodesics from~$z$ to~$x$ are contained in~$\fb{x}{r}$.  

{\it Update:} Pathologies of the aforementioned type were ruled out in other settings for natural gluing operations one can perform for Brownian and $\sqrt{8/3}$-LQG surfaces in \cite{gwynne-miller:gluing}, which together with \cite{gwynne-miller:saw,gwynne-miller:uihpq} has led to a proof that the self-avoiding walk on random quadrangulations converges to $\SLE_{8/3}$ on $\sqrt{8/3}$-LQG.

\subsection{A consequence of slice independence/scale invariance}
\label{subsec::slice_independence}

At the end of Section~\ref{subsec::discreteintuition}, the measure $\muml$ is informally described, along with a notion of ``slice independence'' one might expect such a measure to satisfy. Although we have not given a formal description of $\muml$ yet, we can observe now some properties we would expect this measure to have. For concreteness, let us assume that $L=1$ and that a point on the boundary is fixed, so that the boundary of a sample from $\muml$ can be identified with the interval $[0,1]$.  We ``cut'' along the leftmost geodesic from $0$ to $x$ and view a sample from $\muml$ as a ``triangular slice'' with one side identified with $[0,1]$ and the other two sides forming geodesics of the same length (one from $0$ to $x$ and one from $1$ to $x$).

We define $\wt{d}(a,b)$ to be the distance from the boundary at which the leftmost geodesic from $a$ to $x$ and the leftmost geodesic from $b$ to $x$ merge.  Now, no matter what space and $\sigma$-algebra $\muml$ is defined on, we would expect that if we restrict to rational values of $a$ and $b$, then the $\wt{d}(a,b)$ should be a countable collection of real-valued random variables. Before we even think about $\sigma$-algebras on $\mmspace$ or $\gmsspace$, we can answer a more basic question. What would ``slice independence'' and ``scale invariance'' assumptions tell us about the joint law of these random variables $\wt{d}(a,b)$? The following proposition formalizes what we mean by scale invariance and slice independence, and shows that in fact these properties characterize the joint law of the random variables $\wt{d}(a,b)$ up to a single real parameter.  As we will see in the proof of Theorem~\ref{thm::markovmapcharacterization}, this will allow us to deduce that the metric net associated with a space which satisfies the hypotheses of Theorem~\ref{thm::markovmapcharacterization} is related to the so-called L\'evy net introduced in Section~\ref{sec::surfacesfromtrees} below.

\begin{proposition} 
\label{prop::poissonslicestructure}
Consider a random function $\wt{d}$ defined on all pairs $(a,b) \in (\Q \cap [0,1])^2$ such that
\begin{enumerate}
\item $\wt{d}(a,b) = \wt{d}(b,a)$ for all $a,b \in \Q \cap [0,1]$
\item If $a,b,c,d \in \Q \cap [0,1]$ with $a<b$ and $c<d$ then $\wt{d}(a,b)$ and $\wt{d}(c,d)$ are independent provided that $(a,b)$ and $(c,d)$ are disjoint.
\item $\wt{d}(a,a) = 0$ a.s.\ for all $a \in \Q \cap [0,1]$
\item If $a<b<c$ are in $\Q \cap [0,1]$ then $\wt{d}(a,c) = \max\bigl( \wt{d}(a,b), \wt{d}(b,c) \bigr)$.
\item The law of $\wt{d}(a,b)$ depends only on $|b-a|$.  In fact, there is some $\beta$ so that for any $a$ and $b$ the law of $\wt{d}(a,b)$ is equivalent to the law of $|a-b|^{\beta} \wt{d}(0,1)$.
\end{enumerate}
Then the law of $\wt{d}(a,b)$ has a particular form. Precisely, one can construct a sample from this law as follows.  First choose a collection of pairs $(s,x)$ as a Poisson point process on $[0,1] \times \R_+$ with intensity $ds \otimes x^{\alpha}dx$ where $\alpha = -1/\beta - 1$ and $ds$ (resp.\ $dx$) denotes Lebesgue measure on $[0,1]$ (resp.\ $\R_+$). Then define $\wt{d}(a,b)$ to be the largest value of $x$ such that $(s,x)$ is a point in this point process for some $s \in (a,b)$.
\end{proposition}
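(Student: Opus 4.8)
The plan is to determine first the one-dimensional laws $\mathbf{P}[\wt d(a,b)\le t]$, then to observe that properties (ii) and (iv) force the whole random function to be a deterministic functional of a family of independent ``consecutive'' variables, and finally to check directly that the Poisson point process recipe reproduces exactly this law.

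For the one-dimensional laws, set $F(t)=\mathbf{P}[\wt d(0,1)\le t]$, so that by (v) we have $\mathbf{P}[\wt d(a,b)\le t]=F\big(t/(b-a)^{\beta}\big)$. Given $a<b<c$, the open intervals $(a,b)$ and $(b,c)$ are disjoint, so (ii) gives $\wt d(a,b)\perp\wt d(b,c)$, and then (iv) yields $\mathbf{P}[\wt d(a,c)\le t]=\mathbf{P}[\wt d(a,b)\le t]\,\mathbf{P}[\wt d(b,c)\le t]$. Writing $u=c-a$ and $v=b-a$ this reads
\[
F\!\left(t/u^{\beta}\right)=F\!\left(t/v^{\beta}\right)\,F\!\left(t/(u-v)^{\beta}\right),\qquad 0<v<u.
\]
For fixed $t$, the function $H_t(\ell):=-\log F\big(t/\ell^{\beta}\big)$ then satisfies the additive relation $H_t(u)=H_t(v)+H_t(u-v)$ for all $0<v<u$, and since $F$ is a CDF the function $H_t$ is monotone; hence $H_t(\ell)=c(t)\ell$ for some $c(t)\ge0$. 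Substituting back (and using that $\ell\mapsto t/\ell^{\beta}$ is a bijection of $(0,\infty)$) forces $c(t)t^{1/\beta}$ to be a constant $\kappa$, so $F(s)=\exp(-\kappa s^{-1/\beta})$; for $F$ to be a genuine CDF on $\mathbf{R}_+$ (with $F(\infty)=1>F(0^+)=0$) one needs $\beta>0$, equivalently $\alpha:=-1/\beta-1<-1$. Thus $\mathbf{P}[\wt d(a,b)\le t]=\exp\big(-\kappa(b-a)t^{-1/\beta}\big)$.

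Next, iterating (iv) shows that for any rationals $0\le a_0<a_1<\dots<a_m\le1$ and any $i<j$ one has $\wt d(a_i,a_j)=\max_{i\le k<j}\wt d(a_k,a_{k+1})$, so the entire array $(\wt d(a_i,a_j))$ is a fixed function of the consecutive increments $X_k:=\wt d(a_{k-1},a_k)$, which by (ii) are independent with $\mathbf{P}[X_k\le t]=\exp(-\kappa(a_k-a_{k-1})t^{-1/\beta})$. Since $\wt d$ lives on the countable set $(\mathbf{Q}\cap[0,1])^2$, these finite-dimensional laws (ranging over all finite rational partitions) determine the law of $\wt d$ completely. Finally, if $\Pi$ is a Poisson point process on $[0,1]\times\mathbf{R}_+$ with intensity $ds\otimes x^{\alpha}dx$ and one sets $\wt d(a,b):=\max\{x:(s,x)\in\Pi,\ s\in(a,b)\}$, then: restrictions of $\Pi$ to disjoint vertical strips are independent (giving (i)--(iii) and independence of the $X_k$); the maximum over a union of disjoint strips is the max of the individual maxima (giving (iv)); and $\mathbf{P}[\wt d(a,b)\le t]=\exp\big(-(b-a)\int_t^{\infty}x^{\alpha}dx\big)=\exp(-(b-a)\beta t^{-1/\beta})$, which matches the laws above (and (v)) once $\kappa=\beta$. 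Since $\kappa$ is not otherwise constrained by (i)--(v) (only the ratio structure is), one normalizes $\wt d$ — equivalently rescales $x$ — so that $\kappa=\beta$; this is harmless and is what the statement tacitly does.

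The two reductions are essentially bookkeeping, so the step that deserves care is the functional-equation analysis: justifying the passage from the additive relation for $H_t$ to linearity using only monotonicity of a CDF (with no a priori continuity), and keeping track of the constant $\kappa$ together with the sign constraint $\beta>0$ — the latter being exactly what makes $x^{\alpha}dx$ integrable near $\infty$ so that the Poisson maximum over a strip is almost surely finite. Once these are in hand, the identification with the Poisson point process is immediate from the independent-increments description furnished by (iv).
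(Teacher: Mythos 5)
Your proof is correct and takes essentially the same route as the paper: the paper restricts attention to the grids $\{0,1/k,\ldots,1\}$ and invokes Lemma~\ref{lem::stabilityundersuprema}, whose proof is exactly your functional-equation computation (solving $F(k^\beta s)^k=F(s)$ via density and monotonicity of the CDF to get $F(t)=\exp(-\kappa t^{-1/\beta})$, ``up to a multiplicative constant''---the same normalization $\kappa$ you handle explicitly). Your only departures are presentational: you split intervals into two unequal pieces rather than $k$ equal ones, and you spell out the matching of finite-dimensional laws on rational partitions with the Poisson construction (including the integrability constraint $\beta>0$, i.e.\ $\alpha<-1$), steps the paper leaves implicit.
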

\begin{proof}
The lemma statement describes two ways of choosing a random $\wt{d}$ and asserts that the two laws agree.  It is immediate from Lemma~\ref{lem::stabilityundersuprema} (stated and proved just below) that the laws agree when one restricts attention to $\{0,1/k,2/k,\ldots, 1\}^2$, for any $k \in \N$.  Since this holds for all $k$, the result follows. 
\end{proof}

\begin{lemma}
\label{lem::stabilityundersuprema}
Suppose for some $\beta>0$, a real-valued random variable $A$ has the following property.  When $A_1, A_2, \ldots, A_k$ are i.i.d.\ copies of $A$, the law of $k^{-\beta} \max_{1 \leq i \leq k} A_i$ is the same as the law of $A$.  Then $A$ agrees in law (up to some multiplicative constant) with the size of the maximum element of a Poisson point process chosen from the infinite measure $x^{\alpha} dx$, where $\alpha=  -1/\beta-1$ and $dx$ denotes Lebesgue measure on $\R_+$.
\end{lemma}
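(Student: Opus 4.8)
The plan is to reduce everything to the distribution function. Let $F(t) = \P[A \le t]$. The key observation is that for i.i.d. copies $A_1,\dots,A_k$, the variable $\max_i A_i$ has distribution function $F(t)^k$, so the hypothesis that $k^{-\beta}\max_i A_i$ has the same law as $A$ translates into the functional equation
\[
F(t)^k = F(k^{\beta} t) \qquad \text{for all } k \in \N \text{ and all } t.
\]
First I would argue that $A \ge 0$ a.s. (if $A$ could be negative with positive probability, then $\max$ of many copies would still be negative with positive probability, but $k^{-\beta}$ times it would be pushed toward $0$, forcing a contradiction with scale matching — more cleanly, $F(0)^k = F(0)$ for all $k$ forces $F(0) \in \{0,1\}$, and $F(0)=1$ would make $A$ degenerate at a nonpositive value, incompatible with the scaling unless $A \equiv 0$, a trivial case one can set aside). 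So we may assume $F$ is supported on $\R_+$. Next I would extend the functional equation from integer $k$ to all positive rationals and then, using monotonicity of $F$, to all real $\lambda > 0$: writing $G(t) = -\log F(t)$ (valued in $[0,\infty]$), the relation becomes $G(\lambda^\beta t) = \lambda \, G(t)$, i.e. $G$ is a homogeneous function of degree $1/\beta$. Hence $G(t) = c\, t^{-1/\beta}$ for some constant $c \ge 0$ on $(0,\infty)$ — here I should be slightly careful about whether $G$ could be identically $0$ or $+\infty$ on part of $\R_+$, but homogeneity plus monotonicity rules out anything but the pure power law. Wait — $G(t) = -\log F(t)$ is decreasing in $t$, and homogeneity of positive degree with the correct sign forces exactly $G(t) = c t^{-1/\beta}$. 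Thus $F(t) = \exp(-c\, t^{-1/\beta})$ for $t>0$.

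Then I would identify this as a Fréchet-type law and match it to the Poisson computation. If $\Pi$ is a Poisson point process on $\R_+$ with intensity $x^\alpha\,dx$ where $\alpha = -1/\beta - 1$, then for $t > 0$ the number of points of $\Pi$ in $(t,\infty)$ is Poisson with mean $\int_t^\infty x^\alpha\,dx = \frac{t^{\alpha+1}}{-(\alpha+1)} = \beta\, t^{-1/\beta}$ (finite precisely because $\alpha < -1$). Hence $\P[\max \Pi \le t] = \exp(-\beta\, t^{-1/\beta})$, which has exactly the same form as $F$ above with $c = \beta$. For a general $c$, one rescales: $\max \Pi$ under intensity $x^\alpha\,dx$ and a deterministic scaling $x \mapsto \kappa x$ still gives a law of the same type with a different constant, and one can choose $\kappa$ so that the constants match. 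This is the sense in which $A$ agrees in law with $\max\Pi$ up to a multiplicative constant.

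The main obstacle — really the only non-routine point — is the passage from the discrete functional equation $F(t)^k = F(k^\beta t)$ for integer $k$ to the continuous homogeneity $G(\lambda^\beta t) = \lambda G(t)$ for all real $\lambda$, together with the deduction that the only monotone solution is the pure power. I would handle this by first getting rational $\lambda = p/q$ via $F(p^\beta s)^q = F((p/q)^\beta \cdot q^\beta s)^q$-type manipulations applied to $s = t/q^\beta$, i.e. iterating the relation in both $p$ and $q$; then density of the rationals and monotonicity of $F$ upgrade this to all real $\lambda > 0$; finally a standard argument (e.g. reduce to Cauchy's equation for $\log G(e^u)$ as a function of $u$, which is monotone hence linear) pins down the power. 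I would also flag the degenerate cases ($A \equiv 0$, or $F$ supported at a single atom) and note they are consistent with, or excluded from, the statement in the obvious way.
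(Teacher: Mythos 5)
Your route is the same as the paper's: pass to the distribution function, derive a scaling relation for $F$, extend it from integers to a dense set of scaling factors and then (by monotonicity) to all of $\R_+$, conclude $F(t)=\exp(-c\,t^{-1/\beta})$, and match this with the maximum of the Poisson process (the paper simply cites an exercise in Sato for that last identification, whereas you compute the tail $\int_t^\infty x^{\alpha}\,dx=\beta\,t^{-1/\beta}$ explicitly, which is fine). The one thing to fix is the direction of your functional equation: from $k^{-\beta}\max_i A_i \stackrel{d}{=} A$ one gets $F(t)=\P[\max_i A_i\le k^{\beta}t]=F(k^{\beta}t)^k$, i.e.\ $F(k^{\beta}t)=F(t)^{1/k}$, so with $G=-\log F$ the relation is $G(\lambda^{\beta}t)=\lambda^{-1}G(t)$, which indeed yields $G(t)=c\,t^{-1/\beta}$. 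Your displayed equation $F(t)^k=F(k^{\beta}t)$, equivalently $G(\lambda^{\beta}t)=\lambda\,G(t)$, is the reciprocal relation; taken literally it would force $G(t)=c\,t^{+1/\beta}$, which is incompatible with $F$ being nondecreasing except in the degenerate case, and your ``Wait --- \dots'' remark asserts the correct power rather than deriving it. Once the exponent $1/k$ is put in the right place the rest of your argument (nonnegativity of $A$, rational-to-real extension, the Fr\'echet form, and the rescaling to match the constant $\beta$) goes through and coincides with the paper's proof.
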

\begin{proof}
Let $F$ be the cumulative distribution function of $A$, so that $F(s) = \p[A \leq s]$.  Then
\[ F(s) = \p[ A \leq s] = \p[k^{-\beta} A \leq s]^k = F(k^\beta s)^k.\]
Thus $F(k^\beta s) = F(s)^{1/k}$.  Set $r =k^\beta$ so that $1/k = r^{-1/\beta}$.  Then when $r$ has this form we have $F(r s) = F(s)^{1/k} = F(s)^{r^{-1/\beta}}$.  Applying this twice allows us to draw the same conclusion when $r = k_1^\beta/ k_2^\beta$ for rational $k = k_1/k_2$, i.e., for all values $r$ which are a $\beta$th power of a rational.  Since this is a dense set, we can conclude that in general, if we set $e^t = F(1)$, we have
\begin{equation}
\label{eqn::a_dist_function}
F(r) = e^{t r^{-1/\beta}}.
\end{equation}
It is then straightforward to see that this implies that (up to a multiplicative constant) $A$ has the same law as the Poisson point process maximum described in the lemma statement.  (See, e.g., \cite[Exercise~22.4]{satolevyprocesses}.)
\end{proof}

\subsection{A $\sigma$-algebra on the space of metric measure spaces}
\label{subsec::mmsigma}

We present here a few general facts about measurability and metric spaces, following up on the discussion in Section~\ref{subsec::outline}. Most of the basic information we need about the Gromov-Prohorov metric and the Gromov-weak topology can be found in \cite{grevenpfaffelhuberwinter}. Other related material can be founded in the metric geometry text by Burago, Burago, and Ivanov \cite{courseinmetricgeometry}, as well as Villani's book \cite[Chapters 27-28]{villanibook}.

As in Section~\ref{subsec::outline}, let $\mmspace$ denote the space of metric measure spaces, defined modulo a.e.\ defined measure preserving isometry. Suppose that $(S, d, \nu)\in \mmspace$. If we choose points $x_1, x_2, \ldots, x_k$ i.i.d.\ from $\nu$, then we obtain a $k \times k$ matrix of distances $d_{ij} = d(x_i, x_j)$ indexed by $i,j \in \{1,2,\ldots,k\}$.  Denote this matrix by $M_k= M_k(S,d,\nu)$.

If $\psi$ is any fixed bounded continuous function on $\R^{k^2}$, then the map
\[ (S,d,\nu) \to \E_\nu [\psi(M_k)]\]
is a real-valued function on $\mmspace$.  The Gromov-weak topology is defined to be the weakest topology w.r.t.\ which the functions of this type are continuous.  In other words, a sequence of elements of $\mmspace$ converge in this topology if and only if the laws of the corresponding $M_k$ (understood as measures on $\R^{k^2}$) converge weakly for each $k$.  We denote by $\mmsigma$ the Borel $\sigma$-algebra generated by this topology.  Since we would like to be able to sample marked points from $\nu$ and understand their distances from each other, we feel comfortable saying that $\mmsigma$ is the weakest ``reasonable'' $\sigma$-algebra we could consider.  We will sometimes abuse notation and use $(\gmsspace, \mmsigma)$ to denote a measure space, where in this context $\mmsigma$ is understood to refer to the intersection of $\mmsigma$ with the set of subsets of $\gmsspace$. (We will apply a similar notational abuse to the ``marked'' analogs $\mmspace^k$, $\gmsspace^k$, and $\mmsigma^k$ introduced below.)
   
It turns out that the Gromov-weak topology can be generated by various natural metrics that make $\mmspace$ a complete separable metric space: the so-called Gromov-Prohorov metric and the Gromov-$\underline \Box_1$ metric \cite{grevenpfaffelhuberwinter, gromovprohorovandgromovbox}.  Thus, $(\mmspace, \mmsigma)$ is a {\em standard Borel space} (i.e., a measure space whose $\sigma$-algebra is the Borel $\sigma$-algebra of a topology generated by a metric that makes the space complete and metrizable).  We do not need to discuss the details of these metrics here. We bring them up in order to show that $(\mmspace, \mmsigma)$ is a standard Borel space. One useful consequence of the fact that $(\mmspace, \mmsigma)$ is a standard Borel space is that if $\CG$ is any sub-$\sigma$-algebra of $\mmsigma$, then the regular conditional probability of a random variable, conditioned on $\CG$, is well-defined \cite[Chapter~5.1.3]{durrettprobtext}. 

We can also consider {\em marked} spaces; one may let $\mmspace^k$ denote the set of tuples of the form $(S,d, \nu, x_1,x_2, \ldots, x_k) $ where $(S,d,\nu) \in \mmspace$ and $x_1, x_2, \ldots, x_k$ are elements (``marked points'') of $S$.  Given such a space, one may sample additional points $x_{k+1}, x_{k+2}, \ldots, x_m$ i.i.d.\ from $\nu$ and consider the random matrix $M_m$ of distances between the $x_i$. One may again define a Gromov-weak topology on the marked space to be the weakest topology w.r.t.\ which expectations of bounded continuous functions of $M_m$ are continuous. We let $\mmsigma^k$ denote the Borel $\sigma$-algebra of the marked space. Clearly for any $m > k$ one has a measurable map $\mmspace^m \to \mmspace^k$ that corresponds to ``forgetting'' the last $m-k$ points. One can similarly define $\mmspace^\infty$ to be the space of $(S, d, \nu, x_1, x_2, \ldots)$ with an $x_j$ defined for all positive integer $j$. The argument that these spaces are standard Borel is essentially the same as in the case without marked points. One immediate consequence of the definition of the Gromov-weak topology is the following:

\begin{proposition}
\label{prop::randomapproximatemetricsconverge}
Fix $(S,d,\nu) \in \mmspace$ with $\nu(S) = 1$. Let $x_1, x_2, \ldots$ be i.i.d.\ samples from $\nu$.  Let $(S_{m}, d_{m}, \nu_{m})$ be defined by taking $S_{m} = \{x_1, x_2, \ldots, x_{m} \}$, letting $d_{m}$ be the restriction of $d$ to this set, and letting $\nu_{m}$ assign mass $1/m$ to each element of $S_{m}$. Then $(S_{m}, d_{m}, \nu_{m})$ converges to $(S, d, \nu)$ a.s.\ in the Gromov-weak topology.  A similar statement holds for marked spaces.  If $k < m$ and $(S,d,\nu, x_1, x_2, \ldots, x_{k}) \in \mmspace^{k}$ then one may choose $x_{k+1}, x_{k+2}, \ldots, x_{m}$ i.i.d.\ and consider the discrete metric on $\{x_1, \ldots, x_{m} \}$ with uniform measure, and $x_1,\ldots, x_{k}$ marked. Then these approximations converge a.s.\ to $(S,d,\nu,x_1,\ldots,x_{k})$ in the Gromov-weak topology on $\mmspace^{k}$.
\end{proposition}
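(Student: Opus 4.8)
The plan is to reduce everything to the characterization of the Gromov-weak topology recalled just above: $(S_k,d_k,\nu_k)\to(S,d,\nu)$ holds precisely when, for every $m\ge 1$, the law of the random distance matrix $M_m(S_k,d_k,\nu_k)$ converges weakly on $\R^{m^2}$ to the law of $M_m(S,d,\nu)$. Thus it suffices to prove this weak convergence almost surely. To set up the argument, I would interpret $\nu_k$ as the empirical measure $\mu_k:=\tfrac1k\sum_{i=1}^k\delta_{x_i}$ on $(S,d)$ — this agrees with the prescription in the proposition whenever the $x_i$ are distinct, in particular almost surely when $\nu$ has no atoms, and recording the pairwise $d$-distances of sampled points is insensitive to how mass is bookkept on coincidences. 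Let $D_m\colon S^m\to\R^{m^2}$ be the map $(z_1,\ldots,z_m)\mapsto(d(z_a,z_b))_{1\le a,b\le m}$; it is continuous because $d$ is continuous on $S\times S$. Then, conditionally on the sequence $(x_i)$, the law of $M_m(S_k,d_k,\nu_k)$ is exactly the pushforward $(D_m)_\ast(\mu_k^{\otimes m})$, while the law of $M_m(S,d,\nu)$ is $(D_m)_\ast(\nu^{\otimes m})$.

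Next I would invoke the strong law of large numbers for empirical measures on a separable metric space (Varadarajan's theorem, a.k.a.\ the Glivenko--Cantelli theorem for Polish spaces): there is a single almost-sure event on which $\mu_k\to\nu$ weakly. Weak convergence of probability measures on a Polish space is stable under forming $m$-fold product measures — for instance via the Skorokhod representation theorem applied on a product probability space, or directly because finite products of bounded continuous functions form a convergence-determining class on the product — so on that same event $\mu_k^{\otimes m}\to\nu^{\otimes m}$ weakly for every $m$ at once; and weak convergence is preserved under pushforward by the continuous map $D_m$. Hence, almost surely, $(D_m)_\ast(\mu_k^{\otimes m})\to(D_m)_\ast(\nu^{\otimes m})$ weakly for every $m$, which by the characterization above gives $(S_k,d_k,\nu_k)\to(S,d,\nu)$ almost surely. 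The marked version is handled identically, the only change being that the first $m$ coordinates of the sampled tuple are frozen at the marked points $x_1,\ldots,x_m$: the conditional law of the distance matrix of $m+\ell$ points is $(D_{m+\ell})_\ast\big(\delta_{(x_1,\ldots,x_m)}\otimes\mu_k^{\otimes\ell}\big)$, and $\delta_{(x_1,\ldots,x_m)}\otimes\mu_k^{\otimes\ell}\to\delta_{(x_1,\ldots,x_m)}\otimes\nu^{\otimes\ell}$ weakly almost surely, so the same pushforward argument applies.

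All of this is essentially bookkeeping around a single genuinely nontrivial ingredient, the almost-sure weak convergence of empirical measures; I expect that to be the ``hard part,'' though it is a standard theorem. If one preferred to avoid quoting it, one could instead fix a bounded continuous $\psi$ and write $\E_{\nu_k}[\psi(M_m)]=k^{-m}\sum_{i_1,\ldots,i_m=1}^k\psi\big((d(x_{i_a},x_{i_b}))_{a,b}\big)$, observe that the tuples $(i_1,\ldots,i_m)$ with a repeated index make up an $O(1/k)$ fraction of the sum and hence contribute $o(1)$ since $\psi$ is bounded, and apply Hoeffding's strong law of large numbers for $U$-statistics with bounded kernel to the remaining sum over distinct indices, which converges almost surely to $\E_{\nu^{\otimes m}}[\psi\circ D_m]$; this is an equivalent route using a comparably standard fact. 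The only other points requiring (routine) care are the use of a single null set valid simultaneously for all $m$ — automatic here because $\mu_k\to\nu$ deterministically forces $\mu_k^{\otimes m}\to\nu^{\otimes m}$ for every $m$ — and the harmlessness of coincidences among the $x_i$, noted above.
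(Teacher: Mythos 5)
Your argument is correct. Note that the paper offers no proof at all here: the proposition is stated as an ``immediate consequence of the definition of the Gromov-weak topology,'' so there is no official route to compare against. What you have written is the natural formalization of that remark: identify the law of the sampled distance matrix of $(S_k,d_k,\nu_k)$ as $(D_m)_\ast(\mu_k^{\otimes m})$, invoke Varadarajan's theorem for the almost sure weak convergence $\mu_k\to\nu$ on the separable space $(S,d)$, and then use the (deterministic) stability of weak convergence under finite products and under pushforward by the continuous map $D_m$; your observation that a single null set works simultaneously for all $m$ is exactly the point that makes the conclusion genuinely almost sure in the Gromov-weak topology, and the $U$-statistics variant is an equally standard substitute (there you would additionally want a countable convergence-determining family of test functions $\psi$ to get one null set, which metrizability of weak convergence on $\R^{m^2}$ provides). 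Two small caveats, neither a real gap: the claim that the distance-matrix law is ``insensitive to how mass is bookkept on coincidences'' is not literally true (giving a repeated point mass $1/k$ versus its empirical weight changes the sampling law), but the discrepancy vanishes as $k\to\infty$ and is vacuous in the atomless case relevant to the paper, where the literal statement of the proposition and the empirical-measure reading agree a.s.; and in the marked case your $\mu_k$ is uniform on the $k$ points including the $m$ fixed marked ones, so one should note that the fixed points carry total mass $m/k\to 0$ and hence do not affect the limit $\mu_k\to\nu$.
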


Let $\matrixmspace$ be the space of all infinite-by-infinite matrices (entries indexed by $\N \times \N$) with the usual product $\sigma$-algebra and let $\widehat{\matrixmspace}$ be the subset of $\matrixmspace$ consisting of those matrices with the property that for each $k$, the initial $k \times k$ matrix of $\matrixmspace$ describes a distance function on $k$ elements, and the limit of the corresponding $k$-element metric spaces (endowed with the uniform probability measure on the $k$ elements) exists in $\mmspace$.  We refer to this limit as the {\em limit space} of the infinite-by-infinite matrix. It is a straightforward exercise to check that $\widehat {\matrixmspace}$ is a measurable subset of $\matrixmspace$.

\begin{proposition}
\label{prop::measurability_correspondence}
There is a one-to-one correspondence between
\begin{enumerate}
\item Real-valued $\mmsigma$-measurable functions $\phi$ on $\mmspace$, and
\item Real-valued measurable functions $\wt \phi$ on $\widehat {\matrixmspace}$ with the property that their value depends only on the limit space.
\end{enumerate}
The relationship between the functions is the obvious one:
\begin{enumerate}
\item If we know $\wt \phi$, then we define $\phi$ by setting $\phi\bigl( (S,d,\nu) \bigr)$ to be the a.s.\ value of $\wt \phi (M_\infty)$ when $M_\infty$ is chosen via $(S,d,\nu)$.
\item If we know $\phi$, then $\wt \phi(M_\infty)$ is $\phi$ of the limit space of $M_\infty$.
\end{enumerate}
Moreover, for each $k \in \N$ the analogous correspondence holds with $(\mmspace^k,\mmsigma^k)$ in place of $(\mmspace,\mmsigma$).
\end{proposition}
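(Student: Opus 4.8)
\emph{Setup.} The plan rests on two ingredients. The first is the \emph{limit space} map $L\colon\widehat{\matrixmspace}\to\mmspace$, sending a matrix $M$ to the Gromov-weak limit of its finite $n$-point truncations, together with the claim that $L$ is $\mmsigma$-measurable. To see this, recall that $\mmsigma$ is generated by the maps $\Phi_{\psi,k}\colon (S,d,\nu)\mapsto\E_\nu[\psi(M_k)]$, for $k\in\N$ and $\psi$ bounded continuous on $\R^{k^2}$, so it suffices to check each $\Phi_{\psi,k}\circ L$ is measurable on $\widehat{\matrixmspace}$. But by the definition of Gromov-weak convergence, $\Phi_{\psi,k}(L(M))$ equals the $n\to\infty$ limit of the average of $\psi(M_k)$ over all $k$-tuples of indices drawn (with replacement) from the first $n$ points of $M$; this is a pointwise limit of continuous functions of finitely many entries of $M$, hence measurable. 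The second ingredient is Proposition~\ref{prop::randomapproximatemetricsconverge}: if $M_\infty$ is obtained by sampling i.i.d.\ points from $(S,d,\nu)$, then a.s.\ $M_\infty\in\widehat{\matrixmspace}$ and $L(M_\infty)=(S,d,\nu)$.

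\emph{The two assignments and their inverseness.} Given an $\mmsigma$-measurable $\phi$ on $\mmspace$, put $\wt\phi:=\phi\circ L$; it is measurable by the previous paragraph and by construction its value depends only on the limit space. Conversely, given measurable $\wt\phi$ on $\widehat{\matrixmspace}$ whose value depends only on the limit space, observe that for fixed $(S,d,\nu)$ the random variable $\wt\phi(M_\infty)$ is a.s.\ constant — since $L(M_\infty)=(S,d,\nu)$ a.s.\ and $\wt\phi$ is a function of $L(M_\infty)$ — and define $\phi(S,d,\nu)$ to be this constant, equivalently $\phi(S,d,\nu)=\E[\wt\phi(M_\infty)]$ when $\wt\phi$ is bounded (the general case follows by truncating $\wt\phi$ at level $N$ and letting $N\to\infty$). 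The two assignments are mutually inverse: starting from $\phi$ and returning, the a.s.\ value of $(\phi\circ L)(M_\infty)$ is $\phi(L(M_\infty))=\phi(S,d,\nu)$ a.s.; starting from $\wt\phi$ and returning, for $M\in\widehat{\matrixmspace}$ the value $\phi(L(M))$ is by definition the a.s.\ value of $\wt\phi(M_\infty')$ for $M_\infty'$ sampled from $L(M)$, which equals $\wt\phi(M)$ because $L(M_\infty')=L(M)$ a.s.\ and $\wt\phi$ depends only on the limit space.

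\emph{The main point: measurability of $\phi$.} The only step with real content is that $\phi$, as just defined, is $\mmsigma$-measurable; that is, $(S,d,\nu)\mapsto\E[g(M_\infty)]$ is $\mmsigma$-measurable for the bounded measurable function $g$ on $\matrixmspace$ obtained by extending $\wt\phi$ by $0$ off $\widehat{\matrixmspace}$ (this is legitimate since $\widehat{\matrixmspace}$ is a measurable subset of $\matrixmspace$, and harmless since $M_\infty\in\widehat{\matrixmspace}$ a.s.). I would prove this by a functional monotone class argument. Let $\mathcal H$ be the set of bounded measurable $g\colon\matrixmspace\to\R$ for which $(S,d,\nu)\mapsto\E[g(M_\infty)]$ is $\mmsigma$-measurable. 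Then $\mathcal H$ is a vector space containing the constants and closed under bounded monotone limits (by dominated convergence), and it contains the multiplicative class of bounded continuous functions of finitely many coordinates of $M$ — for such $g=\psi(M_k)$ the map $(S,d,\nu)\mapsto\E_\nu[\psi(M_k)]$ is one of the generating functions of $\mmsigma$. Since that class generates the product $\sigma$-algebra on $\matrixmspace$, the functional monotone class theorem gives $\mathcal H\supseteq$ all bounded measurable functions, as needed. The hard part is exactly this reduction, and it is hard only in the sense of requiring careful verification (that the generating class is multiplicative and generates the product $\sigma$-algebra, and that the $0$-extension off $\widehat{\matrixmspace}$ changes nothing); no new idea is involved.

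\emph{Marked spaces.} The statement with $(\mmspace^k,\mmsigma^k)$ in place of $(\mmspace,\mmsigma)$ follows by the identical argument: one samples additional i.i.d.\ points $x_{k+1},x_{k+2},\ldots$ beyond the $k$ marked ones, applies the marked version of Proposition~\ref{prop::randomapproximatemetricsconverge}, and repeats the monotone class step verbatim.
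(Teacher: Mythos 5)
Your proof is correct and follows essentially the same route as the paper: both directions rest on the generators $(S,d,\nu)\mapsto\E_\nu[\psi(M_k)]$ of $\mmsigma$, a functional monotone class argument to pass to all bounded measurable functions of $M_\infty$, the observation that these generators composed with the limit-space map are pointwise limits of continuous functions of finitely many matrix entries, and Proposition~\ref{prop::randomapproximatemetricsconverge} to identify the limit space of the sampled matrix with the original space. The only difference is cosmetic: you package the second direction as measurability of the limit-space map $L$ (so that $\phi\circ L$ is automatically measurable for every measurable $\phi$), whereas the paper verifies this on generators and invokes a monotone class argument there as well.
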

\begin{proof}
We will prove the result for $(\mmspace,\mmsigma)$; the case of $(\mmspace^k,\mmsigma^k)$ for general $k \in \N$ is analogous.

Suppose that $\wt{\phi}$ is a bounded, continuous function on $\matrixmspace$ which depends only on a finite number of coordinate entries.  Then we know that $(S,d,\nu) \mapsto \E_\nu[ \wt{\phi}(M_\infty)]$ is an $\mmsigma$-measurable function where $M_\infty$ is the infinite matrix of distances associated with an i.i.d.\ sequence $(x_i)$ chosen from $\nu$.  From this it is not difficult to see that if $\wt{\phi}$ is an indicator function of the form $\one_A$ where $A \subseteq \R^{k^2}$ is compact (i.e., $\one_A$ depends only on the initial $k \times k$ matrix) then $(S,d,\nu) \mapsto \E_\nu[ \wt{\phi}(M_\infty)]$ is $\mmsigma$-measurable.  We note that the collection of such sets $A$ is a $\pi$-system which generates the product $\sigma$-algebra on $\matrixmspace$.  We also note that the set of all functions $\wt{\phi}$ on $\matrixmspace$ for which $(S,d,\nu) \mapsto \E_\nu[ \wt{\phi}(M_\infty)]$ is $\mmsigma$-measurable is closed under taking finite linear combinations and non-negative monotone limits.  Therefore the monotone class theorem implies that $(S,d,\nu) \mapsto \E_\nu[\wt{\phi}(M_\infty)]$ is $\mmsigma$-measurable for any bounded, measurable function on $\matrixmspace$.  In particular, this holds if $\wt{\phi}$ is a bounded, measurable function on $\wh{\matrixmspace}$ which depends only on the limit space.  In this case, we note that the a.s.\ value of $\wt{\phi}(M_\infty)$ is the same as $\E_\nu[\wt{\phi}(M_\infty)]$.  This proves one part of the correspondence.

On the other hand, suppose that $\phi$ is an $\mmsigma$-measurable function of the form $(S,d,\nu) \mapsto \E_\nu[\psi(M_k)]$ where $\psi$ is a bounded, continuous function on $\R^{k^2}$ and $M_k$ is the matrix of distances associated with $x_1,\ldots,x_k$ chosen i.i.d.\ from $\nu$.  Suppose that $M_\infty \in \wh{\matrixmspace}$.  For each $j$, we let $(S_j,d_j,\nu_j)$ be the element of $\mmspace$ which corresponds to the $j \times j$ submatrix $M_j$ of $M_\infty$.  Then the map which associates $M_\infty$ with $\phi((S_j,d_j,\nu_j))$ is continuous on~$\wh{\matrixmspace}$.  Therefore the map which associates $M_\infty$ with $\phi((S,d,\nu))$ where $(S,d,\nu)$ is the limit space of $M_\infty$ is measurable as it is the limit of continuous maps.  The other part of the correspondence thus follows from the definition of $\CF$.
\end{proof}

We are now going to use Proposition~\ref{prop::measurability_correspondence} to show that certain subsets of $\mmspace$ are measurable.  We begin by showing that the set of compact metric spaces in $\mmspace$ is measurable.  Throughout, we let $\wh{\compactmatrix}$ consist of those elements of $\wh{\matrixmspace}$ whose limit space is compact.

\begin{proposition}
\label{prop::compact_measurable}
The set of compact metric spaces in $\mmspace$ is measurable.  More generally, for each $k \in \N$ we have that the set of compact metric spaces in $\mmspace^k$ with $k$ marked points is measurable.
\end{proposition}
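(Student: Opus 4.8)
The plan is to reduce compactness of an element of $\mmspace$ to a measurable condition on the laws of the random distance matrices $M_k$, and then to apply Proposition~\ref{prop::measurability_correspondence}. Since an element of $\mmspace$ carries no information away from the support of $\nu$, the statement ``$(S,d,\nu)$ is a compact metric space'' should be read as ``$\supp\nu$ is compact''; as $\supp\nu$ is closed in a complete space it is itself complete, so this is equivalent to $\supp\nu$ being totally bounded, i.e.\ to the cardinality of $\eps$-separated subsets of $\supp\nu$ being bounded for each $\eps>0$.

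The crux is a sampling characterization of total boundedness: for any metric measure space, $\supp\nu$ is totally bounded if and only if for every rational $\eps>0$ there is an $m\in\N$ with
\[
\P\bigl[\,\min\nolimits_{1\le a<b\le m} d(x_a,x_b)>\eps\,\bigr]=0,
\]
where $x_1,x_2,\dots$ are i.i.d.\ samples from the probability measure $\nu/\nu(S)$. In one direction, if $\supp\nu$ contains $\eps$-separated subsets of every finite cardinality then, since each of their points sits in balls of positive $\nu$-mass, with positive probability $m$ independent samples land near $m$ of those points and are pairwise more than $\eps/2$ apart, so the displayed probability (with $\eps/2$ in place of $\eps$) is positive for every $m$; in the other direction, if for some $\eps$ the displayed probability is positive for every $m$ then, the samples lying in $\supp\nu$ a.s., $\supp\nu$ has $\eps$-separated subsets of every size, which is impossible when $\supp\nu$ is totally bounded because it then admits finite $\delta$-nets.

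Applying this to the limit space $(S',d',\nu')$ of a matrix $M_\infty\in\wh{\matrixmspace}$, we get that $M_\infty\in\wh{\compactmatrix}$ if and only if for every rational $\eps>0$ there is $m$ with $\P[\min_{1\le a<b\le m} d'(y_a,y_b)>\eps]=0$ when the $y_a$ are i.i.d.\ from $\nu'$. To see that this last quantity is a measurable function of $M_\infty$, note that for bounded continuous $\psi$ on $\R^{m^2}$,
\[
\E\bigl[\psi\bigl((d'(y_a,y_b))_{a,b}\bigr)\bigr]=\lim_{k\to\infty}k^{-m}\sum\nolimits_{j_1,\dots,j_m=1}^{k}\psi\bigl((d_{j_aj_b})_{a,b}\bigr),
\]
the existence of the limit and the identity being precisely the statement that the $k$-element spaces built from the top-left $k\times k$ blocks converge Gromov-weakly to the limit space, which holds by the definition of $\wh{\matrixmspace}$. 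Each term on the right is a continuous function of finitely many entries of $M_\infty$, so $M_\infty\mapsto\E[\psi((d'(y_a,y_b)))]$ is measurable; a monotone class argument (equivalently, approximating from below by continuous functions the lower semicontinuous indicator of the event that all strictly-above-diagonal entries of the top-left $m\times m$ block exceed $\eps$) extends this to that indicator. Hence $\wh{\compactmatrix}$ is a measurable subset of $\wh{\matrixmspace}$, and since $\one_{\wh{\compactmatrix}}$ depends only on the limit space, Proposition~\ref{prop::measurability_correspondence} yields that the indicator of the set of compact spaces in $\mmspace$ is $\mmsigma$-measurable. The marked case is identical: one samples auxiliary points $x_{k+1},x_{k+2},\dots$ i.i.d.\ from $\nu/\nu(S)$, notes that adjoining finitely many marked points does not affect compactness, and runs the same argument (with $d(x_a,x_b)$ over $k<a<b\le k+m$) in the marked Gromov-weak topology to conclude $\mmsigma^k$-measurability.

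The step I expect to be the main obstacle is this first reduction, and in particular resisting the tempting but false identification of ``the limit space is compact'' with ``the countable metric space defined directly by the entries of $M_\infty$ is totally bounded'': these differ, since mass can escape to infinity in a Gromov-weak limit, so the raw index set may fail to be totally bounded while the limit space is a single point. This forces the criterion to be phrased through the limit measure $\nu'$, recovered only via the empirical averages above; once that is in place, the remaining measurability is a routine monotone class verification.
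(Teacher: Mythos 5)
Your proof is correct, but it takes a genuinely different route from the paper's. The paper works directly with the matrix entries: it defines $\wh{\matrixmspace}_{n,\eps}$ as the set of matrices for which the first $n$ sampled points form an $\eps$-net of the entire sampled sequence, identifies $\bigcap_{\eps \in \Q_+} \bigcup_n \wh{\matrixmspace}_{n,\eps}$ with $\wh{\compactmatrix}$ (the nontrivial inclusion being a diagonal extraction showing that the $\eps$-net condition forces sequential compactness of the limit space), and then invokes Proposition~\ref{prop::measurability_correspondence}. You instead encode compactness by a packing-type criterion stated purely in terms of the limit measure --- for every rational $\eps$ some fixed number $m$ of i.i.d.\ samples a.s.\ fails to be pairwise $\eps$-separated --- and obtain measurability by writing the relevant sampling expectations as limits of the empirical averages $k^{-m}\sum \psi\bigl((d_{j_a j_b})\bigr)$ over the top-left blocks, which is exactly what membership in $\wh{\matrixmspace}$ guarantees. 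What your formulation buys is that the criterion is manifestly a function of the limit space alone, so the hypothesis of Proposition~\ref{prop::measurability_correspondence} is automatic; in the paper's version the measurable set is a condition on the sampled sequence itself, and its identification with $\wh{\compactmatrix}$ --- equivalently, ruling out the escape-of-mass phenomenon you flag in your last paragraph, where the raw sequence fails to be totally bounded even though the limit space is compact --- is where the real work lies, and is most naturally read as an almost-sure statement under sampling rather than a pointwise identity of subsets of $\wh{\matrixmspace}$. What the paper's version buys is concreteness: the sets $\wh{\matrixmspace}_{n,\eps}$ are cut out by countably many explicit inequalities on matrix entries, with no need for the lower-semicontinuous approximation step you use to pass from bounded continuous $\psi$ to the indicator of the $\eps$-separation event. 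Both arguments treat the marked case by the same routine modification.
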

\begin{proof}
We are going to prove the first assertion of the proposition (i.e., the case $k=0$).  The result for general values of $k$ is analogous.

For each $\epsilon > 0$ and $n \in \N$, we let $\wh{\matrixmspace}_{n,\epsilon}$ be those elements $(d_{ij})$ in $\wh{\matrixmspace}$ such that for every~$j$ there exists $1 \leq k \leq n$ such that $d_{jk} \leq \epsilon$.  That is, $(d_{ij})$ is in $\wh{\matrixmspace}_{n,\epsilon}$ provided the $\epsilon$-balls centered at points in the limit space which correspond to the first $n$ rows (or columns) in $(d_{ij})$ cover the entire space.  As $\wh{\matrixmspace}_{n,\epsilon}$ is measurable, we have that both $\wh{\matrixmspace}_\epsilon = \cup_n \wh{\matrixmspace}_{n,\epsilon}$ and $\cap_{\epsilon \in \Q_+} \wh{\matrixmspace}_\epsilon$ are measurable.  By Proposition~\ref{prop::measurability_correspondence}, it therefore suffices to show that $\cap_{\epsilon \in \Q_+} \wh{\matrixmspace}_\epsilon$ is equal to $\wh{\compactmatrix}$.  This, however, follows because a metric space is compact if and only if it is complete and totally bounded.

\end{proof}

To prove the measurability of certain sets in $\mmspace$, we will find it useful first to show that they are measurable with respect to the Gromov-Hausdorff topology and then use that there is a natural map from $\wh{\compactmatrix}$ into the Gromov-Hausdorff space which is measurable.  In order to remind the reader of the Gromov-Hausdorff distance, we first need to remind the reader of the definition of the Hausdorff distance.  Suppose that $K_1,K_2$ are closed subsets of a metric space $(S,d)$.  For each $\epsilon > 0$, we let $K_j^\epsilon$ be the $\epsilon$-neighborhood of $K_j$.  Recall that the {\bf Hausdorff distance} between $K_1,K_2$ is given by
\begin{equation}
\label{eqn::h_def}
\dh(K_1,K_2) = \inf\{ \epsilon >0 : K_1 \subseteq K_2^\epsilon,\ K_2 \subseteq K_1^\epsilon\}.
\end{equation}
Suppose that $(S_1,d_1)$, $(S_2,d_2)$ are compact metric spaces.  The {\bf Gromov-Hausdorff} distance between $(S_1,d_1)$ and $(S_2,d_2)$ is given by 
\begin{equation}
\label{eqn::dgh_def}
\dgh((S_1,d_1),(S_2,d_2)) = \inf\left\{ \dh(\varphi_1(S_1),\varphi_2(S_2)) \right\}
\end{equation}
where the infimum is over all metric spaces $(S,d)$ and isometries $\varphi_j \colon S_j \to S$.  We let $\gh$ be the set of all compact metric spaces equipped with the Gromov-Hausdorff distance~$\dgh$.  More generally, for each $k \in \N$, we let $\gh^k$ be the set of all compact metric spaces $(S,d)$ marked with $k$ points $x_1,\ldots,x_k \in S$.  We equip $\gh^k$ with the distance function
\begin{equation}
\label{eqn::dghk_def}
\begin{split}
&\dgh((S_1,d_1,x_{1,1},\ldots,x_{1,k}),(S_2,d_2,x_{2,1},\ldots,x_{2,k}))\\
=& \inf\left\{ \dh(\varphi_1(S_1),\varphi_2(S_2)) + \sum_{j=1}^k d(\varphi_1(x_{1,j}),\varphi_2(x_{2,j})) \right\}.
\end{split}
\end{equation}
where the infimum is as in~\eqref{eqn::dgh_def}.  We refer the reader to \cite[Chapter~27]{villanibook} as well as \cite[Chapter~7]{courseinmetricgeometry} for more on the Hausdorff and Gromov-Hausdorff distances.

We remark that in~\eqref{eqn::dgh_def}, one may always take the ambient metric space to be $\ell_\infty$.  Indeed, this follows because every compact metric space can be isometrically embedded into $\ell_\infty$.  We will use this fact several times in what follows.

We also note that there is a natural projection $\pi \colon \wh{\compactmatrix} \to \gh$.  Moreover, if we equip $\wh{\matrixmspace}$ with the $\ell_\infty$ topology (in place of the product topology), then the projection $\pi \colon \wh{\compactmatrix} \to \gh$ is $2$-Lipschitz.  Indeed, this can be seen by using the representation of $\dgh$ in terms of the distortion of a so-called correspondence between metric spaces; see \cite[Chapter~27]{villanibook}.  See, for example, the proof of \cite[Proposition~3.3.3]{miermontstflour} for a similar argument.  Since the product topology generates the same Borel $\sigma$-algebra as the $\ell_\infty$ topology on $\wh{\matrixmspace}$, it follows that $\pi$ is measurable.  This observation will be useful for us for proving that certain sets in $\wh{\matrixmspace}$ are measurable.  We record this fact in the following proposition.

\begin{proposition}
\label{prop::gm_measurable}
The projection $\pi \colon \wh{\compactmatrix} \to \gh$ is measurable.
\end{proposition}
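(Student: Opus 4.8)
The plan is to reduce the claim to the measurability of countably many scalar functions on $\wh{\compactmatrix}$ and then handle those one at a time. Since $(\gh,\dgh)$ is a separable metric space, its Borel $\sigma$-algebra is generated by the open balls $B_{\dgh}(Y_0,q)$ with $Y_0$ ranging over a countable dense family of compact metric spaces and $q \in \Q_+$; as $\pi^{-1}\big(B_{\dgh}(Y_0,q)\big) = \{M : \dgh(\pi(M),Y_0) < q\}$, it suffices to prove that for each fixed compact $Y_0$ the function $M \mapsto \dgh(\pi(M),Y_0)$ is measurable on $\wh{\compactmatrix}$ with respect to (the trace of) the product $\sigma$-algebra on $\matrixmspace$.

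For a fixed $Y_0$ I would apply Proposition~\ref{prop::measurability_correspondence}. The value of $M \mapsto \dgh(\pi(M),Y_0)$ depends only on the limit space of $M$, so by that proposition (in the case $k=0$) its measurability on $\wh{\compactmatrix}$ is equivalent to the $\mmsigma$-measurability of $(S,d,\nu) \mapsto \dgh\big((S,d),Y_0\big)$ on the set of compact mm-spaces, which is itself $\mmsigma$-measurable by Proposition~\ref{prop::compact_measurable}. To establish that measurability, fix a compact $(S,d,\nu)$ and let $X_1,X_2,\dots$ be i.i.d.\ samples from $\nu/\nu(S)$. Almost surely $\{X_i\}_{i\ge 1}$ is dense in $\supp\nu$, so the finite metric spaces $(\{X_1,\dots,X_k\},d)$ converge to $(S,d)$ in the Gromov--Hausdorff topology, whence $\dgh\big((\{X_1,\dots,X_k\},d),Y_0\big) \to \dgh\big((S,d),Y_0\big)$ a.s. Moreover $\dgh\big((\{X_1,\dots,X_k\},d),Y_0\big)$ is a continuous function $h_k$ of the distance matrix $M_k = \big(d(X_i,X_j)\big)_{i,j\le k}$ — for two metrics on a common $k$-point set the identity correspondence shows $\dgh$ is $1$-Lipschitz in the sup-norm of the matrix — and it is bounded by $\diam(S,d)+\dgh\big(\{\mathrm{pt}\},Y_0\big) < \infty$. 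By the definition of $\mmsigma$, $(S,d,\nu)\mapsto\E_\nu\big[\min(h_k(M_k),N)\big]$ is $\mmsigma$-measurable for each $N$, hence so is $(S,d,\nu)\mapsto\E_\nu[h_k(M_k)]$ by monotone convergence, and then $(S,d,\nu)\mapsto\dgh\big((S,d),Y_0\big)=\lim_k \E_\nu[h_k(M_k)]$ by bounded convergence; this proves the claim.

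A more direct alternative, closer to the paragraph preceding the statement, is to show that $\pi$ is $\tfrac12$-Lipschitz from $\wh{\compactmatrix}$ with the $\ell_\infty$ metric $\rho(M,M')=\sup_{i,j}|d_{ij}-d'_{ij}|$ into $(\gh,\dgh)$, via the correspondence bound $\dgh(X,Y)\le\tfrac12\,\mathrm{dis}(R)$ applied to the relation pairing the label-$i$ point of $M$ with the label-$i$ point of $M'$ (whose distortion is at most $\rho(M,M')$). One then upgrades $\ell_\infty$-continuity to product-measurability: every $\rho$-ball in $\wh{\matrixmspace}$ is a countable combination of sets depending on single coordinates, hence lies in the product $\sigma$-algebra, and on $\wh{\matrixmspace}$ every $\ell_\infty$-open set is a countable union of such balls, so the two Borel $\sigma$-algebras coincide there; continuity of $\pi$ then gives product-measurability.

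In either approach the delicate point — and the step I expect to be the main obstacle — is identifying $\pi(M)$ with a Gromov--Hausdorff limit of ``finite pieces'' of $M$: the label-carrying points of $M$ need not be dense in $\supp\nu$ (indices can be transient for the empirical measures), so a correspondence built from the labels controls only the completion of the labelled point set and not $\pi(M)$ itself. Circumventing this — by sampling from $\nu$ instead of using the labelled points in the first approach, and by tracking carefully what the $\ell_\infty$ estimate actually bounds in the second — is where the real content lies; the reductions, the $\sigma$-algebra bookkeeping, and the convergence arguments are otherwise routine.
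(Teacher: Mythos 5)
Your first argument is correct, and it follows a genuinely different route from the paper's. The paper disposes of this proposition in the paragraph preceding its statement: it notes that $\pi$ is continuous for the $\ell_\infty$ topology on $\wh{\matrixmspace}$ (via the distortion-of-a-correspondence bound for $\dgh$) and then upgrades continuity to measurability by asserting that the $\ell_\infty$ and product topologies generate the same $\sigma$-algebra. You instead use separability of $(\gh,\dgh)$ to reduce to the measurability of $M\mapsto \dgh(\pi(M),Y_0)$ for a fixed compact $Y_0$, transfer this through Proposition~\ref{prop::measurability_correspondence} to an $\mmsigma$-measurability statement on $\mmspace$, and prove that statement by sampling i.i.d.\ points from $\nu$ and using Gromov--Hausdorff convergence of the empirical subspaces to $\supp\nu$. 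This is in the spirit of the paper's other measurability arguments (e.g.\ Proposition~\ref{prop::compact_measurable}), and it buys two things: it never relies on the labelled points of $M$ being dense in the support of the limit measure (the genuine subtlety you flag — harmless in the paper's downstream applications, where the labels are a.s.\ i.i.d.\ samples and hence dense in the support, but worth the care for the proposition as stated), and it never needs any comparison between the product $\sigma$-algebra and the $\ell_\infty$-Borel $\sigma$-algebra. The loose ends in your first route (extending $h_k$ continuously off the closed set of distance matrices, or just invoking the monotone-class step from the proof of Proposition~\ref{prop::measurability_correspondence} for bounded measurable $\psi$; checking that $(S,d,\nu)\mapsto\dgh((\supp\nu,d),Y_0)$ is constant on equivalence classes) are routine.

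Your second, ``more direct'' alternative is essentially the paper's own sketch, and its final step does not hold as you state it: $\wh{\compactmatrix}$ is not $\ell_\infty$-separable, so an $\ell_\infty$-open set need not be a countable union of balls and the $\ell_\infty$-Borel $\sigma$-algebra is strictly larger than the product $\sigma$-algebra. For instance, fix two points at distance $1$ and send label $i$ to within $2^{-i}$ of one or the other according to whether $i$ lies in a set $A\subseteq\N$ of asymptotic density $1/2$; each such matrix lies in $\wh{\compactmatrix}$, and distinct partitions $\{A,A^c\}$ give matrices at $\ell_\infty$-distance at least $1/2$, of which there are uncountably many. What is true, and is all that is needed, is the weaker statement that the sets $\{M:\dgh(\pi(M),Y_0)<q\}$ are product-measurable — which is exactly what your first argument delivers. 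So keep the first route as the proof and regard the second only as motivation.
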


In the following proposition, we will combine Proposition~\ref{prop::measurability_correspondence} and Proposition~\ref{prop::gm_measurable} to show that the set of compact, geodesic metric spaces in $\mmspace$ is measurable.

\begin{proposition}
\label{prop::geodesic_closed}
The set of compact, geodesic spaces is measurable in $\mmspace$.
\end{proposition}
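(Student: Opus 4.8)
## Proof proposal for Proposition~\ref{prop::geodesic_closed}

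The plan is to reduce the statement to a measurability assertion about the Gromov--Hausdorff space $\gh$ and then transport it back to $\mmspace$ using the machinery already set up. First I would note that a compact metric measure space $(S,d,\nu)$ is geodesic if and only if its limit-space representative is a geodesic space, and that being geodesic for a compact space is equivalent (by the standard Hopf--Rinow-type criterion, see \cite[Chapter~7]{courseinmetricgeometry}) to being a \emph{length space}, i.e.\ to the existence of $\epsilon$-midpoints for all pairs of points and all $\epsilon>0$: for every $a,b$ and every $\epsilon>0$ there is a point $m$ with $\max(d(a,m),d(m,b)) \le \tfrac12 d(a,b) + \epsilon$. On a compact space this midpoint property upgrades to the existence of genuine geodesics by an Arzel\`a--Ascoli argument, so ``geodesic'' and ``length space'' coincide on $\wh{\compactmatrix}$.

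Next I would show that the set of (compact) length spaces is a Borel subset of $\gh$. The midpoint criterion can be tested on a countable dense subset: a compact space is a length space iff for every pair $x_i, x_j$ from a fixed countable dense sequence and every rational $\epsilon>0$ there exists another point $x_k$ in the sequence with $d(x_i,x_k)$ and $d(x_k,x_j)$ both at most $\tfrac12 d(x_i,x_j)+\epsilon$. This is a countable combination (intersections over $i,j,\epsilon$, unions over $k$) of conditions each of which is continuous --- in fact closed or open --- in the matrix entries $d_{ab}$. Hence, working on $\wh{\matrixmspace}$ with the product $\sigma$-algebra and using that the first $n$ rows/columns describe a dense sample of the limit space (Proposition~\ref{prop::randomapproximatemetricsconverge} together with Proposition~\ref{prop::measurability_correspondence}), the set of matrices in $\wh{\compactmatrix}$ whose limit space is a length space is measurable. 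Equivalently, using the measurable projection $\pi\colon \wh{\compactmatrix}\to\gh$ of Proposition~\ref{prop::gm_measurable}, the set of length spaces is measurable in $\gh$ and its preimage under $\pi$ is measurable in $\wh{\compactmatrix}$.

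Finally I would assemble the pieces. By Proposition~\ref{prop::compact_measurable}, $\wh{\compactmatrix}$ (equivalently, the set of compact spaces in $\mmspace$) is measurable. Intersecting with the measurable set just produced gives that the matrices in $\wh{\matrixmspace}$ whose limit space is compact \emph{and} geodesic form a measurable set; since membership in this set depends only on the limit space, Proposition~\ref{prop::measurability_correspondence} transfers it to a measurable subset of $\mmspace$, namely the set of compact geodesic spaces. The marked-point variants follow the same way using $\mmspace^k$, $\wh{\compactmatrix}$ with $k$ distinguished rows, and $\gh^k$.

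The main obstacle I anticipate is the verification that the density-based midpoint criterion really does characterize length spaces among compact spaces --- one has to be careful that passing to a countable dense set and to rational $\epsilon$ does not lose information (it does not, because the midpoint inequality is stable under small perturbations of all three points), and that the resulting genuine geodesics exist (this is exactly where compactness and Arzel\`a--Ascoli are used). Once that deterministic metric-geometry fact is in hand, the measurability bookkeeping is routine given Propositions~\ref{prop::measurability_correspondence}, \ref{prop::compact_measurable}, and \ref{prop::gm_measurable}.
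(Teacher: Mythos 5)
Your proposal is correct, but it takes a genuinely different route from the paper's. The paper disposes of the geodesic condition in one line: the set of geodesic spaces is closed in the Gromov--Hausdorff space $\gh$ (citing \cite[Theorem~27.9]{villanibook} and \cite[Chapter~7.5]{courseinmetricgeometry}), and the result then follows by combining Proposition~\ref{prop::measurability_correspondence} with the measurability of the projection $\pi \colon \wh{\compactmatrix} \to \gh$ from Proposition~\ref{prop::gm_measurable}. You instead verify measurability by hand at the level of $\wh{\matrixmspace}$, using the approximate-midpoint characterization of length spaces tested on the countable sample, which writes the condition as a countable intersection/union of closed conditions on the matrix entries; compactness is handled by intersecting with the set from Proposition~\ref{prop::compact_measurable}, just as in the paper. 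Your route is more elementary and self-contained (it needs neither the closedness of geodesic spaces in $\dgh$ nor the continuity of $\pi$), and it is essentially the ``short proof which does not rely on the measurability of the projection'' that the paper alludes to right after Proposition~\ref{prop::geodesic_closed}. One small precision: the indicator of your midpoint criterion is not literally a function of the limit space alone, because for an arbitrary element of $\wh{\compactmatrix}$ the points encoded by the matrix are only dense in their completion, which can be strictly larger than the support of the limiting measure (e.g.\ a matrix with a few outlying points of vanishing empirical mass can make the criterion and the limit space disagree in either direction). This is harmless provided you transfer to $\mmspace$ via the $\E_\nu$ form used in the proof of Proposition~\ref{prop::measurability_correspondence}: when $M_\infty$ is sampled i.i.d.\ from $\nu$, the sample is a.s.\ dense in the support of $\nu$, so your criterion a.s.\ agrees with the indicator that the space is compact and geodesic, and measurability of the corresponding subset of $\mmspace$ follows. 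The deterministic ingredients you invoke (geodesic $\Leftrightarrow$ length for compact complete spaces, and stability of the midpoint inequality under restriction to a countable dense set and rational $\epsilon$, upgraded to genuine geodesics by Arzel\`a--Ascoli) are standard and correctly handled.
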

\begin{proof}
That the set of geodesic spaces is closed hence measurable in $\gh$ follows from \cite[Theorem~27.9]{villanibook}; see also the discussion in \cite[Chapter~7.5]{courseinmetricgeometry}.  Therefore the result follows by combining Proposition~\ref{prop::measurability_correspondence} and Proposition~\ref{prop::gm_measurable}.
\end{proof}

We note that it is also possible to give a short proof of Proposition~\ref{prop::geodesic_closed} which does not rely on the measurability of the projection $\pi \colon \wh{\compactmatrix} \to \gh$. The following proposition will imply that the set of good measure endowed geodesic spheres is measurable in $\mmspace$.

\begin{proposition}
\label{prop::geodesic_spheres_measurable}
For each $k \in \N_0$ we have that $\gmsspace^k$ is measurable in $\mmspace^k$.
\end{proposition}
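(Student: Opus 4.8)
The plan is to prove the case $k=0$ (i.e., that $\gmsspace$ is measurable in $\mmspace$); the general $k$ case then follows by the same argument applied to the marked spaces $\mmspace^k$, using the fact that the forgetful maps $\mmspace^k \to \gmsspace^k$ are measurable and that marked points are recovered in the limit space. First I would note that by Proposition~\ref{prop::geodesic_closed} the set of compact geodesic spaces is already measurable in $\mmspace$, and by Proposition~\ref{prop::compact_measurable} the set of compact spaces is measurable; so it remains to intersect with the set of compact geodesic spaces whose underlying topological space is homeomorphic to $\S^2$ and whose measure $\nu$ is \emph{good} (non-atomic and positive on open sets). Thus the proposition reduces to showing that each of these two further conditions cuts out a measurable subset.

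For the ``good measure'' condition, I would work on $\wh{\matrixmspace}$ via Proposition~\ref{prop::measurability_correspondence}. Non-atomicity and the property of charging every open set are both expressible in terms of the empirical behavior of i.i.d.\ samples $(x_i)$ from $\nu$: for instance, $\nu$ has an atom if and only if, with positive probability, two independent samples coincide, which at the level of the distance matrix $M_\infty$ means $\Prob[d_{ij} = 0 \text{ for some } i \neq j] > 0$; and $\nu$ assigns positive mass to every ball of the limit space if and only if, for every $i$ and every rational $\epsilon > 0$, the asymptotic frequency $\lim_k k^{-1} \#\{ j \leq k : d_{ij} < \epsilon\}$ is strictly positive (this limit exists a.s.\ by the law of large numbers and is a measurable function of $M_\infty$ depending only on the limit space). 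Countable intersections and unions of such measurable events over $i$, $\epsilon \in \Q_+$ isolate the ``good'' limit spaces, and pulling back through the correspondence of Proposition~\ref{prop::measurability_correspondence} gives a measurable subset of $\mmspace$.

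The main obstacle is the topological condition: recognizing ``homeomorphic to $\S^2$'' measurably. Here I would pass to the Gromov--Hausdorff space $\gh$ using the measurable projection $\pi \colon \wh{\compactmatrix} \to \gh$ of Proposition~\ref{prop::gm_measurable}, and argue that $\{(S,d) \in \gh : (S,d) \cong \S^2\}$ is a Borel (in fact one can hope $G_\delta$ or $F_\sigma$) subset of $\gh$. One route: being a topological $2$-sphere among compact geodesic spaces can be characterized by a countable family of metric/homological conditions --- e.g.\ local properties (every point has arbitrarily small neighborhoods homeomorphic to a disk, detectable through finite $\epsilon$-nets and the nerve of small ball covers, which approximate the space's homotopy type for fine enough scales on a space that is an ANR) together with a global condition (the nerve has the homology of $S^2$ for all sufficiently fine covers). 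Each such finite-combinatorial condition on $\epsilon$-nets is a continuous (hence measurable) function of the point in $\gh$, and the sphere locus is obtained as an appropriate countable combination of these; then one pulls back through $\pi$ and through Proposition~\ref{prop::measurability_correspondence}. I expect this step --- making precise that ``topologically $\S^2$'' is a Borel condition on $\gh$ and controlling the approximation of the homotopy type by nerves of $\epsilon$-ball covers uniformly enough to be expressed by countably many measurable constraints --- to be the genuinely delicate part, while everything else is bookkeeping with the tools already assembled in this section.
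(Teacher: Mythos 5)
Your reduction to $k=0$, the passage to $\gh$ via Proposition~\ref{prop::gm_measurable} and Proposition~\ref{prop::measurability_correspondence}, and your treatment of the good-measure condition (atom detection and mass of small balls from the distance matrix; note that positivity on open sets is essentially automatic once the limit space — the support of $\nu$ — is itself the sphere) are all fine and match the general architecture of the argument in the paper. But the entire content of the proposition sits in the step you defer, namely that $\{(S,d)\in\gh : (S,d)\cong\S^2\}$ is Borel, and the route you sketch for it has a genuine gap. Nerves of $\epsilon$-ball covers do not capture the homotopy type of a general compact geodesic space: that requires the space to be an ANR (or the cover to be good, i.e.\ have contractible intersections), neither of which is available or detectable here, and it is precisely the non-ANR, locally wild spaces that you must exclude. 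More fundamentally, no condition on finitely many points at a fixed scale $\epsilon$ can certify that a point has a disk neighborhood — a space can look sphere-like at every fixed scale while failing to be a manifold in the limit, and conversely a genuine sphere may have ball covers whose nerves are never spheres. So the ``countable combination of continuous constraints'' you hope for is not established, and in the naive form you describe it would admit false positives; this is exactly the delicate point, and the proposal does not resolve it.

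The paper gets around this with a different idea that you may find instructive: it never tries to characterize spheres among \emph{all} compact geodesic spaces. Instead it works inside the closure $\spheresgeoclose$ of the sphere locus in $\gh$, and isolates the spheres within that closure by a uniform small-loop-contraction condition: $f(\gamma,(S,d))$ is the infimal diameter of a set $A$ with connected complement in which the loop $\gamma$ contracts, $f(\delta,(S,d))$ is the sup over loops of diameter $\leq\delta$, and one keeps exactly those $(S,d)\in\spheresgeoclose$ with $f(\delta,\cdot)\to 0$ as $\delta\to 0$. The inclusion of this set in $\spheresgeo$ uses Begle's theorem on regular convergence (a GH-limit of spheres converging \emph{regularly} is a sphere), and the reverse inclusion plus a uniform continuity estimate for $(S,d)\mapsto f(\delta,(S,d))$ on $\spheresgeo$ shows the condition extends continuously to $\spheresgeoclose$, so the sphere locus is a countable intersection of unions of relatively open subsets of a closed set, hence Borel. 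The two missing ingredients in your plan are thus: (i) restrict to the closure of the sphere locus so that one only needs to separate spheres from limits of spheres, and (ii) replace nerve/homology conditions by a quantitative contractibility-type control (with the connected-complement requirement) for which a topological stability theorem such as Begle's is actually available.
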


We will prove Proposition~\ref{prop::geodesic_spheres_measurable} in the case that $k = 0$ (i.e., we do not have any extra marked points).  The proof for general values of $k$ is analogous.  As in the proof of Proposition~\ref{prop::geodesic_closed}, it suffices to show that the set of geodesic metric spaces $(S,d)$ which are homeomorphic to $\S^2$ is measurable in $\gh$.  In order to prove this, we first need to prove the following lemma.

\begin{lemma}
\label{lem::sphere_contract}
Suppose that $(S,d)$ is a geodesic metric space homeomorphic to $\S^2$ and suppose that $\gamma$ is a non-space-filling loop on $S$.  Let $U$ be a connected component of $S \setminus \gamma$ and let $A = S \setminus U$.  For every $\epsilon > 0$, $\gamma$ is homotopic to a point inside of the $\epsilon$-neighborhood of $A$. 
\end{lemma}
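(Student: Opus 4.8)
\medskip

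The plan is to reduce the statement to the contractibility of a closed topological disk. Throughout, write $\gamma$ also for the (compact, connected) image of the curve; since $U\subseteq S\setminus\gamma$ we have $\gamma\subseteq A:=S\setminus U$, and the $\epsilon$-neighborhood $A^\epsilon$ is an open set containing $\gamma$, so it suffices to homotope $\gamma$ to a point within $A^\epsilon$. Fix a homeomorphism identifying $S$ with $\S^2$. The first key step is to show that $U$ is simply connected, in fact homeomorphic to the open disk $\D$. For this I would first check that $\S^2\setminus U$ is connected: every component $V'$ of $\S^2\setminus\gamma$ other than $U$ satisfies $\partial V'\subseteq\gamma$, so $\gamma\cup\ol{V'}$ is connected, and $\S^2\setminus U$ is the union of $\gamma$ together with these sets $\ol{V'}$, each of which contains $\gamma$. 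A connected open proper subset of $\S^2$ with connected complement is simply connected (e.g.\ by the Riemann mapping theorem, or by the classification of simply connected surfaces), hence homeomorphic to $\D$; fix such a homeomorphism $\varphi\colon\D\to U$.

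The heart of the argument is then to exhaust $U$ by the topological disks $U_r:=\varphi(\{|z|<r\})$ for $0<r<1$. For each $r$ one has $\ol{U_r}=\varphi(\{|z|\le r\})\subseteq U$, and $C_r:=\partial U_r=\varphi(\{|z|=r\})$ is a simple closed curve in $\S^2$, being the injective continuous image of a circle. Since $U_r$ is open, connected, and has boundary $C_r$, it is one of the two complementary components of $C_r$, so by the Jordan--Schoenflies theorem $A_r:=\S^2\setminus U_r$ — the closure of the other complementary component — is a closed topological disk. Now $U_r\subseteq U$ gives $\gamma\subseteq A\subseteq A_r$, while $\bigcup_{0<r<1}U_r=U$ gives $\bigcap_{0<r<1}A_r=A$; thus the $A_r$ form a decreasing family of compacta shrinking to $A$. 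Given $\epsilon>0$, since $A\subseteq A^\epsilon$ with $A^\epsilon$ open, a standard compactness argument (the nested compacta $A_r\setminus A^\epsilon$ have empty intersection, hence one of them is empty) produces $r_0<1$ with $A_{r_0}\subseteq A^\epsilon$. As $\gamma$ takes values in $A_{r_0}$, which is homeomorphic to a closed disk and hence contractible, $\gamma$ is homotopic to a constant map within $A_{r_0}$, and composing with the inclusion $A_{r_0}\hookrightarrow A^\epsilon$ completes the proof.

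The main obstacle — or rather, the point one must not fumble — is that the homotopy is required to stay inside the thin set $A^\epsilon$: the naive moves (contracting $\gamma$ in $\S^2$, or in $\S^2$ minus a point of $U$, or reasoning only at the level of $H_1$ or winding numbers, which is insufficient since $\pi_1$ of a planar domain is a nonabelian free group) simply do not respect this constraint. The shrinking family of \emph{genuine} topological disks $A_r$ is exactly the device that does, and it is available precisely because a complementary component of a continuum in $\S^2$ is simply connected; establishing that simple connectivity and extracting the disk exhaustion is where the real content lies.
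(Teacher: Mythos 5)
Your proof is correct and follows essentially the same route as the paper: parametrize $U$ by a homeomorphism $\varphi\colon\D\to U$, cut along the image of a circle of radius close to $1$, and contract $\gamma$ inside the complementary topological disk containing $A$. If anything, your version is slightly more careful than the paper's at one point, since your compactness argument guarantees that the \emph{entire} closed disk $A_{r_0}$ (not merely the cutting circle $C_{r_0}$) lies in the $\epsilon$-neighborhood of $A$, which is exactly what the homotopy needs.
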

\begin{proof}
Since $\gamma$ is a continuous curve, it follows that $U$ is topologically equivalent to $\D$.  Let $\varphi \colon \D \to U$ be a homeomorphism.  Then there exists $\delta > 0$ so that $\Gamma = \varphi(\partial (1-\delta) \D)$ is contained in the $\epsilon$-neighborhood of $A$.  Since $\Gamma$ is a simple curve, it follows that there exists a homeomorphism $\psi$ from $\D$ to the component $V$ of $S \setminus \Gamma$ which contains $\gamma$.  Let $\wt{\gamma} = \psi^{-1}(\gamma)$.  Then $\wt{\gamma}$ is clearly homotopic to $0$ in $\D$ hence $\gamma$ is homotopic to $\psi(0)$ in $V$, which implies the result.
\end{proof}

\begin{proof}[Proof of Proposition~\ref{prop::geodesic_spheres_measurable}]
For simplicity, we will prove the result in the case that $k=0$.  The case for general values of $k$ is established in an analogous manner.  We are going to prove the result by showing that the set $\spheresgeo$ of geodesic metric spaces in $\gh$ which are homeomorphic to $\S^2$ is measurable in $\gh$.  The result will then follow by invoking Proposition~\ref{prop::measurability_correspondence} and Proposition~\ref{prop::gm_measurable}.

Let $\spheresgeoclose$ be the closure of $\spheresgeo$ in $\gh$.  Suppose that $(S,d)$ is in $\gh$.  Let $\gamma$ be a path in $(S,d)$ and let $f(\gamma,(S,d))$ be the infimum of $\diam(A)$ over all $A \subseteq S$ in which $\gamma$ is homotopic in $A$ to a point in $S$ and $S \setminus A$ is connected.  Let $f(\delta,(S,d))$ be equal to the supremum of $f(\gamma,(S,d))$ over all paths $\gamma$ in $(S,d)$ with diameter at most $\delta$.  Let $\wt{\spheresgeo}$ consist of those $(S,d)$ in $\spheresgeoclose$ such that for every $\epsilon > 0$ there exists $\delta > 0$ such that $f(\delta,(S,d)) < \epsilon$.

We are first going to show that $\wt{\spheresgeo}=\spheresgeo$.  We clearly have that $\spheresgeo \subseteq \wt{\spheresgeo}$, so we just need to show that $\wt{\spheresgeo} \subseteq \spheresgeo$.  Suppose that $(S,d)$ is in $\wt{\spheresgeo}$.  We assume without loss of generality that $\diam(S) = 1$.  Then there exists a sequence $(S_n,d_n)$ in $\spheresgeo$ which converges to $(S,d)$ in $\gh$.  We note that we may assume without loss of generality that both $S$ and the $S_n$'s are subsets of $\ell_\infty$ such that $\dh(S_n,S) \to 0$ as $n  \to \infty$ and that $\diam(S_n) = 1$ for all~$n$.

Fix $\epsilon > 0$.  It suffices to show that there exists $\delta > 0$ such that $f(\delta,(S_n,d_n)) < \epsilon$ for all $n \in \N$.  Indeed, this implies that the $(S_n,d_n)$ converge to $(S,d)$ in $\gh$ \emph{regularly} which, by \cite{begle_regular}, implies that $(S,d)$ is in $\spheresgeo$.

Fix $\delta > 0$ such that $f(\delta,(S,d)) < \epsilon$ and assume that $\delta \leq \epsilon$.  We assume that $n_0 \in \N$ is sufficiently large so that
\begin{equation}
\label{eqn::ms_close}
\dh(S_n,S) \leq \frac{\delta}{16} \quad\text{for all}\quad n \geq n_0.
\end{equation}
We note that for each $1 \leq n \leq n_0$ there exists $\delta_n > 0$ such that $f(\delta_n,(S_n,d_n)) < \epsilon$.  We set $\delta_0 = \min_{1 \leq n \leq n_0} \delta_n$.  We are now going to show that there exists $\wh{\delta} > 0$ such that $f(\wh{\delta},(S_n,d_n)) \leq 43 \epsilon$ for all $n \geq n_0$.  Upon showing this, we will have that with $\wt{\delta} = \delta_0 \wedge \wh{\delta}$ we have $f(\wt{\delta},(S_n,d_n)) \leq 43\epsilon$ for all $n$.

Fix $n \geq n_0$ and suppose that $\gamma_n \colon \S^1 \to S_n$ is a path in $S_n$ with $\diam(\gamma_n) \leq \delta/4$.  Then we can construct a path $\gamma$ in $S$ as follows.  We pick times $0 \leq t_0^n < \cdots < t_j^n \leq 2\pi$ such that with $x_i^n = \gamma_n(t_i^n)$ we have
\begin{equation}
\label{eqn::max_diam}
\| x_{i-1}^n - x_i^n \|_{\ell_\infty} \leq \frac{\delta}{16} \quad\text{for all}\quad 1\leq i \leq j.
\end{equation}
By~\eqref{eqn::ms_close}, for each $1 \leq i \leq j$ there exists $x_i \in S \subseteq \ell_\infty$ such that $\| x_i^n - x_i \|_{\ell_\infty} \leq \delta/16$.  We then take $\gamma$ to be the path $\S^1 \to S$ which is given by successively concatenating geodesics from $x_{i-1}$ to $x_i$ for each $1 \leq i \leq j+1$ where we take $x_{j+1} = x_0$.  Suppose that $a,b \in \gamma$.  Then there exists $i_q$ such that $\| q - x_{i_q} \|_{\ell_\infty} \leq 3 \delta/16$ for $q \in \{a,b\}$ as $\|x_{i-1}-x_i\|_{\ell_\infty} \leq 3\delta/16$ for each $1 \leq i \leq j+1$.  Consequently, by~\eqref{eqn::ms_close} and~\eqref{eqn::max_diam} we have that
\begin{align*}
   \|a-b\|_{\ell_\infty}
&\leq \|a-x_{i_a}\|_{\ell_\infty} + \|x_{i_a} - x_{i_b}\|_{\ell_\infty} + \|x_{i_b} - b\|_{\ell_\infty}\\
&\leq \frac{3}{8}\delta + \|x_{i_a} - x_{i_a}^n\|_{\ell_\infty} + \|x_{i_a}^n - x_{i_b}^n\|_{\ell_\infty} + \|x_{i_b}^n - x_{i_b}\|_{\ell_\infty}\\
&\leq \frac{1}{2}\delta + \diam(\gamma_n) < \delta.
\end{align*}
This implies that $\diam(\gamma) < \delta$.  Moreover, we have that the $\dh$-distance between the ranges of $\gamma_n$ and $\gamma$ is at most $\delta/2$.

By assumption, we can contract $\gamma$ to a point in $S$ inside of a set $A \subseteq S$ of diameter at most~$\epsilon$ such that $B = S \setminus A$ is connected.  Pick $x \in B$ with $\dist(x,A) \geq (1-\epsilon)/2$.  Fix $x_n \in S_n$ with $\| x-x_n \|_{\ell_\infty} \leq \delta/16$.  Let~$B_n$ be the component of $S_n \setminus \gamma_n$ containing $x_n$ and let $A_n$ be the closure of $S_n \setminus B_n$.  By Lemma~\ref{lem::sphere_contract}, we have that $f(\gamma_n,(S_n,d_n)) \leq \diam(A_n)$.  It therefore suffices to bound $\diam(A_n)$.

Suppose that $u_n \in A_n$ is a point with distance at least $\epsilon+20\delta$ from $\gamma_n$.   Let $u \in S$ be such that $\|u_n - u\|_{\ell_\infty} \leq \delta/16$.  We will show that $u \in \ol{A}$.  This will imply that $\diam(A) \geq 10\delta$, a contradiction since we have assumed that $\delta \leq \epsilon$ and we have $\diam(A) \leq \epsilon$, hence 
\begin{equation}
\label{eqn:diam_an_bound}
\diam(A_n) \leq 2\epsilon + 40\delta + \diam(\gamma_n) \leq 2\epsilon + 41\delta.	
\end{equation}
Suppose that $u \notin \ol{A}$.  Then there exists a path $\eta$ from $u$ to $x$ which does not intersect $\gamma$.  Arguing as above, this implies that there exists a path $\eta_n$ in $S_n$ from $u_n$ to $x_n$ so that the $d_H$ distance of the ranges of $\eta$ and $\eta_n$ is at most $\delta/2$.  Since $\eta$ does not intersect $\{z \in A : \dist(z,\partial A) \geq \delta\}$, it follows that $\eta_n$ does not intersect $\{z \in A_n : \dist(z,\partial A_n) \geq 2\delta\}$.  This is a contradiction, which proves~\eqref{eqn:diam_an_bound}.

Since $\gamma_n$ was an arbitrary path in $(S_n,d_n)$ of diameter at most $\delta$, we have thus shown that $f(\delta,(S_n,d_n)) \leq 2\epsilon+41\delta \leq 43\epsilon$ for all $n \geq n_0$.  This finishes the proof that $\spheresgeo = \wt{\spheresgeo}$.

To finish proving the result, we will show that $\wt{\spheresgeo}$ (hence $\spheresgeo$) can be written as an intersection of sets which are relatively open in the closure of geodesic spheres in $\gh$, hence is measurable.  It follows from the argument given just above that, for each fixed $\delta > 0$, the map $(S,d) \mapsto f(\delta, (S,d))$ is uniformly continuous on $\spheresgeo$.  This implies that $(S,d) \mapsto f(\delta, (S,d))$ extends to a continuous map on $\spheresgeoclose$.  It therefore follows that, for each $\epsilon > 0$, we have that
\[ \ol{\spheresgeo}_{\epsilon,\delta} = \{ (S,d) \in \spheresgeoclose : f(\delta, (S,d)) < \epsilon\}\]
is relatively open in $\spheresgeoclose$.  Therefore with $\Q_+ = \Q \cap (0,\infty)$ we have that
\[ \bigcap_{\epsilon \in \Q_+} \bigcup_{\delta \in \Q_+} \ol{\spheresgeo}_{\epsilon,\delta}\]
is a Borel set in $\gh$.  The result follows since this set is equal to $\wt{\spheresgeo}$.
\end{proof}

In what follows, it will be useful to consider geodesic spheres which are marked by two points and also come with an orientation.  We note that if $(S,d,\nu,x,y) \in \gmsspace^2$ and $x \neq y$, then we can determine an orientation of $S$ by specifying three additional distinct marked points $\ul{x}=(x_1,x_2,x_3)$ with $x_i \in \partial \fb{x}{r}$ for $i=1,2,3$ for some $r \in (0,d(x,y))$ fixed.  The extra marked points $\ul{x}$ specify an orientation because they specify a continuous curve (modulo monotone parameterization) which parameterizes $\partial \fb{x}{r}$  and visits $x_1,x_2,x_3$ in order.  We observe that this also specifies for every $s \in (0,d(x,y))$ a continuous curve (modulo parameterization) which parameterizes $\partial \fb{x}{s}$.  Indeed, if $\epsilon > 0$ is sufficiently small and $s \in (r-\epsilon,r)$, then we can specify three distinct points on $\partial \fb{x}{s}$ by first specifying three points on $\partial \fb{x}{r}$ with distance at least $2 \epsilon$ from each other using the orientation and then taking points on $\partial \fb{x}{s}$ which each have distance $r-s$ from the given points on $\partial \fb{x}{r}$.  Continuing in this manner specifies a parameterization of $\partial \fb{x}{s}$ for each $s \in (0,r)$.  We can also specify a parameterization of $s \in (r,r+\epsilon)$ by first choosing three points on $\partial \fb{x}{s}$ with distance at least $2\epsilon$ from each other and then taking points on $\partial \fb{x}{r}$ which each have distance $s-r$ from the given points on $\partial \fb{x}{s}$ and then ordering the points on $\partial \fb{x}{s}$ in the same way as the points on $\partial \fb{x}{r}$.  Continuing in this manner specifies a parameterization of $\partial \fb{x}{s}$ for each $s \in (r,d(x,y))$.

We say that two spaces $(S^i,d^i,x^i,y^i,\ul{x}^i)$, $i=1,2$, in $\gmsspace^5$ with marked points as above are equivalent if there exists a measure preserving isometry $S^1 \to S^2$ which takes $x^1$ to $x^2$, $y^1$ to $y^2$, and is orientation preserving.  In what follows, we will be considering various measurable maps which will not depend on the specific choice of three points used to orient the sphere but rather just the equivalence class.  We note that the map $\gmsspace^5 \times \gmsspace^5 \to \{0,1\}$ that outputs $1$ if the two spaces have distinct marked points are equivalent and otherwise $0$ is measurable because if $\ul{x}^1 = (x_1^1,x_2^1,x_3^1)$, $\ul{x}^2 = (x_1^2,x_2^2,x_3^2)$ are two triples of distinct points in $S$ for $(S,d,\nu,x,y) \in \gmsspace^2$ with $x_i^1 \in \partial \fb{x}{r_1}$ and $x_i^2 \in \partial \fb{x}{r_2}$ with $r = r_1 = r_2$ then $\ul{x}^1$ and $\ul{x}^2$ determine the same orientation if and only if the following is true.  Suppose that $w_1=x$, $w_2 = y$, $w_3=x_1$, $w_4=x_2$, $w_5 = x_3$ and $w_6,w_7,\ldots$ is an i.i.d.\ sequence chosen from $\nu$ and suppose that $d_{ij} = d(w_i,w_j)$.  Then it is $\nu$-a.s.\ the case that for every $\epsilon > 0$ small enough there are points $u_1,\ldots,u_n$ among the $(w_i)$ such that $B(u_i,\epsilon) \cap \partial \fb{x}{r} \neq \emptyset$, $B(u_i,\epsilon) \cap B(u_{i+1},\epsilon) \neq \emptyset$ where we take $u_{n+1} = u_1$ and if $x_j^k \in B(u_{i_j^k},\epsilon)$ then $i_1^k,i_2^k,i_3^k$ are in cyclic order (viewed as elements of $\Z_n$) for both $k=1,2$.  In the case that $r_1 < r_2$, then $\ul{x}^1, \ul{x}^2$ determine the same orientation if and only if there exist paths $\gamma_1,\gamma_2,\gamma_3$ in $\fb{x}{r_2} \setminus \fb{x}{r_1}$ which respectively connect $x_i^1$ to $x_i^2$ and do not cross.  The existence of such paths can be similarly determined in a measurable manner using the matrix $(d_{ij})$ of pairwise distances.  Finally, we note that it is not difficult to construct for each $r > 0$ a measurable map $\gmsspace^{5} \to \gmsspace^{5}$ which takes as input $(S,d,\nu,x,y,\ul{x})$ and outputs $(S,d,\nu,x,y,\ul{x})$ if $d(x,y) \leq r$ or the marked points are not distinct or the $\ul{x}$ do not all lie on the same filled metric ball boundary and otherwise it outputs $(S,d,\nu,x,y,\ul{x}')$ where the points of $\ul{x}'$ are distinct, lie in $\partial \fb{x}{r}$, and induce an equivalent orientation.

We let $\gmsspace^{2,O}$ denote the subset of $\gmsspace^5$ with distinct marked points modulo the above equivalence relation.  We let $\mmsigmaspho$ be the associated Borel $\sigma$-algebra.  We will refer to an element of $\gmsspace^{2,O}$ using the notation $(S,d,\nu,x,y)$ and suppress the orientation unless we need to choose a representative of the equivalence class.

We will also need to consider orientations on planar metric measure spaces whose interior is a geodesic disk.  For our purposes, we do not need that the set of such spaces is measurable in the set of metric measure spaces.  So in this case we will introduce an equivalence relation on $\mmspace^4$ which has the property that if the two spaces are of the above type then they are equivalent if and only if they are equivalent as marked metric measure spaces and have the same orientation.  More specifically, we say that two elements $(S^i,d^i,x^i,\ul{x}^i)$, $i=1,2$, in $\mmspace^4$ with distinct marked points are equivalent if there exists a measure preserving isometry which takes $S^1$ to $S^2$, $x^1$ to $x^2$, and for each $\delta > 0$ there exists a simple path in $S^1 \setminus \ol{B(x^1,\delta)}$ which visits all of the marked points $\ul{x}^i$ for $i=1,2$ and between hitting any pair of these points hits $\partial B(x^1,\delta)$.  Arguing as above, it is easy to see that that the map which takes as input two spaces in $\mmspace^4$ and outputs $1$ (resp.\ $0$) if they are equivalent in this sense is Borel measurable since the existence of such a path can be described in terms of the infinite matrix of pairwise distances.  If $(S^i,d^i)$ are topological disks and the marked points $\ul{x}^i$ are in the boundary and distinct and $x^i$ is not in the boundary, then this equivalence relation is equivalent to $\ul{x}^i$ inducing the same orientation.  We let $\mmspace^{1,O}$ be given by $\mmspace^4$ modulo this equivalence relation.

\begin{proposition}
\label{prop::ball_interior_measurable}
Fix a constant $r > 0$ and let $\gmsspacer^{2,O}$ be the set of elements $(S,d,\nu,x,y) \in \gmsspace^{2,O}$ such that $R = d(x,y) - r > 0$ (and note that this is a measurable subset of $\gmsspace^2$).  Then the space which corresponds to $\fb{x}{R}$ (with its interior-internal metric) is in $\mmspace^1$. The function $\gmsspacer^{2,O} \to \mmspace^{1,O}$ given by associating $(S,d,\nu,x,y)$ to this space with the orientation induced from $(S,d,\nu,x,y)$ is Borel measurable.

Suppose that we have a measurable way of choosing $z_1, z_2, \ldots, z_k \in \partial \fb{x}{R}$ that only requires us to look at $S \setminus \fb{x}{R}$.  Then the map to the set of $k$ slices (i.e., the metric measure spaces which correspond to the regions between the leftmost geodesics from each $z_j$ to $x$) is measurable as a map $\gmsspacer^{2,O} \to (\mmspace^3)^k$.  (The three marked points in the $j$th slice are given by $z_j$, $z_{j+1}$, and the point where the leftmost geodesics from $z_j$ and $z_{j+1}$ to $x$ first meet.)
\end{proposition}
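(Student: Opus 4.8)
\emph{Setup and reduction.} Fix $(S,d,\nu,x,y)\in\gmsspacer^2$, sample $w_1,w_2,\dots$ i.i.d.\ from $\nu/\nu(S)$, and let $\mathbf M$ be the infinite matrix of $d$-distances among $x,y,w_1,w_2,\dots$ (and keep the scalar $\nu(S)$). Since $\nu$ is good, the $w_i$ are a.s.\ dense in $S$. By Proposition~\ref{prop::measurability_correspondence} and its marked analogues, each of the three measurability assertions reduces to exhibiting the relevant object --- $\fb{x}{R}$, or a slice --- as the limit space of a submatrix of \emph{interior-internal} distances among a dense subsequence of the $w_i$ together with the relevant marked points, where this submatrix, the identity of the subsequence, and the corresponding (empirical, correctly rescaled) measure are all a.s.\ measurable functions of $\mathbf M$. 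Measurability of $\gmsspacer^2$ is immediate: $\gmsspace^2$ is measurable in $\mmspace^2$ by Proposition~\ref{prop::geodesic_spheres_measurable}, and $d(x,y)$ is a coordinate of the two-point marked distance matrix, so $\{d(x,y)>r\}\in\mmsigma^2$. Moreover $R=d(x,y)-r$ is read off $\mathbf M$, and the event $\{w_i\in S\setminus\fb{x}{R}\}$ is an a.s.\ measurable function of $\mathbf M$: because $S\setminus\fb{x}{R}$ is the $y$-containing connected component of the open set $\{z:d(x,z)>R\}$, one has $w_i\in S\setminus\fb{x}{R}$ iff for some $\eta\in\Q_+$ there is a finite chain $w_i=v_0,v_1,\dots,v_m=y$ of sampled points with $d(x,v_\ell)\ge R+\eta$ for all $\ell$ and $d(v_{\ell-1},v_\ell)<\eta$ for all $\ell$ --- the ``if'' direction by concatenating $d$-geodesics between consecutive vertices (these stay in $\{d(x,\cdot)>R\}$ by the triangle inequality), the ``only if'' direction by covering a path inside the open set (compact, hence at positive distance from $\{d(x,\cdot)=R\}$) by finitely many small balls containing sampled points. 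Hence the indicators $\mathbf1\{w_i\in\fb{x}{R}\}$ are measurable in $\mathbf M$.

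\emph{Recovering $\fb{x}{R}$.} The heart of the matter is to recover the interior-internal metric. I would do this through a limiting chain representation: for $a,b\in\fb{x}{R}$, $d_{\fb{x}{R}}(a,b)$ equals the supremum over $\eta\in\Q_+$ of the infimum of $\sum_{\ell=1}^m d(v_{\ell-1},v_\ell)$ over finite chains $a=v_0,\dots,v_m=b$ of sampled points in $\fb{x}{R}$ whose \emph{interior} vertices stay at distance $>2\eta$ from $\partial\fb{x}{R}$ and with successive gaps suitably small (so that, in particular, each $d$-geodesic between consecutive interior vertices stays in $\mathrm{int}\,\fb{x}{R}$, whence every admissible chain traces an interior path and its length dominates $d_{\fb{x}{R}}(a,b)$). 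Here the distances $\dist(v,\partial\fb{x}{R})$ are themselves measurable in $\mathbf M$, since a short argument using that $\partial\fb{x}{R}$ separates $\mathrm{int}\,\fb{x}{R}$ from $S\setminus\fb{x}{R}$ and lies in $\overline{S\setminus\fb{x}{R}}$ shows $\dist(v,\partial\fb{x}{R})=\inf_j\{d(v,w_j):w_j\in S\setminus\fb{x}{R}\}$, and the indices $j$ with $w_j\in S\setminus\fb{x}{R}$ were identified above. Adjoining $x$ to the sample, the representation defines a measurable submatrix of interior-internal distances on the indices with $w_i\in\fb{x}{R}\cup\{x\}$; its limit space, marked at $x$ and equipped with the empirical measure of $\{w_i\in\fb{x}{R}\}$ rescaled to total mass $\nu(\fb{x}{R})$ (which is recoverable from the law of large numbers and $\nu(S)$), is $\fb{x}{R}$. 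By Proposition~\ref{prop::measurability_correspondence}, the map $\gmsspacer^2\to\mmspace^1$ is measurable.

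\emph{The slices.} By the symmetric construction, the exterior $S\setminus\fb{x}{R}$ --- with its interior-internal metric, the restriction of $\nu$, and the marked point $y$ --- is also a measurable function of $\mathbf M$; consequently a rule that produces $z_1,\dots,z_k\in\partial\fb{x}{R}$ and an orientation of $\partial\fb{x}{R}$ ``only looking at $S\setminus\fb{x}{R}$'' amounts, after composition, to a measurable way of adjoining marked points $z_1,\dots,z_k$ to the sample. With the orientation fixed, Proposition~\ref{prop::uniqueleftmost} gives a well-defined leftmost geodesic $\gamma_j$ from $z_j$ to $x$; I would realize each $\gamma_j$ as an a.s.\ uniform limit of discrete ``leftmost chains'' through the sampled points --- extracting uniform limits by Arzel\`a--Ascoli as in Proposition~\ref{prop::leftmostconvergence}, and identifying the limit with $\gamma_j$ via the uniqueness in Proposition~\ref{prop::uniqueleftmost} and the left/right-limit structure of Proposition~\ref{prop::leftmostconvergence}. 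This exhibits $\gamma_1,\dots,\gamma_k$, the merge points $m_j$ (the point of $\gamma_j\cap\gamma_{j+1}$ farthest from $x$), and hence the partition of $\fb{x}{R}$ into slices, as measurable functions of $\mathbf M$; each sampled $w_i$ in $\mathrm{int}\,\fb{x}{R}$ is assigned to its slice by a connectivity-chain criterion inside $\mathrm{int}\,\fb{x}{R}\setminus\bigcup_j\gamma_j$ (the at most countably many $w_i$ lying on $\bigcup_j\gamma_j$ being assigned by a fixed convention), the interior-internal distance \emph{within} a slice is computed by the chain representation of the previous paragraph with chains confined to that slice, and each slice --- marked at $z_j$, $z_{j+1}$, $m_j$ and carrying the restricted empirical measure --- is the limit space of a measurable submatrix. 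Proposition~\ref{prop::measurability_correspondence} then gives measurability of $\gmsspacer^2\to(\mmspace^3)^k$.

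\emph{Main obstacle.} The substantive steps are (a) establishing the limiting chain representation of the interior-internal metric on $\fb{x}{R}$, and its slice-confined analogue --- the delicate part being that constraining chain vertices to stay $2\eta$-deep, and handling endpoints on $\partial\fb{x}{R}$, does not make the infimum overshoot $d_{\fb{x}{R}}(a,b)$ even when near-optimal interior paths graze the boundary; this uses that $\fb{x}{R}$ and each slice are disk-homeomorphic (Proposition~\ref{prop::boundariesarecircles}), so that $\fb{x}{R}=\overline{\mathrm{int}\,\fb{x}{R}}$ and an interior path spends asymptotically vanishing length in an $\eta$-neighborhood of $\partial\fb{x}{R}$ --- and (b) identifying the limit of discrete leftmost chains with the true leftmost geodesic and checking measurability of the resulting slice partition, which is exactly what Proposition~\ref{prop::leftmostconvergence} is tailored to deliver.
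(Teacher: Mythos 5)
Your treatment of the first two assertions is essentially the paper's own argument: reduce everything to the sampled distance matrix via Proposition~\ref{prop::measurability_correspondence}, detect membership of a sampled point in $S\setminus\fb{x}{R}$ by finite chains of sampled points along which $d(x,\cdot)\geq R+\eta$ with gaps smaller than $\eta$, and recover internal distances by chains whose vertices stay a definite distance from the exterior. That part is sound, and your ``main obstacle (a)'' is milder than you fear: a path that stays in the interior of $\fb{x}{R}$ has compact image, hence lies at a positive distance from $\partial\fb{x}{R}$, so the supremum over $\eta$ automatically recovers the infimum, with only endpoints on the boundary requiring the special treatment you already give them.

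The genuine gap is in the slice part. You propose to build each leftmost geodesic $\gamma_j$ as a uniform limit of ``discrete leftmost chains'' through the sampled points, but you never say what makes a chain ``leftmost.'' Leftmost-ness is an orientation-dependent selection among the possibly many geodesics from $z_j$ to $x$ (defined via the universal cover of $\fb{x}{R}\setminus\{x\}$ in Proposition~\ref{prop::uniqueleftmost}), and it is not visible in the distance data of a chain. Proposition~\ref{prop::leftmostconvergence} does not repair this: it asserts continuity in the base point of the \emph{already selected} leftmost geodesics (leftmost geodesics from $a_j\to a$ from the left converge to the leftmost geodesic from $a$); it supplies no measurable selection rule, and Arzel\`a--Ascoli only produces \emph{some} geodesic as a subsequential limit --- identifying it as the leftmost one is precisely what is missing. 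The paper's proof sidesteps constructing the geodesics altogether: a sampled point $w$ belongs to the slice between the leftmost geodesics from $z_i$ and $z_{i+1}$ if and only if, for every $\epsilon>0$, it can be linked to $\partial\fb{x}{R}$ by a chain of $\epsilon$-balls centered at points of $B(x,R)$ staying at definite distance from $z_i,z_{i+1}$, such that no geodesic from $z_{i+1}$ to $x$, and no geodesic from a boundary point infinitesimally to the left of $z_i$ (this is where the hypothesized orientation and Proposition~\ref{prop::leftmostconvergence} enter), meets any ball of the chain; and the avoidance statement ``no geodesic from $p$ to $x$ meets $B(y_j,\epsilon)$'' is the purely metric condition $\inf\{d(x,u)+d(u,p):u\in B(y_j,\epsilon)\}>d(x,p)$, hence readable from the matrix. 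Some device of this kind, expressing geodesic avoidance and thereby ``leftmost'' in terms of distances, is what your argument needs and does not supply; once it is in place, your connectivity-based assignment of sampled points to slices and the slice-confined chain computation of internal distances would go through.
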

If there is a unique geodesic from $x$ to $y$, one example of a function which associates $S \setminus \fb{x}{R}$ with points $z_1,\ldots,z_k$ is as follows.  Assume that we have a measurable way of measuring ``boundary length'' on $\partial \fb{x}{R}$.  Then we take $z_1,\ldots,z_k \in \partial \fb{x}{R}$ to be equally spaced points according to boundary length with $z_1$ given by the point on $\partial \fb{x}{R}$ which is first visited by the geodesic from $x$ to $y$.
\begin{proof}[Proof of Proposition~\ref{prop::ball_interior_measurable}]
That the space which corresponds to $\fb{x}{R}$ is an element of $\mmspace^1$ is obvious.

We are now going to argue that the map which associates $(S,d,\nu,x,y) \in \gmsspacer^{2,O}$ with the metric measure space associated with $\fb{x}{R}$ and associated orientation is measurable.  To see this, we note that a point $w$ is in $S \setminus \fb{x}{R}$ if and only if there exists $\epsilon > 0$ and $y_1,\ldots,y_\ell \in S$ such that the following hold:
\begin{enumerate}
\item $d(y_j,x) \geq R+\epsilon$ for each $1 \leq j \leq \ell$,
\item $y \in B(y_1,\epsilon)$ and $w \in B(y_\ell,\epsilon)$, and
\item $B(y_j,\epsilon)$ has non-empty intersection with both $B(y_{j-1},\epsilon)$ and $B(y_{j+1},\epsilon)$ for each $2 \leq j \leq \ell-1$.
\end{enumerate}
Suppose that $w_1=x$, $w_2 = y$, $w_3=x_1$, $w_4=x_2$, $w_5 = x_3$ and $w_6,w_7,\ldots$ is an i.i.d.\ sequence chosen from $\nu$ and suppose that $d_{ij} = d(w_i,w_j)$.  The above tells us how to determine those indices $j$ such that $x_j \in S \setminus \fb{x}{R}$.  In particular, it is clear from the above that the event that $x_i \in \fb{x}{R}$ is a measurable function of $(d_{ij})$ viewed as an element of $\wh{\matrixmspace}$.  Suppose that we are on the event that $w_i,w_j \in \fb{x}{R}$ for $i,j$ distinct. Then the event that the interior-internal distance between $w_i$ and $w_j$ is at most $\delta$ is equivalent to the event that there exists $\epsilon > 0$ and indices $j_1 = i,j_2,\ldots,j_{k-1},j_k=j$ such that $d_{j_\ell j_{\ell+1}} < \epsilon$ for each $1 \leq \ell \leq k-1$, $(k-1) \epsilon < \delta$, and $B(x_{j_\ell},\epsilon) \subseteq \fb{x}{R}$ for each $1 \leq \ell \leq k$ (which we can determine using the recipe above).  Thus it is easy to see that the element of $\wh{\matrixmspace}$ which corresponds to the matrix of distances between the $(w_i)$ which are in $\fb{x}{R}$ with the interior-internal metric is measurable.  Thus the measurability of the metric measure space corresponding to $\fb{x}{R}$ viewed as an element of $\mmspace^1$ follows by applying Proposition~\ref{prop::measurability_correspondence}.  The same likewise holds for the orientation of $\fb{x}{R}$.

Suppose that $(S,d,\nu,x,y) \in \gmsspacer^{2,O}$ and let $(S,d,\nu,x,y,\ul{x})$ be a representative of the equivalence class of $(S,d,\nu,x,y)$ in $\gmsspace^5$.  To see the final claim of the proposition, we note that a point $w$ is in the slice between the leftmost geodesics from $z_i$ and $z_{i+1}$ to $x$ if and only if there exists $\delta > 0$ such that for every $\epsilon > 0$ small enough there exist points $y_1,\ldots,y_\ell$ with $y_\ell = w$ which satisfy the following properties:
\begin{enumerate}
\item\label{it:intersect_ball_boundary} $y_j \in \fb{x}{R}$ for each $1 \leq j \leq \ell$,
\item\label{it:away_from_endpoints} $d(y_j,z_i) \geq \delta$ and $d(y_j,z_{i+1}) \geq \delta$ for each $1 \leq j \leq \ell$,
\item\label{it:intersect_cw} $B(y_1,\epsilon)$ has non-empty intersection with the clockwise part of $\partial \fb{x}{R}$ between $z_i$ and $z_{i+1}$,
\item\label{it:intersect_each_other} $B(y_j,\epsilon)$ has non-empty intersection with $B(y_{j-1},\epsilon)$ and $B(y_{j+1},\epsilon)$ for each $2 \leq j \leq \ell-1$,
\item\label{it::no_geodesic_i_plus_1} No geodesic from $z_{i+1}$ to $x$ passes through the $B(y_j,\epsilon)$, and
\item\label{it::no_geodesic_i} No geodesic from a point on $\partial \fb{x}{R}$ which starts from a point on the counterclockwise segment of $\partial \fb{x}{R}$ from $z_i$ to $z_{i+1}$ to $x$ passes through the $B(y_j,\epsilon)$.
\end{enumerate}
It is obvious that properties~\ref{it:intersect_ball_boundary}, \ref{it:away_from_endpoints}, and~\ref{it:intersect_each_other} can be determined from the matrix $(d_{ij})$ in a measurable way.  We will now explain in further detail why the other properties can be measurably determined.  Let us first explain how to check property~\ref{it:intersect_cw}.  We note that $B(y_1,\epsilon)$ intersects the clockwise part of $\partial \fb{x}{R}$ if and only if the following is true for every $\epsilon > 0$ small enough.  There exist points $u_1,\ldots,u_n$ such that $B(u_i,\epsilon) \cap \partial \fb{x}{R} \neq \emptyset$, $B(u_i,\epsilon) \cap B(u_{i+1},\epsilon) \neq \emptyset$ where we take $u_{n+1} = u_1$, if $x_j \in B(u_{i_j},\epsilon)$ then $i_1,i_2,i_3$ are in cyclic order (viewed as elements of $\Z_n$) and the following is true.  If $k_1,k_2$ are such that $z_i \in B(u_{k_1},\epsilon)$ and $z_{i+1} \in B(u_{k_2},\epsilon)$ then $B(y_1,\epsilon)$ intersects $B(u_j,\epsilon)$ if and only if $k_1 \leq u_j \leq k_2$ (viewed as elements of $\Z_n$).
Property~\ref{it::no_geodesic_i_plus_1} holds if and only if
\[ \min_{1 \leq j \leq k} \inf \left\{ d(x,y) + d(y,z_{i+1}) : y \in B(y_j,\epsilon) \right\} > d(x,z_{i+1}).\]
Property~\ref{it::no_geodesic_i} can be checked by combining the ideas used to check properties~\ref{it:intersect_cw} and~\ref{it::no_geodesic_i_plus_1}.  Combining, the result thus follows in view of Proposition~\ref{prop::measurability_correspondence} and the argument described in the previous paragraph.
\end{proof}

\subsection{Measurability of the unembedded metric net}
\label{subsec:unembedded_measurability}

We will now develop some basic properties of metric nets and leftmost geodesic trees. There are many places to get an overview of real trees, plane trees, and contour functions; for example, \cite{ald1991crt1,ald1991crt2,ald1993crt3} uses these concepts to describe continuum random trees and Section 3 of~\cite{le2012scaling} reviews these concepts for the purpose of using them to construct the Brownian map. (They will also be further discussed in Section~\ref{sec::surfacesfromtrees}.)  Let $\T_1$ be the circle given by starting with $[0,1]$ and identifying $0$ and $1$.  We briefly recall that a {\em real planar tree} is a quotient of the type described in Figure~\ref{fig::lamination2}, where $X_t$ is any continuous non-negative function which is not constant in any interval indexed by $t \in \T_1$ with $\inf_{t \in \T_1} X_t = 0$.  Every real planar tree is compact, by definition.

Given any real tree embedded in the plane, one can construct a continuous  ``contour function'' $t \to X_t$ for $t \in \T_1$ by tracing the boundary of the tree continuously clockwise and keeping track of the distance from the root as a function of time. This $X_t$ can then be used to reconstruct the tree, as  Figure~\ref{fig::lamination2} illustrates. The contour function $t \to X_t$ is only determined up to monotone reparameterization: if $f$ is any increasing continuous function $\T_1 \to \T_1$ then $X_{f(t)}$ describes the same tree as $X_t$, via the procedure described in the caption to Figure~\ref{fig::lamination2}.  We call any such $f$ a {\em monotone reparameterization} of the circle.

Let $\treespace$ be the set of continuous functions $X \colon \T_1 \to \R_+$ with $\inf_{t \in \T_1} X_t = 0$ which are not constant in any interval modulo the equivalence relation $X \sim Y$ if and only if $X$ is a monotone reparameterization of $Y$.  For $X,Y \in \treespace$, we set
\[ d(X,Y) = \inf_{f} \|X - Y \circ f\|_\infty\]
where the infimum is over all monotone reparameterizations.  Then $d$ defines a metric on $\treespace$.  It is not hard to see that under this metric the space of compact real planar trees is complete and separable.  We equip $\treespace$ with the associated Borel $\sigma$-algebra.

\begin{figure}[ht!]
\begin{center}
\includegraphics[width=0.52\textwidth]{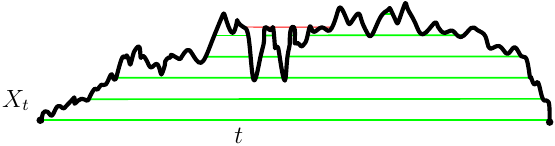}
\caption{\label{fig::lamination2}  Begin with the graph $X_t$ of a continuous excursion; then declare two points on that graph to be equivalent if they can be a connected by a horizontal chord that never goes above the graph of $X_t$ (these chords are shown as green lines). The equivalence classes form a real planar tree with a natural metric: the distance between points indexed by $s$ and by $t$ (with $s<t$) is $X_s + X_t - 2 \inf_{s< r < t} X_r$. It is not hard to see that the number of local maximum heights $a$ (i.e., values $a$ such that for some $s$, $X_s = a$ is a local maximum) is at most countably infinite (since any local maximum is the largest value obtained in {\em some} sufficiently small interval with rational endpoints). However, the horizontal red line illustrates that $X_t$ can have multiple local maxima (perhaps uncountably many) of the same height $a$.}
\end{center}
\end{figure}

It is not always the case that the leftmost geodesics of an oriented metric net form a real tree. For example, if $(S,d,x,y)$ is the Euclidean sphere (with $x$ and $y$ at opposite poles) then the metric net has no holes, and one can draw uncountably many disjoint leftmost geodesic arcs directed toward $x$. In this example, the ``tree of leftmost geodesics'' has uncountably many disjoint branches, each corresponding to a geodesic from~$y$ to~$x$ (with the endpoint $y$ itself not being included in these branches, since there is no single distinguished leftmost geodesic starting at $y$) and is clearly {\em not} a compact or precompact metric space when it is endowed with the natural tree metric. 

We say a doubly marked and oriented geodesic sphere $(S,d,x,y)$ is {\em strongly coalescent} if the leftmost geodesic tree of its metric net (endowed with the tree metric) {\em is} Cauchy-precompact, so that the completion of its leftmost geodesic tree (w.r.t.\ the natural tree metric) is a real planar tree.

\begin{proposition}
\label{prop::coalescentprecompact}
A doubly marked and oriented geodesic sphere $(S,d,x,y)$ is strongly coalescent if and only if for any $0<r<s <d(x,y)$ the number of disjoint leftmost-geodesic segments (toward $x$) one can draw from $\partial \fb{x}{s}$ to $\partial \fb{x}{r}$ is bounded above.
\end{proposition}
\begin{proof}
If there are infinitely many disjoint leftmost geodesic segments, for some~$r$ and~$s$, then the metric net is clearly not Cauchy-precompact. Conversely, if there are only finitely many for each $r$ and $s$, then this is true in particular if for some $\epsilon>0$ we have $r = n \epsilon$ and $s=(n+1)\epsilon$, and letting $n$ vary between $1$ and $d(x,y)/\epsilon$, we can show that it is possible to cover the tree with finitely many balls of diameter $\epsilon$. In other words, the metric net is totally bounded, which is equivalent to Cauchy-precompactness.
\end{proof}

Suppose $(S,d,x,y)$ is a strongly coalescent doubly marked and oriented geodesic sphere with metric net $N$. Let $T$ be the leftmost geodesic tree and $\wt T$ the completion of $T$.  The map from $T$ to $N \setminus \{y\}$ is one-to-one by construction, and can be extended continuously to a map from $\wt T$ to $N$. Two points on $T$ are called {\em equivalent} if they map to the same point on $N$. Let $X_t$ be the corresponding contour function.  We now present a few more definitions and quick observations about contour functions of metric nets:

\begin{enumerate}
\item $X_t$ necessarily assumes every value $r \in \bigl( 0, d(x,y) \bigr)$ an uncountable number of times.  This is because Proposition~\ref{prop::boundariesarecircles} shows that $\partial \fb{x}{r}$ is necessarily homeomorphic to a circle, and the map sending a point on the graph of $X_t$ to the corresponding point on the circle is onto.
\item In the proof of Proposition~\ref{prop::boundariesarecircles}, it was shown that $\Gamma = \partial \fb{x}{r}$ is locally connected and that therefore the homeomorphism from the unit disk to the interior of $\fb{x}{r}$ extends continuously to its boundary, so that the unit circle maps continuously onto $\Gamma$. Let $\Lambda_r \supseteq \Gamma$ be the component of $\partial B(x,r)$ that contains $\partial \fb{x}{r}$.  In other words, $\Lambda_r$ is the portion of $\partial B(x,r)$ that lies in the metric net. If the metric exploration ``pinches off holes'' exactly at time $r$ then $\Lambda_r$ could be strictly larger than $\Gamma$, as in Figure~\ref{fig::netandtree}. By the second part of Proposition~\ref{prop::boundariesarecircles}, the $x$-containing component of $\fb{x}{r} \setminus \Lambda_r$ is a topological disk, and a homeomorphism from the unit disk to that disk extends continuously to give a continuous map from the unit circle onto $\Lambda_r$ (which need not be one-to-one, since $\Lambda_r$ is not necessarily a topological circle). Thus it is natural to think of $\Lambda_r$ as being continuously parameterized by a circle (even if not in a strictly one to one way).
\item Any simple closed loop that can be drawn within $\Lambda_r$ is necessarily the boundary of one of the components of $S \setminus \partial B(x,r)$. This simply follows from the fact the loop divides the sphere into two pieces, and every point $z$ in the piece {\em not} containing $x$ must satisfy $d(x,z) > r$. Conversely, Proposition~\ref{prop::boundariesarecircles} (applied with $z$ in place of $y$) implies that every such boundary is a simple closed loop. In particular this implies that given the metric net $N$ (as a metric space with marked points $x$ and $y$) it is possibly to construct the (necessarily countable) collection of loops that form the boundaries of $N$ when $N$ is viewed as a subset of $S$. By gluing a topological disk into each of those loops, one obtains a topological space that is topologically equivalent to $S$, and which can be embedded in the sphere. In particular, this implies that the metric net $N$ determines its own embedding in the sphere (up to a topological homeomorphism of the sphere).
\begin{figure}[ht!]
\begin{center}
\includegraphics[width=0.32\textwidth]{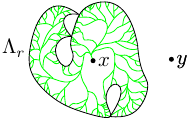}
\caption{\label{fig::netandtree}  Schematic drawing of the union of all of the leftmost geodesics from $\Lambda_r$ --- the component of $\partial B(x,r)$ containing $\partial \fb{x}{r}$ --- back to $x$. The completion of the union of these geodesics (endowed with the tree metric) is a closed subtree of $T$, continuously mapped to the metric net, with all of the set of leaves mapping onto $\Lambda_r$.}
\end{center}
\end{figure}
\item For $r \in \bigl( 0, d(x,y) \bigr)$, consider the closed set $A_r = \{t : X_t = r \}$. This is a subset of the circle $\T_1$, which corresponds to a closed subset of $\wt T$ and hence maps onto a closed subset of the metric net (the set $\Lambda_r$ described above). Let $\equiv$ be the (topological closure of) the smallest equivalence relation such that that two points are equivalent if they are at opposite endpoints of an open interval of $\T_1 \setminus A_r$.  It is not hard to see that if two points are equivalent in $\equiv$ then they must correspond to the same point in $\Lambda_r$, and that the topological quotient of $A_r$ w.r.t.\ $\equiv$ must be a topological circle, which is mapped onto $\Lambda_r$ (as in Figure~\ref{fig::netandtree}) in a continuous way. If this map is one-to-one, we say that that $\partial B(x,r)$ is a {\em simply traced loop}. If it is one-to-one except for two points that are mapped to the same place, then we say that $\partial B(x,r)$ is a {\em simply traced figure eight}. (The $\Lambda_r$ shown in  Figure~\ref{fig::netandtree} is neither of these; at least three pairs of points are ``pinched together'' in this image. There are other possibilities; for example, it is possible for a whole interval could get ``pinched'' to a single point.)
\item If $\partial B(x,r)$ is a simply traced figure eight, then $y$ lies in one of the two loops of the figure eight; all the points along the other loop correspond to local maxima of $X_t$, as the red line in Figure~\ref{fig::lamination2}. On the other hand, there must be a dense set of points along the loop containing $y$ that correspond to points that are {\em not} local maxima (since if there were a whole interval of points that were local maxima, then the points on either ends of that interval would have to be equivalent). Thus one can recover from the tree $\wt T$ where the two special points must be. This will be discussed later in the specific context of the L\'evy net (where the contour function for $\wt T$ is a so called L\'evy height function derived from a L\'evy excursion in a particular way). See Figure~\ref{fig::levynet2}.

\item In order to speak about a ``leftmost tree'' we have to have an orientation assigned to the metric net (so it is not quite enough to just have the metric space structure of the metric net).

\end{enumerate}

\begin{proposition}
\label{prop::leftmosttreeismeasurable}
Let $F \colon \gmsspace^{2,O} \to \treespace$ be the following map.  If the metric net of $X \in \gmsspace^{2,O}$ is not strongly coalescent, it outputs the $0$ function.  Otherwise, it outputs the contour function for the completion of the leftmost geodesic tree.  Then $F$ is Borel measurable.
\end{proposition}

This will follow from the propositions below.

The Gromov-Hausdorff distance on compact metric spaces marked by $k$ points discussed in~\eqref{eqn::dghk_def} can be extend to a metric on compact metric spaces $(S,A_1,A_2,\ldots,A_k)$ marked by $k$ distinguished closed subsets by setting
\[ \dgh\Bigl((S,A_1,A_2,\ldots,A_k), (\wt S,\wt A_1,\wt A_2,\ldots,\wt A_k)\Bigr) = \inf \Bigl( \dh(S, \wt S) + \sum_{i=1}^k \dh(A_i, \wt A_i) \Bigr).\]
It is a simple exercise to check the following:

\begin{proposition}
\label{prop::GHwithsets}
If a sequence of $k$-subset-marked compact metric spaces converges w.r.t.\ the metric $\dgh$ (on the $S$ components) then it has a subsequence that converges w.r.t.\ $\dgh$ (on all components). 
\end{proposition}

\begin{figure}[ht!]
\begin{center}
\includegraphics[scale=1]{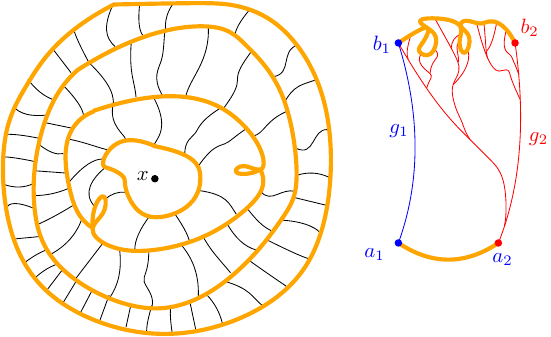}
\caption{\label{fig::quad} {\bf Left:} Shown in orange are the metric ball boundaries $\partial B(x,k\epsilon)$ for integer $k$.  {\bf Right:} The distance from $a_1$ to $b_1$ is exactly $\epsilon$. The distance from $a_2$ to {\em any} point on the black arc between $b_1$ and $b_2$ to is exactly $\epsilon$. The distance between any other pair of points --- with one on the $a_1$ to $a_2$ arc and one on the $b_1$ to $b_2$ arc --- is strictly larger than $\epsilon$. The metric sphere itself is a geodesic metric space, and it follows that the metric net is ``almost'' a geodesic space in the sense that if $a$ and $b$ are any points on the metric net then one can create a geodesic between them if one adds a countable collection of arcs, each of which connects two points on the same hole and has length given by the $d$ distance between those points.}

\end{center}
\end{figure}

Now given any metric net and small constant $\epsilon>0$, we can consider the ball boundaries $\partial B(x,k\epsilon)$ for positive integer $k$, as shown in Figure~\ref{fig::quad}.  A point on such a boundary $\partial B(x,k\epsilon)$ is called a ``coalescence point'' if it lies on a leftmost geodesic drawn from {\em some} point on $\partial B\bigl(x,(k+1)\epsilon\bigr)$ back to $x$.  Clearly, if the surface is strongly coalescent, the number of coalescence points on each $\partial B(x,k\epsilon)$ is finite (and each such point necessarily lies on $\partial \fb{x}{k\epsilon}$).  Now imagine we fix $k$ and number the coalescent points around $\partial \fb{x}{k\epsilon}$ clockwise as $a_0, a_1, \ldots a_{m-1}$. (It does not matter which one we designate as $a_0$.) For each $a_i$ we define a point $b_i$ to be the {\em rightmost} point on $\partial B\bigl(x,(k+1)\epsilon\bigr)$ with the property that the leftmost geodesic started at that point hits $a_i$. (``Rightmost'' can be interpreted as ``clockwisemost'' within the universal cover of the annulus.) Let $g_i$ be the leftmost geodesic connecting $b_i$ to $a_i$ and let $Q_i$ be the quadrilateral bounded between $g_i$ and $g_{i+1}$ (addition taken modulo $m$) as shown on the RHS of Figure~\ref{fig::quad}. Call such a $Q_i$ a {\bf quad}.

The caption of  Figure~\ref{fig::quad} explains some conditions that quads must satisfy. But the set of possible $Q_i$ satisfying these conditions is not a GH closed set: the GH limit of a sequence of $Q_i$ can degenerate in at least two ways: first, the paths $g_i$ may collide in the limit, so that they no longer correspond to disjoint and distinct leftmost geodesics.  Second, the distance between a pair of points on the upper and lower arcs may approach $\epsilon$ in the limit---so that perhaps the limiting quad has {\em additional} leftmost geodesics.

If $Q$ is the quad shown on the RHS of Figure~\ref{fig::quad}, then the portion of the clockwise arc of $\partial \fb{x}{k\epsilon}$ that lies between $b_1$ and $b_2$  (not counting $b_1$) is called the {\bf entrance arc} of $Q$ while the point $a_2$ is called the {\bf exit point} from $Q$.  Note that if $z$ is any point on the entrance arc of $Q$, then the leftmost geodesic from $z$ to $x$ necessarily passes through the exit point of $Q$. We say that a quad $Q'$ is a {\bf child} of $Q$ if the exit point of $Q'$ lies on the entrance arc of $Q$. We say $Q'$ is a {\bf boundary child} of $Q$ if the exit point of $Q'$ lies on the right boundary of the entrance arc of $Q$.

The collection of all quads $Q$, together with the child-parent relationship, forms a tree rooted at $B(x, \epsilon)$ (which one may interpret as a ``root quad''), in which some of the child-parent edges are designated ``boundary.''  Let $T$ be the labeled tree defined this way.

\begin{proposition}
\label{prop::treemeasurable}
The map $\gmsspace^{2,O} \to \treespace$ which outputs the contour function of the tree $T$ defined above is Borel measurable.
\end{proposition}
\begin{proof}
Given a labeled tree $T$ and positive $r,s,\delta$ (with $r<\delta/100$ and $s<\delta/100$) we let $A(T,r,s,\delta)$ be the set of geodesic quintuply marked (hence oriented) metric spheres $(S,d,x,y,\ul{x})$ where the marked points are distinct and $\ul{x} = (x_1,x_2,x_3)$ consists of three points in $\partial \fb{x}{r}$, $r = d(x,y)/2$, whose metric net can be sliced along (not necessarily leftmost) geodesics into ``approximate quads'' in the manner of Figure~\ref{fig::quad} in such a way that the following conditions hold (where, as illustrated, the upper and lower boundaries of an approximate quad need not be simple curves, but they are connected sets):
\begin{enumerate}
\item The distance between the two geodesics forming the left and right sides of any given quad (the curves $g_1$ and $g_2$ in Figure~\ref{fig::quad}) is at least $\delta$. Moreover, the distance between any quad $Q$ and any non-neighboring quad (i.e., any quad that would not be distance zero from $Q$ for a system of true quads corresponding to the labeled tree $T$) is at least $\delta$.
\item For each quad (labeled as in  Figure~\ref{fig::quad}) the distance between any point on the upper arc and any point on the lower arc is at least $\epsilon +s$ {\em unless} unless the lower point is in $B(a_2, s)$ or we have both that the lower point is in $B(a_1,s)$ and the upper point is in $B(b_1,s)$.
\item The left boundary of each quad coincides exactly with the right boundary of the quad to its left.
\item If $z_1$ is any point in an approximate quad $Q$, and $z_2$ is any other point outside of $Q$, then one can find a $z_3$ such that $z_3$ lies on the left, right, upper or lower boundary of $Q$ and $d(z_1, z_3) = d(z_1,z_3)+d(z_3,z_2)$. (This is also automatic from the construction and the fact that the overall metric sphere is a length space.)
\end{enumerate}

We stress that the ``approximate quads'' whose existence defines membership in  $A(T,r,s,\delta)$ are not ``true quads'' in the sense of satisfying all of the conditions of the quad shown in Figure~\ref{fig::quad}. Rather, they satisfy an approximation of those conditions.

The proof of the proposition proceeds in two parts:
\begin{enumerate}
\item First, we observe that $A(T,r,s,\delta)$ is a Gromov-Hausdorff closed set. (More precisely, every element in $A(T,r,s,\delta)$ that is obtainable as the metric net of a doubly marked oriented and geodesic sphere is again in $A(T,r,s,\delta)$.  So $A(T,r,s,\delta)$ is closed {\em within the space of quintuply marked geodesic metric spheres as above (i.e., oriented)}.)
\item 
Then we show that 
\[ A(T) = \bigcup_\delta \bigcap_r \bigcup_s A(T,r,s,\delta) \]
(where $\delta$, $r$, and $s$ are all restricted to powers of two) contains the set $M(T)$ of metric nets corresponding to the labeled tree $T$  --- which implies that the latter can be produced from countable unions/intersections of closed sets and is hence measurable. This implies that $T$ is a measurable function of the oriented metric net.
\end{enumerate}

The first part is a straightforward application of Proposition~\ref{prop::GHwithsets}. Given any sequence of elements in $A(T,r,s,\delta)$, we can form a marked sequence by decorating each sequence element with a set of quads satisfying the given conditions; for each quad there are five compact sets (the left, right, upper and lower boundaries and the whole quad itself) and by Proposition~\ref{prop::GHwithsets} one can find a subsequence along which the whole collection of sets converges; next one just observes that the properties that characterize $A(T,r,s,\delta)$ are all evidently preserved by limits of this form.

For the second part, note that since the sets $A(T,r,s,\delta)$ are decreasing in $s$, and we are taking a union over $s$, it is enough to consider very small $s$ (say $s$ smaller than any fixed threshold).  Similarly, since one is taking an intersection over $r$, it is enough to limit attention to $r$ below any fixed threshold.  Similarly, the sets $ \bigcap_r \bigcup_s A(T,r,s,\delta)$ are decreasing in $\delta$.  So, we find that the definition of $A(T)$ does not change if we require that $\delta < \epsilon/1024$ and that furthermore $s < \delta /1024$ and $r < \delta / 1024$.

Next, note that the conditions on the quads in $A(T,r,s,\delta)$ guarantee that no leftmost geodesic (starting at least $s$ distance away from $b_1$) terminates more than $s$ units from $a_2$.  This implies that either $a_2$ or a point slightly (at most $r$ units) to its left is a merge point.  So the number of true quads is at least the number of vertices in $T$. At this point, there could in principle be {\em other} true quads since there could be other merge points within $r$ units of $a_2$. However, any such true quad would have to have some positive width (some corresponding $\delta'$) which would have to be less than $r$---so the number of true quads with width {\em greater} than $r$ has to be at most the corresponding number in $T$.  Because we can take $r$ arbitrarily small, this implies that the number of {\em true quads} is exactly the number of vertices in $T$, taking the $r \to 0$ limit, it is not hard to see that the tree structure must agree with $T$.
\end{proof}

\begin{proof}[Proof of Proposition~\ref{prop::leftmosttreeismeasurable}]
For each $\epsilon > 0$, we let $T_\epsilon$ be the tree produced from Proposition~\ref{prop::treemeasurable} and let $X^\epsilon$ be its contour function.  We also let $T$ be the tree of leftmost geodesics.  If $(S,d,\nu,x,y)$ has a metric net which is strongly coalescent, by Proposition~\ref{prop::coalescentprecompact} we know that $T$ is precompact when equipped with the tree metric.  In particular, $T$ is totally bounded.  This implies that for every $\delta > 0$ there exists $\epsilon_0 > 0$ so that for every $\epsilon \in (0,\epsilon_0)$ every point $T$ has distance at most $\delta > 0$ from a point in $T_\epsilon$, when equipped with the tree metric.  From this, it is not difficult to see that for all $\epsilon,\epsilon' \in (0,\epsilon_0)$ the uniform distance between $X^\epsilon$ and $X^{\epsilon'}$ modulo monotone reparameterization is at most $\delta > 0$.  Therefore $X^\epsilon$ converges as $\epsilon \to 0$ modulo monotone reparameterization to the contour function $X$ for $T$.  This proves the desired measurability of $X$ as a function of $(S,d,\nu,x,y)$ since we have exhibited it as a limit of measurable maps.
\end{proof}

Combining the previous two propositions implies that the set of doubly-marked and oriented geodesic spheres whose metric net from $x$ to $y$ is strongly coalescent is measurable.  Indeed, we know that the contour function $X^\epsilon$ of $T_\epsilon$ is measurable for each $\epsilon > 0$.  As explained in the proof of Proposition~\ref{prop::coalescentprecompact}, whether $(S,d,\nu,x,y) \in \gmsspace^{2,O}$ has metric net from $x$ to $y$ which is strongly coalescent is determined by whether there exists rational $0 < r_1 < r_2$ such that the number of crossings made by the graph of $X^\epsilon$ across the lines with heights $r_1,r_2$ is unbounded as $\epsilon \to 0$.

We are now going to upgrade the statement of Proposition~\ref{prop::leftmosttreeismeasurable} to obtain that the map which takes as input an element of $\gmsspace^{2,O}$ and outputs the pair consisting of the contour function of the leftmost geodesic tree and the equivalence relation which encodes how the tree is glued to itself is Borel measurable.  In order to formalize this statement, we need to introduce an appropriate space and $\sigma$-algebra.

We let $\treeequivspace$ be the set of pairs consisting of a continuous function $X \colon \T_1 \to \R_+$ which is not constant in any interval and with $\inf_{t \in \T_1} X_t = 0$ and a compact set $K \subseteq \T_2$, $\T_2 = \T_1 \times \T_1$, where we consider pairs $(X,A)$, $(Y,K)$ in $\treeequivspace$ to be equivalent if there exists an increasing homeomorphism $f \colon \T_1 \to \T_1$ so that $X = Y \circ f$ and $A = f^{-1}(K)$ where we abuse notation and write $f^{-1}(K) = \{(f^{-1}(x),f^{-1}(y)) : (x,y) \in K\}$.  We define a metric $d$ on $\treeequivspace$ by setting
\[ d((X,A),(Y,K)) = \inf_f \bigg( \| X - Y \circ f\|_\infty + \dh(A,f^{-1}(K))  \bigg)\]
where the infimum is over all $f$ as above.

\begin{proposition}
\label{prop:equiv_measurable}
Consider the map $F \colon \gmsspace^{2,O} \to \treeequivspace$ which is defined as follows.  Suppose that $(S,d,\nu,x,y) \in \gmsspace^{2,O}$.  If $(S,d)$ is not an oriented geodesic sphere whose metric net from $x$ to $y$ is strongly coalescent, then it outputs $(0,\emptyset)$.  If $(S,d)$ is a geodesic sphere whose metric net from $x$ to $y$ is strongly coalescent, then it outputs the pair consisting of the contour function $X \colon \T_1 \to \R_+$ of the leftmost geodesic tree and the set $K \subseteq \T_2$ which consists exactly of those pairs $(s,t) \in \T_2$ which correspond to the same points in $S$, both modulo monotone parameterization.  Then $F$ is Borel measurable.
\end{proposition}
\begin{proof}
The same argument used to prove Proposition~\ref{prop::leftmosttreeismeasurable} implies that the following is true.  Consider the map $G$ from $\gmsspace^{2,O}$ to the space which consists of an element $(S,d,\nu,x,y)$ of $\gmsspace^{2,O}$ and a continuous map $f \colon \T_1 \to S$, defined modulo monotone parameterization, which is defined as follows.  If $(S,d,\nu,x,y)$ is not a geodesic sphere with strongly coalescent metric net from $x$ to $y$, then $G$ outputs the pair consisting of $(S,d,\nu,x,y)$ and the function $f(t) = x$ for all $t \in \T_1$.  If $(S,d,\nu,x,y)$ is a geodesic sphere with strongly coalescent metric net from $x$ to $y$, then $G$ outputs the pair consisting of $(S,d,\nu,x,y)$ and the function $f \colon \T_1 \to S$ which is defined by setting $f(t)$ for $t \in \T_1$ to be equal to the point in $S$ on the leftmost geodesic tree which corresponds to $t \in \T_1$ using the contour function $X$.  Then this is a measurable function when we equip the target space with the obvious extension of the Gromov-weak topology.  We can construct $F$ from $G$ as follows.  If $(S,d,\nu,x,y)$ is a doubly marked and oriented geodesic sphere whose metric net from $x$ to $y$ is strongly coalescent and $((S,d,\nu,x,y),f) = G((S,d,\nu,x,y))$, then we set $F( (S,d,\nu,x,y))$ to consist of the contour function produced by Proposition~\ref{prop::leftmosttreeismeasurable} and the compact set $K$ of $\T_2$ consisting of those $0 \leq s,t \leq 1$ so that $d(f(s),f(t)) = 0$.  This function is measurable since the function which takes as input a pair consisting of an element $(S,d,\nu,x,y)$ of $\gmsspace^{2,O}$ and a continuous function $f \colon \T_1 \to S$ and outputs the pair $(S,d,\nu,x,y)$ and the set $K$ as described just above is a measurable map from $\gmsspace^{2,O}$ to the space of doubly marked oriented and geodesic spheres which are also marked by a compact set in $\T_2$.
\end{proof}

\section{Tree gluing and the L\'evy net}
\label{sec::surfacesfromtrees}

Section~\ref{subsec::spheressymetric} and Section~\ref{subsec::spheressymetricholes} briefly recall two tree-mating constructions developed in~\cite{matingtrees}, one involving a pair of continuum random trees, and the other involving a pair of $\alpha$-stable looptrees \cite{ck2013looptrees}.  These very brief sections are not strictly necessary for the current project, but we include them to highlight some relationships between this work and~\cite{matingtrees} (relationships that play a crucial role in the authors' works relating the Brownian map and pure Liouville quantum gravity). The real work of this section begins in Section~\ref{subsec::spheresassymetricholes}, which describes how to construct the $\alpha$-L\'evy net by gluing an $\alpha$-stable looptree to itself (or equivalently, by gluing an $\alpha$-stable looptree to a certain related real tree derived from the $\alpha$-stable looptree --- the geodesic tree of the L\'evy net). The reader may find it interesting to compare the construction in Section~\ref{subsec::spheresassymetricholes}, where a {\em single} $\alpha$-stable looptree is glued {\em to itself}, to the one in~Section~\ref{subsec::spheressymetricholes}, where {\em two} $\alpha$-stable looptrees are glued {\em to each other}.  In Section~\ref{subsec::levynet_second_construction} we present a different but (it turns out) equivalent way to understand and visualize the L\'evy net construction given in Section~\ref{subsec::spheresassymetricholes}.  We give a review of continuous state branching processes in Section~\ref{subsec::csbp}, then give a breadth-first construction of the L\'evy net in Section~\ref{subsec::levynetbreadthfirst}, and finally prove the topological equivalence of the L\'evy net constructions in Section~\ref{subsec::levy_net_top_equiv}.  We end this section by showing in Section~\ref{subsec::recovering_embedding} that the embedding of the L\'evy net into $\S^2$ is determined up to homeomorphism by the geodesic tree and its associated equivalence relation in the L\'evy net.

\subsection{Gluing together a pair of continuum random trees}
\label{subsec::spheressymetric}

There are various ways to ``glue together'' two continuum trees to produce a topological sphere decorated by a space-filling path (describing the ``interface'' between the two trees).  One approach, which is explained in \cite[Section~1.1]{matingtrees}, is the following: let~$X_t$ and~$Y_t$ be independent Brownian excursions, both indexed by $t \in [0,T]$.  Thus $X_0 = X_T = 0$ and $X_t > 0$ for $t \in (0,T)$ (and similarly for~$Y_t$).  Once~$X_t$ and~$Y_t$ are chosen, choose~$C$ large enough so that the graphs of $X_t$ and $C-Y_t$ do not intersect.  (The precise value of $C$ does not matter.)  Write $R = [0,T] \times [0,C]$, viewed as a Euclidean metric space.

Let~$\cong$ denote the smallest equivalence relation on~$R$ that makes two points equivalent if they lie on the same vertical line segment with endpoints on the graphs of~$X_t$ and $C-Y_t$, or they lie on the same horizontal line segment that never goes above the graph of~$X_t$ (or never goes below the graph of $C-Y_t$).  Maximal segments of this type are shown in Figure~\ref{fig::lamination_treemaking}.  As explained in \cite[Section~1.1]{matingtrees}, if one begins with the Euclidean rectangle and then takes the topological quotient w.r.t.\ this equivalence relation, one obtains a topological sphere, and the path obtained by going through the vertical lines in left-to-right order is a continuous space-filling path on the sphere, which intuitively describes the ``interface'' between the trees encoded by $X_t$ and $Y_t$ after quotienting by $\cong$.  In fact, this remains true more generally when~$X_t$ and~$Y_t$ are not independent, and the pair $(X_t, Y_t)$ is instead an excursion of a correlated two-dimensional Brownian motion into the positive quadrant (starting and ending at the origin), as explained in detail in \cite{matingtrees, quantum_spheres}.

\begin{figure}[ht!]
\begin{center}
\includegraphics[width=0.52\textwidth]{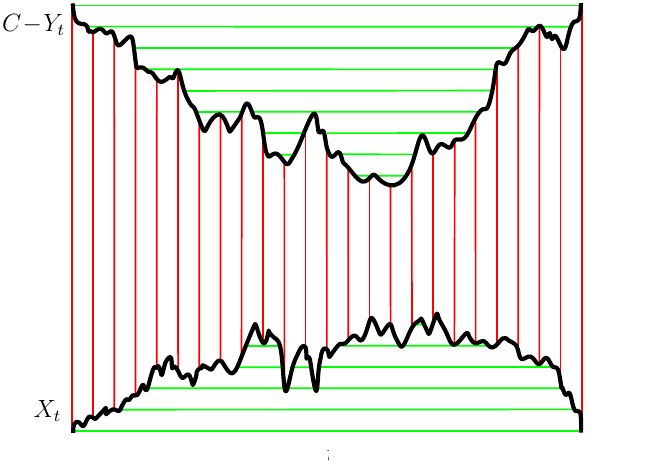}
\includegraphics[width=0.46\textwidth]{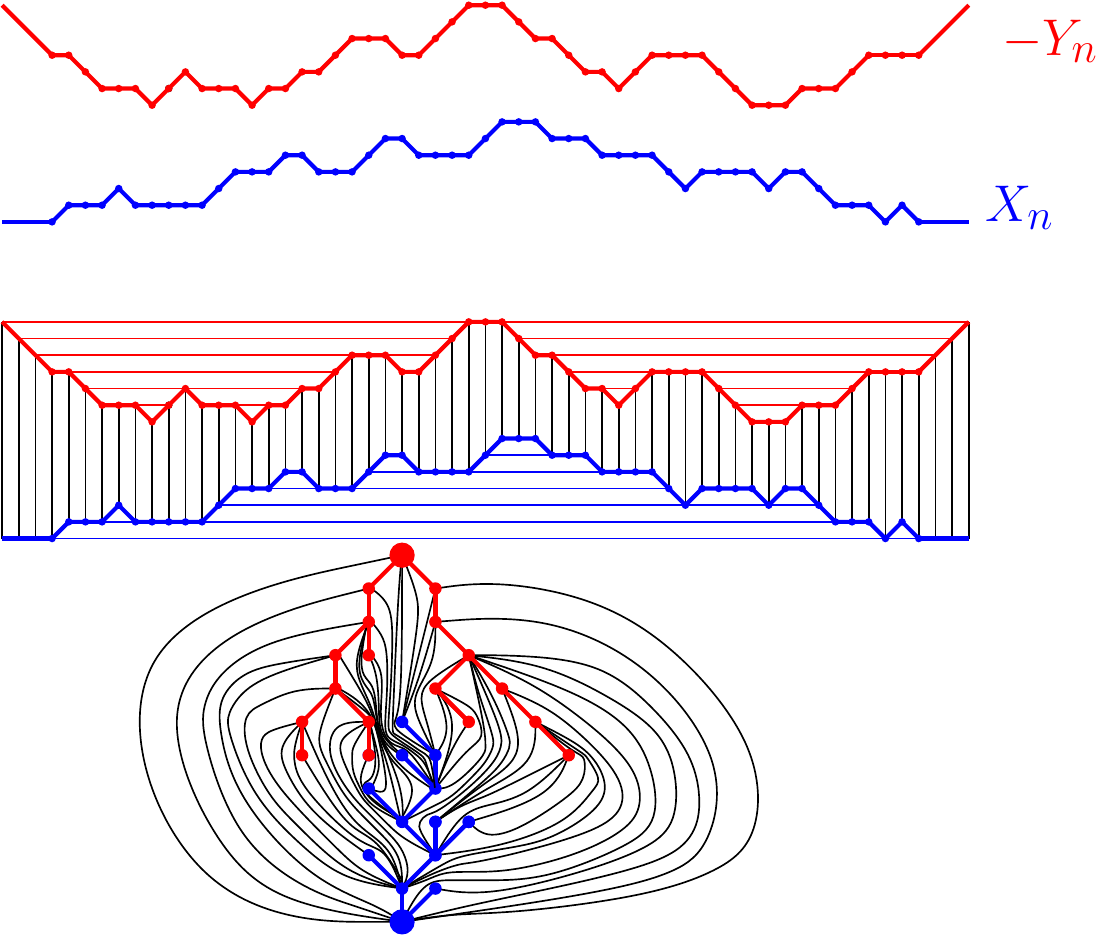}
\caption{\label{fig::lamination_treemaking}  {\bf Left:} {\it Gluing continuum random trees to each other.} Here $X_t$ and $Y_t$ are Brownian excursions and $C$ is a constant chosen so that the two graphs shown do not intersect.   Points on the same vertical (or horizontal) line segment are declared to be equivalent.  The space of equivalence classes (endowed with the quotient topology) can be shown to be homeomorphic to the sphere \cite[Section~1.1]{matingtrees}. {\bf Right:} {\it Gluing discrete trees to each other.}  There is a standard discrete analog of the construction shown in the left that produces a planar triangulation (with distinguished tree and dual tree) from a finite walk $(X_n,Y_n)$ in $\Z_+^2$ that starts and ends at $(0,0)$.   The bottom figure is obtained by collapsing the horizontal red and blue lines to produce two trees, connected to each other by black edges. See \cite{mullintrees, bernardibijection, sheffield2011quantum} for details. }
\end{center}
\end{figure}

\subsection{Gluing together a pair of stable looptrees}
\label{subsec::spheressymetricholes}

Also discussed in \cite[Section~1.3]{matingtrees} is a method of obtaining a sphere by gluing together two independent stable looptrees (with the disk in the interior of each loop included), as illustrated in Figure~\ref{fig::levytreegluing}.  Each stable looptree is encoded by the time-reversal of an $\alpha$-stable L\'evy excursion with only upward jumps with $\alpha \in (1,2)$.  In the setting discussed there, each of the grey disks surrounded by a loop is given a conformal structure (that of a ``quantum disk''), and this is shown to determine a conformal structure of the sphere obtained by gluing the trees together; given this structure, the interface between the trees in Figure~\ref{fig::levytreegluing} is shown to be an $\SLE_{\kappa'}$ process for $\kappa' = 16/\gamma^2 \in (4,8)$ where $\alpha = \kappa'/4 \in (1,2)$.  In a closely related construction, the interface between the trees in the left side of Figure~\ref{fig::lamination_treemaking} is shown to be a space-filling form of $\SLE_{\kappa'}$ in which the path ``goes inside and fills up'' each loop after it is created. As explained in \cite{matingtrees}, one obtains a range different values of $\kappa'$ by taking the trees to be correlated with each other and varying the correlation coefficient.

\begin{figure}[ht!]
\begin{center}
\includegraphics[scale=.8]{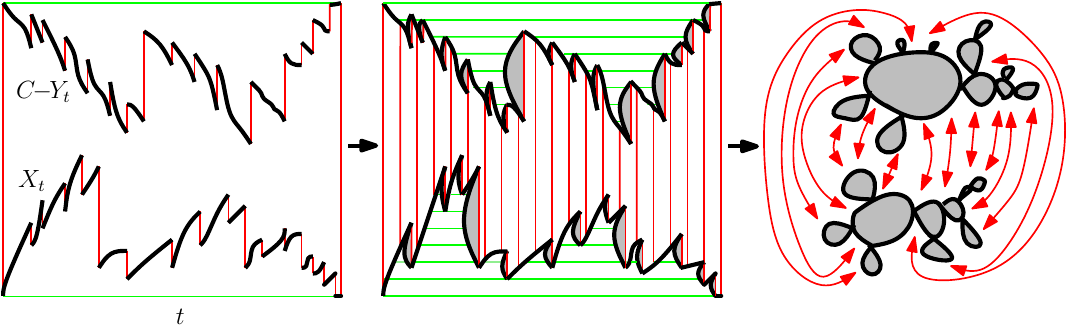}
\caption{\label{fig::levytreegluing} {\bf \it Gluing stable looptrees to each other.} {\bf Left:}  $X_t$ and $Y_t$ are independent and are each given by the time-reversal of an $\alpha$-stable L\'evy excursion, $\alpha \in (1,2)$, with only upward jumps (so that $X_t$, $Y_t$ have downward jumps).  Graphs of $X_t$ and $C-Y_t$ are sketched; red segments indicate jumps. {\bf Middle:} Add a black curve to the left of each jump, connecting its two endpoints; the precise form of the curve does not matter (as we care only about topology for now) but we insist that it intersect each horizontal line at most once and stay strictly below the graph of $X_t$ (or above the graph of $C-Y_t$) except at its endpoints.  (The reader may easily verify that it is a.s.\ possible to draw such a path for every jump discontinuity.) We also draw the vertical segments that connect one graph to another, as in the left side of Figure~\ref{fig::lamination_treemaking}, declaring two points equivalent if they lie on the same such segment (or on the same jump segment).  Shaded regions (one for each jump) are topological disks.  {\bf Right:} By collapsing green segments and red jump segments, one obtains two trees of disks with outer boundaries identified.}
\end{center}
\end{figure}

\subsection{Gluing a stable looptree to itself to obtain the L\'evy net}
\label{subsec::spheresassymetricholes}

Throughout, we fix $\alpha \in (1,2)$.  Figure~\ref{fig::levynet} illustrates a procedure for generating a sphere from a single stable looptree, which in turn is generated from the time-reversal of an $\alpha$-stable L\'evy excursion with only upward jumps. (See Definition~\ref{def::levynet} below for a more formal description.) Precisely, Proposition~\ref{prop::spherelevynet} below will show that the topological quotient of the rectangle, w.r.t.\ the equivalence relation illustrated, actually is a.s.\ homeomorphic to the sphere. The process~$Y_t$ illustrated there is sometimes known as the {\em height process} of the process $X_t$, which is the \cadlag\ modification of the time-reversal of an $\alpha$-stable L\'evy excursion with upward jumps (so $X_t$ has downward jumps). The fact that this~$Y_t$ is well-defined and a.s.\ has a continuous modification (along with H\"older continuity and the exact H\"older exponent) is established for example in \cite[Theorems~1.4.3 and~1.4.4]{duquesnelegall2005levytrees} (see also \cite{legalllejan1998levytrees}).

In this construction the upper tree in the figure is not independent of the lower tree (with holes); in fact, it is strictly determined by the L\'evy excursion below, as explained in the figure caption.  Note that every jump in the L\'evy excursion (corresponding to a bubble) comes with a ``height'' which is encoded in the upper tree.  If one removes from the constructed sphere the grey interiors of the disks shown, one obtains a closed subset of the sphere; this set, together with its topological structure, can also be obtained directly without reference to the sphere (simply take the quotient topology on the set of equivalence classes in the complement of the grey regions in Figure~\ref{fig::levynet}).  It is important to note that the set of \emph{record infima} achieved by $X|_{[t,T]}$ looks locally like the range of a stable subordinator with index $\alpha - 1$ \cite[Chapter~VIII, Lemma~1]{bertoinlevybook}, and that in particular it a.s.\ has a well-defined Minkowski measure \cite{ft1983local}, which also corresponds to the time parameter of the stable subordinator.\footnote{For an $\alpha$-stable process with no {\em negative} jumps ($\beta = 1$ in language of \cite{bertoinlevybook}) the statement in \cite[Chapter~VIII, Lemma~1]{bertoinlevybook} is that the set of record {\em maxima} (the range of the so-called ``ladder height'' process) has the law of the range of a stable subordinator of index $\alpha \rho$ where
\[ \rho = \frac12 + (\pi \alpha)^{-1} \arctan (\tan (\pi \alpha/2)) = \frac12 + (\pi\alpha)^{-1} (\pi  \alpha/2-\pi) = 1-1/\alpha.\]
(Recall that for $x \in (\pi/2, \pi)$ we have $\arctan (\tan (x)) = x - \pi$.)  Thus in this case the index of the stable subordinator is $\alpha \rho = \alpha - 1 $.  This value varies between $0$ and $1$ as $\alpha$ varies between $1$ and $2$.  The dimension of the range is given by the index $\alpha - 1$ (a special case of \cite[Chapter~III, Theorem~15]{bertoinlevybook}.}

\begin{figure}[ht!]
\begin{center}
\includegraphics [scale=.8]{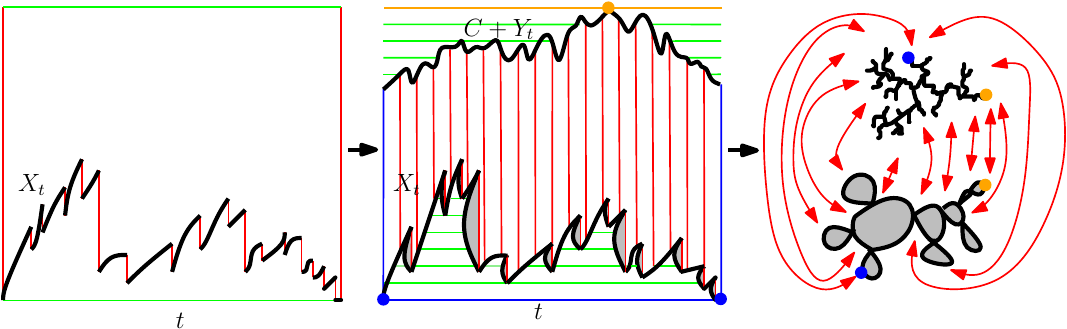}
\caption{\label{fig::levynet} {\bf \it Gluing a stable looptree to itself.}  Illustration of Definition~\ref{def::levynet}, the definition of the $\alpha$-stable L\'evy net. {\bf Left:}  $X_t$ is the \cadlag\ modification of the time-reversal of an $\alpha$-stable L\'evy excursion with only positive jumps.  {\bf Middle:} Extra arcs are added to the lower graph as in Figure~\ref{fig::levytreegluing}.  $Y_t$ is the Minkowski measure of the set of record infimum values obtained by $X|_{[t,T]}$.  (This quantity corresponds to a ``distance'' to the dual root, in the sense of \cite{dlg2002trees_levy}.) Red and green lines indicate equivalences.  Note that whenever the lower endpoints of two vertical red segments are connected to one another by a green segment, it must be the case that the upper endpoints have the same height (which may be hard to recognize from this hand-drawn figure).  {\bf Right:} Once the green lines are collapsed, one has a tree and a tree of loops (which we will refer to as either the dual tree or looptree).  The tree above is the geodesic tree.  The orange dot is the root of that tree.  The blue dot is a ``dual root'' (a second marked point).  The horizontal green lines above the graph of $Y_t$ ``wrap around'' from one side of the rectangle to the other; these lines correspond to the points on the geodesic tree arc from the orange dot to the blue dot.}
\end{center}
\end{figure}

We now give the formal definition of the L\'evy net, which is defined in terms of an $\alpha$-stable L\'evy excursion with only upward jumps.  Recall \cite[Chapter~VIII.4]{bertoinlevybook} that the standard infinite measure on $\alpha$-stable L\'evy excursions with only upward jumps is constructed as follows.  One first picks a lifetime~$T$ from the infinite measure $c_\alpha T^{-1/\alpha-1} dT$ where $c_\alpha > 0$ is a constant and $dT$ denotes Lebesgue measure on $\R_+$.  One then samples a normalized (unit length) excursion and then finally scales space and time respectively by the factors~$T^{1/\alpha}$ and~$T$.

\begin{definition}
\label{def::levynet}
Fix $\alpha \in (1,2)$ and suppose that $X_t$ is the \cadlag\ modification of the time-reversal of an $\alpha$-stable L\'evy excursion (as defined just above) and let $Y_t$ be its associated height process.  Fix $C > 0$ large enough so that the graphs of $X_t$ and $C+Y_t$ are disjoint and let~$R$ be the smallest Euclidean rectangle which contains both the graphs of~$X_t$ and $C+Y_t$.  We then define an equivalence relation on $R$ as follows.  We declare points of $R$ which lie above the graph of $C+Y_t$ to be equivalent if they lie on a horizontal chord which does not cross the graph of $C+Y_t$.  For each $t$, we declare the points of $R$ on the vertical line segment from $(t,X_t)$ to $(t,C+Y_t)$ to be equivalent.  Finally, we declare points of $R$ which lie below the graph of $X_t$ (extended as in Figure~\ref{fig::levytreegluing} and Figure~\ref{fig::levynet}) to be equivalent if they lie on a horizontal chord which does not cross the graph of~$X_t$.  The quotient space w.r.t.\ this equivalence relation is the doubly marked compact topological space that we call the {\bf ($\alpha$-stable) L\'evy net}. Let $\pi$ be the corresponding quotient map from $R$ to this space. As Figure~\ref{fig::levynet} illustrates the topological space can be understood as a gluing of a pair of trees: the geodesic tree $T_1$ (corresponding to $C-Y_t$) and a dual tree $T_2$ (corresponding to $X_t$). The roots of these two trees are respectively the \emph{root} and \emph{dual root} of the L\'evy net.

(The reason for these names for $T_1,T_2$ is that we will later find that the $3/2$-stable L\'evy net describes the metric net in the Brownian map where $T_1$ is the tree of geodesics.)

We view the L\'evy net as a random variable taking values in the space $\treeequivspace$ defined at the end of Section~\ref{subsec::mmsigma} (i.e., a continuous function on $\T_1 \to \R_+$ which defines a real tree together with a compact subset of $\T_2$ which defines a topologically closed equivalence relation on $\T_1$, both viewed modulo monotone reparameterization).  The tree is the one encoded by $-(Y_{t/T} - \sup_{t \in [0,T]} Y_t)$.  The equivalence relation on $\T_1$ is induced by the equivalence relation on $R$ defined above (we will prove in Proposition~\ref{prop::spherelevynet} that this equivalence relation is topologically closed).

Although {\em a priori} we do not put a full metric space structure on the L\'evy net, we define the \emph{distance to the root} of a point in the L\'evy net to be the distance inherited from the geodesic tree, i.e., the value of the function $\sup_s Y_s-Y_t$.  The image of a shortest path to the root in $T_1$ is called  a {\em geodesic to the root}. Also, it is not hard to see that every point in the L\'evy net corresponds to either one or two points in $T_1$, and hence has either one or two distinguished ``geodesics'' from itself to the root (see the proof of Proposition~\ref{prop::spherelevynet}). When there are two, we refer to them as a {\em leftmost geodesic} and a {\em rightmost geodesic}, depending on whether they correspond to the leftmost or rightmost path in $T_1$.
\end{definition}

The left and right geodesics arise in Definition~\ref{def::levynet} when two geodesics in the geodesic tree are identified together at some point.  Every point in the dual tree which is a child of such a point then has at least two geodesics in the L\'evy net which go back to the root.  Since the L\'evy net is defined by an equivalence relation on a Euclidean rectangle, there is a well-defined leftmost and rightmost geodesic from each point back to the root (there in fact can be many geodesics from a given point back to the root).  These are the left and right geodesics referred to in Definition~\ref{def::levynet} just above.

We now establish a few basic properties of the L\'evy net.

\begin{proposition}
\label{prop::nodoublesidedrecord}
Suppose that $Y_t$ is the height process associated with the time-reversal of an $\alpha$-stable L\'evy excursion with only upward jumps.  It is a.s.\ the case that $Y_t$ does not have a decrease time.  That is, it is a.s.\ the case that there does not exist a time $t_0$ and $h>0$ such that $Y_s \geq Y_{t_0}$ for all $s \in (t_0-h, t_0)$ and $Y_s \leq Y_{t_0}$ for $s \in (t_0, t_0+h)$.  Similarly, $Y_t$ a.s.\ does not have an increase time.
\end{proposition}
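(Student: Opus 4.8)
The plan is to argue by contradiction, following the picture in Figure~\ref{fig::nodoublesidedrecord}: if $Y$ had a decrease time, then the L\'evy net equivalence relation of Definition~\ref{def::levynet} would glue together enough of the graph of $C+Y$ that a positive-mass set of points of the geodesic tree would acquire more than one geodesic back to the root, contradicting Lemma~\ref{lem::noreflectedcsbp}.

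\emph{Reduction.} Suppose, for contradiction, that with positive probability $Y$ has a decrease time. Using that $Y$ has a continuous modification (recorded above via \cite{duquesnelegall2005levytrees}), the event that a decrease time exists can be written as a countable union of events, indexed by rational enclosing windows, on each of which a decrease time $t_0$ together with a parameter $h>0$ is specified, so that $Y_s\geq Y_{t_0}$ for $s\in(t_0-h,t_0)$ and $Y_s\leq Y_{t_0}$ for $s\in(t_0,t_0+h)$; this is a routine measurability point. Hence it suffices to fix one such event and derive a contradiction on it. We may discard the degenerate cases $t_0\in\{0,T\}$ and $Y_{t_0}=0$ (in the latter, $0\leq Y\leq Y_{t_0}=0$ on $(t_0,t_0+h)$ would force $Y$ to vanish on an interval, which is impossible for the height process of an excursion), so from now on $0<t_0<T$ and $m:=Y_{t_0}>0$; let $t_-:=\sup\{s<t_0:Y_s<m\}$, so that $t_-\leq t_0-h$ and, by continuity, $Y_{t_-}=m$ and $Y\geq m$ on $(t_-,t_0)$.

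\emph{The gluing picture.} As illustrated in Figure~\ref{fig::nodoublesidedrecord}, near $t_0$ the graph of $C+Y$ descends to level $C+m$ from above on $(t_-,t_0)$ and then remains at or below level $C+m$ on $(t_0,t_0+h)$. Reading off the equivalence relation in Definition~\ref{def::levynet} (its horizontal chords together with the vertical segments joining the graph of $X$ to the graph of $C+Y$), this configuration collapses the chord at the decrease level to a single point $p$ of the geodesic tree, and --- crucially --- makes the portion of the geodesic tree between $p$ and the root reachable from the root along two genuinely distinct routes: one descending just to the left of $t_-$ and one descending just to the right of $t_0$, the latter being available precisely because $Y\leq m$ on $(t_0,t_0+h)$ so that the graph of $C+Y$ does not block it. It follows that every point of the geodesic tree corresponding to a point of the graph of $C+Y$ below level $C+m$ in the region cut off by $p$ --- in particular the image of each $s\in(t_0,t_0+h)$ --- carries at least two distinct geodesics back to the root.

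\emph{Contradiction and the main obstacle.} This set of points has positive mass for the natural measure on the geodesic tree, because that measure is the pushforward of Lebesgue measure on the coding interval and $(t_0,t_0+h)$ has positive length. But Lemma~\ref{lem::noreflectedcsbp} asserts that the set of points of the geodesic tree from which there is more than one geodesic to the root is a null set for this measure, giving the desired contradiction; therefore $Y$ a.s.\ has no decrease time. The main obstacle is the middle step: one must verify carefully, from the definition of the quotient, that the ``left'' and ``right'' descents really are distinct equivalence classes (not secretly identified to each other by some chord or vertical segment of the L\'evy net relation), and that this distinctness persists along a set of times of positive Lebesgue measure rather than the two routes re-merging immediately below $p$. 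By comparison, the reduction step and the appeal to Lemma~\ref{lem::noreflectedcsbp} are routine.
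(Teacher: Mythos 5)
Your overall strategy is the same as the paper's (a decrease time would produce a positive-measure set of points of the geodesic tree with two distinct geodesics back to the root, contradicting a confluence statement), but as written there are two genuine gaps. First, the step you describe as ``reading off the equivalence relation'' is not automatic, and it is not quite what you say: the chord at level $C+m$ over $(t_-,t_0)$ is \emph{not} collapsed to a point (its interior points lie on vertical segments and map onto a nondegenerate subtree). What is actually needed is that the specific graph points at times $t_-$ and $t_0$ are identified in the quotient, and this identification does \emph{not} come from the horizontal chords above the graph of $C+Y$ (the chord between them lies below the graph, since $Y\geq m$ on $[t_-,t_0]$). In the paper it comes from the \emph{lower} graph: one argues that $Y\geq Y_{t_-}$ on $[t_-,t_0]$ forces $X\geq X_{t_-}$ on $[t_-,t_0]$, whence $\pi((t_-,X_{t_-}))=\pi((t_0,X_{t_0}))$, and then the vertical segments transfer this to $\pi((t_-,C+Y_{t_-}))=\pi((t_0,C+Y_{t_0}))$. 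Without this looptree step there is no second geodesic at all, so the point you flag as ``the main obstacle'' (distinctness of the two ascent routes, which holds because $t_-$ and $t_0$ lie in different components of $\{Y<b\}$ for $b$ slightly above $m$) is only half of what is missing; the other half is the identification itself, which is where the hypothesis that $Y$ is the height process of $X$ is actually used.

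Second, your appeal to Lemma~\ref{lem::noreflectedcsbp} is a mis-citation: that lemma says nothing about geodesics. It is purely a statement that a CSBP restarted at $\epsilon$ upon hitting $0$ tends to $0$ in probability as $\epsilon\to 0$. The assertion you attribute to it --- that the set of points of the geodesic tree with more than one geodesic to the root is a null set --- is precisely what the second half of the paper's proof establishes, by combining Proposition~\ref{prop::levynetgeodesictrajectoriesdetermined} (the boundary length between two geodesics evolves as an $\alpha$-stable CSBP) with an iterated scheme of geodesics started at boundary-length distance $\epsilon$ from a typical point, whose successive merging times are controlled by Lemma~\ref{lem::noreflectedcsbp}; a Fubini argument then gives the null-set statement. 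So your proposal silently assumes roughly half of the actual proof. To repair it you would need to supply both the looptree identification argument and the confluence-of-geodesics argument (or an equivalent substitute for the latter), rather than quoting Lemma~\ref{lem::noreflectedcsbp} for a conclusion it does not contain.
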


See Figure~\ref{fig::nodoublesidedrecord} for an illustration of the proof of Proposition~\ref{prop::nodoublesidedrecord}.  We will postpone the detailed proof to Section~\ref{subsec::levynetbreadthfirst}, at which point we will have collected some additional properties of the height process $Y_t$.  We emphasize that Proposition~\ref{prop::nodoublesidedrecord} will only be used in the proof of Proposition~\ref{prop::spherelevynet} stated and proved just below, so the argument is not circular.

\begin{proposition}
\label{prop::noisolatedmax}
Suppose that $Y_t$ is the height process associated with the time-reversal of an $\alpha$-stable L\'evy excursion with only upward jumps.  It is a.s.\ the case that $Y_t$ has countably many local maxima, and each of these local maxima occurs at a distinct height (and hence in particular each local maximum is isolated).
\end{proposition}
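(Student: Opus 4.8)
The plan is to deduce both assertions from the single statement that a.s.\ no two local maxima of $Y$ occur at the same height. Countability then follows easily: every local maximum $t_0\in(0,T)$ of $Y$ lies in a rational time-interval $[q,q']\ni t_0$ on which $Y\le Y_{t_0}$, whence $Y_{t_0}=\max_{[q,q']}Y$; so the set of local-maximum \emph{values} of $Y$ is contained in the countable set $\{\max_{[q,q']}Y: q<q'\text{ rational},\ [q,q']\subseteq(0,T)\}$, and if distinct local maxima have distinct values this forces the set of local maxima itself to be countable (the ``isolated'' assertion is then immediate). Since $Y_0=Y_T=0$ while $Y>0$ on $(0,T)$, the endpoints play no role.

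To prove distinctness, suppose $t_1<t_2$ are local maxima of $Y$ with $Y_{t_1}=Y_{t_2}=v$, and set $m=\min_{[t_1,t_2]}Y$. Because the mass measure of the $\alpha$-stable tree is a.s.\ atom-free, $Y$ has no interval of constancy, and a short argument then gives $m<v$ (if $m=v$ then $Y\ge v$ on $[t_1,t_2]$, which together with the local maximum at $t_1$ produces a flat stretch). Pick a rational $r\in(m,v)$. Then $[t_1,t_2]$ contains a time where $Y<r$, so $t_1$ and $t_2$ lie in two distinct connected components (excursion intervals) $E$ and $E'$ of $\{Y>r\}$; shifting down by $r$, the number $v-r$ is a local-maximum height of each of the two (rescaled) standard height-process excursions carried by $E$ and $E'$. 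So it suffices to establish: (a) conditionally on the local time $\Lambda_r$ of $Y$ at level $r$, the excursions of $Y$ above level $r$ are conditionally independent; and (b) for a standard height-process excursion $\eta$ and each fixed $h>0$, the probability that $h$ is a local-maximum height of $\eta$ is zero. Granting (a) and (b): the (a.s.\ countable) sets of local-maximum heights of the distinct excursions above $r$ are conditionally independent random subsets of $\R$, each of which a.s.\ avoids any fixed point, hence are a.s.\ pairwise disjoint; a union over the countably many rational $r$ and the countably many pairs of excursions above each $r$ then rules out the configuration above.

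Statement (a) is the branching/regenerative property of the height process --- given $\Lambda_r$, the excursions of $Y$ above level $r$ form a Poisson point process with intensity $\Lambda_r$ times the excursion measure of the height process --- which is part of the continuous-state-branching-process description recalled in Section~\ref{subsec::csbp} (see also Duquesne--Le Gall). For (b), the point is that if $h$ is a local-maximum height of $\eta$, attained at $t_0$ with $\eta\le h$ on $(t_0-\delta,t_0+\delta)$, then (since $\eta$ has no flat stretch) the excursion interval of $\eta$ above level $h-\epsilon$ containing $t_0$ shrinks to $\{t_0\}$ as $\epsilon\downarrow0$, so for all small rational $\epsilon$ some excursion of $\eta$ above level $h-\epsilon$ attains maximum \emph{exactly} $h$. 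Conditionally on the local time of $\eta$ at level $h-\epsilon$, the maxima of the excursions above $h-\epsilon$ form a Poisson point process on $(h-\epsilon,\infty)$ whose intensity has a density in the height variable --- this uses the explicit tail $\sup$-law of the $\alpha$-stable excursion measure, $x\mapsto cx^{-1/(\alpha-1)}$, again from the branching picture --- so a.s.\ none of these maxima equals the fixed number $h$; a union over rational $\epsilon$ gives (b).

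The main obstacle is marshalling the structural input cleanly: one must verify that the sub-excursions of $Y$ above a level are genuine rescaled height-process excursions, set up the Poisson intensity of their maxima with the right power law, and be careful with the measure theory (the excursion measure is only $\sigma$-finite, so one argues inside the finite-measure events $\{\sup>r\}$). The remaining ingredients --- the rational-interval reduction, the no-flat-stretch property of $Y$, and the elementary fact that two conditionally independent a.s.-countable random sets, each avoiding any fixed point, are a.s.\ disjoint --- are routine given the continuity of $Y$ and the cited properties of the height process. (Alternatively, one could try to show directly, via the Markov property of $X$, that the maxima of $Y$ over disjoint rational time-intervals have atom-free conditional laws and hence differ a.s., but this seems to require essentially the same analytic input.)
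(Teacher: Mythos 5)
Your proof is correct, but it takes a genuinely different route from the paper: the paper's entire proof of Proposition~\ref{prop::noisolatedmax} is a citation (the first assertion in the proof of Theorem~4.4 of \cite{duquesnelegall2005levytrees}; see also Lemma~2.5.3 of \cite{duquenelegallasterisque}), whereas you actually reprove the statement. Your reduction is the standard one (local-maximum values lie in the countable set of suprema over rational intervals, so everything follows once distinct local maxima a.s.\ have distinct heights), and the core of your argument --- excursions of $Y$ above a fixed rational level are, given the level local time or their number, conditionally independent height-process excursions, and the law of the supremum under the excursion measure is atomless, with tail $c x^{-1/(\alpha-1)}$, which is the same computation as the extinction-time formula of Lemma~\ref{lem::csbp_extinction_time} --- is precisely the toolkit the paper itself uses in Section~\ref{subsec::levynetbreadthfirst}, so nothing you invoke is circular or foreign to the paper's framework. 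What your route buys is an actual explanation, with inputs already present in the paper, of why no two local maxima share a height; what it costs is that the structural inputs (the branching property of the height process, non-atomicity of the mass measure giving the no-constancy-interval step, the sup-law under the excursion measure) are themselves Duquesne--Le Gall results, so the argument is self-contained only modulo the same reference the paper cites, and a fully written version would need the Palm/Mecke-type bookkeeping you wave at to handle the countably many excursion pairs under a $\sigma$-finite measure. One small caveat, which applies equally to the paper's own parenthetical: distinct heights imply that each local maximum is a \emph{strict} maximum attained at a single time (no flat top and no nearby time at the same height), which is what the chord-counting in the proof of Proposition~\ref{prop::spherelevynet} actually requires; they do not literally imply that local-maximum \emph{times} are isolated from one another (bumps with strictly smaller, pairwise distinct heights can accumulate at a higher local maximum), so your ``immediate'' should be read in the former sense.
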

\begin{proof}
This is established in the first assertion in the proof of \cite[Theorem~4.4]{duquesnelegall2005levytrees}
See also \cite[Lemma~2.5.3]{dlg2002trees_levy} for a related result.
\end{proof}

\begin{proposition}
\label{prop::spherelevynet}
If one glues a topological disk into each of the loops of the looptree instance associated with an instance of the L\'evy net, then the topological space that one obtains is a.s.\ homeomorphic to~$\S^2$.
\end{proposition}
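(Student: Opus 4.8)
The plan is to use Moore's theorem on upper semicontinuous decompositions of $\S^2$: if $\cong$ is an equivalence relation on $\S^2$ whose equivalence classes are closed, connected, and non-separating (i.e., each class has connected complement), and the decomposition is upper semicontinuous, then the quotient $\S^2/\cong$ is again homeomorphic to $\S^2$. So the first step is to replace the rectangle $R$ with a sphere in a harmless way — view $R$ as sitting inside $\S^2$, collapse everything in the complement of the relevant region (the region between the graph of $X_t$ and the graph of $C+Y_t$, together with the part above $C+Y_t$) to a point, and fill each loop of the looptree with a disk — so that the L\'evy net construction becomes a quotient of $\S^2$ by an explicitly described equivalence relation $\cong$. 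One then must check the three Moore hypotheses for $\cong$.

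The equivalence classes come in a few types: (i) horizontal chords below the graph of $X_t$ (these describe the looptree / dual tree identifications, but note we are \emph{filling} the loops, so really the classes are single points inside the grey disks, together with, on the boundary, the pair of points at the two ends of a maximal such chord that bounds a filled loop), (ii) horizontal chords above the graph of $C+Y_t$ that do not cross it (the geodesic-tree identifications), (iii) the vertical segments from $(t,X_t)$ to $(t,C+Y_t)$, and (iv) points where a type-(ii) or type-(iii) class meets a type-(i) boundary point. Each such class is a finite union of segments (at most two horizontal chords joined by a vertical segment, roughly) hence compact and connected; the key point is connectedness of the \emph{complement}, i.e., that no equivalence class separates $\S^2$. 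For the geodesic-tree chords this is the standard fact that the chords in the lamination of a continuous excursion (here the height process $Y_t$) are non-crossing and each leaves two simply connected complementary components; for the vertical segments and the looptree side one argues similarly, using that $X_t$ has only positive jumps so the ``infimum'' horizontal chords below its graph are non-crossing. The slightly delicate point is the compound classes of type (iv): one must check that gluing a vertical segment to horizontal chords at each end still yields a set with connected complement, which follows because the endpoints of a vertical segment $(t,X_t)$ and $(t,C+Y_t)$ sit at ``outermost'' positions of their respective chords (this is exactly the content that makes the quotient a sphere rather than something worse, and is where Proposition~\ref{prop::nodoublesidedrecord}, ruling out decrease times of $Y_t$, and Proposition~\ref{prop::noisolatedmax}, giving that local maxima of $Y_t$ are isolated and at distinct heights, get used: they guarantee that distinct vertical segments have genuinely disjoint images and that the ``wrap-around'' green chords above $C+Y_t$ corresponding to the root-to-dual-root geodesic are handled consistently).

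The main obstacle I expect is verifying \emph{upper semicontinuity} of the decomposition — equivalently, that the quotient map is closed, equivalently that the relation $\cong$ (as a subset of $\S^2\times\S^2$) is closed. This is where the analytic input about $X_t$ and $Y_t$ really enters. One needs: if $(a_n,b_n)\to(a,b)$ with $a_n\cong b_n$, then $a\cong b$. For the horizontal-chord classes this reduces to continuity of $X_t$ and $Y_t$ plus the fact that a limit of non-crossing chords below a continuous graph is again such a chord; the subtlety is at jump times of $X_t$, where a sequence of ``infimum chords'' can converge to a chord spanning a jump — but that is precisely a boundary chord of a filled loop, hence still an equivalence (the two endpoints, with the disk interior, form one class), so the relation stays closed once we have filled the loops. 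For the vertical segments the needed statement is that $t\mapsto \{t\}\times[X_t, C+Y_t]$ has closed graph, which again follows from right/left limits of the c\`adl\`ag process $X_t$ and continuity of $Y_t$, together with the compatibility (noted in the Figure~\ref{fig::levynet} caption) that the upper endpoints of vertical segments whose lower endpoints are joined by a below-graph chord lie at the same height — a property that is built into the definition of $Y_t$ as the height/Minkowski-measure process of the record infima.

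Once the three Moore hypotheses are in hand, Moore's theorem gives that the quotient is homeomorphic to $\S^2$, and unwinding the initial reduction (the collapsed complement of the working region is a single point on this sphere, harmless) shows that the L\'evy net with each loop filled by a disk is homeomorphic to $\S^2$, as claimed. I would also remark that an alternative to citing Moore directly is to exhibit the quotient as a monotone limit of quotients by ``finite'' versions of the relation (using only jumps of $X_t$ of size at least $\eps$ and only finitely many chords of the lamination of $Y_t$), each of which visibly produces a sphere, and then pass to the limit; but the Moore-theorem route is cleaner and I would take it as the primary approach.
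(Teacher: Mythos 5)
Your proposal is correct and follows essentially the same route as the paper: apply Moore's theorem (Proposition~\ref{prop::moore}) to the explicit equivalence relation on the rectangle viewed inside $\S^2$, classify the equivalence classes into horizontal-chord, vertical-segment, and compound types using Proposition~\ref{prop::nodoublesidedrecord} and Proposition~\ref{prop::noisolatedmax}, verify connectedness and non-separation, and establish topological closedness by a compactness/subsequence argument on converging chord endpoints. The paper's proof is exactly this, so no further comparison is needed.
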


Proposition~\ref{prop::spherelevynet} implies that the quotient of the rectangle shown in Figure~\ref{fig::levynet}, w.r.t.\ the equivalence relation induced by the horizontal and vertical lines as illustrated is topologically equivalent to~$\S^2$.

We will prove Proposition~\ref{prop::spherelevynet} using Moore's theorem \cite{moore1925concerning}, which for the convenience of the reader we restate here.  Recall that an equivalence relation $\cong$ on $\S^2$ is said to be \emph{topologically closed} if and only if whenever $(x_n)$ and $(y_n)$ are two sequences in~$\S^2$ with $x_n \cong y_n$ for all $n$, $x_n \to x$ and $y_n \to y$ as $n \to \infty$, then $x \cong y$.  Equivalently, $\cong$ is topologically closed if the graph $\{(x,y): x \cong y \}$ is closed as a subset of $\S^2 \times \S^2$. The {\em topological closure} of a relation $\cong$ is the relation whose graph is the closure of the graph of $\cong$. (Note that it is not true in general that the topological closure of an equivalence relation is an equivalence relation.) The following statement of Moore's theorem is taken from \cite{milnor2004pasting}.

\begin{proposition}
\label{prop::moore}
Let~$\cong$ be any topologically closed equivalence relation on~$\S^2$.  Assume that each equivalence class is connected and not equal to all of~$\S^2$.  Then the quotient space $\S^2 / \cong$ is itself homeomorphic to~$\S^2$ if and only if no equivalence class separates~$\S^2$ into two or more connected components.
\end{proposition}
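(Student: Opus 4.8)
We only sketch the argument; a complete proof is classical (see \cite{moore1925concerning}), so we merely indicate its structure. I would dispatch the ``only if'' direction first, as it is elementary. Since $\cong$ is closed and $\S^2$ is compact, the saturation of any closed set is closed, so $q \colon \S^2 \to \S^2/\cong$ is a closed map and the associated decomposition is upper semicontinuous. If some class $C$ separated $\S^2$, say $\S^2 \setminus C = U_1 \sqcup U_2$ with $U_i$ open, nonempty and disjoint, then each $U_i$ is a union of classes (every class other than $C$ is connected and disjoint from $C$), so $C \cup U_1$ and $C \cup U_2$ are saturated and closed; hence $q(U_1)$ and $q(U_2)$ would be open, nonempty, disjoint, and cover $(\S^2/\cong) \setminus \{q(C)\}$, contradicting the connectedness of $\S^2$ minus a point.

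For the ``if'' direction, assume no class separates $\S^2$ and set $X = \S^2/\cong$. The plan is first to show that $X$ is a Peano continuum and then to invoke a topological characterization of the $2$-sphere. It is compact and connected as a continuous image of $\S^2$, and Hausdorff since $\cong$ is closed. It is second countable --- hence metrizable by Urysohn --- because, fixing a countable base of $\S^2$, for each finite union $W$ of base sets the set $X \setminus q(\S^2 \setminus W)$ is open (its $q$-preimage is $\{x : \text{class}(x) \subseteq W\}$, which is open by upper semicontinuity), and these sets form a base of $X$ since classes are compact. Finally $X$ is locally connected: given a class $C$ and a saturated open $V \supseteq C$, local connectedness of $\S^2$ yields a connected open $U$ with $C \subseteq U \subseteq V$; the union $U^{\ast}$ of all classes contained in $U$ is open and saturated, each (connected) class lies in a single component of $U^{\ast}$, so the component $U^{\ast\ast}$ of $U^{\ast}$ containing $C$ is a connected, open, saturated neighborhood of $C$ inside $V$, and $q(U^{\ast\ast})$ is a connected open neighborhood of $q(C)$.

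It then remains to identify the Peano continuum $X$ with $\S^2$, and here I would use the classical characterization of the sphere among Peano continua (Zippin): a Peano continuum that contains a simple closed curve, in which every simple closed curve separates the space, and in which no pair of points separates the space, is homeomorphic to $\S^2$. The input that makes these conditions checkable is that each class $C$, being a continuum that does not separate $\S^2$, is \v{C}ech-acyclic: $\check H^1(C) \cong \widetilde H_0(\S^2 \setminus C) = 0$ by Alexander duality, and $\check H^0(C) = \Z$ since $C$ is connected. Thus $q$ is a monotone map all of whose point preimages are acyclic, so the Vietoris--Begle mapping theorem gives $\check H^{\ast}(X) \cong \check H^{\ast}(\S^2)$; in particular $X$ has the \v{C}ech cohomology (and hence covering dimension $2$) of the sphere, which is exactly what the no-separation hypothesis purchases --- it excludes degeneracies such as the wedge of two spheres obtained by collapsing a meridian. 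Monotonicity of $q$ makes preimages of connected sets connected, and one then transfers the Jordan curve theorem across $q$: a simple closed curve $J \subseteq X$ has the property that $q^{-1}(J)$ separates $\S^2$, whence $J$ separates $X$; dually one rules out separation of $X$ by a pair of points.

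The main obstacle is precisely this last transfer: showing that the upper semicontinuous decomposition, whose members may be intricate non-separating continua, neither creates new cut points in $X$ nor destroys the separation caused by simple closed curves --- in effect a monotone-map version of the Jordan--Schoenflies theorem for $\S^2$. In Moore's original treatment this is carried out by a delicate argument with nested chains of open sets adapted to the decomposition; in a cohomological treatment it is packaged into the Vietoris--Begle isomorphism together with a recognition theorem for $\S^2$. Either way, this step is the substantive content of the proposition, the reductions above (Peano continuum, acyclicity of the classes) being the routine part.
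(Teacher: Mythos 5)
The paper does not actually prove this proposition: it is Moore's theorem, restated (following \cite{milnor2004pasting}) purely for the reader's convenience and invoked as a classical result from \cite{moore1925concerning}. So there is no internal proof to compare your argument against; the only fair comparison is with the classical literature you yourself cite.

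On the merits of your sketch: the ``only if'' direction is complete and correct (saturation of the two sides of a separating class, openness of their images, contradiction with connectedness of $\S^2$ minus a point), and the reductions in the ``if'' direction --- Hausdorffness from closedness of the relation, second countability and metrizability, local connectedness via saturated connected neighborhoods --- are all sound. Two caveats on the remaining part. First, the recognition theorem you quote (every simple closed curve separates, no pair of points separates, for a nondegenerate Peano continuum) is the Kline sphere characterization, proved by Bing; Zippin's characterization is the variant phrased with non-separating arcs, so the attribution should be fixed if you keep this route. Second, and more substantively, the sketch stops exactly where the work begins: you assert, but do not verify, that every simple closed curve $J$ in the quotient separates it and that no pair of points does. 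These checks are in fact doable along the lines you hint at --- Vietoris--Begle over $J$ gives $\check H^1\bigl(q^{-1}(J)\bigr) \cong \check H^1(J) \cong \Z$ since the fibers are non-separating continua and hence \v{C}ech-acyclic by Alexander duality, then Alexander duality again shows $q^{-1}(J)$ separates $\S^2$ into two saturated open pieces whose images disconnect the quotient; similarly the complement of a union of two disjoint acyclic continua in $\S^2$ is connected, ruling out separation by a pair of points --- but as written none of this is carried out, so your text is an outline deferring the substantive content rather than a proof. Since the paper itself treats the statement as a citation to Moore, that is a defensible posture, but it should be presented as such rather than as a proof.
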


\begin{proof}[Proof of Proposition~\ref{prop::spherelevynet}]
We first claim that Proposition~\ref{prop::nodoublesidedrecord} implies that no vertical line segment corresponding to an equivalence class in Definition~\ref{def::levynet} (or Figure~\ref{fig::levynet}) has an endpoint on two distinct (non-zero-length) horizontal segments which correspond to an equivalence class in Definition~\ref{def::levynet}.  (The reader might find it helpful to look at Figure~\ref{fig::nodoublesidedrecord}, which illustrates the proof of Proposition~\ref{prop::nodoublesidedrecord}, to visualize the argument.)  Indeed, suppose that we have a vertical chord between the graphs of $X$ and $C+Y$ which connects to an endpoint of a horizontal chord, connecting $(a,Y_a+C)$ to $(b,Y_b+C)$ say, which lies above the graph of $C+Y$.  Then there cannot exist $t \in (a,b)$ so that the graph of $X$ in $(a,t]$ is strictly above $X_a$.  This follows because if there was such a $t \in (a,b)$ then the Minkowski measure of times at which $X|_{[t,T]}$ spends at its running infimum (i.e., the time parameter of the corresponding subordinator) would be larger than that of $X|_{[a,T]}$.  That is, $Y_t > Y_a$.  Thus if the vertical chord is from $(a,X_a)$ to $(a,Y_a+C)$, a horizontal chord below the graph of $X$ which contains $(a,X_a)$ must contain $(a,X_a)$ as its right endpoint.  This cannot happen because then $a$ would be a decrease time of $Y$, which is ruled out in Proposition~\ref{prop::nodoublesidedrecord}.  Alternatively, if the vertical chord is from $(b,X_b)$ to $(b,Y_b+C)$, then a horizontal chord below the graph of $X$ which contains $(b,X_b)$ must contain $(b,X_b)$ as its left endpoint.  Then $b$ would be an increase time of $Y$, which is again ruled out in Proposition~\ref{prop::nodoublesidedrecord}.  We conclude that no equivalence class contains a non-empty horizontal chord of both the upper and lower graphs.

The equivalence classes can thus be classified as:
\begin{enumerate}
\item[Type I:] Those containing neither upper nor lower horizontal chords.  These are isolated points in the interior of one of the topological disks glued into a loop of the looptree (e.g., on the interiors of the grey regions in Figure~\ref{fig::levynet}) or single vertical lines connecting one graph to the other.
\item[Type II:] Those containing an upper (but not lower) chord.  By Proposition~\ref{prop::noisolatedmax}, such a chord can hit the graph of $C+Y_t$ either two or three times, but not more. Thus these equivalence classes consist of a horizontal line segment attached to either two or three vertical chords.
\item[Type III:] Those containing a lower (but not upper) chord. Since stable L\'evy processes with only downward jumps have a countable collection of unique local minima, such a chord must hit the black curves in either two or three places. In the (a.s.\ countable) set of places where the latter occurs, it is not hard to see that the rightmost point is a.s.\ in the interior of one of the boundaries of the grey regions.  (One can see from this that the path tracing the boundary of the looptree hits no point more than twice.) Thus the number of vertical line segments is either one (if one of the two endpoints lies on the boundary of a grey region) or two (if neither endpoint lies on the boundary of a grey region).
\end{enumerate}

From this description, it is obvious that all equivalence classes are connected, fail to disconnect the space, and do not contain the entire space. It only remains to check that the equivalence relation is topologically closed. To do this we use essentially the same argument as the one given in \cite[Section~1.1]{matingtrees}.  Suppose that $x_i$ and $y_i$ are sequences with $x_i \to x$ and $y_i \to y$, and $x_i \cong y_i$ for all $i$. Then we can find a subsequence of $i$ values along which the equivalence classes of $x_i$ and $y_i$ all have the same type (of the types enumerated above). By compactness, we can then find a further subsequence and such that the collection of segment endpoints converges to a limit. It is not hard to see that the resulting limit is necessarily a collection of vertical chords and horizontal chords (each of which is an equivalence class) that are adjacent at endpoints; since $x$ and $y$ are both in this limit we must have $x \cong y$.
\end{proof}

We next briefly remark that the L\'evy net can be endowed with a metric space structure in various ways.  Recall from Definition~\ref{def::levynet} that each point in the L\'evy net has either one or two geodesics back to the root in the tree encoded by $C-Y$ and that in the case there are two geodesics there is always a distinguished left geodesic.  The approach that we use in Definition~\ref{def::levynet} is to use the distance inherited from the leftmost geodesics: given any two points $x$ and $y$, one may draw their leftmost geodesic until they merge at a point $z$ and define the distance to be the sum of geodesic arc lengths from $x$ to $z$ and from $y$ to $z$. Another is to consider the geodesic tree (as described by $Y_t$) with its intrinsic metric structure and then take the quotient (as in Section~\ref{subsec::metricsphereobservations}) w.r.t.\ the equivalence relation induced by the gluing with the looptree. Note that when two points in the upper tree are equivalent, their distance from the root is always the same; thus, the distance between any point and the root is the same in the quotient metric space as it is in the tree itself. This implies that the metric space quotient defined this way is not completely degenerate --- i.e., it is not the case that {\em all} points become identified with each other when one takes the metric space quotient in this way. It would be natural to try to prove a stronger form of non-degeneracy for this metric structure: namely, one would like to show that a.s.\ {\em no} two distinct points in the L\'evy net have distance zero from each other in this quotient metric. This is not something that we will prove for general $\alpha$ in this paper; however, in the case that $\alpha = 3/2$, it will be derived in Section~\ref{sec::brownianmap} as a consequence of the proof of our main theorem.

We will see in Section~\ref{subsec::recovering_embedding} that given the structure described in Definition~\ref{def::levynet}, one can recover additional structure: namely an embedding in the sphere (unique up to homeomorphism of the sphere), a cyclic ordering of the points around each metric ball boundary (which is homeomorphic to either a circle or a figure $8$) with a distinguished point where the geodesic from $x$ to $y$ intersects the metric ball boundary, and a boundary length measure on each such boundary.

\subsection{A second approach to the L\'evy net quotient}
\label{subsec::levynet_second_construction}

\begin{figure}[ht!]
\begin{center}
\includegraphics [scale=1]{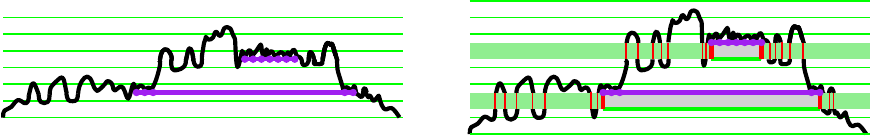}
\caption{\label{fig::levynet2}  {\bf Left:} Illustration of Definition~\ref{def::levynet2}, the second approach to the L\'evy net quotient.  Shown is the graph of $Y_t$ together with all horizontal lines, both above and below the graph, drawn as chords. The points on a horizontal chord that lies strictly above or below the graph ({\em except} for its two endpoints) are considered to be equivalent. The equivalence class corresponding to a given chord is either the chord itself or a pair of such chords above the graph with a common endpoint (a local maximum). The two horizontal purple segments correspond to sets of local minima of the same height each indicated with a purple dot, which in turn correspond to jumps of the L\'evy process. Only two such segments are drawn, but in fact there are infinitely many; the endpoints of such segments occupy a dense set of points on the graph of $Y_t$. Each such segment contains an uncountable collection of equivalence classes, including uncountably many single points (purple dots), countably many closed chords that lie strictly under the graph except at endpoints, and the pair of endpoints of the whole black segment (which is its own equivalence class). Each purple segment becomes a circle in the topological quotient.
{\bf Right:} Same graph with a horizontal stripe of ``extra space'' inserted at each purple segment. The height of the stripe can be chosen so that the sum of the heights of all of the (countably many) stripes is finite. At each of the (uncountably many) places where $Y_t$ intersects the purple segment, a corresponding red vertical ``bridge'' is added crossing the green stripe; points on the same bridge are considered equivalent.  Points on the closure of the same green rectangle (bounded between successive bridges) are also considered equivalent. The bottom, left, and right edges of each grey rectangle together constitute a single equivalence class, so that the topological quotient of each grey rectangle's boundary is a circle (as in the left figure).}
\end{center}
\end{figure}

We are now going to give another construction of a topological space with the height process $Y_t$ as the starting point which we will show just below is equivalent to the L\'evy net.  It is an arguably simpler way to understand Definition~\ref{def::levynet} (or Figure~\ref{fig::levynet}), which only involves the upper graph $C+Y_t$ (or equivalently just $Y_t$). The implications of this are discussed further in the caption to Figure~\ref{fig::levynet2}.

\begin{definition}[Second definition of the L\'evy net quotient]
\label{def::levynet2}
Let $R$ be the smallest rectangle which contains the graph of the height process $Y_t$.  We let $\cong$ be the smallest equivalence relation on $R$ in which points which lie on a horizontal chord which is strictly above or below the graph of $Y_t$ (except possibly at their endpoints) are equivalent and also points which are the left and right endpoints of the (uncountable) set of local minima of a given height corresponding to a jump time for $X_t$.
\end{definition}
See the left side of Figure~\ref{fig::levynet2} for an illustration of $\cong$ as in Definition~\ref{def::levynet2}.

\begin{proposition}
\label{prop::graphplushorizchords}
In the setting of Definition~\ref{def::levynet}, it is a.s.\ the case that two distinct points on the graph of $Y_t$ are equivalent in $\cong$ if and only if one of the following holds.
\begin{enumerate}
\item\label{it:equiv1} There is a horizontal chord above or below the graph of $Y_t$ that connects those two points and intersects the graph of $Y_t$ only at its endpoints.
\item\label{it:equiv2} There is a horizontal chord above the graph that intersects the graph of $Y_t$ at exactly one location, in addition to its two endpoints.
\item\label{it:equiv3} The two points are the left and right endpoints of the (uncountable) set of local minima of a given height corresponding to a jump time for $X_t$.
\end{enumerate}
Moreover, it is a.s.\ the case that two distinct points on the graph of $Y_t$ are equivalent under Definition~\ref{def::levynet} if and only if they are equivalent under Definition~\ref{def::levynet2}.
\end{proposition}
\begin{proof}

We begin by noting that a horizontal chord above the graph of $Y_t$ can intersect the graph of $Y_t$ in at most three places by Proposition~\ref{prop::noisolatedmax}.  We also note that a horizontal chord below the graph of $Y_t$ can only intersect the graph of $Y_t$ in two places or uncountably many places.  Indeed, suppose that the horizontal chord $[(a,Y_a)$, $(b,Y_b)]$ intersects the graph of $Y_t$ in at least $3$ places and let $(c,Y_c)$ be one of these points with $a < c < b$.  By the definition of $Y_t$, it follows that $X_c$ is a local minimum for $X_t$ which $X_t$ subsequently jumps below and therefore $b$ must be a jump time for $X_t$.  Therefore $[(a,Y_a),(b,Y_b)]$ necessarily intersects the graph of $Y_t$ uncountably many times, corresponding to the record minimum times of the time-reversal of $X|_{[a,b]}$.

We will now justify why the equivalence relations defined by Definition~\ref{def::levynet} and Definition~\ref{def::levynet2} are the same.  Suppose that $(a,Y_a)$, $(b,Y_b)$ are points on the graph of $Y_t$ with $a < b$.  Recalling the proof of Proposition~\ref{prop::spherelevynet}, they are equivalent using Definition~\ref{def::levynet} if and only if one of the following two possibilities hold (corresponding to Type II and Type III in the proof of Proposition~\ref{prop::spherelevynet}):
\begin{itemize}
\item $(a,Y_a)$, $(b,Y_b)$ are connected by a horizontal chord which lies above the graph of~$Y_t$.
\item $(a,X_a)$, $(b,X_b)$ are connected by a horizontal chord which lies below the graph of $X_t$.  By the definition of $Y_t$, this implies that $Y_a = Y_b$.  If $b$ is not a jump time for $X_t$, then this implies that $Y_r > Y_a = Y_b$ for all $r \in (a,b)$.  If $b$ is a jump time for $X_t$, then $(a,Y_a)$ and $(b,Y_b)$ are respectively the left and right endpoints of the set of local minima of $Y_t$ corresponding to the jump of $X_t$ at time $b$.
\end{itemize}
In each of these cases, $(a,Y_a)$ and $(b,Y_b)$ are equivalent under Definition~\ref{def::levynet2}.

Conversely, suppose that $(a,Y_a)$, $(b,Y_b)$ are equivalent under Definition~\ref{def::levynet2}.  Since $Y_t$ does not have increase or decrease times (Proposition~\ref{prop::nodoublesidedrecord}), it follows that the horizontal chord connecting $(a,Y_a)$, $(b,Y_b)$ cannot cross the graph of $Y_t$ (for otherwise there would be infinitely many intersections).  If the horizontal chord connecting $(a,Y_a)$, $(b,Y_b)$ lies non-strictly above the graph of $Y_t$, then it is obvious that $(a,Y_a)$, $(b,Y_b)$ are equivalent under Definition~\ref{def::levynet}.  If the horizontal chord connecting $(a,Y_a)$, $(b,Y_b)$ lies below the graph of $Y_t$ and intersects the graph of $Y_t$ only at its endpoints then we have that $X_a = X_b$ and the horizontal chord connecting $(a,X_a)$, $(b,X_b)$ lies below the graph of $X_t$.  Indeed, $X_t$ cannot have a downward jump at time $b$ because then the horizontal chord connecting $(a,Y_a)$, $(b,Y_b)$ would intersect the graph of $Y_t$ in infinitely many places.  Lastly, if $(a,Y_a)$, $(b,Y_b)$ correspond to the left and right endpoints of an uncountable set of local minima corresponding to a jump time for $X_t$, then $X_a = X_b$ and $X_r > X_a = X_b$ for $r \in (a,b)$ so that $(a,Y_a)$, $(b,Y_b)$ are equivalent under Definition~\ref{def::levynet}. 
\end{proof}

The right hand side of Figure~\ref{fig::levynet2} illustrates an alternate way to represent the topological sphere shown in Figure~\ref{fig::levynet}.  On the left hand side of Figure~\ref{fig::levynet2} (i.e., Definition~\ref{def::levynet2}), two distinct points are considered to be equivalent if and only if either:
\begin{enumerate}
\item[Case 1:] The line segment connecting them is horizontal and intersects the graph of~$Y_t$ in at most finitely many points. (Recall that it is a.s.\ the case that there can be at most three such intersection points, counting the endpoints themselves; and if one of these points is in the interior of the segment, it must be a local maximum of~$Y_t$.)
\item[Case 2:] They are the pair $\inf\{s : Y_s = m\}$ and $\sup\{s : Y_s = m\}$ where $m$ is the value of a local minimum for $Y_t$ (which in turn corresponds to a jump in the L\'evy process).
\end{enumerate}
It is interesting because at first glance it looks like any two points of the same horizontal line in the left side of Figure~\ref{fig::levynet2} should be equivalent. But of course, this is not the case if the segment between them intersects the graph of $Y_t$ infinitely often.\footnote{If one begins with the tree obtained by gluing along horizontal chords above the graph (the tree we call the geodesic tree) then each of the two types of equivalence classes described above produces an equivalence relation on this tree in which each equivalence class has exactly one or two elements. The smaller equivalence class obtained by focusing on either one of these two cases is a dense subset in the full equivalence relation; so the full relation can be understood as the topological closure of either of these two smaller relations.}

The quotient of the right side of Figure~\ref{fig::levynet2} is generated from the quotient of the left side of Figure~\ref{fig::levynet2} by gluing topological disks into each of the holes, which is the same procedure which generates the quotient in the middle image of Figure~\ref{fig::levynet} from the first definition of the L\'evy net.  Therefore the spaces defined in Definition~\ref{def::levynet} and Definition~\ref{def::levynet2} are equivalent.

We remark that one could also check directly that the relation on the right hand side of Figure~\ref{fig::levynet2} satisfies the conditions of Moore's theorem (Proposition~\ref{prop::moore}), since each of the equivalence classes is a single point, a single line segment (horizontal or vertical), a solid rectangle, or the union of the left, right, and lower sides of a grey rectangle.

\subsection{Characterizing continuous state branching processes}
\label{subsec::csbp}

To study the L\'evy net in more detail, we will need to recall some basic facts about {\em continuous state branching processes}, which were introduced by Ji\v{r}ina and Lamperti several decades ago \cite{miloslav, lamperticsbp, lampertibranchingaslimit} (see also the more recent overview in \cite{legallspatialbranchingbook} as well as \cite[Chapter~10]{kyp2006levy_fluctuations}).
A Markov process $(Y_t, t \geq 0)$ with values in~$\R_+$, whose sample paths are {\cadlag} (right continuous with left limits) is said to be a continuous state branching process (CSBP for short) if the transition kernels $P_t(x,dy)$ of $Y$ satisfy the additivity property:
\begin{equation}
\label{eqn::CSBPproperty}
P_t(x+x', \cdot) = P_t(x, \cdot) * P_t(x',\cdot).
\end{equation}

\begin{remark}
\label{rem::CSBPsubordinator}
Note that~\eqref{eqn::CSBPproperty} implies that the law of a CSBP at a \emph{fixed} time is infinitely divisible.  In particular, this implies that for each fixed $t$ there exists a subordinator (i.e., a non-decreasing process with stationary, independent increments) $A^t$ with $A_0^t = 0$ such that $A_t^t \stackrel{d}{=} Y_t$.  (We emphasize though that $Y$ does not \emph{evolve} as a subordinator in~$t$.)  We will make use of this fact several times.
\end{remark}

The Lamperti representation theorem states that there is a simple time-change procedure that gives a one-to-one correspondence between CSBPs and L\'evy processes without negative jumps starting from a positive value and stopped upon first hitting $0$, where each is a time-change of the other.   The statement of the theorem we present below is lifted from a recent expository treatment of this result \cite{surveyproofoflampertirepresentation}.

Consider the space $\mathcal D$ of {\cadlag} functions $f\colon [0,\infty] \to [0,\infty]$ such that $\lim_{t \to \infty} f(t)$ exists in $[0,\infty]$ and $f(t) = 0$ (resp.\ $f(t) = \infty$) implies $f(t+s) = 0$ (resp.\ $f(t+s)= \infty$) for all $s \geq 0$.  For any $f \in \mathcal D$, let $\theta_t := \int_0^t f(s) ds \in [0,\infty]$, and let $\kappa$ denote the right-continuous inverse of $\theta$, so $\kappa_t := \inf \{u \geq 0 : \theta_u > t \} \in [0,\infty]$, using the convention $\inf \emptyset = \infty$.  The {\em Lamperti transformation} is given by $L(f) = f \circ \kappa$. The following is the Lamperti representation theorem, which applies to $[0,\infty]$-valued processes indexed by $[0,\infty]$.

\begin{theorem}
\label{thm::lamperti}
The Lamperti transformation is a bijection between CSBPs and L\'evy processes with no negative jumps stopped when reaching zero.  In other words, for any CSBP $Y$, $L(Y)$ is a L\'evy process with no negative jumps stopped whenever reaching zero; and for any L\'evy process $X$ with no negative jumps stopped when reaching zero, $L^{-1}(X)$ is a CSBP.
\end{theorem}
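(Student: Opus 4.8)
The plan is to prove Theorem~\ref{thm::lamperti} by introducing a common analytic invariant for the two families of processes --- the \emph{branching mechanism / Laplace exponent} $\psi$, of L\'evy--Khintchine form
$\psi(\lambda) = -a\lambda + b\lambda^2 + \int_{(0,\infty)}\bigl(e^{-\lambda r}-1+\lambda r\,\mathbf{1}_{\{r\le 1\}}\bigr)\,\mu(dr)$
with $a\in\R$, $b\ge 0$ and $\int_{(0,\infty)}(r\wedge r^2)\,\mu(dr)<\infty$ --- and then checking that the Lamperti time change interchanges the two dynamical descriptions attached to a fixed $\psi$. First I would record the standard Laplace-functional description of a CSBP: using the additivity property \eqref{eqn::CSBPproperty} (equivalently the infinite divisibility noted in Remark~\ref{rem::CSBPsubordinator}), the function $x\mapsto\E_x[e^{-\lambda Y_t}]$ is multiplicative in $x$, hence of the form $e^{-x\,u_t(\lambda)}$; the semigroup property forces the flow identity $u_{t+s}=u_s\circ u_t$, and differentiating at $s=0$ yields $\partial_t u_t(\lambda)=-\psi(u_t(\lambda))$, $u_0(\lambda)=\lambda$, where $\psi$ is necessarily of the displayed type because each $u_t$ is the Laplace exponent of an $\R_+$-valued infinitely divisible law. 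Conversely such a $\psi$ determines the law of the CSBP. On the other side, a L\'evy process $X$ with no negative jumps, stopped upon reaching $0$, is characterized by $\E_x[e^{-\lambda X_t}]=e^{-\lambda x+t\psi(\lambda)}$ for $t$ before the hitting time of $0$, with the \emph{same} family of admissible $\psi$. So the claim becomes: for each $\psi$, the Lamperti transformation carries the $\psi$-CSBP to the $\psi$-L\'evy process and back.

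The heart of the argument is a time-change computation, which I find cleanest in martingale-problem form. The $\psi$-L\'evy process stopped at $0$ is the process with generator $\CL f(x) = -a f'(x) + b f''(x) + \int_{(0,\infty)}\bigl(f(x+r)-f(x)-r f'(x)\,\mathbf{1}_{\{r\le 1\}}\bigr)\,\mu(dr)$ (killed at $0$), while the $\psi$-CSBP has generator $x\,\CL f(x)$; this "multiplication of the generator by $x$" is exactly the algebraic shadow of the branching property. The relevant general principle is that time-changing a Markov process with generator $\mathcal G$ by the additive functional $t\mapsto\int_0^t\varphi(Z_s)\,ds$ produces a process with generator $\varphi^{-1}\mathcal G$ on $\{\varphi>0\}$. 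Applied to $Z=Y$ the CSBP with $\varphi(x)=x$: since $\theta_t=\int_0^t Y_s\,ds$ has $\theta_t'=Y_t$, its right-inverse $\kappa$ satisfies $\kappa_t'=1/Y_{\kappa_t}=1/X_t$ where $X=L(Y)=Y\circ\kappa$; hence $X$ has generator $(1/x)\cdot(x\CL)=\CL$, i.e.\ $X$ is the $\psi$-L\'evy process, and it is stopped at $0$ because $\theta$ freezes once $Y$ hits $0$. For the converse, given the $\psi$-L\'evy process $X$ stopped at $0$, set $\sigma$ to be the inverse of $t\mapsto\int_0^t du/X_u$; then $\sigma_t'=X_{\sigma_t}=Y_t$ for $Y:=L^{-1}(X)=X\circ\sigma$, so $Y$ has generator $x\CL$ and obeys \eqref{eqn::CSBPproperty}. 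Finally, the two time changes are reciprocal: a change of variables gives $\int_0^t du/X_u=\int_0^t du/Y_{\kappa_u}=\kappa_t$, so $\sigma=\theta$ and $X\circ\sigma=Y\circ\kappa\circ\theta=Y$, and symmetrically $L\circ L^{-1}=\mathrm{id}$; this yields the asserted bijection.

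The step I expect to be the main obstacle is the careful bookkeeping of boundary behavior at $0$ and at $+\infty$, together with the rigor of time-changing the (local) martingale problem when the rate $\varphi(x)=x$ vanishes at $0$ and is unbounded at $\infty$. One must check that $\kappa$ and $\sigma$ are well-defined, right-continuous, $[0,\infty]$-valued stopping times; that when $Y$ reaches $0$ in finite time with finite $\theta$-value the image $X$ genuinely terminates at $0$ (and conversely that the divergence --- or not --- of $\int_0^\cdot du/X_u$ near the first hitting time of $0$ by $X$ matches finite- versus infinite-time extinction of the CSBP, governed by integrability of $1/\psi$ near $\infty$); and the analogous analysis at $+\infty$, using the conventions that $Y\equiv\infty$ after explosion and that $\theta$ then jumps to $\infty$ (which is why the theorem is phrased for the space $\mathcal D$ of $[0,\infty]$-valued paths absorbed at $0$ and $\infty$). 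These are precisely the delicate points treated in the expository account \cite{surveyproofoflampertirepresentation}, whose statement we have quoted; once they are in place, the identification of generators and the reciprocity of the two time changes deliver the bijection.
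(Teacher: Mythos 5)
The paper does not actually prove Theorem~\ref{thm::lamperti}: it is imported as a known classical result, with the statement lifted from the expository account \cite{surveyproofoflampertirepresentation}, and is then only \emph{used} downstream (via Propositions~\ref{prop::stablecsbp} and~\ref{prop::strongstablecsbp}). So there is no in-paper argument to compare against; what you have written is a sketch of one of the standard proofs --- identify both classes of processes through a common branching mechanism $\psi$, note that the CSBP generator is $x\,\CL$ where $\CL$ generates the $\psi$-L\'evy process, and check that the Lamperti time change by $\int_0^t Y_s\,ds$ converts one martingale problem into the other, with the reciprocity of the two time changes giving the bijection. That outline is sound and is essentially one of the routes covered in \cite{surveyproofoflampertirepresentation}. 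Be aware, though, that nearly all of the real content sits in exactly the steps you defer to that same reference: well-posedness of the two martingale problems, the validity of the time-change principle when the rate $\varphi(x)=x$ vanishes at $0$ and is unbounded near $\infty$, and the boundary bookkeeping at $0$ and $+\infty$ (finite- versus infinite-time extinction, explosion, and the conventions making $\kappa$ and $\sigma$ genuine $[0,\infty]$-valued inverses). Deferring these is fine for a sketch, but it means your argument does not add much beyond the citation the paper already makes.

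Two smaller points. The L\'evy--Khintchine form you wrote imposes $\int_{(0,\infty)}(r\wedge r^2)\,\mu(dr)<\infty$, which forces $\int_{\{r>1\}} r\,\mu(dr)<\infty$ and hence excludes infinite-mean CSBPs (for instance Neveu's mechanism $\psi(u)=u\log u$); with the truncated compensation $\lambda r\,\mathbf{1}_{\{r\le 1\}}$ that you use, the correct requirement is only $\int_{(0,\infty)}(1\wedge r^2)\,\mu(dr)<\infty$, and the full bijection asserted in the theorem needs this larger class. Also, differentiating the flow identity $u_{t+s}=u_s\circ u_t$ at $s=0$ presupposes differentiability of $s\mapsto u_s(\lambda)$ at $s=0$ with derivative $-\psi(\lambda)$; this regularity (due to Silverstein) is itself a nontrivial ingredient of the Laplace-functional characterization of CSBPs and should at least be acknowledged rather than taken for granted.
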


Informally, the CSBP is just like the L\'evy process it corresponds to except that its speed (the rate at which jumps appear) is given by its current value (instead of being independent of its current value).  The following is now immediate from Theorem~\ref{thm::lamperti} and the definitions above:

\begin{proposition}
\label{prop::stablecsbp}
Suppose that $X_t$ is a L\'evy process with non-negative jumps that is strictly {\em $\alpha$-stable} in the sense that for each $C>0$, the rescaled process $X_{C^\alpha t}$ agrees in law with $C X_t$ (up to a change of starting point).  Let $Y = L^{-1}(X)$.  Then $Y$ is a CSBP with the property that $Y_{C^{\alpha-1} t}$ agrees in law with $C Y_t$ (up to a change of starting point).  The converse is also true.  Namely, if $Y$ is a CSBP with the property that $Y_{C^{\alpha-1} t}$ agrees in law with $C Y_t$ (up to a change of starting point) then $Y$ is the CSBP obtained as a time-change of the $\alpha$-stable L\'evy process with non-negative jumps.
\end{proposition}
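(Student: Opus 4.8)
The plan is to push the $\alpha$-stable scaling symmetry of $X$ through the Lamperti transformation. The key point is that $L$ (and hence $L^{-1}$) is a \emph{deterministic} bijection on path space by Theorem~\ref{thm::lamperti}, so an identity in law for $X$ automatically transfers to one for $Y=L^{-1}(X)$ once we know how $L^{-1}$ intertwines the two scalings. Concretely, it suffices to show that $L^{-1}$ sends the map $X_t\mapsto C^{-1}X_{C^\alpha t}$ (on spectrally positive L\'evy processes, stopped at $0$) to the map $Y_t\mapsto C^{-1}Y_{C^{\alpha-1}t}$ (on CSBPs, absorbed at $0$); the distributional statement then follows by applying $L^{-1}$ to both sides of the scaling identity $\{C^{-1}X_{C^\alpha t}\}_t\stackrel{d}{=}\{X_t\ \text{started from}\ x/C\}_t$ that is built into the hypothesis.

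First I would record the explicit form of $L^{-1}$. If $g=L(f)$ with $\theta_t=\int_0^t f(s)\,ds$ and $\kappa=\theta^{-1}$, then $f=g\circ\theta$ and $\theta_u=\int_0^u g(\theta_s)\,ds$, so $\theta$ solves $\theta'=g\circ\theta$, $\theta_0=0$; separating variables (valid while $g>0$) gives $\theta=\eta^{-1}$ where $\eta_t=\int_0^t \frac{ds}{g(s)}$. Hence
\[ L^{-1}(g)=g\circ\eta^{-1},\qquad \eta_t=\int_0^t \frac{ds}{g(s)} . \]
The conventions in the definition of the space $\mathcal D$ (absorption once $0$ or $\infty$ is reached) handle the degenerate cases, so I would only remark on these.

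Next I would do the time-change bookkeeping. Put $X':=C^{-1}X_{C^\alpha\,\cdot}$. A substitution $u=C^\alpha s$ gives $\eta'_t=\int_0^t \frac{ds}{X'_s}=C^{1-\alpha}\,\eta_{C^\alpha t}$, hence $(\eta')^{-1}_r=C^{-\alpha}\,\eta^{-1}\!\big(C^{\alpha-1}r\big)$, and therefore
\[ L^{-1}(X')_r = X'_{(\eta')^{-1}_r} = C^{-1}X_{\,\eta^{-1}(C^{\alpha-1}r)} = C^{-1}\big(L^{-1}(X)\big)_{C^{\alpha-1}r}. \]
Since $X'\stackrel{d}{=}X$ started from $x/C$, applying the deterministic map $L^{-1}$ yields that $Y_{C^{\alpha-1}t}$ (started from $x$) has the law of $C\,Y_t$ (started from $x/C$), which is the asserted forward direction. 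For the converse, I would start with a CSBP $Y$ satisfying $Y_{C^{\alpha-1}t}\stackrel{d}{=}CY_t$ (up to starting point), set $X:=L(Y)$ (spectrally positive and stopped at $0$ by Theorem~\ref{thm::lamperti}), invert the intertwining just established to get $X_{C^\alpha t}\stackrel{d}{=}CX_t$ up to starting point, and conclude: a L\'evy process with this exact scaling is strictly $\alpha$-stable, the absence of negative jumps is preserved, and $Y=L^{-1}(X)$.

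The main obstacle is not conceptual depth but bookkeeping: it is very easy to be off by a power of $C$ when composing $\eta'$, $(\eta')^{-1}$ and $X'$, so the substitution computation must be done carefully. The other point requiring a little attention is making the ``up to a change of starting point'' clause rigorous — i.e., verifying that applying the deterministic Lamperti inverse to an equality in law between two processes started from points related by the scaling produces exactly the claimed equality in law for $Y$ — together with a brief word on degenerate behaviour (the process hitting $0$, or $\eta$ becoming infinite), which is absorbed by the definition of $\mathcal D$.
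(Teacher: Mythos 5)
Your argument is correct and is exactly the route the paper takes: the paper treats the proposition as immediate from the Lamperti representation theorem, and your computation simply spells out the time-change bookkeeping showing that $L^{-1}$ intertwines the scaling $X_t\mapsto C^{-1}X_{C^\alpha t}$ with $Y_t\mapsto C^{-1}Y_{C^{\alpha-1}t}$, which is precisely the content the paper leaves implicit. The substitution computation and the handling of the converse are both right, so nothing further is needed.
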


Proposition~\ref{prop::stablecsbp} will be useful on occasions when we want to prove that a given process~$Y$ is the CSBP obtained as a time change of the $\alpha$-stable L\'evy process with non-negative jumps.  (We refer to this CSBP as the {\em $\alpha$-stable CSBP} for short.\footnote{This process is also referred to as a~$\psi$-CSBP with ``branching mechanism'' $\psi(u) = C u^\alpha$, $C > 0$ a constant, in other work in the literature, for example \cite{dlg2002trees_levy}.})  It shows that it suffices in those settings to prove that $Y$ is a CSBP and that it has the scaling symmetry mentioned in the proposition statement.  To avoid dealing with uncountably many points, we will actually often use the following slight strengthening of Proposition~\ref{prop::stablecsbp}:

\begin{proposition}
\label{prop::strongstablecsbp}
Suppose that $Y$ is a Markovian process indexed by the dyadic rationals that satisfies the CSBP property~\eqref{eqn::CSBPproperty} and that $Y_{C^{\alpha-1} t}$ agrees in law with $C Y_t$ (up to a change of starting point) when $C^{\alpha-1}$ is a power of $2$.  Assume that $Y$ is not trivially equal to $0$ for all positive time, or equal to $\infty$ for all positive time.  Then $Y$ is the restriction (to the dyadic rationals) of an $\alpha$-stable CSBP.
\end{proposition}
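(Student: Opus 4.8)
\emph{Proof strategy.} The plan is to deduce the statement from Proposition~\ref{prop::stablecsbp}: I would show that the dyadic-time process $Y$ is the restriction of a genuine continuous-time, \cadlag\ CSBP, which then inherits the scaling symmetry characterizing the $\alpha$-stable CSBP.

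First I would pass to Laplace exponents. By the branching property~\eqref{eqn::CSBPproperty} and infinite divisibility (Remark~\ref{rem::CSBPsubordinator}), for each dyadic $t \ge 0$ there is a function $u_t$ with $\E_x[e^{-\lambda Y_t}] = e^{-x u_t(\lambda)}$; the Markov property becomes the composition identity $u_{s+t} = u_t \circ u_s$ for dyadic $s,t$, and the scaling hypothesis becomes $u_{C^{\alpha-1}t}(\lambda) = C^{-1} u_t(C\lambda)$ for the allowed $C$ and dyadic $t$. A first use of non-triviality: writing $\E_x[Y_t] = x\, m_t$, the identity $m_{s+t} = m_s m_t$ forces $m_t = m_1^{\,t}$ on the dyadics, and the scaling relation then forces $m_1 \in \{0,1,\infty\}$; the values $0$ and $\infty$ would force $Y_t = 0$ (resp.\ $Y_t = \infty$) for all dyadic $t > 0$, which is excluded, so $m_t \equiv 1$. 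Thus $(Y_t)$ is a nonnegative martingale along the dyadics, and by Jensen $u_t(\lambda) \le \lambda$, so $t \mapsto u_t(\lambda)$ is non-increasing on the dyadics.

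Next I would construct the continuous-time object. Monotonicity extends $t \mapsto u_t(\lambda)$ to a non-increasing, right-continuous function $\widehat u_t(\lambda)$ on $[0,\infty)$, and density of the dyadics gives $\widehat u_{s+t} = \widehat u_t \circ \widehat u_s$ for all real $s,t$, so $\widehat u$ defines a branching Markov semigroup $\widehat P_t$. Backward martingale convergence for $Y_t$ as $t \downarrow 0$ along dyadics, together with the Markov property and the fact that $Y_0 = x$ is deterministic, gives $\widehat u_{0+} = \mathrm{id}$, i.e.\ $\widehat P_t$ is stochastically continuous. The standard theory of CSBPs then yields a \cadlag\ CSBP $\widehat Y$ realizing $\widehat P_t$ and agreeing in law with $Y$ on the dyadics. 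The scaling identity, holding for the relevant dilations on a dense set of times and for two stochastically continuous processes, passes to $\widehat Y$ for all $C > 0$, and Proposition~\ref{prop::stablecsbp} identifies $\widehat Y$ as the $\alpha$-stable CSBP, which is the claim.

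The main obstacle is the passage to continuous time: one must show that the only behaviour compatible with the dyadic data and the non-triviality hypothesis is a stochastically continuous CSBP semigroup --- ruling out instantaneous absorption at $0$, explosion, and ``frozen'' behaviours --- and then invoke or reprove the existence of a \cadlag\ modification. The functional-equation bookkeeping and the final appeal to Proposition~\ref{prop::stablecsbp} are routine once this regularity is available; the one subtle point is ensuring that enough dilation factors are genuinely present, so that one lands on the $\alpha$-stable (and not merely a semi-stable) CSBP, which is exactly why stochastic continuity of the extension is used to upgrade the discrete family of scalings to all of $(0,\infty)$.
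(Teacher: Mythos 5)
Your overall architecture matches the paper's: use \eqref{eqn::CSBPproperty} together with the dyadic scaling and the non-triviality assumption to pin down the mean multiplier (so that $Y$ is a martingale along the dyadics), extend to a continuous-time {\cadlag} process, and then invoke Proposition~\ref{prop::stablecsbp}. The paper performs the extension pathwise (the upcrossing lemma yields left and right limits along the dyadics, hence a {\cadlag} extension), while you work at the level of the Laplace exponents $u_t$; that substitution is legitimate in principle. However, two of your steps fail as stated.

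First, stochastic continuity at $0$ does not follow from backward martingale convergence plus the Markov property and the deterministic start. Those facts only give an a.s.\ and $L^1$ limit $L$ of $Y_t$ as $t \downarrow 0$ with $\E[L]=x$ and $\E[e^{-\lambda L}]=e^{-x u_{0+}(\lambda)}$ for some subordinator exponent $u_{0+}$ with unit slope at $0$; this does not force $u_{0+}(\lambda)=\lambda$ (for instance $u_{0+}(\lambda)=1-e^{-\lambda}$, i.e.\ a Poisson limit of mean $x$, is consistent with everything you cite, since nothing you cite makes the germ $\sigma$-algebra trivial). The identification $u_{0+}=\mathrm{id}$ is exactly where the paper's law-of-large-numbers step enters; in your notation it comes from the scaling identity you already wrote, $u_{2^{-k}}(\lambda)=C^{-1}u_1(C\lambda)$ with $C=2^{-k/(\alpha-1)}\to 0$, which gives $u_{0+}(\lambda)=\lambda\,u_1'(0+)=\lambda$ once the mean is known to equal one; you need to use it there. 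Second, and more seriously, your resolution of the final ``subtle point'' rests on an invalid principle: stochastic continuity of the extension does not upgrade invariance under the discrete family of dilations to invariance under all $C>0$. A CSBP whose branching mechanism is $u^{\alpha}$ multiplied by a suitable log-periodic factor (a semi-stable perturbation of the stable L\'evy measure, with period matched to $\ln 2/(\alpha-1)$) is {\cadlag}, stochastically continuous, critical, and invariant under exactly the discrete dilations in question without being invariant under any larger group; so continuity alone cannot be what rules this behaviour out, and the step ``passes to $\widehat Y$ for all $C>0$'' needs a different justification (the paper does not attempt such an upgrade: it feeds the {\cadlag} extension, with the scalings it has, directly into Proposition~\ref{prop::stablecsbp}). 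A smaller point of the same flavor: $m_1=\infty$ does not by itself force $Y_t=\infty$, since an infinite mean is compatible with finite values; this degenerate case is handled in the paper by the subordinator law of large numbers rather than by the mean computation alone.
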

\begin{proof}
By the CSBP property~\eqref{eqn::CSBPproperty}, the law of $Y_1$, assuming $Y_0 = a > 0$, is infinitely divisible and equivalent to the law of the value $A_a$ where $A$ is a subordinator and $A_0 = 0$ (recall Remark~\ref{rem::CSBPsubordinator}).  Fix $k \in \N$ and pick $C > 0$ such that $C^{1-\alpha} = 2^{-k}$.  Similarly, by scaling, we have that $Y_{C^{1-\alpha}} \stackrel{d}{=} C^{-1}A_{Ca}$.  By the law of large numbers, this law is concentrated on $a \E[A_1]$ when $k$ is large; we observe that $\E[A_1] = 1$ since otherwise (by taking the $k \to \infty$ limit) one could show that $Y$ is equal to $0$ (if $\E[A_1] < 1$) or $\infty$  (if $\E[A_1] > 1$) for all positive time.

From this we deduce that $Y$ is a martingale, and the standard upcrossing lemma allows us to conclude that a.s.\ $Y$ has only finitely many upcrossings across the interval $(x, x+\epsilon)$ for any $x$ and $\epsilon$, and that $Y$ a.s.\ is bounded above.  This in turn guarantees, for all $t \geq 0$, the existence of left and right limits of $Y_{t+s}$ as $s \to 0$.  It implies that $Y$ is a.s.\ the restriction to the dyadic rationals of a {\cadlag} process; and there is a unique way to extend $Y$ to a {\cadlag} process defined for all $t \geq 0$.  Since left limits exist a.s.\ at any fixed time, it is straightforward to verify that the hypotheses of Proposition~\ref{prop::stablecsbp} apply to $Y$.
\end{proof}

CSBPs are often introduced in terms of their Laplace transform \cite{legallspatialbranchingbook}, \cite[Chapter~10]{kyp2006levy_fluctuations} and Proposition~\ref{prop::stablecsbp} is also immediate from this perspective.  We will give a brief review of this here, since this perspective will also be useful in this article.  In the case of an $\alpha$-stable CSBP $Y_t$, this Laplace transform is explicitly given by
\begin{equation}
\label{eqn::csbp_def}
\E[ \exp(-\lambda Y_t) \giv Y_s ] = \exp(-Y_s u_{t-s}(\lambda)) \quad\text{for all}\quad t > s \geq 0
\end{equation}
where, for a constant $c > 0$,
\begin{equation}
\label{eqn::csbp_u_form}
u_t(\lambda) = \left( \lambda^{1-\alpha} + c t\right)^{1/(1-\alpha)}.
\end{equation}
More generally, CSBPs are characterized by the property that they are Markov processes on $\R_+$ such that their Laplace transform has the form given in~\eqref{eqn::csbp_def} where $u_t(\lambda)$, $t \geq 0$, is the non-negative solution to the differential equation
\begin{equation}
\label{eqn::csbp_diffeq}
\frac{\partial u_t}{\partial t}(\lambda) = -\psi(u_t(\lambda))\quad\text{for}\quad u_0(\lambda) = \lambda.
\end{equation}
The function $\psi$ is the so-called \emph{branching mechanism} for the CSBP and corresponds to the Laplace exponent of the L\'evy process associated with the CSBP via the Lamperti transform (Theorem~\ref{thm::lamperti}).  In this language, an $\alpha$-stable CSBP is a called a ``CSBP with branching mechanism $\psi(u) = C u^\alpha$'' (where $C > 0$ is a constant depending on $c > 0$ from~\eqref{eqn::csbp_u_form}).

One of the uses of~\eqref{eqn::csbp_def} is that it provides an easy derivation of the law of the extinction time of a CSBP, which we record in the following lemma.
\begin{lemma}
\label{lem::csbp_extinction_time}
Suppose that $Y$ is an $\alpha$-stable CSBP and let $\zeta = \inf\{t \geq 0 : Y_t = 0\}$ be the extinction time of $Y$.  Then we have for a constant $c_\alpha > 0$ that
\begin{equation}
\label{eqn::csbp_extinction_time}
\p[ \zeta > t]  = 1- \exp\big( -c_\alpha t^{1/(1-\alpha)} Y_0 \big).
\end{equation}
\end{lemma}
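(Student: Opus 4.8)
The plan is to read the law of $\zeta$ directly off the Laplace transform identity~\eqref{eqn::csbp_def}--\eqref{eqn::csbp_u_form}, after first noting that extinction of an $\alpha$-stable CSBP is detected by a single-time event. The first point to record is that $0$ is an absorbing state: taking $x = x' = 0$ in the additivity property~\eqref{eqn::CSBPproperty} forces $P_t(0,\cdot) = P_t(0,\cdot) * P_t(0,\cdot)$, and the only probability measure on $\R_+$ equal to its own convolution square is $\delta_0$, so $P_t(0,\cdot) = \delta_0$ for all $t$. By the Markov property it follows that if $Y_t = 0$ then $Y_{t+s} = 0$ for all $s \geq 0$ a.s., so that $\{\zeta \leq t\} = \{Y_t = 0\}$ a.s.\ and hence $\p[\zeta > t] = 1 - \p[Y_t = 0]$.

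Next I would compute $\p[Y_t = 0]$ by sending $\lambda \to \infty$ in~\eqref{eqn::csbp_def} with $s = 0$. Since $0 \leq e^{-\lambda Y_t} \leq 1$ and $e^{-\lambda Y_t} \to \one_{\{Y_t = 0\}}$ pointwise as $\lambda \to \infty$, dominated convergence gives $\p[Y_t = 0] = \lim_{\lambda \to \infty} \E[ e^{-\lambda Y_t} \giv Y_0] = \exp\bigl( -Y_0 \lim_{\lambda \to \infty} u_t(\lambda) \bigr)$. From the explicit form~\eqref{eqn::csbp_u_form} we have $u_t(\lambda) = \bigl( \lambda^{1-\alpha} + (\alpha-1)t \bigr)^{1/(1-\alpha)}$, and because $\alpha \in (1,2)$ the exponent $1 - \alpha$ is negative, so $\lambda^{1-\alpha} \to 0$ and $u_t(\lambda) \to \bigl( (\alpha-1)t \bigr)^{1/(1-\alpha)} = c_\alpha\, t^{1/(1-\alpha)}$ with $c_\alpha = (\alpha-1)^{1/(1-\alpha)}$. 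Substituting this back yields $\p[Y_t = 0] = \exp\bigl( -c_\alpha t^{1/(1-\alpha)} Y_0 \bigr)$, and therefore $\p[\zeta > t] = 1 - \exp\bigl( -c_\alpha t^{1/(1-\alpha)} Y_0 \bigr)$, which is~\eqref{eqn::csbp_extinction_time}.

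There is essentially no hard step here: the computation is routine once~\eqref{eqn::csbp_def}--\eqref{eqn::csbp_u_form} are in hand, and the only points requiring a word of care are (i) that $0$ is absorbing, so that the exploration of $\zeta$ reduces to the one-time event $\{Y_t = 0\}$, and (ii) the interchange of limit and expectation, both handled above. As a sanity check, letting $t \to \infty$ one gets $t^{1/(1-\alpha)} \to 0$, hence $\p[\zeta = \infty] = 0$; i.e.\ the $\alpha$-stable CSBP becomes extinct in finite time almost surely, consistent with the fact (via Theorem~\ref{thm::lamperti}) that the associated $\alpha$-stable L\'evy process with no negative jumps has infinite variation and oscillates.
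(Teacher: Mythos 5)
Your proof is correct and follows essentially the same route as the paper: identify $\{\zeta > t\}$ with $\{Y_t > 0\}$ and send $\lambda \to \infty$ in~\eqref{eqn::csbp_def}--\eqref{eqn::csbp_u_form}; your additional justifications (that $0$ is absorbing, and the dominated-convergence interchange) are points the paper leaves implicit. Note also that your sign is the correct one: the formula should read $1 - \exp\bigl(-c_\alpha t^{1/(1-\alpha)} Y_0\bigr)$, the minus sign being missing from the displayed equation~\eqref{eqn::csbp_extinction_time} as printed.
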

\begin{proof}
Note that $\{\zeta > t\} = \{ Y_t > 0\}$.  Consequently,
\begin{align*}
   \p[ \zeta > t]
&= \p[ Y_t > 0 ]
 = 1 - \lim_{\lambda \to \infty} \E[ e^{-\lambda Y_t}]
 = 1- \exp( - c_\alpha t^{1/(1-\alpha)} Y_0),
\end{align*}
which proves~\eqref{eqn::csbp_extinction_time}.
\end{proof}

As we will see in Section~\ref{subsec::levynetbreadthfirst} just below, it turns out that the boundary length of the segment in a ball boundary between two geodesics in the L\'evy net evolves as a CSBP as one decreases the size of the ball.  The merging time for the geodesics corresponds to when this CSBP reaches $0$.  Thus Proposition~\ref{prop::poissonslicestructure} together with Lemma~\ref{lem::csbp_extinction_time} allows us to relate the structure of geodesics in a space which satisfies the hypotheses of Theorem~\ref{thm::markovmapcharacterization} with the L\'evy net.

\subsection{A breadth-first approach to the L\'evy net quotient}
\label{subsec::levynetbreadthfirst}

Now, we would like to consider an alternative approach to the L\'evy net in which we observe loops in the order of their distance from the root of the tree of loops (instead of in the order in which they are completed when tracing the boundary of the stable looptree).  Consider a line at some height $C+s$ as depicted in Figure~\ref{fig::levycb}.  As explained in the figure caption, we would like to define $Z_s$ to be in some sense the ``fractal measure'' of the set of points at which this line intersects the graph of $C+Y_t$ (which should be understood as some sort of local time) and then understand how $Z_s$ evolves as $s$ changes. A detailed account of the construction and properties of $Z_s$, along with Proposition~\ref{prop::Zisalphastable} (stated and proved below), appears in \cite{duquesnelegall2005levytrees}. We give a brief sketch here.

First of all, in what sense is~$Z_s$ defined?  Note that if we fix~$s$, then we may define the set $E_s = \{t : Y_t > s \}$.  Observe that within each open interval of~$E_s$ the process $X_t$ evolves as an $\alpha$-stable L\'evy process, which obtains the same value at its endpoints and is strictly larger than that value in the interim.  In other words, the restriction of~$X_t$ to that interval is (a translation and time-reversal of) an $\alpha$-stable L\'evy excursion.  If we condition on the number~$N_\epsilon$ of excursions of this type that reach height at least~$\epsilon$ above their endpoint height, then it is not hard to see that the conditional law of the set of excursions is that of an i.i.d.\ collection of samples from the L\'evy excursion measure used to generate~$X_t$ (restricted to the positive and finite measure set of excursions which achieve height at least~$\epsilon$).  The ordered collection of L\'evy excursions agree in law with the ordered collection one would obtain by considering the ``reflected $\alpha$-stable L\'evy process'' (with positive jumps) obtained by replacing an $\alpha$-stable L\'evy process~$R_t$ by $\wt{R}_t = R_t - \inf \{R_s: 0 \leq s \leq t \}$.  (See \cite{bertoinlevybook} for a more thorough treatment of local times and reflected processes.)  The process~$\wt{R}_t$ then has a {\em local time} describing the amount of time it spends at zero; this time is given precisely by $\wt{R}_t - R_t$.  For each $Q > 0$, the set of excursions up to the first time that $\wt{R}_t - R_t$ first reaches~$Q$ can be understood as a Poisson point process corresponding to the product of the Lebesgue measure $[0,Q]$ and the (infinite) L\'evy excursion measure.  In particular, one can deduce from this that as $\epsilon$ tends to zero the quantity $\epsilon^\alpha N_\epsilon$ a.s.\ tends to a constant times the local time; this can then be taken as the definition of $Z_s$.

\begin{definition}
\label{def::levycb}
We refer to the process $Z_s$ constructed just above from the height process $Y_t$ associated with $X_t$ as the \emph{boundary length process} associated with a L\'evy net instance generated by $X_t$.
\end{definition}

\begin{figure}[ht!]
\begin{center}
\includegraphics[scale=.77]{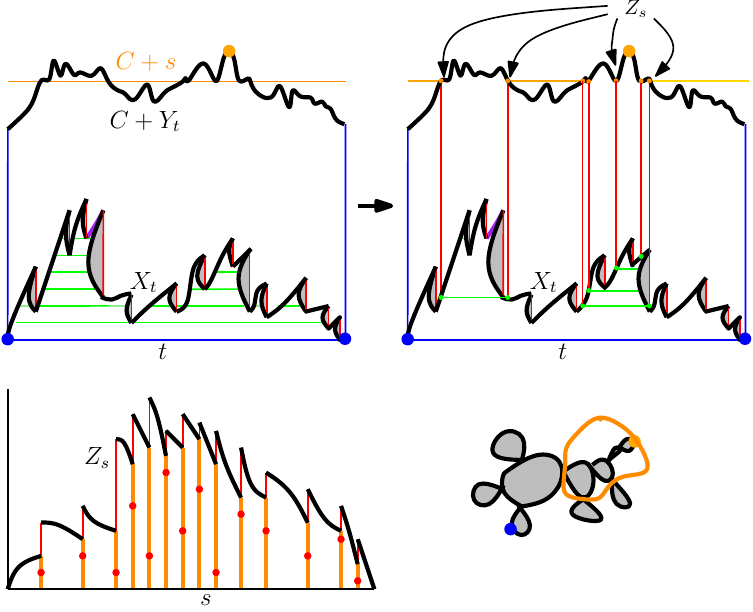}
\vspace{-0.01\textheight}
\caption{\label{fig::levycb} {\small Illustration of the breadth-first construction of the L\'evy net.  The following caption should be read together with the contents of Section~\ref{subsec::levynetbreadthfirst} up to the statement of Proposition~\ref{prop::easyexpectationratiofact}.  {\bf Upper left:}  An orange line is drawn at height $C+s$ for some~$s$.  {\bf Upper right:}  If~$a$ and~$b$ are the endpoints of an excursion of $C+Y_t$ above the orange line, then~$a$ and~$b$ are identified (via a red line) to points on the lower graph that are identified (via a green horizontal line). {\bf Lower left:} As the height of the orange segment in the upper graph increases (i.e., $s$ increases),~$Z_s$ measures the local time of the intersection between that segment and the graph of~$C+Y_t$.  When the rising orange line encounters a point $(t,s)$ on the upper graph such that~$X$ has a jump at time~$t$, there is a corresponding upward jump in~$Z_s$ of the same magnitude.  This is due to the fact (not obvious in this illustration) that all points on the loop corresponding to the jump are identified with points on the upward graph of the same height; the local time of this set of points is the magnitude of the jump.  The amount of this local time in the orange/black intersection which is to the {\em right} of the point $(t,s)$ is a quantity that lies strictly between~$0$ and~$Z_{s-}$ (see \cite[Proposition~1.3.3]{dlg2002trees_levy}); this quantity is encoded by the height of the red dot (one for each of the countably many jumps) shown in the center graph.  Another perspective is that the jumps in~$Z_s$ correspond to loops observed in the tree on the right as one explores them in order of their distance from the root of the tree encoded by $-Y_t$, where the distance is given by their looptree distance.  The orange circle on the right encloses the set of loops explored up until time~$s$.  Each red dot in the middle graph indicates {\em where} along the boundary a new loop is attached to the already-explored looptree structure, as defined relative to the branch in the geodesic tree connecting the root and dual root. Conditioned on the process~$Z$, for each jump time $s$ the vertical location of the red dot is independent and uniform on $[0,Z_{s^-}]$ (see Lemma~\ref{lem::bubble_locations_independent})}.}
\end{center}
\end{figure}

Note that the discussion above in principle only allows us to define $Z_s$ for almost all~$s$, or for a fixed countable dense set of~$s$ values.  We have not ruled out the possibility that there exist exceptional~$s$ values for which the limit that defines $Z_s$ is undefined.  To be concrete, we may use the above definition of $Z_s$ for all dyadic rational times and extend to other times by requiring the process to be {\cadlag} (noting that this definition is a.s.\ equal to the original definition of $Z_s$ for almost all $s$ values, and for any fixed $s$ value; alternatively see \cite{duquesnelegall2005levytrees} for more discussion of the local time definition).  This allows us to use Proposition~\ref{prop::strongstablecsbp} to derive the following, which is referred to in \cite[Theorem~1.4.1]{duquesnelegall2005levytrees} as the Ray-Knight theorem (see also the L\'evy tree level set discussion in \cite{dlg2002trees_levy, duquesnelegall2005levytrees}):

\begin{proposition}
\label{prop::Zisalphastable}
The process $Z$ from Definition~\ref{def::levycb} has the law of an $\alpha$-stable CSBP.
\end{proposition}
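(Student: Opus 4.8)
The plan is to verify the hypotheses of Proposition~\ref{prop::strongstablecsbp}, namely that (i) the process $Z$ (restricted to dyadic rationals, and extended to a \cadlag{} process) is Markovian and satisfies the additivity property~\eqref{eqn::CSBPproperty}, (ii) it has the $\alpha$-stable scaling symmetry $Z_{C^{\alpha-1}s}\overset{d}{=}CZ_s$ when $C^{\alpha-1}$ is a power of $2$, and (iii) it is not identically $0$ or identically $\infty$ for positive time. Granting these, Proposition~\ref{prop::strongstablecsbp} immediately gives that $Z$ is the restriction to the dyadics of an $\alpha$-stable CSBP, and then the \cadlag{} extension is the $\alpha$-stable CSBP itself, which is the assertion.

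The key input for (i) is the excursion-theoretic description of $X_t$ on the set $E_t=\{t:Y_t>s\}$ already sketched before Definition~\ref{def::levycb}: conditionally on the value $Z_s$ and on the ``to-be-explored'' configuration at height $C+s$, the portion of the picture above height $C+s$ is built from an independent Poisson collection of $\alpha$-stable L\'evy excursions whose local-time length is $Z_s$, and the evolution of $Z$ as $s$ increases depends on the past only through $Z_s$. Concretely: for each open interval of $E_t$, $X_t$ restricted to it is (a translate and time-reversal of) an $\alpha$-stable L\'evy excursion, and conditioned on how many of these reach height at least $\epsilon$ the excursions are i.i.d.\ from the L\'evy excursion measure; the whole collection agrees in law with the excursions of a reflected $\alpha$-stable L\'evy process run up to local time $Z_s$. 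Running the height parameter forward by $\delta$ therefore produces, independently, the ``height-$\delta$'' analogue of this construction fed by a reflected process of local-time length $Z_s$; this is exactly the Markov property. The additivity~\eqref{eqn::CSBPproperty} is then the statement that splitting the local-time length $Z_s=z+z'$ splits the Poisson excursion collection into two independent collections of local-time lengths $z$ and $z'$, whose height-$\delta$ evolutions add; this is a direct consequence of the superposition/restriction properties of Poisson point processes (each excursion is explored independently). One could alternatively cite \cite[Theorem~1.4.1]{duquesnelegall2005levytrees} directly here, but I would include the Poissonian argument for self-containedness.

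For (ii), the scaling symmetry of $Z$ is inherited from the scaling symmetry of $X_t$: an $\alpha$-stable L\'evy process satisfies $X_{C^\alpha t}\overset{d}{=}CX_t$, the height process $Y_t$ then satisfies $Y_{C^\alpha t}\overset{d}{=}C^{\alpha-1}Y_t$ (the height process scales by the excursion-length exponent; this is standard, cf.\ \cite{duquesnelegall2005levytrees}), and $Z_s$, being a fractal/local-time measurement of a level set of $Y$ at height proportional to $s$ after the $C^{\alpha-1}$ rescaling of heights, picks up the factor $C$ in length when heights are scaled by $C^{\alpha-1}$. Matching the $s$-reparametrization one finds $Z_{C^{\alpha-1}s}\overset{d}{=}CZ_s$ up to a change of starting point, which is precisely the CSBP scaling hypothesis. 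Restricting to $C$ with $C^{\alpha-1}$ a power of $2$ is harmless. Item (iii) is clear: $Z_0$ equals the boundary length of the whole looptree, which is positive and finite a.s., and $Z$ is not frozen because $X$ has genuine excursions above every level below its maximum.

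The main obstacle is item (i) at the level of rigor: one must be careful that the quantities $Z_s$ are simultaneously well-defined along a dense countable set (or for fixed $s$) as genuine local times, and that the conditional law of the configuration above height $C+s$ given the past is correctly identified with the reflected-process excursion picture with the Markov property intact --- in particular that nothing pathological happens at the countably many jump heights (where $Z$ jumps by the magnitude of a jump of $X$), and that the ``record infimum'' structure after time $t$ really does have the stable-subordinator description invoked earlier (from \cite[Chapter~VIII, Lemma~1]{bertoinlevybook}). All of this is handled in \cite{duquesnelegall2005levytrees}, so in the write-up I would state the construction, extract the two properties needed for Proposition~\ref{prop::strongstablecsbp}, and cite \cite[Theorem~1.4.1]{duquesnelegall2005levytrees} for the delicate measurability and local-time existence statements rather than reprove them.
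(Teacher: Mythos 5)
Your proposal is correct and follows essentially the same route as the paper: the paper's proof likewise verifies the hypotheses of Proposition~\ref{prop::strongstablecsbp}, obtaining the branching property~\eqref{eqn::CSBPproperty} from the independence of the local-time increments at a given height (the Poissonian excursion description sketched before Definition~\ref{def::levycb}) and the scaling hypothesis from the scaling of $X$ and $Y$. Your write-up simply spells out these two steps in more detail and, as the paper also does, defers the delicate local-time existence issues to \cite{duquesnelegall2005levytrees}.
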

\begin{proof}
The CSBP property~\eqref{eqn::CSBPproperty} follows from the derivation above because if the process records $L+L'$ units of local time at height $s$, then the amount of local time it records at height $t > s$ in the first $L$ units of local time at height $s$ is independent of the amount of local time it records at height $t$ in the last $L'$ units of local time.  Moreover, the scaling property required by Proposition~\ref{prop::strongstablecsbp} follows from the scaling properties of $X$ and $Y$.
\end{proof}

Related to Proposition~\ref{prop::Zisalphastable} is the following correspondence between the jumps of the~$Z$ and~$X$ processes shown in Figure~\ref{fig::levycb}.

\begin{proposition}
\label{prop::jump_correspondence}
The (countably many) jumps in the process $Z$ from Definition~\ref{def::levycb} are a.s.\ in one-to-one correspondence with the (countably many) jumps in the process~$X$ used to generate the corresponding L\'evy net instance.  Namely, it is a.s.\ the case that whenever a jump in $Z$ occurs at a time $s$ we have $s = Y_t$ for some $t$ value at which the process $X$ has a jump, and vice-versa; in this case, the corresponding jumps have the same magnitude.
\end{proposition}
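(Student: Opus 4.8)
The plan is to read the correspondence off the level-set (Ray--Knight) description of $Z$ together with the structure of the $\alpha$-stable height process $Y$ at the jump times of $X$, as developed in \cite{duquesnelegall2005levytrees}. Throughout, for a jump time $t$ of $X$ I write $\delta = \lvert \Delta X_t\rvert$ for the magnitude of the jump.

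First I would fix a jump time $t_0$ of $X$, set $h = Y_{t_0}$, and show that $Z$ has a jump of magnitude exactly $\delta$ at level $s = h$. Recall from Section~\ref{subsec::levynetbreadthfirst} that, for each level $s$, the quantity $Z_s$ is the local time at $0$ of the reflected process built from the restriction of $X$ to the level set $E_s = \{t : Y_t > s\}$, equivalently $Z_s = \lim_{\epsilon \to 0} N_\epsilon(s)/\epsilon^\beta$ where $N_\epsilon(s)$ counts the (genealogical) excursions of $X$ above level $s$ that rise at least $\epsilon$ higher. By the structure theory of stable L\'evy trees --- precisely the fact invoked in the caption of Figure~\ref{fig::levycb}, namely \cite[Proposition~1.3.3]{duquesnelegall2005levytrees} --- the image in the geodesic tree of the time $t_0$ is an infinite-multiplicity branch point $b$ sitting at height $h$, and the ``fan'' of subtrees emanating from $b$ carries total mass exactly $\delta$ with respect to the exit/local-time measure used to define $Z$, deposited at the single height $h$. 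Hence, as $s$ increases through $h$, the collection of excursions above level $s$ acquires all at once a piece of local-time content $\delta$, so $Z$ has a jump at $s=h$ of magnitude $\delta$. A careful write-up has to check here that the normalization of $Z$ fixed in Definition~\ref{def::levycb} agrees with the normalization of the exit-local-time measure in \cite{duquesnelegall2005levytrees}; this is the only quantitative point in the whole argument.

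Next I would prove the converse, that these are the only jumps of $Z$ and that distinct $X$-jumps land at distinct levels. Since $Y$ is a.s.\ continuous (Section~\ref{subsec::spheresassymetricholes}) and, away from the countable set of infinite-multiplicity branch points, the level sets of $Y$ vary continuously, the map $s \mapsto Z_s$ is continuous at every level not of the form $Y_t$ for a jump time $t$ of $X$; this already gives an injection from jumps of $Z$ into $\{Y_t : \Delta X_t \neq 0\}$. Distinct jump times of $X$ a.s.\ correspond to branch points at distinct heights (\cite[Section~1.3]{duquesnelegall2005levytrees}; otherwise $Z$ would jump at $Y_t = Y_{t'}$ by $\delta + \delta'$ and one could not have a bijection), so $t \mapsto Y_t$ is injective on the set of jump times. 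Combining this with the first step yields the asserted bijection with matching magnitudes. As a consistency check, pushing the jump point process $\{(t,\Delta X_t)\}$ forward under $Y$ produces a point process of $(\text{level},\text{size})$ pairs with intensity $Z_s\,ds \otimes c\,x^{-\alpha-1}\,dx$ for a constant $c>0$ --- the weight $Z_s$ being the occupation density of $Y$ at level $s$, which is $Z_s$ by the Ray--Knight description in Proposition~\ref{prop::Zisalphastable} --- and this matches the jump intensity of the $\alpha$-stable CSBP $Z$ read through the Lamperti time change of Theorem~\ref{thm::lamperti}.

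The hard part will be the quantitative claim of the first step: that the local-time mass attached at the branch point over a jump time $t_0$ is \emph{exactly} $\lvert\Delta X_{t_0}\rvert$, and not merely proportional to it, and that it is deposited at a single level. This is genuinely a statement about the spine/excursion decomposition of the stable height process at an infinite-multiplicity branch point, and I would handle it by quoting \cite[Proposition~1.3.3 and Theorem~1.4.1]{duquesnelegall2005levytrees} and tracking constants against Definition~\ref{def::levycb}; the rest of the argument --- continuity of $Y$, countability, injectivity of $t \mapsto Y_t$ on jump times, and the Poissonian bookkeeping above --- is soft.
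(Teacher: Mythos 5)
Your proposal is correct in outline and identifies the right quantitative crux, but it takes a heavier route than the paper for that crux. The paper's proof is essentially two sentences: when the rising level line passes the height of a jump of $X$, the new intersection points with the graph of $Y$ are precisely the times at which $X$, read from right to left, attains record minima after the jump and before returning to the value at the lower end of the jump; since the local time used to define $Z$ in Section~\ref{subsec::levynetbreadthfirst} is literally the decrement of the running infimum (the quantity $\wt{R}_t - R_t$ in the reflected-process description), the local time created as the line passes that height is the total decrease of the infimum over this stretch, which is exactly the jump magnitude --- with no normalization constant left to track. You instead route this through \cite[Proposition~1.3.3, Theorem~1.4.1]{duquesnelegall2005levytrees}, asserting that the branch point over a jump time carries local-time/exit mass exactly $|\Delta X_t|$ deposited at a single height, and you correctly flag that reconciling normalizations with Definition~\ref{def::levycb} is then the remaining quantitative chore. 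That is a legitimate route (the paper itself cites DL Proposition~1.3.3 in the caption of Figure~\ref{fig::levycb}, though only for the finer fact that the attachment point lies strictly between $0$ and the jump height), but it buys nothing that the infimum-decrement observation does not give for free, and as written it leaves the constant-matching unresolved rather than proved. Two smaller points: your parenthetical justification that distinct $X$-jumps occur at a.s.\ distinct heights (``otherwise one could not have a bijection'') is circular --- the citation to DL, or an argument in the spirit of Proposition~\ref{prop::noisolatedmax}, is what must carry that step; and your claim that $Z$ is continuous at every level not of the form $Y_t$ with $t$ a jump time of $X$ is asserted rather than argued, although to be fair the paper's own proof treats this converse direction with a comparable degree of brevity.
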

\begin{proof}
When a jump occurs in $Z_s$, the line with height of $s$ intersects the graph of $Y_t$ at all points at which $X_t$ (run from right to left) reaches a record minimum following the jump, up until $X_t$ (run from right to left) again reaches the value on the lower side of the jump.  Using the description of local time above (in terms of $\wt R$ and $R$), we see that the amount of local time added due to the appearance of the jump is precisely the height of the $X_t$ jump.
\end{proof}

For each $r > 0$, we let $Z_s^r$ be the local time of the intersection of the graph of $Y$ with the line of height $s$ and width $r$ (i.e., the line connecting $(0,s)$ with $(r,s)$).  Note that $Z_s = Z_s^T$ where $T$ is the length of the L\'evy excursion and $Z_s$ is as in Definition~\ref{def::levycb}.  As in the case of $Z$ itself, $Z_s^r$ is in principle only a.s.\ defined for each $(r,s)$ pair.  In Proposition~\ref{prop::local_time_cadlag_modification} below, we will construct a jointly measurable modification of $(r,s) \mapsto Z_s^r$ which satisfies certain continuity properties.  Throughout, we will assume that we are using this modification so that $Z_s^r$ is defined for all $(r,s)$ simultaneously.  In particular, it makes sense to talk about $Z_s^r$ even at random times.

Let $D = \sup_t Y_t$ so that $[0,D]$ is the interval on which $Z_s$ is defined.  For each $s \in [0,D]$, we let $U_s$ be the set of points in the L\'evy net which have distance at least $D-s$ from the root.  Then $\partial U_s$ is the set of points in the L\'evy net which have distance equal to $D-s$ from the root.  Note that $\partial U_s$ corresponds to a horizontal line in Figure~\ref{fig::bubblegluing}.  In view of Definition~\ref{def::levycb} and Figure~\ref{fig::levycb}, we note that if $x,y \in \partial U_s$ then it makes sense to talk about the clockwise $S_{xy}$ (resp.\ counterclockwise $\wt{S}_{xy}$) segments of~$\partial U_s$ which connect~$x$ and~$y$.  The boundary lengths of $S_{xy}$ and $\wt{S}_{xy}$ are determined by the local time of the intersection of the lines with height $s$ with the graph of~$Y_t$ which correspond to the preimages of $S_{xy}$ and $\wt{S}_{xy}$ under the quotient map.  Fix $r,t > 0$ and assume that we are working on the event that $D > t$ and $Z_t \geq r$.  Let $\gamma$ be the branch of the geodesic tree which connects the root and the dual root.  We can then describe each point $x \in \partial U_s$ in terms of the length of the counterclockwise segment of~$\partial U_s$ which connects~$x$ and the point~$x_s$ on~$\partial U_s$ which is visited by~$\gamma$.

\begin{definition}
\label{def::levy_attachment_points}
For each $s$ which is a jump time for $Z$ and $t$ such that $s = Y_t$, we refer to the amount of local time in the intersection of the line with height $s$ with the graph of $Y$ which lies to the \emph{right} of the point $(t,s)$ (i.e., $Z_s^T - Z_s^t$) as the \emph{attachment point} associated with the jump.
\end{definition}

As explained in the caption of Figure~\ref{fig::levycb}, the attachment point associated with a given jump records the boundary length distance in the counterclockwise direction of the loop in the stable looptree encoded by $X$ from the branch in the geodesic tree that connects the root of the geodesic tree to the root of the looptree.

Next, we make a simple observation:

\begin{proposition}
\label{prop::easyexpectationratiofact}
Suppose that $A_s$ is a subordinator with $A_0 = 0$ and $\p[A_1 > 0] = 1$.  Suppose also that $\wt A_s$ is an independent instance of the same process.  Then for any fixed values $a$ and $b$ we have
\begin{equation}
\label{eqn::a_wt_b_ratio}
\E \left[\frac{A_a}{A_a + \wt A_b} \right] = \frac{a}{a+b}.
\end{equation}
\end{proposition}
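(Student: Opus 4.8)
The plan is to exploit the scaling/additivity structure of subordinators together with an exchangeability argument. First I would observe that it suffices to prove the identity when $a$ and $b$ are positive integers (or, more precisely, when $a/b$ is rational), and then pass to general $a,b\ge 0$ by continuity, since both sides of~\eqref{eqn::a_wt_b_ratio} depend continuously on $(a,b)$ — the right-hand side trivially, and the left-hand side because $A_a \to A_{a'}$ and $\wt A_b \to \wt A_{b'}$ a.s.\ (hence in distribution, and the ratio is bounded by $1$) as $(a,b)\to(a',b')$ through rationals, using \cadlag\ paths and the fact that $A_{a}+\wt A_{b}>0$ a.s.\ since $\p[A_1>0]=1$. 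Alternatively one can rescale time so that it suffices to treat $a=m$, $b=n$ with $m,n\in\N$.

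For the integer case, the key step is to use stationary independent increments to write $A_m = \sum_{i=1}^m B_i$ and $\wt A_n = \sum_{i=m+1}^{m+n} B_i$, where $B_1,\dots,B_{m+n}$ are i.i.d.\ copies of $A_1$ (the increments of a single subordinator run for time $m+n$, split into the first $m$ and the last $n$). Then
\[
\E\left[\frac{A_m}{A_m+\wt A_b}\right] = \E\left[\frac{\sum_{i=1}^m B_i}{\sum_{i=1}^{m+n} B_i}\right] = \sum_{i=1}^m \E\left[\frac{B_i}{\sum_{j=1}^{m+n} B_j}\right].
\]
By exchangeability of the i.i.d.\ family $(B_i)$, each summand $\E[B_i/\sum_j B_j]$ has the same value, call it $c$; summing over all $i$ from $1$ to $m+n$ gives $(m+n)c = \E[\sum_i B_i / \sum_j B_j] = 1$ (here one uses $\p[A_1>0]=1$ so the denominator is a.s.\ positive and the ratio equals $1$ identically). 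Hence $c = 1/(m+n)$ and the sum of the first $m$ terms is $m/(m+n)$, which is~\eqref{eqn::a_wt_b_ratio} with $a=m$, $b=n$. Rescaling the time parametrization (replace $A_s$ by $A_{\lambda s}$, still a subordinator) then yields the claim for all rational ratios $a/b$, and continuity finishes the general case.

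The only genuinely delicate point — and it is minor — is checking that the denominator $A_a+\wt A_b$ is a.s.\ strictly positive so that the ratio is well-defined and bounded, and that the continuity-in-$(a,b)$ argument goes through; both follow immediately from $\p[A_1>0]=1$ together with monotonicity of subordinator paths (so $A_s>0$ for all $s\ge 1$, and more generally $A_s=0$ only on an initial, possibly empty, interval), and from a.s.\ right-continuity with left limits. I do not expect any real obstacle here; the heart of the matter is the one-line exchangeability computation above. $\qed$
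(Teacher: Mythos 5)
Your proof is correct and takes essentially the same route as the paper's: both decompose $A_a$ and $\wt A_b$ into i.i.d.\ increments on a common mesh and exploit the symmetry of those increments (your exchangeability computation is the paper's ``condition on the multiset of increment values and randomly assign them'' step in different words), so the heart of the argument is identical. The only divergence is the passage from the lattice case to general $a,b$: you use a.s.\ continuity of the subordinator at fixed times together with bounded convergence, whereas the paper sandwiches the expectation monotonically by rounding $a$ down and $b$ up (and vice versa) to the mesh; both limiting arguments are valid.
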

\begin{proof}
Let $c = a+b$.  Since $A$ has stationary independent increments, it suffices to show that $\E[ A_a/A_c] = a/c$.  For each $n \in \N$ and $i \in \{1,\ldots,n\}$, we note that $\E[ A_{c/n} \giv A_c] = \E[ A_{ic/n} - A_{(i-1)c/n} \giv A_c]$ by exchangeability of the increments given their sum.  If we now sum over $i$, we see that $\E[ A_{c/n} \giv A_c] = n^{-1} A_c$.  This implies that $\E[ A_a \giv A_c] = \tfrac{a}{c} A_c$ for any $a$ of the form $i c/n$ for $i \in \{0,\ldots,n\}$.  The assertion for general values of $a$ follows because we can find a sequence $(a_k)$ of the form $a_k = i_k c / 2^k$ for $i_k \in \{0,\ldots,2^k\}$ which increases to $a$ and apply the monotone convergence theorem.
\end{proof}

\begin{figure}[ht!]
\begin{center}
\includegraphics [scale=.85, page =1]{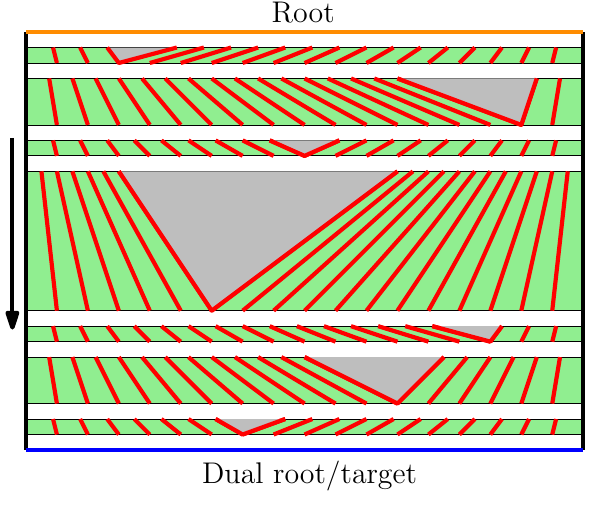}
\caption{\label{fig::bubblegluing} {\bf \it Recovering topological structure from bubbles:} Shown is a representation of a L\'evy net using a width-$1$ rectangle $R$.  The top (resp.\ bottom) line represents the root (resp.\ dual root/target).  The left and right sides of $R$ are identified with each other and represent the branch $\gamma$ in the geodesic tree connecting the root and dual root.  If $r$ is {\em not} one of the countably many values at which a jump in boundary length occurs, then each point $z$ on the L\'evy tree of distance $r$ from the root is mapped to the point in the rectangle whose horizontal location is the length of the counterclockwise radius-$r$-ball boundary segment from $\gamma$ to $z$ divided by the total length of the radius-$r$-ball boundary; the vertical distance from the top of the rectangle is the sum of $r$ and the sum of squares of the boundary-length jumps that occur as the radius varies from $0$ and $r$. Each of the green stripes represents the set of points whose distance from the root is a value $r$ at which a jump {\em does} occur.  Every red line (going from the top to the bottom of a stripe) is an equivalence class that encodes one of these points. The height of each green stripe is equal to the square of the jump in the boundary length corresponding to the grey triangle (the sum of these squares is a.s.\ finite since the sum of the squares of the jumps of an $\alpha$-stable L\'evy process is a.s.\ finite; see, e.g., \cite[Chapter~I]{bertoinlevybook}).
The top (resp.\ bottom) of each green stripe represents the outer boundary of the metric ball infinitesimally before (resp.\ after) the boundary length of the metric ball jumps.  Each red line is a single closed equivalence class (except that when two red lines share an end vertex, their union forms a single closed equivalence class). The uppermost (resp.\ lowermost) horizontal orange (resp.\ blue) line is also a single closed equivalence class.  Also, each pair of left and right boundary points of the rectangle (with the same vertical coordinate) is a closed equivalence class.  Any point that does not belong to one of these classes is its own class.}
\end{center}
\end{figure}

\begin{figure}[ht!]
\begin{center}
\includegraphics [scale=.85,page =2]{figures/bubblegluingplustrajectory}
\caption{\label{fig::bubblegluingplustrajectory} {\bf \it Plotting a geodesic trajectory:} The black sequence of arrows represents a branch $\eta$ in the geodesic tree in the L\'evy net.  We have drawn $\eta$ beginning on one of the horizontal lines of the figure which, as explained in Figure~\ref{fig::bubblegluing}, represents the boundary of the metric ball starting from the root.  As shown in Proposition~\ref{prop::levynetgeodesictrajectoriesdetermined}, $\eta$ eventually merges with the left boundary of the rectangle (both left and right rectangle boundaries correspond to the root-to-target branch in the geodesic tree) just before getting back to the root vertex (represented by the uppermost orange line).  Geodesics started at distinct points can ``merge'' with each other.}
\end{center}
\end{figure}

Proposition~\ref{prop::easyexpectationratiofact} now implies another simple but interesting observation, which we record as Proposition~\ref{prop::levynetgeodesictrajectoriesdetermined} below (and which is related to the standard ``confluence-of-geodesics'' story).  See Figure~\ref{fig::bubblegluing} and Figure~\ref{fig::bubblegluingplustrajectory} for relevant illustrations.  Before we state this result, we now give our third definition of the L\'evy net quotient.

\begin{definition}
\label{def::levynet_bubble_gluing}
(Third definition of the L\'evy net.)  Suppose we are given a realization of the process $Z_s$ from Definition~\ref{def::levycb} as well as the attachment points as defined in Definition~\ref{def::levy_attachment_points}.  Let~$R$ be a rectangle with width~$1$ and height equal to the sum of the length of the interval on which~$Z$ is defined plus the sum of the squares of the jumps of~$Z$.  For each~$s$, we let~$J(s)$ (resp.\ $J_{-}(s)$) be the sum of the squares of the jumps of~$Z$ which have occurred before (resp.\ strictly before) time~$s$.  We define an equivalence relation $\cong$ on~$R$ by declaring points to be equivalent which lie on each line segment connecting points of the form $(s+J_-(s),u/Z_{s-})$ to $(s+J(s),u/Z_s)$ for each $s$ which is a jump time of $Z_s$ and $u \in [0,a_s]$ where $a_s$ is the attachment point corresponding to time $s$ and from $(s+J_-(s),u/Z_{s-})$ to $(s+J(s),(u+\Delta_s)/Z_s)$ for each $u \in [a_s,Z_{s-}]$, where $\Delta_s = Z_s - Z_{s-}$ is the size of the jump at time $s$.
\end{definition}

See Figure~\ref{fig::bubblegluing} for an illustration of $\cong$ as in Definition~\ref{def::levynet_bubble_gluing}.  (As the root in Figure~\ref{fig::bubblegluing} is shown on the top rather than the bottom, one has to vertically reflect the illustration in Figure~\ref{fig::bubblegluing} to correspond exactly to $\cong$.)

Fix $t, r > 0$.  On $D > t$ and $Z_t \geq r$, we let $\eta^{t,r}$ be the geodesic starting from the point on $\partial U_t$ such that the length of the counterclockwise segment of $\partial U_t$ to $x_t$ is equal to $r$.  For each $s \geq t$, we let $A_s^{t,r}$ (resp.\ $B_s^{t,r}$) be the length of the counterclockwise (resp.\ clockwise) segment of $\partial U_s$ which connects $\eta^{t,r} \cap \partial U_s$ to $x_s$.  Note that $A_t^{t,r} = r$, $B_t^{t,r} = Z_t - r$, and $A_s^{t,r} + B_s^{t,r} = Z_s$ for all $s \in [t,D]$.

\begin{proposition}
\label{prop::levynetgeodesictrajectoriesdetermined}
When the processes $A^{t,r}$, $B^{t,r}$, and $Z^{t,r}$ and the values $t$ and $D$ are as defined just above, the following holds for the restrictions of these processes to the interval $s \in [t, D]$.
\begin{enumerate}
\item The processes $A_s^{t,r}$ and $B_s^{t,r}$ are independent $\alpha$-stable CSBPs.
\item The process $A_s^{t,r} / Z_s = A_s^{t,r} / (A_s^{t,r} + B_s^{t,r})$ is a martingale. (This corresponds to the horizontal location in the trajectory illustrated in Figure~\ref{fig::bubblegluingplustrajectory} when parameterized using distance).
\item The process $A_s^{t,r} / Z_s$ a.s.\ hits $0$ or $1$ before time $D$.
\end{enumerate}
\end{proposition}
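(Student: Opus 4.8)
The plan is to prove the three assertions in the order (1), (3), (2): assertion~(1) is the substantive step, and assertions~(3) and~(2) follow from it using the facts about $\alpha$-stable CSBPs collected in Section~\ref{subsec::csbp}.

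\textbf{Assertion (1).} I would first argue that $(A_s,B_s)_{s\in[t,D]}$ is a Markov process in which $A$ and $B$ are independent $\alpha$-stable CSBPs started from $A_t=r$ and $B_t=Z_t-r$. The point is that $Z$ is the boundary-length (local-time) process of the height process, so that the cycle $\partial U_s$ plays the role of the ``population alive at time $s$'' for the CSBP $Z$, with $\gamma$ and $\eta$ two distinguished ancestral lines: the crossing points $x_s=\gamma\cap\partial U_s$ and $\eta\cap\partial U_s$ are the images of $x_t$ and $\eta\cap\partial U_t$ under the ancestral flow of the genealogy, because $\gamma$ and $\eta$ are branches of the geodesic tree of the L\'evy net, along which distance to the root is monotone. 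These two marked points split $\partial U_s$ into the arcs of lengths $A_s$ and $B_s$, and the branching property of CSBPs --- equation~\eqref{eqn::CSBPproperty}, i.e.\ descendants of disjoint sub-populations evolve as independent CSBPs --- identifies $A$ and $B$ with the two sub-families. Concretely one checks this from the construction: between jumps the continuous decrease of $Z$ is spread along $\partial U_s$ so that each arc decreases at a rate proportional to its own current length, while by Figure~\ref{fig::levycb} (the attachment points of Definition~\ref{def::levy_attachment_points} are, conditionally on $(Z_u)_u$, independent and uniform) each jump of $Z$ at time $s$ is added entirely to the $A$-arc with probability $A_{s-}/Z_{s-}$ and to the $B$-arc with probability $B_{s-}/Z_{s-}$, independently of the past; this makes $(A,B)$ a Markov process with exactly the additive transition structure of a pair of independent CSBPs. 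Scaling is inherited from that of $Z$ (Proposition~\ref{prop::Zisalphastable}), so Proposition~\ref{prop::stablecsbp} shows that each of $A$ and $B$ is the $\alpha$-stable CSBP. The one genuinely delicate point --- the main obstacle --- is the justification that the geometric arc-decomposition of $\partial U_s$ coincides with the canonical branching decomposition of the CSBP $Z$; everything after that is bookkeeping.

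\textbf{Assertion (3).} Assume~(1). Let $\zeta_A$ and $\zeta_B$ be the extinction times of $A$ and $B$; these are independent, and since $A$ and $B$ are absorbed at $0$ we have $D=\zeta_A\vee\zeta_B$, while $A_s/Z_s$ reaches $\{0,1\}$ at $\tau:=\zeta_A\wedge\zeta_B$ --- it equals $0$ there if $\zeta_A<\zeta_B$ and $1$ if $\zeta_B<\zeta_A$, using that the survivor of $A,B$ is strictly positive at $\tau$ (an $\alpha$-stable CSBP is positive strictly before its extinction time). By Lemma~\ref{lem::csbp_extinction_time} there is a strictly decreasing continuous $\rho\colon(t,\infty)\to(0,\infty)$ with $\rho(t+)=\infty$, $\rho(\infty)=0$, such that $\p[\zeta_A\le s]=\exp(-r\,\rho(s))$ and $\p[\zeta_B\le s]=\exp(-(Z_t-r)\,\rho(s))$; equivalently $\rho(\zeta_A)$ and $\rho(\zeta_B)$ are independent exponentials of parameters $r$ and $Z_t-r$. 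In particular $\zeta_A\ne\zeta_B$ a.s., hence $\tau<D$ a.s., which is assertion~(3).

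\textbf{Assertion (2).} Adopt the convention that $A_s/Z_s$ is absorbed at its value as soon as it first hits $0$ or $1$; by~(3) this makes it well defined for all $s\ge t$ (note $Z_s>0$ on $[t,D)$, and $Z_s$ equals $A_s$ or $B_s$ on $[\tau,D)$) and bounded in $[0,1]$. For $t\le s_1<s_2$ I would show $\E[A_{s_2}/Z_{s_2}\giv\CF_{s_1}]=A_{s_1}/Z_{s_1}$. On $\{\tau\le s_1\}$ this is trivial. On $\{\tau>s_1\}$, both $A_{s_1},B_{s_1}$ are positive; by~(1), the Markov property and Remark~\ref{rem::CSBPsubordinator}, conditionally on $\CF_{s_1}$ the pair $(A_{s_2},B_{s_2})$ has the law of $(\mathcal{A}_{A_{s_1}},\widetilde{\mathcal{A}}_{B_{s_1}})$ for two independent copies $\mathcal{A},\widetilde{\mathcal{A}}$ of the subordinator associated with the $\alpha$-stable CSBP run for time $s_2-s_1$. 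On the event $\{Z_{s_2}>0\}$ the exchangeability-of-increments computation in the proof of Proposition~\ref{prop::easyexpectationratiofact} shows that the conditional expectation of $A_{s_2}/(A_{s_2}+B_{s_2})$ equals $A_{s_1}/(A_{s_1}+B_{s_1})$; on the complementary event $\{Z_{s_2}=0\}$ the absorbed value of $A_s/Z_s$ is $\mathbf{1}_{\{\zeta_B<\zeta_A\}}$, and the exponential representation from~(3) gives $\p[Z_{s_2}=0,\ \zeta_B<\zeta_A\giv\CF_{s_1}]=\tfrac{A_{s_1}}{A_{s_1}+B_{s_1}}\,\p[Z_{s_2}=0\giv\CF_{s_1}]$. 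Adding the two contributions yields $\E[A_{s_2}/Z_{s_2}\giv\CF_{s_1}]=A_{s_1}/(A_{s_1}+B_{s_1})=A_{s_1}/Z_{s_1}$, as desired, with boundedness making all conditional expectations finite.
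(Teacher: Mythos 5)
Your proposal is correct and takes essentially the same route as the paper's proof: part (1) is read off from the local-time/branching structure of the construction (as in the proof of Proposition~\ref{prop::Zisalphastable}), part (2) follows from Proposition~\ref{prop::easyexpectationratiofact} via Remark~\ref{rem::CSBPsubordinator}, and part (3) from the fact that two independent CSBPs started from positive values a.s.\ have distinct extinction times. If anything you are more careful than the paper, both in making the distinct-extinction-times claim explicit through Lemma~\ref{lem::csbp_extinction_time} and in handling the event $\{Z_{s_2}=0\}$, where the subordinator representing the CSBP transition has an atom at zero and the ratio must be interpreted via your absorption convention.
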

\begin{proof}
The first point is immediate from the construction; recall the proof of Proposition~\ref{prop::Zisalphastable}.  Given the first point, the second point is immediate from Proposition~\ref{prop::easyexpectationratiofact} (recall Remark~\ref{rem::CSBPsubordinator}).  The fact that the martingale reaches $0$ or $1$ a.s.\ before reaching the upper end of the rectangle is reached simply follows from the fact that two independent CSBPs, both started at positive values, a.s.\ do not reach zero at exactly the same time.
\end{proof}

\begin{lemma}
\label{lem::bubble_locations_independent}
Given the process $Z$, the locations of the attachment points as defined in Definition~\ref{def::levy_attachment_points} are conditionally independent.  If $s$ is a jump time for $Z$, then the corresponding attachment point is uniform in $[0,Z_{s-}]$.
\end{lemma}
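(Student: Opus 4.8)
The plan is to reduce the statement to properties of the reflected $\alpha$-stable L\'evy process and its excursions, using the local-time description of $Z_s$ developed just above. Recall that for a fixed height $s$, the set $E_t = \{t : Y_t > s\}$ decomposes the path $X_t$ into a collection of $\alpha$-stable L\'evy excursions (one for each open interval of $E_t$), and that, by the argument preceding Definition~\ref{def::levycb}, this ordered collection agrees in law with the collection of excursions of the reflected process $\wt R_t = R_t - \inf_{0 \le s' \le t} R_{s'}$ explored during a local time interval $[0,Z_s]$, which is a Poisson point process with intensity given by the product of Lebesgue measure on $[0,Z_s]$ and the L\'evy excursion measure. The attachment point of a jump of $Z$ at time $s = Y_t$ is, by Definition~\ref{def::levy_attachment_points}, the amount of local time of the intersection of the height-$s$ line with the graph of $Y$ lying to the right of $(t,s)$; under the identification above, this is exactly the local time accumulated by $\wt R$ strictly after the excursion corresponding to that jump, i.e.\ the ``local-time coordinate'' of that excursion within the Poisson point process.

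First I would make precise the correspondence (already used in Proposition~\ref{prop::jump_correspondence}) between jumps of $Z$ and jumps of $X$: a jump of $Z$ at height $s$ occurs precisely when the rising height-$s$ line first touches a point $(t,s)$ with $X$ having a jump at time $t$, the jump magnitude being the height of that jump of $X$, and the local time to the right of $(t,s)$ being the piece of the new local time sitting to the right of the newly-created plateau of record infima. Second, I would invoke the Poisson point process structure: conditionally on $Z$ (equivalently, conditionally on the total local time accumulated at each height and hence on the ``clock'' $s \mapsto Z_s$), the excursions of $\wt R$ that reach a given height above their base, together with their local-time coordinates, form a Poisson point process on $[0, Z_{s-}] \times (\text{excursion space})$ with product intensity. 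A standard fact about such a PPP is that, given the collection of excursions (hence given their heights, hence given the jump times and sizes of $Z$), the local-time coordinates are i.i.d.\ uniform on the relevant interval; here that interval is $[0, Z_{s-}]$ because the jump of $Z$ at time $s$ is triggered by an excursion present ``just below'' height $s$, whose local-time coordinate is uniform in the total local time $Z_{s-}$ present at that level. Conditional independence across distinct jump times follows from the independence of disjoint regions of a Poisson point process, combined with the Markov/branching structure of $Z$ (Proposition~\ref{prop::Zisalphastable}) which guarantees that the excursions governing jumps at different heights live in ``independent'' portions of the reflected-process picture.

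The main obstacle I anticipate is handling the measure-theoretic subtleties of conditioning on the entire path of $Z$ at once, rather than on $Z$ restricted to a countable set of heights, and making sure the uniformity of the attachment point is stated with respect to the correct $\sigma$-algebra. One clean way around this is to approximate: fix $\epsilon > 0$, consider only those excursions (equivalently jumps of $Z$, equivalently of $X$) of size at least $\epsilon$; there are a.s.\ finitely many of them on any compact height interval, the conditional law of their local-time coordinates given $Z$ is an explicit finite-dimensional object (i.i.d.\ uniform on the appropriate intervals, by the elementary PPP computation), and then one lets $\epsilon \to 0$ and uses that the $\sigma$-algebras generated by the size-$\ge \epsilon$ jumps increase to the $\sigma$-algebra generated by all jumps. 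A subsidiary point to check carefully is that the relevant ``interval'' for the uniform law is genuinely $[0, Z_{s-}]$ and not $[0, Z_s]$ or something in between; this is exactly the content of \cite[Proposition~1.3.3]{duquesnelegall2005levytrees} cited in the caption of Figure~\ref{fig::levycb} (``a quantity that lies strictly between $0$ and the height of $X$ at the lower end of that jump''), and I would cite that and the local-time bookkeeping rather than re-derive it. With these pieces in place the lemma follows, and it feeds directly into the analysis of the processes $A_s$, $B_s$ in Proposition~\ref{prop::levynetgeodesictrajectoriesdetermined}.
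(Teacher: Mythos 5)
Your route is genuinely different in machinery from the paper's. The paper's proof is a one-line symmetry argument carried out entirely at the level of the boundary-length process: by the CSBP branching property~\eqref{eqn::CSBPproperty}, for each fixed level one writes the future of $Z$ as a sum of $n$ independent $\alpha$-stable CSBPs each started from $Z_s/n$; each of the $n$ pieces of boundary is equally likely to contain the next jump in a short time window, so the attachment point is uniform in the $n\to\infty$ limit, and iterating at the (shape-measurable) jump levels gives the conditional independence. You instead descend to the L\'evy excursion $X$ and invoke It\^o excursion theory for the reflected process: at a fixed level, given the multiset of excursion shapes above that level (which determines $Z$ there and above), the local-time coordinates are i.i.d.\ uniform. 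The underlying exchangeability is the same, but your version needs more scaffolding, and as written it has a gap precisely where the two pictures are matched up. (Also, the clause ``conditionally on $Z$, the excursions \ldots form a Poisson point process with product intensity'' is not correct as stated---conditioning on $Z$ constrains the shapes; the correct statement, which you do use afterwards, is that the PPP structure holds unconditionally at a fixed level, and that \emph{given the shapes} the coordinates are i.i.d.\ uniform.)

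The gap is the identification of the attachment point with ``the local-time coordinate of the excursion present just below height $s$,'' uniform on $[0,Z_{s-}]$. There is no excursion decomposition ``at level $s-$'': for each fixed level $u<s$ the excursion straddling the jump has a coordinate that is uniform on $[0,Z_u]$, not on $[0,Z_{s-}]$, and that coordinate is \emph{not} the attachment point. The attachment point is measured in level-$s$ local time: it equals the level-$s$ local time of the portion of the straddling excursion lying to the right of the jump time, plus the level-$s$ (not level-$u$) local times of the excursions lying entirely to its right. Completing your argument therefore requires a $u\uparrow s$ limit: for instance, show that the maximal level-$s$ local time carried by any single excursion above level $u$ (apart from the jump's own contribution) tends to $0$ as $u\uparrow s$, so that, given the shapes, the attachment point is a sum of many small terms included independently according to whether the corresponding uniform coordinate lies to the right of the straddling excursion, and hence concentrates at $\bigl(1-U/Z_u\bigr)Z_{s-}$ with $U/Z_u$ uniform on $[0,1]$---an argument in the spirit of Proposition~\ref{prop::easyexpectationratiofact} and Proposition~\ref{prop::levynetgeodesictrajectoriesdetermined}; uniformity of $a_s/Z_{s-}$ given the shapes (hence given $Z$) then follows by passing to the limit of these conditionally uniform ratios. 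Your $\epsilon$-truncation paragraph addresses a different issue (conditioning on all of $Z$ rather than on finitely many jumps) and does not supply this step. Once it is supplied, the conditional independence across jumps follows as you indicate, by nesting the same statement at successive jump levels via the Markov property.
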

In the context of Figure~\ref{fig::levycb}, Lemma~\ref{lem::bubble_locations_independent} states that conditionally on the process $Z_s$, the red dots in the bottom left of Figure~\ref{fig::levycb} are conditionally independent and uniform on each of the vertical orange lines.
\begin{proof}[Proof of Lemma~\ref{lem::bubble_locations_independent}]
This follows because the CSBP property~\eqref{eqn::CSBPproperty} implies that for each fixed $s$ we can write $Z_{s+t}$ for $t \geq 0$ as a sum $n$ independent $\alpha$-stable CSBPs each starting from $Z_s/n$ and the probability that any one of them has a jump in $\epsilon > 0$ units of time is equal.
\end{proof}

\begin{theorem}
\label{thm::levynetrecover}
The $\sigma$-algebra generated by the process $Z$ as in Definition~\ref{def::levycb} and the attachment points defined in Definition~\ref{def::levy_attachment_points} is equal to the $\sigma$-algebra generated by $X$.  (In other words, the information encoded by the graph in the bottom left of Figure~\ref{fig::levycb} a.s.\ determines the information encoded by the first graph.) That is, these definitions yield (as illustrated in Figure~\ref{fig::levycb}) an a.e.-defined one-to-one measure-preserving correspondence between
\begin{enumerate}
\item $\alpha$-stable L\'evy excursions and
\item $\alpha$-stable L\'evy excursions (which are naturally reparameterized and viewed as CSBP excursions) that come equipped with a way of assigning to each jump a distinguished point between zero and the lower endpoint of that jump (as in Definition~\ref{def::levy_attachment_points} and illustrated in the bottom left graph of Figure~\ref{fig::levycb}).
\end{enumerate}
\end{theorem}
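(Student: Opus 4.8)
The plan is to show that the map $\Phi$ sending an $\alpha$-stable Lévy excursion $X$ to the pair consisting of the associated CSBP excursion $Z$ (Definition~\ref{def::levycb}) and its collection of attachment points (Definition~\ref{def::levy_attachment_points}) is, off a null set, a measure-preserving bijection between the two measure spaces in the statement, and then to deduce the $\sigma$-algebra identity from abstract measure theory. That $\Phi$ is $\sigma(X)$-measurable is built into the constructions already made: the height process $Y$ is a measurable functional of $X$, the process $Z$ is obtained as an a.s.\ limit of rescaled counts $N_\epsilon/\epsilon^\beta$ of the relevant sub-excursions and then rendered \cadlag, and each attachment point is a measurable functional of $(X,Y)$ by Definition~\ref{def::levy_attachment_points}. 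That $\Phi$ pushes the $\alpha$-stable Lévy excursion measure forward to the decorated CSBP excursion measure described in part~(2) of the theorem follows from results already in hand: Proposition~\ref{prop::Zisalphastable} (the law of $Z$), Proposition~\ref{prop::jump_correspondence} (the jumps of $Z$ and of $X$ are in bijection with equal magnitudes, so ``one attachment point per jump'' is well posed), and Lemma~\ref{lem::bubble_locations_independent} (given $Z$, the attachment points are independent and uniform on the intervals $[0,Z_{s-}]$).

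It then remains to show that $\Phi$ is a.s.\ injective, i.e.\ that $X$ is an a.s.\ measurable function of the pair $(Z,\{a_s\})$; granting this, both spaces are standard Borel, so the Lusin--Souslin theorem furnishes a Borel inverse defined on a co-null set, which yields the equality $\sigma(Z,\{a_s\}) = \sigma(X)$ modulo null sets and at the same time the surjectivity needed for ``one-to-one''. To recover $X$, I would reconstruct the planar stable looptree associated with $X$ directly from $(Z,\{a_s\})$: the bookkeeping underlying Definition~\ref{def::levynet_bubble_gluing} and Figure~\ref{fig::bubblegluing} realizes each loop as a jump of $Z$ of perimeter equal to the jump size, and uses the attachment point $a_s$ to record where, along the already-reconstructed radius-$s$ ball boundary, the new loop is grafted, measured counterclockwise from the point where the root-to-dual-root branch $\gamma$ meets that boundary. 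Running this over all jump heights produces the looptree as a measured planar object, and by the Curien--Kortchemski encoding \cite{ck2013looptrees} this object is in measurable bijection with $X$. (Equivalently and more directly: the genealogy and cyclic structure of the looptree determine the jump measure of $X$, and since $\alpha\in(1,2)$ the path $X$ is recovered from its jump measure by adding a purely deterministic, $\alpha$-dependent compensator.)

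The main obstacle is precisely this a.s.\ injectivity step, and within it a ``breadth-first equals depth-first'' consistency that must be verified: the loops are enumerated by $(Z,\{a_s\})$ in order of distance from the root, whereas $X$ is the depth-first (contour order) encoding, and one must check that the attachment data glues the loops into a \emph{unique} planar looptree with no residual randomness. Here I would first prove the finite version --- for the finitely many loops of perimeter at least $\epsilon$, the attachment points are a.s.\ distinct (Lemma~\ref{lem::bubble_locations_independent}) and the confluence statement of Proposition~\ref{prop::levynetgeodesictrajectoriesdetermined} ensures that the grafting sites line up consistently along the $\gamma$-branch, so the restricted looptree is determined --- and then pass to the limit $\epsilon\downarrow0$ over the a.s.\ countable collection of all loops. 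A secondary, routine point is to confirm that the deterministic compensation in passing from the jump measure of $X$ back to the path introduces no ambiguity, which is immediate since the compensator depends only on $\alpha$.
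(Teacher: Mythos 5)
Your reductions (measurability of the map $X \mapsto (Z,\{a_s\})$, identification of the pushforward law via Proposition~\ref{prop::Zisalphastable}, Proposition~\ref{prop::jump_correspondence} and Lemma~\ref{lem::bubble_locations_independent}, the Lusin--Souslin step, and the recovery of $X$ from its jumps and their order) are fine, but the heart of the theorem is exactly the step you flag and then do not prove: that $(Z,\{a_s\})$ determines the gluing with \emph{no residual randomness}. Your finite-$\epsilon$ argument does not establish this. Knowing $Z$ and the attachment coordinate of each loop \emph{at the instant it is attached} does not obviously tell you where previously attached loops sit on later ball boundaries: the counterclockwise boundary-length coordinate (relative to the branch $\gamma$) of any marked boundary point evolves randomly as the radius increases --- the boundary lengths to its left and right are themselves CSBPs being fed by the countably many later jumps --- and deciding which sub-boundary each later jump lands in requires already knowing the sub-boundary intervals, which are in turn determined by which jumps they receive. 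This apparent circularity is precisely what must be broken, and neither the a.s.\ distinctness of attachment points nor Proposition~\ref{prop::levynetgeodesictrajectoriesdetermined} does it: that proposition describes the \emph{law} of the left/right boundary-length processes of a geodesic in the Lévy net built from $X$; it says nothing about their measurability with respect to $\sigma(Z,\{a_s\})$. The $\epsilon\downarrow 0$ limit is likewise unaddressed (small loops carry boundary length and so shift all coordinates), but the finite step is already unproven.

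For comparison, the paper's proof resolves exactly this point with a coupling/resampling device: sample \emph{two} geodesic trajectories $\eta,\wt\eta$ from the same boundary point, conditionally independently given $(Z,\{a_s\})$; show that away from collision times the three boundary-length segments they cut out are independent $\alpha$-stable CSBPs (using that a CSBP is a.s.\ reconstructed from its initial value and its ordered jumps, plus Lemma~\ref{lem::bubble_locations_independent} to see that the middle segment receives jumps with the correct proportional-to-length law); and then invoke Lemma~\ref{lem::noreflectedcsbp} to conclude that the middle segment, being a CSBP started from zero, stays at zero, so $\eta=\wt\eta$ a.s.\ Hence each geodesic, and therefore the whole geodesic tree and $X$ itself, is a measurable function of $(Z,\{a_s\})$. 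If you want to salvage your outline, you need an argument of this strength (or some other genuine uniqueness argument) in place of the sentence asserting that ``the grafting sites line up consistently, so the restricted looptree is determined.''
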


To further clarify the statement of Theorem~\ref{thm::levynetrecover}, we recall that an $\alpha$-stable L\'evy excursion is determined by the collection of pairs which give its jump times and jump magnitudes.  Therefore we can think of the infinite measure on $\alpha$-stable L\'evy excursions as an infinite measure on countable subsets of $\R_+^2$ where an element $(t,u) \in \R_+^2$ corresponds to a jump at time $t$ of size $u$.  An $\alpha$-stable L\'evy excursion where each jump is marked by a point between $0$ and the size of the jump can be thought of as a countable subset of $\R_+^2 \times [0,1]$ where an element $(t,u,v) \in \R_+ \times [0,1]$ corresponds to a jump at time $t$ of size $u$ with marked point along the jump at height $uv$.  The measure which will arise in this context in Theorem~\ref{thm::levynetrecover} will be given by the infinite measure on $\alpha$-stable L\'evy excursions where each jump is marked by a conditionally independent uniform random variable which gives the position of the mark corresponding to the jump.

Before we give the proof of Theorem~\ref{thm::levynetrecover}, we first need the following lemmas.

\begin{lemma}
\label{lem::csbp_sup}
Suppose that $W$ is an $\alpha$-stable CSBP with $W_0 > 0$ and let $W^* = \sup_{s \geq 0} W_s$.  Then we have that
\begin{equation}
\label{eqn::csbp_sup_bound}
\p[ W^* \geq u] \leq \frac{W_0}{u} \quad\text{for each}\quad u \geq W_0.
\end{equation}
\end{lemma}
\begin{proof}
Let $\tau = \inf\{t \geq 0: W_t = 0 \quad\text{or}\quad W_t \geq u\}$.  Then we have that
\begin{align*}
  \p[ W^* \geq u ]
&\leq \frac{1}{u} \E[ W_\tau ] \leq \frac{1}{u} \liminf_{t \to \infty} \E[ W_{t \wedge \tau}] \quad\text{(by Fatou's lemma)}\\
&= \frac{W_0}{u} \quad\text{(by the optional stopping theorem)}.	
\end{align*}
\end{proof}

\begin{lemma}
\label{lem::noreflectedcsbp}
Let $W_t$ be a process that starts at $W_0 = \epsilon$, then evolves as an $\alpha$-stable CSBP until it reaches $0$, then jumps to $\epsilon$ and continues to evolve as an $\alpha$-stable CSBP until again reaching zero, and so forth.  For each $T > 0$, the process $W|_{[0,T]}$ converges to zero in probability as $\epsilon \to 0$ with respect to the uniform topology.
\end{lemma}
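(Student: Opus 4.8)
The plan is to use the scaling and martingale structure of the $\alpha$-stable CSBP to show that, for small $\epsilon$, the path $W$ is assembled from a very large number of very short excursions, of which essentially none ever reaches a macroscopic height. Write $Y^\epsilon$ for an $\alpha$-stable CSBP started from $\epsilon$ and killed on reaching $0$, and let $\zeta^\epsilon$ be its extinction time; by Lemma~\ref{lem::csbp_extinction_time}, $\p[\zeta^\epsilon > t]$ tends to $1$ as $t \to 0^+$ and to $0$ as $t \to \infty$, so $\zeta^\epsilon \in (0,\infty)$ a.s. By construction $W$ is the concatenation of i.i.d.\ copies $Y^{(1)}, Y^{(2)}, \ldots$ of $Y^\epsilon$; set $\tau_0 = 0$, $\tau_j = \zeta^{(1)} + \cdots + \zeta^{(j)}$ (which tends to $\infty$, so $W$ is a.s.\ well defined for all times), and $N_\epsilon(t) = \max\{ j \ge 0 : \tau_j \le t\}$. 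For $s \in (\tau_j, \tau_{j+1})$ one has $W_s = Y^{(j+1)}_{s - \tau_j}$, so the excursion alive at any time $s \le T$ has index at most $N_\epsilon(T) + 1$, and hence for every integer $K$,
\begin{equation}
\label{eqn::reflected_csbp_decomp}
\left\{ \sup_{0 \le s \le T} W_s \ge \delta \right\} \subseteq \left\{ \tau_K \le T \right\} \cup \left\{ \sup_{s \ge 0} Y^{(j)}_s \ge \delta \text{ for some } j \le K \right\}.
\end{equation}
It then suffices to produce $K = K(\epsilon)$ for which both events on the right have probability tending to $0$ as $\epsilon \to 0$.

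For the second event I would first record that $Y^\epsilon$ is a non-negative martingale: differentiating the Laplace transform \eqref{eqn::csbp_def} in $\lambda$ at $\lambda = 0$ and using the expansion $u_t(\lambda) = \lambda + o(\lambda)$ as $\lambda \to 0^+$ coming from \eqref{eqn::csbp_u_form} (valid since $\alpha > 1$) gives $\E[Y_t \giv Y_s] = Y_s$. The standard maximal inequality for non-negative supermartingales then yields $\p\bigl[\sup_{s \ge 0} Y^\epsilon_s \ge \delta\bigr] \le \epsilon/\delta$, so by a union bound over the $K$ (independent) excursions the second event in \eqref{eqn::reflected_csbp_decomp} has probability at most $K\epsilon/\delta$.

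For the first event I would use the scaling of Proposition~\ref{prop::stablecsbp}: taking $C = \epsilon$ there gives $\zeta^\epsilon \stackrel{d}{=} \epsilon^{\alpha - 1} \zeta^1$, where $\zeta^1$ is the extinction time of an $\alpha$-stable CSBP started from $1$; since $\zeta^1 \in (0,\infty)$ a.s., the number $\rho := \E\bigl[e^{-\zeta^1}\bigr]$ lies in $(0,1)$. Applying the Chernoff bound $\p[\tau_K \le T] \le e^{\lambda T}\, \E\bigl[e^{-\lambda \zeta^\epsilon}\bigr]^K$ with the $\epsilon$-dependent choice $\lambda = \epsilon^{-(\alpha-1)}$ gives $\E[e^{-\lambda \zeta^\epsilon}] = \E[e^{-\zeta^1}] = \rho$ and hence $\p[\tau_K \le T] \le \exp\bigl( \epsilon^{-(\alpha-1)} T - K \log(1/\rho)\bigr)$. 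Now take $K = K(\epsilon) = \lceil 2T \epsilon^{-(\alpha-1)} / \log(1/\rho) \rceil$: then $\p[\tau_K \le T] \le \exp\bigl(-T \epsilon^{-(\alpha-1)}\bigr) \to 0$ as $\epsilon \to 0$ because $\alpha > 1$, while $K\epsilon/\delta = O\bigl(\epsilon^{2-\alpha}\bigr) \to 0$ as $\epsilon \to 0$ because $\alpha < 2$. Feeding both bounds into \eqref{eqn::reflected_csbp_decomp} gives $\p\bigl[\sup_{0 \le s \le T} W_s \ge \delta\bigr] \to 0$ for every $T, \delta > 0$, which is the asserted convergence of $W$ to $0$ in probability.

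I expect the only genuinely delicate step to be the control of $N_\epsilon(T)$ through the event $\{\tau_K \le T\}$: the typical number of completed excursions by time $T$ is of order $\epsilon^{-(\alpha-1)}$, and one must ensure that the probability of having appreciably more than this decays fast enough to survive multiplication by the per-excursion factor $\epsilon/\delta$ in \eqref{eqn::reflected_csbp_decomp}. A crude (Markov) bound on $N_\epsilon(T)$ is too weak for this; the point is to use the exponential lower-tail estimate on $\tau_K = \sum_{i \le K} \zeta^{(i)}$ with $\lambda$ scaled like $\epsilon^{-(\alpha-1)}$, which is exactly what makes $\p[\tau_K \le T]$ superpolynomially small while keeping $K\epsilon$ of order $\epsilon^{2-\alpha}$. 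The remaining ingredients---the martingale property of the $\alpha$-stable CSBP, the maximal inequality, and the scaling of the extinction time---are routine once invoked in this order.
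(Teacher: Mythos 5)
Your proof is correct and follows essentially the same route as the paper's: decompose $W$ into i.i.d.\ excursions, use the martingale property to bound the per-excursion probability of reaching $\delta$ by $\epsilon/\delta$, and use the $\epsilon^{\alpha-1}$ scaling of the extinction time to show only about $\epsilon^{-(\alpha-1)}$ excursions fit in time $T$, so the total probability is $O(\epsilon^{2-\alpha}) \to 0$. The paper states this as a brief sketch (order $\epsilon^{-1}$ excursions needed, each of duration of order $\epsilon^{\alpha-1}$, and $\epsilon^{-1}\epsilon^{\alpha-1}\to\infty$), and your Chernoff bound on $\tau_K$ and the supermartingale maximal inequality simply make that sketch quantitative.
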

\begin{proof}
Let $\tau_0 = 0$.  Assuming that $\tau_0,\ldots,\tau_k$ have been defined we let $\tau_{k+1} = \inf\{t > \tau_k : W_t = 0\}$.  Fix $\delta > \epsilon$ and let $N = \min\{k \geq 0 : \sup_{t \in [\tau_k,\tau_{k+1}]} W_t \geq \delta\}$.  Lemma~\ref{lem::csbp_sup} implies that $N$ stochastically dominates a geometric random variable with parameter $\epsilon/\delta$.  Fix $a \in (0,1)$ so that $\alpha-1-a>0$ (recall that $\alpha \in (1,2)$).  Therefore $\p[ N \leq \delta/\epsilon^a] \to 0$ as $\epsilon \to 0$ with $\delta > 0$ fixed.  Note also that there exists a constant $p > 0$ so that $\p[ \tau_1 \geq \epsilon^{\alpha-1}] \geq p$ uniformly in $\epsilon > 0$.  Let $n = \lfloor \delta/\epsilon^a \rfloor$.  Using that the sequence of random variables $(\tau_j-\tau_{j-1})$ is i.i.d., standard concentration results for binomial random variables imply that (with $\delta > 0$ fixed),
\[ \p\!\left[ \tau_n \leq \frac{p}{2} \times \epsilon^{\alpha-1} \times \frac{\delta}{\epsilon^a}  \right] =  \p\!\left[ \sum_{j=1}^n (\tau_j-\tau_{j-1}) \leq \frac{p \delta}{2} \epsilon^{\alpha-1-a} \right] \to 0 \quad\text{as}\quad \epsilon \to 0.\]
Combining, we have that (with $\delta > 0$ fixed),
\begin{align*}
\p[ \tau_N \leq T]
&\leq \p[ \tau_n \leq T] + o(1) = o(1) \quad\text{as}\quad \epsilon \to 0.
\end{align*}
This implies the result.

\end{proof}

\begin{proof}[Proof of Theorem~\ref{thm::levynetrecover}]
Fix $t, r > 0$.  Assume that we are working on the event that $D > t$ and $Z_t \geq r$.  We claim that the trajectory $\eta^{t,r}$ considered in Proposition~\ref{prop::levynetgeodesictrajectoriesdetermined} is a.s.\ uniquely determined by the boundary length process $Z_s$ together with the attachment points (i.e., the information in the decorated graph $Z_s$, as shown in the bottom left graph of Figure~\ref{fig::levycb}). Upon showing this, we will have shown that the geodesic tree is a.s.\ determined by $Z_s$ and the attachment points.  Indeed, then we can recover the process $Y_t$ and from $Y_t$ we can recover the ordered sequence of jumps made by $X_t$ hence we can recover $X_t$ itself.  That is, the entire $\alpha$-stable L\'evy net is a.s.\ determined.  This implies the theorem statement because we know that $X_t$ determines $Z_s$ plus the attachment points.

To prove the claim, we choose two such trajectories $\eta^{t,r}$ and $\wt{\eta}^{t,r}$ conditionally independently, given $Z_s$ and the attachment points, and show that they are a.s.\ equal.

We begin by noting that the length of the segment which is to the left of $\eta^{t,r}$ evolves as an $\alpha$-stable CSBP and the length which is to the right of $\eta^{t,r}$ evolves as an independent $\alpha$-stable CSBP.  The same is also true for $\wt{\eta}^{t,r}$.  It follows from this that in the intervals of time in which $\eta^{t,r}$ is not hitting $\wt{\eta}^{t,r}$ we have that the length $A_s^{t,r}$ (resp.\ $C_s^{t,r}$) of the segment which is to the left (resp.\ right) of both trajectories evolve as independent $\alpha$-stable CSBPs.  Our aim now is to show that the length $B_s^{t,r}$ which lies between $\eta^{t,r}, \wt{\eta}^{t,r}$ also evolves as an independent $\alpha$-stable CSBP in these intervals of time.

Fix an interval of time $I = [a,b]$ in which $\eta^{t,r}$ does not collide with $\wt{\eta}^{t,r}$.  Then we know that both $A^{t,r}|_I$ and $C^{t,r}|_I$ can be a.s.\ deduced from the ordered set of jumps they have experienced in $I$ along with their initial values $A_a^{t,r},C_a^{t,r}$ (since this is true for $\alpha$-stable CSBPs and $\alpha$-stable L\'evy processes).  That is, if we fix $s \in I$ and let $J_s^\epsilon$ be the sum of the jumps made by $A^{t,r}|_{[a,s]}$ with size at least $\epsilon$ then $A_s^{t,r}$ is a.s.\ equal to $A_a^{t,r} + \lim_{\epsilon \to 0} \big( J_s^\epsilon - \E[ J_s^\epsilon] \big)$ and the analogous fact is likewise true for $C^{t,r}|_I$.  Since this is also true for $(A^{t,r} + B^{t,r} + C^{t,r})|_I$ as it is an $\alpha$-stable CSBP (Proposition~\ref{prop::Zisalphastable}), we see that $B^{t,r}|_I$ is a.s.\ determined by the jumps made by $B^{t,r}|_I$ and $B_a^{t,r}$ in the same way.

To finish showing that $B^{t,r}|_I$ evolves as an $\alpha$-stable CSBP, we need to show that the law of the jumps that it has made in $I$ has the correct form.  Lemma~\ref{lem::bubble_locations_independent} implies that each time a new bubble comes along, we may sample which of the three regions it is glued to (with probability of each region proportional to each length).  This implies that the jump law for $B^{t,r}|_I$ is that of an $\alpha$-stable CSBP which implies that $B^{t,r}|_I$ is in fact an $\alpha$-stable CSBP.

The argument is completed by applying Lemma~\ref{lem::noreflectedcsbp} to deduce that since $B_s^{t,r}$ starts at zero and evolves as an $\alpha$-stable CSBP away from time zero, it cannot achieve any positive value in finite time.  We have now shown that it is possible to recover $X$ and $Y$ in the definition of the L\'evy net from $Z$ together with the attachment points.  That is, it is possible to recover the top left graph in Figure~\ref{fig::levycb} from the bottom left graph a.s.  We have already explained how to construct $Z$ and the attachment points from $X$ and $Y$, which completes the proof.
\end{proof}

\begin{figure}[ht!]
\begin{center}
\includegraphics[scale=0.8]{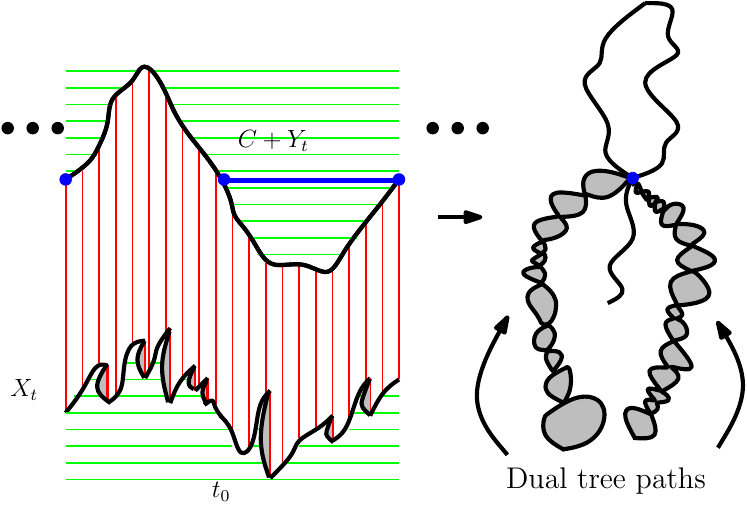}
\end{center}
\vspace{-0.03\textheight}
\caption{\label{fig::nodoublesidedrecord} Illustration of the proof of Proposition~\ref{prop::nodoublesidedrecord}, which states that $Y_t$ cannot have a decrease time, i.e., there cannot be a time $t_0$ and $h > 0$ such that $Y_s \geq Y_{t_0}$ for all $s \in (t_0-h,t_0)$ and $Y_s \leq Y_{t_0}$ for all $s \in (t_0,t_0+h)$.  Shown is the behavior of the geodesic tree and dual tree if $Y$ did have a decrease time $t_0$.  The middle blue line on the graph of $C+Y_t$ corresponds to the decrease time and the blue dots to its left and right are points which are all glued together by the L\'evy net equivalence relation. Observe that every point in the L\'evy net which corresponds to a point in the graph of $C+Y_t$ which lies below the blue line would have more than one geodesic back to the root.  This is a contradiction in view of Lemma~\ref{lem::noreflectedcsbp}, because then we would have a positive measure of points in the geodesic tree from which there is more than one geodesic to the root.  An analogous argument implies that $Y_t$ cannot have an increase time.}
\end{figure}

We now have the tools to give the proof of Proposition~\ref{prop::nodoublesidedrecord}.

\begin{proof}[Proof of Proposition~\ref{prop::nodoublesidedrecord}]
See Figure~\ref{fig::nodoublesidedrecord} for an illustration of the argument.  We will give the proof that $Y_t$ a.s.\ does not have a decrease time; the proof that $Y_t$ a.s.\ does not have an increase time is analogous.  We suppose for contradiction that $Y$ has a decrease time $t_0$.  Then there exists $h > 0$ such that $Y_s \geq Y_{t_0}$ for all $s \in (t_0-h,t_0)$ and $Y_s \leq Y_{t_0}$ for all $s \in (t_0,t_0+h)$.  Let $u_0$ (resp.\ $v_0$) be the supremum (resp.\ infimum) of times $s$ before (resp.\ after) $t_0$ such that $Y_s < Y_{t_0}$ (resp.\ $Y_s > Y_{t_0}$).  As $h > 0$, we have that $u_0 < t_0 < v_0$.  Let $\pi$ be the quotient map as in Definition~\ref{def::levynet}.  By the definition of the geodesic tree in Definition~\ref{def::levynet}, we have that $\pi( (t_0,C+Y_{t_0})) = \pi((v_0,C+Y_{v_0}))$.  Moreover, as $Y_t \geq Y_{u_0} = Y_{v_0}$ for all $t \in [u_0,t_0]$ it follows that $X_t \geq X_{u_0}$ for all $t \in [u_0,t_0]$.  Consequently, it follows that $\pi((u_0,X_{u_0})) = \pi((t_0,X_{t_0}))$.  Since $\pi((t,C+Y_t)) = \pi((t,X_t))$ for all $t$, we conclude that $\pi((u_0,C+Y_{u_0})) = \pi((t_0,C+Y_{t_0}))$.  That is, there are two distinct geodesics from the root of the geodesic tree to $\pi((t_0,X_{t_0})) = \pi((v_0,X_{v_0}))$.  Therefore the projection under~$\pi$ of the line segment $C + [t_0,v_0]$ is a positive measure subset of the geodesic tree from which there are at least two geodesics in the geodesic tree back to the root.

We will now use Lemma~\ref{lem::noreflectedcsbp} to show that the subset of the geodesic tree from which there are multiple geodesics back to the root a.s.\ has measure zero.  It is shown in Proposition~\ref{prop::levynetgeodesictrajectoriesdetermined} that the boundary length between two geodesics in the L\'evy net evolves as an $\alpha$-stable CSBP as the distance from the dual root increases.  Suppose that~$x$ is a fixed point in the L\'evy net and that~$\eta$ is the branch in the geodesic tree from~$x$ back to the root.  Fix $\epsilon > 0$, let $\wt{\tau}_0 = \tau_0 = 0$, and let $\eta_0$ (resp.\ $\wt{\eta}_0$) be the branch in the geodesic tree back to the root which starts from clockwise (resp.\ counterclockwise) boundary length distance $\epsilon$ from $x = \eta(\tau_0)$ back to the root.  We let $\tau_1$ (resp.\ $\wt{\tau}_1$) be the time at which $\eta$ first merges with $\eta_0$ (resp.\ $\wt{\eta}_0$).  Assuming that $\eta_0,\ldots,\eta_j$ and $\wt{\eta}_0,\ldots,\wt{\eta}_j$ as well as $\tau_0,\ldots,\tau_j$ and $\wt{\tau}_0,\ldots,\wt{\tau}_j$ have been defined, we let $\tau_{j+1}$ (resp.\ $\wt{\tau}_{j+1}$) be the first time that $\eta$ merges with $\eta_j$ (resp.\ $\wt{\eta}_j$) and let $\eta_{j+1}$ (resp.\ $\wt{\eta}_{j+1}$) be the branch of the geodesic tree starting from $\epsilon$ units in the clockwise (resp.\ counterclockwise) direction along the boundary relative to $\eta(\tau_{j+1})$ (resp.\ $\wt{\eta}_{j+1}(\wt{\tau}_{j+1})$).  

Suppose that there are at least two geodesics from $x=\eta(0)$ back to the root of the geodesic tree.  Then it would be the case that there exists $\delta > 0$ such that for sufficiently small $\epsilon > 0$ there is a $j$ such that either $\tau_{j+1}-\tau_j \geq \delta$ or $\wt{\tau}_{j+1}-\wt{\tau}_j \geq \delta$.  By Lemma~\ref{lem::noreflectedcsbp}, this a.s.\ does not happen, from which the result follows.
\end{proof}

We will later also need the following lemma, which gives an explicit description of the time-reversal of the L\'evy process whose corresponding CSBP is used to generate a L\'evy net.

\begin{lemma}
\label{lem::levyreversal}
Suppose that $\alpha \in (1,2)$ and $W_t$ is an $\alpha$-stable L\'evy excursion with positive jumps (indexed by $t \in [0,T]$ for some $T$).  That is, $W_t$ is chosen from the natural infinite measure on excursions of this type.  Then the law of $W_{T-t}$ is also an infinite measure, and corresponds to an excursion of a Markov process that has only negative jumps. When the process value is $c$, the jump law for this Markov process is given by a constant times $a^{-\alpha-1} (1-a/c)^{\alpha - 2}$.
\end{lemma}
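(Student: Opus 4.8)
The plan is to realize $W$ as a sample from the It\^o excursion measure $\mathbf{n}$ of the reflected process $X^{+}-\underline{X}^{+}$, where $X^{+}$ is the spectrally positive strictly $\alpha$-stable L\'evy process and $\underline{X}^{+}_{t}=\inf_{s\le t}X^{+}_{s}$, and then to read off the law of the time-reversal $\wt{W}_{t}:=W_{(T-t)^{-}}$ from the time-reversal duality for Markov processes. First I would recall the standard structure of an excursion under $\mathbf{n}$: it leaves $0$ by a jump (whose size is drawn from the entrance law of $\mathbf{n}$) and thereafter, while at a level $c>0$, evolves exactly as $X^{+}$ started from $c$ and killed at $T_{0}:=\inf\{t:X^{+}_{t}=0\}$; since $X^{+}$ has only positive jumps, it creeps continuously down to any level below its current position, so $W$ returns to $0$ continuously, and the jump kernel of $W$ at level $c'$ is simply the L\'evy measure $c_{\alpha}\,a^{-\alpha-1}\,da$ of upward jumps of size $a$. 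Time reversal turns an upward jump at time $s$ of size $\Delta$ into a downward jump at time $T-s$ of the same size, so $\wt{W}$ is \cadlag, has only negative jumps, leaves $0$ continuously, and returns to $0$ by a jump (the reverse of the initial jump of $W$); this already gives the qualitative part of the statement.

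For the jump kernel: since $W$ is, after leaving $0$, a time-homogeneous Markov process, $\wt{W}$ is again Markov, and the time-reversal identity --- reversing with respect to the occupation measure $u(x)\,dx := \mathbf{n}\bigl[\int_{0}^{T}\mathbf{1}\{W_{t}\in\cdot\}\,dt\bigr]$ of the excursion under $\mathbf{n}$, \emph{not} with respect to Lebesgue measure --- identifies $\wt{W}$ with the Doob $h$-transform, with $h=u$, of the L\'evy dual $X^{-}:=-X^{+}$ (a spectrally negative $\alpha$-stable process) killed on exiting $(0,\infty)$. It is precisely this position-dependent $h$ that keeps $\wt{W}$ from being a L\'evy process, and it yields, at level $c$, the jump kernel $\tfrac{u(c-a)}{u(c)}\,c_{\alpha}\,a^{-\alpha-1}\,da$ for $a\in(0,c)$ (jumps of $X^{-}$ that would land at or below $0$ terminate the excursion). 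It then only remains to compute $u$, and here a scaling computation does it: under the rescaling of space by $\lambda$, time by $\lambda^{\alpha}$, and the infinite measure $\mathbf{n}$ by the compensating power of $\lambda$ (dictated by the Poisson structure of the excursions indexed by local time, which for $X^{+}$ is the running infimum $-\underline{X}^{+}$ up to a constant), the occupation density transforms so as to force $u(x)\propto x^{\alpha-2}$. (Equivalently, $u$ is the entrance Green's function of $X^{+}$ killed at $0$, which is an explicit power for stable processes; note $u\equiv\mathrm{const}$ when $\alpha=2$, consistent with the Brownian excursion.) Substituting, $\tfrac{u(c-a)}{u(c)}=(1-a/c)^{\alpha-2}$, and the jump kernel of $\wt{W}$ at level $c$ is a constant times $a^{-\alpha-1}(1-a/c)^{\alpha-2}\,da$, as claimed; since $\alpha-2\in(-1,0)$ this has an integrable singularity at $a=c$.

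Finally I would tie up the two endpoints --- the terminal jump of $\wt{W}$ down to $0$ reverses the initial jump of $W$ and is governed by the (size-biased) entrance law of $\mathbf{n}$, and the strong Markov property of the $h$-process gives that $\wt{W}$ is genuinely an excursion of a \emph{single} time-homogeneous Markov process away from its (deterministically located) endpoints. The main obstacle is the reversal step in the middle paragraph: making rigorous that reversing the excursion amounts to taking the $h$-transform of $X^{-}$ killed at $0$ with $h$ equal to the occupation density $u\propto x^{\alpha-2}$ --- i.e.\ correctly handling the entrance and exit laws of the $\sigma$-finite measure $\mathbf{n}$ at the boundary point $0$ (this is a version of Nagasawa's time-reversal theorem). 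Everything else reduces to standard facts about spectrally one-sided stable processes, as in \cite{bertoinlevybook}, together with the ordinary time-reversal identity for L\'evy processes over a fixed time horizon.
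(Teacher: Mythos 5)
Your central computation is, at bottom, the same mechanism as the paper's: the reversed excursion is an $h$-transform of the spectrally negative stable process killed on exiting $(0,\infty)$, with $h(x)\propto x^{\alpha-2}$, which gives the kernel $a^{-\alpha-1}(1-a/c)^{\alpha-2}$. The paper produces this $h$ as the renormalized probability that the record-minimum set of the dual process started from $c-a$ meets $[0,\epsilon]$ (i.e.\ the renewal density of its descending ladder height), while you produce it as the occupation density $u$ of the excursion measure and invoke a Nagasawa-type reversal with respect to $u(x)\,dx$; these are the same function, and your route (scaling, or the entrance Green's function of $X^+$ killed at $0$, both of which correctly give $x^{\alpha-2}$) is arguably the cleaner path to a rigorous statement than the paper's $\epsilon$-limit conditioning heuristic, with the Nagasawa step being the one piece that genuinely needs care, as you note.

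There is, however, a genuine error in your description of the excursion, and it propagates into several of your statements. For a spectrally positive $\alpha$-stable process with $\alpha\in(1,2)$, the excursion of $X^{+}-\underline{X}^{+}$ does \emph{not} leave $0$ by a jump: the time spent by $X^+$ at its running infimum has zero Lebesgue measure, so the compensation formula shows that a.s.\ no jump of size at least $\delta$ (for any $\delta>0$) occurs at a time when $X^+_{t-}$ equals the running infimum; since $\underline{X}^{+}$ is continuous, the excursion therefore leaves $0$ continuously, and (having no negative jumps) it also returns to $0$ continuously. Consequently the reversed excursion likewise begins \emph{and} ends continuously at $0$: it does not terminate with a jump onto $0$, and in the $h$-transformed picture the jumps of $X^{-}$ that would cross $0$ are suppressed rather than ``terminating the excursion.'' This is not merely cosmetic for your write-up: if the entrance really were by a jump with law $\rho$, the occupation density would be $\int\rho(dx)\,g(x,\cdot)$ rather than the entrance Green's function, so your ``equivalently'' identification of $u$ would be inconsistent with your stated structure of $\mathbf{n}$; and the closing discussion of ``the terminal jump of $\widetilde{W}$ governed by the size-biased entrance law of $\mathbf{n}$'' is simply incorrect and should be deleted. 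Fortunately the quantitative content of the lemma is the interior jump kernel, which does not involve the entrance law, so once the endpoint description is corrected (and the Nagasawa reversal is carried out carefully) your argument does yield the stated formula.
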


Lemma~\ref{lem::levyreversal} is a relatively standard sort of calculation about time-reversals of L\'evy excursions.  See, for example, \cite[Theorem~4]{ch1996levy}.  For completeness, we will give a proof just below.

\begin{proof}[Proof of Lemma~\ref{lem::levyreversal}]
Fix $\epsilon > 0$ and let $V_t$ be an $\alpha$-stable L\'evy process with only upward jumps with $V_0 = \epsilon$.  Let $\tau = \inf\{t \geq 0 : V_t = 0\}$.  Then the law of $V_{t-\tau}$ is given by that of an $\alpha$-stable L\'evy process with only downward jumps conditioned to be non-negative stopped at the last time that it hits $\epsilon$ \cite[Chapter~VII, Theorem~18]{bertoinlevybook}.  When starting from a positive value, this process can be constructed explicitly from the law of an $\alpha$-stable L\'evy process with only downward jumps by weighting it by a certain Radon-Nikodym derivative.  To be more precise, recall that the \emph{scale function} \cite[Chapter~VII.2]{bertoinlevybook} $\xi$ for an $\alpha$-stable L\'evy process with only downward jumps is given by $\xi(u) = \alpha u^{\alpha-1}$.  Suppose that $U$ has the law of an $\alpha$-stable L\'evy process with only downward jumps with $U_0 > 0$.  Then the Radon-Nikodym derivative of $U|_{[0,t]}$ conditioned to be positive with respect to the (unconditioned) law of $U|_{[0,t]}$ is given by
\begin{equation}
\label{eqn::rn_cond_form}
	\frac{\xi(U_t)}{\xi(U_0)} \one_{\{t < \zeta\}} = \frac{U_t^{\alpha-1}}{U_0^{\alpha-1}} \one_{\{t < \zeta\}}
\end{equation}
where $\zeta = \inf\{t \geq 0 : U_t \leq 0\}$.  The law of the conditioned process started from $U_0 = 0$ is then given by the limit as of its law when it starts from $U_0 > 0$ as $U_0 \to 0$.

From~\eqref{eqn::rn_cond_form}, it is easy to see that the jump law for the conditioned process is given by a constant times
\begin{equation}
\label{eqn::cond_pos_jump_law}
a^{-\alpha-1}(1-a/c)^{\alpha-1}	
\end{equation}
when the process value is equal to $c$.  To complete the proof, we need to determine the effect on the jump law of further conditioning the process conditioned to be positive to hit $(0,\epsilon)$ in the limit as $\epsilon \to 0$.

We have the following basic fact for the conditioned process.  By \cite[Chapter~VII, Lemma~12]{bertoinlevybook}, the probability that it starting from $y > \epsilon >0$ hits the interval $(0,\epsilon)$ is given by 
\begin{equation}
\label{eqn::prob_hit_form}
p_{y,\epsilon} = 1-\frac{\xi(y-\epsilon)}{\xi(y)} = 1 - \frac{(y-\epsilon)^{\alpha-1}}{y^{\alpha-1}}.
\end{equation}
Using~\eqref{eqn::prob_hit_form}, we see for $y,z > 0$ that
\begin{equation}
\label{eqn::frac_to_zero}
\frac{p_{y,\epsilon}}{p_{z,\epsilon}} \to \frac{z}{y} \quad\text{as}\quad \epsilon \to 0.
\end{equation}
Consider the law of $U$ conditioned to be positive conditioned further on hitting $(0,\epsilon)$.  If the process value is $c$ at a given time, then (by a Bayes' rule calculation) the probability of making a downward jump of size $a \in (0,c)$ is weighted by $p_{c-a,\epsilon}/p_{c,\epsilon}$ in comparison to~\eqref{eqn::cond_pos_jump_law}.  Therefore combining~\eqref{eqn::frac_to_zero} with~\eqref{eqn::cond_pos_jump_law} implies that the jump law for the time-reversed excursion is as desired since the law of the time-reversed excursion can be constructed by taking the limit as $\epsilon \to 0$ of the law of $U$ conditioned to be positive conditioned further on hitting $(0,\epsilon)$.
\end{proof}

\subsection{Topological equivalence of L\'evy net constructions}
\label{subsec::levy_net_top_equiv}

We have so far given three different descriptions of the L\'evy net quotient, namely in Definition~\ref{def::levynet} (illustrated in Figure~\ref{fig::levynet}), Definition~\ref{def::levynet2} (illustrated in Figure~\ref{fig::levynet2}), and Definition~\ref{def::levynet_bubble_gluing} (illustrated in Figure~\ref{fig::bubblegluing}).  Moreover, we explained in Section~\ref{subsec::levynet_second_construction} that the quotients in Definition~\ref{def::levynet} and Definition~\ref{def::levynet2} yield an equivalent topology.  The purpose of this section is show that the topology of the quotient constructed in Definition~\ref{def::levynet_bubble_gluing} is equivalent to the topology constructed in Definition~\ref{def::levynet}.

\begin{proposition}
\label{prop::levy_net_equivalent_topology}
The topology of the L\'evy net quotient constructed as in Definition~\ref{def::levynet} is equivalent to the topology of the quotient constructed in Definition~\ref{def::levynet_bubble_gluing}.  In particular, the quotient constructed in Definition~\ref{def::levynet_bubble_gluing} is a.s.\ homeomorphic to $\S^2$.
\end{proposition}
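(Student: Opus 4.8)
The plan is to exhibit an explicit continuous bijection from the quotient space of Definition~\ref{def::levynet_bubble_gluing} onto the L\'evy net of Definition~\ref{def::levynet}, which by Proposition~\ref{prop::spherelevynet} (and the discussion of Section~\ref{subsec::levynet_second_construction}) is a.s.\ homeomorphic to $\S^2$, and then to invoke the elementary fact that a continuous bijection from a compact space onto a Hausdorff space is a homeomorphism. The domain quotient is automatically compact, being the image of the compact rectangle $R$ of Definition~\ref{def::levynet_bubble_gluing} under the (continuous) quotient map, and the target is $\S^2$, which is Hausdorff; so once such a bijection is produced the proposition follows, and in particular the quotient of Definition~\ref{def::levynet_bubble_gluing} is a.s.\ homeomorphic to $\S^2$, which is the final assertion.

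To construct the map, I would first use Theorem~\ref{thm::levynetrecover} and Proposition~\ref{prop::jump_correspondence}: the data consisting of the boundary length process $Z$ together with the attachment points a.s.\ determine, and are determined by, the excursion $X_t$ of Definition~\ref{def::levynet}, with the jumps of $Z$ in one-to-one correspondence with the jumps of $X$ (equivalently with the loops of the looptree, i.e.\ the grey regions). For each $r$ in the domain of $Z$ that is \emph{not} one of the countably many jump times, the analysis underlying Proposition~\ref{prop::levynetgeodesictrajectoriesdetermined} shows that the boundary $\partial U_{r}$ at distance $D-r$ from the root in the L\'evy net is a circle carrying a boundary length of total mass $Z_{r}$, together with a marked point $x_{r}$ where the root--to--dual-root branch $\gamma$ of the geodesic tree crosses it. I would then send the horizontal segment of $R$ at height $r + J(r)$ onto $\partial U_{r}$ by matching normalized counterclockwise position along $R$ (starting from the left edge) with counterclockwise boundary-length position along $\partial U_{r}$ starting from $x_{r}$; this descends to equivalence classes since the left and right edges of $R$ are identified and both represent $x_{r}\in\gamma$. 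For a jump time $s$ of $Z$, the green stripe $\{(w,v): s+J_-(s)\le w \le s+J(s)\}$ is sent onto the union of the corresponding looptree loop, which has boundary length $\Delta_s = Z_s - Z_{s-}$, together with the topological disk glued into it; here one uses Definition~\ref{def::levy_attachment_points} and Lemma~\ref{lem::bubble_locations_independent}, which identify the attachment point $a_s$ with the counterclockwise boundary-length location at which this loop meets $\partial U_{s-}$ relative to $\gamma$ --- exactly the coordinate $a_s/Z_{s-}$ appearing in Definition~\ref{def::levynet_bubble_gluing}. The two fans of segments in that definition are then carried respectively to the identification of the loop with the arc of $\partial U_{s-}$ it swallows and to the loop itself, with the residual grey triangle mapped homeomorphically onto the glued-in disk; the uppermost orange line is sent to the root and the degenerate bottom edge (where $Z=0$) to the dual root.

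One then has to check that this assignment is constant on the $\cong$-classes of Definition~\ref{def::levynet_bubble_gluing} and injective on classes, which reduces to matching those classes against the Type~I/II/III classification of equivalence classes obtained in the proof of Proposition~\ref{prop::spherelevynet}: the vertical red ``bridge'' segments go to the vertical chords joining the two graphs (Type~I), the collapsed green rectangles to the upper geodesic-tree chords (Type~II), the grey-triangle structure to the lower looptree chords together with the disks glued into their loops (Type~III plus disks), and the left/right edge identifications and the top orange line to $\gamma$ and the root. For continuity I would argue slicewise: $r\mapsto\partial U_r$ varies continuously in the Hausdorff sense away from jump times (a consequence of the continuity of the height process there and of the quotient description underlying Proposition~\ref{prop::spherelevynet}), while at a jump time $s$ the boundaries $\partial U_{r}$ with $r\uparrow s$ (resp.\ $r\downarrow s$) converge to the top (resp.\ bottom) edge of the green stripe with the loop collapsing onto its attachment point --- which is precisely how the stripe is glued --- and the ``vertical'' coordinate $r\mapsto r+J(r)$ is itself continuous and strictly increasing, so no tearing occurs.

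I expect the main obstacle to be exactly this last step: verifying, at the countably many jump times, that the explicit correspondence simultaneously respects both equivalence relations \emph{and} is continuous across each green stripe --- i.e.\ that the swallowed loop genuinely ``closes up onto its attachment point'' in the L\'evy net topology as the radius crosses the jump value, and that the grey triangle of Definition~\ref{def::levynet_bubble_gluing} is glued in exactly as a disk is glued into the corresponding loop in Definition~\ref{def::levynet}. Granting these matchings (for which Proposition~\ref{prop::jump_correspondence}, Lemma~\ref{lem::bubble_locations_independent}, and the classification of equivalence classes in the proof of Proposition~\ref{prop::spherelevynet} are the key inputs), continuity and bijectivity of the resulting quotient map follow, and the compact-to-Hausdorff argument then completes the proof of both the stated equivalence and the ``in particular'' clause.
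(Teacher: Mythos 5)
Your overall architecture is the same as the paper's: exhibit an explicit map between the two quotients that is a continuous bijection and conclude via the compact-to-Hausdorff argument (the paper goes in the opposite direction, mapping the depth-first picture of Definition~\ref{def::levynet2} onto the breadth-first rectangle via the normalized local time $(r,s)\mapsto Z_s^r/Z_s$, whereas you invert the local time to parametrize each circle $\partial U_r$ by boundary length). The class-matching bookkeeping you describe, including the treatment of the green stripes via Proposition~\ref{prop::jump_correspondence} and the Type~I/II/III classification, is fine. However, there is a genuine gap at exactly the step you dismiss as soft: you assert that away from jump times the slicewise maps fit together continuously ``as a consequence of the continuity of the height process'' and of Hausdorff continuity of $r\mapsto\partial U_r$. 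Hausdorff continuity of the boundaries does \emph{not} give continuity of their boundary-length parametrizations. What is actually required is a version of the local-time field $Z_s^r$ that is defined \emph{simultaneously for all levels} (not just a.s.\ for each fixed or countably many levels) and is jointly regular --- continuous in the spatial variable and c\`adl\`ag in the level, uniformly on compacts. This is precisely Proposition~\ref{prop::local_time_cadlag_modification}, whose proof (via the CSBP estimates of Lemmas~\ref{lem::csbp_sup}--\ref{lem::number_of_splittings_bound}) constitutes the bulk of the paper's argument for this proposition; nothing in the ingredients you cite (Theorem~\ref{thm::levynetrecover} is a measurability statement, not a continuity statement) substitutes for it.

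Your reversed direction also adds a difficulty the paper's direction avoids: each slice map $u\mapsto\pi\bigl((T_r^{uZ_r},r)\bigr)$ is built from the right-continuous inverse of $r'\mapsto Z_r^{r'}$, which is a jump function; it only becomes continuous after composing with the quotient map (the jumps over excursion intervals above or below level $r$ are healed by the chord identifications), and only becomes well defined and surjective onto $\partial U_r$ once one knows the boundary-length measure on each circle has no atoms and full support, simultaneously for all $r$. These points, together with injectivity away from the jump levels, again hinge on the uniform regularity of the local-time field. So the identification across the green stripes is not the main obstacle; the missing ingredient is the modification/regularity result for $Z_s^r$, and without it the continuity of your slicewise-defined map does not follow.
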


We remark that it is also possible to give a short, direct proof that the quotient described in Definition~\ref{def::levynet_bubble_gluing} is a.s.\ homeomorphic to $\S^2$ using Moore's theorem (Proposition~\ref{prop::moore}), though we will not do so in view of Proposition~\ref{prop::levy_net_equivalent_topology}.

Recall that for each $r > 0$, $Z_s^r$ is the local time of the intersection of the graph of $Y$ with the line of height $s$ and width $r$ (i.e., the line connecting $(0,s)$ with $(r,s)$) and that $Z_s = Z_s^T$ where $T$ is the length of the L\'evy excursion.  In order to show that the topology of the breadth first construction of the L\'evy net quotient from Definition~\ref{def::levynet_bubble_gluing} (illustrated in Figure~\ref{fig::bubblegluing}) is equivalent to that associated with the constructions from Definition~\ref{def::levynet} (illustrated in Figure~\ref{fig::levynet}) and Definition~\ref{def::levynet2} (illustrated in Figure~\ref{fig::levynet2}), we first need to construct a modification of $Z_s^r$ which has certain continuity properties.  We will then use this modification to construct the map which takes the construction described in Figure~\ref{fig::levynet2} to the breadth first construction.

\begin{proposition}
\label{prop::local_time_cadlag_modification}
The process $(r,s) \mapsto Z_s^r$ has a jointly measurable modification which a.s.\ satisfies the following two properties (for all $r,s$ simultaneously).
\begin{enumerate}
\item The map $r \mapsto Z_\cdot^r$ is continuous with respect to the uniform topology.
\item The map $s \mapsto Z_s^\cdot$ is {\cadlag} with respect to the uniform topology.
\end{enumerate}
\end{proposition}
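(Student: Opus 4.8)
The plan is to obtain the desired modification by first establishing the requisite continuity estimates for $Z^r_s$ on a countable dense set of $(r,s)$-values and then extending by a Kolmogorov-type argument, being careful that the two parameters play very different roles: in $r$ we expect genuine (Hölder) continuity, while in $s$ the process jumps, so the natural topology in which to ask for regularity is the Skorokhod/\cadlag{} topology (with the uniform topology on the already-$r$-continuous functions $Z^\cdot_s$). Concretely, I would first recall from \cite{duquesnelegall2005levytrees} (the construction underlying Definition~\ref{def::levycb} and Proposition~\ref{prop::Zisalphastable}) that for each fixed $s$ the process $r \mapsto Z^r_s$ is an additive functional of the height process --- the local time at level $s$ accumulated along $[0,r]$ --- and in particular it is a.s.\ continuous and nondecreasing in $r$. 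The first bullet point for a fixed countable dense set of $s$-values is then essentially immediate from the classical regularity theory of local times of the height process; the content is to upgrade ``for each fixed $s$'' to ``for all $s$ simultaneously.''

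The key steps, in order, would be as follows. (1) Fix the standard \cadlag{} version of $Z_s = Z^T_s$ (the full boundary length process), which by Proposition~\ref{prop::Zisalphastable} is an $\alpha$-stable CSBP excursion and hence has only countably many jumps, a.s.\ at distinct levels. (2) For each $r$, interpret $Z^r_s$ as the ``boundary length measure restricted to the first $r$ units of horizontal length,'' and observe the monotonicity and additivity relations $Z^{r}_s \le Z^{r'}_s$ for $r \le r'$ and $Z^{r}_s + (Z^{T}_s - Z^{r}_s)$ splits $Z_s$; these let me control $Z^r_s$ uniformly in $s$ by $Z_s$, which is bounded (a.s.). (3) Establish a two-parameter moment bound of the form $\E\big[ |Z^{r_2}_{s} - Z^{r_1}_{s}|^p \big] \le C |r_2 - r_1|^{1+\delta}$, uniformly in $s$ over a compact level-range bounded away from the extinction time --- this is where I would use the scaling of $X$ and $Y$ and the excursion decomposition of the height process above level $s$ into i.i.d.\ excursions (the picture already described before Proposition~\ref{prop::Zisalphastable}), together with the fact that the number $N_\epsilon$ of excursions reaching height $\epsilon$ has a Poisson-type law with intensity controlled by the local time $Z_s$. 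A Kolmogorov–Chentsov argument in the $r$-variable then gives, for each fixed $s$ in our countable dense set, an a.s.\ Hölder-continuous version of $r\mapsto Z^r_s$, and the uniform-in-$s$ moment bound lets me choose these versions consistently. (4) For the $s$-variable: show that for fixed rational $r$, $s \mapsto Z^r_s$ is a.s.\ \cadlag{}, with jumps occurring exactly at the (countably many) levels $s=Y_t$ at which $X$ has a jump lying in $[0,r]$ horizontally --- this is the same correspondence as in Proposition~\ref{prop::jump_correspondence}, localized to horizontal window $[0,r]$ --- and the jump sizes are the corresponding $X$-jump heights. Between these countably many levels, $s\mapsto Z^r_s$ is continuous (again by height-process local-time continuity). (5) Finally, glue: define the modification on the countable dense grid of $(r,s)$, verify that the grid values determine a function on all of $(0,T]\times[0,D]$ via the a.s.\ uniform continuity in $r$ and the a.s.\ \cadlag{} structure in $s$, and check joint measurability (it is a pointwise limit of the grid functions, hence measurable).

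The main obstacle I expect is step~(3): proving the moment/modulus estimate for $r\mapsto Z^r_s$ that is \emph{uniform in $s$}. For a single level $s$ this is classical, but I need a bound whose constant does not blow up as $s$ varies — including as $s$ approaches the extinction time $D$, where $Z_s\to 0$ and excursions above level $s$ become rare and large. The resolution is to work on the event $\{Z_s \ge \kappa\}$ for $\kappa>0$ and a compact $s$-range, use Lemma~\ref{lem::csbp_extinction_time} (or rather the CSBP moment estimates behind it) to control $\sup_s Z_s$ and the modulus of the excursion counts, and then take $\kappa \downarrow 0$ along a sequence; near the extinction time the total remaining boundary length is small, so the oscillation of $r\mapsto Z^r_s$ there is automatically small and no uniform Hölder estimate is needed in that regime. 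Combining the two regimes (bounded away from $D$, and a neighborhood of $D$) yields the a.s.\ simultaneous continuity in $r$. Everything else — the \cadlag{} property in $s$, joint measurability, and the gluing — is routine once the correspondence with jumps of $X$ (Proposition~\ref{prop::jump_correspondence}) is in hand.
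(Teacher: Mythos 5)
The decisive step is missing, and it is exactly the one you flag as step (3) and then pass over in step (5). The proposition asserts continuity of $r \mapsto Z_\cdot^r$ in the \emph{uniform} topology over the level variable, i.e.\ a single almost sure control of $\sup_s |Z_s^{r_2} - Z_s^{r_1}|$ simultaneously over the uncountably many levels $s$. Your step (3) produces, from a moment bound that is merely uniform in $s$, a Kolmogorov--Chentsov modulus for each fixed $s$ separately; the random H\"older constant so obtained depends on $s$, and nothing forces it to be bounded even along a countable dense set of levels, let alone all levels. The phrase ``the uniform-in-$s$ moment bound lets me choose these versions consistently'' is precisely where a proof is required and none is given. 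The standard remedy, a genuinely two-parameter Kolmogorov criterion in $(r,s)$, is unavailable here because the process jumps in $s$ and hence is not jointly continuous. Moreover, your diagnosis of the ``main obstacle'' (degeneration of constants near the extinction time) misses the real difficulty: the level range near extinction is handled in the paper by a crude union bound together with Lemma~\ref{lem::csbp_extinction_time}; the hard part is transferring control from a grid of levels to \emph{all} levels in the presence of jumps.

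The paper's proof supplies exactly this missing mechanism, and it is not a chaining bound on $r$-increments of $Z_s^r$ at all. It works with the inverse quantities $T_s^u$ (the width needed to accumulate $u$ units of local time at level $s$): Lemma~\ref{lem::length_lbd} gives an exponential lower-tail estimate showing that, at each level of a grid with spacing $\Delta$, accumulating $\delta^2$ of local time costs width at least $\Delta^\alpha\delta^{3\alpha}$ (the events $F_{\ell,\Delta}^{\delta}$); and a separate ``gap persistence'' argument (the events $G_{\ell,\Delta}^{\delta}$, proved via the repeated splitting of the boundary length into pieces of size about $\delta/4$, the stopping times $\tau_k$, and Lemmas~\ref{lem::csbp_sup}, \ref{lem::csbp_terminal_value} and~\ref{lem::number_of_splittings_bound}) shows that if $Z_s^{t_2} - Z_s^{t_1} \geq \delta$ at \emph{any} level $s$, then the gap is still at least $2\delta^2$ at the next grid level, because the boundary length lying between positions $t_1$ and $t_2$ evolves in the level variable as a sum of CSBPs that cannot all become tiny by then. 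Borel--Cantelli over the scales $\Delta = e^{-j}$ then yields a modulus valid for all levels simultaneously: a gap of size $\delta$ at any $s$ forces $t_2 - t_1 \geq \Delta^\alpha\delta^{3\alpha}$, which is the uniform-in-$s$ continuity in $r$. Any rescue of your route needs an argument of comparable strength at this point. (A smaller inaccuracy: for fixed $r$, the jumps of $s \mapsto Z_s^r$ are not indexed by the jump times of $X$ lying in $[0,r]$ with the full jump heights as sizes; the local time attached to a jump of $X$ at time $t$ is spread over a set of horizontal positions around $t$, so a window $[0,r]$ in general captures only a portion of it, and jumps of $X$ at times outside $[0,r]$ may still contribute. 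This does not affect the plausibility of the {\cadlag} claim, but your ``localized'' version of Proposition~\ref{prop::jump_correspondence} would need restating.)
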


See \cite[Proposition~1.3.3]{dlg2002trees_levy} for a related result.  We note that the modification obtained in Proposition~\ref{prop::local_time_cadlag_modification} has stronger continuity properties than given in \cite[Proposition~1.3.3]{dlg2002trees_levy}.

We need to collect several intermediate lemmas before we give the proof of Proposition~\ref{prop::local_time_cadlag_modification}.  We begin with an elementary estimate for $\alpha$-stable CSBPs.

\begin{lemma}
\label{lem::csbp_terminal_value}
Suppose that $W$ is an $\alpha$-stable CSBP.  There exists a constant $c_0 > 0$ depending only on $\alpha$ such that
\[ \p[ W_t \leq \delta] \leq \exp( -(\delta- c_0  W_0) t^{1/(1-\alpha)}) \quad\text{for all}\quad \delta  > 0.\]
\end{lemma}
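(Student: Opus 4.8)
The plan is to prove Lemma~\ref{lem::csbp_terminal_value} by a straightforward Chernoff (exponential Markov) bound, using the explicit Laplace transform of the $\alpha$-stable CSBP recorded in~\eqref{eqn::csbp_def}--\eqref{eqn::csbp_u_form}. Recall that for the $\alpha$-stable CSBP one has $\E[\exp(-\lambda W_t) \giv W_0] = \exp(-W_0\, u_t(\lambda))$ for all $\lambda > 0$, where $u_t(\lambda) = (\lambda^{1-\alpha} + (\alpha-1)t)^{1/(1-\alpha)}$. The only real content is a quantitative version of the elementary fact that $u_t(\lambda)$ is comparable to $\lambda$ when $\lambda$ is not too large.

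First I would write, for an arbitrary $\lambda > 0$,
\[
\p[W_t \leq \delta] = \p\bigl[e^{-\lambda W_t} \geq e^{-\lambda \delta}\bigr] \leq e^{\lambda \delta}\, \E\bigl[e^{-\lambda W_t}\bigr] = \exp\bigl(\lambda \delta - W_0\, u_t(\lambda)\bigr).
\]
Next I would record the elementary inequality: since $1-\alpha \in (-1,0)$, if $0 < \lambda \leq \bigl((\alpha-1)t\bigr)^{1/(1-\alpha)}$ then $\lambda^{1-\alpha} \geq (\alpha-1)t$, hence $\lambda^{1-\alpha} + (\alpha-1)t \leq 2\lambda^{1-\alpha}$, and applying the (decreasing) map $x \mapsto x^{1/(1-\alpha)}$ gives $u_t(\lambda) \geq \bigl(2\lambda^{1-\alpha}\bigr)^{1/(1-\alpha)} = c_0\, \lambda$ with $c_0 := 2^{1/(1-\alpha)} \in (0,1)$. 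I would then observe that $\bigl((\alpha-1)t\bigr)^{1/(1-\alpha)} = (\alpha-1)^{1/(1-\alpha)}\, t^{1/(1-\alpha)} \geq t^{1/(1-\alpha)}$, because $(\alpha-1)^{1/(1-\alpha)} > 1$ for $\alpha \in (1,2)$ (a number in $(0,1)$ raised to a negative power). Hence the choice $\lambda = t^{1/(1-\alpha)}$ is admissible, and feeding it into the displayed Chernoff bound together with $u_t(\lambda) \geq c_0 \lambda$ yields
\[
\p[W_t \leq \delta] \leq \exp\bigl(\delta\, t^{1/(1-\alpha)} - c_0 W_0\, t^{1/(1-\alpha)}\bigr) = \exp\bigl((\delta - c_0 W_0)\, t^{1/(1-\alpha)}\bigr),
\]
which is the estimate claimed in the lemma.

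There is no genuine obstacle here: the argument is elementary once one has the Laplace transform~\eqref{eqn::csbp_u_form}, and the work consists only in tracking the constant $c_0 = c_0(\alpha)$ and verifying that the chosen value of $\lambda$ lies in the admissible range. It is worth noting that the bound is informative precisely when $\delta$ lies below a fixed multiple of $W_0$ and $t^{1/(1-\alpha)}$ is large (i.e.\ $t$ is small), which is consistent with Lemma~\ref{lem::csbp_extinction_time}: there $\p[W_t = 0] = \exp\bigl(-(\alpha-1)^{1/(1-\alpha)} W_0\, t^{1/(1-\alpha)}\bigr)$, which is already a lower bound for $\p[W_t \leq \delta]$ of exactly this order (the compatibility of the two constants amounts to $(\alpha-1)^{1/(1-\alpha)} \geq 2^{1/(1-\alpha)}$, which holds since $\alpha - 1 \leq 2$ and $1/(1-\alpha) < 0$). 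If a sharper bound were needed, one could instead optimize the Chernoff inequality over all $\lambda > 0$ (using $u_t(\lambda) \approx \lambda - t\lambda^{\alpha}$ for small $\lambda$) to obtain an exponent of order $-(W_0 - \delta)^{\alpha/(\alpha-1)} t^{1/(1-\alpha)}$, but the crude linear form above is all that the present statement requires.
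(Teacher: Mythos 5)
Your proof is correct and takes essentially the same route as the paper: a Chernoff bound via the explicit Laplace transform \eqref{eqn::csbp_def}--\eqref{eqn::csbp_u_form} with the choice $\lambda = t^{1/(1-\alpha)}$; the paper simply evaluates $u_t(\lambda)$ exactly at this $\lambda$ (giving $c_0=\alpha^{1/(1-\alpha)}$), whereas you use the lower bound $u_t(\lambda)\geq 2^{1/(1-\alpha)}\lambda$ on the admissible range, which is an immaterial difference. Note that what both arguments actually yield is $\exp\bigl((\delta-c_0 W_0)\,t^{1/(1-\alpha)}\bigr)$, so the extra minus sign in the displayed statement of the lemma is a typo (consistent with your comparison to Lemma~\ref{lem::csbp_extinction_time} and with how the bound is applied later in the paper).
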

\begin{proof}
Using the representation of the Laplace transform of an $\alpha$-stable CSBP given in~\eqref{eqn::csbp_def}, \eqref{eqn::csbp_u_form}, we have for $\lambda > 0$ that
\begin{align*}
   \p[ W_t \leq \delta]
&= \p[ e^{-\lambda W_t} \geq e^{-\lambda \delta}]
 \leq e^{\lambda \delta} \E[ e^{-\lambda W_t}]
 = e^{\lambda \delta - u_t(\lambda) W_0}.
\end{align*}
where $u_t(\lambda) = (\lambda^{1-\alpha} + c t)^{1/(1-\alpha)}$ and $c > 0$ is a constant.  Taking $\lambda = t^{1/(1-\alpha)}$ yields the result.
\end{proof}

For each $s,u \geq 0$, we let~$T_s^u$ be the smallest value of~$r$ that $Z_s^r \geq u$.  On the event that $T_s^u < \infty$, we note that the same argument used to prove Proposition~\ref{prop::Zisalphastable} implies that~$Z_t^{T_s^u}$ evolves as an $\alpha$-stable CSBP for $t \geq s$ with initial value~$u$.

\begin{lemma}
\label{lem::length_lbd}
There exists a constant $c_0 > 0$ such that the following is true.  Fix $s > 0$. For each $u \geq 0$ and $w,v > 0$ we have that
\begin{equation}
\label{eqn::local_time_euclidean_lbd}
\p[ T_s^{u+v} - T_s^u \leq t,\ T_s^u < \infty \giv Z_s > w] \leq \exp(-c_0 v t^{-1/\alpha}).
\end{equation}
\end{lemma}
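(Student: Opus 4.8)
### Proof Proposal

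\textbf{Setup and strategy.} The plan is to exploit the fact, established just above the statement, that on the event $T_s^u < \infty$ the process $t \mapsto Z_t^{T_s^u}$ (for $t \ge s$) evolves as an $\alpha$-stable CSBP started from $u$. The key observation is that the quantity $T_s^{u+v} - T_s^u$ should be understood ``horizontally'' at a fixed height $s$: it measures how much additional horizontal width is needed, beyond the width $T_s^u$ at which the local time of the height-$s$ line first reaches $u$, for that local time to climb from $u$ to $u+v$. By the same Ray--Knight/excursion-theory argument that proves Proposition~\ref{prop::Zisalphastable}, the local time accumulated by the height-$s$ line as one slides its right endpoint from $T_s^u$ rightward is itself (after the Lamperti time change) governed by an $\alpha$-stable CSBP in the \emph{spatial} variable, started from $u$; equivalently, by Remark~\ref{rem::CSBPsubordinator}, the increment of this CSBP over a spatial interval of length $t$ is stochastically dominated (in the appropriate sense) by, or directly comparable to, the value $A_t$ of an $\alpha$-stable subordinator of index $\alpha - 1$ evaluated at a suitable time. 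So the event $\{T_s^{u+v} - T_s^u \le t\}$ is, up to the time change, the event that such a subordinator-type increment exceeds $v$ within spatial ``time'' $t$, and the bound~\eqref{eqn::local_time_euclidean_lbd} is a large-deviation-type estimate for that.

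\textbf{Main steps.} First I would make precise the claim that $r \mapsto Z_s^r$, restricted to $r \ge T_s^u$ and run until it first exceeds $u+v$, agrees in law with an $\alpha$-stable CSBP started from $u$ after the Lamperti time change; this is the spatial analog of Proposition~\ref{prop::Zisalphastable} and follows from the excursion decomposition of $X$ above the line of height $s$ exactly as in that proof, using the scaling properties of $X$ and $Y$. Conditioning on $\{Z_s > w\}$ only affects the initial segment $r \le T_s^w$ and is irrelevant once $u$ and $v$ are fixed — more carefully, by the strong Markov property of the CSBP at the (spatial) hitting time $T_s^u$, the conditional law of the post-$T_s^u$ evolution given $\{Z_s > w\}$ is still that of an $\alpha$-stable CSBP from $u$, so the conditioning can be dropped. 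Second, I would translate $\{T_s^{u+v} - T_s^u \le t\}$ into a statement about the CSBP $W$ with $W_0 = u$: it is the event that $W$ reaches level $u+v$ by Lamperti-time corresponding to spatial length $t$. Third, using either the explicit Laplace transform~\eqref{eqn::csbp_def}--\eqref{eqn::csbp_u_form} together with the Lamperti transform (Theorem~\ref{thm::lamperti}), or more directly the subordinator representation of the running supremum via Remark~\ref{rem::CSBPsubordinator} and the jump measure of the $\alpha$-stable L\'evy process, I would bound the probability that this increment exceeds $v$ within spatial length $t$. A clean route: the running supremum of the associated $\alpha$-stable L\'evy process over a time interval of length proportional to $t$ is dominated by an $(\alpha-1)$-stable subordinator evaluated at that length (the ladder-height process; see the footnote in Section~\ref{subsec::spheresassymetricholes}), and for an $(\alpha-1)$-stable subordinator $A$ with index in $(0,1)$ one has $\p[A_t \ge v] = \p[A_1 \ge v t^{-1/(\alpha-1)}]$, whose left tail in $t$ — i.e. the small-$t$ regime — decays like $\exp(-c v t^{-1/(\alpha-1)})$ by the standard exponential lower tail of stable subordinators. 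Wait: the exponent in the statement is $-1/\alpha$, not $-1/(\alpha-1)$, so the correct normalization must come through the Lamperti time change converting spatial ``CSBP time'' back to Euclidean width $r$; I would carry out that change of variables explicitly, noting $\theta_r = \int_0^r Z_s^{(\cdot)} \, ds$-type relation, to land on exactly $t^{-1/\alpha}$. Finally, assembling these estimates with an appropriate choice of the free parameter (optimizing as in the proof of Lemma~\ref{lem::csbp_sup}) yields~\eqref{eqn::local_time_euclidean_lbd} with a suitable constant $c_0$ depending only on $\alpha$, and scaling handles general $u, v, w$.

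\textbf{Main obstacle.} The delicate point is getting the exponent $1/\alpha$ rather than $1/(\alpha-1)$ or $1/(1+\alpha)$ right: this requires carefully tracking how the Lamperti time change relates the ``intrinsic'' time of the CSBP $t \mapsto Z_t^{T_s^u}$ (which evolves in the \emph{height} direction) to the Euclidean \emph{width} variable $r$ in which $T_s^{u+v} - T_s^u$ is measured. The cleanest way to avoid confusion is probably to bypass the vertical CSBP entirely and work directly with the horizontal structure: fix the height $s$, realize the local-time profile $r \mapsto Z_s^r$ via the reflected $\alpha$-stable process description already given in Section~\ref{subsec::levynetbreadthfirst} (the ``$\widetilde R_t - R_t$'' local time), so that $Z_s^r$ is literally the inverse local time of an $\alpha$-stable L\'evy process with positive jumps run for Euclidean time $r$; then $T_s^u$ is the first time this inverse local time hits $u$, and $T_s^{u+v} - T_s^u$ is, by the strong Markov property, an independent copy of the first time the inverse local time hits $v$ — which is the range-covering time of a stable subordinator of index $1/\alpha$ (the inverse local time at zero of a spectrally positive $\alpha$-stable process is an $\alpha$-stable subordinator of index $1/\alpha$). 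Then $\p[T_s^v \le t] = \p[(\text{$1/\alpha$-stable subordinator})_v \le t] = \p[(\cdot)_1 \le t v^{-\alpha}]$, and the left tail of a stable subordinator gives $\exp(-c (t v^{-\alpha})^{-1/(\alpha-1)}) = \exp(-c v^{\alpha/(\alpha-1)} t^{-1/(\alpha-1)})$ — still not matching, so one more reconciliation of which subordinator index is in play is needed. I would resolve this bookkeeping by direct appeal to the explicit Laplace-transform bound of Lemma~\ref{lem::csbp_terminal_value} applied to the correct CSBP, tuning $\lambda = t^{-1/\alpha}$ (by analogy with the choice $\lambda = t^{1/(1-\alpha)}$ there, but in the dual spatial variable), which is what produces the stated $t^{-1/\alpha}$; the honest proof just needs this one Chernoff-type computation carried through with the correct spatial scaling exponent, which I am confident works out but is the one place where care is essential.
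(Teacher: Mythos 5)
Your second, ``main obstacle'' reduction is the right one, and it is essentially the paper's setup: at fixed height $s$ the profile $r \mapsto Z_s^r$ is the local time at zero of a reflected spectrally positive $\alpha$-stable process, $u \mapsto T_s^u$ is the associated inverse local time (a stable subordinator of index $1/\alpha$), and by the excursion/Markov structure $T_s^{u+v}-T_s^u$ is an increment of this subordinator over $v$ units of local time; the earlier discussion of the vertical CSBP $t\mapsto Z_t^{T_s^u}$ and the Lamperti change of time is a red herring, as you yourself suspected. The genuine gap is in your concluding estimate. You propose to finish with a Chernoff/Laplace-transform bound (``tuning $\lambda$''), but no such computation can deliver \eqref{eqn::local_time_euclidean_lbd}: since $\E[e^{-q\sigma_v}]=e^{-c v q^{1/\alpha}}$ for the first-passage subordinator $\sigma$, optimizing $\exp\bigl(qt-cvq^{1/\alpha}\bigr)$ over $q$ gives $\exp\bigl(-C(vt^{-1/\alpha})^{\alpha/(\alpha-1)}\bigr)$, and because $\alpha/(\alpha-1)>1$ this is strictly weaker than $\exp(-c_0 v t^{-1/\alpha})$ whenever $vt^{-1/\alpha}$ is small, for every choice of $c_0>0$. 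The bound must hold in that regime too, where the true probability is $1-\Theta(vt^{-1/\alpha})$; that first-order term comes from the heavy \emph{upper} tail of the subordinator's jumps (the possibility of a single long excursion), which a Laplace-transform bound at any fixed $\lambda$ cannot capture without a multiplicative prefactor that ruins the prefactor-free form of the statement. So this is not mere bookkeeping about which index is in play (your index $1/\alpha$ is correct); it is the wrong type of estimate.

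The missing step, which is the whole of the paper's proof, is a one-line jump argument: during $v$ units of local time at height $s$, the excursions away from the level line form a Poisson point process with intensity $v\,{\mathbf n}$, where ${\mathbf n}$ is the excursion measure of the reflected process and ${\mathbf n}[\zeta \ge t] = c_\alpha t^{-1/\alpha}$ with $\zeta$ the excursion length (equivalently, the L\'evy measure of your index-$1/\alpha$ subordinator has this tail). If even one excursion of length at least $t$ occurs in this local-time window, then $T_s^{u+v}-T_s^u \ge t$; hence $\{T_s^{u+v}-T_s^u\le t\}$ is contained in the event that the Poisson number of such excursions is zero, whose probability is exactly $\exp(-c_\alpha v t^{-1/\alpha})$. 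This gives the claimed bound with the correct linear dependence on $v$ and no prefactor. (Your brief dismissal of the conditioning on $\{Z_s>w\}$ via the Markov property is in line with the paper's own treatment, which also handles it only implicitly.)
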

\begin{proof}
Let ${\mathbf n}$ be the excursion measure associated with an $\alpha$-stable L\'evy process with only upward jumps from its running infimum.  As explained in \cite[Chapter~VIII.4]{bertoinlevybook}, there exists a constant $c_\alpha > 0$ depending only on~$\alpha$ such that ${\mathbf n}[ \zeta \geq t] = c_\alpha t^{-1/\alpha}$ where~$\zeta$ denotes the length of the excursion.  This implies that in $v$ units of local time, the number $N$ of excursions of $X$ with height at least $s$ and with length at least $t$ is distributed as a Poisson random variable with mean $c_\alpha v t^{-1/\alpha}$.  Note that on the event that we have at least one such excursion, it is necessarily the case that $T_s^{u+v} - T_s^u \geq t$.  Consequently, \eqref{eqn::local_time_euclidean_lbd} follows from the explicit formula for the probability mass distribution for a Poisson random variable evaluated at $0$.
\end{proof}

We turn to describe the setup for the proof of Proposition~\ref{prop::local_time_cadlag_modification}.  We begin by emphasizing that each $Z_t^{T_s^u}$ for $t \geq s$ evolves as an $\alpha$-stable CSBP.

Fix $s_0 > 0$.  Then we know that $Z_t$ for $t \geq s_0$ evolves as an $\alpha$-stable CSBP starting from $Z_{s_0}$.  Fix $\delta > 0$ and assume that $Z_{s_0} \geq \delta/2$.  We inductively define stopping times and a modification of $Z$ as follows.  First, we let $n_1 = \lceil 4\delta^{-1} Z_{s_0} \rceil$, $\delta_1 = Z_{s_0} / n_1$, and let $Z_t^{1,j} = Z_t^{T_{s_0}^{j\delta_1}} - Z_t^{T_{s_0}^{(j-1)\delta_1}}$ so that the $Z^{1,j}$ for $1 \leq j \leq n_1$ are independent $\alpha$-stable CSBPs defined on the time-interval $[s_0,\infty)$ all with initial value $\delta/8 \leq \delta_1 \leq \delta/4$ (unless $n_1 = 1$).  We then take a modification so that $Z_t^{1,j}$ for $t \geq s_0$ and $1 \leq j \leq n_1$ are {\cadlag}. We then let
\[ \tau_1 = \inf\left\{t \geq s_0 : \max_{1 \leq j \leq n_1} Z_t^{1,j} \geq \delta/2 \right\}.\]
Note that if $\tau_1 < \infty$ then $Z_{\tau_1} \geq \delta/2$.
Assume that stopping times $\tau_1,\ldots,\tau_k$ and (\cadlag) CSBPs $Z^{j,1},\ldots,Z^{j,n_j}$ have been defined for $1 \leq j \leq k$.  We then let $n_{k+1} = \lceil 4 \delta^{-1} Z_{\tau_k} \rceil$, $\delta_{k+1} = Z_{\tau_k} / n_{k+1}$, and $Z_t^{k+1,j} = Z_t^{T_{\tau_k}^{j \delta_{k+1}}} - Z_t^{T_{\tau_k}^{(j-1)\delta_{k+1}}}$.  Then the $Z_t^{k+1,j}$ are independent $\alpha$-stable CSBPs defined on the time-interval $[\tau_k,\infty)$ all with initial value $\delta/8 \leq \delta_{k+1} \leq \delta/4$ (unless $n_{k+1} = 1$).  We modify $Z$ again if necessary so that the processes $Z_t^{k+1,j}$ for $t \geq \tau_k$ and $1 \leq j \leq n_{k+1}$ are {\cadlag}.  We then let
\[ \tau_{k+1} = \inf\left\{t \geq \tau_k : \max_{1 \leq j \leq n_{k+1}} Z_t^{k+1,j} \geq \delta/2 \right\}.\]

We note that
\begin{equation}
\label{eqn::splitting_sup_bound}
n^* := \sup_j n_j \leq 1+ \frac{4}{\delta} \sup_{t \geq s_0} Z_t.
\end{equation}
Combining~\eqref{eqn::splitting_sup_bound} and Lemma~\ref{lem::csbp_sup}, we see for a constant $c_0 > 0$ that on the event $\{Z_{s_0} \geq \delta\}$ we have
\begin{equation}
\label{eqn::splitting_bound}
\p[n^* \geq M \giv Z_{s_0}] \leq \frac{c_0 Z_{s_0}}{\delta M}.
\end{equation}

\begin{lemma}
\label{lem::number_of_splittings_bound}
For each $\delta > 0$ and $\delta < a < b < \infty$ there exists a constant $c_0 > 0$ such that on the event $\{ Z_{s_0} \in [a,b]\}$ we have that
\begin{equation}
\label{eqn::tau_n_tail}
\p[ \tau_n \leq 1 \giv Z_{s_0}] \leq c_0 n^{-1/2}.
\end{equation}
\end{lemma}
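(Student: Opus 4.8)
The plan is to combine a pigeonhole argument with the polynomial tail \eqref{eqn::splitting_bound} for $n^{*}$ and a quantitative estimate showing that a single round is unlikely to be short. Set $\tau_{0}=s_{0}$, and for $1\le k\le n$ let $\xi_{k}=\mathbf{1}\{\tau_{k}-\tau_{k-1}\le 2/n\}$ (with $\xi_{k}=0$ if $\tau_{k-1}=\infty$). On $\{\tau_{n}\le 1\}$ we have $\sum_{k=1}^{n}(\tau_{k}-\tau_{k-1})\le 1$, so at most $n/2$ of these increments exceed $2/n$, hence $\sum_{k=1}^{n}\xi_{k}\ge n/2$. Thus, for any integer $M$,
\[
\p[\tau_{n}\le 1\mid Z_{s_{0}}]\le \p[n^{*}\ge M\mid Z_{s_{0}}]+\p\Big[\textstyle\sum_{k=1}^{n}\xi_{k}\ge n/2,\ n_{k}\le M\ \forall k\le n\ \Big|\ Z_{s_{0}}\Big].
\]
On $\{Z_{s_{0}}\in[a,b]\}$ (so $Z_{s_{0}}\ge a>\delta$), \eqref{eqn::splitting_bound} bounds the first term by $c_{0}(b/\delta)^{\beta_{0}}M^{-\beta_{0}}$, where $\beta_{0}$ is the universal exponent from Lemma~\ref{lem::csbp_sup}; choosing $M$ to be a small power of $n$ will make this polynomially small in $n$.

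For the second term I would use the following CSBP estimate, which is the technical heart of the argument: there are $C(\alpha)<\infty$ and $q(\alpha)>0$ such that if $W$ is an $\alpha$-stable CSBP with $W_{0}\le 1/4$ then $\p[\sup_{0\le u\le t}W_{u}\ge 1/2]\le C(\alpha)\,t^{q(\alpha)}$ for all $t\le 1$. Granting this: round $k$ ends when one of the $n_{k}$ independent $\alpha$-stable CSBPs $Z^{k,j}$, started at time $\tau_{k-1}$ from $\delta_{k}=Z_{\tau_{k-1}}/n_{k}\le\delta/4$, first reaches $\delta/2$; by the branching property a CSBP from a smaller starting value is dominated by one from a larger starting value, so we may assume each starts at $\delta/4$, and by the scaling $Z_{C^{\alpha-1}t}\stackrel{d}{=}CZ_{t}$ with $C=1/\delta$ together with a union bound,
\[
\p\big[\tau_{k}-\tau_{k-1}\le 2/n\mid \mathcal{F}_{\tau_{k-1}}\big]\le n_{k}\,C(\alpha)\big((2/n)\delta^{1-\alpha}\big)^{q(\alpha)}\le M\,C'(\alpha,\delta)\,n^{-q(\alpha)}=:p_{n}
\]
on $\{n_{k}\le M\}$, using that $\{n_{k}\le M\}\in\mathcal{F}_{\tau_{k-1}}$. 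Replacing $\xi_{k}$ by $\xi_{k}\mathbf{1}\{n_{1},\dots,n_{k}\le M\}$ (which agrees with $\xi_{k}$ on the relevant event and has conditional mean $\le p_{n}$ given $\mathcal{F}_{\tau_{k-1}}$) shows that on $\{n_{k}\le M\ \forall k\le n\}$ the sum $\sum_{k}\xi_{k}$ is stochastically dominated by a $\mathrm{Binomial}(n,p_{n})$, so the second term is at most $\p[\mathrm{Bin}(n,p_{n})\ge n/2]\le(4p_{n})^{n/2}$ once $p_{n}\le 1/8$. Taking $M=\lceil n^{q(\alpha)/2}\rceil$ makes $p_{n}\le C'(\alpha,\delta)\,n^{-q(\alpha)/2}\to 0$, so $(4p_{n})^{n/2}$ decays faster than any power of $n$, while the first term is $\le c_{0}(b/\delta)^{\beta_{0}}n^{-q(\alpha)\beta_{0}/2}$. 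Absorbing the small-$n$ cases into the constant gives \eqref{eqn::tau_n_tail} with exponent $\beta=q(\alpha)\beta_{0}/2$ and a constant depending on $\alpha,\delta,a,b$.

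It remains to establish the CSBP estimate, and here is where the main difficulty lies: a crude martingale maximal inequality only gives $\p[\sup_{u}W_{u}\ge 1/2]\le W_{0}/(1/2)\le 1/2$ with no decay in $t$, because a single large jump can carry $W$ from near $1/4$ to above $1/2$ instantly, so extracting a genuine power of $t$ forces one to separate out the large jumps. By scaling take $W_{0}=v\le 1/4$, let $\sigma=\inf\{u:W_{u}\ge 1/2\}$, and fix the truncation level $\rho=t^{1/(2\alpha)}\le 1$. If some jump of $W$ of size $\ge\rho$ occurs in $[0,\sigma\wedge t]$ then, since such jumps arrive at rate at most a constant times $\rho^{-\alpha}$ while $W<1/2$, this has probability $\le C(\alpha)\rho^{-\alpha}t=C(\alpha)t^{1/2}$. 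On the complementary event, write $W=v+\widetilde M+G$, where $\widetilde M$ is the $L^{2}$ martingale built from the compensated jumps of size $<\rho$ (well defined since $\int_{0}^{\rho}x^{2}\,\nu(dx)<\infty$ for the stable L\'evy measure $\nu$, with jumps $<\rho$ and $d\langle\widetilde M\rangle_{s}=W_{s-}\,c_{\alpha}(2-\alpha)^{-1}\rho^{2-\alpha}\,ds$) and $G$ collects the $(\ge\rho)$-jumps together with their compensator; on this event $G_{s}=-\big(\int_{\rho}^{\infty}x\,\nu(dx)\big)\int_{0}^{s}W_{u-}\,du\le 0$ for $s\le\sigma$, so $W_{s\wedge\sigma}-v\le\widetilde M_{s\wedge\sigma}$. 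Hence if $\sigma\le t$ then $\widetilde M_{\sigma}\ge 1/4$ while $\langle\widetilde M\rangle_{\sigma}\le\tfrac12 c_{\alpha}(2-\alpha)^{-1}\rho^{2-\alpha}t$, and Freedman's martingale concentration inequality bounds the probability of this by $\exp\!\big(-c(1/16)/(\rho^{2-\alpha}t+\rho)\big)$, which with $\rho=t^{1/(2\alpha)}$ is $o(t^{1/2})$ as $t\to 0$. Summing the two contributions yields $\p[\sigma\le t]\le C(\alpha)t^{1/2}$, so one may take $q(\alpha)=1/2$. Aside from this estimate, the remaining steps — the pigeonhole bound, the conditional-Bernoulli domination, and the choice $M=\lceil n^{q/2}\rceil$ balancing the two error terms — are routine.
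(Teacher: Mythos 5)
Your argument is correct and shares the paper's skeleton --- split according to whether $n^* \le M$ using \eqref{eqn::splitting_bound}, control the duration of each round conditionally on $\CF_{\tau_{k-1}}$, and finish with binomial concentration, with $M$ a power of $n$ chosen to balance the two error terms --- but the per-round estimate is handled differently. The paper bounds rounds from \emph{below}: it shows that with probability at least a constant $p_0>0$ (uniform in $n$ and $k$), all $n_{k+1}$ processes stay inside $[\delta/8,\delta/2]$ for a time of order $1/n_{k+1}$, by passing to L\'evy time via the Lamperti transform and invoking the confinement estimates from Bertoin; binomial concentration then forces $\tau_n \gtrsim n/M \sim n^{1/2} \to \infty$ on $\{n^*\le M\}$ with $M \sim n^{1/2}$, contradicting $\tau_n\le 1$. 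You instead bound the probability that a round is shorter than $2/n$ by $p_n = M\,O(n^{-q}) \to 0$, via a union bound over the $n_k \le M$ CSBPs (the reduction to a common starting value $\delta/4$ by additivity is fine) together with a time-decaying maximal inequality $\p[\sup_{u\le t} W_u \ge 1/2] \le C t^{q}$ for $W_0 \le 1/4$, and take $M \sim n^{q/2}$; this works and yields a valid (if smaller) exponent $\beta$. The one criticism is that your ``technical heart'' is much heavier than necessary: the jump-truncation/Freedman argument appears correct, but the same estimate (with $q=1$) follows in one line from tools already in play --- by Theorem~\ref{thm::lamperti}, before the CSBP exceeds $1/2$ it consumes at most $t/2$ units of L\'evy time, so the probability in question is at most $\p[\sup_{s \le t/2}(X_s - X_0) \ge 1/4] \le c\,t$ by \eqref{eqn::s_t_upd}, exactly the estimate used in the proof of Lemma~\ref{lem::csbp_sup}. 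With that simplification, your route and the paper's are essentially dual phrasings of the same argument (a uniform lower bound on the chance of a long round versus a vanishing upper bound on the chance of a short one).
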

\begin{proof}
Throughout, we shall assume that we are working on the event $\{ Z_{s_0} \in [a,b]\}$.  By~\eqref{eqn::splitting_bound}, we know that there exists a constant $c_0 > 0$ such that
\begin{align}
      \p[ \tau_n \leq 1 \giv Z_{s_0}]
&\leq \p[ \tau_n \leq 1,\ n^* \leq M \giv Z_{s_0}] + \frac{c_0 Z_{s_0}}{\delta M}. \label{eqn::n_j_bound}
\end{align}
We take $M = n^{1/2} Z_{s_0}/\delta$ so that the error term on the right hand side of~\eqref{eqn::n_j_bound} is at most a constant times $n^{-1/2}$.

Let $\CF_t$ be the $\sigma$-algebra generated by $Z_r^{T_s^u}$ for all $s \leq r \leq t$ with $u,s,r \in \Q_+$.  We claim that, given $\CF_{\tau_k}$, we have that $\tau_{k+1} - \tau_k$ is stochastically dominated from below by a random variable $\xi_k$ such that the probability that $\xi_k$ is at least $1/n_{k+1}$ is at least some constant $p_0 > 0$ (which may depend on $\delta$ but not $k$).  Upon showing this,~\eqref{eqn::tau_n_tail} will follow by combining~\eqref{eqn::n_j_bound} with binomial concentration.  We note that the claim is clear in the case that $n_{k+1}=1$, so we now assume that $n_{k+1} \geq 2$ and we let
\[ \sigma_{k+1} = \inf\left\{ t \geq \tau_k : \min_{1 \leq j \leq n_{k+1}} Z_t^{k+1,j} \leq \delta/16 \right\} \quad\text{and}\quad \wt{\tau}_{k+1} = \tau_{k+1} \wedge \sigma_{k+1}.\]
Since $\wt{\tau}_{k+1} \leq \tau_{k+1}$, it suffices to prove the stochastic domination result for $\wt{\tau}_{k+1}-\tau_k$ in place of $\tau_{k+1}-\tau_k$.

By the Lamperti transform (Theorem~\ref{thm::lamperti}), it suffices to show that the probability that $n_{k+1}$ independent $\alpha$-stable L\'evy processes, each starting from a common value in $[\delta/8,\delta/4]$ and run for time $(16/\delta) \times n_{k+1}^{-1}$, all do not leave the interval $[\delta/16,\delta/2]$ is at least some $p_0 > 0$.  (The factor $16/\delta$ comes from the speedup when transforming to L\'evy process time.)  This, in turn, follows from \cite[Chapter~VII, Corollary~2]{bertoinlevybook} and \cite[Chapter~VIII, Proposition~4]{bertoinlevybook}.
\end{proof}

\begin{proof}[Proof of Proposition~\ref{prop::local_time_cadlag_modification}]
We will prove the result by showing that $r \mapsto Z_\cdot^r$ for $r \in \Q_+$ is a.s.\ uniformly continuous with respect to the uniform topology.  Throughout, we assume that $s_0,\delta_0,\delta > 0$ are fixed and we let $H_{s_0,\delta_0} = \{ Z_{s_0} \in [\delta_0/2,\delta_0]\}$.  Also, $c_j > 0$ will denote a constant (which can depend on $s_0,\delta_0,\delta$).

For each $\ell \in \N$ and $\Delta > 0$ we let
\[ F_{\ell,\Delta}^{\delta} = \bigcap_k \left\{ T_{s_0+ \ell \Delta}^{k\delta^2} - T_{s_0+\ell \Delta}^{(k-1)\delta^2} \geq \Delta^{\alpha} \delta^{3\alpha} \right\}.\]
Lemma~\ref{lem::csbp_sup} and Lemma~\ref{lem::length_lbd} together imply that
\begin{equation}
\label{eqn::f_j_delta_bound0}
\p[ (F_{\ell,\Delta}^{\delta})^c \giv H_{s_0,\delta_0}] \leq c_0 M^{-1} + \frac{M}{\delta^2} \exp(-c_1 \Delta^{-1} \delta^{-1}).
\end{equation}
By optimizing over $M$, it follows from~\eqref{eqn::f_j_delta_bound0} that
\begin{equation}
\label{eqn::f_j_delta_bound}
\p[ (F_{\ell,\Delta}^{\delta})^c \giv H_{s_0,\delta_0}] \leq \exp(-c_2  \Delta^{-1} \delta^{-1}).
\end{equation}

Let $\zeta = \inf\{ s > 0 : Z_s = 0\}$.  By performing a union bound over $\ell$ values, from~\eqref{eqn::f_j_delta_bound} and Lemma~\ref{lem::csbp_extinction_time} we have with $F_\Delta^{\delta} = \cap_\ell F_{\ell,\Delta}^{\delta}$ that
\begin{equation}
\label{eqn::f_delta_delta_bound0}
\p[ (F_\Delta^\delta)^c \giv H_{s_0,\delta_0}] \leq \frac{T}{\Delta} \exp(-c_3 \Delta^{-1} \delta^{-1}) + c_4 T^{1/(1-\alpha)}.
\end{equation}
Optimizing~\eqref{eqn::f_delta_delta_bound0} over $T$ values implies that
\begin{equation}
\label{eqn::f_delta_delta_bound}
\p[ (F_\Delta^\delta)^c \giv H_{s_0,\delta_0}] \leq \exp(-c_5 \Delta^{-1} \delta^{-1}).
\end{equation}
Therefore the Borel-Cantelli lemma implies that with $\Delta = e^{-j}$, for each $\delta > 0$ there a.s.\ exists $j_F^\delta \in \N$ (random) such that $j \geq j_F^\delta$ implies that $F_\Delta^\delta$ occurs.

We also let $G_{\ell,\Delta}^{\delta}$ be the event that for every $s \in \Q$ with $s \in [s_0 + (\ell-1) \Delta ,s_0 + \ell\Delta]$ and $t_1,t_2 \in \Q_+$ with $t_2 \geq t_1$ such that $Z_s^{t_2} - Z_s^{t_1} \geq \delta$ we have that $Z_{s_0 + \ell \Delta}^{t_2} - Z_{s_0 + \ell \Delta}^{t_1} \geq 2\delta^2$.  We claim that it suffices to show that
\begin{equation}
\label{eqn::g_j_delta_bound}
\p[ (G_{\ell,\Delta}^{\delta})^c \giv H_{s_0,\delta_0}] \leq \exp(-c_6 \delta \Delta^{1/(1-\alpha)}).
\end{equation}
Letting $G_\Delta^{\delta} = \cap_\ell G_{\ell,\Delta}^{\delta}$, we have from~\eqref{eqn::g_j_delta_bound} by performing a union bound over $\ell$ values (and applying Lemma~\ref{lem::csbp_extinction_time} as in the argument to prove~\eqref{eqn::f_delta_delta_bound}) that
\[ \p[ (G_\Delta^\delta)^c \giv H_{s_0,\delta_0}]  \leq \exp(-c_7 \delta \Delta^{1/(1-\alpha)}).\]
Thus the Borel-Cantelli lemma implies that with $\Delta = e^{-j}$, for each $\delta > 0$ there a.s.\ exists $j_G^\delta \in \N$ (random) such that $j \geq j_G^\delta$ implies that $G_\Delta^\delta$ occurs.  In particular, this implies that for every $s \geq s_0$ with $s \in \Q$ and $t_1,t_2$ such that $Z_s^{t_2} - Z_s^{t_1} \geq \delta$ we have that $Z_{s_0 + \ell \Delta}^{t_2} - Z_{s_0 + \ell\Delta}^{t_1} \geq 2\delta^2$ where
\begin{equation}
\label{eqn::ell_def}
\ell = \lceil (s-s_0)/\Delta \rceil
\end{equation}
for $\Delta = e^{-j}$ and $j \geq j_G^\delta$.

Assume that $j \geq j_F^\delta \vee j_G^\delta$ so that with $\Delta = e^{-j}$ we have that both $F_\Delta^\delta$ and $G_\Delta^\delta$ occur.  Suppose that $t_1,t_2,s$ are such that $Z_s^{t_2} - Z_s^{t_1} \geq \delta$.  With $\ell$ as in~\eqref{eqn::ell_def}, it must be true that $Z_{s_0 + \ell \Delta}^{t_2} - Z_{s_0 + \ell \Delta}^{t_1} \geq 2\delta^2$.  This implies that there exists $k$ such that 
\begin{equation}
\label{eqn::fit_t_1_t2}
T_{s_0 + \ell \Delta}^{k \delta^2} \leq t_2 \quad\text{and}\quad T_{s_0 + \ell \Delta}^{(k-1) \delta^2} \geq t_1.
\end{equation}
Rearranging~\eqref{eqn::fit_t_1_t2}, we thus have that
\begin{equation}
\label{eqn::t_2_t_1_diff_bound}
t_2 - t_1 \geq T_{s_0 + \ell \Delta}^{k \delta^2} - T_{s_0 + \ell \Delta}^{(k-1) \delta^2} \geq \Delta^\alpha \delta^{3\alpha}.
\end{equation}
This implies that $r \mapsto Z_\cdot^r|_{[s_0,\infty)}$ for $r \in \Q_+$ has a certain modulus of continuity with respect to the uniform topology.  In particular, $r \mapsto Z_\cdot^r|_{[s_0,\infty)}$ for $r \in \Q_+$ is uniformly continuous with respect to the uniform topology hence extends continuously.  The result then follows (assuming~\eqref{eqn::g_j_delta_bound}) since $s_0,\delta_0,\delta > 0$ were arbitrary.

To finish the proof, we need to establish~\eqref{eqn::g_j_delta_bound}.  For each $j$, we let
\[ E_j =  \{\tau_j \geq s_0 + \Delta \} \cup \left( \cap_{k=1}^{n_j} \{ Z_{s_0+\Delta}^{j,k} \geq 2\delta^2\} \right).\]
We first claim that $G_{1,\Delta}^\delta \supseteq \cap_{j=1}^n E_j$.  To see this, fix a value of $s \in [s_0,s_0 + \Delta]$ and suppose that $Z_s^{t_2} - Z_s^{t_1} \geq \delta$.  Let $j$ be such that $\tau_j \leq s < \tau_{j+1}$ and let $k$ be the first index so that $Z_s^{j,1} + \cdots + Z_s^{j,k} \geq Z_s^{t_1}$.  Since $Z_s^{j,i} \leq \delta/2$ for all $i$, it follows that $Z_s^{j,1} + \cdots + Z_s^{j,k+1} \leq Z_s^{t_2}$.  Consequently, $Z_{s_0+\Delta}^{t_2} - Z_{s_0+\Delta}^{t_1} \geq Z_{s_0+\Delta}^{j,k+1}$.  The claim follows because we have that $Z_{s_0+\Delta}^{j,k+1} \geq 2\delta^2$ on $\cap_j E_j$.

Thus to finish the proof, it suffices to show that
\begin{equation}
\label{eqn::e_j_udb}
\p[ \cup_{j=1}^n E_j^c \giv H_{s_0,\delta_0}] \leq \exp(-c_8 \delta \Delta^{1/(1-\alpha)})
\end{equation}
(as the same analysis leads to the same upper bound for $\p[(G_{\ell,\Delta}^\delta)^c \giv H_{s_0,\delta_0}]$ for other $\ell$ values).  To this end, 
Lemma~\ref{lem::csbp_terminal_value} implies that
\begin{equation}
\label{eqn::one_e_j_bound}
\p[ E_j^c,\ Z^* \leq \delta M/4 \giv H_{s_0,\delta_0}]  \leq M \exp(-c_9 \delta \Delta^{1/(1-\alpha)}).
\end{equation}
Thus applying a union bound together with~\eqref{eqn::one_e_j_bound} in the second step below, we have for each $n \in \N$ that
\begin{align}
  & \p[ \cup_j E_j^c,\ Z^* \leq \delta M/4 \giv H_{s_0,\delta_0}] \notag\\
=& \p[ \cup_j E_j^c,\ Z^* \leq \delta M/4,\ \tau_n \geq \Delta \giv H_{s_0,\delta_0}] + \p[ \tau_n \leq \Delta \giv H_{s_0,\delta_0}] \notag\\
\leq& n M \exp(- c_{10} \delta \Delta^{1/(1-\alpha)}) + c_{11} n^{-1/2} \quad\text{(by Lemma~\ref{lem::number_of_splittings_bound})} \label{eqn::e_j_one_step_bound}
\end{align}
Applying Lemma~\ref{lem::csbp_sup}, we therefore have that
\begin{equation}
\label{eqn::e_j_c_bound}
\p[ \cup_j E_j^c \giv H_{s_0,\delta_0}] \leq n M \exp(- c_{10} \delta \Delta^{1/(1-\alpha)}) + c_{11} n^{-1/2} + c_{12} (\delta M)^{-1}.
\end{equation}
Optimizing over $n$ and $M$ values implies~\eqref{eqn::e_j_udb}.
\end{proof}

\begin{proof}[Proof of Proposition~\ref{prop::levy_net_equivalent_topology}]
As we remarked earlier, it suffices to show the equivalence of the quotient topology from Definition~\ref{def::levynet2} (Figure~\ref{fig::levynet2}) with the quotient topology described in Definition~\ref{def::levynet_bubble_gluing} (Figure~\ref{fig::bubblegluing}).  We will show this by arguing that $Z_s^r$ induces a continuous map $\wt{Z}_s^r$ from Figure~\ref{fig::levynet2} to Figure~\ref{fig::bubblegluing} which takes equivalence classes to equivalence classes in a bijective manner.  This will prove the result because this map then induces a bijection which is continuous from the space which arises after quotienting as in Definition~\ref{def::levynet2} (Figure~\ref{fig::levynet2}) to the space which arises after quotienting as in Definition~\ref{def::levynet_bubble_gluing} (Figure~\ref{fig::bubblegluing}) and the fact that bijections which are continuous from one compact space to another are homeomorphisms.

Let $S$ be the vertical height of the rectangle as in the right hand side of Figure~\ref{fig::levynet2} and fix $s \in [0,S]$.  Let $t$ be the vertical height which corresponds to $s$ as in the left side of Figure~\ref{fig::levynet2}.  In other words, $t$ is obtained from $s$ by mapping the right to the left side of Figure~\ref{fig::levynet2} by removing the stripes which correspond to the jumps.  If~$t$ is not a jump time for~$Z$, then we take $\wt{Z}_s^r = Z_t^r / Z_t$.  Suppose that $t$ is a jump time for~$Z$.  If $s$ is the $y$-coordinate of the top (resp.\ bottom) of the corresponding rectangle as in the right side of Figure~\ref{fig::levynet2}, we take $\wt{Z}_s^r = \lim_{q \downarrow t} Z_q^r / Z_q$ (resp.\ $\wt{Z}_s^r = \lim_{q \uparrow t} Z_q^r / Z_q$).  Suppose that $s$ is between the bottom and the top of the corresponding rectangle.  If $(s,r)$ is outside of the interior of the rectangle, then we take $\wt{Z}_s^r = Z_t^r / Z_t$.  Note that in this case we have that the limit $\lim_{q \to t} Z_q^r / Z_q$ exists and is equal to $Z_t^r / Z_t$.  Let $s_1$ (resp.\ $s_2$) be the $y$-coordinate of the bottom (resp.\ top) of the rectangle.  If $(s,r)$ is in the rectangle, then we take $\wt{Z}_s^r$ to be given by linearly interpolating between the values of $\wt{Z}_{s_1}^r$ and $\wt{Z}_{s_2}^r$.  That is,
\[ \wt{Z}_s^r = \frac{s_2-s}{s_2-s_1} \wt{Z}_{s_1}^r + \frac{s-s_1}{s_2-s_1} \wt{Z}_{s_2}^r.\]
By the continuity properties of $Z$ given in Proposition~\ref{prop::local_time_cadlag_modification} and the construction of $\wt{Z}$, we have that the map $(s,r) \mapsto \wt{Z}_s^r$ is continuous.

Observe that $\wt{Z}$ is constant on the equivalence classes as defined in Definition~\ref{def::levynet2} (Figure~\ref{fig::levynet2}).  This implies that~$\wt{Z}$ induces a continuous map from the topological space one obtains after quotienting by the equivalence relation as in Definition~\ref{def::levynet2} (Figure~\ref{fig::levynet2}) into the one from Definition~\ref{def::levynet_bubble_gluing} (Figure~\ref{fig::bubblegluing}, not yet quotiented).  As $\wt{Z}$ bijectively takes equivalence classes as in Definition~\ref{def::levynet2} (Figure~\ref{fig::levynet2}) to equivalence classes as in Definition~\ref{def::levynet_bubble_gluing} (Figure~\ref{fig::bubblegluing}), it follows that~$\wt{Z}$ in fact induces a bijection which is continuous from the quotient space as in Definition~\ref{def::levynet2} (Figure~\ref{fig::levynet2}) to the quotient space as in Definition~\ref{def::levynet_bubble_gluing} (Figure~\ref{fig::bubblegluing}).  The result follows because, as we mentioned earlier, a bijection which is continuous from one compact space to another is a homeomorphism.
\end{proof}

\subsection{Recovering embedding from geodesic tree quotient}
\label{subsec::recovering_embedding}

We now turn to show that the embedding of the L\'evy net into~$\S^2$ is unique up to a homeomorphism of $\S^2$.  Recall that a set is called {\em essentially $3$-connected} if deleting two points always produces either a connected set, a set with two components one of which is an open arc, or a set with three components which are all open arcs.  In particular, every $3$-connected set is essentially $3$-connected.  Suppose that a compact topological space~$K$ can be embedded into $\S^2$ and that $\phi_1 \colon K \to \S^2$ is such an embedding. It is then proved in \cite{3connectedembeduniquely} that $K$ is essentially $3$-connected if and only if for every embedding $\phi \colon K \to \S^2$, there is a homeomorphism $h \colon \S^2 \to \S^2$ such that $\phi = h \circ \phi_1$.\footnote{It is clear from our construction that when $K$ is a L\'evy net there exists at least one embedding of~$K$ into~$\S^2$. More generally, it is shown in \cite{planarityofcompactmetricspaces} that a compact and locally connected set $K$ is homeomorphic to a subset of~$\S^2$ if and only if it contains no homeomorph of~$K_{3,3}$ or~$K_5$.}

\begin{proposition}
\label{prop::levynet_three_connected}
For each $\alpha \in (1,2)$, the topological space associated with the L\'evy net is a.s.\ $3$-connected.  Hence by \cite{3connectedembeduniquely} it can a.s.\ be embedded in $\S^2$ in a unique way (up to a homeomorphism).
\end{proposition}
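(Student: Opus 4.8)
The plan is to show that deleting any two points $p,q$ from the L\'evy net $K$ leaves a space of the required form, working with the concrete description of $K$ provided by Definition~\ref{def::levynet_bubble_gluing} (equivalently Figure~\ref{fig::bubblegluing}), which by Proposition~\ref{prop::levy_net_equivalent_topology} is homeomorphic to the L\'evy net. The key structural features I would exploit are: (i) the \emph{geodesic tree} is a topological real tree, so removing an interior point of an edge disconnects it into exactly two pieces and removing a leaf or a branch point does not disconnect the relevant local picture; (ii) each loop of the dual looptree (each boundary of a grey region in Figure~\ref{fig::bubblegluing}) is a circle that meets the rest of $K$ in a way controlled by the attachment-point structure of Theorem~\ref{thm::levynetrecover}; and (iii) Proposition~\ref{prop::nodoublesidedrecord} and Proposition~\ref{prop::levynetgeodesictrajectoriesdetermined} guarantee that from Lebesgue-almost-every point there is a \emph{unique} geodesic to the root, and that any two distinct geodesics eventually merge, so no point is a cut point in a ``fat'' way.

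First I would set up the case analysis for the pair $\{p,q\}$ according to whether each of $p,q$ lies: (a) on the interior of a geodesic-tree branch that is \emph{not} shared with a loop boundary; (b) at a branch point of the geodesic tree; (c) on the interior of a loop of the looptree (a point of a grey-region boundary that is not also a geodesic-tree branch point); or (d) at one of the two roots. For a single deletion: removing an interior point of type (a) splits $K$ into two components, one of which — the ``downstream'' side consisting of the finite piece of tree plus loops hanging off it before the next record minimum — is an open arc precisely when that piece carries no loops, and otherwise both sides are still connected; removing a type-(b) or type-(c) point keeps $K$ connected because a branch point or a loop boundary point always has a connected punctured neighborhood inside $K$ (the loop is a circle, and the finitely many — two or three, by Proposition~\ref{prop::noisolatedmax} and the Type~II/III classification in the proof of Proposition~\ref{prop::spherelevynet} — tree branches emanating from a branch point are joined through higher parts of the tree). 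Removing \emph{two} points then produces at most three components, and a component is an open arc exactly when it is a sub-arc of a single geodesic branch or of a single loop with no loops attached along it; since such ``arc'' components arise only when one of the deleted points is a branch/attachment point cutting off a loopless stretch, and the complement of such an arc is always connected, the essential $3$-connectivity condition is met. I would organize this as a short lemma: in $K$, every point has arbitrarily small open neighborhoods $V$ such that $V\setminus\{z\}$ has at most two components, at most one of which fails to accumulate on $z$ from both sides, and then the two-point deletion count follows by a standard Mayer--Vietoris-style bookkeeping.

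The main obstacle, and the step deserving the most care, is ruling out pathological disconnections caused by the dense set of loops: a priori one worries that deleting two well-chosen points on the geodesic tree could separate an infinite comb of loops into infinitely many pieces, or that a loop boundary point $z$ which is simultaneously the attachment point of the loop to the geodesic branch \emph{and} a branch point of that branch could be a ``genuine'' cut point of high valence. Here I would use two inputs decisively: Proposition~\ref{prop::nodoublesidedrecord} (no decrease times of $Y_t$), which forbids exactly the configuration where a single point has the upper-tree and lower-tree chords both realized, so no point of $K$ is the meeting of more than three arcs; and the fact (from the Type~I/II/III analysis and Lemma~\ref{lem::bubble_locations_independent}) that each loop is glued to the already-explored structure along a \emph{single} point, so a loop contributes at most one cut point to the whole space, and deleting it cuts off only that loop (a circle, hence after one more deletion an arc) — never an infinite family. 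Combining this with the real-tree property of the geodesic tree (removal of a point yields finitely many, in fact at most three, components locally), the total number of components after deleting two points is bounded by three, and the arc-identification in the exceptional cases is immediate. Once $3$-connectivity (hence essential $3$-connectivity) is established, the uniqueness of the embedding up to homeomorphism of $\S^2$ follows directly from \cite{3connectedembeduniquely}, together with the existence of an embedding which is manifest from the construction (and also from Proposition~\ref{prop::spherelevynet}).
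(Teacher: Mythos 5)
There is a genuine gap here, and it is structural rather than a matter of missing detail: your case analysis is premised on the L\'evy net having cut points, which is false and in fact contradicts the very statement you are proving. You assert that deleting an interior point of a geodesic branch ``splits $K$ into two components,'' and that a loop is ``glued to the already-explored structure along a single point'' so that deleting that point ``cuts off only that loop.'' Neither picture is correct for the net. Every point of a bubble-boundary loop lies at a single radius $r$ from the root and is a limit of net points at strictly smaller radii (each such point carries a geodesic back to the root), so the loop is attached to the rest of the net along its \emph{entire} length, not only at the figure-$8$ pinch point; similarly, the two sides of an interior point of a geodesic branch are reconnected through the metric-ball boundaries (which are circles, or figure-$8$'s at jump radii) and through the dense family of loops. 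The proposition asserts full $3$-connectedness, so a correct proof cannot have cases in which one or two deletions disconnect the space except to show those cases are impossible; your bookkeeping instead builds essential $3$-connectivity on top of disconnections that never occur. Relatedly, appealing to uniqueness of geodesics from Lebesgue-a.e.\ point cannot suffice, since the claim quantifies over \emph{all} pairs of points, and the proposed ``small neighborhoods with at most two components'' lemma is exactly the hard local statement that is never actually established; the worry you flag about dense combs of loops is resolved in your sketch only by the erroneous single-point-attachment picture.

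The paper's proof avoids local analysis entirely. It argues by contradiction: if $W\setminus\{x,y\}=A\cup B$ were disconnected, embed $W$ in $\S^2$ (Proposition~\ref{prop::spherelevynet}), absorb into $\wt A$ and $\wt B$ the complementary bubbles whose boundaries lie entirely in $A$ or $B$, and observe that the remaining components of $\S^2\setminus(\wt A\cup\wt B\cup\{x,y\})$ must each have $x$ or $y$ on their boundary; there are at least two such components, and a pigeonhole argument forces two of them to share $x$ or $y$ on their boundaries. This contradicts the breadth-first structure of the net (Figure~\ref{fig::bubblegluing}): every bubble boundary sits at a single radius and the metric exploration pinches off at most one component from the dual root at any given time. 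If you want to salvage your approach, the essential missing ingredient is precisely this ``connectivity through the past'': you would need to prove that for every point $z$ and every radius, the portion of the net at radii below the radius of $z$ accumulates on the full ball boundary through $z$, which is tantamount to the global argument the paper gives; as written, your proposal would not compile into a proof of either $3$-connectedness or essential $3$-connectedness.
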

\begin{proof}
Suppose that~$W$ is an instance of the L\'evy net and assume for contradiction that~$W$ is not $3$-connected.  Then there exists distinct points $x,y \in W$ such that $W \setminus \{x,y\}$ is not connected.  This implies that we can write $W \setminus \{x,y\} = A \cup B$ for $A,B \subseteq W$ disjoint and $A,B \neq \emptyset$.  We assume that~$W$ has been embedded into $\S^2$.  Let~$\wt{A}$ (resp.\ $\wt{B}$) be given by~$A$ (resp.\ $B$) together with all of the components of $\S^2 \setminus W$ whose boundary is entirely contained in $A$ (resp.\ $B$).  Then $\wt{A}$, $\wt{B}$ are disjoint and we can write $\S^2$ as a disjoint union of $\wt{A}$, $\wt{B}$, $\{x\}$, $\{y\}$, and the components of $\S^2 \setminus W$ whose boundary has non-empty intersection with both $A$ and $B$.  Suppose that $C$ is such a component.  Then there exists a point $w \in \partial C$ which is not in $\wt{A}$ or $\wt{B}$.  That is, either $x \in \partial C$ or $y \in \partial C$.

Note that $\S^2 \setminus (\wt{A} \cup \wt{B} \cup \{x,y\})$ must have at least two distinct components $C_1,C_2$ (for otherwise $\wt{A}$, $\wt{B}$ would not be disjoint).  If either $x$ or $y$ is in $\partial C_1 \cap \partial C_2$ then we have a contradiction because the distance of both $\partial C_1$ and $\partial C_2$ to the root of $W$ must be the same but by the breadth-first construction of the L\'evy net (Definition~\ref{def::levynet_bubble_gluing}) and Theorem~\ref{thm::levynetrecover} we know that the metric exploration from the root to the dual root in $W$ does not separate more than one component from the dual root at any given time (as these components correspond to jumps of the boundary length process).  If $\partial C_1 \cap \partial C_2$ does not contain either $x$ or $y$, then there must be a third component $C_3$ of $\S^2 \setminus (\wt{A} \cup \wt{B} \cup \{x,y\})$.  This leads to a contradiction because then (by the pigeon hole principle) either $\partial C_1 \cap \partial C_3$ or $\partial C_2 \cap \partial C_3$ contains either $x$ or $y$.
\end{proof}

We are now going to use that the topological space associated with the L\'evy net a.s.\ has a unique embedding into~$\S^2$ up to homeomorphism to show that it together withe the distance function to the root and an orientation a.s.\ determines the L\'evy excursion~$X$ used to generate it.

\begin{proposition}
\label{prop::levynet_structure_determined}
For each $\alpha \in (1,2)$, the $\alpha$-stable L\'evy excursion $X$ used in the construction of the L\'evy net is a.s.\ determined by the topological space associated with the L\'evy net and distance function to the root together with an orientation.
\end{proposition}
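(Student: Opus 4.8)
The plan is to run the reconstruction through the breadth-first picture. By Proposition~\ref{prop::levynet_three_connected} we may fix an embedding of the L\'evy net $W$ into $\S^2$, unique up to a homeomorphism of $\S^2$; together with the given orientation, this makes the entire planar structure of $W$ --- the countable collection of complementary open disks $\{O_i\}$, their nesting, and the cyclic order of points on each Jordan curve $\partial O_i$ --- a canonical (measurable) function of $(W,\mathrm{orientation})$. Recall that as in Definition~\ref{def::levynet} the L\'evy net also carries its two marked points $\rho$ (root) and $\rho^*$ (dual root), the distance-from-root function $D\colon W\to[0,R]$, and the distinguished root-directed geodesics; in particular the geodesic-tree branch $\gamma$ from $\rho^*$ to $\rho$ is canonical, as is its intersection $x_r:=\gamma\cap\mathcal L_r$ with each level set $\mathcal L_r:=D^{-1}(r)$. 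By Theorem~\ref{thm::levynetrecover}, together with Proposition~\ref{prop::levy_net_equivalent_topology} and the breadth-first description of Definition~\ref{def::levynet_bubble_gluing} (Figure~\ref{fig::bubblegluing}), it suffices to reconstruct, as a measurable function of $(W,\mathrm{orientation})$, the boundary length process $Z$ of Definition~\ref{def::levycb} \emph{together with} its attachment points; Theorem~\ref{thm::levynetrecover} then yields $X$ itself.

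First I would read off the combinatorial skeleton. For all but countably many $r$ the level set $\mathcal L_r$ is a circle, while at the (countably many) $r$ where $Z$ jumps it is a figure eight whose crossing point $p$ is exactly the point at which a bubble is absorbed. Since $Z$ is an $\alpha$-stable CSBP (Proposition~\ref{prop::Zisalphastable}) its jump times are a.s.\ distinct, and the structural results of Section~\ref{subsec::levynetbreadthfirst} (illustrated in Figure~\ref{fig::bubblegluing}) identify, for each bubble $O_i$, a unique absorption distance $r_i$, the crossing point $p_i$ with $r_i=D(p_i)$, and the identification of $\partial O_i$ with the ``new'' loop of the figure eight $\mathcal L_{r_i}$. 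All of this is determined by $(W,D)$ and the planar structure.

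The substantive step is to recover the boundary length measure on each level set. For an arc $I\subseteq\mathcal L_r$ (described topologically) and $\epsilon>0$, let $N_\epsilon(r,I)$ be the number of bubbles absorbed along $I$ at absorption distance in $(r,r+\epsilon)$ whose ``size'' --- say the $D$-diameter of $\partial O_i$, equivalently the magnitude of the corresponding record-infimum excursion in the coding of Definition~\ref{def::levycb} --- exceeds $\epsilon$. Exactly as in the local-time construction recalled after Definition~\ref{def::levycb}, where the conditional law of such excursions given their number is that of an i.i.d.\ sample from the stable excursion measure and $N_\epsilon/\epsilon^{\beta}$ converges to the local time by the law of large numbers, one shows that $\epsilon^{\beta}N_\epsilon(r,I)$ converges a.s.\ to a fixed constant times the true boundary length of $I$. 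Invoking the joint regularity of Proposition~\ref{prop::local_time_cadlag_modification} (continuity in one variable, {\cadlag} in the other, uniformly) upgrades this a.s.\ convergence for fixed $(r,I)$ to a statement that holds simultaneously for all $r$ and all $I$ in a countable generating family of arcs; the limit then defines, a.s.\ and measurably in $(W,\mathrm{orientation})$, a boundary length measure $\mu_r$ on every $\mathcal L_r$.

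Given the $\mu_r$, the rest is bookkeeping: $Z_r$ is $\mu_r(\mathcal L_r)$ in the breadth-first time parametrization (read off from $D$ and the recorded jump heights); the jump of $Z$ at $r_i$ has magnitude $\mu_{r_i}(\partial O_i)$, which coincides with the corresponding jump of $X$ by Proposition~\ref{prop::jump_correspondence}; and the attachment point at $r_i$ is the $\mu_{r_i}$-length of the arc of $\mathcal L_{r_i}$ from $x_{r_i}$ to $p_i$ traversed in the direction fixed by the orientation. Hence $\bigl(Z,(\text{attachment points})\bigr)$ is a measurable function of $(W,\mathrm{orientation})$, and Theorem~\ref{thm::levynetrecover} completes the argument; the orientation is genuinely used, since it is what renders ``left/right'' --- hence the attachment points, and with them $X$ rather than its reflection --- unambiguous. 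I expect the main obstacle to be the third paragraph: showing that a purely topological/metric renormalized count of small bubbles converges to the intrinsically defined boundary length, a.s.\ and uniformly enough to patch all the level sets into one measurable assignment $r\mapsto\mu_r$. This is in effect a re-derivation of the local-time construction of $Z$ ``from the inside,'' and the estimates behind Proposition~\ref{prop::local_time_cadlag_modification} are precisely what is needed to control it.
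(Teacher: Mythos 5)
Your overall architecture matches the paper's: fix the embedding of the net in $\S^2$ via Proposition~\ref{prop::levynet_three_connected} together with the orientation, recover the boundary length data intrinsically, and then reconstruct $X$ (you route the final step through Theorem~\ref{thm::levynetrecover} via $Z$ plus attachment points, while the paper recovers the jumps of $X$ together with their ordering via the partial local times $Z_t^{T_s^u}$; both reductions are legitimate). The gap is in your third paragraph, which you correctly identify as the substantive step but do not actually carry out. First, the appeal to ``exactly as in the local-time construction recalled after Definition~\ref{def::levycb}'' does not transfer: that construction counts excursions of $X$ over a level (one per component of $\{Y>s\}$ reaching $X$-height $\epsilon$), which are not intrinsically visible objects in the metric net, whereas you count complementary bubbles pinched off in a thin annulus --- a different point process whose normalization must be computed from scratch. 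Second, and more seriously, your notion of bubble ``size'' conflates two inequivalent quantities: the magnitude of the corresponding jump is the boundary length of $\partial O_i$, which is exactly what you are trying to reconstruct and hence not available a priori, while the intrinsically available quantity --- the metric diameter of $\partial O_i$, i.e.\ the depth at which the geodesics from $\partial O_i$ coalesce --- is governed by the extinction time of an $\alpha$-stable CSBP started from that boundary length and so scales like the $(\alpha-1)$ power of the length (Lemma~\ref{lem::csbp_extinction_time}), not linearly in it. So the asserted equivalence is false, the exponent $\beta$ is left genuinely undetermined, and proving that the renormalized count converges to a constant times boundary length (a.s., uniformly over levels and over an intrinsically specified countable family of arcs, which you also do not say how to produce before a length measure exists) requires a real argument: a conditional Poisson law of large numbers given the arc-length process, calibrated by the CSBP extinction-time formula.

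For comparison, the paper closes precisely this step with a cleaner device: on $\partial B(x,R)$ it marks $z_1$ (where the $x$-to-$y$ geodesic crosses) and then successively the first clockwise point $z_j$ whose geodesic to $x$ merges with that of $z_{j-1}$ only at depth $\epsilon$. These points are manifestly determined by the L\'evy net plus orientation, and since the boundary length between two geodesics evolves as an $\alpha$-stable CSBP (Proposition~\ref{prop::levynetgeodesictrajectoriesdetermined}), the spacings are i.i.d.\ exponentials with mean $m_\epsilon=((\alpha-1)\epsilon)^{1/(\alpha-1)}$ by Lemma~\ref{lem::csbp_extinction_time}; hence $N_\epsilon$ is conditionally Poisson with mean $m_\epsilon^{-1}L_r$ and $m_\epsilon N_\epsilon\to L_r$, with no unknown renormalization to identify. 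Running the same count along arcs (equivalently, recovering the $Z_t^{T_s^u}$) yields the arc-length measure you need for the attachment points, after which your appeal to Theorem~\ref{thm::levynetrecover} is fine. So your plan is repairable, but as written the key convergence is asserted rather than proved, and the miscalibration in the definition of ``size'' sits exactly where the extinction-time computation is needed.
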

\begin{proof}
By Proposition~\ref{prop::levynet_three_connected}, we know that the embedding of the (topological space associated with the) L\'evy net into $\S^2$ is a.s.\ determined up to homeomorphism; we assume throughout that we have fixed an orientation so that the embedding is determined up to orientation preserving homeomorphism.  Recall that the jumps of $Z_s$ are in correspondence with those made by $X_t$.  Thus, if we can show that the jumps of $Z$ are determined by the L\'evy net, then we will get that the jumps of $X$ are determined by the L\'evy net.  More generally, if we can show that the processes $Z_t^{T_s^u}$ are determined by the L\'evy net, then we will be able to determine the jumps of $X$ and their ordering.  This will imply the result because $X$ is a.s.\ determined by its jumps and the order in which they are made.  For simplicity, we will just show that $Z_s$ is a.s.\ determined by the L\'evy net.  The proof that $Z_t^{T_s^u}$ is a.s.\ determined follows from the same argument.

Let $x$ (resp.\ $y$) denote the root (resp.\ dual root) of the L\'evy net.  Fix $r > 0$ and condition on $R = d(x,y) -r > 0$.  We let $\partial B(x,R)$ be the boundary of the ball of radius~$R$ centered at~$x$ in the geodesic tree in the L\'evy net.  Fix $\epsilon > 0$.  We then fix points $z_1,\ldots,z_{N_\epsilon} \in \partial B(x,R)$ as follows.  We let~$z_1$ be the unique point on $\partial B(x,R)$ which is visited by the unique geodesic from~$x$ to~$y$.  For $j \geq 2$ we inductively let~$z_j$ be the first clockwise point on~$\partial B(x,R)$ (recall that we have assumed that the L\'evy net has an orientation) such that the geodesic from~$z_j$ to~$x$ merges with the geodesic from~$z_{j-1}$ to~$x$ at distance at least~$\epsilon$.  As the embedding of the L\'evy net into $\S^2$ is a.s.\ determined up to (orientation preserving) homeomorphism, it follows that $z_1,\ldots,z_{N_\epsilon}$ is a.s.\ determined by the L\'evy net.

Conditional on the boundary length $L_r$ of $\partial B(x,R)$, we claim that $N_\epsilon$ is distributed as a Poisson random variable $Z_\epsilon$ with mean $m_\epsilon^{-1} L_r$ where $m_\epsilon = (c \epsilon)^{1/(\alpha-1)}$ and $c > 0$ is a constant.  The desired result will follow upon showing this because then
\begin{align*}
	\E[ m_\epsilon Z_\epsilon \giv L_r] =  L_r \quad\text{and}\quad
		\var[ m_\epsilon Z_\epsilon  \giv L_r]  = m_\epsilon L_r \to 0 \quad\text{as}\quad \epsilon \to 0.
\end{align*}

To compute the conditional distribution of $N_\epsilon$ given $L_r$, it suffices to show that the boundary length of the spacings are given by i.i.d.\ exponential random variables with mean $m_\epsilon$ given $L_r$.  We will establish this by using that $L_r$ evolves as an $\alpha$-stable CSBP as $r$ varies.  Fix $\delta > 0$ and let $(Z_j^\delta)$ be a sequence of i.i.d.\ $\alpha$-stable CSBPs, each starting from~$\delta$.  Then the CSBP property~\eqref{eqn::CSBPproperty} implies that the process $s \mapsto L_{r+s}$ is equal in distribution to $Z_1^\delta + \cdots + Z_n^\delta + \wt{Z}^\delta$ where $n = \lfloor L_r/\delta \rfloor$ and $\wt{Z}^\delta$ is an independent $\alpha$-stable CSBP starting from $L_r - \delta n < \delta$.  We then define indices $(j_k^\delta)$ inductively as follows.  We let $j_1^\delta$ be the first index $j$ such that the amount of time it takes the $\alpha$-stable CSBP $Z_1^\delta + \cdots + Z_j^\delta$ (which starts from $j \delta$) to reach $0$ is at least $\epsilon$.  Assuming that $j_1^\delta,\ldots,j_k^\delta$ have been defined, we take $j_{k+1}^\delta$ to be the first index $j$ such that the amount of time that it takes the $\alpha$-stable CSBP $Z_{j_k^\delta+1}^\delta + \cdots + Z_j^\delta$ (which starts from $\delta(j - (j_k^\delta+1))$) to reach $0$ is at least $\epsilon$.

Note that the random variables
\[ Z_k^\delta = Z_{j_{k-1}^\delta+1}^\delta + \cdots + Z_{j_k^\delta}^\delta\]
are i.i.d.  We claim that the law of $Z_1^\delta$ converges in distribution as $\delta \to 0$ to that of an exponential random variable with mean $m_\epsilon$.  To see this, we fix $u > 0$, let $\wt{u} = \delta \lfloor u/\delta \rfloor$, and let $W$ be an $\alpha$-stable CSBP starting from $\wt{u}$.  Then we have that
\begin{align}
    \p[ Z_1^\delta \geq u]
&= \p[ W_\epsilon = 0]
  = \lim_{\lambda \to \infty} \E[ \exp(-\lambda W_\epsilon)]. \label{eqn::z_1_at_least_u}
\end{align}
As in the proof of Lemma~\ref{lem::csbp_terminal_value}, using the representation of the Laplace transform of an $\alpha$-stable CSBP given in~\eqref{eqn::csbp_def}, \eqref{eqn::csbp_u_form}, the Laplace transform on the right hand side of~\eqref{eqn::z_1_at_least_u} is given, for a constant $c > 0$, by
\[ \exp( - (\lambda^{1-\alpha} + (c \epsilon)^{1/(1-\alpha)} \wt{u}).\]
Therefore the limit on the right hand side of~\eqref{eqn::z_1_at_least_u} is given by $\exp(- m_\epsilon^{-1} \wt{u})$.  This, in turn, converges to $\exp(-m_\epsilon^{-1} u)$ as $\delta \to 0$, which proves the result.
\end{proof}

\section{Tree gluing and the Brownian map}
\label{sec::brownianmap}

\subsection{Gluing trees encoded by Brownian-snake-head trajectory}
\label{subsec::spheresassymetric}

We now briefly review the standard construction of the Brownian map (see e.g.\ \cite[Section~3.4]{legall2014icm}).  Our first task is to identify the measure~$\mustwo$ discussed in Section~\ref{subsec::discreteintuition} with a certain Brownian snake excursion measure. In fact, this is the way~$\mustwo$ is formally constructed and defined.

Let~$\snake$ be the set of all finite paths in~$\R$ beginning at~$0$.  An element of $\snake$ is a continuous map $w \colon [0, \zeta] \to \R$ for some value $\zeta = \zeta(w) \geq 0$ that depends on~$w$.   We refer to~$\snake$ as the {\em snake space} and visualize an element of~$\snake$ as the ($y$-to-$x$ coordinate) graph $\{ (w(y), y): y \in [0,\zeta] \}$.  As illustrated in Figure~\ref{fig::browniansnake}, such a graph may be viewed as a ``snake'' with a body beginning at $(0,0)$ and ending at the ``head,'' which is located at $\bigl(w(\zeta), \zeta \bigr)$.  From this perspective, $\zeta = \zeta(w)$ is the {\em height} of the snake, which is also the {\em vertical head coordinate}, and $w(\zeta)$ is the {\em horizontal head coordinate}.

A distance on $\snake$ is given by
\begin{equation}
\label{eqn::snake_metric}
d(w,w') = |\zeta(w) - \zeta(w')| + \sup_{t \geq 0} | w(t \wedge \zeta(w)) - w'(t \wedge \zeta(w'))|.
\end{equation}

\begin{figure}[ht!]
\begin{center}
\includegraphics [width=5.5in]{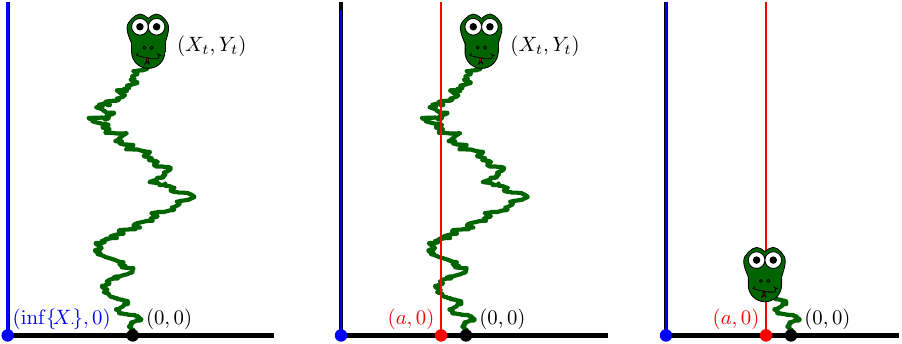}
\caption{\label{fig::browniansnake} {\bf \it Gluing an asymmetric pair of trees.} The doubly marked Brownian map construction is the same as the construction in the left side of Figure~\ref{fig::lamination_treemaking} except that the pair $(X_t, Y_t)$ is produced from a Brownian snake excursion instead of a Brownian excursion.  In this setup $Y_t$ is chosen from the (infinite) Brownian excursion measure and $X_t$ is a Brownian motion indexed by the corresponding CRT.  The process $(X_t, Y_t)$ determines a trajectory in the snake space $\snake$. {\bf Left:} At a given time $t$, the ``snake'' has a body that looks like the graph of a Brownian motion (rotated 90 degrees).  
The blue vertical line represents the leftmost point reached by the process $(X_t, Y_t)$.  It was proved by Duquesne (see \cite[Lemma~4.15]{marckert2006limit}) that there is a single time at which the blue line is hit.  After projection, this corresponds to the Brownian map root.  At all other times, distance from the blue line represents distance from the root in the Brownian map metric. {\bf Middle:} Suppose $ \inf \{X_\cdot \} < a < 0$ and consider the vertical line through $(a,0)$.  This divides the snake space $\snake$ into the subspace $\snake_{>a}$ of snakes not hit by the red line (except at the origin if $a = 0$) and the complementary subspace $\snake_{\leq a} = \snake \setminus \snake_{>a}$ of snakes that {\it are} hit.  {\bf Right:} If a snake is hit by the red line, then it has a unique ``ancestor snake'' whose body lies entirely to the right of the red line and whose head lies on the red line.  A snake lies on the boundary of $\snake_{> a}$ if and only if it has this form.  The distance from a snake in $\snake_{\leq a}$ to $\snake_{>a}$ (in terms of the metric on $\snake$, not the Brownian map metric) is the difference in head height between itself and this ancestor.  This distance evolves as a Brownian motion  (until it first reaches $0$) in the snake space.}
\end{center}
\end{figure}

There is a natural way to create a time-length $T$ excursion into $\snake$ beginning and ending at the zero snake. To do so, let $Y_t$ be a time-length $T$ Brownian excursion into $[0, \infty)$ (starting and ending at zero). Then $Y_t$ encodes a continuum random tree (CRT) $\mathcal T$ \cite{ald1991crt1,ald1991crt2,ald1993crt3}, together with a map $\phi\colon [0,T] \to \mathcal T$ that traces the boundary of $\CT$ in order.  (As we will discuss below, one may also consider a Brownian excursion measure for which the length $T$ is not {\em a priori} determined, and in the most natural way to do this, the Brownian excursion measure is an infinite measure.) Once one is given $Y_t$, one may construct a Brownian process $X_t$ with $X_0 = 0$ and 
\begin{equation}
\label{eqn::brownian_covariance}
\Cov(X_s, X_t) = \inf \left\{Y_r: r \in [s,t] \right\}.
\end{equation}
An application of the Kolmogorov-Centsov theorem implies that $X$ has a H\"older continuous modification; see, e.g.\ \cite[Section~3.4]{legall2014icm}.  The RHS of~\eqref{eqn::brownian_covariance} describes the length of the intersection of the two tree branches that begin at~$\phi(0)$ and end at~$\phi(s)$ or~$\phi(t)$.  In particular, if $\phi(s) = \phi(t)$ then $X_s = X_t$.  Therefore $X$ induces a process $Z$ defined on $\CT$ which satisfies $X_t = Z_{\phi(t)}$.

Given the $(X_t, Y_t)$ process, it is easy to draw the body of the snake in Figure~\ref{fig::browniansnake} for any fixed time $t \in [0,T]$.  To do so, for each value $b < Y_t$, one plots the point $(X_s, b)$ where~$s$ is the last time before~$t$ at which the~$Y$ process reached height~$b$.  Note also that if one takes $s'$ to be the first time after $t$ when the $Y$ process reaches $b$, then we must have $X_{s'} = X_s$. Intuitively speaking, as $Y_t$ goes down, the snake head retraces the snake body; as $Y_t$ goes up, new randomness determines the left-right fluctuations. As discussed in the captions of Figure~\ref{fig::browniansnake}, this evolution can be understood as a diffusion process on $\snake$.

We now consider a natural infinite measure on the space of excursions into $\snake$. It is the measure described informally in the caption to Figure~\ref{fig::browniansnake}.  To construct this, first we define $\excursion$ to be the natural Brownian excursion measure (see \cite[Chapter~XII, Section 4]{ry99martingales} for more detail on the construction of $\excursion$). 
Each such excursion comes with a terminal time $T$ such that $Y_0 = Y_T = 0$, $Y_t > 0$ for $t \in (0,T)$, and $Y_t = 0$ for all $t \geq T$.  We recall that the excursion measure is an infinite measure that can be constructed as follows.  Define $\excursion_\epsilon$ to be $(2\epsilon)^{-1}$ times the probability measure on one-dimensional Brownian paths started at $\epsilon$, stopped the first time they hit zero.  Note that this measure assigns mass 1/2 to the set of paths that reach $1$ before hitting zero.  The measure $\excursion$ is obtained by taking the weak the limit of the $\excursion_\epsilon$ measures as $\epsilon \to 0$ (using the topology of uniform convergence of paths, say).  Note that for each $a>0$ the $\excursion$ measure of the set of paths that reach level $a$ is exactly $(2a)^{-1}$.  Moreover, if one normalizes $\excursion$ to make it a probability on this set of paths, then one finds that the law of the path after the first time it hits $a$ is simply that of an ordinary Brownian motion stopped when it hits zero.  Now that we have defined $\excursion$, we define $\snakeexcursionmeasure$ to be a measure on excursions into $\snake$ such that the induced measure on $Y_t$ trajectories is $\excursion$, and given the $Y_t$ trajectory, the conditional law of $X_t$ is that of the Brownian process indexed by the CRT encoded by $Y_t$ (i.e., with covariance as in~\eqref{eqn::brownian_covariance}).

As we will explain in more detail just below, given a sample from $\snakeexcursionmeasure$, the tree encoded by $X_t$ is the tree of geodesics drawn from all points to a fixed {\bf root}, which is the value of $\phi$ at the point $t$ that minimizes $X_t$.  The tree $\CT$ described by $Y_t$ (the dual tree) has the law of a CRT, and $Y_t$ describes the distance in $\CT$ from the dual root (which corresponds to time $0$ or equivalently time $T$, which is the time when $Y_t$ is minimal).

Note that for any time $t$, we can define the {\em snake} to be the graph of the function from $y\in [0,Y_t]$ to $x$ that sends a point $y$ to the value of the Brownian process at the point on $\CT$ that is $y$ units along the branch in $\mathcal T$ from $\phi(0)$ to $\phi(t)$.

As in Figure~\ref{fig::browniansnake}, for each $a < 0$ we let $\snake_{>a}$ be the subspace of $\snake$ which consists of those snakes $w$ such that $w(t) > a$ for all $t \in [0,\zeta]$.  That is, $w \in \snake_{>a}$ if and only if its body lies to the right of the vertical line through $(a,0)$.  We also let $\snake_{\leq a} = \snake \setminus \snake_{>a}$.

We next proceed to remind the reader how to associate an $(X,Y)$ pair with a metric measure space structure. This will allow us to think of $\snakeexcursionmeasure$ as a measure on $\mmspace$.  Roughly speaking, the procedure described in the left side of Figure~\ref{fig::lamination_treemaking} already tells us how to obtain a sphere from the pair $(X,Y)$. The points on the sphere are the equivalence classes from the left side of Figure~\ref{fig::lamination_treemaking}. The tree described by $X$ alone (the quotient of the graph of $X$ w.r.t.\ the equivalence given by the chords under the graph) can be understood as a geodesic tree (which comes with a metric space structure), and we may construct the overall metric space as a quotient of this metric space (as defined in Section~\ref{subsec::metricsphereobservations}) w.r.t.\ the extra equivalence relations induced by $Y$.

An equivalent way to define the Brownian map is to first consider the CRT $\CT$ described by $Y$, and then define a metric and a quotient using $X$ as the second step. This is the approach usually used in the Brownian map literature (see e.g.\ \cite[Section~3.5]{legall2014icm}) and we give a quick review of that construction here. Consider the function $d^\circ$ on $[0,T]$ defined by:
\begin{equation}
\label{eqn::circ_distance_time}
d^\circ(s,t) = X_s + X_t - 2\max\left( \min_{r \in [s,t]} X_r, \min_{r \in [t,s]} X_r \right).
\end{equation}
Here, we assume without loss of generality that $s < t$ and define $[t,s] = [0,s] \cup [t,T]$.  For $a,b \in \CT$, we then set
\begin{equation}
\label{eqn::circe_distance_tree}
d_\CT^\circ(a,b) = \min\{ d^\circ(s,t) : \phi(s) = a,\ \phi(t) = b\}
\end{equation}
where $\phi \colon [0,T] \to \CT$ is the natural projection map.  Finally, for $a,b \in \CT$, we set
\begin{equation}
\label{eqn::bm_distance_inf}
d(a,b) = \inf\left\{ \sum_{j=1}^k d_\CT^\circ(a_{j-1},a_j) \right\}
\end{equation}
where the infimum is over all $k \in \N$ and $a_0=a,a_1,\ldots,a_k=b$ in $\CT$.  We get a metric space structure by quotienting by the equivalence relation $\cong$ defined by $a \cong b$ if and only if $d(a,b) = 0$ and we get a measure on the quotient space by taking the projection of Lebesgue measure on $[0,T]$.  
As mentioned in the introduction, it was shown by Le Gall and Paulin \cite{le2008scaling} (see also \cite{MR2399286}) that the resulting metric space is a.s.\ homeomorphic to $\S^2$ and that two times $a$ and $b$ are identified if and only if vertical red lines in the left side of Figure~\ref{fig::lamination_treemaking} (where $X_t$ and $Y_t$ are Brownian snake coordinates) belong to the same equivalence class as described in the left side of Figure~\ref{fig::lamination_treemaking}. Thus the topological quotient described in the left side of Figure~\ref{fig::lamination_treemaking} is in natural bijection with the metric space quotient described above.

Given a sample from $\snakeexcursionmeasure$, the corresponding sphere comes with two special points corresponding to a snake whose head is at the leftmost possible value (the root), and the origin snake (the dual root). Indeed, if we let $S$ denote the set of points on the sphere, $\nu$ the measure, $x$ the root, and $y$ the dual root, then we obtain a doubly marked metric measure space $(S,d, \nu,x,y)$ of the sort described in Section~\ref{subsec::mmsigma}.  The dual root~$y$ should be thought of as the target point of a metric exploration starting from the root~$x$.  In what follows, we will always use~$x$ to the denote the root (center point from which a metric ball will grow) and~$y$ to denote the dual root or target point of the metric exploration.

In fact, we claim that $\snakeexcursionmeasure$ induces a measure on $(\gmsspace^{2,O},\mmsigma^{2,O})$.  This measure is precisely the doubly marked grand canonical ensemble of Brownian maps: i.e., it corresponds to the measure~$\mustwo$ discussed in Section~\ref{subsec::theoremstatement}. There is a bit of an exercise involved in showing that the map from Brownian snake instances to $(\mmspace^k, \mmsigma^k)$ is measurable w.r.t.\ the appropriate $\sigma$-algebra on the space of Brownian snakes, so that~$\mustwo$ is a well-defined measure $(\gmsspace^{2,O},\mmsigma^{2,O})$. In particular, one has to check that the distance-function integrals described in Section~\ref{subsec::mmsigma} (the ones used to define the Gromov-weak topology) are in fact measurable functions of the Brownian snake; one can do this by first checking that this is true when the metric is replaced by the function $d^\circ$ discussed above, and then extending this to the approximations of $d$ in which the distance between two points is the infimum of the length taken over paths made up of finitely many segments of the geodesic tree described by the process $X$. This is a straightforward exercise, and we will not include details here.

Given a snake excursion $s$ chosen from $\snakeexcursionmeasure$, we define the snake excursion $\wh s$ so that its associated surface is the surface associated to $s$ {\em rescaled} to have total area $1$.  In other words, $\wh s$ is the snake whose corresponding head process is
\[ (\wh X_t, \wh Y_t) = (\zeta^{-1/4} X_{ \zeta t}, \zeta^{-1/2} Y_{\zeta t}).\]

Here we have scaled $t$ by a factor of $\zeta$, we have scaled $Y_t$ by a factor of $\zeta^{-1/2}$, and we have scaled $X_t$ by a factor of $\zeta^{-1/4}$.  An excursion $s$ can be represented as the pair $(\wh s, \zeta(s))$ where $\zeta(s)$ represents the length of the excursion --- or equivalently, the area of the corresponding surface.  Since a sample from the Brownian excursion measure $\excursion$ is an excursion whose length has law $c \zeta^{-3/2} d\zeta$ \cite[Chapter~XII, Section 4]{ry99martingales}, where $d\zeta$ is Lebesgue measure on $\R_+$ and $c=1/\sqrt{8\pi}$, we have the following:

\begin{proposition}
\label{prop::bm_measures}
If we interpret $\snakeexcursionmeasure$ as a measure on pairs $(\wh s, \zeta)$, then $\snakeexcursionmeasure$ can be written as $\wh \snakeexcursionmeasure \otimes c t^{-3/2} dt$, where $dt$ represents Lebesgue measure on $\R_+$, $c=1/\sqrt{8\pi}$, and $\wh \snakeexcursionmeasure$ is a probability measure on the space of excursions of unit length.
\end{proposition}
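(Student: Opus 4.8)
The plan is to disintegrate $\snakeexcursionmeasure$ according to the length $\zeta$ of the underlying excursion and to verify that the conditional law, after the canonical rescaling $(\wh X_t,\wh Y_t)=(\zeta^{-1/4}X_{\zeta t},\zeta^{-1/2}Y_{\zeta t})$, does not depend on $\zeta$. Recall that by construction $\snakeexcursionmeasure$ is the semidirect product of the It\^o excursion measure $\excursion$ on the height process $Y$ with the conditional law of $X$ given $Y$, the latter being the centered Gaussian process with covariance given by~\eqref{eqn::brownian_covariance}. The two inputs are: (i) the standard scaling structure of $\excursion$, and (ii) Brownian scaling for the labels process $X$.

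First I would record the scaling structure of $\excursion$. Writing an excursion as $(Y_t)_{t\in[0,T]}$ with $T=\zeta$, Brownian scaling invariance together with the normalization of $\excursion$ fixed earlier in this section gives the disintegration $\excursion=\int_0^\infty \excursion^{(\zeta)}\,\zeta^{-3/2}\,d\zeta$, where $\excursion^{(\zeta)}$ is the probability law of the Brownian excursion conditioned to have length $\zeta$, and $\excursion^{(\zeta)}$ is the pushforward of the unit-length law $\excursion^{(1)}$ under the deterministic map $e\mapsto\bigl(\zeta^{1/2}e(t/\zeta)\bigr)_{t\in[0,\zeta]}$; this is the content of \cite[Chapter~XII]{ry99martingales}. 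In particular the length has law $\zeta^{-3/2}\,d\zeta$ and, given $\zeta$, the rescaled height process $\wh Y_t:=\zeta^{-1/2}Y_{\zeta t}$, $t\in[0,1]$, has the fixed law $\excursion^{(1)}$.

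Next I would check the conditional law of $X$ given $Y$ is consistent with the rescaling $\wh X_t=\zeta^{-1/4}X_{\zeta t}$. If $Y_u=\zeta^{1/2}\wh Y_{u/\zeta}$ then for $0\le s\le t\le\zeta$ one has $\inf_{r\in[s,t]}Y_r=\zeta^{1/2}\inf_{r\in[s/\zeta,t/\zeta]}\wh Y_r$, so by~\eqref{eqn::brownian_covariance}
\[
\Cov(X_s,X_t)=\zeta^{1/2}\,\Cov(\wh X_{s/\zeta},\wh X_{t/\zeta})=\Cov\bigl(\zeta^{1/4}\wh X_{s/\zeta},\,\zeta^{1/4}\wh X_{t/\zeta}\bigr),
\]
where $\wh X$ is the Gaussian process indexed by the CRT encoded by $\wh Y$. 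Both sides describe centered Gaussian processes, so equality of covariances yields equality in law; equivalently, given $\wh Y$ and $\zeta$, the process $t\mapsto\zeta^{1/4}\wh X_{t/\zeta}$ has exactly the conditional law of $X$ given $Y$. Combining with the previous paragraph, sampling from $\snakeexcursionmeasure$ is the same as: sample $\zeta$ from $\zeta^{-3/2}\,d\zeta$; independently sample $\wh Y$ from $\excursion^{(1)}$ and then $\wh X$ from the Gaussian process with covariance $\inf\wh Y$; and set $(X_t,Y_t)=(\zeta^{1/4}\wh X_{t/\zeta},\zeta^{1/2}\wh Y_{t/\zeta})$. Writing $\wh\snakeexcursionmeasure$ for the joint probability law of $(\wh X,\wh Y)$ --- equivalently, for the law of the rescaled snake excursion $\wh s$ --- this says precisely that the pushforward of $\snakeexcursionmeasure$ under $s\mapsto(\wh s,\zeta(s))$ is $\wh\snakeexcursionmeasure\otimes t^{-3/2}\,dt$. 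It remains only to note that the area of the surface associated to $s$ is the Lebesgue measure of $[0,T]=[0,\zeta]$, namely $\zeta$, so that the surface built from $\wh s$ has unit area; and that the maps $s\mapsto\wh s$, $s\mapsto\zeta(s)$ and their joint inverse are Borel for the metric~\eqref{eqn::snake_metric}, which is routine.

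The argument contains no serious obstacle; the only point requiring care is bookkeeping the scaling exponents --- the height process $Y$ scales like (length)$^{1/2}$ while the label process $X$ scales like (length)$^{1/4}$, in accordance with the fact that scaling Brownian-map distances by $C$ scales areas by $C^4$ and boundary lengths by $C^2$ --- together with invoking the standard fact (from It\^o excursion theory, \cite[Chapter~XII]{ry99martingales}) that $\excursion$ disintegrates over length with density $\zeta^{-3/2}$ and scale-invariant conditional law.
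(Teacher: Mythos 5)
Your proposal is correct and follows essentially the same route as the paper: the paper derives the proposition directly from the fact (cited to \cite[Chapter~XII]{ry99martingales}) that the excursion length under $\excursion$ has law $\zeta^{-3/2}\,d\zeta$ with scale-invariant conditional law, combined with the rescaling $(\wh X_t,\wh Y_t)=(\zeta^{-1/4}X_{\zeta t},\zeta^{-1/2}Y_{\zeta t})$ defined just before the statement. Your explicit check that the conditional Gaussian law of $X$ given $Y$ (covariance~\eqref{eqn::brownian_covariance}) is compatible with the $\zeta^{1/4}$ scaling is exactly the bookkeeping the paper leaves implicit.
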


\begin{figure}[ht!]
\begin{center}
\includegraphics [width=3.5in]{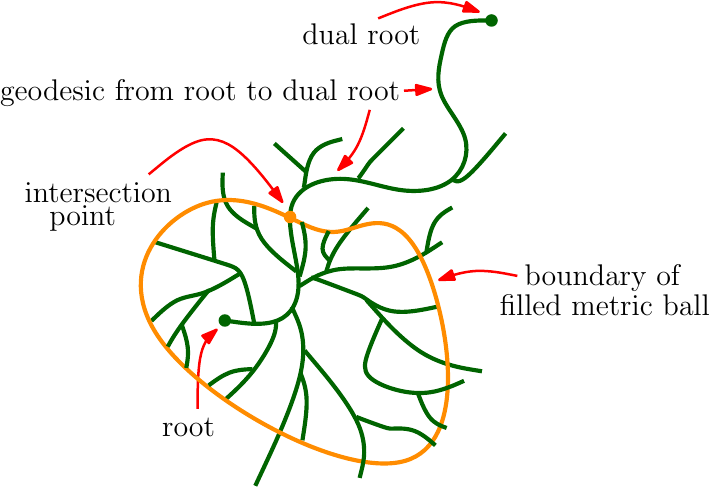}
\caption{\label{fig::geodesictree} The snake trajectory corresponds to a path that traces the boundary of a (space-filling) tree of geodesics in the doubly marked Brownian map $(S,d,\nu,x,y)$.  The figure illustrates several branches of the geodesic tree (the tree itself is space-filling) and along with the outer boundary (as viewed from the dual root) of a radius-$r$ metric ball centered at the root $x$ (i.e., $\partial \fb{x}{r}$). From a generic point $z \in S$, there is a unique path $\gamma$ in the dual tree back to the dual root $y$.  The distances $d(\gamma(t),x)$ vary in $t$; this variation encodes the shape of the body of the Brownian snake $(X,Y)$ associated with $(S,d,\nu,x,y)$.  The total quadratic variation of $t \mapsto d(\gamma(t),x)$ encodes the height of the snake's head (i.e., the value of $Y_s$ if $z$ corresponds to $(X_s,Y_s)$).  During the snake trajectory (as the snake itself changes) the {\em first} and {\em last} times that the horizontal coordinate $X$ of the snake's head reaches $a = \inf\{X_t \} + r$ correspond to the intersection point (shown in orange) of the (a.s.\ unique) dual-root-to-root geodesic and $\partial \fb{x}{r}$.  Intuitively, as one traces the boundary of the space-filling geodesic tree (beginning and ending at the dual root $y$), the orange dot is the first and last point that the path visits within the closed orange disk.}
\end{center}
\end{figure}

\subsection{Brownian maps, disks, and L\'evy nets}
\label{subsec::mapsdisksnets}

The purpose of this subsection is to prove that the unembedded metric net of the doubly marked Brownian map has the law of a $3/2$-stable L\'evy net.  We will refer to the (countably many) components of the complement of the metric net as ``bubbles'' and will describe a one-to-one correspondence between these bubbles and the ``holes'' in the corresponding L\'evy net.  We will also introduce here the measure $\mudonel$ on marked random disks, which give the law of the complement of a filled metric ball in the Brownian map.  (We will later introduce the measure $\mudl$, which gives the law of the complementary components of a metric exploration in the Brownian map, in a more general framework.)  The two jump processes in Figure~\ref{fig::levycb} correspond to different orders in which one might explore these holes.   The first explores holes in a ``depth-first'' order --- i.e., the order in which they are encountered by a path that traces the boundary of the geodesic tree; the second explores holes in a ``breadth-first'' order --- i.e., in order of their distance from a root vertex.  We will see what these two orderings look like within the context of the Brownian map, as constructed from a Brownian snake excursion.

In order to begin understanding the unembedded metric net of the Brownian map, we need a way to make sense of the boundary length measure on a metric ball within the Brownian map.  Observe that for any real number $a < 0$, the snake diffusion process has the property that if the snake lies in~$\snake_{\leq a}$ at time~$t$, then its distance (in the snake space metric as defined in~\eqref{eqn::snake_metric}) from the boundary of $\snake_{>a}$ is given by $Y_t - Y_s$, where $s$ is supremum of the set of times before $t$ at which the snake was in~$\snake_{>a}$; see Figure~\ref{fig::browniansnake} for an illustration.  We in particular emphasize that the distance at time $t$ to the boundary of $\snake_{>a}$ does not depend on $X_t$.  This distance clearly evolves as a Brownian motion until the next time it reaches zero.   Let us define $i_a(t)$ to be the total time before $t$ that the snake process spends inside $\snake_{>a}$, and $o_a(t) = t - i_a(t)$ the total amount of time before $t$ that the snake process spends in~$\snake_{\leq a}$.

We claim that when we parameterize time according to $o_a$ time, i.e.\ by the right-continuous inverse $o_a^{-1}(t) = \inf\{r \geq 0 : o_a(r) > t\}$ of $o_a$, this process is a non-negative, reflected Brownian motion, and hence has a well-defined notion of local time~$\ell_a$ for any given value of $a$ (see \cite[Chapter~VI]{ry99martingales} for more on the construction of Brownian local time).  To see this, it suffices to show that the process $Y_{o_a^{-1}(t)}$ is non-negative, evolves as a Brownian motion in the intervals of time in which it is positive, and is instantaneously reflecting at $0$.  The first two properties are true by the construction as we have explained above, which leaves us to show that $Y_{o_a^{-1}(t)}$ is instantaneously reflecting at $0$.  To prove this, it suffices to show that the Lebesgue measure of the set of times $t$ that $(X_t,Y_t)$ is in the boundary of $\snake_{> a}$ is a.s.\ equal to $0$ which in turn follows from the stronger statement that the Lebesgue measure of the set of times $t$ that $X_t = a$ is a.s.\ equal to $0$.  Fix $\epsilon > 0$ and let $[\tau_\epsilon,\sigma_\epsilon]$ be the interval of time corresponding to the longest excursion that $Y$ makes above $\epsilon$ (breaking ties by taking the one which happens first).  Given $[\tau_\epsilon,\sigma_\epsilon]$ and $X_{\tau_\epsilon} = X_{\sigma_\epsilon}$, we have that $(X_t,Y_t)$ in $[\tau_\epsilon,\sigma_\epsilon]$ is a Brownian snake starting from $X_{\tau_\epsilon} = X_{\sigma_\epsilon}$.  For a given value of $X_{\tau_\epsilon} = X_{\sigma_\epsilon}$, there are at most countably many values of $b$ so that the set of times $t \in [\tau_\epsilon,\sigma_\epsilon]$ such that $X_t = b$ has positive Lebesgue measure and this set depends on $X_{\tau_\epsilon} = X_{\sigma_\epsilon}$ by translation as $X_{\tau_\epsilon} = X_{\sigma_\epsilon}$ gives the initial value of the tree-indexed Brownian motion.  Thus since $X_{\tau_\epsilon} = X_{\sigma_\epsilon}$ has a density with respect to Lebesgue measure, it follows that the Lebesgue measure of the set of times $t \in [\tau_\epsilon,\sigma_\epsilon]$ so that $X_t = a$ is a.s.\ equal to $0$.  This completes the proof of the claim since $\epsilon > 0$ was arbitrary.

We recall that the excursions that a reflected Brownian motion makes from $0$ can be described by a Poisson point process indexed by local time (see, e.g., \cite[Chapter~VI]{ry99martingales}).  We also note that in each such excursion made by $Y$, the initial (and terminal) value of $X$ is given by $a$.

We next claim that a sample from $\snakeexcursionmeasure$ may be obtained in two steps:
\begin{enumerate}
\item First sample the behavior of the snake restricted to $\snake_{>a}$, parameterized according to $i_a$ time, i.e.\ by the right-continuous inverse $i_a^{-1}(t) = \inf\{r \geq 0 : i_a(r) > t\}$ of $i_a$.  That is, we sample the process $(X_{i_a^{-1}(t)}, Y_{i_a^{-1}(t)})$ from its marginal law.

We claim that the process $(X_{i_a^{-1}(t)},Y_{i_a^{-1}(t)})$ determines the local time $\ell_a$.  To see this, let $Y^1_t$ be the difference between $Y_t$ and the height of the ancestor snake head at time $t$ (as in Figure~\ref{fig::browniansnake}), and define $Y^2_t = Y_t - Y^1_t$ so that $Y_t = Y^1_t + Y^2_t$.  As explained just above, we know that $Y_t^1$ evolves as a reflected Brownian motion when we parameterize by $o_a$ time.  Thus it follows that $Y^1_t - \ell_a(t)$ is a continuous martingale (see, e.g., the It\^o-Tanaka formula) when parameterized by $o_a$ time, hence it is a continuous martingale itself.  Consequently, $Y^2_t + \ell_a(t)$ is a continuous martingale.  Moreover, $Y_t^2 + \ell_a(t)$ is a continuous martingale when parameterized by $i_a$ time and thus $Y_{i_a^{-1}(t)}^2$ is a continuous supermartingale.  Hence, one can use the Doob-Meyer decomposition to recover this local time from the process $Y_{i_a^{-1}(t)}^2$.
\item Then, conditioned on the total amount of local time $\ell_a(T)$ sample the set of excursions into $\snake_{\leq a}$ using a Poisson point process on the product of Lebesgue measure on $[0,\ell_a(T)]$ (an interval which is now known, even though $T$ is not itself yet determined) and~$\snakeexcursionmeasure$.  Note that each excursion is translated so that it is ``rooted'' at some point along the vertical line through $(a,0)$, instead of at $(0,0)$.
\end{enumerate}

In what follows, we will write $\ell_a(T)$ for the total amount of local time and condition on its value as just above.  We emphasize that $T$ is a random variable but when we condition on $\ell_a(T)$ we are not conditioning on the value of $T$ unless we explicitly say otherwise.  We extend $(\ell_a(T) : a < 0)$ to a process by taking it so that $s \mapsto \ell_{-s}(T)$ for $s > 0$ is {\cadlag}.

For the Brownian map instance $(S,d,\nu,x,y)$ encoded by $(X,Y)$ and $a < 0$, we define the {\bf boundary length} of $\partial \fb{x}{d(x,y)+a}$ to be the value of $\ell_a(T)$.

We note that the process $\ell_a(t)$ described and constructed just above is a special case of the so-called \emph{exit measure} associated with the Brownian snake.  See, e.g., \cite{legallspatialbranchingbook} for more on exit measures.

From this discussion, the following is easy to derive the following proposition.

\begin{proposition}
\label{prop::brownianmapCSBP}
Suppose that $(X,Y)$ is sampled from $\snakeexcursionmeasure$ and $\ell_a(T)$ is as above.  Then $(\ell_a(T) : a< 0)$ follows the excursion measure of a $3/2$-stable CSBP.
\end{proposition}
\begin{proof}
The proof is nearly the same as the proof of Proposition~\ref{prop::Zisalphastable}.  One has only to verify that the process satisfies the hypotheses Proposition~\ref{prop::strongstablecsbp}.  Again, the scaling factor is obvious (one may rescale time by a factor of $C^2$, the $Y_t$ process values by a factor of $C$ and the $X_t$ process values by a factor of $C^{1/2}$); and the value of the $\ell_a(T)$ process then scales by $C$ and its time to completion scales by $C^{1/2}$, suggesting that the scaling hypothesis of Proposition~\ref{prop::Zisalphastable} is satisfied with $\alpha -1 = 1/2$, so that $\alpha = 3/2$.  The CSBP property~\eqref{eqn::CSBPproperty} is also immediate from the construction.
\end{proof}

\begin{figure}[ht!]
\begin{center}
\includegraphics [width=5.5in]{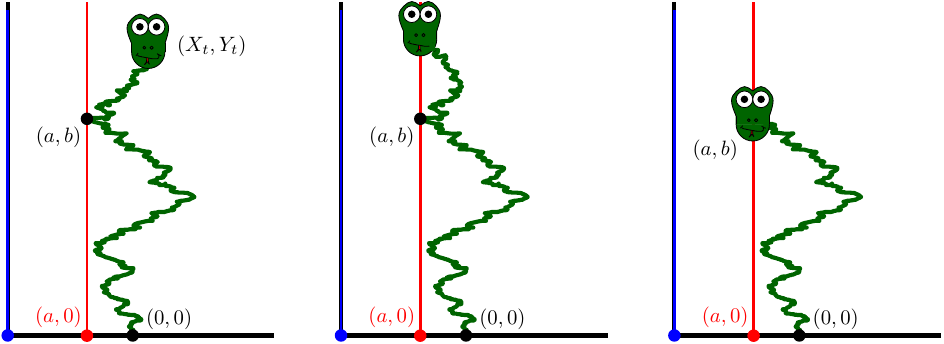}
\caption {\label{fig::browniansnakedisc} {\bf Left:} the snake shown in Figure~\ref{fig::browniansnake} with $a$ shifted to the smallest value for which the red line intersects the snake, and the point of this first intersection marked as $(a,b)$.  For almost all times $t$, the snake will have a unique minimum point of this kind; a.s., only countably many pairs $(a,b)$ arise as minima for any snake in the trajectory.  We let $\mathcal B_{(a,b)}$ denote the collection of all snakes in the trajectory with leftmost point at $(a,b)$, and the head {\em not} at $(a,b)$ itself; e.g., the left snake shown belongs to $\mathcal B_{(a,b)}$.  The set $\mathcal B_{(a,b)}$ represents a ``bubble'' of the corresponding doubly marked Brownian map i.e., an open component of the complement of the metric net between the root and the dual root.  {\bf Middle:} A snake on the bubble boundary $\partial \mathcal B_{(a,b)}$.  {\bf Right:} The ``bubble root'' of $\mathcal B_{(a,b)}$.  (Note: not every snake whose head lies left of its body is a bubble root; there are a.s.\ only countably many such points in the trajectory, one per bubble.)}
\end{center}
\end{figure}

\begin{proposition}
\label{prop::jumpsoneone}
Suppose that $(X,Y)$ is sampled from $\snakeexcursionmeasure$ and $\ell_a(T)$ is as above. The jumps in $\ell_a(T)$ are in one-to-one correspondence with the bubbles of the metric net from the root to the dual root of the Brownian map.  If one keeps track of the location along the boundary (using boundary length) at which each bubble root (see Figure~\ref{fig::browniansnakedisc}) occurs together with the total boundary length process, one obtains an object with the law of the process $Z_s$ as in Definition~\ref{def::levycb} together with the attachment points of Definition~\ref{def::levy_attachment_points} (as shown in Figure~\ref{fig::levycb}).  In particular, conditioned on the process $\ell_a(T)$, the attachment points are independent random variables with law associated with a jump occurring for a given value of $a$ is that of a uniform random variable in $[0,\ell_{a-}(T)]$.
\end{proposition}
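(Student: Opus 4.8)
The plan is to build on the two-step description of the snake measure $\snakeexcursionmeasure$ given just above the statement, which decomposes a snake excursion into (i) its restriction to $\snake_{>a}$ parameterized by $i_a(t)$ together with the induced local time $\ell_a(t)$, and (ii) a Poisson point process of excursions into $\snake_{\le a}$, attached along the vertical line through $(a,0)$ at locations given by their local time coordinate. First I would argue that the bubbles of the metric net from the root $x$ to the dual root $y$ are exactly the surfaces carried by the excursions into $\snake_{\le a}$: fixing $a$ with $\inf X < a < 0$ and recalling (as in Figure~\ref{fig::geodesictree}) that $\fb{x}{r}$ corresponds to $\snake_{\le a}$ with $a = \inf\{X_t\} + r$, an excursion of the snake into $\snake_{\le a}$ is precisely a maximal piece of the surface cut off from $y$ by $\ol{B(x,r)}$, i.e.\ a connected component of $S\setminus\fb{x}{r}$ that is ``born'' at radius exactly $r$ and does not survive to any smaller $a$. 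Letting $a$ range over all values, and noting each bubble is born at a unique radius (since $X_t$ attains its local minima at distinct values, a fact one extracts from the Brownian-snake construction analogously to Proposition~\ref{prop::noisolatedmax}), gives the claimed one-to-one correspondence between bubbles and the jumps of $a\mapsto \ell_a(T)$: a jump in $\ell_a(T)$ at level $a$ is exactly the local-time length of the newly-created boundary segment, which is the boundary length of the bubble born there.

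Next I would identify the resulting decorated process with $(Z_s, \text{attachment points})$ of Definition~\ref{def::levycb} and Definition~\ref{def::levy_attachment_points}. By Proposition~\ref{prop::brownianmapCSBP}, the total boundary length process $\ell_a(T)$, reparameterized so that $s$ increases as $a$ decreases, is a $3/2$-stable CSBP; by Proposition~\ref{prop::Zisalphastable}, so is $Z_s$ for $\alpha = 3/2$. For the decorations: a bubble born at level $a$ must be attached somewhere along the current ball boundary $\partial\fb{x}{r}$, and its location relative to the distinguished point $x_s$ (where the leftmost geodesic $\gamma$ from $x$ to $y$ meets $\partial\fb{x}{r}$) is a well-defined boundary-length coordinate in $[0,\ell_{a-}(T)]$. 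I would show this coordinate is conditionally uniform and independent across bubbles given $\ell_\cdot(T)$ by exactly the argument of Lemma~\ref{lem::bubble_locations_independent}: the CSBP property lets one split $\ell_{a-}(T)$ into $n$ equal independent sub-CSBPs, each of which is equally likely to be the one producing a jump in a small window, so in the $n\to\infty$ limit the attachment location is uniform; independence across distinct birth levels follows from the independence of the excursions in the Poisson point process (step (ii) of the decomposition). This matches the description of the attachment points in Figure~\ref{fig::levycb} and Lemma~\ref{lem::bubble_locations_independent}.

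Finally, to get equality of the \emph{full} laws (not just the marginals of boundary-length-process-plus-attachment-points), I would invoke Theorem~\ref{thm::levynetrecover}: that theorem says the $\sigma$-algebra generated by $(Z_s, \text{attachment points})$ is the same as the one generated by $X$, i.e.\ the decorated CSBP determines the whole L\'evy net. Since we have matched the joint law of $(\ell_\cdot(T), \text{bubble attachment locations})$ with that of $(Z_s, \text{attachment points})$, and since (by the correspondence between $\snake_{\le a}$-excursions and bubbles) the metric net of the Brownian map is the deterministic image of $(\ell_\cdot(T), \text{attachment locations})$ under the same gluing recipe, it follows that the metric net has the law of the $3/2$-stable L\'evy net. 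I expect the main obstacle to be the first step: carefully justifying the topological/metric identification of $\snake_{\le a}$-excursions with the connected components of $S\setminus\fb{x}{r}$, and in particular that the ``boundary length'' coming from the reflected-Brownian-motion local time $\ell_a$ really does agree with the intrinsic boundary length measure on $\partial\fb{x}{r}$ and that each bubble is born at a unique, isolated radius. This requires combining the Le Gall--Paulin identification of the Brownian map quotient with the snake diffusion picture of Figure~\ref{fig::browniansnake} and a continuity argument for $\ell_a$, along the lines of Proposition~\ref{prop::local_time_cadlag_modification}, but now for the Brownian-snake exit measure rather than the L\'evy height process.
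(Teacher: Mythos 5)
Your proposal is correct and follows essentially the same route as the paper: the paper's proof simply conditions on the snake's behavior in $\snake_{>a}$ and uses the Poisson structure of the excursions into $\snake_{\leq a}$ (taking limits along dyadic values of $a$ approaching a bubble root time) to conclude that each attachment point is uniform on $[0,\ell_{a-}(T)]$ and independent of everything else, which is precisely your combination of the two-step decomposition with the Lemma~\ref{lem::bubble_locations_independent}-style argument, together with Proposition~\ref{prop::brownianmapCSBP} for the CSBP law; also note that the boundary length on $\partial \fb{x}{r}$ is \emph{defined} via the exit-measure local time $\ell_a$, so no separate matching with an intrinsic boundary length is required. Your closing appeal to Theorem~\ref{thm::levynetrecover} to upgrade to the law of the full metric net is not needed for this statement (that upgrade is the content of Proposition~\ref{prop::metric_net_law}).
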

\begin{proof}
Recall the two step sampling procedure from the measure $\snakeexcursionmeasure$ described above.  Namely, given the process $(X_{i_a^{-1}(t)},Y_{i_a^{-1}(t)})$ (i.e., the Brownian snake growth within the set $\snake_{>a}$), the conditional law of the process in $\snake_{\leq a}$ is given by a Poisson point process $\Lambda = \{ (u_i,(X^i,Y^i))\}$ with intensity measure given by the product of Lebesgue measure on $[0,\ell_a(T)]$ and $\snakeexcursionmeasure$.  The $u_i$ coordinate gives the location of where the excursion is rooted on the ball boundary, as measured relative to the place on the ball boundary visited by the unique geodesic connecting the root and dual root.  This in particular implies that the following is true.  For each $a < 0$, the law of the snake process is invariant under the operation of replacing the Poisson point process of excursions $\Lambda$ that it makes into $\snake_{\leq a}$ with the process $\{ (u_i + U,(X^i,Y^i))\}$ where $U$ is uniform in $[0,\ell_a(T)]$ independently of everything else and we consider the $u_i + U$ modulo $\ell_a(T)$.  This operation has a simple geometric interpretation.  Namely, it corresponds to the operation of cutting out the filled metric ball of radius $a-\inf\{X\}$ centered at the root of the geodesic tree, then ``rotating it'' by $U$ units of boundary length, and then gluing back together with its complement according to boundary length.  (Recall also the discussion just after Proposition~\ref{prop::recover_sphere}.)

We will now extend the above observations to the setting of stopping times.  More precisely, for each $r > 0$ we let $\CF_r$ be the $\sigma$-algebra generated by the process $t \mapsto (X_{i_{-r}^{-1}(t)},Y_{i_{-r}^{-1}(t)})$.  Then $\CF_r$ is non-decreasing in $r$.  Let $\tau$ be a stopping time for $\CF_r$.  For each~$n$, let~$\tau_n$ be the smallest element of $2^{-n} \Z$ which is at least as large as~$\tau$.  Then $\tau_n$ decreases to~$\tau$ as $n \to \infty$.  By the two step construction of $\Pi$ described above, the conditional law of the snake process given $\CF_{\tau_n}$ within the set $\snake_{\leq -\tau_n}$ is again given by a Poisson point process with intensity measure given by the product of Lebesgue measure on $\ell_{-\tau_n}(T)$ and $\Pi$.  Taking a limit as $n \to \infty$ and using the backward martingale convergence theorem and the continuity of the aforementioned conditional law (recall that $\ell_a(T)$ is left-continuous in $a$), we have that the conditional law of the snake process given $\CF_\tau$ takes exactly the same form.  In particular, it is invariant under the operation of adding to the first coordinate in the Poisson point process an independent random variable which is uniform in $\ell_{-\tau}(T)$ and then working modulo $\ell_{-\tau}(T)$.

The result thus follows by applying the previous paragraph to stopping times which correspond to bubble root times.
\end{proof}

We will now begin to describe the measure $\mudonel$ on random marked disks, which is one of the key actors in what follows.  The key ideas to understanding $\mudonel$ through the perspective of the Brownian snake are illustrated in the caption of Figure~\ref{fig::browniansnake}.  (We will later describe the measure $\mudl$ on random unmarked disks in a more general framework in the process of proving Theorem~\ref{thm::levynetbasedcharacterization} below.  We also remark that a snake-based approach to $\mudl$ is carried out in \cite{alg}.)

Fix $r > 0$ and suppose that $(S,d,\nu,x,y)$ is sampled from $\mustwo$ conditioned on $d(x,y) >r$.  We define $\mudonel$ to be the conditional law of $S \setminus \fb{x}{r}$, viewed as a metric measure space equipped with the interior-internal metric and marked by $y$, given that its boundary length is equal to $L$ as defined just above Proposition~\ref{prop::brownianmapCSBP}.

Write $\CC_a:=\ell_a(T)$ and recall from Proposition~\ref{prop::brownianmapCSBP} that $(\CC_a: a< 0)$ follows the excursion measure of a $3/2$-stable CSBP, indexed in the negative direction. Now suppose the measure on excursions of this form is {\em weighted} by the excursion length --- call it $A=A(\CC_{\cdot})$ --- and that given $(\CC_a)$  the quantity $r$ is chosen uniformly in $[0,A]$ so that $-A + r$ is uniform on the interval $[-A,0]$.  Then we can now think of the triple $\bigl((\CC_a), A, r\bigr)$ as describing a CSBP excursion {\em decorated} by a distinguished time during its length. Set $L = \CC_{-A+r}$.  In this framework, we can then break the excursion into two pieces, corresponding to time before and after $-A+r$ --- with durations $r$ and $A-r$.  We interpret each of these pieces as being defined modulo a horizontal translation of its graph (or equivalently one can imagine that one translates each piece so that it starts or ends at time $0$). Interpreted this way, we now claim that {\em given} $L$, the two pieces of the excursion (before or after the distinguished time) are independent of each other, and in fact we have all three of the following:

\begin{enumerate}
\item {\bf Weighted excursion Markov property:} For the excursion measure {\em weighted}  by excursion length $A$ and decorated as above, we have that given $L =\CC_{-A+r}$ the two sides of the excursion are conditionally independent and are given by forward and reverse CSBPs started at $\CC_{-A+r}$ and stopped upon hitting $0$.

\item {\bf Decreasing time Markov property:} For the unweighted excursion measure, we have that for any fixed $a$, on the event the event $A>a$ the two sides of the excursion (before and after time $a$) are conditionally independent given $L = \CC_{-a}$.

\item {\bf Increasing time Markov property:} For the unweighted excursion measure, we have that for any fixed $r$, on the event that $A>r$, the two sides of the excursion (before and after $-A+r$) are conditonally independent given $L = \CC_{-A+r}$.
\end{enumerate}

The decreasing time Markov property is immediate from the definition of the unweighted excursion measure.  To get the first property, note that for any constant $C$, if we restrict the length-weighted-decorated measure on triples $\bigl((\CC_a), A, r\bigr)$ to the event $A>C$ and $r \in [0,C]$, then on this event the marginal law of $(\CC_a)$ (ignoring $r$) on the time-interval $-a \in [C,A]$ is the same (up to constant factor) as in the unweighted case.  It in particular has the property that given the restriction of $\CC_a$ to $-a \in [0,C]$ the conditional law of the remainder of $\CC_a$ is simply that of a CSBP started at $\CC_a$ and stopped when it hits zero.  Since this holds for any $C$, the first property readily follows, and a similar argument then shows that the first property above implies the third property.

In the proof of Proposition~\ref{prop::metric_net_law} below, we will derive an extension of this equivalence to a setting involving the metric net. In this setting, the boundary length process (for the filled metric ball centered at $x$, as viewed from $y$) is a CSBP excursion, and we will want to say that for a given radius $r$ the inside and the outside of the ball are conditionally independent given the boundary length.  The proof below will be a bit more complicated than the observation above, because we want conditional independence of two random metric spaces (a filled ball and its complement) and defining these metric spaces requires information than just what is encoded in the boundary length process.

\begin{proposition}
\label{prop::metric_net_law}
Fix $r > 0$.  Conditionally on $d(x,y) > r$, we have that $\fb{x}{r}$ and $S \setminus \fb{x}{r}$ (each viewed as elements of $\mmspace^{1,O}$) are conditionally independent given the boundary length of $\partial \fb{x}{r}$ and the conditional law of $S \setminus \fb{x}{r}$ does not depend on $r$.
\end{proposition}
\begin{proof}
We begin by fixing a (deterministic) value of $a < 0$ and let $r = d(x,y)+a$.  Since $d(x,y)$ is random, we emphasize that $r$ is random at this point.  Let $d_r$ be the interior-internal metric on $S \setminus \fb{x}{r}$, which we recall is defined by setting the distance between any two points to be the infimum of lengths of paths connecting the two points which stay in the interior of $S \setminus \fb{x}{r}$.

Let $(X,Y)$ be the Brownian snake process which generates $(S,d,\nu,x,y)$ and let $(X^a,Y^a)$ be the process obtained by truncating the excursions that $(X,Y)$ makes into $\snake_{\leq a}$, as discussed above Proposition~\ref{prop::brownianmapCSBP} (see also Figure~\ref{fig::browniansnake}).  We claim that $(X^a,Y^a)$ determines $d_r$ using the same procedure to construct $d$ from $(X,Y)$; recall~\eqref{eqn::bm_distance_inf}.  We first note that all of the geodesics in $S \setminus \fb{x}{r}$ to $\partial \fb{x}{r}$ are determined by $(X^a,Y^a)$ because such a geodesic is part of a geodesic to $x$.  Fix $z, w \in S \setminus \fb{x}{r}$ and let $\eta$ be a $d_r$-geodesic from $z$ to $w$.  Let $\epsilon > 0$.  By the construction of $d$, it follows that there exists $n \in \N$ and segments $\eta_1,\ldots,\eta_n$ of geodesics to the root $x$ so that the concatenation of $\eta_1,\ldots,\eta_n$ connects $z$ to $w$ and has length at most $\epsilon$ plus the length of $\eta$.  Applying the procedure~\eqref{eqn::bm_distance_inf} to $(X^a,Y^a)$, we see that the successive endpoints of $\eta_1,\ldots,\eta_n$ are identified.  Since $\epsilon > 0$ was arbitrary, we see that $d_r(z,w)$ is determined by $(X^a,Y^a)$.  We similarly have that the interior-internal metric on $\fb{x}{r}$ is determined by the excursions that $(X,Y)$ makes into $\snake_{\leq a}$.

Summarizing, we thus have that the metric measure spaces (with their interior-internal metric) $S \setminus \fb{x}{a+d(x,y)}$ (marked by $y$) and $\fb{x}{a+d(x,y)}$ (marked by $x$) are respectively determined by $(X^a,Y^a)$ and the excursions the snake process makes in $\snake_{\leq a}$.  The discussion just above Proposition~\ref{prop::brownianmapCSBP} therefore implies that the two spaces are conditionally independent given the boundary length~$L$.

Recall from the discussion just before the statement of Proposition~\ref{prop::brownianmapCSBP} that the conditional law of the excursions that $(X,Y)$ makes into $\snake_{\leq a}$ given the boundary length $L = \ell_a(T)$ is given by a Poisson point process with intensity measure given by the product of Lebesgue measure on $[0,\ell_a(T)]$ and $\Pi$.  Recall also that the value of $d(x,y)$ is equal to $-1$ times the minimum value attained by the $x$-coordinate of these excursions.  In particular, given $d(x,y)$ the excursions into $\snake_{\leq a}$ are conditionally independent of $(X^a,Y^a)$ given the boundary length $L$.  Note that given $d(x,y)$, there will be one excursion whose $x$-coordinate has minimum value $-d(x,y)$ and the other excursions have a Poisson law but with minimal $x$-coordinate larger than $-d(x,y)$.

The analysis above applies equally well in the setting of the remark before the beginning of the proof -- where the ``CSBP excursion measure'' (or the corresponding doubly marked Brownian map measure) is weighted by its length (which corresponds to $d(x,y)$ in the corresponding doubly marked Brownian map) and $a$ is uniformly chosen from $[-d(x,y), 0]$ (instead of being deterministic) and again  $r = d(x,y) + a$. That is, even in this setting, we still have conditional independence of $(\fb{x}{r},x)$ and $(S \setminus \fb{x}{r},y)$ given $L$. In particular, in this setting if we condition on $r \in [r_0, r_0 + \epsilon]$ (i.e., we restrict the infinite measure space to the finite-measure subspace on which this is true) we learn nothing about $(S \setminus \fb{x}{r},y)$ from knowing  $(\fb{x}{r},x)$ beyond the value of $L$, and by the scaling symmetry of the overall construction, the value $L$ affects the conditional law of $(S \setminus \fb{x}{r},y)$ only by a scaling (multiplying distances by $L^{1/2}$, measure by $L^2$).  But once we restrict the measure to the set on which $r \in [r_0, r_0 + \epsilon]$, the marginal law (of the doubly marked Brownian surface) is no longer weighted by the length of the interval $[0,d(x,y)]$, but rather by the length of $[0,d(x,y)] \cap  [r_0, r_0 + \epsilon]$, which is simply equal to $\epsilon$ when $d(x,y) > r_0 + \epsilon$ and $0$ if $d(x,y) < r_0$. (The proportion of this measure coming from the case $d(x,y) \in [r_0, r_0+\epsilon]$ tends to zero as $\epsilon \to 0$.) Furthermore, the amount  the CSBP boundary length process changes during $[r_0, r_0+\epsilon]$ tends to zero in probability as $\epsilon \to 0$, and the conditional law of $(S \setminus \fb{x}{r},y)$ is continuous as function of $L$ (w.r.t.\ any natural topology -- e.g., we can use the weak topology induced by the snake space metric).  Thus we can take the $\epsilon \to 0$ limit and conclude that the conditional law of $(S \setminus \fb{x}{r},y)$ given  $(\fb{x}{r},x)$ depends only on $L$.

\end{proof}

\begin{remark}
\label{rem::brownian_map_boundary_length}
In the case that $\alpha = 3/2$, we now have that up to time parameterization, both the process $\ell$ defined for Brownian maps and the process $Z$ defined for the L\'evy net can be understood as descriptions of the natural boundary length measure $L_{r}$ discussed in Section~\ref{sec::introduction}.
\end{remark}

By combining Propositions~\ref{prop::brownianmapCSBP} and~\ref{prop::metric_net_law} together with Theorem~\ref{thm::levynetrecover} we see that a.e.\ instance of the Brownian map \emph{determines} a $3/2$-stable L\'evy net instance.  We will now show that the unembedded metric net of the Brownian map is equal to this instance of the $3/2$-stable L\'evy net as $\treeequivspace$-valued random variables.

\begin{proposition}
\label{prop:metric_net_equivalent}
The unembedded metric net of a sample $(S,d,\nu,x,y)$ from $\mustwo$ has the law of a $3/2$-stable L\'evy net. In this correspondence, the $3/2$-stable CSBP excursion $\ell_a(T)$ described above for a sample from $\mustwo$ agrees with the $3/2$-stable CSBP excursion $Z_s$ of Definition~\ref{def::levycb} (recall also Figure~\ref{fig::levycb}), up to an affine transformation relating $a$ and $s$.
\end{proposition}

\newcommand{\LN}{\mathrm{LN}}
\newcommand{\BM}{\mathrm{BM}}

The rest of this section is aimed at proving Proposition~\ref{prop:metric_net_equivalent}.  Throughout, we will let $(S,d,\nu,x,y)$ be a sample from $\mustwo$, $(X,Y)$ be the corresponding instance of the Brownian snake, and $(X^\LN,Y^\LN)$ the instance of the $3/2$-stable L\'evy net which is determined by $(X,Y)$.  Here, $(X^\LN,Y^\LN)$ are as in Definition~\ref{def::levynet} so that $X^\LN$ is the time-reversal of a $3/2$-stable L\'evy excursion with only upward jumps and $Y^\LN$ is the associated height process.  We let $K^\LN$ be the compact subset of $[0,1]^2$ which describes the equivalence relation in the L\'evy net instance $(X^\LN,Y^\LN)$ so that $(Y^\LN,K^\LN)$ takes values in $\treeequivspace$.  We also let $(Y^\BM,K^\BM)$ be the $\treeequivspace$-valued random variable determined by $(S,d,\nu,x,y)$ as in Proposition~\ref{prop:equiv_measurable}.  We will proceed by first showing (Lemma~\ref{lem:geodesic_tree_is_the_same}) that the leftmost geodesic tree in the metric net of $(S,d,\nu,x,y)$ is the same as in the L\'evy net (i.e., that $Y^\BM$ is equal to $Y^\LN$ up to monotone reparameterization).  We will then show that the associated equivalence relation $K^\BM$ in the Brownian map has \emph{at least as many} identifications as in the L\'evy net (Lemma~\ref{lem:levy_net_jump_equiv_dense}).  That is, we will show that $K^\BM \supseteq K^\LN$.  We will then complete the proof by showing that $K^\BM \subseteq K^\LN$ (Lemma~\ref{lem:net_equiv_implies_levy_equiv}).

For each $a \leq 0$ and $r > 0$, we let $\tau_a^r$ be the first time $t \geq 0$ that $Y$ has accumulated $r$ units of local time on the boundary of $\snake_{>a}$.  For each $b \leq a$, we let $\ell_{a,b}^r(T)$ be the amount of local time that $Y|_{[0,\tau_a^r]}$ accumulates on the boundary of $\snake_{>b}$.  The same argument as in the proof of Proposition~\ref{prop::brownianmapCSBP} implies that $\ell_{a,b}^r(T)$ evolves as a $3/2$-stable CSBP as $b$ decreases.  Assume that $\tau_a^r < \infty$.  For each $b \leq a$, we let $\tau_{a,b}^r$ be the last time that $(X,Y)|_{[0,\tau_a^r]}$ visits $\partial \snake_{> b}$.  Let $\rho \colon [0,T] \to (S,d,\nu,x,y)$ be the projection map from $[0,T]$ associated with the Brownian snake construction of $(S,d,\nu,x,y)$.  Then we observe that $b \mapsto \rho(\tau_{a,b}^r)$ for $b \leq a$ gives the leftmost geodesic from $\rho(\tau_a^r)$ to $x$.  Indeed, fix $b \leq a$.  Then we have by definition that $\rho(\tau_{a,b}^r) \in \partial \fb{x}{d(x,y)+b}$.  We also have by definition that $\tau_{a,b}^r \leq \tau_a^r$ and that $X|_{[\tau_{a,b}^r,\tau_a^r]}$ is at least $X_{\tau_{a,b}^r} = b$.  This implies that the point in the real tree encoded by $X$ (the geodesic tree in $(S,d,\nu,x,y)$ rooted at $x$) corresponding to $\tau_{a,b}^r$ is an ancestor of $\tau_a^r$.  Equivalently, $\rho(\tau_{a,b}^r)$ lies on the leftmost geodesic $\eta$ from $\rho(\tau_a^r)$ to $x$.  This proves the claim since $b \leq a$ was arbitrary.  We have obtained from this discussion that the boundary length on the counterclockwise arc of $\partial \fb{x}{d(x,y)+b}$ from $\eta$ to the unique geodesic from $y$ to $x$ evolves as a $3/2$-stable CSBP.  We can similarly consider the amount of local time that $Y|_{[\tau_a^r,\infty)}$ spends on $\partial \snake_{> b}$.  This process describes the boundary length between on the clockwise arc of $\partial \fb{x}{d(x,y)+b}$ from $\eta$ to the unique geodesic from $y$ to $x$.  This process also evolves as an $3/2$-stable CSBP which is independent of $\ell_{a,b}^r(T)$.  By generalizing these considerations, we obtain the following.

\begin{lemma}
\label{lem:geodesic_boundary_length_csbp}
Fix $a \leq 0$ and $0 < r_1 < \cdots < r_n$.  Suppose that $\tau_{a}^{r_n} < \infty$.  For each $1 \leq j \leq n$, we let $\eta_j$ be the leftmost geodesic in $(S,d,\nu,x,y)$ from $\rho(\tau_a^{r_j})$ to $x$ where we take $\eta_0 = \eta_n$ to be the unique geodesic from $y$ to $x$ starting from when it first hits $\partial \fb{x}{d(x,y)+a}$.  For each $b \leq a$, let $L_{a,b}^{j}$ be the boundary length on the counterclockwise segment of $\partial \fb{x}{d(x,y)+b}$ from $\eta_j$ to $\eta_{j+1}$.  Given the initial values $L_{a,a}^j$, the processes $L_{a,b}^j$ evolve as $b \leq a$ decreases as independent $3/2$-stable CSBPs.
\end{lemma}
\begin{proof}
The boundary length between $\eta_j$ and $\eta_{j+1}$ on $\partial \fb{x}{d(x,y)+b}$ is given by the amount of local time spent by $Y|_{[\tau_{a}^{r_j},\tau_a^{r_{j+1}}]}$ on $\partial \snake_{> b}$.  The same considerations as above therefore imply that $L_{a,b}^j$ evolves as $b \leq a$ decreases as a $3/2$-stable CSBP independently of $L_{a,b}^i$ for $i \neq j$, given the initial values of all of the processes $L_{a,b}^i$.
\end{proof}

We will now deduce from Lemma~\ref{lem:geodesic_boundary_length_csbp} and the inside/outside independence of filled metric balls established in Proposition~\ref{prop::metric_net_law} that the leftmost geodesic tree in the metric net of $(S,d,\nu,x,y)$ is the same as in the L\'evy net.

\begin{lemma}
\label{lem:geodesic_tree_is_the_same}
Up to monotone reparameterization, we have that $Y^\BM$ is equal to $Y^\LN$.
\end{lemma}
\begin{proof}
Fix $a \leq 0$ and $r > 0$.  On the event that the boundary length of $\partial \fb{x}{d(x,y)+a}$ is at least $r$, we let $\eta$ be the leftmost geodesic starting from the point on $\partial \fb{x}{d(x,y)+a}$ whose counterclockwise boundary length from where the unique geodesic from $y$ to $x$ passes through $\partial \fb{x}{d(x,y)+a}$ is equal to $r$.  The same argument used to prove Theorem~\ref{thm::levynetrecover} implies that for each $b \leq a$ the clockwise and counterclockwise boundary lengths on $\partial \fb{x}{d(x,y)+b}$ from $\eta$ to the unique geodesic from $y$ to $x$ are a.s.\ the same as the corresponding boundary lengths for the corresponding geodesic in the L\'evy net.  Since the time at which one of these boundary length processes first hits $0$ gives $1/2$ of the distance in the trees encoded by $Y^\LN$ and $Y^\BM$ between the starting point of $\eta$ and where the unique geodesic from $y$ to $x$ passes through $\partial \fb{x}{d(x,y)+a}$, we see that these distances are the same in the trees encoded by $Y^\LN$ and $Y^\BM$.  Off a set of measure $0$, this in fact holds simultaneously for any fixed countable collection of $a$ and $r$ values.  By allowing $a$ to range in $\Q_- = \Q \cap \R_-$ and $r$ to range in $\Q_+$, we see that the whole geodesic tree in the metric net of the Brownian map agrees with the geodesic tree in the L\'evy net.  That is, $Y^\LN$ and $Y^\BM$ agree up to monotone reparameterization.
\end{proof}

By Lemma~\ref{lem:geodesic_tree_is_the_same}, we can assume (after applying a monotone reparameterization) that $Y^\LN = Y^\BM$.  We now aim to show that $K^\BM = K^\LN$.  We will proceed by first showing that $K^\BM \supseteq K^\LN$ by arguing that $K^\BM$ contains a dense subset of $(s,t)$ pairs in $K^\LN$ and then we will show that $K^\BM \subseteq K^\LN$.  Let $T^\LN$ be such that $[0,T^\LN]$ is the interval on which $X^\LN$ is defined.

\begin{lemma}
\label{lem:levy_net_jump_equiv_dense}
We have that $K^\BM \supseteq K^\LN$.
\end{lemma}
\begin{proof}
Suppose that $(s,t) \in K^\LN$ with $s < t$.  If $t$ is a jump time of $X^\LN$, then Proposition~\ref{prop::jumpsoneone} implies that $(s,t) \in K^\BM$.  If $s$ and $t$ are equivalent in the tree encoded by $Y^\LN = Y^\BM$, then we also have that $(s,t) \in K^\LN$.  Suppose that $t T^\LN$ is not a jump time of $X^\LN$ and that $s$, $t$ are not equivalent in the tree encoded by $Y^\LN$.  By the definition of $K^\LN$, we have that the horizontal chord connecting $(s T^\LN,X_{s T^\LN}^\LN)$ and $(t T^\LN,X_{t T^\LN}^\LN)$ lies below the graph of $X^\LN|_{[sT^\LN,t T^\LN]}$.  Then there exists a sequence of times $t_k$ such that $X^\LN$ has a downward jump at time $t_k T^\LN$ such that if $s_k < t_k$ is such that $X_{s_k T^\LN}^\LN = X_{t_k T^\LN}^\LN$ and the horizontal chord from $(s_k T^\LN,X_{s_k T^\LN}^\LN)$ to $(t_k T^\LN,X_{t_k T^\LN}^\LN)$ lies below the graph of $X^\LN|_{[s_k T^\LN,t_k T^\LN]}$ and $s_k \downarrow s$ and $t_k \uparrow t$ as $k \to \infty$.  Then $(s_k,t_k) \in K^\BM$ by what we explained at the beginning of the proof.  Since $K^\BM$ is closed, it follows that $(s,t) \in K^\BM$.
\end{proof}

\begin{lemma}
\label{lem:net_equiv_implies_levy_equiv}
We have that $K^\BM \subseteq K^\LN$.  In particular, $K^\BM = K^\LN$.
\end{lemma}
\begin{proof}
Suppose that $(s,t) \in K^\BM$ and $s < t$.  If $t T^\LN$ is a jump time of $X^\LN$, then as we explained in the proof of Lemma~\ref{lem:levy_net_jump_equiv_dense} we have that $(s,t) \in K^\LN$.  We may therefore assume that $t T^\LN$ is not a jump time of $X^\LN$.  Then there exists $a < 0$ so that $Y_s^\BM = Y_t^\BM = d(x,y)+a$.  We recall from the breadth-first construction (Definition~\ref{def::levynet_bubble_gluing}) that the boundary length measure in the L\'evy net for the metric ball centered at the root of the geodesic tree is defined for all radii simultaneously.  We consider two possibilities.  Either one of the boundary lengths along the clockwise or counterclockwise segments of the metric ball boundary in the L\'evy net from the point corresponding to $s$ to the point corresponding to $t$ is equal to zero or both boundary lengths are positive.  If one of the boundary lengths is equal to zero, then it follows that $(s,t) \in K^\LN$ by the breadth-first construction of the L\'evy net quotient (Definition~\ref{def::levynet_bubble_gluing}).  Suppose that both boundary lengths are positive.  Let $\rho \colon [0,1] \to S$ be the map which visits the points in (the completion of) the leftmost geodesic tree in the Brownian map in contour order.  We will obtain a contradiction by showing that $\rho(s) \neq \rho (t)$.

By the construction in Definition~\ref{def::levynet_bubble_gluing}, we have that the boundary length measure defined for the L\'evy net is right-continuous.  Fix $\epsilon > 0$ small and rational and let $k = \lceil a/\epsilon\rceil + 1$.  Since $t T^\LN$ does not correspond to a jump in $X^\LN$, it follows that we can find $u,v$ in the L\'evy net with distance $d(x,y)+ k \epsilon$ to $x$ and with boundary length distance from the unique geodesic from $y$ to $x$ given by a multiple of $\epsilon$ so that the following is true.  The geodesics from $u,v$ to $x$ pass through the counterclockwise segment on the boundary of the ball centered at $x$ with radius $d(x,y)+a$ before merging.  By the proof of Lemma~\ref{lem:geodesic_tree_is_the_same}, this implies that the corresponding leftmost geodesics in $(S,d,\nu,x,y)$ also do not merge before passing through the counterclockwise segment of $\partial \fb{x}{d(x,y)+a}$ from $\rho(s)$ to $\rho(t)$.  In particular, this interval has non-empty interior.  We can likewise find a pair of points so that the leftmost geodesics to $x$ do not merge before passing through the clockwise segment of $\partial \fb{x}{d(x,y)+a}$ from $\rho(s)$ to $\rho(t)$.  In particular, this interval also has non-empty interior.  This implies that $\rho(s) \neq \rho(t)$ so that $(s,t) \notin K^\BM$ as desired.
\end{proof}

\begin{proof}[Proof of Proposition~\ref{def::levy_attachment_points}]
As explained above, this follows by combining Lemmas~\ref{lem:levy_net_jump_equiv_dense}--\ref{lem:net_equiv_implies_levy_equiv}.
\end{proof}

\subsection{Axioms that characterize the Brownian map}
\label{subsec::axiomsforbrownianmap}

Most of this subsection will be devoted to a proof of the following L\'evy net based characterization of the Brownian map. At the end of the section, we will explain how to use this result to derive Theorem~\ref{thm::markovmapcharacterization}.

\begin{theorem}
\label{thm::levynetbasedcharacterization}
Up to a positive multiplicative constant, the doubly marked Brownian map measure $\mustwo$ is the unique (infinite) measure on $(\gmsspace^{2,O}, \mmsigmaspho)$ which satisfies the following properties, where an instance is denoted by $(S,d,\nu,x,y)$.  
\begin{enumerate}
\item Given $(S,d,\nu)$, the conditional law of $x$ and $y$ is that of two i.i.d.\ samples from~$\nu$ (normalized to be a probability measure). In other words, the law of the doubly marked surface is invariant under the Markov step in which one ``forgets'' $x$ (or~$y$) and then resamples it from the given measure.
\item The law on $\treeequivspace$ (real trees with an equivalence relation) induced by the unembedded metric net from $x$ to $y$ (whose law is an infinite measure) by the measurable map defined in Proposition~\ref{prop:equiv_measurable} has the law of an $\alpha$-L\'evy net for some $\alpha \in (1,2)$.  In other words, the metric net is a.s.\ strongly coalescent (as defined in Section~\ref{subsec:unembedded_measurability}) and the law of the contour function of the leftmost geodesic tree and set of identified points agrees with that of the L\'evy height process used in the $\alpha$-L\'evy net construction.

\item Fix $r>0$ and consider the circle that forms the boundary $\partial \fb{x}{r}$ (an object that is well-defined a.s.\ on the finite-measure event that the distance from $x$ to $y$ is at least $r$).  Then the inside and outside of $\fb{x}{r}$ (each viewed as an element of $\mmspace^{1,O}$, with the orientation induced by $S$) are conditionally independent, given the boundary length of $\partial \fb{x}{r}$ (as defined from the L\'evy net structure) and the orientation of $S$.  Moreover, the conditional law of the outside of $\fb{x}{r}$ does not depend on $r$.
\end{enumerate}
\end{theorem}

Let us emphasize a few points before we give the proof of Theorem~\ref{thm::levynetbasedcharacterization}.
\begin{itemize}
\item Recalling Proposition~\ref{prop:metric_net_equivalent}, in the case of $\mustwo$ one has $\alpha=3/2$.  Moreover, Proposition~\ref{prop:metric_net_equivalent} implies that $\mustwo$ satisfies the second hypothesis of Theorem~\ref{thm::levynetbasedcharacterization}.  Indeed, we saw in Proposition~\ref{prop:metric_net_equivalent} that the law of the unembedded metric net of a sample $(S,d,\nu,x,y)$ from $\mustwo$ has the law of the $3/2$-stable L\'evy net, this implies that the collection of left and right geodesics geodesics in the metric get and how they are identified has the same law as in the $3/2$-stable L\'evy net.  Proposition~\ref{prop::metric_net_law} implies that $\mustwo$ satisfies the third assumption. 
\item The second assumption together with Proposition~\ref{prop::levynet_structure_determined} implies that the boundary length referenced in the third assumption is a.s.\ well-defined and has the law of a CSBP excursion (just like the CSBP used to encode the L\'evy net). In particular, this implies that for any $r>0$, the measure of the event $d(x,y)>r$ is positive and finite.
\item In the coupling between the metric net and the L\'evy net described above, we have made no assumptions about whether {\em every} geodesic in the metric net, from some point $z$ to the root $x$, corresponds to one of the distinguished left or right geodesics in the L\'evy net.  That is, we allow {\em a priori} for the possibility that the metric net contains many additional geodesics besides these distinguished ones. Each of these additional geodesics would necessarily pass through the filled ball boundaries $\partial \fb{x}{r}$ in decreasing order of $r$, but in principle they could continuously zigzag back and forth in different ways. 
\item The measurability results of Section~\ref{subsec::mmsigma} imply that the objects referred to in the statement of Theorem~\ref{thm::levynetbasedcharacterization} are random variables.  In particular, Proposition~\ref{prop::ball_interior_measurable} implies that the inside and the outside of $\fb{x}{r}$ (viewed as elements of $\mmspace^{1,O}$) are measurable functions of an element of $\gmsspace^{2,O}$ and Proposition~\ref{prop:equiv_measurable} implies that unembedded metric net (i.e., the leftmost geodesic tree together with its identified points viewed as an element of $\treeequivspace$) is a measurable function of an element of $\gmsspace^{2,O}$.  
\item The proof of Theorem~\ref{thm::levynetbasedcharacterization} will make use of a rerooting argument which was used previously by Le Gall; see \cite[Section~8.3]{legalluniqueanduniversal}.
\end{itemize}

Now we proceed to prove Theorem~\ref{thm::levynetbasedcharacterization}. This proof requires several lemmas, beginning with the following.

\begin{lemma}
\label{lem::metricnetmeasurezero}
If $\tmustwo$ satisfies the hypotheses of Theorem~\ref{thm::levynetbasedcharacterization}, and $(S,d,\nu,x,y)$ denotes a sample from $\tmustwo$, then it is a.s.\ the case that the metric net from $x$ to $y$ has $\nu$ measure zero. That is, the set of $(S,d,\nu,x,y)$ for which this is not the case has $\tmustwo$ measure zero.
\end{lemma}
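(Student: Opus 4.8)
The plan is to show that the metric net, viewed as a subset of $S$, is a decreasing union (over $r$) of the one‑dimensional objects $\partial \fb{x}{r}$, and that each of these has $\nu$‑measure zero, so that by countable additivity the whole net has measure zero. More precisely, the metric net is $\bigcup_{r \in [0,d(x,y)]} \partial \fb{x}{r}$, and it suffices to show that for Lebesgue‑a.e.\ $r$ (or even for a countable dense set of $r$) the set $\partial \fb{x}{r}$ has $\nu$‑measure zero, since $\nu(\partial \fb{x}{r})$ is monotone in an appropriate sense and the net is covered by countably many such boundaries together with a Lebesgue‑null set of exceptional radii; indeed $\nu$ assigns measure zero to $\{z : d(x,z) \in N\}$ whenever $N \subseteq \R$ is Lebesgue‑null, because $r \mapsto \nu(\fb{x}{r})$ is monotone and hence has at most countably many jumps, so the exceptional radii contribute nothing.

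First I would reduce to a statement about a single radius. Fix $r>0$ and work under the (finite‑measure) conditioning $\mathcal E_r = \{d(x,y)>r\}$. By Proposition~\ref{prop::levynet_structure_determined} (applied via hypothesis~2), the boundary length $L_r$ of $\partial \fb{x}{r}$ is a.s.\ well‑defined and has the law of a CSBP excursion value; in particular $L_r \in (0,\infty)$ a.s. I would then use hypothesis~3: conditionally on $L_r$, the inside $\fb{x}{r}$ and the outside $S \setminus \fb{x}{r}$ are independent elements of $\mmspace^1$. The point is that $\partial \fb{x}{r}$ is the ``interface'' common to both pieces, and the $\nu$‑mass of this interface is measurable with respect to the inside (it is the mass of the topological boundary of the inside surface) and also, by a symmetric argument, with respect to the outside. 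Thus $\nu(\partial \fb{x}{r})$ is measurable with respect to the $\sigma$‑algebra generated by the inside \emph{and} with respect to the $\sigma$‑algebra generated by the outside; since these two $\sigma$‑algebras are conditionally independent given $L_r$, the random variable $\nu(\partial \fb{x}{r})$ is a.s.\ equal to a deterministic measurable function of $L_r$ alone.

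Next I would run a ``doubling''/additivity argument to pin down that function. Using hypothesis~1 (resampling of marked points) together with the scale invariance implicit in the L\'evy net structure, and the fact that a filled ball boundary of length $L$ can be divided into slices whose interfaces concatenate (cf.\ the slice decomposition of Figure~\ref{fig::slicedecomposition} and Proposition~\ref{prop::poissonslicestructure}), the quantity $\nu(\partial \fb{x}{r})$ as a function of $L_r = L$ must be additive under the operation of gluing slices of boundary lengths summing to $L$; being additive, monotone, and satisfying the CSBP scaling, it is linear in $L$, say $\nu(\partial \fb{x}{r}) = c\,L_r$ for some constant $c \geq 0$. Finally I would show $c=0$ by a self‑consistency/Fubini argument: if $c>0$ then $\E_{\tmustwo}[\nu(\partial \fb{x}{r}) \one_{\mathcal E_r}] = c\,\E_{\tmustwo}[L_r \one_{\mathcal E_r}]$, while integrating over $r \in [0,d(x,y)]$ and using Fubini gives $\int_0^{d(x,y)} \nu(\partial\fb{x}{r})\,dr$, which is the $\nu\otimes dr$‑measure of the set of pairs $(z,r)$ with $z \in \partial\fb{x}{r}$, i.e.\ with $d(x,z)=r$; but for each fixed $z$ this set of $r$'s is a single point, so this double integral is zero, forcing $c\,\int_0^{d(x,y)} L_r\,dr = 0$ and hence $c=0$ (as $\int_0^{d(x,y)} L_r\,dr > 0$ a.s.\ on $\mathcal E_r$).

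The main obstacle I anticipate is the measurability bookkeeping in the middle step: making rigorous the claim that $\nu(\partial \fb{x}{r})$ is measurable with respect to \emph{both} the inside $\sigma$‑algebra and the outside $\sigma$‑algebra. The boundary $\partial \fb{x}{r}$, as a metric measure space, sits inside neither $\fb{x}{r}$ nor $S \setminus \fb{x}{r}$ in a literal sense — one only has the interior‑internal metrics — so one must argue (using the discussion in Section~\ref{subsec::metricsphereobservations} on interior‑internal metrics, and Proposition~\ref{prop::boundariesarecircles} that $\partial \fb{x}{r}$ is a circle) that the $\nu$‑mass carried by the boundary circle is recoverable as a measurable function of either piece. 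Once that is in hand, the conditional‑independence collapse to a function of $L_r$ and the subsequent linearity/Fubini argument are routine. An alternative route that sidesteps some of this, which I would keep in reserve, is to argue directly from Proposition~\ref{prop::metric_net_law}‑style reasoning is \emph{not} available a priori here (we are proving a characterization), but one can instead note that $\nu$ restricted to the net would have to be supported on a set of Hausdorff dimension strictly less than that of the full Brownian map, contradicting that $\nu$ is a good measure assigning positive mass to open sets — however this dimension‑theoretic argument requires inputs not developed in the excerpt, so the conditional‑independence argument above is the one I would actually carry out.
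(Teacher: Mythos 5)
Your reduction in the first paragraph is where the argument breaks. The metric net is the union of $\partial\fb{x}{r}$ over an \emph{uncountable} set of radii, so knowing that $\nu(\partial\fb{x}{r})=0$ for every fixed $r$ (let alone for Lebesgue-a.e.\ $r$) does not bound $\nu$ of the union: the mass could be spread diffusely across radii, i.e.\ the pushforward of $\nu$ under $z\mapsto d(x,z)$ could have a nonatomic part concentrated on the set of radii hit by the net. Your justification for discarding the exceptional radii --- that $r\mapsto\nu(\fb{x}{r})$ is monotone and hence has countably many jumps --- only rules out atoms of this pushforward; it does not rule out a singular continuous part, and that is exactly the scenario in which the net could carry positive $\nu$-mass while every single boundary circle is null. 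Note also that, since $\partial\fb{x}{r}\subseteq\{z:d(x,z)=r\}$, the statement your machinery ultimately yields (that $\nu(\partial\fb{x}{r})=0$ for a.e.\ fixed $r$) is essentially automatic --- at most countably many spheres about $x$ can carry $\nu$-mass --- and is strictly weaker than the lemma. So even granting the delicate middle steps (in particular the claim that the boundary mass is measurable with respect to the \emph{outside} piece, which is doubtful since $\nu|_{S\setminus\fb{x}{r}}$ simply does not record that mass), the conclusion reached does not imply the statement.

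The paper's proof avoids fixed radii entirely and is much shorter: it uses hypothesis~1 to resample the target point. If the net from $x$ to $y$ had positive $\nu$-measure with positive probability, then a point $\wt{y}$ resampled from $\nu$ would lie in that net with positive probability; but $(S,d,\nu,x,\wt{y})$ has the same law, so by hypothesis~2 the metric net from $x$ to $\wt{y}$ is a L\'evy net, in which the boundary length process of the component containing the target a.s.\ tends to $0$ as the target is reached. If $\wt{y}$ lies on $\partial\fb{x}{s}$ (relative to $y$) with $s<d(x,y)$, that boundary length does not tend to zero as $r\uparrow s=d(x,\wt{y})$, a contradiction. Any radius-by-radius strategy like yours would instead have to control the conditional measures of $\nu$ on the spheres $\{d(x,\cdot)=r\}$ for a.e.\ $r$ with respect to the (possibly singular) pushforward, which is precisely what the resampling argument accomplishes in one stroke.
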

\begin{proof}
Suppose that the metric net does not have $\nu$ measure $0$ with positive $\tmustwo$ measure.  Then if we fix $x$ and resample $y$ from $\nu$ to obtain $\wt{y}$, there is some positive probability that $\wt{y}$ is in the metric net from $x$ to $y$. Let $L_r$ be the process that encodes the boundary length of the complementary component of $B(x,r)$ which contains $\wt{y}$.  Then we have that $L_r$ does not a.s.\ tend to $0$ as $\wt{y}$ is hit.  This is a contradiction as, in the L\'evy net definition, we do have that $L_r$ a.s.\ tends to~$0$ as the target point is reached.
\end{proof}

If~$\tmustwo$ satisfies the hypotheses of Theorem~\ref{thm::levynetbasedcharacterization}, then we let~$\tmudonel$ denote the conditional law of $S \setminus \fb{x}{r}$, together with its interior-internal metric and measure, given that the boundary length of~$\partial \fb{x}{r}$ is equal to~$L$. Once we have shown that~$\tmustwo$ agrees with~$\mustwo$, we will know that~$\tmudonel$ agrees with~$\mudonel$, which will imply in particular that~$\tmudonel$ depends on~$L$ in a scale invariant way. That is, we will know that sampling from~$\tmudonel$ is equivalent to sampling from~$\tmudonelset{1}$ and then rescaling distances and measures by the appropriate powers of~$L$. However, this is not something we can deduce directly from the hypotheses of Theorem~\ref{thm::levynetbasedcharacterization} as stated. We can however deduce a weaker statement directly: namely, that at least the probability measures~$\tmudonel$ in some sense depend on~$L$ in a continuous way.  Note that given our definition in terms of a regular conditional probability, the family of measures~$\tmudonel$ is {\em a priori} defined only up to redefinition on a Lebesgue measure zero set of~$L$ values, so the right statement will be that there is a certain type of a continuous modification.

\begin{lemma}
\label{lem::disklawcontinuity}
Suppose that $\tmustwo$ satisfies the hypotheses of Theorem~\ref{thm::levynetbasedcharacterization}. Let $\tmudonel$ denote the conditional law of $S \setminus \fb{x}{r}$, together with its interior-internal metric and measure, given that the boundary length of~$\partial \fb{x}{r}$ is $L$.  For $L_1,L_2 > 0$, define $\rho(\tmudonelset{L_1}, \tmudonelset{L_2})$ to be the smallest $\epsilon > 0$ such that one can couple a sample from~$\tmudonelset{L_1}$ with a sample from~$\tmudonelset{L_2}$ in such way that with probability at least $1-\epsilon$ the two metric/measure-endowed disks agree when restricted to the $y$-containing component of the complement of the set of all points of distance~$\epsilon$ from the disk boundary (and both such components are nonempty). Then the~$\tmudonel$ (after redefinition on a zero Lebesgue measure set of $L$ values) have the property that as $L_1$ tends to $L_2$ the $\rho$ distance between the $\tmudonelset{L_i}$ tends to zero. In other words, the map from $L$ to $\tmudonel$ has a modification that is continuous w.r.t.\ the metric described by $\rho$.
\end{lemma}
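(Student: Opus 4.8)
The plan is to prove continuity of $L \mapsto \tmudonel$ by exploiting the scale invariance that the L\'evy net structure \emph{does} give us --- even though scale invariance of the full disk law $\tmudonel$ is precisely what we are not yet allowed to assume. The key observation is that the metric net of $(S,d,\nu,x,y)$ has the law of an $\alpha$-L\'evy net, and by Proposition~\ref{prop::Zisalphastable} and Proposition~\ref{prop::strongstablecsbp} the boundary length process $L_r$ (as $r$ increases from $0$) is an $\alpha$-stable CSBP excursion, which \emph{does} scale: $L_{C^{\alpha-1}r}$ agrees in law with $C L_r$ up to starting point. So I would use the metric net itself as a ``bridge'': the conditional law of $S \setminus \fb{x}{r}$ given the metric net is obtained by gluing conditionally independent disks (from the $\tmudlset{\cdot}$ family, not yet known to be scale invariant) into the holes of the portion of the L\'evy net lying outside $\fb{x}{r}$, together with one marked-point disk $\tmudonelset{\cdot}$ replacing the $y$-containing hole.

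First I would set up the coupling. Fix $L_2$ and consider $L_1$ close to $L_2$. On the event $\mathcal E_r$ (that $d(x,y) > r$ and hence $\partial \fb{x}{r}$ has a well-defined boundary length $L_r$), condition on $L_r = L_2$ and look at the metric net outside $\fb{x}{r}$. Because this outside net is a time-change of a CSBP excursion started from $L_2$ (run until extinction, i.e.\ until $y$ is reached), and because CSBP excursions from nearby starting values can be coupled to agree outside a small neighborhood of their starting and ending times with high probability --- this is exactly the kind of statement one extracts from the continuity estimates of Proposition~\ref{prop::local_time_cadlag_modification} and Lemma~\ref{lem::csbp_extinction_time}, together with the fact that an excursion from $L_2$ is an excursion from $L_1$ plus an independent piece, using the CSBP additivity~\eqref{eqn::CSBPproperty} --- the two L\'evy nets (outside the respective filled balls) can be coupled to be isometric once one moves a definite metric distance $\epsilon$ in from the boundary. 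Then I would glue in the disks. For the holes that lie at L\'evy-net-distance at least $\epsilon$ from $\partial \fb{x}{r}$, the coupled nets are identical, so I can use the \emph{same} disk samples on both sides, and the resulting metric measure spaces agree on the $y$-containing component of the complement of the set of points within distance (roughly) $\epsilon$ of the boundary. The contribution of the disks glued into holes within distance $\epsilon$ of the boundary --- including the $y$-containing disk itself, governed by $\tmudonelset{L_i}$ rather than $\tmudlset{L_i}$ --- is confined to that $\epsilon$-neighborhood of $\partial \fb{x}{r}$ (here one uses that a hole of L\'evy-net-distance $< \epsilon$ from the boundary contributes points of metric distance controlled by $\epsilon$, which follows from Proposition~\ref{prop::metric_net_law} and the definition of the glued metric), so it does not affect the region we are comparing. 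This gives $\rho(\tmudonelset{L_1}, \tmudonelset{L_2}) \to 0$ as $L_1 \to L_2$, possibly after redefining the regular conditional probabilities on a Lebesgue-null set of $L$ values --- the redefinition is needed because $\tmudonel$ is only defined up to null sets and we want to select the continuous modification, which exists once we have shown the $\rho$-oscillation is small on small intervals of $L$.

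The main obstacle I expect is making rigorous the claim that changing the \emph{starting} boundary length of the outside-L\'evy-net from $L_2$ to $L_1$ only perturbs the glued metric measure space inside a metric $\epsilon$-neighborhood of $\partial \fb{x}{r}$. The subtlety is that an $\alpha$-stable CSBP excursion started from $L_1$ and one started from $L_2$ differ not just ``near the start'': the additive decomposition writes the $L_2$-excursion as an $L_1$-excursion plus an extra independent CSBP, but that extra piece can, in principle, survive for a macroscopic time and attach macroscopic bubbles far from the boundary. To control this I would use Lemma~\ref{lem::csbp_extinction_time}: the extra piece has initial value $L_2 - L_1 \to 0$, so its extinction time tends to $0$ in probability, and (via Lemma~\ref{lem::csbp_sup}) its supremum --- hence the total boundary length it can contribute at any level, hence the sizes of the bubbles it spawns --- is small with high probability. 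Converting ``small extra boundary length, short-lived extra piece'' into ``the glued surface is unchanged outside a metric $\epsilon$-neighborhood'' requires the quantitative relationship between L\'evy-net boundary length, L\'evy-net distance from the root, and metric distance in $(S,d)$ --- precisely the content of Proposition~\ref{prop::metric_net_law} and the interior-internal metric bookkeeping at the end of Section~\ref{subsec::metricsphereobservations}. A secondary but routine point is verifying that the coupling can be made jointly measurable so that $\rho$ is actually realized, which follows from the standard-Borel structure established in Section~\ref{subsec::mmsigma}.
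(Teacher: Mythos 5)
There are two genuine gaps here, and the first is a circularity. Your gluing step assumes that, conditionally on the metric net (or on the net explored past the random times at which bubbles are cut off), the complementary disks are conditionally independent with laws $\tmudlset{L_i}$ depending only on their hole boundary lengths, so that you may ``use the same disk samples on both sides.'' But the hypotheses of Theorem~\ref{thm::levynetbasedcharacterization} give conditional independence only across $\partial \fb{x}{r}$ for a \emph{fixed deterministic} $r$, given the single boundary length $L_r$; they say nothing about conditioning on the whole net, on individual holes, or at the random (jump) times at which holes are created. In the paper that stronger structure is exactly what Lemma~\ref{lem::markovatjumpstop}, Lemma~\ref{lem::tmudldefined} and Lemma~\ref{lem::explore_to_marked_point} establish, and the extension from fixed radii to jump stopping times is carried out precisely by invoking Lemma~\ref{lem::disklawcontinuity} — the statement you are trying to prove. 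So as written your argument presupposes a consequence of the lemma rather than deriving the lemma from the hypotheses.

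The second gap is in the coupling itself. The boundary-length process attached to a sample of $\tmudonel$, run inward from the disk boundary toward $y$, is a \emph{time-reversed} $\alpha$-stable CSBP started from $L$ (downward jumps; cf.\ Lemma~\ref{lem::levyreversal}); in the CSBP direction the excursion runs from the $y$ end and is \emph{conditioned to terminate at the value} $L$. The additivity property \eqref{eqn::CSBPproperty} is a statement about forward transition kernels from a given initial mass, so it does not decompose ``the $L_2$-excursion as an $L_1$-excursion plus an independent CSBP started from $L_2-L_1$''; that decomposition is unjustified for the conditioned/reversed object you actually have, and the subsequent bookkeeping (extinction time and supremum of the ``extra piece,'' bubbles it spawns far from the boundary) rests on it. The paper's route avoids both problems and is much shorter: couple the two time-reversed boundary-length processes started from $L_1$ and $L_2$ so that with probability at least $1-\epsilon$ they have merged (and are nonzero) after a small fixed additional radius $\epsilon$ — this defines $\rho'(L_1,L_2)$ and needs no L\'evy-net decomposition — and then apply the fixed-radius Markov property (hypothesis 3) at radius $r+\epsilon$: given the common boundary length there, the unexplored $y$-containing regions have the same conditional law and can be coupled identically, giving $\rho(\tmudonelset{L_1},\tmudonelset{L_2})\leq\rho'(L_1,L_2)$ for a.e.\ pair, after which the countable-dense-set argument and redefinition on a Lebesgue-null set (which you do correctly anticipate) produce the continuous modification. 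If you want to salvage your approach, you would first have to prove the ``disks glued into the net'' Markov structure from the fixed-$r$ hypothesis alone, which is essentially the content of the later lemmas and cannot be quoted here.
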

\begin{proof}
We begin by observing that a sample from $\tmudonel$ determines an instance of a time-reversed CSBP starting from $L$ and stopped when it hits $0$.  Indeed, this time-reversed CSBP is simply the continuation of the boundary length process, starting from a point at which it has value $L$, associated with the L\'evy net instance which corresponds to the metric net of $S$.  (Recall Proposition~\ref{prop::levynet_structure_determined}, which gives that the boundary length process can be measurably recovered from the unembedded metric net.)  Note that if $Y$ is a time-reversed CSBP and $t > 0$ is fixed then the law of $Y_t$ is continuous in $Y_0$ in the total variation sense.  By the Markov property of a time-reversed CSBP, this implies that for $t > 0$ fixed the law of $(Y_s : s \geq t)$ is continuous in $Y_0$ in the total variation sense.  In particular, we can couple the corresponding time-reversed CSBPs that arise from $\tmudonelset{L_1}$ and $\tmudonelset{L_2}$ so that they agree starting after time $\epsilon > 0$ with probability tending to $1$ as $L_1 \to L_2$.  Let us define~$\rho'(L_1, L_2)$ to be the smallest~$\epsilon > 0$ so that the two time-reversed CSBPs, started at $L_1$ and $L_2$, can be coupled to agree and are both non-zero after an $\epsilon$ interval of time with probability $1-\epsilon$.  It follows from what we have explained above that $\rho'(L_1, L_2)$ is continuous in $L_1$ and $L_2$ and zero when $L_1 = L_2$.  Now using the Markov property assumed by the hypotheses of Theorem~\ref{thm::levynetbasedcharacterization}, we find $\rho(\tmudonelset{L_1}, \tmudonelset{L_2}) \leq \rho'(L_1, L_2)$ for almost all~$L_1$ and~$L_2$ pairs.  Indeed, running the time-reversed CSBPs $L_1$ and $L_2$ from time $\epsilon$ corresponds to metrically exploring from the disk boundaries for $\epsilon$ distance units.  If the CSBPs have coalesced by time $\epsilon$,  then by the hypotheses of Theorem~\ref{thm::levynetbasedcharacterization}, we know that the conditional law of the unexplored region is the same for both disk instances hence we can couple them to be the same.  Thus, if a countable dense set~$Q$ of~$L$ values is obtained by i.i.d.\ sampling from Lebesgue measure, then this bound a.s.\ holds for all~$L_1$ and~$L_2$ in~$Q$. Then for almost all other $L$ values, we have that with probability one, $\rho(\tmudonelset{L'}, \tmudonelset{L}) \to 0$ as $L'$ approaches $L$ with $L'$ restricted to the set~$Q$. We obtain the desired modification by redefining $\tmudonelset{L}$, on the measure zero set of values for which this is not the case, to be the unique measure for which this limiting statement holds. (It is clear that the limiting statement uniquely determines the law of disk outside of an $\epsilon$-neighborhood of the boundary, and since this holds for any $L$, it determines the law of the overall disk.)
\end{proof}

\begin{lemma}
\label{lem::markovatjumpstop}
Suppose that $\tmustwo$ satisfies the hypotheses of Theorem~\ref{thm::levynetbasedcharacterization}. Let $\tmudonel$ denote the conditional law of $S \setminus \fb{x}{r}$, together with its interior-internal metric and measure, given that the boundary length of $\partial \fb{x}{r}$ is $L$. Then suppose $\tau$ is any stopping time for the process $L_r$ such that a.s.\ $L_r$ has a jump at time $\tau$.  (For example $\tau$ could be the first time at which a jump in a certain size range appears.) Then the conditional law of $S \setminus \fb{x}{\tau}$, given $\fb{x}{\tau}$ and the process $L_r$ up to time $\tau$, is given by $\tmudonel$ with $L= L_\tau$.
\end{lemma}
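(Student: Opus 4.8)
The plan is to reduce Lemma~\ref{lem::markovatjumpstop} to the already-established hypotheses of Theorem~\ref{thm::levynetbasedcharacterization} by a limiting argument: we approximate the jump stopping time $\tau$ by deterministic times and invoke the conditional independence / continuity statements of Lemma~\ref{lem::disklawcontinuity} together with the Markov property of the boundary length process $L_r$. The key point is that the hypotheses only give us conditional independence of $\fb{x}{r}$ and $S\setminus\fb{x}{r}$ at \emph{deterministic} radii $r$, so we must bootstrap to a class of stopping times.

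First I would recall that, by the second hypothesis of Theorem~\ref{thm::levynetbasedcharacterization} together with Proposition~\ref{prop::levynet_structure_determined}, the process $L_r$ is a time-change of an $\alpha$-stable CSBP excursion (run in reverse), so in particular it is a strong Markov process whose jump times are stopping times, and at such times $\tau$ the value $L_\tau = L_{\tau-} + \Delta$ where the jump sizes have a known law. For a deterministic $r$, the third hypothesis gives that $\fb{x}{r}$ and $S\setminus\fb{x}{r}$ are conditionally independent given $L_r$, and by definition of $\tmudonel$ the conditional law of $S\setminus\fb{x}{r}$ given $L_r=L$ is $\tmudonel$. I would then discretize: for $\delta>0$ let $\tau_\delta$ be the smallest dyadic multiple of $\delta$ that is $\ge\tau$ (or, better, handle a stopping time of the form ``first time a jump of size in $[a,b]$ occurs'' by observing that $L$ restricted to dyadic times a.s.\ determines $\tau$ in the limit). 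Using the strong Markov property of $L_r$ and the fact that $\fb{x}{r}$ is an increasing family of closed sets, one sees that conditionally on $\fb{x}{\tau_\delta}$ and $(L_r)_{r\le\tau_\delta}$, the law of $S\setminus\fb{x}{\tau_\delta}$ is $\tmudonelset{L_{\tau_\delta}}$ — this follows by writing $\tau_\delta$ as a countable union over deterministic times and applying the third hypothesis at each, exactly as in the standard passage from a Markov property at fixed times to one at discrete stopping times.

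Next I would take $\delta\to 0$. Since $L_r$ is \cadlag\ and a.s.\ has a jump at $\tau$ (by hypothesis on $\tau$), we have $\tau_\delta\downarrow\tau$ and $L_{\tau_\delta}\to L_\tau$ a.s.; moreover $\fb{x}{\tau_\delta}\downarrow\fb{x}{\tau}$ in the appropriate sense, and the internal-metric-measure spaces $S\setminus\fb{x}{\tau_\delta}$ converge to $S\setminus\fb{x}{\tau}$ in the Gromov-weak topology (using the measurability statements of Proposition~\ref{prop::ball_interior_measurable} and the fact that $\fb{x}{\tau}\setminus\fb{x}{\tau_\delta}$ has $\nu$-measure tending to zero by Lemma~\ref{lem::metricnetmeasurezero}-type considerations and right-continuity). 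Combining this convergence with the continuity of $L\mapsto\tmudonel$ established in Lemma~\ref{lem::disklawcontinuity} (w.r.t.\ the metric $\rho$ there), the conditional law of $S\setminus\fb{x}{\tau_\delta}$ given the past converges to $\tmudonelset{L_\tau}$; since it also converges to the conditional law of $S\setminus\fb{x}{\tau}$ given $\fb{x}{\tau}$ and $(L_r)_{r\le\tau}$, we conclude these two agree, which is the assertion.

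The main obstacle is the passage to the limit at the level of conditional laws: one must be careful that conditioning on the $\sigma$-algebras generated by $\fb{x}{\tau_\delta}$ and $(L_r)_{r\le\tau_\delta}$ is compatible across $\delta$ and limits correctly to conditioning on $\fb{x}{\tau}$ and $(L_r)_{r\le\tau}$. The cleanest route is a martingale / reverse-martingale convergence argument: for a fixed bounded $\mmsigma^1$-measurable test function $F$ of $S\setminus\fb{x}{\cdot}$, the quantities $\E[F(S\setminus\fb{x}{\tau_\delta})\mid \mathcal G_{\tau_\delta}]$ (with $\mathcal G_{\tau_\delta}$ the relevant $\sigma$-algebra, nested appropriately as $\delta$ decreases along a dyadic sequence) form a convergent sequence by Lévy's downward/upward theorem, and one identifies the limit using both the explicit formula $\tmudonelset{L_{\tau_\delta}}$ on one side and continuity of $F$ under the Gromov-weak convergence of the truncated disks on the other. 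Handling the joint convergence of $\fb{x}{\tau_\delta}$ to $\fb{x}{\tau}$ and the uniform (in $\delta$) integrability needed to interchange limit and expectation is where the real care is required; everything else is a routine application of the hypotheses and Lemmas~\ref{lem::metricnetmeasurezero}--\ref{lem::disklawcontinuity}.
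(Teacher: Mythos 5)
Your proposal is correct and follows essentially the same route as the paper: the paper's proof also rounds $\tau$ up to the nearest multiple of $\delta$, notes that the deterministic-radius hypothesis extends immediately to stopping times taking finitely (or countably) many values, and then passes to the $\delta\to 0$ limit by invoking the continuity of $L\mapsto\tmudonel$ from Lemma~\ref{lem::disklawcontinuity}. Your additional discussion of how the conditional laws converge simply fleshes out what the paper leaves as a "straightforward" limiting step.
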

\begin{proof}
This is simply an extension of the theorem hypothesis from a deterministic stopping time to a specific type of random stopping time. The extension to random stopping times is obvious if one considers stopping times that a.s.\ take one of finitely many values. In particular this is true for the stopping time $\tau_\delta$ obtained by rounding~$\tau$ up to the nearest integer multiple of $\delta$, where $\delta>0$. It is then straightforward to obtain the result by taking the $\delta \to 0$ limit and invoking the continuity described in Lemma~\ref{lem::disklawcontinuity}.  (Recall also the proof of Proposition~\ref{prop::jumpsoneone}.)
\end{proof}

\begin{lemma}
\label{lem::figureeightlaws}
Suppose that we have the same setup as in Lemma~\ref{lem::markovatjumpstop}. Then the union of $\partial\fb{x}{r}$ and the boundary of the ball cut off at time $\tau$ is a.s.\ a topological figure $8$ (of the sort shown in Figure~\ref{fig::exploretopinch}). The boundary length measure along the figure $8$ is a.s.\ well-defined. The total boundary length is $L_{\tau^-}$, while the boundary length of the component surrounding $y$ is $L_\tau$.
\end{lemma}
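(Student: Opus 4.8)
The plan is to deduce the figure-$8$ structure from the second hypothesis of Theorem~\ref{thm::levynetbasedcharacterization}, which identifies the metric net of $(S,d,x,y)$ with an instance of the $\alpha$-L\'evy net, and then to read off the local picture at a jump time from the breadth-first description of the L\'evy net developed in Section~\ref{sec::surfacesfromtrees}. First I would fix the coupling asserted by that hypothesis, so that the metric net of $(S,d,x,y)$ is carried homeomorphically onto the L\'evy net, respecting distances from $x$ and carrying the distinguished left and right geodesics to geodesics in $(S,d)$. Under this correspondence, each boundary $\partial\fb{x}{r}$ (for $0<r<d(x,y)$) — which lies in the metric net, being one of the level sets whose union is the net — is carried to the level set of the L\'evy net at distance $r$ from the root (the set $\partial U_s$ of Section~\ref{subsec::levynetbreadthfirst}, after the appropriate reparameterization), and the closure of the region $\fb{x}{\tau}\setminus\fb{x}{\tau-}$ cut off at time $\tau$ is carried to the closed disk glued into the hole of the L\'evy net associated (via Proposition~\ref{prop::jump_correspondence}) with the jump of the generating L\'evy excursion $X$ — equivalently of $Z_s$ (Definition~\ref{def::levycb}), equivalently of $L_r$ — that occurs at ``time'' $\tau$. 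Proposition~\ref{prop::levynet_structure_determined} and Proposition~\ref{prop::local_time_cadlag_modification} ensure that $L_r$ really has a left limit $L_{\tau-}$ at $\tau$, that $\tau$ is a bona fide jump time, and that the boundary length of this hole equals $\Delta_\tau:=L_{\tau-}-L_\tau$, the magnitude of the jump.

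Next I would carry out the topological analysis inside the L\'evy net itself, where the level sets $\partial U_s$ are completely explicit. By the construction in Definition~\ref{def::levynet} — most transparently in the bubble-gluing form of Definition~\ref{def::levynet_bubble_gluing}, illustrated in Figure~\ref{fig::bubblegluing} — the level set ``infinitesimally before'' the jump (the top of the corresponding green stripe) is exactly the union of the level set ``infinitesimally after'' the jump (the bottom of the stripe, a loop of length $L_\tau$ surrounding the dual root $y$) and the boundary of the disk glued into the hole (a loop of length $\Delta_\tau$), and these two loops meet at a single point, namely the equivalence class at which the hole is attached. The point that exactly one point is shared — not an arc, nor several points — is where the delicate input enters: here I would argue exactly as in the proof of Proposition~\ref{prop::spherelevynet}, using Proposition~\ref{prop::nodoublesidedrecord} (the height process has no decrease time, so no equivalence class contains chords of both graphs) and Proposition~\ref{prop::noisolatedmax} (its local maxima are isolated and occur at distinct heights) to rule out the degenerate configurations, together with the fact recorded in the proof of Proposition~\ref{prop::levynet_three_connected} that the metric exploration separates at most one component from $y$ at any given time. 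Combining this with Proposition~\ref{prop::boundariesarecircles} (which gives that $\partial\fb{x}{\tau}$ is itself a simple closed curve in $(S,d)$) and a Hausdorff-limit argument of the type used in the proof of Proposition~\ref{prop::metric_net_measurable} (to identify $\partial\fb{x}{\tau-}=\overline{\bigcup_{r<\tau}\partial\fb{x}{r}}$ as the pinched limit of these circles), one concludes that $\partial\fb{x}{\tau}$ together with the boundary of the ball cut off at $\tau$ is a.s.\ a topological figure $8$.

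For the boundary length statement, I would use that the L\'evy net comes equipped with a canonical good boundary length measure on each metric-ball boundary (a local-time functional of $Z$, equivalently of the height process) and on each hole boundary (the natural ``length'' measure on the corresponding looptree loop), as recorded at the end of Section~\ref{subsec::spheresassymetricholes}; restricting these to the two loops of the figure $8$ — they agree on the pinch point, both being restrictions of the boundary length measure at level $\tau-$ — gives a well-defined good measure of total mass $L_{\tau-}$, whose restriction to the $y$-surrounding loop has mass $L_\tau$ and whose restriction to the cut-off loop has mass $\Delta_\tau$. Transporting back to $(S,d,x,y)$ through the coupling of the first step then completes the proof. The main obstacle throughout is the topological step in the second paragraph: making rigorous — rather than reading it off the figures — that the two boundary loops of the figure $8$ meet at exactly one point, which is precisely why the no-decrease-time and isolated-local-maxima properties of the height process (and not merely its scaling and Markov properties) are needed at this stage.
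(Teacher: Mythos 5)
Your argument is correct and follows essentially the same route as the paper, which disposes of this lemma in one line as immediate from the definition of the L\'evy net and Proposition~\ref{prop::boundariesarecircles}: the figure-$8$ with a single pinch point and the stated lengths are read directly off the L\'evy net structure guaranteed by the second hypothesis of Theorem~\ref{thm::levynetbasedcharacterization}, most transparently via Definition~\ref{def::levynet_bubble_gluing}, where the hole boundary is glued in at a single attachment point. Your additional appeal to Propositions~\ref{prop::nodoublesidedrecord} and~\ref{prop::noisolatedmax} is harmless but more than is needed, since the bubble-gluing description (already shown equivalent in Proposition~\ref{prop::levy_net_equivalent_topology}) makes the single-shared-point structure explicit by construction.
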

\begin{proof}
The fact that the union of $\partial\fb{x}{r}$ and the boundary of the ball cut off at time $\tau$ is a.s.\ a topological figure~$8$ is immediate from the the third definition of the L\'evy net quotient (Definition~\ref{def::levynet_bubble_gluing}, see also Figure~\ref{fig::bubblegluing}) and Proposition~\ref{prop::boundariesarecircles}.  Lemma~\ref{lem::markovatjumpstop} implies that the conditional law of $S \setminus \fb{x}{\tau}$ equipped with its interior-internal metric and measure has law $\tmudonelset{L_\tau}$.  In particular, the boundary length along $\partial \fb{x}{\tau}$ is a.s.\ well-defined and the total boundary length is equal to $L_\tau$.  It is left to explain why the boundary length of the other component is well-defined and why the sum of the two boundary lengths is equal to $L_{\tau^-}$.  We note that if we let $\wt{y}$ be an independent sample from $\nu$ given everything else, then there is a positive chance given everything else that it is in the other component as $\nu$ is a good measure.  It thus follows that the other component has law which is absolutely continuous with respect to $\tmudonel$ (for some value of $L$) as it can be obtained as the complement of the filled metric ball at a stopping time on the event that it contains $\wt{y}$.  In particular, it has an a.s.\ well-defined boundary measure.  Finally, that the sum of the boundary lengths is a.s.\ equal to $L_{\tau^-}$ can be seen from the proof of Proposition~\ref{prop::levynet_structure_determined}.
 
\end{proof}

If $\tmustwo$ satisfies the hypotheses of Theorem~\ref{thm::levynetbasedcharacterization}, and $\tau$ is a stopping time as in Lemma~\ref{lem::markovatjumpstop}, then we can now define $\tmudl$ to be the conditional law of the disk cut out at time $\tau$ given that the boundary length of that disk (i.e., the size of the jump in the $L_r$ process that occurs then $r = \tau$) is $L$. The following lemma asserts that this conditional law indeed depends only on $L$ and not on other information about the behavior of the surface outside of this disk.

We define $\mudl$ to be the corresponding law when we start from $\mustwo$.  Recall that the proof that $\mustwo$ satisfies the hypotheses of Theorem~\ref{thm::levynetbasedcharacterization} does not require one to have analyzed any properties of or even to have defined $\mudl$.  Therefore at this point in the paper, we may apply the following lemma in the case of $\mustwo$ in order to give a definition of $\mudl$.  Using this approach, one does not need an argument which is separate from that in the case of $\mudonel$ to construct the boundary length measure for $\mudl$ and to deduce that the disks in the metric net of $\mustwo$ are conditionally independent given their boundary lengths.

We will show in Lemma~\ref{lem::tmudlareaexpectation} just below that $\tmudl$ (resp.\ $\mudl$) can be obtained from $\tmudonel$ (resp.\ $\mudonel$) by unbiasing its law by area.  In other words, we will prove that $\tmudl$ (resp.\ $\mudl$) is obtained from $\tmudonel$ (resp.\ $\mudonel$) by removing a marked point.  This fact is what motivates the notation.

\begin{lemma}
\label{lem::tmudldefined}
Assume that $\tmustwo$ satisfies the hypotheses of Theorem~\ref{thm::levynetbasedcharacterization}. Then the conditional probability $\tmudl$ described above is well-defined and indeed depends only on~$L$.
\end{lemma}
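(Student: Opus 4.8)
The plan is to show the stronger statement that, on the event that a jump of the relevant boundary length process occurs at $\tau$, the cut-out disk $D$ is conditionally independent of the rest of the surface given its boundary length, and that the conditional law in question is the one obtained from $\tmudonelset{L}$ by un-weighting by area. The mechanism is to re-root the metric exploration so that $D$ becomes the \emph{complement} of a filled metric ball cut off at a jump time, which is precisely the situation handled by Lemma~\ref{lem::markovatjumpstop}. Recall from Lemma~\ref{lem::figureeightlaws} that $D$ is a topological disk whose boundary $\ell$ is one loop of the figure~$8$ forming the relevant ball boundary, that every point of $\ell$ has distance exactly $\tau$ from $x$, that $D \subseteq \fb{x}{\tau}$, and that by monotonicity of filled metric balls $D$ lies in the $y$-containing component of $S\setminus\ol{B(x,r)}$ for every $r<\tau$. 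A first consequence is that every point in the interior of $D$ has distance $\geq\tau$ from $x$: a point of $D$ at distance $r<\tau$ from $x$ would lie both in $\ol{B(x,r)}\subseteq\fb{x}{r}$ and in the $y$-side $S\setminus\fb{x}{r}$, a contradiction.

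Now I would introduce an auxiliary marked point. Using hypothesis~(1) of Theorem~\ref{thm::levynetbasedcharacterization}, resample a point $z$ from $\nu$ (so that $(S,d,\nu,x,z)$ again satisfies the hypotheses of the theorem, and so that, jointly with $y$, the two points are conditionally i.i.d.\ samples from $\nu$), and work on the positive-probability event that $z$ lies in the interior of $D$; on this event $y\notin D$, since $D$ is by construction separated from $y$. Two facts must then be checked. First, for every $r<\tau$ the points $y$ and $z$ lie in the same component of $S\setminus\ol{B(x,r)}$, so the exploration of $\fb{x}{r}$ from $x$ toward $z$ coincides with the exploration toward $y$ for all $r<\tau$; consequently $\tau$ is a stopping time for the boundary length process of the $x$-toward-$z$ exploration, and because that process agrees with $L_r$ up to time $\tau$ while at $\tau$ it takes the value $L:=\length(\ell)=L_{\tau-}-L_\tau<L_{\tau-}$, it a.s.\ has a jump at $\tau$. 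Second, since the interior of $D$ lies at distance $\geq\tau$ from $x$ and is connected, and since its boundary $\ell$ is at distance exactly $\tau$ and shares with the $y$-loop only the pinch point, the interior of $D$ is exactly the $z$-containing component of $S\setminus\ol{B(x,\tau)}$. Hence, \emph{computed with $z$ as the target}, the set $\fb{x}{\tau}$ equals $S$ minus the interior of $D$ — so it records all of the information about the surface outside $D$ — while $S\setminus\fb{x}{\tau}$ is exactly $D$ itself.

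Given these identifications, applying Lemma~\ref{lem::markovatjumpstop} to the $x$-toward-$z$ exploration at the stopping time $\tau$ yields that, conditionally on everything outside $D$ (together with the boundary length process up to $\tau$), the law of $D$ decorated by the marked point $z$ is $\tmudonelset{L}$, an object that depends on $L$ only; that the location of $y$ adds no further information follows because, given the complement of $D$, the point $y$ is just a $\nu$-sample from that complement. Since $z$, conditioned to lie in $D$, is distributed as $\nu$ restricted to $D$ and normalized, this says that the (conditional) law of $D$ weighted by $\nu(D)$ and then decorated by a uniformly chosen interior point equals $\tmudonelset{L}$; as $\nu(D)\in(0,\infty)$ a.s.\ (because $\nu$ is a good measure, $D$ has nonempty interior, and $\nu(S)<\infty$), un-weighting by $\nu(D)$ recovers the conditional law of $D$ uniquely. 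Therefore this conditional law depends only on $L$, and not on $\tau$ or on any other feature of the surface outside $D$, which is exactly the assertion that $\tmudlset{L}$ is well-defined. I expect the main obstacle to be the topological bookkeeping underlying the second fact above — verifying carefully from the definition of the filled ball and Lemma~\ref{lem::figureeightlaws} that the interior of $D$ is genuinely a single complementary component of $\ol{B(x,\tau)}$ and that the pinch point does not create a passage joining $D$ to the $y$-side — together with confirming that $\tau$ remains both a stopping time for, and a.s.\ a jump time of, the re-rooted boundary length process (so that Lemma~\ref{lem::markovatjumpstop}, and via Lemma~\ref{lem::disklawcontinuity} its extension from deterministic to random times, is legitimately applicable).
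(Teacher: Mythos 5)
Your topological groundwork is fine (the interior of $D$ really is the $z$-containing component of $S\setminus\ol{B(x,\tau)}$, and the two explorations agree on $[0,\tau)$), but the central step fails twice. First, $\tau$ is \emph{not} a stopping time for the boundary length process of the $x$-toward-$z$ exploration: it is defined through the auxiliary point $y$ (on the event $z\in\mathrm{int}(D)$ it is precisely the time at which the $z$-exploration disconnects $y$ from $z$, combined with the original stopping rule), and agreement of the two processes on $[0,\tau)$ does not make $\{\tau\le t\}$ measurable for the $z$-process. For instance, if $\tau$ is the first time $L_r$ jumps by at least $\epsilon$, the corresponding jump of the $z$-process at time $\tau$ has size $L_\tau$, which carries no intrinsic marker distinguishing it from any other jump. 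So $\tau$ is a stopping time only for a filtration enlarged by $y$, and Lemma~\ref{lem::markovatjumpstop} says nothing about that filtration; your closing remark flags this but does not resolve it, and it cannot be waved away.

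Second, and more seriously, the conclusion you extract is false as stated: conditioning on the location of $y$ (equivalently, on $\{z\in\mathrm{int}(D)\}$ with $y\notin D$ built into the construction of $D$) size-biases the disk by the factor $\nu(S)^{-1}=(\nu(D)+\nu(S\setminus D))^{-1}$. Test it against the Brownian map itself: there, given everything outside $D$, the disk has law $\mudlset{L}$ and $z$ is a $\nu/\nu(S)$-sample, so given additionally $\{z\in\mathrm{int}(D)\}$ the pair $(D,z)$ has Radon--Nikodym derivative proportional to $\nu(S)^{-1}$ with respect to $\mudonelset{L}$ --- it is \emph{not} $\mudonelset{L}$. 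Your justification that ``the location of $y$ adds no further information because, given the complement of $D$, $y$ is a $\nu$-sample from that complement'' ignores that the probability that $y$ lies in the complement at all depends on $\nu(D)$; this is exactly the size-biasing the paper quantifies in Lemma~\ref{lem::tmudlareaexpectation}. The same $\nu(S)^{-1}$ factor is dropped again in your unweighting step (you unweight by $\nu(D)$ where the correct conditional weight is $\nu(D)/\nu(S)$), so the two omissions cancel and the relation you would end with --- that $\tmudl$ is $\tmudonel$ unweighted by area --- happens to be correct; but as written the argument passes through a false intermediate identity, and repairing it means proving a Markov property for the $y$-enlarged filtration with the explicit $\nu(S)^{-1}$ bias, which is essentially the computation the paper carries out in Lemma~\ref{lem::tmudlareaexpectation} (and which there takes the present lemma as input, so one must also avoid circularity). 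The paper's own proof of this lemma sidesteps identifying the law entirely: it resamples $y$ onto the two lobes of the figure $8$, uses the conditional independence of the lobes given which one contains $y$, and concludes by a mixing argument for the iterated resampling that the conditional law of the non-$y$ lobe is determined by $L$ alone.
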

\begin{proof}
Let $L_1,L_2$ be the boundary lengths of the two disks which together form the figure $8$ which arises at the stopping time~$\tau$.  If one explores up until the stopping time~$\tau$, one can resample the target point~$y$ from the restriction of~$\nu$ to the union of the two disks pinched off at time $\tau$.  Indeed, this follows since the conditional law of $y$ given $(S,d,\nu,x)$ is $\nu$.  Since $\nu$ is a.s.\ a good measure, there will be some positive probability that $y$ ends up on each of the two sides. The theorem hypotheses imply that the conditional law of each of the two disks bounded by the figure $8$, on the event that $y$ lies in that disk, is given by $\tmudonel$, independently of any other information about the surface outside of that disk. This implies in particular that the two disks are independent of each other once it has been determined which disk contains $y$. Now, one can resample the location of $y$, resample the disk containing $y$ from $\tmudonel$ (with $L=L_1$ or $L=L_2$ depending on which disk contain $y$), resample the location of $y$, resample the disk containing $y$ again, etc.

The proof will thus be complete upon showing that this Markov chain has a unique invariant distribution which depends only on $L_1$ and $L_2$.  To see this, we can consider the same chain but with the initial distribution consisting of two independent samples from $\tmudonel$, one with $L=L_1$ and the other with $L=L_2$.  We claim that the chain with this initial distribution converges to a limit as the number of resampling steps goes to $\infty$.  Indeed, the reason is that for any pair of initial configurations and $\epsilon > 0$, it is easy to see that there exists $N \in \N$ (depending only on the relative masses of the pairs of disks) so that if one performs the resampling step $n \geq N$ times then the total variation distance between the resulting laws will be at most $\epsilon$.  This fact also implies that the limiting law as the number of resampling steps goes to $\infty$ is the unique invariant distribution for the Markov chain.  Moreover, this limiting law is a function only of~$L_1$ and~$L_2$ because the initial distribution used to define it was a function only of~$L_1$ and~$L_2$.

These assumptions therefore determine the form of $\tmudl$. (The explicit relationship between~$\tmudl$ and $\tmudonel$ will be derived in the proof of Lemma~\ref{lem::tmudlareaexpectation} just below.)
\end{proof}

\begin{lemma}
\label{lem::explore_to_marked_point}
Given the $L_r$ process describing the boundary length of $\partial \fb{x}{r}$, the conditional law of the disks in the complement of the metric net are given by conditionally independent samples from $\tmudlset{L_i}$ where $L_i$ are the lengths of the hole boundaries (which in turn correspond to the jumps of $L_r$).
\end{lemma}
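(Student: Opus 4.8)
The plan is to establish the statement first for the finitely many disks whose boundary length exceeds a threshold $\epsilon$, and then to let $\epsilon\downarrow 0$. By the second hypothesis of Theorem~\ref{thm::levynetbasedcharacterization} together with Proposition~\ref{prop::levynet_structure_determined}, the boundary length process $L_r$ is (up to a time change) an excursion of an $\alpha$-stable CSBP, so it a.s.\ has only finitely many jumps of magnitude at least $\epsilon$; these jumps are in bijection with the holes of the net of boundary length at least $\epsilon$, and each corresponds to a disk cut off as the filled ball grows past the jump time. Fix $\epsilon>0$ and work on the event $\{N^\epsilon=N\}$. Set $\tau_0=0$ and, inductively, let $\tau_{k+1}$ be the first time after $\tau_k$ at which $L_r$ has a jump of magnitude at least $\epsilon$; each $\tau_k$ is a stopping time for $L_r$ that a.s.\ lands on a jump, so Lemma~\ref{lem::markovatjumpstop} applies at each $\tau_k$. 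Let $D_1,\dots,D_N$ be the disks cut off at $\tau_1<\dots<\tau_N$ and let $\Delta_k=L_{\tau_k-}-L_{\tau_k}$, which by Lemma~\ref{lem::figureeightlaws} is the boundary length of $D_k$.

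The core of the argument is to ``peel off'' the disks one at a time. Let $\CF_k=\sigma(\fb{x}{\tau_k},\,L|_{[0,\tau_k]})$, noting that $D_1,\dots,D_k$ (which lie inside $\fb{x}{\tau_k}$) and $L_{\tau_k}$ are $\CF_k$-measurable, while $D_{k+1},\dots,D_N$ and $L|_{(\tau_k,\infty)}$ are measurable functions of $S\setminus\fb{x}{\tau_k}$. By Lemma~\ref{lem::markovatjumpstop}, the conditional law of $S\setminus\fb{x}{\tau_k}$ given $\CF_k$ is $\tmudonelset{L_{\tau_k}}$, hence depends on $\CF_k$ only through $L_{\tau_k}$; equivalently, $S\setminus\fb{x}{\tau_k}$ is conditionally independent of $\CF_k$ given $L_{\tau_k}$. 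Since $L_{\tau_k}$ is $L|_{[0,\tau_k]}$-measurable, a standard manipulation of conditional independence upgrades this to
\[ \bigl(D_{k+1},\dots,D_N,\,L|_{(\tau_k,\infty)}\bigr)\ \perp\ \bigl(D_1,\dots,D_k\bigr)\ \giv\ L,\]
and in particular $D_{k+1}\perp(D_1,\dots,D_k)\giv L$. Moreover, applying Lemma~\ref{lem::tmudldefined} at the stopping time $\tau_{k+1}$ shows that the conditional law of $D_{k+1}$ given $L$ (which determines $\Delta_{k+1}$) is $\tmudlset{\Delta_{k+1}}$: the cut-off disk depends only on its own boundary length and not on the surrounding surface, and $L_r$ is a function of the surface outside that disk. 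Running $k$ from $0$ to $N-1$ and using the elementary fact that ``each new variable being conditionally independent of the earlier ones given $L$'' implies joint conditional independence given $L$, we conclude that on $\{N^\epsilon=N\}$, conditionally on $L$, the disks $D_1,\dots,D_N$ are mutually independent with $D_k$ distributed as $\tmudlset{\Delta_k}$.

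Finally, we let $\epsilon\downarrow0$. The holes of the net are in bijection with the (countably many) jumps of $L_r$, and any finite set of holes consists of holes of boundary length at least some $\epsilon>0$; the conclusion of the previous paragraph, valid for every $\epsilon$, therefore shows that conditionally on $L$ the full countable collection of disks filling the holes is mutually independent, with the disk filling a hole of boundary length $L_i$ having law $\tmudlset{L_i}$. This is precisely the assertion of the lemma.

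The step I expect to be the main obstacle is the conditional-independence bookkeeping in the second paragraph. One must be careful that conditioning on the entire process $L_r$ does not destroy the independence of the newly cut-off disk from the previously explored ones, even though part of the information in $L_r$ (its values for $r>\tau_k$) is carried by the downstream surface $S\setminus\fb{x}{\tau_k}$ rather than by $\CF_k$; this is exactly where the ``depends only on $L_{\tau_k}$'' feature of Lemma~\ref{lem::markovatjumpstop} is used, together with the observation that the internal geometry of a cut-off disk does not affect the boundary lengths $L_r$, so that Lemma~\ref{lem::tmudldefined} may legitimately be invoked with the conditioning including all of $L$. The remaining ingredients---finiteness of $N^\epsilon$, and the assembly of the finite-$\epsilon$ statements into one about the countable family of all disks---are routine.
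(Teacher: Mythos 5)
Your proposal is correct and follows essentially the same route as the paper, which simply records this lemma as a consequence of Lemma~\ref{lem::tmudldefined} (together with Lemma~\ref{lem::markovatjumpstop}); your peeling induction over the jumps of size at least $\epsilon$ and the $\epsilon \downarrow 0$ limit are exactly the routine bookkeeping the paper leaves implicit. The conditional-independence manipulations you flag as the main obstacle are standard and valid as you use them, so no gap.
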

\begin{proof}
We will deduce the result from Lemma~\ref{lem::tmudldefined} as follows.  Fix a value of $k \in \N$ and for each $j \in \N$ we let $\tau_{j,k}$ be the $j$th time that $L_r$ has a downward jump of size at least $2^{-k}$.  Lemma~\ref{lem::tmudldefined} applied with the stopping time $\tau = \tau_{1,k}$ implies that, on $\tau_{1,k} < \infty$, we have that given $L_{\tau_{1,k}}$ and $L_{1,k} = L_{\tau_{1,k}^-} - L_{\tau_{1,k}}$ the conditional laws of $S \setminus \fb{x}{\tau_{1,k}}$ and the component separated from $y$ at time $\tau_{1,k}$ equipped with their interior-internal metrics and the measure given by the restriction of $\nu$ are respectively given by $\tmudonelset{L_{\tau_{1,k}}}$ and $\tmudlset{L_{1,k}}$.  In fact, it is not difficult to see that the same statement holds when we condition on $(L_r : r \leq \tau_{1,k})$, which we note determines $L_{1,k}$ and $L_{\tau_{1,k}}$.  By continuing the exploration and iterating the argument, we see that given the values of $L_{j,k} = L_{\tau_{j,k}^-} - L_{\tau_{j,k}}$ for $j \geq 1$ the components which are separated from $y$ at the times $\tau_{j,k}$ are conditionally independent samples from $\tmudlset{L_{j,k}}$.  The same statement in fact holds when we condition on the entire realization of $L_r$, which in turn determines the $L_{j,k}$.  The result thus follows by taking a limit as $k \to \infty$.
\end{proof}

\begin{lemma}
\label{lem::tmudlareaexpectation}
Assume that~$\tmustwo$ satisfies the hypotheses of Theorem~\ref{thm::levynetbasedcharacterization}, and that~$\tmudl$ and~$\tmudonel$ are defined as above. Let~$A$ be the total area measure of a sample from~$\tmudl$.  Then the~$\tmudl$ expectation of~$A$ is given by a constant times~$L^{2\alpha - 1}$. Moreover, the Radon-Nikodym derivative of~$\tmudonel$ w.r.t.\ $\tmudl$ (where one ignores the marked point, so that the two objects are defined on the same space) is hence given by a constant times~$A/L^{2\alpha - 1}$.
\end{lemma}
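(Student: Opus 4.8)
The plan is to prove the statement in two stages: first the first--moment identity $\E_{\tmudl}[A] = c_\alpha L^{2\alpha-1}$, and then the identification of $\tmudonel$ (with its marked point forgotten) as an area--reweighting of $\tmudl$; the Radon--Nikodym claim follows by combining these, since the total mass produced by an area--reweighting of $\tmudl$ is exactly $\E_{\tmudl}[A]$.

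First I would pin down the relation between $\tmudonel$ and $\tmudl$. By Proposition~\ref{prop::metric_net_law}, a sample from $\tmustwo$ is obtained from the $\alpha$-L\'evy net by gluing conditionally independent $\tmudl$-disks into its holes, and by Lemma~\ref{lem::metricnetmeasurezero} the net itself carries no $\nu$-mass; so by Lemma~\ref{lem::explore_to_marked_point} all of the area lives in the glued disks. Using hypothesis~(1) of Theorem~\ref{thm::levynetbasedcharacterization} to resample $y$ from $\nu/\nu(S)$, one sees that $y$ lands in a given bubble with probability proportional to that bubble's area, conditionally on the net and the disks. Because $\tmustwo$ is an infinite measure, one may restrict attention to the part of it on which the complement of the relevant bubble has arbitrarily large total area, making this ``proportional to area'' statement exact in the limit; combined with the figure-$8$ resampling argument behind Lemma~\ref{lem::tmudldefined} (identifying the conditional law of a bubble of boundary length $L$ given that it contains $y$ with $\tmudonelset{L}$), this shows $\frac{d\tmudonel}{d\tmudl} = A$ up to normalization, with the normalizing constant being $\E_{\tmudl}[A]$.

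Next I would compute $g(L) := \E_{\tmudlset{L}}[A]$. Applying the decomposition above to an $\tmudonel$-sample, its total area is $\sum_i A_i$ where, given the bubble boundary lengths $L_i$, the $A_i$ are independent with $\E[A_i \mid L_i] = g(L_i)$, and the $L_i$ are the (downward) jumps of the boundary-length process one sees exploring from $\partial\fb{x}{r}$ inward to $y$ --- a time-reversed $\alpha$-stable CSBP excursion run from $L$ to $0$ (Proposition~\ref{prop::Zisalphastable}, Lemma~\ref{lem::levyreversal}, Remark~\ref{rem::brownian_map_boundary_length}). The scale invariance of this excursion (boundary length scaled by $C$ corresponds to radius scaled by $C^{\alpha-1}$, via Proposition~\ref{prop::strongstablecsbp}) forces $g(L) = g(1)L^{\theta}$ for some $\theta$; likewise $g_{\mathrm{out}}(L) := \E_{\tmudonel}[A]$ is a power in $L$, and the area-reweighting of the previous paragraph gives a relation among $g$, $g_{\mathrm{out}}$ and $\E_{\tmudl}[A^{2}]$. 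To identify $\theta$ I would explore one infinitesimal shell at a time: this makes $g_{\mathrm{out}}$ the solution of $-\mathcal{G} g_{\mathrm{out}}(c) = \E_c\big[\sum_{\text{jumps in } dr} g(\text{jump})\big]/dr$, with $\mathcal{G}$ the generator of the time-reversed CSBP, the right-hand side evaluated from the explicit jump rate of Lemma~\ref{lem::levyreversal} and a Beta integral that converges precisely because $\alpha\in(1,2)$. Together with the analogous second-moment recursion, this should close the system and yield $\theta = 2\alpha - 1$ (consistently, $\alpha = 3/2$ gives exponent $2$, matching the $C^{4}$ versus $C^{2}$ scaling of area versus boundary length in the Brownian map); feeding this back, $\E_{\tmudl}\big[\tfrac{d\tmudonel}{d\tmudl}\big] = \E_{\tmudl}[A] = c_\alpha L^{2\alpha-1}$.

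The main obstacle is this last step: actually pinning $\theta$ rather than merely showing area scales like some power of boundary length. Scale invariance and the net decomposition are each consistent with any exponent, so extracting $2\alpha-1$ forces one to use the fine structure of the $\alpha$-stable CSBP excursion --- its jump measure and the action of its generator on powers --- and to track carefully how bubble areas accumulate along the exploration, including reconciling the ``depth-first'' order in which bubbles appear in the construction of $\tmudl$ with the ``breadth-first'' order in which they are met exploring from $\partial\fb{x}{r}$ toward $y$. A secondary point is justifying the exactness of the ``proportional to area'' step, which leans on the infiniteness of $\tmustwo$ and on the continuity of $L\mapsto\tmudonel$ from Lemma~\ref{lem::disklawcontinuity}.
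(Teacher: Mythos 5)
Your first half (identifying $d\tmudonel/d\tmudl$ as $A$ up to normalization via the figure-$8$ pinch and the $y$-resampling hypothesis) is essentially the paper's argument, and the detour through ``infinite measure / large complementary area'' is unnecessary: the resampling property already gives, exactly, that given the two disk areas $A_1,A_2$ at a pinch the point $y$ lies in the first with conditional probability $A_1/(A_1+A_2)$. The genuine gap is in your second half, in two places. First, you derive the power-law form $\E_{\tmudlset{L}}[A]=g(1)L^{\theta}$ from ``scale invariance,'' but the hypotheses of Theorem~\ref{thm::levynetbasedcharacterization} contain no scaling assumption on the conditional disk laws: as the paper stresses just before Lemma~\ref{lem::disklawcontinuity}, one cannot deduce that $\tmudonel$ (or $\tmudl$) depends on $L$ in a scale-invariant way from these hypotheses — only a continuity-in-$L$ statement is available. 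Scale invariance of the CSBP excursion driving the exploration constrains the law of the boundary lengths $L_i$, but says nothing about how the unknown function $g$ transforms, so it does not force $g$ to be a power. Second, your mechanism for pinning $\theta$ (an infinitesimal-shell generator identity plus a ``second-moment recursion'') is not carried out and does not obviously close: the reweighting relation $\E_{\tmudonel}[A]=\E_{\tmudl}[A^2]/\E_{\tmudl}[A]$ introduces the second moment as a fresh unknown with no further equation for it, and you yourself flag this step as the main obstacle.

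The missing idea is a single two-way computation that the paper performs at a stopping time when $L_r$ jumps and the boundary pinches into loops of lengths $a$ and $b$ with $a+b=c$. On one hand, the explicit jump density of the time-reversed $\alpha$-stable excursion (Lemma~\ref{lem::levyreversal}, constant times $a^{-\alpha-1}(b/c)^{\alpha-2}$ for a jump of size $a$) gives directly that the conditional odds that $y$ lies in the $a$-loop rather than the $b$-loop are $a^{2\alpha-1}/b^{2\alpha-1}$. On the other hand, the Radon--Nikodym identity you already established forces these same odds to equal $\E_{\tmudlset{a}}[A]/\E_{\tmudlset{b}}[A]$. Equating the two for generic pairs $(a,b)$ yields, in one stroke, both that $L\mapsto\E_{\tmudlset{L}}[A]$ is a power law and that the exponent is $2\alpha-1$ — no scale invariance of the disk laws, no generator computation, and no second-moment input is needed. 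Without this comparison (or some substitute exploiting the explicit jump law to determine which loop contains $y$), your outline does not determine the exponent and so does not prove the lemma.
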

\begin{proof}
Fix $L_0 > 0$ and suppose that we start with an instance of $\tmudonelset{L_0}$.  Let $L_r$ be the evolution of the boundary length of the exploration towards the marked point $y$.  Suppose that we evolve~$L_r$ up to $\tau = \inf\{r \geq 0 : (L_{r^-} - L_r)/L_r \geq 1/4\}$.  At time $\tau$, the boundary length $c = L_r$ is divided into two components, of lengths $a$ and $b$ with $a+b=c$.  (That is, $c = L_{r^-}$ and $\{a,b\} = \{L_r,L_{r^-} - L_r\}$.)

Set $L \in \{a,b\}$ to be the boundary length of the component surrounding $y$. By Lemma~\ref{lem::markovatjumpstop}, the conditional law of the disk in this component is given by $\tmudonel$.  Following Lemma~\ref{lem::tmudldefined}, we let $\tmudl$ denote the probability measure that describes the conditional law of the metric disk inside the loop that {\em does not} surround $y$, when $L \in \{a, b \}$ is taken to be the length of {\em that} loop.  (Again, we have not yet proved this is equivalent to the measure $\mudl$ defined from the Brownian map.)

If we condition on the lengths of these two pieces --- i.e., on the pair $\{a,b\}$ (but not the values of $L_r$ and $L_{r^-} - L_r$) --- then what is the conditional probability that $y$ belongs to the $a$ loop versus the $b$ loop?  We will address that question in two different ways.  First of all, if $p$ is that probability, then we can write the overall measure for the pair of surfaces as the following weighted average of probability measures
\[ p \tmudonelset{a} \otimes \tmudlset{b} + (1-p) \tmudlset{a} \otimes \tmudonelset{b}.\]
Now, observe that if we condition on the pair of areas $A_1, A_2$, then the resampling property for $y$ (recall the proof of Lemma~\ref{lem::tmudldefined}) implies that the conditional probability that $y$ is in the first area is $A_1/(A_1 + A_2)$.  This implies the following Radon-Nikodym derivative formula for two (non-probability) measures
\begin{equation}
\label{eqn::radonratio}
\frac{ d \left[ p \tmudonelset{a} \otimes \tmudlset{b} \right] }{d \left[(1-p) \tmudlset{a} \otimes \tmudonelset{b} \right]} = \frac{A_1}{A_2}.
\end{equation}
From this, we may deduce (by holding one of the two disks fixed and letting the other vary) that the Radon-Nikodym derivative of $\tmudonel$ w.r.t.\ $\tmudl$ (ignoring the marked point location) is given by a constant times the area $A$ of the disk; since both objects are probability measures, this Radon-Nikodym derivative must be the ratio $A/\E_{\tmudl}[ A]$. Plugging this back into~\eqref{eqn::radonratio}, we find that
\begin{equation}
\label{eqn::loopprobratio}
\frac{p}{1-p} = \frac{\E_{\tmudlset{a}} [A]}{\E_{\tmudlset{b}} [A]}.
\end{equation}
In other words, the probability that $y$ lies in the disk bounded by the loop of length $L \in \{a,b\}$ (instead of the other disk) is given by a constant times the $\tmudl$-expected area of a disk bounded by that loop.

Next, we note that there is a second way to determine $p$. Namely, we may directly compute the relative likelihood of a jump by $a$ versus a jump by $b$ in the time-reversal of an $\alpha$-stable L\'evy excursion, given that one has a jump of either $a$ or $b$.  By Lemma~\ref{lem::levyreversal}, the ratio of these two probabilities is $a^{2\alpha-1}/b^{2\alpha-1}.$  Plugging this into~\eqref{eqn::loopprobratio} gives
\[\frac{a^{2\alpha-1}}{b^{2\alpha-1}} = \frac{\E_{\tmudlset{a}} [A]}{\E_{\tmudlset{b}} [A]}.\]
Since this is true for generic values of $a$ and $b$, we conclude that $\E_{\tmudl}[A]$ is given by a constant times $L^{2\alpha - 1}$.
\end{proof}

\begin{figure}[ht!]
\begin{center}
\includegraphics[scale=1]{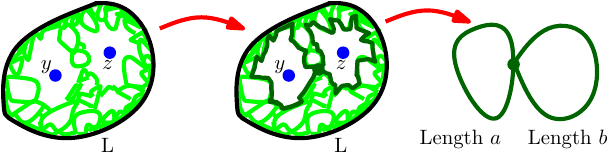}
\caption{\label{fig::exploretopinch} The intersection of the metric net from the boundary to $y$ with the metric net from the boundary to $z$. Intuitively, these are the points one finds as one continually ``explores'' points (in order of distance from the boundary) within the unexplored component containing both $y$ and $z$, stopping at the first time that $y$ and $z$ are separated.  At the time when $y$ and $z$ are separated, the boundaries of the component containing $y$ and the component containing $z$ are disjoint topological circles, each of which comes with a length; we denote the two lengths by $a$ and $b$.
}
\end{center}
\end{figure}

As discussed above, at a time when a point $z$ is disconnected from the target point~$y$, the boundary has the form of a figure $8$ with two loops of distinct lengths~$a$ and~$b$, as shown in Figure~\ref{fig::exploretopinch}.  At this time the process~$L_r$ jumps from some value $c = a+b$ down to~$a$ (if the marked point~$y$ is in the component of boundary length~$a$) or~$b$ (if~$y$ is in the component of boundary length~$b$).  We define a {\em big jump} in the process $L_r$ associated to $\tmudonel$ to be a jump whose lower endpoint is less than half of its upper endpoint. A big jump corresponds to a time when the marked point lies in the disk bounded by the {\em shorter} of the two figure $8$ loops.
  
In what follows, it will sometimes be useful to consider an alternative form of exploration in which the endpoint $y$ is not fixed in advance. We already know that if let $y_1, y_2, \ldots$ be independent samples from $\nu$, then the unembedded metric nets targeted at those points should be in some sense coupled L\'evy nets, which agree up until the first time at which those points are separated. Indeed, there will be countably many times at which one of those points is first disconnected from the other, as illustrated in Figure~\ref{fig::exploretopinch}. This union of all such explorations can be understood as sort of a branching exploration process, where each time the boundary is ``pinched'' into two (forming a figure $8$, as in Figure~\ref{fig::exploretopinch}) the exploration continues on each of the two sides.

In what follows, it will be useful to consider an alternative form of exploration in which, at each such pinch point, the exploration always continues in the {\em longer} of these two loops, rather than continuing in the loop that contains some other predetermined point~$y$. That is, we choose the exploration so that the corresponding boundary length process~$L_r$ has no ``big jumps'' as we defined them above.  It is clear that each~$y_i$ will a.s.\ fail to lie in the bigger loop of a figure~$8$ at some point.  Indeed, the area of the bigger loop a.s.\ tends to $0$ as the exploration continues.  This implies that for each $\epsilon > 0$ and $N$ there exists $R > 0$ so that if $r \geq R$ then the probability that any of $y_1,\ldots,y_N$ are in the region into which the exploration continues after time $r$ is at most $\epsilon$.  Hence a.s.\ all of the points~$y_i$ will lie in disks that are cut off by this exploration process in finite time.

Let~$A_r$ denote the unexplored disk (in which the exploration continues) that remains after~$r$ units of exploration of this process.  Then~$\ol{A}_r$ is a closed set, which is the closure of the set of points~$y_i$ with the property that the L\'evy net explorations targeted at those points have no big jumps before time~$r$.  The intersection of~$\ol{A}_r$, over all $r$, is thus a closed set that we will call the {\em center} of the disk. We do not need to know this {\em a priori} but we expect that the center contains only a single point. Note that the center can be defined if the surface is sampled from either~$\tmudl$ or~$\tmudonel$ (and in the latter case its definition does not depend on the marked point~$y$). We refer to the modified version of the L\'evy net as the {\em center net} corresponding to the surface.  We are now going to prove an analog of Lemma~\ref{lem::explore_to_marked_point} for the center net.

\begin{lemma}
\label{lem::exploretocenter}
Given the $M_r$ process describing the center net corresponding to a sample from $\tmudl$, the conditional law of the disks in the complement of the net are given by conditionally independent samples from $\tmudlset{M_i}$ where $M_i$ are the lengths of the hole boundaries.
\end{lemma}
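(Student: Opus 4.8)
The plan is to mimic the proof of Lemma~\ref{lem::explore_to_marked_point}, which in turn rested on Lemma~\ref{lem::tmudldefined}, but now with the center net playing the role of the (ordinary, target-directed) L\'evy net. The essential point is that the center net is built by the same branching-exploration mechanism as the metric net, except that at each ``pinch'' (each figure~$8$ as in Figure~\ref{fig::exploretopinch}) we declare the exploration to continue in the \emph{longer} loop rather than in the $y$-containing loop. So the first step is to record that, for a sample from $\tmudl$, the center net is almost surely well-defined (each $y_i$ sampled from $\nu$ a.s.\ lies in the shorter loop at some pinch time since $\nu$ is a good measure, so the $M_r$ process terminates and the complement of the center net is a countable collection of disks, one per jump of $M_r$). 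This is exactly the discussion preceding the lemma, so nothing new is needed there.

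Next I would establish the analog of Lemma~\ref{lem::tmudldefined} for the center net: namely that, given $M_r$, the disk cut off at a jump of size $L$ has conditional law depending only on $L$. The argument is the same resampling/mixing argument. Fix a stopping time $\tau$ for $M_r$ at which a jump a.s.\ occurs (e.g.\ the first pinch whose shorter loop has boundary length at least an $\epsilon$ fraction of the total); at such a time the boundary is a figure~$8$ with loops of lengths $a\le b$ (here $a$ is the \emph{shorter} loop, which is the one being cut off, and $b$ the longer, in which the center exploration proceeds). One resamples a point from the restriction of $\nu$ to the union of the two disks: with positive probability it lands in the $a$-disk, and on that event the center-net exploration and the target-net exploration agree (since a big jump has just occurred), so by Lemma~\ref{lem::markovatjumpstop} the conditional law of the $a$-disk is $\tmudonelset{a}$, independent of everything outside it. Running the same mixing dynamics as in the proof of Lemma~\ref{lem::tmudldefined} --- resample $y$, resample the disk containing $y$ from $\tmudonel$, resample $y$, and so on --- one deduces that the conditional law of the cut-off disk given its boundary length $L$ and $M_r$ up to the cut time is exactly $\tmudl$ (the same measure identified in Lemma~\ref{lem::tmudldefined}), depending only on $L$ and nothing else about the surface outside. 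One should also invoke the continuity in Lemma~\ref{lem::disklawcontinuity} to pass from finitely-valued stopping times to general $\tau$, just as in Lemma~\ref{lem::markovatjumpstop}.

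Finally, to get \emph{joint} conditional independence of all the complementary disks given the whole $M_r$ process, I would iterate: condition on $M_r$, order the jumps, and peel off the disks one at a time using the strong Markov property of the branching exploration at each pinch time. Each peeling step gives a disk of law $\tmudlset{M_i}$ conditionally independent of the remaining (still-to-be-explored) region and of the already-peeled disks, by the previous paragraph. Taking the limit over all jumps (countably many) yields that the full collection is conditionally independent given $M_r$, with the $i$th disk distributed as $\tmudlset{M_i}$. This is the statement of Lemma~\ref{lem::exploretocenter}.

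The main obstacle is the verification that the center net really does enjoy a strong-Markov/branching-independence structure at the pinch times, so that the peeling in the last step is legitimate: one must be sure that, after cutting off the shorter loop at a pinch, the conditional law of the remaining surface inside the longer loop (given its boundary length) is again ``the same kind of object'' to which one can reapply the argument. This is morally built into the L\'evy net structure (Proposition~\ref{prop::metric_net_law} and the hull-process description of Figure~\ref{fig::bubblegluing}, together with the CSBP branching property~\eqref{eqn::CSBPproperty}), but phrasing it cleanly --- so that one can reapply Lemma~\ref{lem::markovatjumpstop}/Lemma~\ref{lem::tmudldefined} to the longer-loop surface --- requires care, since the hypotheses of Theorem~\ref{thm::levynetbasedcharacterization} are stated for target-directed explorations. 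The fix is to observe that on the (positive-probability) event that a freshly resampled point from $\nu$ lands in the longer loop, the center exploration and the $y$-directed exploration coincide up to the next pinch, which reduces everything to the already-established target-directed statements.
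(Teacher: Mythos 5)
Your proposal is correct and rests on the same essential idea as the paper's proof: on a positive-probability event the center exploration coincides with a marked-point exploration, which lets you import the already-established target-directed statements (Lemma~\ref{lem::markovatjumpstop}, Lemma~\ref{lem::tmudldefined}, Lemma~\ref{lem::explore_to_marked_point}). The paper's argument is a one-step version of yours --- it conditions on the event that the two explorations agree up to a fixed time (equivalently, that the marked-point boundary length process has no big jumps by then, an event measurable with respect to that process, so the conditioning does not disturb the conditional independence in Lemma~\ref{lem::explore_to_marked_point}) and cites that lemma once --- whereas you re-run the resampling/mixing argument pinch-by-pinch and then peel iteratively, which is more laborious but reaches the same conclusion.
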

\begin{proof}
We first suppose that $(S,d,\nu,y)$ is sampled from $\tmudonel$.  Consider the exploration from $\partial S$ towards $y$.  Fix $r > 0$ and condition on the event that $L|_{[0,r]} = M|_{[0,r]}$ where $L$ (resp.\ $M$) denotes the boundary length process associated with the exploration towards $y$ (resp.\ center exploration).  We note that this event is $L|_{[0,r]}$-measurable, so that if we condition on it then Lemma~\ref{lem::explore_to_marked_point} implies that the holes cut out by the exploration up to time $r$ are conditionally independent samples from $\tmudlset{M_i}$ where $M_i$ are the hole boundary lengths.  We also note that conditioning on this event is the same as weighting the law of $(S,d,\nu,y)$ by a normalizing constant times $A_r/A$ where $A_r$ (resp.\ $A$) is the area of the $y$-containing complementary component of the center exploration at time $r$ (resp.\ total area of $S$).  In particular, the marginal law of $(S,d,\nu)$ given this event is the same as the law of a sample from~$\tmudl$ weighted by a normalizing constant times~$A_r$ where here~$A_r$ denotes the area of the unexplored region in the center exploration up to time~$r$.

The above implies that the following is true.  Suppose that $(S,d,\nu)$ is sampled from $\tmudl$, $r > 0$, and $y$ is picked in $S$ from $\nu$.  If we condition on the event that the metric exploration towards $y$ agrees with the center exploration up to time $r$, then the holes cut out are conditionally independent samples from $\tmudlset{M_i}$ where $M_i$ are the hole boundary lengths.  Since $r > 0$ was arbitrary, this implies that the holes cut out by the center exploration up until the first time that the center exploration disagrees with the exploration targeted at $y$ are conditionally independent samples from $\tmudlset{M_i}$ where $M_i$ are the hole boundary lengths.  We can iterate the same procedure at this time (and then keep repeating) to get that the same statement is true when we perform the center exploration until it terminates.

\end{proof}

We now would like to discuss the relationship between the laws of the following processes:
\begin{enumerate} \item The process $L_r$ obtained by exploring the metric net from a sample from $\tmudonel$, starting with $L_0$ equal to some fixed value $L$.
\item The process $M_r$ obtained by exploring a sample from $\tmudl$ toward the center (again starting with $M_0 = L$).
\item The process $M_r^1$ obtained by exploring a sample from $\tmudonel$ toward the center (again starting with $M_0^1 = L$).
\end{enumerate}

We already know that the Radon-Nikodym derivative of~$\tmudonel$ w.r.t.\ $\tmudl$ is given by a constant times the area of the disk. We will use this fact to deduce the following.

\begin{lemma}
\label{lem::explorationprocessrelationships}
The Radon-Nikodym derivative of the process~$M_r^1$ w.r.t.\ the process~$M_r$ is given by a constant times the {\em expected} disk area given the process, which (by Lemma~\ref{lem::tmudlareaexpectation} and Lemma~\ref{lem::exploretocenter}) is given by a constant times $\sum_K K^{2\alpha - 1}$ where $K$ ranges over the jump magnitudes corresponding to the countably many jumps in the process. Moreover, if~$L_r$ and~$M_r^1$ are coupled in the obvious way (i.e., generated from the same instance of $\tmudonel$) then they agree up until a stopping time: namely, the first time that~$L_r$ experiences a big jump.
\end{lemma}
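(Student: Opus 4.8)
The plan is to treat the two assertions in turn. For the Radon--Nikodym claim I would combine Lemma~\ref{lem::tmudlareaexpectation}, which says that $\tmudonel \ll \tmudl$ (after ignoring the marked point) with density $\frac{d\tmudonel}{d\tmudl} = A/\E_{\tmudl}[A]$, where $A$ is the total area and $\E_{\tmudl}[A]$ is a constant times $L^{2\alpha-1}$, with the elementary fact that pushing a density forward under a measurable map produces the conditional expectation of the density given the image. Concretely, let $\Phi$ denote the measurable functional that assigns to a boundary-length-$L$ disk the boundary length process of its center net. Realizing everything on the space underlying $\tmudl$, the process $M_r$ is $\Phi$ under $\tmudl$ and $M_r^1$ is $\Phi$ under $\tmudonel$, so the law of $M_r^1$ is absolutely continuous with respect to the law of $M_r$ with density $m \mapsto \E_{\tmudl}[\,A \mid \Phi = m\,]/\E_{\tmudl}[A]$. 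It therefore suffices to compute $\E_{\tmudl}[A \mid M]$.

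For that I would invoke Lemma~\ref{lem::exploretocenter}: conditionally on $M$, the complementary components of the center net are independent disks, the $i$-th distributed as $\tmudlset{M_i}$, where the $M_i$ are the jump magnitudes of $M$. Since the center net is $\nu$-null --- the exact analog of Lemma~\ref{lem::metricnetmeasurezero}, and a consequence of the already-noted fact that a $\nu$-sampled point a.s.\ lands in one of these cut-off disks in finite time --- the total area splits as $A = \sum_i A_i$, so $\E_{\tmudl}[A \mid M] = \sum_i \E_{\tmudlset{M_i}}[A]$, which by Lemma~\ref{lem::tmudlareaexpectation} is a constant times $\sum_i M_i^{2\alpha-1}$. (Almost-sure finiteness of this sum is automatic, since its $\tmudl$-expectation equals $\E_{\tmudl}[A]$, which is finite, and the law of $M^1$ is dominated by that of $M$.) This quantity is simultaneously the expected disk area given the process and, up to the $L$-dependent constant $1/\E_{\tmudl}[A]$, the desired Radon--Nikodym derivative of the law of $M_r^1$ with respect to the law of $M_r$.

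For the second assertion I would couple $L_r$ and $M_r^1$ by generating both from one instance of $\tmudonel$. Both processes are produced by iterating the same local step: explore until the exploration front pinches off a topological figure $8$ (Lemma~\ref{lem::figureeightlaws}), then continue inside one of its two loops; the only difference is the selection rule, the metric net toward $y$ keeping the loop containing $y$ and the center net keeping the longer loop. These rules make the same choice at a given pinch precisely when the loop containing $y$ is the longer one, and by definition the first pinch at which $y$ lies in the \emph{shorter} loop is the first big jump of $L_r$. Therefore the explored regions, hence the two processes, coincide on the interval up to that stopping time (which is a.s.\ finite since $y$ a.s.\ lies off the center), which is exactly the asserted statement.

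The only steps requiring care are measure-theoretic: verifying that the center net is $\nu$-null so that $A$ decomposes over the complementary disks, and that the functional $\Phi$, the jump magnitudes, and the area are jointly measurable so that the pushforward/conditional-expectation manipulation is legitimate --- the latter being unproblematic because all the spaces involved are standard Borel (Section~\ref{subsec::mmsigma}) and hence admit the regular conditional probabilities used implicitly above. I do not anticipate a genuinely hard step; the substance of the lemma is the bookkeeping that turns Lemma~\ref{lem::tmudlareaexpectation} and Lemma~\ref{lem::exploretocenter} into the stated density.
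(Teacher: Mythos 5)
Your proposal is correct and follows essentially the same route as the paper, which treats the lemma as an immediate consequence of the fact (Lemma~\ref{lem::tmudlareaexpectation}) that $d\tmudonel/d\tmudl$ is a constant times the disk area, combined with Lemma~\ref{lem::exploretocenter}; your write-up simply makes explicit the pushforward/conditional-expectation bookkeeping, the $\nu$-nullity of the center net needed to write $A=\sum_i A_i$, and the observation that the two exploration rules first disagree exactly at the first big jump of $L_r$. These are precisely the details the paper leaves implicit, so no further changes are needed.
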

\begin{proof}
By Lemma~\ref{lem::tmudlareaexpectation}, the Radon-Nikodym derivative of $\tmudonel$ with respect to $\tmudl$ is given by a normalizing constant times the amount of area $A$ assigned by $\nu$ to $S$.  Since there is a.s.\ no area in the metric net, we have that $A = \sum_i A_i$ where the $A_i$'s give the areas of the holes cut out by the exploration.  It follows that the Radon-Nikodym derivative of the law of $M_r^1$ with respect to the law of $M_r$ is given by a normalizing constant times the conditional expectation of $A$ given the realization of the entire process $M_r$.  By Lemma~\ref{lem::exploretocenter} and Lemma~\ref{lem::tmudlareaexpectation}, this conditional expectation is equal to a constant times $\sum_K K^{2\alpha-1}$ where $K$ ranges over the jump magnitudes corresponding to the jumps in the process.  This proves the first assertion of the lemma.  The second assertion of the lemma is immediate from the definitions. 
\end{proof}

As a side remark, let us note that the stopping time~$\tau$ of the process~$M_r^1$, as defined in Lemma~\ref{lem::explorationprocessrelationships}, can be constructed in fairly simple way that roughly corresponds to, each time a new figure~$8$ is created, tossing an appropriately weighted coin to decide whether~$y$ is in the smaller or the larger loop, and then stopping when it first lies in the smaller loop. To formulate this slightly more precisely, suppose that for each $r \geq 0$ we let~$\chi_r$ be the product of
\[ \frac{a^{2\alpha-1}}{a^{2\alpha-1}+b^{2\alpha-1}}\]
over all jumps of $M^1|_{[0,r]}$ where $a$ is the size of the jump and~$b$ is equal to the value of~$M^1$ immediately after the jump.  Suppose that we choose~$p$ uniformly in $[0,1]$.  Then we can write $\tau = \inf\{r \geq 0 : \chi_r < p\}$.

We next claim the following:
\begin{lemma}
\label{lem::mtmarkovian}
If one explores the center net of an instance of $\tmudl$ up to some stopping time $\tau$, then the conditional law of the central unexplored disk (i.e., the one in which exploration will continue) is given by an instance of $\tmudlset{L'}$ where $L' = M_\tau$ is the boundary length at that time. In particular, this implies that the process $M_r$ is Markovian.
\end{lemma}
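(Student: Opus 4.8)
The plan is to prove the conditional-law statement first for deterministic times $r$, then for stopping times that a.s.\ occur at a jump of $M$, and finally for general stopping times, following the same approximation-and-continuity scheme as in the proof of Lemma~\ref{lem::markovatjumpstop}. Given the conditional-law statement, the Markovian assertion is formal: continuing the center-net exploration of the central disk $A_\tau$, regarded as a disk of boundary length $M_\tau$, reproduces the tail of the original exploration (the center net is defined by the purely local rule of always entering the longer loop at each figure-$8$ pinch), so once we know that the conditional law of $A_\tau$ given the exploration $\mathcal F_\tau$ up to time $\tau$ depends on $\mathcal F_\tau$ only through $M_\tau$, the same follows for the conditional law of the future of $M$.

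For deterministic $r$, write $Q(\mathcal F_r,\cdot)$ for the conditional law of $A_r$ given $\mathcal F_r$ under $\tmudl$; we must show $Q(\mathcal F_r,\cdot) = \tmudlset{M_r}$. The idea is to transport the problem to $\tmudonel$, where a Markov property is available from Theorem~\ref{thm::levynetbasedcharacterization}(3), and then transport back. By Lemma~\ref{lem::tmudlareaexpectation}, a sample from $\tmudonel$ is a sample from $\tmudlset{L}$ reweighted by a constant times its total area and then equipped with an independent marked point $y$. The center net does not see $y$, so the center exploration of this $\tmudonel$-sample is that of an area-reweighted $\tmudl$-sample; and by Lemma~\ref{lem::explorationprocessrelationships} it coincides, up to time $r$, with the $y$-targeted metric-net exploration on the event $E_r=\{y\in A_r\}$, which is exactly the event that the targeted exploration has had no ``big jump'' before $r$. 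On $E_r$ the remaining disk with its marked point $y$ is $S\setminus\fb{x}{r}$ for the $\tmudonel$-sample, so the Markov property (applied at distance $r$, via the consistency of metric-net exploration, so that it may be viewed as a continuation of the $\tmustwo$-exploration that produced the disk) gives that, conditionally on $\mathcal F_r$, the law of $(A_r,y)$ is $\tmudonelset{M_r}$.

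It then remains to undo the reweighting. Restricting $\tmudonel$ to $E_r$ and conditioning on $\mathcal F_r$, the law of $(A_r,y)$ is $Q(\mathcal F_r,\cdot)$ reweighted by a constant times $\nu(A_r)$ with $y$ then uniform in $A_r$; by the previous paragraph this equals $\tmudonelset{M_r}$, which by the same dictionary (Lemma~\ref{lem::tmudlareaexpectation}) is $\tmudlset{M_r}$ reweighted by a constant times area with a uniform marked point. Since reweighting a probability measure by the a.s.\ positive, integrable functional ``area'' (and adjoining a uniform marked point) is invertible, we conclude $Q(\mathcal F_r,\cdot)=\tmudlset{M_r}$; the restriction to $E_r$ costs nothing because reweighting the $\tmudl$-law of $\mathcal F_r$ by the a.s.\ positive $\mathcal F_r$-measurable function $\E_{\tmudl}[\nu(A_r)\giv\mathcal F_r]$ preserves $\mathcal F_r$-null sets. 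In particular $Q(\mathcal F_r,\cdot)$ depends on $\mathcal F_r$ only through $M_r$, which is the claimed conditional independence at deterministic times. For a stopping time $\tau$ that a.s.\ occurs at a jump one rounds $\tau$ up to a multiple of $\delta$, applies the deterministic case to each of its countably many values, and lets $\delta\to0$, invoking continuity of $L\mapsto\tmudlset{L}$ (the analog of Lemma~\ref{lem::disklawcontinuity} for $\tmudl$, which one deduces from it via area-reweighting); the same scheme, using in addition right-continuity of the exploration, handles a general $\tau$. I expect the main obstacle to be precisely this transfer of the Markov property from the $y$-targeted exploration (where it is assumed) to the center exploration: keeping straight the interplay of the area-reweighting dictionary of Lemma~\ref{lem::tmudlareaexpectation}, the coupling of Lemma~\ref{lem::explorationprocessrelationships}, and the fact that $E_r$ has positive but not full probability, which is what makes Theorem~\ref{thm::levynetbasedcharacterization}(3) applicable in the first place.
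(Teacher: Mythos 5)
Your argument is correct and follows essentially the same route as the paper, whose proof simply combines Lemma~\ref{lem::markovatjumpstop}, Lemma~\ref{lem::tmudlareaexpectation}, and Lemma~\ref{lem::explorationprocessrelationships}: you transfer the Markov property of the $y$-targeted exploration to the center exploration via the coupling up to the first big jump and the area-reweighting dictionary between $\tmudl$ and $\tmudonel$, then extend to stopping times by the same rounding/continuity scheme used for Lemma~\ref{lem::markovatjumpstop}. You have merely spelled out the details that the paper leaves implicit.
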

\begin{proof}
This follows by combining Lemma~\ref{lem::markovatjumpstop}, Lemma~\ref{lem::tmudlareaexpectation}, and Lemma~\ref{lem::explorationprocessrelationships}.
\end{proof}

By Lemma~\ref{lem::levyreversal}, the jump density for $\tmudonel$ (for a jump of size $a$ that leaves a loop of size $b = c-a$ in which $y$ is contained) is given by a constant times $a^{-\alpha-1} b^{\alpha - 2}$.

\begin{lemma}
\label{lem::mtjumplaw}
The process $M_r$ agrees in law with the process $L_r$ except that the jump law is different. Instead of having the form
\begin{equation}
\label{eqn::levy_jump_law}
\one_{a \in [0,c]} a^{-\alpha-1} (b/c)^{\alpha-2} da,
\end{equation}
it has the form
\begin{equation}
\label{eqn::firstradon}
\one_{a \in [0,c/2]} a^{-\alpha - 1} (b/c)^{-\alpha-1} da,
\end{equation}
where in both cases $b$ is simply defined via $b  = c-a$, $c$ is defined to the height of the process just before the jump, and $da$ denotes Lebesgue measure.
\end{lemma}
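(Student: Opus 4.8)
The plan is to identify $M_r$ as a time\nobreakdash-homogeneous strong Markov process with no continuous part whose L\'evy\nobreakdash-type jump kernel, when the process is at value $c$, assigns a downward jump of size $a$ (write $b:=c-a$) the intensity appearing on the right\nobreakdash-hand side of \eqref{eqn::firstradon}. Granting this, the lemma follows at once, since by Lemma~\ref{lem::levyreversal} the process $L_r$ is itself a time\nobreakdash-homogeneous strong Markov process of exactly the same type (for $\alpha\in(1,2)$ these processes carry no Gaussian component) whose jump kernel, at value $c$, is the right\nobreakdash-hand side of \eqref{eqn::levy_jump_law}, with the same overall normalization. That $M_r$ is strong Markov and homogeneous is precisely Lemma~\ref{lem::mtmarkovian}; in particular its jump kernel at value $c$ is the same at every time, so it suffices to compute it at the initial instant of an exploration of a sample from $\tmudlset{c}$. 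To do this I would compare $M_r$ with two auxiliary explorations of a sample from $\tmudonelset{c}$: the targeted exploration $L_r$ (which at each figure\nobreakdash-$8$ pinch continues in the loop containing $y$) and the center exploration, denoted $M^1_r$ (which continues in the larger loop). The two inputs are (i) a Girsanov identity relating the jump kernels of $M^1_r$ and $M_r$, and (ii) a direct formula for the jump kernel of $M^1_r$ via its coupling with $L_r$.

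For (i): by Lemma~\ref{lem::tmudlareaexpectation} and Lemma~\ref{lem::exploretocenter}, and using that the center net has zero $\nu$\nobreakdash-measure (argued as in Lemma~\ref{lem::metricnetmeasurezero}), the center exploration of $\tmudlset{v}$ satisfies $\E\bigl[\sum_i M_i^{2\alpha-1}\bigr]=v^{2\alpha-1}$, where the $M_i$ are the hole\nobreakdash-boundary lengths, i.e.\ the downward jumps of the process. Combining this with Lemma~\ref{lem::explorationprocessrelationships}, the Radon--Nikodym derivative of the law of $M^1$ with respect to the law of $M$, restricted to the $\sigma$\nobreakdash-algebra $\CF_r$ generated by the exploration up to time $r$, equals
\[
N_r \;=\; \frac{1}{c^{2\alpha-1}}\Bigl(\ \sum_{i:\,\tau_i\le r} M_i^{2\alpha-1}\ +\ M_r^{2\alpha-1}\Bigr),
\]
a nonnegative martingale with $N_0=1$. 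Since $N$ changes only at the jump times of $M$ and is a deterministic function of $M_r$ together with the past jumps, the Girsanov theorem for pure\nobreakdash-jump processes gives that the jump kernel of $M^1_r$ is obtained from that of $M_r$ by multiplying, along a jump of size $a$ at current value $v$, by $N_{\mathrm{after}}/N_{\mathrm{before}}$. Evaluated at the initial instant (value $c$, empty past), a jump of size $a$ to $b=c-a$ multiplies $N$ by $(a^{2\alpha-1}+b^{2\alpha-1})/c^{2\alpha-1}$, so the initial jump kernels $\nu^{M^1}_c$ and $\nu^{M}_c$ satisfy $\nu^{M^1}_c(da)=\nu^{M}_c(da)\,(a^{2\alpha-1}+b^{2\alpha-1})/c^{2\alpha-1}$.

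For (ii): every jump of $M^1$ occurs at a figure\nobreakdash-$8$ pinch, after which $M^1$ continues in the larger of the two loops; hence each jump of $M^1$ has size $a\in(0,c/2)$, the length of the smaller (abandoned) loop, which becomes a hole. A pinch with smaller loop of size $a$ arises in exactly two ways: either $y$ lies in the larger loop (length $b=c-a$), in which case $L$ makes a (non\nobreakdash-big) jump of size $a$, with intensity $\kappa\,a^{-\alpha-1}(b/c)^{\alpha-2}$ by \eqref{eqn::levy_jump_law}; or $y$ lies in the smaller loop, in which case $L$ makes a big jump of size $b$, with intensity $\kappa\,b^{-\alpha-1}(a/c)^{\alpha-2}$; and in both cases $M^1$ jumps by $a$. (Here $\kappa$ is the constant of Lemma~\ref{lem::levyreversal}; we use the coupling of $M^1$ with $L$ from Lemma~\ref{lem::explorationprocessrelationships}, together with the fact that at the pinch disconnecting $y$ the loop not containing $y$ has conditional law $\tmudlset{\cdot}$, so that thereafter $M^1$ really does evolve as a center exploration of a $\tmudl$\nobreakdash-disk.) Thus $\nu^{M^1}_c(da)=\kappa\bigl[a^{-\alpha-1}(b/c)^{\alpha-2}+b^{-\alpha-1}(a/c)^{\alpha-2}\bigr]\one_{a\in(0,c/2)}\,da$. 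Combining with (i) and the elementary identity $a^{-\alpha-1}(b/c)^{\alpha-2}+b^{-\alpha-1}(a/c)^{\alpha-2}=c^{2-\alpha}\,a^{-\alpha-1}b^{-\alpha-1}(a^{2\alpha-1}+b^{2\alpha-1})$ gives $\nu^{M}_c(da)=\kappa\,a^{-\alpha-1}(b/c)^{-\alpha-1}\one_{a\in(0,c/2)}\,da$, which is exactly \eqref{eqn::firstradon}; by homogeneity this is the jump kernel of $M$ at every time.

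\emph{The main obstacle I anticipate} is making the Girsanov step rigorous in the presence of the infinite total jump rate near $a=0$: I would first restrict to jumps of size $\ge\epsilon$, where all the relevant intensities are finite, carry out the computation there, and let $\epsilon\downarrow0$ (equivalently, argue directly at the level of predictable compensators). The other point needing care is the clause ``$M_r$ agrees in law with $L_r$ except for the jump law'', i.e.\ confirming that $M_r$ has no continuous part and the same behaviour at $0$; this I would obtain from the mutual absolute continuity of the laws of $M$ and $M^1$ on path space (from (i)) together with the fact that $M^1_r$ coincides with $L_r$, which has no continuous part, up until $L_r$'s first big jump.
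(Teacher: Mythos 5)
Your argument is correct and is essentially the paper's own proof: the paper likewise computes the jump law of $M^1_r$ from that of $L_r$ via the observation that a size-$a$ jump of $M^1$ corresponds to either a size-$a$ or a size-$b$ jump of $L$, relates the jump laws of $M^1_r$ and $M_r$ by the expected-area weighting $(a/c)^{2\alpha-1}+(b/c)^{2\alpha-1}$ coming from Lemma~\ref{lem::tmudlareaexpectation} and Lemma~\ref{lem::explorationprocessrelationships}, and then factors to back out~\eqref{eqn::firstradon}. Your explicit martingale $N_r$ and the $\epsilon$-truncation are just a more formal rendering of the weighting step that the paper carries out directly at a stopping time, so the two proofs coincide in substance.
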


To further clarify the statement of Lemma~\ref{lem::mtjumplaw}, we recall that a L\'evy process is a.s.\ determined by its jumps and jump times in a measurable manner.  Therefore if we observe the jumps and jump times of $L_r$, then we can determine the entire process.  We have that $M_r$ is determined by its jumps using the same measurable function which determines $L_r$ from its jumps.

\begin{proof}[Proof of Lemma~\ref{lem::mtjumplaw}]
Let $L_r$ (resp.\ $M_r^1$) be the boundary length processes associated with an exploration of a sample from $\tmudonel$ explored towards the marked point $y$ (resp.\ the center).  Let $\tau$ be the first time at which the two explorations differ.  Fix $\epsilon \in (0,1/2)$ and let $\tau'$ be the smallest $r \geq 0$ such that the exploration towards $y$ makes a downward jump of size in $[\epsilon L_r,(1-\epsilon) L_r]$.  Recall from Lemma~\ref{lem::levyreversal} that the density for the jump law for $L_r$ is given by a constant times $a^{-\alpha-1} (b/c)^{\alpha - 2}$ where $a$ is the jump size, $c$ is the value of the process at the time of the jump, and $b = c-a$.  Given $\tau' \leq \tau$, the density for the downward jump made by $L_r$ at time $\tau'$ is given by a constant times $a^{-\alpha-1} (b/c)^{\alpha-2} \one_{a \in [c \epsilon,c(1-\epsilon)]}$.  Since a jump of size $a$ in $M_r^1$ can correspond to two kinds of jumps in $L_r$ (one of size $a$ and one of size $b=c-a$), it follows that the density for the downward jump made by $M_r^1$ at the time $\tau'$ given $\tau' \leq \tau$ is given by a constant times
\begin{align*}
    & \bigl(a^{-\alpha-1} (b/c)^{\alpha - 2} + (a/c)^{\alpha-2} b^{-\alpha - 1}\bigr)\one_{a \in [c \epsilon,c/2]}\\
 =& \big( (a/c)^{2\alpha-1} + (b/c)^{2\alpha - 1}\bigr) a^{-\alpha - 1} (b/c)^{-\alpha-1} \one_{a \in [c \epsilon,c/2]}.
 \end{align*}
 Since $\epsilon \in (0,1/2)$ was arbitrary, we find that the jump law for $M_r^1|_{[0,\tau]}$ is given by
\begin{equation}
\label{eqn::mtonejump}
\big( (a/c)^{2\alpha-1} + (b/c)^{2\alpha - 1}\bigr) a^{-\alpha - 1} (b/c)^{-\alpha-1} \one_{a \in [0,c/2]}.
\end{equation}

Let $M_r$ denote the boundary length process associated with an exploration towards the center from a sample from $\tmudl$.  Lemma~\ref{lem::explorationprocessrelationships} implies that the laws of $M_r^1$ and $M_r$ are absolutely continuous.  On the event that $\tau' \leq \tau$, the Radon-Nikodym derivative for the law of the jump made by $M_r^1$ at the time $\tau'$ and the law of the jump made by $M_r$ at the corresponding time is given by a constant times
\begin{equation}
\label{eqn::mtjumpradon}
(a/c)^{2\alpha-1} + (b/c)^{2\alpha - 1}.
\end{equation}
Indeed, this expression gives the expected area in the figure $8$ formed by the two components at the jump time.  This proves the result since the Radon-Nikodym derivative between the laws~\eqref{eqn::mtonejump} and~\eqref{eqn::firstradon} is given by~\eqref{eqn::mtjumpradon}.
\end{proof}

We remark that from the point of view of the discrete models, the jump law for~$M_r$ described in Lemma~\ref{lem::mtjumplaw} is precisely what one would expect if the overall {\em partition function} for a boundary-length $a$ disk were given by a constant times $a^{-\alpha -1}$. Indeed, in this case $a^{-\alpha - 1} b^{\alpha -1}$ would be the weighted sum of all ways to triangulate the loops of a figure~$8$ with loop lengths~$a$ and~$b$, which matches the law described in the lemma statement. It is therefore not too surprising that the jump law for the~$\tmudl$ exploration toward the center has to have this form. Furthermore, we may conclude that the~$M_r$ process can be a.s.\ recovered from the ordered collection of jumps (since this is true for L\'evy processes, hence true for CSBPs, hence true for time-reversals of these processes, hence true for this modified time-reversal that corresponds to~$\tmudl$) and the reconstruction procedure is the same as the one that corresponds to the~$L_r$ process.

As suggested by Figure~\ref{fig::levyslicedecomposition}, now that we have constructed the law of the exploration of a sample from $\tmudl$ toward the center, we can try to iterate this construction within each of the unexplored regions and repeat, so that in the limit, we obtain the joint law of the metric net toward all points, or at least toward all points in some countable dense subset of the metric disk.  The hope is that one can recover the entire law of $\tmudl$ using a branching procedure like this. This idea underlies that the proof below.

\begin{proof}[Proof of Theorem~\ref{thm::levynetbasedcharacterization}]
We will break the proof up into three steps.

\noindent{\it Step 1: Axioms imply $\alpha = 3/2$.}  By Lemma~\ref{lem::metricnetmeasurezero} there is a.s.\ no area in the metric net itself.  This implies that if we explore the center net of a sample from $\tmudl$ up until a given time, then the center net also a.s.\ contains zero area.  Let $M_r$ be the boundary length process associated with the center exploration of a sample from $\tmudl$.  By Lemma~\ref{lem::tmudlareaexpectation}, Lemma~\ref{lem::exploretocenter}, and Lemma~\ref{lem::mtmarkovian} if we perform an exploration towards the center of a sample produced from $\tmudl$ up until a given time $s$ then the conditional expectation of the total area is given by (a constant times)
\begin{equation}
\label{eqn::asdef}
A_s := M_s^{2\alpha-1} + \sum |a_i|^{2\alpha - 1}
\end{equation}
where the $a_i$ are an enumeration of the jumps in the process $M_r$ up to time $s$.   Thus,~\eqref{eqn::asdef} must evolve as a martingale in $s$.  Proposition~\ref{prop::centermartingale} (stated and proved in Section~\ref{subsec::martingale32} below) implies that~\eqref{eqn::asdef} evolves as a martingale if and only if $\alpha = 3/2$. Thus, the fact that $\alpha = 3/2$ is a consequence of the properties listed in the theorem statement. For the remainder of the proof, we may therefore assume that $\alpha = 3/2$.

\noindent{\it Step 2: Conditional law of area given boundary length agrees.}  Recall that the collection $\CU_0$ of complementary components which arise from performing the center exploration each correspond to one of the downward jumps $a_i$ of $M$.  Moreover, $a_i$ gives the boundary length of the corresponding element of $\CU_0$.  We can iterate the process by performing a center exploration into each of the elements of $\CU_0$.  Let $\CG_1$ be the $\sigma$-algebra which is generated by:
\begin{itemize}
\item The initial center exploration $M$ and
\item The same information corresponding to center explorations into each of the elements of $\CU_0$.
\end{itemize}
The iterative step used to define $\CG_1$ yields a collection of components~$\CU_1$, in each of which we can again perform a center exploration.  For $k \in \N$, we inductively let $\CU_k$ (resp.\ $\CG_k$) be the collection of complementary components which arise from (resp.\ $\sigma$-algebra generated by~$\CG_{k-1}$ and by) performing center explorations in all of the components in~$\CU_{k-1}$.

Let $A$ be the overall area measure of a surface sampled from $\tmudl$ and let $A_k = \E[ A \giv \CG_k]$.  We will now show that $A$ is $\CG =\sigma(\CG_k : k \in \N)$ measurable, i.e., $A$ is determined by the information encoded by {\em all} of the countably many exploration iterations.  Upon proving this, we will have by the martingale convergence theorem that $A_k \to \E[ A \giv \CG] = A$ a.s.  Recall from the discussion just after the statement of Theorem~\ref{thm::levynetbasedcharacterization} that all of the hypotheses of Theorem~\ref{thm::levynetbasedcharacterization} apply to $\mustwo$ with $\alpha = 3/2$.  Indeed, Proposition~\ref{prop::metric_net_law} implies that the law of the unembedded metric net in this case is the $3/2$-L\'evy net and one has the conditional independence of the inside and outside of filled metric balls.  We therefore have that all of the lemmas above apply if we use~$\mudl$ and~$\mudonel$ in place of~$\tmudl$ in and~$\tmudonel$, respectively.  Therefore we know that the joint law of the processes encoding the iterations~$A_k$, and the law of the conditional expectation of the area in the unexplored regions, is the same in each case.  Hence, the proof of the step will be complete upon showing that $A$ is $\CG$-measurable.

Fix $\epsilon > 0$ and we let $G_{k,\epsilon}$ be the event that the total amount of area in each of the individual complementary components after performing $k$ iterations of the exploration is at most $\epsilon$.  Under $\tmudl$, we know that $\nu$ is a good measure hence does not have atoms.  Therefore it follows that the $\tmudl$ mass of $G_{k,\epsilon}^c$ tends to $0$ as $k \to \infty$ (with $\epsilon$ fixed).  For each $j$, let $X_j$ denote the area of the $j$th component (according to some ordering) after performing $k$ iterations of the exploration.  Then we have that the total variation distance between the law of $\sum_j X_j \one_{X_j \leq \epsilon}$ and the law of $\sum_j X_j$ under~$\tmudl$ tends to $0$ as $k \to \infty$ (with $\epsilon$ fixed).  As the conditional variance of the former given $\CG_k$ obviously tends to $0$ as $k \to \infty$ and then $\epsilon \to 0$, it thus follows that the latter concentrates around a $\CG$-measurable value as $k \to \infty$.  This proves the claim in the case of $\tmudl$.  The same argument also applies verbatim with $\mudl$ in place of $\tmudl$, hence completes the proof of this step.

\noindent{\it Step 3: Coupled L\'evy net instances.}  Suppose that $(S,d,\nu,x,y)$, $(\wt{S},\wt{d},\wt{\nu},\wt{x},\wt{y})$ are samples from $\mustwo$, $\tmustwo$, respectively.  Let $(z_i)$, $(\wt{z}_i)$ be i.i.d.\ samples from $\nu, \wt{\nu}$, respectively.   The exploration process towards each of the $z_i$ encodes an instance of the L\'evy net, which (recall Definition~\ref{def::levynet_bubble_gluing}) can be encoded by the boundary length process together with the attachment point locations.  By the assumptions of the theorem (and that $\alpha=3/2$), we can couple $\mustwo$ and $\tmustwo$ so that the L\'evy net instances (i.e., the corresponding boundary length process with attachment points) associated with the metric explorations towards $z_1$ and $\wt{z}_1$ are a.s.\ the same.  Each of these L\'evy nets is determined by the encoding information (boundary length process plus attachment points) which in turn a.s.\ fixes the homeomorphism between the two, which by assumption maps geodesics of $\mustwo$ to geodesics of $\tmustwo$.

By the previous step, we can also couple $(S,d,\nu,x,y)$ and $(\wt{S},\wt{d},\wt{\nu},\wt{x},\wt{y})$ so that the masses of all of the holes cut out by these two explorations are the same. Thus, we can then sample $z_2$ and $\wt{z}_2$ coupled in such a way that they a.s.\ lie in the same hole of the L\'evy net complement (i.e., the hole corresponding to the same jump in the boundary length process). At the first time $t$ at which $z_1$ and $z_2$ are separated, there is a.s.\ a figure $8$ in $S$ (hence also in $\wt S$) describing the boundary of the two unexplored regions containing $z_1$ and $z_2$. We can then couple the L\'evy net toward $z_2$  (as started from the time $t$) so that is a.s.\ agrees with the corresponding L\'evy net toward $\wt{z}_2$.  Now one readily sees that the {\em union} of the L\'evy nets in $S$ toward $z_1$ and $z_2$ (which is the union of a figure $8$ and one set contained in each of the three components of its complement) is homeomorphic to its counterpart in $\wt{S}$, and again we may assume that the masses of the holes cut out by the branched exploration are the same.

Note that (since this information is encoded in the L\'evy net) if the two geodesics from $z_1$ and $z_2$ merge at some distance $t$ from the root (with the geodesic from $z_1$ merging from the left, say) then the corresponding paths in $\wt{S}$ a.s.\ exhibit the same behavior.

By iterating this, and taking a limit in the obvious way, we obtain a coupling under which the L\'evy nets associated with the $(z_i)$ (i.e., the corresponding countable collection of boundary length processes with attachment point locations) a.s.\ agree {\em precisely} with those corresponding to the $(\wt{z}_i)$.  Moreover, for each $i$ and $j$, the distance from the root at which the two geodesics merge (and which of the two paths merges from the left) agrees a.s. In other words, the planar tree in $S$ formed by taking the union of the geodesics from the $z_i$ is a.s.\ isomorphic to the corresponding tree in $\wt{S}$. In fact, we know more than that, since we also know that each L\'evy net in $S$ toward one of the $z_i$ is a.s.\ in homeomorphic correspondence with its counterpart in $\wt{S}$.

Now, we would like to argue that in this coupling, the distance between any two of the $\wt{z}_i$ in $\wt{S}$ is a.s.\ {\em at most} the Brownian map distance between the corresponding $z_i \in S$.  By definition of the distance $d$ on the Brownian map side, the distance between any two points in $S$ is the infimum over the lengths of continuous paths between those points made by concatenating arcs, each of which is a segment of a geodesic to the root $x$ (recall~\eqref{eqn::circ_distance_time}--\eqref{eqn::bm_distance_inf}).   Another way to describe this intuitively is to recall Figure~\ref{fig::lamination_treemaking}, where the $X_t$ and $Y_t$ process are the coordinates of a Brownian snake excursion. Let $G$ be the geodesic tree which is the quotient of the graph of $X_t$ that makes two points equivalent if a horizontal green segment connects them. Endow $G$ with the obvious metric structure.

Given a random pair of points $z_1$ and $z_2$ from $\nu$, we can find corresponding points $g_1$ and $g_2$ in $G$. Now recall that $d(z_1, z_2)$ is defined as the minimum length of a path in $G$ from $g_1$ to $g_2$ that is allowed to take finitely many ``shortcuts,'' where a shortcut is a step from one point in $G$ to another point in $G$ that corresponds to an equivalent point in the Brownian map. In Figure~\ref{fig::lamination_treemaking}, a shortcut can be taken by tracing a vertical red line up to the graph of $C- Y_t$, following a horizontal green line back to another point on the graph of $C-Y_t$, and then following a vertical red line back down. Each horizontal segment above the graph of $C-Y_t$ represents a shortcut.

Now, let $\mathcal L \subset S$ be the union of the points in the L\'evy nets targeted at the $(z_i)$.  Let $G_\mathcal L$ be the corresponding subset of $G$.  Note that by construction, if $z \in G_\CL$ then any geodesic from $x$ to $z$ is also in $\mathcal L$, so $G_\CL$ is a.s.\ a dense subtree of $G$.

We claim that in the Brownian map, the distance definition (restricted to points in $(z_i)$) would be equivalent if we required that each of the arcs belong to $\mathcal L$.  To see why, first note that $\mathcal L$ contains every point $z$ with the property that $\{ w : d(x,w) > d(z,x)\}$ has a component with $z$ on its boundary (since then $z$ would be part of the L\'evy net corresponding to any $z_i$ in that component --- recall that $\nu$ is a.s.\ a good measure, so it is a.s.\ the case that any open subset of $S$ contains at least one of the $z_i$).  This would include any point $z$ on the Brownian map dual tree (whose contour function is $Y_t$) which lies in the interior of a branch of the dual tree and (within that branch) is a local minimum of the Brownian process used to define the Brownian map (since this implies that the branch includes a non-trivial path of points in $\{ w : d(x,w) > d(z,x)\}$ that terminates at $z$). So in particular $\mathcal L$ includes a dense set of points along any branch of the dual tree, along with the geodesics connecting these points to the root. In Figure~\ref{fig::lamination_treemaking} this implies that a dense subset of the horizontal segments above $C-Y_t$ correspond to points in $\mathcal L$ --- assuming we encode each segment by its pair of endpoints and use the Euclidean topology on $(\R^2)^2$. 

Now to describe a ``path from $g_1$ to $g_2$ with finitely many shortcuts'' we can simply give the sequence  $(a_1,b_1), \ldots, (a_k,b_k)$ of directed horizontal line segments (above the graph of $C-Y_t$) that describe the shortcuts, where $a_j$ and $b_j$ are the first and last points of the $j$th shortcut.  (We may assume that the arcs between the shortcut endings are minimum length arcs in $G$, so the total length of the path is the sum of the lengths of these arcs). From here it is not hard to see that we can replace each horizontal line segment with an arbitrarily-nearby alternative that corresponds to a point in $\mathcal L$, and we can do so in a way that causes the length of the concatenated path to change by an arbitrarily small amount.  So, as claimed above, the definition of $d$ (restricted to points in $\mathcal L$) does not change if we add the requirement that the geodesic arcs be subsets of $\mathcal L$.

But for every such path in $S$ comprised of geodesic arcs that are subsets of $\mathcal L$, there is a corresponding path in $\wt{S}$ of the same length. This implies that the distance between two of the points $\wt{z}_i$ in $\wt{S}$ is a.s.\ {\em at most} the corresponding distance in $S$, and hence a.s.\ $\wt{d}(\wt{z}_1,\wt{z}_2) \leq d(z_1,z_2)$. 

Recall that the $\musa$ expectation of the diameter is finite.  This combined with the scale invariance of the Brownian map implies that we a.s.\ have
\[ \E[ d(z_1,z_2) \giv \nu(S) ] < \infty .\]
 Moreover, from the above coupling, we a.s.\ have $\nu(S) = \wt{\nu}(\wt{S})$ and
\begin{equation}
\label{eqn::expected_dist_agree}
\E[ \wt{d}(\wt{x},\wt{y}) \giv \wt{\nu}(\wt{S}) ] = \E[ d(x,y) \giv \nu(S) ],
\end{equation}
which also holds if $x,y,\wt{x},\wt{y}$ are replaced by $z_1, z_2, \wt{z}_1, \wt{z}_2$. Recalling that $z_1,z_2$ and $\wt{z}_1,\wt{z}_2$ are independent and uniform samples from $\nu$ and $\wt{\nu}$, respectively, it thus follows from~\eqref{eqn::expected_dist_agree} and the aforementioned one-sided bound on distances that we in fact must have an a.s.\ equality.  Since this holds a.s.\ for any $i$ and $j$, we have that  $(S,d,x,y)$ and $(\wt{S},\wt{d},\wt{x},\wt{y})$ are a.s.\ isomorphic when restricted to a countable dense set, and hence are also isomorphic on the closure of that set (which is the entire Brownian map in the case of $S$, and hence must be an entire sphere homeomorphic surface in the case of $\wt{S}$ as well).  The measures $\nu$ and $\wt{\nu}$ also agree a.s.\ (as they are determined by the sequence of samples $(z_i)$).

\end{proof}

\begin{figure}[ht!]
\begin{center}
\includegraphics[scale=0.85]{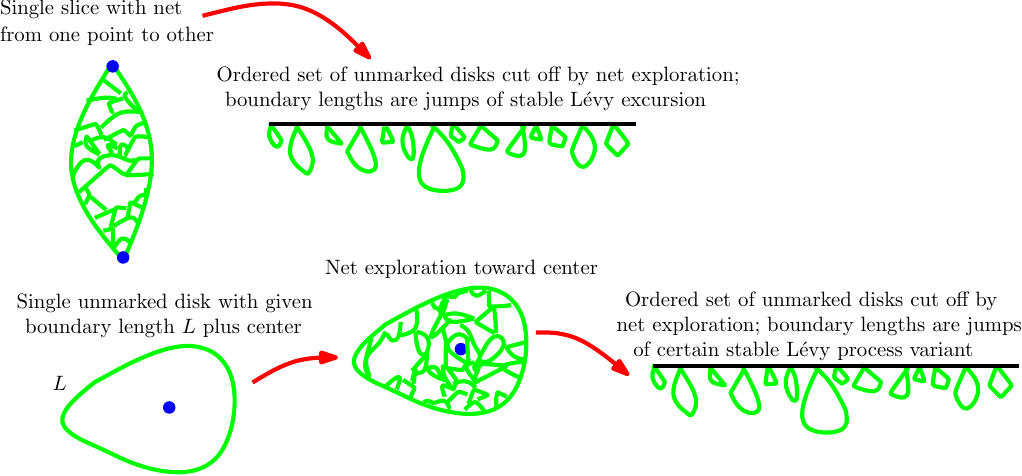}
\caption{\label{fig::levyslicedecomposition} A slice (or doubly marked sphere) comes endowed with a L\'evy net (as explained in Figure~\ref{fig::bubblegluing}) and once the L\'evy net is given, the disks are conditionally independent unmarked Brownian disks with given boundary lengths. As shown below, even an unmarked disk of given boundary length $L$ has a special interior point called the {\em center}. Once one conditions on the exploration net toward that point, the holes are again conditionally independent unmarked Brownian disks with given boundary lengths.
}
\end{center}
\end{figure}

We are now ready to prove Theorem~\ref{thm::markovmapcharacterization}.  The main ideas of the proof already appeared in the proof of Theorem~\ref{thm::levynetbasedcharacterization}.

\begin{proof}[Proof of Theorem~\ref{thm::markovmapcharacterization}]
The beginning of the proof of this result appears in Section~\ref{subsec::slice_independence} with the statement of Proposition~\ref{prop::poissonslicestructure}.  In particular, the combination of Proposition~\ref{prop::poissonslicestructure} and Lemma~\ref{lem::csbp_extinction_time} implies that for each fixed value of $r$ the law of the merging times of the leftmost geodesics of $(S,d,x,y)$ from $\partial \fb{x}{s}$ for $s=d(x,y)-r$ to $x$ have the same law as the geodesic tree in a L\'evy net (when the starting points for the geodesics have the same spacing in both).  Thus in view of the proof of Proposition~\ref{prop::levynet_structure_determined}, we have that $L_r$ is a.s.\ determined by the metric space structure of $(S,d,x,y)$.  This combined with the second assumption in the statement of Theorem~\ref{thm::markovmapcharacterization} implies that $L_r$ is a non-negative Markov process which satisfies the conditions of Proposition~\ref{prop::strongstablecsbp}.  That is, $L_r$ evolves as a CSBP excursion as $r$ increases, stopped when it hits zero.

This discussion almost implies that the hypotheses of Theorem~\ref{thm::levynetbasedcharacterization} are satisfied for some $\alpha \in (1,2)$.  Arguing as in the proof of Proposition~\ref{prop:metric_net_equivalent}, it implies that the portion of the unembedded metric net in $\fb{x}{s}$ looks like a portion of a L\'evy net. However, it does not rule out the possibility that the boundary length process $L_r$ might not tend to zero as $r$ approaches $d(x,y)$. As explained in the proof of Lemma~\ref{lem::metricnetmeasurezero}, this can be ruled out by showing that the metric net from $x$ to $y$ a.s.\ has $\nu$ measure zero.

If the metric net failed to have measure zero, then the expression~\eqref{eqn::atmartingale} from Proposition~\ref{prop::centermartingale} would have to fail to be a martingale, which would imply by Proposition~\ref{prop::centermartingale} that we must have $\alpha \not = 3/2$.

Suppose that the metric net does not have measure zero.  We now suppose that $(S,d,\nu)$ is a sample from the law $\tmudl$.  Let $M_r$ be the boundary length process associated with the center exploration and let $\CJ_r$ be the jumps made by $M$ up to time $r$.  Then the process
\[ A_r = M_r^{2\alpha-1} + \sum_{x \in \CJ_r} |x|^{2\alpha-1}\]
as in~\eqref{eqn::atmartingale} from Proposition~\ref{prop::centermartingale} corresponding to the center exploration of an instance of $\tmudl$ would not be a martingale (implying $\alpha \neq 3/2$).  However, the process $A_r$ would have to be a supermartingale and $A_r + B_r$ is a martingale where $B_r$ is the conditional expectation given $M|_{[0,r]}$ of the amount of area in the metric net disconnected  by the center exploration from the center up to time $r$.  By the Doob-Meyer decomposition, $B_r$ is the unique non-decreasing process so that $A_r + B_r$ is a martingale.  The form of $B_r$ can be determined explicitly from the expression for the drift term associated to~\eqref{eqn::atmartingale}, which is derived in the proof of Proposition~\ref{prop::centermartingale} which is given below.  In particular, it is shown in~\eqref{eqn:a_r_limit} that in the case $L=1$ we have that
\[  \E[ A_r - A_0] = r I_\alpha + o(r)\]
where $I_\alpha$ is a constant which depends only $\alpha$.  By the scaling property of area in terms of boundary length, this implies that for a general value of $L > 0$ given in the assumption of the theorem that
\[  \E[ A_r - A_0] = r I_\alpha L^a + o(r).\]
Lemma~\ref{lem::m_t_martingale} implies that
\[ A_r - I_\alpha \int_0^r M_u^a du\]
is a martingale which (by the uniqueness of the Doob-Meyer decomposition) in turn implies that
\[ B_r = -I_\alpha \int_0^r M_u^a du.\]
Altogether, this implies that
\[ \E[ \nu(S) ] = \E\left[ \sum_{x \in \CJ} |x|^{2\alpha-1} \right] - I_\alpha \E\left[ \int_0^\infty M_u^a du \right]\]
where $\CJ$ is the set of jumps made by $M$ (and we take $M$ to be $0$ after the center is reached).  Since $\E[ \nu(\CS) ]$ is given by a constant times $L^{2\alpha-1}$, we must have that $a = \alpha$ (since the multiplying the boundary length by $C$ changes the time duration by $C^{\alpha-1}$).  On the other hand, the independence of slices assumption implies we must have that $a=1$.  Since $\alpha \in (1,2)$, this cannot be the case and therefore the $\nu$-area of the metric net is zero.

\end{proof}

\subsection{Tail bounds for distance to disk boundary}
\label{subsec::tail_bounds}

It will be important in \cite{qle_continuity} to establish tail bounds for the amount of time that it takes a $\QLE(8/3,0)$ exploration starting from the boundary of a quantum disk to absorb all of the points inside of the quantum disk.  This result will serve as input in the argument in \cite{qle_continuity} to show that the metric space defined by $\QLE(8/3,0)$ satisfies the axioms of Theorem~\ref{thm::levynetbasedcharacterization} (and therefore we cannot immediately apply Theorem~\ref{thm::levynetbasedcharacterization} in the setting we have in mind in \cite{qle_continuity} to transfer the corresponding Brownian map estimates to $\sqrt{8/3}$-LQG).  However, in the results of \cite{qlebm} we already see some of the Brownian map structure derived here appear on the $\sqrt{8/3}$-LQG sphere.  Namely, the evolution of the boundary length of the filled metric ball takes the same form, the two marked points are uniform from the quantum measure, and we have the conditional independence of the surface in the bubbles cut out by the metric exploration given their quantum boundary lengths.  The following proposition will therefore imply that the results of \cite{qlebm} combined with the present work are enough to get that the joint law of the amount of time that it takes for a $\QLE(8/3,0)$ starting from the boundary of a quantum disk to absorb all of the points in the disk and the quantum area of the disk is the same in the case of both the Brownian map and $\sqrt{8/3}$-LQG.

\begin{proposition}
\label{prop::disk_area_distance}
Suppose that we have a probability measure on singly-marked disk-homeomorphic metric measure spaces $(S,d,\nu,x)$ where $\nu$ is an a.s.\ finite, good measure on $S$ such that the following hold.
\begin{enumerate}
\item The conditional law of $x$ given $(S,d,\nu)$ is given by $\nu$ (normalized to be a probability measure).
\item For each $r$ which is smaller than the distance $d(x,\partial S)$ of $x$ to $\partial S$, there is a random variable $L_r$, which we interpret as a boundary length of the $x$-containing component of the complement of the set of points with distance at most~$r$ from~$\partial S$.  As $r$ varies, this boundary length evolves as the time-reversal of a $3/2$-stable CSBP stopped upon hitting $0$.  The time at which the boundary length hits $0$ is equal to $d(x,\partial S)$.
\item The law of the metric measure space inside of such a component given its boundary length is conditionally independent of the outside.
\item There exists a constant $c_0 > 0$ such that the expected $\nu$ mass in such a component given that its boundary length is $\ell$ is $c_0 \ell^2$.
\end{enumerate}
Let $d^* = \sup_{z \in S} \dist(z,\partial S)$.  Then the joint law of $d^*$ and $\nu(S)$ is the same as the corresponding joint law of these quantities under $\mudonel$ where $L$ is equal to the boundary length of $\partial S$ under $(S,d,\nu,x)$.  In particular, for each $0 < a,L_0 < \infty$ there exists a constant $c \geq 1$ such that for all $L \in (0,L_0)$ and $r > 0$ we have
\begin{equation}
\label{eqn::disk_swallowing_time_tail_bound}
\p\left[\nu(S) \leq a \giv d^* \geq r \right] \leq c\exp(- c^{-1} r^{4/3})
\end{equation}
Moreover, the tail bound~\eqref{eqn::disk_swallowing_time_tail_bound} also holds if we use the law with Radon-Nikodym derivative given by $(\nu(S))^{-1}$ with respect to the law of $(d^*,\nu(S))$.
\end{proposition}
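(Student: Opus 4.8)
\emph{Proof plan.}  The plan is to reduce the statement to a computation about the Brownian disk measure $\mudonel$, by first showing that hypotheses (1)--(4) determine the law of $(S,d,\nu,x)$ given the boundary length of $\partial S$.  Hypotheses (1)--(4) are exactly the disk analogues of the hypotheses of Theorem~\ref{thm::levynetbasedcharacterization}, with the stable index already pinned to $\alpha = 3/2$: (1) is the resampling invariance; (2) supplies the boundary length process $L_r$ along the metric exploration inward from $\partial S$, which, as in the proof of Lemma~\ref{lem::bubble_locations_independent}, together with the additivity property~\eqref{eqn::CSBPproperty} and hypothesis (3) recovers the full $3/2$-L\'evy net structure of the metric net from $\partial S$ to $x$ (in particular the conditionally uniform attachment points of the cut-out bubbles); (3) is the conditional independence of inside and outside of a filled ball given its boundary length; and (4) records the expected area of a cut-out bubble as a function of its boundary length, which by Lemma~\ref{lem::tmudlareaexpectation} (with $2\alpha - 1 = 2$) is consistent with that length measure.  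Using Proposition~\ref{prop::brownianmapCSBP}, Proposition~\ref{prop::metric_net_law}, Lemma~\ref{lem::tmudlareaexpectation} and the discussion preceding Proposition~\ref{prop::brownianmapCSBP}, one checks that $\mudonel$ itself satisfies (1)--(4) with $\alpha = 3/2$.

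With this in hand I would run the argument of the proof of Theorem~\ref{thm::levynetbasedcharacterization} essentially verbatim in the disk setting.  The analogues of Lemma~\ref{lem::metricnetmeasurezero}--Lemma~\ref{lem::mtjumplaw} show that the metric net from $\partial S$ to $x$ has $\nu$-measure zero, that the law of a bubble cut out by the exploration depends only on its boundary length and must equal $\mudl$ (since $\mudl$ is characterized among such laws by having expected area a constant times the square of its boundary length, together with the mixing property obtained by repeatedly resampling the marked point inside a figure $8$), and that the boundary length process explored toward the center of such a bubble is the modified time-reversed CSBP of Lemma~\ref{lem::mtjumplaw}.  Iterating the center-net exploration as in that proof, and using that $\nu$ is a good measure (so that a uniformly chosen point is a.s.\ absorbed after finitely many iterations and the area carries no atoms), one concludes that the $\sigma$-algebra generated by the iterated exploration a.s.\ determines the entire metric measure space, hence that the conditional law of $(S,d,\nu,x)$ given that the boundary length of $\partial S$ equals $L$ is $\mudonel$.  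The claimed equality of the joint law of $(d^*,\nu(S))$ with its $\mudonel$ counterpart is then immediate; and since reweighting by $\nu(S)^{-1}$ sends $\mudonel$ to $\mudl$, the final assertion reduces to the same tail bound for $\mudl$, for which the identical argument applies using the center net of an unmarked disk (Lemma~\ref{lem::exploretocenter}).

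It remains to establish~\eqref{eqn::disk_swallowing_time_tail_bound} for $\mudonel$.  Here I would decompose $d^*$ along the branching exploration: a point with $\dist(\cdot,\partial S) \geq r$ forces a chain of nested bubbles, cut out at successively smaller boundary-length scales, whose accumulated radii reach $r$; a segment of such a chain at boundary-length scale $M$ has duration of order $M^{1/2}$ and, by hypothesis (4) and the squared-jump accounting behind~\eqref{eqn::asdef} (which gives $\E[\nu(S)\giv\text{exploration}] = c_0(M_s^2 + \sum_i a_i^2)$), throws off bubbles of total area of order $M^{3/2}$ per unit duration.  On the event $\nu(S)\le a$ one therefore has $\sum_i a_i^2 = O(a)$ up to an event of comparably small probability, which bounds the time the chain may spend at each scale; combined with the CSBP extinction estimate of Lemma~\ref{lem::csbp_extinction_time} applied along the chain, and a union bound over chains whose combinatorial cost is subexponential at the relevant scaling, this reduces the estimate to a deterministic variational problem: maximize the accumulated radius subject to a fixed budget for the total squared boundary length and for the accumulated extinction cost.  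The extremizer keeps the boundary length at the self-similar scale balancing area-per-unit-radius against extinction-cost-per-unit-radius, and solving it with the explicit extinction probability of Lemma~\ref{lem::csbp_extinction_time} yields the optimal rate $\tfrac32(1+o(1)) r^{4/3}$; this gives~\eqref{eqn::disk_swallowing_time_tail_bound}, with the $o(1)$ absorbing combinatorial and subleading corrections and the prefactor $c$ absorbing the contribution of $\{\nu(S)\le a\}$.  The $\nu(S)^{-1}$-reweighted version is then handled by a dyadic decomposition of $\nu(S)$ over $[0,a]$, using a crude small-deviations bound for $\nu(S)$ under $\mudl$ to make $\nu(S)^{-1}$ summable and the monotonicity of $\p[d^*\ge r\giv\nu(S)\le a']$ in $a'$.

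The main obstacle is the sharp constant $\tfrac32$: the union bound over chains and the iterated use of Lemma~\ref{lem::csbp_extinction_time} must be carried out precisely enough to pin the exact exponential rate, which amounts to proving that the self-similar thin tube is the optimal way for a small-area disk to be deep and to computing its cost exactly.  A non-sharp version, $\p[d^*\ge r\giv\nu(S)\le a] \le c\exp(-\gamma r^{4/3})$ with $\gamma = \gamma(a,L_0)>0$, would already follow from cruder forms of these estimates.  A secondary point, handled exactly as in Lemma~\ref{lem::metricnetmeasurezero} and the proof of Theorem~\ref{thm::levynetbasedcharacterization}, is the verification that no geodesic or path reaches distance $\geq r$ from $\partial S$ other than through the chains visible in the iterated exploration, so that the branching decomposition of $d^*$ is exhaustive.
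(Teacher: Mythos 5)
The first half of your plan --- running a disk version of the argument for Theorem~\ref{thm::levynetbasedcharacterization} to identify the conditional law of the marked disk given its boundary length (in particular the joint law of $(d^*,\nu(S))$) with that of $\mudonel$ --- is exactly the paper's route; the paper describes its proof of this step as ``a simplified version of the argument used to prove Theorem~\ref{thm::levynetbasedcharacterization},'' and your hypothesis-by-hypothesis matching is fine.

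The genuine gap is in how you propose to obtain~\eqref{eqn::disk_swallowing_time_tail_bound}. You attempt to derive the sharp rate $\tfrac{3}{2}(1+o(1))r^{4/3}$ from scratch via a chaining/union-bound argument over nested bubbles together with a variational problem, and you concede yourself that pinning the constant $\tfrac{3}{2}$ is ``the main obstacle'': as written this is a heuristic sketch of a hard large-deviations computation, not a proof. The union bound's combinatorial cost, the passage from expected bubble area per unit duration to an actual tail estimate on $\{\nu(S)\le a\}$, and the identification of the extremal (self-similar thin tube) profile are all left open, and nothing in the sketch guarantees that the resulting errors affect only the $o(1)$ rather than the constant. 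The paper does none of this: it imports the sharp estimate from Serlet's large deviation bound for the diameter of the unit-area Brownian map (\cite{serlet_ldp}, used in Lemma~\ref{lem::brownian_map_diameter_bound}) and transfers it to $\mudl$ and $\mudonel$ by an embedding argument (Lemma~\ref{lem::brownian_disk_area_bound}): conditioned on a positive-probability event, a sample from $\mudl$ with area at most $a$ arises as a complementary component of a filled metric ball inside a $\mustwo$ instance, and its $d^*$ is dominated by the diameter of the ambient map, so the bound, with the exact constant, follows immediately. The $\nu(S)^{-1}$-reweighted assertion is then also immediate, since that reweighting turns $\mudonel$ into $\mudl$ and the relevant Radon-Nikodym derivative is bounded on $\{\nu(S)\le a\}$, making your dyadic decomposition unnecessary. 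Unless you intend to carry out the full large-deviations analysis (essentially reproving Serlet's estimate in the disk setting), you should replace the second half of your argument with this embedding-plus-citation step.
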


We note that the law in the final assertion of Proposition~\ref{prop::disk_area_distance} corresponds to $\mudl$.  We will need to collect two lemmas before we give the proof of Proposition~\ref{prop::disk_area_distance}.

\begin{lemma}
\label{lem::brownian_map_diameter_bound}
For each $0 < a < b < \infty$ there exists a constant $c > 0$ such that the following is true.  For an instance $(S,d,\nu,x,y)$ sampled from $\mustwo$, we let $d^*$ be the diameter of $S$.  Conditionally on $\nu(S) \in [a,b]$, the probability that $d^*$ is larger than $r$ is at most $c \exp(-\tfrac{3}{2}(1+o(1)) b^{-1/3} r^{4/3})$ where the $o(1)$ term tends to $0$ as $r \to \infty$.
\end{lemma}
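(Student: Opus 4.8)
The plan is to reduce the statement to a large–deviations estimate for the Brownian snake. Using the decomposition $\snakeexcursionmeasure = \wh\snakeexcursionmeasure \otimes t^{-3/2}\,dt$ of Proposition~\ref{prop::bm_measures}, conditioning a sample from $\mustwo$ on $\nu(S)\in[a,b]$ amounts to choosing $T\in[a,b]$ with density proportional to $T^{-3/2}$, independently choosing a standard unit–area Brownian map, and rescaling all distances by $T^{1/4}$. Since $T$ ranges over a bounded interval, it therefore suffices to prove $\p[D\ge r]\le C\exp(-\tfrac32(1+o(1))r^{4/3})$ for the diameter $D$ of the unit–area map $\musa$ (the bounded range of $T$ affecting only $C$ and the $o(1)$). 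Represent the unit–area map by a normalized Brownian snake $(X_t,Y_t)_{t\in[0,1]}$, where $Y$ is a normalized Brownian excursion encoding a CRT $\CT$ of height $H=\max Y$ and, given $Y$, $X$ is the centered Gaussian label process with $\E[(X_s-X_t)^2]=d_\CT(s,t)$. Since the quotient metric $d$ is dominated by $d^\circ$ (recall \eqref{eqn::circ_distance_time}--\eqref{eqn::bm_distance_inf}), we have $D\le \sup_{s,t}d^\circ(s,t)$, while $D\ge \mathcal R:=\sup_t X_t-\inf_t X_t$ always (the radius from the root). The crude bound $D\le 2\mathcal R$ already yields a tail of the form $C\exp(-c\,r^{4/3})$; getting the sharp constant $\tfrac32$ forces one to estimate $\sup_{s,t}d^\circ(s,t)$ directly.

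The heart of the argument is the following computation. Condition on $Y$, hence on $\CT$ and $H$. Given $\CT$, the process $X$ is Brownian motion indexed by $\CT$; a chaining argument over an $\epsilon$–net of $\CT$ (the CRT has polynomially many points at scale $\epsilon$, and the Gaussian modulus of continuity of $X$ given $Y$ controls oscillations between net points) gives, for fixed $\rho>0$,
\[
\p\!\left[\sup_t X_t\ge\rho \,\big|\, \CT\right]\ \le\ \mathrm{poly}(\rho,H)\,\exp\!\left(-\tfrac{\rho^2}{2H}(1-o(1))\right),
\]
since every point of $\CT$ is within tree–distance $H$ of the base and so has label variance at most $H$; the same holds for $-\inf_t X_t$. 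Combining with the excursion–maximum tail $\p[\max Y\ge h]\le Ch^2 e^{-2h^2}$ and integrating in $h$, Laplace's method yields
\[
\p\!\left[\sup_t X_t\ge\rho\right]\ \le\ \exp\!\left(-(1-o(1))\inf_{h>0}\Big(2h^2+\tfrac{\rho^2}{2h}\Big)\right)\ =\ \exp\!\left(-\tfrac32(1-o(1))\rho^{4/3}\right),
\]
the infimum being attained at $h=(\rho^2/8)^{1/3}$. This controls the part of $\{\sup_{s,t}d^\circ\ge r\}$ on which the radius is of order $r$.

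It remains to handle the complementary part of $\{\sup_{s,t}d^\circ(s,t)\ge r\}$, on which $X_s,X_t$ are not of order $r$ but there are two separated time intervals — one inside $(s,t)$, one inside its complement — on each of which $X$ dips below some level $-L$ with $2L\ge r(1-o(1))$. For a configuration in which the two dips travel distances $\rho_1,\rho_2$ (with $\rho_1+\rho_2=r$) along branches of $\CT$ of lengths $h_1,h_2$ from a common ancestor, the cost for $\CT$ to possess two such branches is $\exp(-2(h_1+h_2)^2(1+o(1)))$ (the excursion must make two peaks of total height $h_1+h_2$, and $\inf_{u\in(0,1)}(2h_1^2/u+2h_2^2/(1-u))=2(h_1+h_2)^2$), while the label cost is $\exp(-\rho_1^2/(2h_1)-\rho_2^2/(2h_2))$; with $h_1+h_2=H$ fixed the latter optimizes to $\exp(-r^2/(2H))$ independently of the split, so the total exponent is again $\inf_H(2H^2+r^2/(2H))=\tfrac32 r^{4/3}$. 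Since $d^\circ(s,t)$ only involves two dip levels, configurations with three or more separated dips are no cheaper, so summing over the finitely many relevant cases gives $\p[D\ge r]\le C\exp(-\tfrac32(1+o(1))r^{4/3})$, and the scaling step of the first paragraph completes the proof.

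The main obstacle is quantitative uniformity. The chaining step must be executed so as not to lose a constant in the exponent, which requires pairing the precise metric entropy of the CRT at the scale $h\asymp\rho^{2/3}$ with sharp Gaussian tail and modulus estimates for $X$ given $Y$. Equally delicate is converting the ``two–branch'' heuristic into an actual decomposition of $\{\sup_{s,t}d^\circ(s,t)\ge r\}$ with controlled error terms. An alternative that I would explore is to invoke the confluence–of–geodesics property of the Brownian map to show that on $\{D\ge r\}$ the diameter is asymptotically equal to the radius from a single marked point, which would route the whole estimate through the cleaner bound of the second paragraph.
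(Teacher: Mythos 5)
Your first paragraph coincides with what the paper actually does, and everything after it is an attempted re-derivation of the result the paper simply cites: the paper's proof of this lemma is two sentences long, namely an appeal to \cite[Proposition~14]{serlet_ldp} for the tail of the diameter of the unit-area Brownian map, followed by the same scaling reduction you describe. Your sketch of that underlying estimate is heuristically on target --- the variational computation $\inf_{h>0}\bigl(2h^2+\rho^2/(2h)\bigr)=\tfrac32\rho^{4/3}$ is exactly the right optimization, and the two-branch analysis does yield the same constant for the range/diameter as for the radius --- but as a proof it has the gaps you yourself flag, and they are not minor. First, the conditional bound $\p\bigl[\sup_t X_t\ge\rho\giv\CT\bigr]\le\mathrm{poly}(\rho,H)\exp\bigl(-\rho^2(1-o(1))/(2H)\bigr)$ is asserted rather than proved; arranging a chaining/entropy argument over the CRT so that it costs only a polynomial prefactor (and hence does not degrade the constant $3/2$) is the technical heart of Serlet's argument, not a routine step. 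Second, the treatment of $\{\sup_{s,t}d^\circ(s,t)\ge r\}$ via ``two separated dips'' is not yet an actual decomposition of the event with controlled errors: the claimed tree cost $\exp\bigl(-2(h_1+h_2)^2(1+o(1))\bigr)$ for the normalized excursion to carry two branches of prescribed lengths in disjoint time intervals needs a proof, the family of configurations $(\rho_1,\rho_2,h_1,h_2)$ is not finite, and one would need a discretization with uniform error control before ``summing over the relevant cases.'' Since these points are precisely the content of the cited large deviation estimate, the efficient route is the paper's: quote it and do only the scaling step.

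One smaller point of bookkeeping in the reduction itself: rescaling distances by $T^{1/4}$ with $T\in[a,b]$ converts the unit-area bound $\exp\bigl(-\tfrac32(1+o(1))\rho^{4/3}\bigr)$ into $\exp\bigl(-\tfrac32\,b^{-1/3}(1+o(1))r^{4/3}\bigr)$, i.e.\ when $b>1$ it changes the constant multiplying $r^{4/3}$, which cannot be absorbed into the prefactor $c$ or into an $o(1)$ term that tends to zero. The paper's own statement glosses the same issue, so this does not distinguish your argument from theirs, but if you carry explicit constants you should record the dependence on $b$ rather than claim it only affects $C$ and the $o(1)$.
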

\begin{proof}
It follows from \cite[Proposition~14]{serlet_ldp} that the probability that the unit area Brownian map has diameter larger than $r$ is at most a constant times $\exp(-\tfrac{3}{2}(1+o(1)) r^{4/3})$ where the $o(1)$ term tends to $0$ as $r \to \infty$.  The assertion of the lemma easily follows from the scaling property of the Brownian map (scaling areas by the factor $a$ scales distances by the factor $a^{1/4}$).
\end{proof}

\begin{lemma}
\label{lem::brownian_disk_area_bound}
Fix $0 < a, L_0 < \infty$.  There exists a constant $c \geq 1$ depending only on $a, L_0$ such that for all $L \in (0,L_0)$ the following is true.  Suppose that we have an instance $(S,d,\nu)$ sampled from $\mudl$ conditioned on $\nu(S) \leq a$.  Let $d^*$ be the supremum over all $z \in S$ of the distance of $z$ to $\partial S$.  The probability that $d^*$ is larger than $r$ is at most $c \exp(-c^{-1} r^{4/3})$.  The same holds with $\mudonel$ in place of $\mudl$.
\end{lemma}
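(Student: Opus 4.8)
The plan is to transfer the large‑deviations estimate for the diameter of the Brownian map, Lemma~\ref{lem::brownian_map_diameter_bound}, to Brownian disks by realizing a disk inside a doubly marked Brownian map as the complement of a filled metric ball (for $\mudonel$) or as a bubble of the metric net (for $\mudl$). Concretely, fix a reference radius $r_0>0$ and sample $(S,d,\nu,x,y)$ from $\mustwo$ restricted to the finite‑mass event $\{d(x,y)>r_0\}$. By Proposition~\ref{prop::metric_net_law} (together with the conditional independence of the inside and outside of $\fb{x}{r_0}$ given the boundary length, which is part of that structure), the conditional law of $S\setminus\fb{x}{r_0}$ --- equipped with its internal metric, the restriction of $\nu$, and the marked point $y$ --- given that the boundary length of $\partial\fb{x}{r_0}$ equals $L$, is exactly $\mudonelset{L}$. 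The key geometric input is the fact recorded at the end of Section~\ref{subsec::metricsphereobservations}: for every $z\in S\setminus\fb{x}{r_0}$ the internal distance from $z$ to $\partial\fb{x}{r_0}$ equals $d(x,z)-r_0$, because shortest paths from $z$ to the boundary within $S\setminus\fb{x}{r_0}$ are initial segments of geodesics from $z$ to $x$ (and these segments stay in the interior, so the same identity holds for the interior‑internal metric). Hence $d^*$ of this disk equals $\sup_{z}d(x,z)-r_0$, which is at most $\diam(S)$. The same reasoning applies to $\mudl$: by Proposition~\ref{prop::metric_net_law} a sample from $\mustwo$ decomposes as the $3/2$‑L\'evy net together with conditionally independent disks $\mudlset{L_i}$ glued into its holes; the boundary of a hole lies on some $\partial\fb{x}{r'}$, so $d^*$ of the glued disk equals $\sup_z d(x,z)-r'\le\diam(S)$, and $\mudlset{L}$ is realized inside $\mustwo$ by disintegrating over the boundary length of a (Palm‑selected) hole, exactly as for $\mudonel$.

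Next I would control the total area, so that Lemma~\ref{lem::brownian_map_diameter_bound} can be applied with a sharp constant. Since $\fb{x}{r_0}$ (respectively the union of the other holes together with the net, which has zero $\nu$‑mass by Lemma~\ref{lem::metricnetmeasurezero}) is conditionally independent of the disk given the relevant boundary length, conditioning in addition on the event that its $\nu$‑mass is at most a small $\epsilon>0$ leaves the conditional law of the disk unchanged while forcing $\nu(S)\le\nu(\mathrm{disk})+\epsilon$. On the event $\{\nu(\mathrm{disk})\le a\}$ this is at most $a+\epsilon$, and Lemma~\ref{lem::brownian_map_diameter_bound} --- disintegrated over the value of $\nu(S)$ and using that its large‑deviations exponent is nonincreasing in the area --- yields $\p[d^*\ge r\mid\cdots]\le c\exp(-\tfrac32(1+o(1))r^{4/3})$, the $r_0$‑shift and the parameter $\epsilon$ being absorbed into the $(1+o(1))$ after sending $\epsilon\downarrow 0$. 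Uniformity over $L\in(0,L_0)$ then follows from the scaling of $\mudl$ and $\mudonel$ in $L$ (distances scaled by $L^{1/2}$, areas by $L^{2}$): this reduces matters to $L=1$ with the area threshold replaced by $aL^{-2}\ge aL_0^{-2}$ and the radius by $rL^{-1/2}\ge rL_0^{-1/2}$, so the bound for a single well‑chosen reference gives the general case. (The passage from $\mudonel$ to $\mudl$ can alternatively be made using the Radon--Nikodym relation $d\mudonel/d\mudl\propto$ area of Lemma~\ref{lem::tmudlareaexpectation}, combined with the observation that the event $\{d^*\ge r\}$ forces the disk to contain a metric ball of radius comparable to $r$ and hence forces a lower bound on its area, which tames the resulting $1/A$ weight.)

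The main obstacle is the measure‑theoretic bookkeeping in the second step: matching the event ``the disk's area is at most $a$'' with the ``$\nu(S)\in[a',b']$'' form required by Lemma~\ref{lem::brownian_map_diameter_bound} while keeping the constant in the exponent sharp, carrying this out uniformly over $L\in(0,L_0)$, and --- for the $\mudl$ statement --- making precise the disintegration that selects a hole of prescribed boundary length (using that the hole boundary lengths are the jumps of a $3/2$‑stable CSBP excursion, so that only finitely many exceed any given size and their configuration is a tractable point process). A secondary point to check is that reducing to the complement of a filled ball with the internal/interior‑internal identity above is legitimate; this is exactly the content of the discussion around Proposition~\ref{prop::recover_sphere} and the remarks at the end of Section~\ref{subsec::metricsphereobservations}, since boundaries of filled metric balls in the Brownian map are among the ``nice'' curves along which cutting and regluing is well behaved.
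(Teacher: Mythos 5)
Your proposal follows the same basic route as the paper: realize the disk law inside a sample from $\mustwo$, dominate the distance to the disk boundary by the diameter of the ambient sphere, control the total area via the conditional independence structure, and invoke Lemma~\ref{lem::brownian_map_diameter_bound}. The difference is the order in which $\mudl$ and $\mudonel$ are treated, and the paper's order is what makes the argument short. The paper handles $\mudl$ first by conditioning $\mustwo$ on the \emph{positive and finite mass} event that some component $U$ of $S\setminus B(x,r)$ (over all $r$) with $y\notin U$ has boundary length \emph{exactly} $L$, together with $\nu(U)\le a$ and $\nu(S\setminus U)\le 1$; exact hitting of $L$ has positive probability because these boundary lengths sweep through a continuum of values as $r$ varies, so no disintegration over a measure-zero event, no Palm selection, and no continuity- or scaling-in-$L$ argument is needed, and the total area is bounded by $a+1$ uniformly in $L$ by construction. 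The $\mudonel$ case then comes for free because $d\mudonel/d\mudl$ is proportional to $\nu(S)$ (Lemma~\ref{lem::tmudlareaexpectation}), hence bounded on $\{\nu(S)\le a\}$ --- the easy Radon--Nikodym direction.

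Your reversed order creates two concrete weak points. First, your alternative passage from $\mudonel$ to $\mudl$ uses the derivative proportional to $\nu(S)^{-1}$, and the claim that $\{d^*\ge r\}$ forces a lower bound on the area is false as a deterministic statement: a Brownian disk can contain a metric ball of radius $r$ of arbitrarily small area with positive probability, so taming the $1/\nu(S)$ weight would need an extra quantitative lower-tail estimate for ball areas (your primary route via the L\'evy net holes avoids this, at the cost of the Palm disintegration bookkeeping you acknowledge). Second, the scaling reduction for uniformity over $L\in(0,L_0)$ does not reduce to a single reference case: after rescaling to boundary length $1$ the area cutoff becomes $aL^{-2}$, which blows up as $L\to0$, so you would still need to know how the constant and the $o(1)$ in Lemma~\ref{lem::brownian_map_diameter_bound} depend on the upper area cutoff, whereas the paper's conditioning event keeps the ambient area bounded independently of $L$.
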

\begin{proof}
Suppose that we have a sample $(S,d,\nu,x,y)$ from $\mustwo$ conditioned on the positive and finite probability event that:
\begin{enumerate}
\item There exists an $r$ and a component $U$ of $S \setminus B(x,r)$ with $y \notin U$ such that the boundary length of $U$ is equal to $L$.
\item $\nu(U) \leq a$ and $1 \leq \nu(S \setminus U) \leq 2$.
\end{enumerate}
Then we know that the law of $U$ (viewed as a metric measure space) is given by $\mudl$ conditioned on having area at most $a$.  The amount of time that it takes the metric exploration starting from $\partial U$ to absorb every point in $U$ is bounded from above by the diameter of $(S,d)$.  Thus the first assertion of the lemma follows from Lemma~\ref{lem::brownian_map_diameter_bound}.

The second assertion follows from the first because the Radon-Nikodym derivative between $\mudonel$ and $\mudl$ is at most $a$ on the event that $\nu(S) \leq a$.
\end{proof}

\begin{proof}[Proof of Proposition~\ref{prop::disk_area_distance}]
The first assertion follows from a simplified version of the argument used to prove Theorem~\ref{thm::levynetbasedcharacterization}.

We now turn to prove the second assertion by combining the first assertion with Lemma~\ref{lem::brownian_disk_area_bound}.  We may assume without loss of generality that $r \geq 1$.  We consider two possibilities depending on whether $L \leq r^{-1/2}$ or $L \in (r^{-1/2}, L_0]$.

Suppose that $L \in (r^{-1/2},L_0]$.  Then we can write
\[ \p[ \nu(S) \leq a \giv d^* \geq r] = \frac{\p[ \nu(S) \leq a, d^* \geq r]}{\p[d^* \geq r]}.\]
Lemma~\ref{lem::brownian_disk_area_bound} implies that the numerator is at c $\exp(-c^{-1} r^{4/3})$ for a constant $c \geq 1$.  As $L \geq r^{-1/2}$, it is easy to see that the denominator is at least a negative power of $r$ as $r \to \infty$.  This proves the desired result in this case.

Now suppose that $L \in (0,r^{-1/2}]$.  Let $(S,d,\nu,x,y)$ be sampled from $\mustwo$.  Let $A_L$ (resp.\ $D_L$) be the area (resp.\ maximal distance to the boundary) of $S \setminus \fb{x}{\tau_L}$ where $\tau_L$ is the smallest $r \geq 0$ so that the boundary length of $\partial \fb{x}{r}$ is equal to $L$.  Then we need to prove an upper bound for $\mustwo(A_L \leq a \giv D_L \geq r)$.  This, in turn is equal to
\[ \frac{\mustwo(A_L \leq a, D_L \geq r)}{\mustwo(D_L \geq r)}.\]
Since $L \leq r^{-1/2}$, the denominator is at least $\mustwo(D_{r^{-1/2}} \geq r)$ which is in turn at least a negative power of $r$ as $r \to \infty$.  Let $D$ be the diameter of $(S,d)$ so that $D \geq D_L$.  Then the numerator is at most a constant times $\mustwo(A \leq 2a, D \geq r)$ where $A = \nu(S)$ as the conditional probability that $\nu(\fb{x}{\tau_L}) \leq a$ given $\tau_L < \infty$ is positive.  We can then write
\begin{equation}
\label{eqn:area_diam_integral}	
\mustwo(A \leq 2a, D \geq r) = \int_0^{2a} \mustwo(D \geq r \giv A = p) \mustwo(A = p) dp
\end{equation}
where $\mustwo( \cdot \giv A = p)$ denotes the conditional law of $\mustwo$ given $A = p$ and $\mustwo(A=p)$ denotes the density of $A$ at $p$.  Recall that the density of $A$ at $p$ is equal to a constant times $p^{-3/2}$.  When $p=1$, we also recall \cite[Proposition~14]{serlet_ldp} implies that $\mustwo(D \geq r \giv A = 1)$ is at most a constant times $\exp(-\tfrac{3}{2}(1+o(1)) r^{4/3})$ where the $o(1)$ term tends to $0$ as $r \to 0$.  Recall that if we scale the unit area Brownian map so that its area becomes $p$ then distances are scaled by the factor $p^{1/4}$.  It therefore follows that $\mustwo(D \geq r \giv A = p) = \mustwo(D \geq r p^{-1/4} \giv A = 1)$ is at most a constant times $\exp(-\tfrac{3}{2}(1+o(1)) p^{-1/3} r^{4/3})$.  Altogether, \eqref{eqn:area_diam_integral} is at most a constant times
\[ \int_0^{2a} \exp(-\tfrac{3}{2}(1+o(1)) p^{-1/3} r^{4/3}) p^{-3/2} dp\]
which in turn is at most $c \exp(-c^{-1} r^{4/3})$ for a constant $c \geq 1$.
\end{proof}

\subsection{Adding a third marked point along the geodesic}
\label{sec::bmintro}

\begin{figure}[ht!]
\begin{center}
\includegraphics[scale=0.85]{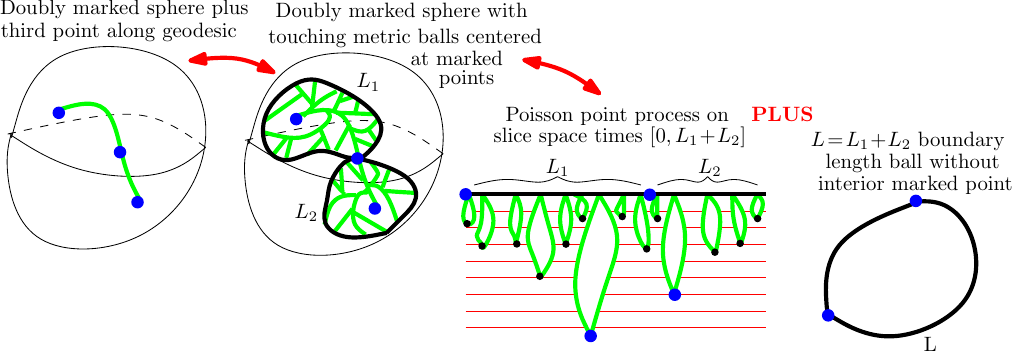}
\caption{\label{fig::eightinsphere}  To sample from the measure  $\mustwoplusone$ on triply marked spheres, one first samples from the measure $\mustwo$ {\em weighted} by the distance $D = d(x,y)$; given a sample from that measure, one then chooses $r$ uniformly in $[0,D]$ and marks the point $r$ units along the (a.s.\ unique) geodesic.  The second figure is a continuum version of Figure~\ref{fig::shards}. Given $L_1$ and $L_2$, one may decompose the metric balls as in Figure~\ref{fig::slicedecomposition} (the first $L_1$ units of time describing the first ball, the second $L_2$ units the second ball). The right figure is an independent unmarked Brownian disk, which represents the surface that lies outside of the two metric balls in the second figure. Given the disk, first blue dot is uniform on the boundary; the second is $L_1$ units clockwise from first. The measure that $\mustwoplusone$ induces on the pair $(L_1, L_2)$ is (up to multiplicative constant) the measure $(L_1 + L_2)^{-5/2}dL_1 dL_2$. This follows from the overall scaling exponent of $L$ and the fact that given $L=L_1 +L_2$ the conditional law of $L_1$ is uniform on $[0, L]$. }
\end{center}
\end{figure}

In this section, we present Figure~\ref{fig::eightinsphere} and use it to informally explain a construction that will be useful in the subsequent works \cite{qlebm, qle_continuity} by the authors to establish the connection between the $\sqrt{8/3}$-Liouville quantum gravity sphere and the Brownian map. This subsection is an ``optional'' component of the current paper and does not contain any detailed proofs; however, the reader who intends to read \cite{qlebm, qle_continuity} will find it helpful to have this picture in mind, and it is easier to introduce this picture here.

Roughly speaking, we want to describe the continuum version of the Boltzmann measure on figures such as the one in Figure~\ref{fig::shards}, where one has a doubly marked sphere together with two filled metric balls (centered at the two marked points) that touch each other on the boundary but do not otherwise overlap. Clearly, the Radon-Nikodym derivative of such a measure w.r.t.\ $\mu_{\mathrm{TRI}}^2$ should be $D+1$ where $D$ is the distance between the two points, since the radius of the first ball can be anything in the interval $[0,D]$. In the discrete version of this story, it is possible for the two metric balls in Figure~\ref{fig::shards} to intersect in more than one point (this can happen if the geodesic between the two marked points is not unique) but in the continuum analog discussed below one would not expect this to be the case (since the geodesic between the marked points is a.s.\ unique).

To describe the continuum version of the story, we need to define a measure  $\mustwoplusone$ on continuum configurations like the one shown in Figure~\ref{fig::eightinsphere}.  To sample from  $\mustwoplusone$, one first chooses a doubly marked sphere from the measure whose Radon-Nikodym derivative w.r.t.\ $\mustwo$ is given by $D$. Then, having done so, one chooses a radius $D_1$ for the first metric ball uniformly in $[0,D]$, and then sets the second ball radius to be $D_2 := D-D_1$. Now $\mustwoplusone$ is a measure on Brownian map surfaces decorated by two marked points and touching two filled metric balls centered at those points. Let $L_1$ and $L_2$ denote the boundary lengths of the two balls and write $L = L_1 + L_2$.

\begin{enumerate}
\item Based on Figure~\ref{fig::shards} and Figure~\ref{fig::eightinsphere}, we would expect that one can first choose the set of slices indexed by time $L$, and then randomly choose $L_1$ uniformly from $[0,L]$. Thus, we expect that given $L$ and $A$, the value $L_1$ is uniform on $[0,L]$.
\item It is possible to verify the following scaling properties (which hold up to a constant multiplicative factor):
\begin{align*}
\mustwo[ A > a] \approx a^{-1/2} \quad\quad&\mathrm{and}\quad\quad \mustwoplusone[ A > a ] \approx a^{-1/4}.\\
\mustwo[ L > a ] \approx a^{-1} \quad\quad&\mathrm{and}\quad\quad \mustwoplusone[ L > a ] \approx a^{-1/2}. \\
\mustwo[ D > a ] \approx a^{-2} \quad\quad&\mathrm{and}\quad\quad \mustwoplusone [ D > a ] \approx a^{-1}.
\end{align*}
\end{enumerate}

The two properties above suggest that $\mustwoplusone$ induces a measure on $(L_1, L_2)$ given (up to constant multiplicative factor) by $(L_1 + L_2)^{-5/2}dL_1 dL_2$.  The measure on $L$ itself is then $L^{-3/2} dL$.

If we condition on the metric ball in Figure~\ref{fig::eightinsphere} of boundary length $L_1$, we expect that conditional law of the complement to be that of a marked disk of boundary length $L_1$, i.e., to be a sample from $\mudonel$ with $L_1$ playing the role of the boundary length. This suggests the following symmetry (which we informally state but will not actually prove here).

\begin{proposition}
\label{prop::twowaystobuildmarkeddisc}
Given $L_1$, the following are equivalent:
\begin{enumerate}
\item Sample a marked disk of boundary length $L_1$ from the probability measure $\mudonel$ (with $L_1$ as the boundary length). One can put a ``boundary-touching circle'' on this disk by drawing the outer boundary of the metric ball whose center is the marked point and whose radius is the metric distance from the marked point to the disk boundary.
\item Sample $L_2$ from the measure $(L_1 + L_2)^{-5/2}dL_2$ (normalized to be a probability measure) and then create a large disk by identifying a length $L_2$ arc of the boundary of a sample from $\mudl$, with the entire boundary of a disk sampled from $\mumlset{L_2}$. The interface between these two samples is the ``boundary-touching circle'' on the larger disk.
\end{enumerate}
\end{proposition}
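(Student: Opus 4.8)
The proposition is only stated informally in the paper, so what follows is a plan for a rigorous proof. The idea is to realize the marked disk $\mudonelset{L_1}$ inside an instance of the Brownian map and read off the asserted decomposition from the conditional independence axioms together with the L\'evy net structure already developed. Fix a constant $r>0$ and let $(S,d,\nu,x,y)\sim\mustwo$ restricted to the finite-mass event $\{d(x,y)>r\}$. By the discussion of Section~\ref{subsec::mapsdisksnets} and the definition of $\mudonel$ (including the remark there that $\mudl$, hence $\mudonel$, carries no marked boundary point), the conditional law of the marked space $S\setminus\fb{x}{r}$ --- with mark $y$, interior-internal metric, restricted measure --- given $|\partial\fb{x}{r}|=L_1$ is $\mudonelset{L_1}$, independently of the choice of $r$. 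Inside this disk the mark $y$ lies at distance $R:=d(x,y)-r$ from $\partial\fb{x}{r}$ (end of Section~\ref{subsec::metricsphereobservations}), attained at the a.s.\ unique point $p=\gamma\cap\partial\fb{x}{r}$ where $\gamma$ is the a.s.\ unique $x$--$y$ geodesic. The filled ball $\fb{y}{R}$ (centered at $y$, viewed from $x$) is then contained in $S\setminus\fb{x}{r}$, has circular boundary by Proposition~\ref{prop::boundariesarecircles}, meets $\partial\fb{x}{r}$ only at $p$ (a second touching point would give a second geodesic of length $d(x,y)$), and is exactly the ``boundary-touching circle'' of item (1). So $S\setminus\fb{x}{r}=\fb{y}{R}\sqcup B$ with $B:=S\setminus(\fb{x}{r}\cup\fb{y}{R})$ a topological disk whose boundary is the figure eight $\partial\fb{x}{r}\cup\partial\fb{y}{R}$, of total length $L_1+L_2$ with $L_2:=|\partial\fb{y}{R}|$.

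Next I would identify the conditional laws of the two pieces. Applying the conditional independence and scale invariance assumptions of Theorem~\ref{thm::markovmapcharacterization} (equivalently the structure of Theorem~\ref{thm::levynetbasedcharacterization}) with the roles of $x$ and $y$ exchanged and $s=R$: given $L_2$, the ball $\fb{y}{R}$ is conditionally independent of $S\setminus\fb{y}{R}$ and has law $\mumlset{L_2}$ --- this conditional law being $\muml$ is built into the definition of $\muml$ in Section~\ref{subsec::discreteintuition}. Since $L_1$ is measurable with respect to $S\setminus\fb{y}{R}\supseteq\fb{x}{r}$, this independence and the identification of the law of $\fb{y}{R}$ persist after conditioning on $(L_1,L_2)$. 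It remains to show that, given $(L_1,L_2)$, the central disk $B$ (with its internal metric, measure and figure-eight boundary, the two boundary points over $p$ being forgotten) has the law $\mudlset{L_1+L_2}$. This is the continuum form of the fact, visible in Figure~\ref{fig::shards} and Figure~\ref{fig::eightinsphere}, that the complement of two touching filled balls is a Brownian disk whose boundary length is the sum of the two ball boundary lengths; I would deduce it from Proposition~\ref{prop::metric_net_law} (the metric net of $\mustwo$ is a $3/2$-stable L\'evy net with holes filled by conditionally independent $\mudl$ samples) together with the observation that $B$, carrying its own L\'evy net, is obtained from the portion of the ambient net between heights $r$ and $d(x,y)$ by excising the part inside $\fb{y}{R}$ --- a piece which, by the Markov/excursion structure of the $3/2$-stable CSBP boundary length process and independence of the hole fillings, is a Brownian disk of boundary length equal to that of its boundary.

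The remaining ingredient is the conditional law of $L_2$ given $L_1$. Under $\mudonelset{L_1}$ the process $s\mapsto|\partial\fb{x}{s}|$, continued from $s=r$ (value $L_1$) to $s=d(x,y)$ (value $0$), is the time-reversal of a $3/2$-stable CSBP run to extinction (Proposition~\ref{prop::metric_net_law} and the proof of Theorem~\ref{thm::levynetbasedcharacterization}), and $L_2$ is the boundary length of the complementary touching ball $\fb{y}{d(x,y)-r}$. Using the $x\leftrightarrow y$ resampling symmetry of $\mustwo$ and the correspondence between the two exploration directions, the law of $(L_1,L_2)$ under $\mustwoplusone$ (that is, $\mustwo$ weighted by $d(x,y)$ with the extra geodesic point) is proportional to $(L_1+L_2)^{-5/2}\,dL_1\,dL_2$ --- precisely the computation indicated in the caption of Figure~\ref{fig::eightinsphere}, which uses the scaling exponent $\mustwoplusone[L>a]\approx a^{-1/2}$ and the fact that the slice decomposition makes $L_1$ uniform on $[0,L_1+L_2]$ given $L_1+L_2$. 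Integrating out $L_2$ gives $L_1$-marginal proportional to $L_1^{-3/2}\,dL_1$, whence the conditional law of $L_2$ given $L_1$ is $(L_1+L_2)^{-5/2}\,dL_2$ normalized, as in item (2); alternatively this conditional law follows directly from Lemma~\ref{lem::levyreversal} applied at the pinch time. Assembling the three ingredients, a sample from $\mudonelset{L_1}$ is produced by choosing $L_2$ from $(L_1+L_2)^{-5/2}\,dL_2$ normalized, sampling $\fb{y}{R}\sim\mumlset{L_2}$ and $B\sim\mudlset{L_1+L_2}$ independently, and gluing the length-$L_2$ arc of $\partial B$ lying over $p$ to the entire boundary of the $\mumlset{L_2}$-sample in a length-preserving (and, since neither factor has a marked boundary point, rotationally uniform) way, so the complementary length-$L_1$ arc of $\partial B$ becomes the boundary of the glued disk, the center of the $\mumlset{L_2}$-sample its marked interior point, and the interface the boundary-touching circle; this is exactly item (2), so (1) and (2) are equivalent.

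The hardest part will be the identification in the second paragraph of the central disk $B$ with $\mudlset{L_1+L_2}$: one must show simultaneously that the internal geometry of $B$ is conditionally independent of the rest of $S$ given $(L_1,L_2)$ and that the law of a figure-eight-bounded region remembers only its total boundary length, which requires pushing the L\'evy-net-within-a-disk structure through with care. A close second is the $(L_1+L_2)^{-5/2}$ conditional law, which in the paper rests on facts ($L_1$ uniform given $L_1+L_2$, and the $\mustwoplusone$ scaling exponents) that are only stated informally; making either rigorous is the crux of a complete argument.
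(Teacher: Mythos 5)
The first thing to note is that the paper does not actually prove this proposition: Section~\ref{sec::bmintro} is explicitly labelled as an ``optional'' discussion containing no detailed proofs, the proposition is ``informally state[d] but will not actually prove here,'' and immediately after the statement the authors say they do not know how to prove Proposition~\ref{prop::twowaystobuildmarkeddisc} directly from the Brownian snake construction or from the breadth-first construction developed in this paper --- they obtain it only as a consequence of symmetries of the discrete models (details omitted) or, in the companion works, from Liouville quantum gravity. So there is no argument in the paper to compare yours against, and the real question is whether your outline could be closed using only the tools established here.

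Judged that way, the two steps you yourself flag as hardest are genuine gaps, and they sit at essentially the same depth as the proposition itself. First, the identification of the middle region $B$ with $\mudlset{L_1+L_2}$: Proposition~\ref{prop::metric_net_law} describes only the complementary components of a single metric net from $x$ to $y$ --- disks attached to the net at one pinch point --- whereas $B$ is bounded by two arcs, one on $\partial\fb{x}{r}$ and one on $\partial\fb{y}{R}$, meeting at a point of the geodesic; nothing established in the paper shows that the conditional law of such a region depends only on the total boundary length $L_1+L_2$, nor that the $\mumlset{L_2}$ ball is attached along a uniformly located length-$L_2$ arc. That uniform-attachment statement is a boundary re-rooting invariance of $\mudl$ of exactly the kind the authors say they cannot access ``from direct considerations'' (they cannot even prove the $x\leftrightarrow y$ symmetry of $\mustwo$ that way), so invoking ``the Markov/excursion structure and independence of the hole fillings'' at this point is not a proof but a restatement of what must be shown. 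Second, the joint law $(L_1+L_2)^{-5/2}\,dL_1\,dL_2$ is taken from the caption of Figure~\ref{fig::eightinsphere}, which itself rests on the unproven assertions that $L_1$ is uniform on $[0,L_1+L_2]$ given $L_1+L_2$ and on the $\mustwoplusone$ scaling exponents; and your fallback, Lemma~\ref{lem::levyreversal} ``applied at the pinch time,'' does not apply here: that lemma gives the jump law of the time-reversed boundary-length process of a single exploration, where the two loop lengths are the jump size and the post-jump value, whereas in the present configuration $L_1$ and $L_2$ are boundary lengths of two filled balls centered at distinct points and $L_2$ is not a jump of the process $r\mapsto L_r$. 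In short, the roadmap is reasonable, but at both crux points it appeals to facts equivalent in difficulty to the proposition being proved; closing it requires an external input such as the discrete approximation the authors allude to, or the LQG symmetry arguments of the companion papers.
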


Interestingly, we do not know how to prove Proposition~\ref{prop::twowaystobuildmarkeddisc} directly from the Brownian snake constructions of these Brownian map measures, or from the breadth-first variant discussed here. Indeed, from direct considerations, we do not even know how to prove the symmetry of $\mustwo$ with respect to swapping the roles of the two marked points $x$ and $y$.  However, both this latter fact and Proposition~\ref{prop::twowaystobuildmarkeddisc} can be derived as consequences of the fact that $\mustwo$ is a scaling limit of discrete models that have similar symmetries (though again we do not give details here). We will see in \cite{qlebm, qle_continuity} that these facts can also be derived in the Liouville quantum gravity setting, where certain symmetries are more readily apparent.

We will also present in \cite{qlebm, qle_continuity} an alternate way to construct Figure~\ref{fig::eightinsphere} in the Liouville quantum gravity setting. In this alternate construction, one begins with a measure $\mu^2_{\mathrm{LQGSPH}}$ on doubly marked LQG spheres. Given such a sphere, one may then decorate it by a whole plane $\SLE_6$ path from one marked point to the other. Such a path will have certain ``cut points'' which divide the trace of the path into two connected components. It is possible to define a quantum measure on the set of cut points. One can then define a measure $\mu^{2+1}_{\mathrm{LQGSPH}}$ on path-decorated doubly marked quantum spheres with a {\em distinguished} cut point along the path. This is obtained by starting with the law of an $\SLE_6$-decorated sample from $\mu^2_{\mathrm{LQGSPH}}$, then {\em weighting} this law by the quantum cut point measure, and then choosing a cut point uniformly from this cut point measure. We will see in \cite{qlebm, qle_continuity} that a certain QLE ``reshuffling'' procedure allows us to convert a sample from $\mu^{2+1}_{\mathrm{LQGSPH}}$ into an object that (once an appropriate metric is defined on it) looks like a sample from $\mustwoplusone$.

\subsection{The martingale property holds if and only if $\alpha=3/2$}
\label{subsec::martingale32}

\begin{proposition}
\label{prop::centermartingale}
Fix $\alpha \in (1,2)$ and suppose that $M_r$ is the process associated with an exploration towards the center of a sample produced from $\tmudl$ where $\tmudl$ is as in Section~\ref{subsec::axiomsforbrownianmap}.  For each $r \geq 0$, we let
\begin{equation}
\label{eqn::atmartingale}
A_r = M_r^{2\alpha - 1} + \sum_{a \in \CJ_r} |a|^{2\alpha-1}
\end{equation}
where $\CJ_r$ is the set of jumps made by $M|_{[0,r]}$.  Then $A_r$ is a martingale if and only if $\alpha  = 3/2$.
\end{proposition}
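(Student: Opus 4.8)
The plan is to describe $M_r$ explicitly as a self-similar Markov process and then compute the predictable compensator of $A_r$. First I would assemble the law of $M_r$ from the preceding lemmas: by Lemma~\ref{lem::mtmarkovian} it is a Markov process on $\R_+$ started from $L$ and absorbed at $0$; by the scale invariance of $\tmudl$ (the self-similarity discussed after Lemma~\ref{lem::mtmarkovian}) it is self-similar of index $\alpha-1$, i.e.\ $M_r^{(c)}\stackrel{d}{=}c\,M_{c^{1-\alpha}r}^{(1)}$; and by Lemma~\ref{lem::mtjumplaw} its jump kernel from a state $c$ must be, up to an $\alpha$-dependent constant, $\mu_c(da)=K_\alpha\,c\,a^{-\alpha-1}(1-a/c)^{-\alpha-1}\one_{(0,c/2]}(a)\,da$, the prefactor $c$ being the unique power of $c$ compatible with the self-similarity. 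Writing the generator in canonical (compensated) form,
\[
\mathcal{L}f(c)=\beta_\alpha\,c^{2-\alpha}f'(c)+\int_0^{c/2}\bigl(f(c-a)-f(c)+a\,f'(c)\bigr)\,\mu_c(da),
\]
where the residual drift coefficient $\beta_\alpha\in\R$ is determined by the law of $M_r$ and the power $c^{2-\alpha}$ is again forced by self-similarity (the integral converges since $\alpha\in(1,2)$).

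Next I would apply Dynkin's formula. The increasing process $S_r:=\sum_{a\in\CJ_r}|a|^{2\alpha-1}$ has predictable compensator $\int_0^r R(M_u)\,du$ with $R(c)=\int_0^{c/2}a^{2\alpha-1}\mu_c(da)$, and the substitution $a=cu$ gives $R(c)=K_\alpha I_\alpha c^{\alpha}$ where $I_\alpha=\int_0^{1/2}u^{\alpha-2}(1-u)^{-\alpha-1}\,du\in(0,\infty)$. Taking $\phi(x)=x^{2\alpha-1}$, the process $\phi(M_r)-\int_0^r\mathcal{L}\phi(M_u)\,du$ is a local martingale, and the same substitution yields $\mathcal{L}\phi(c)=\bigl(\beta_\alpha(2\alpha-1)+K_\alpha\widetilde I_\alpha\bigr)c^{\alpha}$ with $\widetilde I_\alpha=\int_0^{1/2}\bigl[(1-u)^{2\alpha-1}-1+(2\alpha-1)u\bigr]u^{-\alpha-1}(1-u)^{-\alpha-1}\,du$. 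Hence $A_r-\int_0^r q(\alpha)\,M_u^{\alpha}\,du$ is a local martingale, where $q(\alpha)=\beta_\alpha(2\alpha-1)+K_\alpha(I_\alpha+\widetilde I_\alpha)$. Since $M_u^{\alpha}>0$ strictly before the absorption time, an optional-stopping/localization argument (using $A_r\geq0$ and, for the upgrade to a true martingale when $q(\alpha)=0$, the scaling together with the explicit absorption-time law of Lemma~\ref{lem::csbp_extinction_time}) shows that $A_r$ is a martingale if and only if $q(\alpha)=0$.

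It then remains to evaluate $q(\alpha)$ and show its only zero in $(1,2)$ is $\alpha=3/2$. The two ingredients are the Beta-type integrals $I_\alpha,\widetilde I_\alpha$ (an elementary computation) and, crucially, the drift $\beta_\alpha$. For the latter I would use that the area process $\widetilde A_r:=L_r^{2\alpha-1}+\sum(\text{jumps})^{2\alpha-1}$ attached to the exploration of $\tmudonel$ toward the marked point is, by the hypotheses of Theorem~\ref{thm::levynetbasedcharacterization} (cf.\ Lemma~\ref{lem::tmudlareaexpectation}), an honest conditional expectation and hence a martingale for every $\alpha$; running the computation above for $L_r$ in place of $M_r$ (its jump kernel being the one of Lemma~\ref{lem::levyreversal}, again with the forced prefactor $c$) then pins down the residual drift of $L_r$, and one transfers this to $M_r$ through the absolutely continuous change of measure of Lemma~\ref{lem::explorationprocessrelationships}–Lemma~\ref{lem::mtjumplaw} (weighting by $\sum_K K^{2\alpha-1}$), which multiplies the jump kernel by the corresponding density and modifies the drift accordingly. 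This produces $q(\alpha)$ as an explicit elementary function of $\alpha$; one may short-cut the final step by noting that for the genuine Brownian map (which satisfies the axioms with $\alpha=3/2$, so that $A_r$ really is a conditional expectation of area) one must have $q(3/2)=0$, which both checks the computation and exhibits the factor $(2\alpha-3)$, after which it suffices to verify that the complementary factor of $q$ does not vanish on $(1,2)$ (e.g.\ by monotonicity or a sign count). The main obstacle is this determination of $\beta_\alpha$: correctly tracking the drift through the time-reversal and the change of measure, rather than the Beta-function bookkeeping, which is routine.
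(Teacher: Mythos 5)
Your overall skeleton parallels the paper's argument: the paper also reduces the problem to showing that the drift of $A_r$ is $q(\alpha)\,M_r^{\alpha}$ for an explicit $q$ vanishing only at $\alpha=3/2$ (it does this by a bare-hands Taylor expansion with a jump-size cutoff and an elementary criterion, Lemma~\ref{lem::m_t_martingale}, rather than via a generator/Dynkin formulation, and evaluates the resulting beta-type integral, namely $I_\alpha$ in \eqref{eqn::jumpintegral1}--\eqref{eqn::jumpintegral1a}). Your identification of the drift coefficient as the crux is also correct. The problem is how you propose to determine it.

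The step that fails is the determination of $\beta_\alpha$ from the claim that $\wt A_r := L_r^{2\alpha-1}+\sum |a_i|^{2\alpha-1}$, for the exploration of $\tmudonel$ toward the marked point, is ``an honest conditional expectation and hence a martingale for every $\alpha$.'' It is not. Along the exploration toward $y$, the cut-off disks are indeed unbiased $\tmudlset{a_i}$ samples with conditional expected area $c_0|a_i|^{2\alpha-1}$ (Lemma~\ref{lem::explore_to_marked_point} and Lemma~\ref{lem::tmudlareaexpectation}), but the unexplored component containing $y$ is a $\tmudonelset{L_r}$ disk, i.e.\ the \emph{area-weighted} disk, whose conditional expected area is $\E_{\tmudl}[A^2]/\E_{\tmudl}[A]$, not $c_0 L_r^{2\alpha-1}$ (and for the actual Brownian disk this second moment is infinite, so $\wt A_r$ is not a conditional expectation of anything integrable). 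This size-biasing is precisely the reason the paper introduces the exploration toward the \emph{center}: only there is the unexplored region an unbiased $\tmudl$ disk, which is what makes $A_r$ the natural candidate martingale in the first place. Imposing the (false) martingale property of $\wt A_r$ to solve for the drift of $L_r$ would return an incorrect $\beta_\alpha$, and the final function $q(\alpha)$ would be wrong; your own consistency check $q(3/2)=0$ would then fail rather than rescue the argument. The fix is that the drift is not a free parameter to be inferred from an auxiliary martingale at all: hypothesis (2) of Theorem~\ref{thm::levynetbasedcharacterization} gives the law of $L_r$ explicitly as (a time change of) the time-reversal of an $\alpha$-stable L\'evy/CSBP excursion, and the remark following Lemma~\ref{lem::mtjumplaw} shows that $M_r$ is reconstructed from its jumps by the same procedure as $L_r$; this is exactly how the paper pins down the continuous part, writing $M_r=\lim_{\epsilon\to 0}(1+J_r^\epsilon+rC^\epsilon)_+$ with $C^\epsilon=\int_\epsilon^\infty x\cdot x^{-\alpha-1}\,dx$ the stable compensator. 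If you replace your $\wt A_r$ argument by reading the drift directly off this known law (and then, if you wish, transferring to $M_r$ through the change of measure as you describe), your generator computation reduces to the same incomplete-beta evaluation as the paper's $I_\alpha$ and the proof goes through.
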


We will need two intermediate lemmas before we give the proof of Proposition~\ref{prop::centermartingale}.

\begin{lemma}
\label{lem::m_t_martingale}
Suppose that $X_t$ is a non-negative, real-valued, continuous-time {\cadlag} process with $\sup_{t \geq 0} X_t < \infty$ and $X_0 > 0$ a.s.  Let $\tau = \inf\{t \geq 0 : X_t = 0\}$ and let $(\CF_t)$ be the filtration generated by $(X_{t \wedge \tau})$.  Assume that there exists $p > 1$ with
\begin{equation}
\label{eqn::x_u_ui}
\sup_{s \leq t \leq T}\E[ |X_{t \wedge \tau}|^p \giv \CF_s]  < \infty \quad\text{for all}\quad 0 \leq s < T < \infty.
\end{equation}
Suppose that $q \colon \R_+ \to \R_+$ is a non-decreasing function such that $q(\Delta)/\Delta \to 0$ as $\Delta \to 0$.  Assume that $Y_t$ is a {\cadlag} process adapted to $\CF_t$ with $\E|Y_t| < \infty$ for all $t$ and that $a$ is a constant such that
\[ \left| \E[ Y_t - Y_s \giv \CF_s ] - a (t-s) X_{s \wedge \tau} \right| \leq q(t-s)|X_{s\wedge \tau}| \quad\text{for all}\quad t \geq s.\]
Then $Y_t$ is a martingale if and only if $a=0$.
\end{lemma}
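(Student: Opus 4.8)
The plan is to establish the two implications separately; the substance is a short telescoping-plus-tower-property estimate, the rest being bookkeeping about the stopped process.

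\textbf{The direction $a=0 \Rightarrow Y$ is a martingale.} Fix $0\le s<t$ and, for $n\in\N$, set $t_i=s+i(t-s)/n$ for $0\le i\le n$ and $\delta=(t-s)/n$. Since $s\le t_i$ for each $i$, the tower property gives
\[ \E[Y_t-Y_s\giv\CF_s]=\sum_{i=0}^{n-1}\E\big[\,\E[Y_{t_{i+1}}-Y_{t_i}\giv\CF_{t_i}]\giv\CF_s\,\big], \]
and the hypothesis (with $a=0$) bounds the inner conditional expectation in absolute value by $q(\delta)X_{t_i\wedge\tau}$. Applying the triangle inequality, conditional Jensen, and then taking expectations yields
\[ \E\big|\E[Y_t-Y_s\giv\CF_s]\big|\le q(\delta)\sum_{i=0}^{n-1}\E[X_{t_i\wedge\tau}]. \]
Now, because $X$ is non-negative and càdlàg with $X_\tau=0$ (any sequence of zeros of $X$ approaching $\tau$ must approach from above, so right-continuity forces $X_\tau=0$), one has $X_{u\wedge\tau}=X_u\mathbf{1}_{\{\tau>u\}}\le X_u$ for every $u$, hence $\E[X_{u\wedge\tau}]\le\E|X_u|\le(\sup_{0\le v\le t}\E|X_v|^p)^{1/p}=:C_t<\infty$ by~\eqref{eqn::x_u_ui}. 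Therefore the right-hand side is at most $q(\delta)\,n\,C_t=(t-s)C_t\cdot q(\delta)/\delta$, which tends to $0$ as $n\to\infty$ since $q(\delta)/\delta\to0$. Thus $\E[Y_t\giv\CF_s]=Y_s$ almost surely; as $Y$ is $(\CF_t)$-adapted and integrable, it is a martingale.

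\textbf{The direction $Y$ a martingale $\Rightarrow a=0$.} If $Y$ is a martingale then $\E[Y_t-Y_s\giv\CF_s]=0$, so the hypothesis reduces to $|a|(t-s)X_{s\wedge\tau}\le q(t-s)X_{s\wedge\tau}$ almost surely for all $t\ge s$. Fixing $s$ and taking $t=s+\delta$, this says $\big(|a|-q(\delta)/\delta\big)X_{s\wedge\tau}\le 0$ a.s. If $a\ne0$, then for all sufficiently small $\delta$ the coefficient $|a|-q(\delta)/\delta$ is strictly positive, forcing $X_{s\wedge\tau}=0$ a.s.; since $s\ge0$ was arbitrary, $s=0$ gives $X_0=X_{0\wedge\tau}=0$ a.s. In the applications $X$ is the boundary-length process $M$ started from a fixed $L>0$, so $X_0=L>0$, a contradiction; hence $a=0$. (More generally the implication holds whenever $\p[X_0>0]>0$, i.e.\ $X$ is not instantly absorbed at $0$.)

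The argument is essentially routine; the only points needing care are the identity $X_{u\wedge\tau}=X_u\mathbf{1}_{\{\tau>u\}}$ (which converts the uniform $L^p$ bound on $X$ into a uniform $L^1$ bound on the stopped process, keeping the accumulated error of order $(t-s)q(\delta)/\delta$), and the mild non-degeneracy of $X$ needed so that ``$X_{s\wedge\tau}\equiv0$'' is genuinely a contradiction in the converse. I expect no serious obstacle beyond these.
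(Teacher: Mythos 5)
Your proof is correct. The forward direction ($a=0$ implies $Y$ is a martingale) is essentially the paper's argument: telescope over a partition of $[s,t]$, use the tower property, and control the accumulated error by $q(\delta)/\delta \to 0$ together with the uniform integrability of the stopped process supplied by~\eqref{eqn::x_u_ui}. Where you genuinely diverge is the converse. The paper pushes the same partition computation further: it uses the {\cadlag} property of $X$ to show the Riemann sums $\sum_j a(t_j-t_{j-1})X_{t_{j-1}\wedge\tau}$ converge to $a\int_s^t X_{u\wedge\tau}\,du$, passes from $L^1$ convergence to a.s.\ convergence along a subsequence, and thereby obtains the identity $\E[Y_t \giv \CF_s] = Y_s + a\int_s^t \E[X_{u\wedge\tau}\giv\CF_s]\,du$, from which both implications are read off at once (the drift term is a.s.\ positive on $\{s<\tau\}$ unless $a=0$). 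You avoid all of this by feeding the martingale property directly into the hypothesis at a single increment of length $\delta$ and letting $\delta\to0$, which requires no integral representation, no subsequence extraction, and no path regularity of $X$ beyond the observation that $X_{u\wedge\tau}=X_u\mathbf{1}_{\{\tau>u\}}$; this is shorter and more elementary, at the cost of not producing the explicit drift identity (which the paper does not need elsewhere either). Finally, both arguments rest on the same implicit non-degeneracy: if $X_0=0$ a.s., so that $X_{\cdot\wedge\tau}\equiv0$, the hypothesis holds vacuously for every $a$ and the ``only if'' direction fails as literally stated; the paper's proof quietly assumes $\p[s<\tau]>0$ for some $s$, and you make the equivalent assumption $\p[X_0>0]>0$ explicit, correctly noting that it holds in the intended application since $M_0=L>0$. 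That is a fair and accurate reading of the lemma, not a gap in your argument.
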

\begin{proof}
Fix $\Delta > 0$, $s < t$, and let $t_0 = s < t_1 < \cdots < t_n = t$ be a partition of $[s,t]$ with $\Delta/2 < t_j-t_{j-1} \leq \Delta$ for all $1 \leq j \leq n$.  Then we have that
\begin{align*}
   \E[ Y_t \giv \CF_s ]
&= Y_s + \sum_{j=1}^n \E[ Y_{t_j} - Y_{t_{j-1}} \giv \CF_s]\\
&= Y_s + \sum_{j=1}^n \E[ \E[ Y_{t_j} - Y_{t_{j-1}} \giv \CF_{t_{j-1}}] \giv \CF_s].
\end{align*}
We are going to show that the right hand side above tends to $Y_s + a \int_s^t \E[ X_{u \wedge \tau} \giv \CF_s] du$ in $L^1$ as $\Delta \to 0$.  This, in turn, implies that there exists a positive sequence $(\Delta_k)$ with $\Delta_k \to 0$ as $k \to \infty$ sufficiently quickly so that the convergence is almost sure.  This implies the result because if $s < \tau$ then $a \int_s^t \E[ X_{u \wedge \tau} \giv \CF_s] du =0$ if and only if $a  =0$.

We begin by noting that
\begin{align*}
  & \sum_{j=1}^n \E\left| \E[ (Y_{t_j} - Y_{t_{j-1}}) \giv \CF_{t_{j-1}}] - a(t_j-t_{j-1}) X_{t_{j-1} \wedge \tau} \right|\\
\leq& \sum_{j=1}^n q(t_j-t_{j-1}) \E|X_{t_{j-1} \wedge \tau}|\\
\leq&  \frac{2q(\Delta)}{\Delta} \sup_{s \leq u \leq t} \E|X_u| \to 0  \quad\text{as}\quad \Delta \to 0.
\end{align*}
The {\cadlag} property together with the dominated convergence theorem implies that
\begin{equation*}
\sum_{j=1}^n a(t_j-t_{j-1}) X_{t_{j-1} \wedge \tau} \to a \int_s^t X_{u \wedge \tau} du \quad\text{as}\quad \Delta \to 0.
\end{equation*}
Combining this with the integrability assumption~\eqref{eqn::x_u_ui} implies that
\begin{equation*}
\sum_{j=1}^n a(t_j-t_{j-1}) \E[ X_{t_{j-1} \wedge \tau} \giv \CF_s] \to a \int_s^t \E[ X_{u \wedge \tau} \giv \CF_s] du \quad\text{as}\quad \Delta \to 0,
\end{equation*}
which proves the claim.
\end{proof}

\begin{lemma}
\label{lem::stable_levy_time_reversal}
Fix $\alpha \in (1,2)$ and suppose that $M_r$ is the process associated with an exploration towards the center of a sample produced from $\tmudl$ where $\tmudl$ is as in Section~\ref{subsec::axiomsforbrownianmap}.  There exists constants $c_0,c_1 > 0$ such that
\begin{equation}
\label{eqn::stable_levy_time_reversal_tail}
\p[ M_r \geq u ] \leq c_0 e^{-c_1 r^{-1/\alpha} u} \quad\text{for all}\quad u,r > 0.
\end{equation}
In particular,
\begin{equation}
\label{eqn::stable_levy_time_reversal_moment}
\E |M_r|^p < \infty \quad\text{for all}\quad r,p > 0.
\end{equation}
\end{lemma}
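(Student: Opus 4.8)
The plan is to exploit the one structural feature of $M_r$ that distinguishes it from the (heavy‑tailed) forward CSBP $Z_s$: by Lemma~\ref{lem::mtjumplaw} every jump of $M$ is \emph{downward} and of relative size at most $1/2$, so $M$ can only cross an upward level continuously, and in particular it has finite exponential moments $\E[\exp(\lambda M_r)\mid M_0=L]<\infty$ for all $\lambda\ge 0$. The tail bound will then come from a Chernoff/exponential–martingale estimate, and the moment bound~\eqref{eqn::stable_levy_time_reversal_moment} is immediate from~\eqref{eqn::stable_levy_time_reversal_tail} by integration, $\E|M_r|^p=p\int_0^\infty u^{p-1}\p[M_r\ge u]\,du\le p\,c_0\int_0^\infty u^{p-1}e^{-c_1 r^{-1/\alpha}u}\,du<\infty$.

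First I would record the preliminary reductions. From Lemma~\ref{lem::mtmarkovian}, $M_r$ is Markovian; from the jump law of Lemma~\ref{lem::mtjumplaw} and the way the generator must rescale, $M$ is self‑similar, the law of $M_r$ under $\tmudl$ agreeing with that of $L\,\bar M_{rL^{-(\alpha-1)}}$ where $\bar M$ is the corresponding process for $\tmudlset{1}$. Hence it suffices to treat $\tmudlset{1}$ (the dependence of the constants on $L$ being then explicit through this scaling). Also, writing $\tau_u=\inf\{s\ge 0:M_s\ge u\}$, the absence of upward jumps forces $M_{\tau_u}=u$ on $\{\tau_u<\infty\}$ and gives $\{M_r\ge u\}\subseteq\{\tau_u\le r\}$, so it is enough to bound $\p[\tau_u\le r\mid M_0=L]$.

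The heart of the argument is an exponential estimate for the upward excursions of $M$. Using the Markov property together with the explicit jump law of Lemma~\ref{lem::mtjumplaw}, one shows that $g_r(\lambda):=\E[\exp(\lambda M_{r})\mid M_0=L]$ is finite for $\lambda$ in a suitable range and is controlled by the solution of an autonomous differential equation $\partial_r w_r=\Psi(w_r)$, $w_0(\lambda)=\lambda$, with $\Psi$ the Laplace exponent of the spectrally‑negative $\alpha$‑stable object underlying $M$ (so $\Psi(\mu)$ is comparable to $\mu^\alpha$); extracting $\Psi$ from the jump law is the one explicit computation required. Stopping $\exp(\lambda M_{\cdot})$ at $r\wedge\tau_u$ (so that it is a submartingale) gives
\begin{equation*}
\p\big[M_r\ge u\mid M_0=L\big]\ \le\ \p\big[\tau_u\le r\mid M_0=L\big]\ \le\ e^{-\lambda u}\,\E\!\big[\exp(\lambda M_{r\wedge\tau_u})\big]\ \le\ e^{-\lambda u}\, g_{r}(\lambda),
\end{equation*}
and then one optimizes over $\lambda$. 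Solving the differential equation and optimizing (the optimal $\lambda$ tending to the explosion threshold as $u/L\to\infty$) produces an exponent of the form (constant)$\cdot r^{-\kappa}u$ for an appropriate power $\kappa$, which after adjusting $c_0,c_1$ (and, if needed, using the scaling of the first paragraph to move between ranges of $r$) yields the bound $\p[M_r\ge u]\le c_0 e^{-c_1 r^{-1/\alpha}u}$ asserted in the statement.

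The main obstacle is the middle step: $M$ is \emph{not} literally an $\alpha$‑stable CSBP nor a Lévy process — its jump intensity at state $c$ carries the state‑dependent factor $(1-a/c)^{-\alpha-1}$ and the cutoff $a\le c/2$ — so the clean exponential‑moment identity has to be justified directly from the description of the exploration toward the center. I see two routes. The transparent one (a) is to compute $g_r(\lambda)$ by hand from Lemma~\ref{lem::mtjumplaw} and the branching structure that underlies Lemma~\ref{lem::mtmarkovian}. The more robust one (b), which I would actually write, is to avoid the exact identity and instead construct a coupling in which $M$ is stochastically dominated from above by a genuine spectrally‑negative $\alpha$‑stable CSBP (equivalently, by a reflected spectrally‑negative $\alpha$‑stable Lévy process run through the Lamperti time change of Theorem~\ref{thm::lamperti}); the fact that all jumps of $M$ are downward and of bounded relative size makes such a domination natural, and the classical exponential bound for that dominating process, together with the observation that on $\{\tau_u\le r\}$ the Lamperti clock has advanced at most $u r$, then gives the claim.
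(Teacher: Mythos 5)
Your overall instinct---exploit the fact that $M$ has only downward jumps and compare it with a spectrally negative $\alpha$-stable object---is the right one, and it is essentially what the paper does: the paper's proof simply invokes the exponential bound for the running supremum of a spectrally negative $\alpha$-stable L\'evy process (Bertoin, Chapter~VII, Corollary~2), which already has the $r^{-1/\alpha}$ scaling of \eqref{eqn::stable_levy_time_reversal_tail}, and then observes that the jump density of Lemma~\ref{lem::mtjumplaw}, $a^{-\alpha-1}(1-a/c)^{-\alpha-1}\one_{a\in[0,c/2]}$, differs from the stable density $a^{-\alpha-1}$ only by a factor lying in $[1,2^{\alpha+1}]$ and by the removal of the largest downward jumps, so the bound transfers by comparison, with no time change at all. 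The concrete gap in your write-up is the Lamperti step in route (b), the route you say you would actually write. Using ``on $\{\tau_u\le r\}$ the Lamperti clock has advanced at most $ur$'' and then the classical bound for the dominating L\'evy process yields at best $\p[\tau_u\le r]\le c_0\exp\bigl(-c_1(ur)^{-1/\alpha}(u-L)\bigr)$, whose exponent is of order $u^{(\alpha-1)/\alpha}r^{-1/\alpha}$ for large $u$; this is sublinear in $u$ and therefore does not prove \eqref{eqn::stable_levy_time_reversal_tail}. The time change is not only damaging but unnecessary: in the normalization in which $M$ is actually used in this section (the density of Lemma~\ref{lem::mtjumplaw} read as the per-unit-$r$ jump intensity, with the constant-rate compensator $rC^\epsilon$ appearing in the proof of Proposition~\ref{prop::centermartingale}), $M$ is directly comparable to the spectrally negative stable L\'evy process, and that is exactly what the $r^{-1/\alpha}$ in the statement reflects. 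Your scaling claim that $M_r$ under $\tmudl$ agrees with $L\,\bar M_{rL^{-(\alpha-1)}}$ betrays the same mis-normalization (in the normalization where \eqref{eqn::stable_levy_time_reversal_tail} holds, time scales like $L^{\alpha}$, not $L^{\alpha-1}$); relatedly, a ``spectrally negative CSBP'' is not a CSBP (CSBPs have no negative jumps), and your proposed pathwise domination is not automatic anyway, since the two jump measures are not comparable in a monotone way (M has \emph{more} intensity for jumps below $c/2$ and none above), so any domination argument must go through the drift/compensator rather than a naive thinning.

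Route (a) is the salvageable one and is morally the paper's comparison done by hand: bound the action of the generator of $M$ on $e^{\lambda x}$ by $C\lambda^\alpha e^{\lambda x}$ plus a controllable drift term (using that the reweighting factor is at most $2^{\alpha+1}$ and that the missing large jumps only remove downward motion), then apply the optional stopping/Chernoff argument at $r\wedge\tau_u$; this gives an exponent of order $(ur^{-1/\alpha})^{\alpha/(\alpha-1)}$, which implies \eqref{eqn::stable_levy_time_reversal_tail} after adjusting $c_0,c_1$. But as written you defer precisely this computation (``the one explicit computation required''), and your opening assertion that all exponential moments are finite because the jumps are downward and of relative size at most $1/2$ is the thing to be proved, not a preliminary reduction. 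One mitigating remark: the weaker bound that route (b) does deliver is still stretched-exponential in $u$, hence gives \eqref{eqn::stable_levy_time_reversal_moment}, and finiteness of moments is all the paper uses later (in the proof of Proposition~\ref{prop::centermartingale}); so your argument would rescue the application, but not the lemma as stated.
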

\begin{proof}
We first note that~\eqref{eqn::stable_levy_time_reversal_tail} in the case of an $\alpha$-stable process with only downward jumps follows from \cite[Chapter~VII, Corollary~2]{bertoinlevybook}.  The result in the case of $M_r$ follows by comparing the jump law for $M_r$ as computed in Lemma~\ref{lem::mtjumplaw} with the jump law for an $\alpha$-stable process (which we recall has density $x^{-\alpha-1}$ with respect to Lebesgue measure on $\R_+$).
\end{proof}

\begin{proof}[Proof of Proposition~\ref{prop::centermartingale}]
We assume without loss of generality that $L = 1$.  Let $\CJ_r$ be the set of jumps made by $M|_{[0,r]}$ and, for each $\epsilon,\delta > 0$, let $\CJ_r^\epsilon$ (resp.\ $\CJ_r^{\epsilon,\delta}$) consist of those jumps in $\CJ_r$ with size at least $\epsilon$ (resp.\ size in $[\epsilon,\delta]$).  Let $J_r^\epsilon$ (resp.\ $J_r^{\epsilon,\delta}$) be the sum of the elements in $\CJ_r^\epsilon$ (resp.\ $\CJ_r^{\epsilon,\delta}$) and let
\[ C^\epsilon = \int_\epsilon^\infty x \cdot x^{-\alpha - 1} dx = \int_\epsilon^\infty x^{-\alpha} dx = \frac{1}{\alpha-1} \epsilon^{1-\alpha} \quad\text{and}\quad C^{\epsilon,\delta} = \int_\epsilon^\delta x^{-\alpha} dx.\]
Then we have that
\[ M_r = \lim_{\epsilon \to 0} M_r^\epsilon \quad\text{where}\quad M_r^\epsilon = (1 + J_r^\epsilon + r C^\epsilon)_+.\]

  We also let $A_r^\epsilon$ be given by
\[ A_r^\epsilon = (M_r^\epsilon)^{2\alpha-1} + \sum_{a \in \CJ_r^\epsilon} |a|^{2\alpha-1}.\]
We note that
\begin{align}
\label{eqn::a_t_eps_diff}
A_r - A_r^\epsilon
&= M_r^{2\alpha-1} - (M_r^\epsilon)^{2\alpha-1} - \sum_{a \in \CJ_r \setminus \CJ_r^\epsilon} |a|^{2\alpha-1}
\end{align}
and that the expectation of~\eqref{eqn::a_t_eps_diff} tends to $0$ as $\epsilon \to 0$.

Using that $A_0^\epsilon = M_0 = 1$, we have that
\begin{align*}
      A_r^\epsilon - A_0^\epsilon
&= (M_r^\epsilon)^{2\alpha-1} + \sum_{a \in \CJ_r^\epsilon} |a|^{2\alpha-1}  - 1
  = \left( 1+J_r^\epsilon + r C^\epsilon \right)_+^{2\alpha-1} + \sum_{a \in \CJ_r^\epsilon} |a|^{2\alpha-1} - 1.
\end{align*}

\newcommand{\jumplaw}{\mathsf{M}}

With $\jumplaw$ denoting the jump law of $M_r$ as determined in Lemma~\ref{lem::mtjumplaw}, we let
\begin{align}
I_\alpha^\delta  &= \int_\delta^{1/2} \left( x^{2\alpha - 1} + (1-x)^{2\alpha -1}  - 1 \right) d\jumplaw(x)  +  (2\alpha - 1)C^\delta \quad\text{and} \label{eqn::jumpintegral1}\\
I_\alpha &= \lim_{\delta \to 0} I_\alpha^\delta. \label{eqn::jumpintegral1a}
\end{align}
We will show later in the proof that the limit in~\eqref{eqn::jumpintegral1a} converges, compute its value, and show that $I_\alpha = 0$ precisely for $\alpha=3/2$.

Assuming for now that this is the case, we are going to prove the result by showing that
\begin{equation}
\label{eqn::a_t_change}
\E[A_r - A_0] = \lim_{\epsilon \to 0} \E[ A_r^\epsilon - A_0^\epsilon] = r I_\alpha + o(r) \quad\text{as}\quad r \to 0
\end{equation}
where $I_\alpha$ is as in~\eqref{eqn::jumpintegral1a}.  This suffices because then we can invoke Lemma~\ref{lem::m_t_martingale}.

Let $E_r^{0,\delta}$ (resp.\ $E_r^{1,\delta}$) be the event that $M|_{[0,r]}$ does not make a (resp.\ makes exactly $1$) jump of size at least $\delta$ and let $E_r^{2,\delta}$ be the event that $M|_{[0,r]}$ makes at least two jumps of size at least $\delta$.

Assume $\epsilon \in (0,\delta)$.  We will now establish~\eqref{eqn::a_t_change} by estimating $\E[ (A_r^\epsilon - A_0^\epsilon) \one_{E_r^{j,\delta}}]$ for $j =0,1,2$.

We start with the case $j=0$.  Let
\begin{equation}
\label{eqn::x_def}
X = J_r^{\epsilon,\delta} + r C^\epsilon = J_r^{\epsilon,\delta} + r \big( C^{\epsilon,\delta} + C^\delta \big).
\end{equation}
On $E_r^{0,\delta}$, we have that
\begin{align}
      A_r^\epsilon - A_0^\epsilon
=& (1 +X)_+^{2\alpha-1} + \sum_{a \in \CJ_r^\epsilon} |a|^{2\alpha-1} -1. \label{eqn::e_t_0_delta_a}
\intertext{By performing a Taylor expansion of $u \mapsto (1+u)_+^{2\alpha-1}$ around $u=0$, we see that~\eqref{eqn::e_t_0_delta_a} is equal to}
& (2\alpha-1)X + O(X^2) + O(|X|^3) + \sum_{a \in \CJ_r^\epsilon} |a|^{2\alpha-1}  \label{eqn::e_t_0_delta_b}
\end{align}
where the implicit constants in the $O(X^2)$ and $O(|X|^3)$ terms are non-random.  (The presence of the $O(|X|^3)$ term is so that we have a uniform bound which holds for all $X$ values, not just small $X$ values; we are using that $\alpha \in (1,2)$ so that $2\alpha-1 < 3$.)

The form of the jump law implies that
\begin{align}
\E \sum_{a \in \CJ_r^\epsilon} |a|^{2\alpha-1} &= O( r \delta^{\alpha-1}) \label{eqn::e_t_0_delta_jump}\\
\p[(E_r^{0,\delta})^c] = O(r \delta^{-\alpha}),\quad \p[E_r^{1,\delta}] &= O(r \delta^{-\alpha}),\quad \p[ E_r^{2,\delta}] = O_\delta(r^2) \label{eqn::e_r_0_delta_prob}\\
\E[ |J_r^{\epsilon,\delta} + r C^{\epsilon,\delta}| ] &= O(r \delta^{1-\alpha/2})  \label{eqn::j_r_eps_delta_mean},\\
\E[ (J_r^{\epsilon,\delta} + r C^{\epsilon,\delta})^2] &= O_\delta(r^2), \quad\text{and} \label{eqn::j_r_eps_delta_var}\\
\E[ | J_r^{\epsilon,\delta} + r C^{\epsilon,\delta} |^3] &= O_\delta(r^3). \label{eqn::j_r_eps_delta_third}
\end{align}
In~\eqref{eqn::e_r_0_delta_prob}, \eqref{eqn::j_r_eps_delta_var}, and~\eqref{eqn::j_r_eps_delta_third} the subscript $\delta$ in $O_\delta$ means that the implicit constant depends on~$\delta$.  Thus by the Cauchy-Schwarz inequality and~\eqref{eqn::e_r_0_delta_prob}, \eqref{eqn::j_r_eps_delta_mean}, \eqref{eqn::j_r_eps_delta_var} we have that
\begin{align}
  \E[| J_r^{\epsilon,\delta} + r C^{\epsilon,\delta} | \one_{E_r^{0,\delta}}]
&=  O(r \delta^{1-\alpha/2}) - \E[ | J_r^{\epsilon,\delta} + r C^{\epsilon,\delta}| \one_{(E_r^{0,\delta})^c}] \notag\\
&= O(r \delta^{1-\alpha/2}) + O_\delta(r^{3/2}). \label{eqn::e_t_0_delta_small_jumps_comp}
\end{align}
Moreover, using~\eqref{eqn::j_r_eps_delta_var} we have that
\begin{align}
        \E[ X^2]
&\leq 4\left( \E[ (J_r^\epsilon + r C^{\epsilon,\delta})^2] + (r C^\delta)^2 \right)
  = O_\delta( r^2). \label{eqn::x_square_1_plus_x}
\end{align}
and from~\eqref{eqn::j_r_eps_delta_third} we have
\begin{align}
        \E[|X|^3]
&\leq 8\left( \E[ | J_r^\epsilon + r C^{\epsilon,\delta}|^3] +  (r C^\delta)^3 \right)
 = O_\delta( r^3). \label{eqn::x_third_1_plus_x}
\end{align}
Therefore taking expectations of~\eqref{eqn::e_t_0_delta_b} and using~\eqref{eqn::e_t_0_delta_jump},~\eqref{eqn::e_t_0_delta_small_jumps_comp}, \eqref{eqn::x_square_1_plus_x}, and~\eqref{eqn::x_third_1_plus_x}, we see that
\begin{equation}
 \E[ (A_r^\epsilon - A_0^\epsilon) \one_{E_r^{0,\delta}} ] =  r (2\alpha-1) C^\delta + O(r \delta^{1-\alpha/2}) + O(r \delta^{\alpha-1}) + O_\delta(r^{3/2}). \label{eqn::e_t_0_delta}
\end{equation}

We turn to the case $j=1$.  On $E_r^{1,\delta}$, with $J$ the size of the single jump larger than $\delta$, we have that 
\begin{align}
      A_r^\epsilon - A_0^\epsilon
=& \left( 1 +  J + X \right)_+^{2\alpha-1} + |J|^{2\alpha-1} + \sum_{a \in \CJ_r^\epsilon \setminus \CJ_r^\delta} |a|^{2\alpha-1} - 1. \label{eqn::e_t_1_delta_a}
\intertext{By performing a Taylor expansion of $u \mapsto (1 + J +u)_+^{2\alpha-1}$ about $u=0$, we see that~\eqref{eqn::e_t_1_delta_a} is equal to}
  & (1 + J)_+^{2\alpha-1} + |J|^{2\alpha-1} + \sum_{a \in \CJ_r^\epsilon \setminus \CJ_r^\delta} |a|^{2\alpha-1} + O(X) + O(|X|^3) -1 \notag
\end{align}
where $X$ is as in~\eqref{eqn::x_def} and the implicit constant in the $O(X)$ and $O(|X|^3)$ terms are non-random.  By~\eqref{eqn::e_r_0_delta_prob} and the Cauchy-Schwarz inequality we have $\E[ X \one_{E_r^{1,\delta}}] = O_\delta(r^{3/2})$.  Combining, we have that
\begin{equation}
\label{eqn::e_t_1_delta}
\E[ (A_r^\epsilon - A_0^\epsilon) \one_{E_r^{1,\delta}}] = r ( I_\alpha^\delta - (2\alpha-1)C^\delta) + O(r \delta^{\alpha-1}) + O(r \delta^{1-\alpha/2}) +O_\delta(r^{3/2}).
\end{equation}

We finish with the case $j=2$.  Using Lemma~\ref{lem::stable_levy_time_reversal}, it is easy to see that $A_r^\epsilon$ has finite moments of all order uniformly in $\epsilon$.  Thus using~\eqref{eqn::e_r_0_delta_prob} and H\"older's inequality, we have for any $p > 1$ that
\begin{align}
     \E[ (A_r^\epsilon - A_0^\epsilon) \one_{E_r^{2,\delta}} ]
&= O_{\delta,p}(r^{2/p}) \label{eqn::e_t_2_delta}
\end{align}
where the implicit constant in $O_{\delta,p}(r^{2/p})$ depends on both $\delta$ and $p$.

Combining~\eqref{eqn::e_t_0_delta},~\eqref{eqn::e_t_1_delta}, and~\eqref{eqn::e_t_2_delta} (with $p \in (1,2)$ so that $2/p > 1$), and taking a limit as $\epsilon \to 0$ we see that
\begin{equation}
\label{eqn:a_r_limit}
\E[ A_r - A_0] = r  I_\alpha + o(r) \quad\text{as}\quad  r \to 0.
\end{equation}
Indeed, this follows because each of the error terms which have a factor of $r$ also have a positive power of $\delta$ as a factor, except for the term with $I_\alpha$.  Thus we can make these terms arbitrarily small compared to $r$ by taking $\delta$ small.  The remaining error terms have a factor with a power of $r$ which is strictly larger than $1$, so we can make these terms arbitrarily small compared to $r$ by taking $r$ small.

Therefore to finish the proof we need to show that $I_\alpha = 0$ precisely for $\alpha=3/2$.  The indefinite integral
\begin{equation}
\label{eqn::jumpintegral2}
\int \left( x^{2\alpha - 1} + (1-x)^{2\alpha -1} - 1 \right) d\jumplaw(x)  - (2\alpha - 1) \int x^{-\alpha} dx
\end{equation}
can be directly computed (most easily using a computer algebra package such as Mathematica) to give
\begin{align*}
x^{-\alpha} \bigg(& \frac{_2F_1(1-\alpha,\alpha+1;2-\alpha;x) x }{\alpha-1}+\frac{\,
   _2F_1(-\alpha,\alpha;1-\alpha;x)}{\alpha}+\\
   &\frac{(\alpha-x) x^{2 \alpha-1} (1-x)^{-\alpha}}{(\alpha-1) \alpha}-\frac{(\alpha+x-1)
   (1-x)^{\alpha-1}}{(\alpha-1) \alpha}+  \frac{x-2 \alpha x}{\alpha-1} \bigg)
\end{align*}
where $_2F_1$ is the hypergeometric function.  In particular, the limit in~\eqref{eqn::jumpintegral1} is equal to
\begin{align}
\label{eqn::levyintegral}
-\frac{4^\alpha}{\alpha} - 2 B_{\frac{1}{2}}(-\alpha,1-\alpha) + \frac{2^{\alpha-1} (1-2 \alpha)}{\alpha-1} + (2\alpha - 1) \int_{1/2}^\infty x^{-\alpha }  dx ,
\end{align}
where $B_{x}(a,b) = \int_0^x u^{a-1}(1-u)^{b-1} du$ is the incomplete beta function.

By evaluating the integral in~\eqref{eqn::levyintegral}, we see that~\eqref{eqn::levyintegral} is equal to
\[ -\frac{4^\alpha}{\alpha} - 2 B_{\frac{1}{2}}(-\alpha,1-\alpha).\]
Direct computation shows that this achieves the value $0$ when $\alpha = 3/2$ and (since this is an increasing function of $\alpha$) is non-zero for other values of $\alpha \in (1,2)$. Thus,~\eqref{eqn::jumpintegral1} is equal to zero if and only if $\alpha = 3/2$, and as noted above, the result follows from this.
\end{proof}

\bibliographystyle{hmralphaabbrv}
\addcontentsline{toc}{section}{References}
\bibliography{qlebm}

\newcommand{\etalchar}[1]{$^{#1}$}
\begin{thebibliography}{LGM{\etalchar{+}}12}

\bibitem[AL15]{alg}
C.~{Abraham} and J.-F. {Le Gall}.
\newblock {Excursion theory for Brownian motion indexed by the Brownian tree}.
\newblock {\em ArXiv e-prints}, September 2015, \arxiv{1509.06616}.

\bibitem[Ald91a]{ald1991crt1}
D.~Aldous.
\newblock The continuum random tree. {I}.
\newblock {\em Ann. Probab.}, 19(1):1--28, 1991. \MR{1085326 (91i:60024)}

\bibitem[Ald91b]{ald1991crt2}
D.~Aldous.
\newblock The continuum random tree. {II}. {A}n overview.
\newblock In {\em Stochastic analysis ({D}urham, 1990)}, volume 167 of {\em
  London Math. Soc. Lecture Note Ser.}, pages 23--70. Cambridge Univ. Press,
  Cambridge, 1991. \MR{1166406 (93f:60010)}

\bibitem[Ald93]{ald1993crt3}
D.~Aldous.
\newblock The continuum random tree. {III}.
\newblock {\em Ann. Probab.}, 21(1):248--289, 1993. \MR{1207226 (94c:60015)}

\bibitem[Ang03]{angelUIPTpercolation}
O.~Angel.
\newblock Growth and percolation on the uniform infinite planar triangulation.
\newblock {\em Geom. Funct. Anal.}, 13(5):935--974, 2003. \MR{2024412
  (2005b:60015)}

\bibitem[AS03]{angelschrammUIPT}
O.~Angel and O.~Schramm.
\newblock Uniform infinite planar triangulations.
\newblock {\em Comm. Math. Phys.}, 241(2-3):191--213, 2003. \MR{2013797
  (2005b:60021)}

\bibitem[BBCK16]{betroinbuddcurienkortchemski}
J.~{Bertoin}, T.~{Budd}, N.~{Curien}, and I.~{Kortchemski}.
\newblock {Martingales in self-similar growth-fragmentations and their
  connections with random planar maps}.
\newblock {\em ArXiv e-prints}, May 2016, \arxiv{1605.00581}.

\bibitem[BBI01]{courseinmetricgeometry}
D.~Burago, Y.~Burago, and S.~Ivanov.
\newblock {\em A course in metric geometry}, volume~33 of {\em Graduate Studies
  in Mathematics}.
\newblock American Mathematical Society, Providence, RI, 2001.

\bibitem[BCK15]{bck}
J.~{Bertoin}, N.~{Curien}, and I.~{Kortchemski}.
\newblock {Random planar maps $\&$ growth-fragmentations}.
\newblock {\em ArXiv e-prints}, July 2015, \arxiv{1507.02265}.

\bibitem[Beg44]{begle_regular}
E.~G. Begle.
\newblock Regular convergence.
\newblock {\em Duke Math. J.}, 11:441--450, 1944. \MR{0010964 (6,95e)}

\bibitem[Ber96]{bertoinlevybook}
J.~Bertoin.
\newblock {\em L\'evy processes}, volume 121 of {\em Cambridge Tracts in
  Mathematics}.
\newblock Cambridge University Press, Cambridge, 1996. \MR{1406564 (98e:60117)}

\bibitem[Ber07]{bernardibijection}
O.~Bernardi.
\newblock Bijective counting of tree-rooted maps and shuffles of parenthesis
  systems.
\newblock {\em Electron. J. Combin.}, 14(1):Research Paper 9, 36 pp.
  (electronic), 2007.
\newblock \arxiv{math/0601684}. \MR{2285813 (2007m:05125)}

\bibitem[BM17]{bettinelli_miermont_disks}
J.~Bettinelli and G.~Miermont.
\newblock Compact {B}rownian surfaces {I}: {B}rownian disks.
\newblock {\em Probab. Theory Related Fields}, 167(3-4):555--614, 2017.
\newblock \arxiv{1507.08776}. \MR{3627425}

\bibitem[BMS00]{bousquetmelouschaeffer}
M.~Bousquet-M{\'e}lou and G.~Schaeffer.
\newblock Enumeration of planar constellations.
\newblock {\em Adv. in Appl. Math.}, 24(4):337--368, 2000. \MR{1761777
  (2001g:05006)}

\bibitem[Cha96]{ch1996levy}
L.~Chaumont.
\newblock Conditionings and path decompositions for {L}\'{e}vy processes.
\newblock {\em Stochastic Process. Appl.}, 64(1):39--54, 1996. \MR{1419491}

\bibitem[CK14]{ck2013looptrees}
N.~Curien and I.~Kortchemski.
\newblock Random stable looptrees.
\newblock {\em Electron. J. Probab.}, 19:no. 108, 35, 2014.
\newblock \arxiv{1304.1044}. \MR{3286462}

\bibitem[CLG16]{2014arXiv1409.4026C}
N.~Curien and J.-F. Le~Gall.
\newblock The hull process of the {B}rownian plane.
\newblock {\em Probab. Theory Related Fields}, 166(1-2):187--231, 2016.
\newblock \arxiv{1409.4026}. \MR{3547738}

\bibitem[CLG17]{cl2014peeling}
N.~Curien and J.-F. Le~Gall.
\newblock Scaling limits for the peeling process on random maps.
\newblock {\em Ann. Inst. Henri Poincar\'e Probab. Stat.}, 53(1):322--357,
  2017.
\newblock \arxiv{1412.5509}. \MR{3606744}

\bibitem[CLGM13]{browniancactus}
N.~Curien, J.-F. Le~Gall, and G.~Miermont.
\newblock The {B}rownian cactus {I}. {S}caling limits of discrete cactuses.
\newblock {\em Ann. Inst. Henri Poincar\'e Probab. Stat.}, 49(2):340--373,
  2013. \MR{3088373}

\bibitem[CLUB09]{surveyproofoflampertirepresentation}
M.~E. Caballero, A.~Lambert, and G.~Uribe~Bravo.
\newblock Proof(s) of the {L}amperti representation of continuous-state
  branching processes.
\newblock {\em Probab. Surv.}, 6:62--89, 2009. \MR{2592395 (2011a:60305)}

\bibitem[CS02]{chassaingschaeffer}
P.~Chassaing and G.~Schaeffer.
\newblock Random planar lattices and integrated super{B}rownian excursion.
\newblock In {\em Mathematics and computer science, {II} ({V}ersailles, 2002)},
  Trends Math., pages 127--145. Birkh\"auser, Basel, 2002. \MR{1940133}

\bibitem[Cur15]{cur2013glimpse}
N.~Curien.
\newblock A glimpse of the conformal structure of random planar maps.
\newblock {\em Comm. Math. Phys.}, 333(3):1417--1463, 2015.
\newblock \arxiv{1308.1807}. \MR{3302638}

\bibitem[CV81]{cori1981planar}
R.~Cori and B.~Vauquelin.
\newblock Planar maps are well labeled trees.
\newblock {\em Canad. J. Math.}, 33(5):1023--1042, 1981. \MR{638363
  (83c:05070)}

\bibitem[DK88]{dk1998rwks}
B.~Duplantier and K.-H. Kwon.
\newblock Conformal invariance and intersections of random walks.
\newblock {\em Phys. Rev. Lett.}, 61:2514--2517, 1988.

\bibitem[DLG02]{dlg2002trees_levy}
T.~Duquesne and J.-F. Le~Gall.
\newblock Random trees, {L}\'evy processes and spatial branching processes.
\newblock {\em Ast\'erisque}, 281:vi+147, 2002.
\newblock \arxiv{math/0509558}. \MR{1954248 (2003m:60239)}

\bibitem[DLG05]{duquesnelegall2005levytrees}
T.~Duquesne and J.-F. Le~Gall.
\newblock Probabilistic and fractal aspects of {L}\'evy trees.
\newblock {\em Probab. Theory Related Fields}, 131(4):553--603, 2005.
  \MR{2147221 (2006d:60123)}

\bibitem[DLG06]{duquesnelegallhausdorff}
T.~Duquesne and J.-F. Le~Gall.
\newblock The {H}ausdorff measure of stable trees.
\newblock {\em ALEA Lat. Am. J. Probab. Math. Stat.}, 1:393--415, 2006.
  \MR{2291942 (2008c:60081)}

\bibitem[DLG09]{duquesnelegallrerooting}
T.~Duquesne and J.-F. Le~Gall.
\newblock On the re-rooting invariance property of {L}\'evy trees.
\newblock {\em Electron. Commun. Probab.}, 14:317--326, 2009. \MR{2535079
  (2010k:60291)}

\bibitem[DMS14]{matingtrees}
B.~{Duplantier}, J.~{Miller}, and S.~{Sheffield}.
\newblock {Liouville quantum gravity as a mating of trees}.
\newblock {\em ArXiv e-prints}, September 2014, \arxiv{1409.7055}.

\bibitem[DS89]{ds1989fk}
B.~Duplantier and H.~Saleur.
\newblock Exact fractal dimension of {2D} {Ising} clusters.
\newblock {\em Phys.\ Rev.\ Lett.}, 63:2536, 1989.

\bibitem[Dup98]{dup1998rw_qg}
B.~Duplantier.
\newblock Random walks and quantum gravity in two dimensions.
\newblock {\em Phys. Rev. Lett.}, 81(25):5489--5492, 1998. \MR{1666816
  (99j:83034)}

\bibitem[Dur10]{durrettprobtext}
R.~Durrett.
\newblock {\em Probability: theory and examples}.
\newblock Cambridge Series in Statistical and Probabilistic Mathematics.
  Cambridge University Press, Cambridge, fourth edition, 2010. \MR{2722836
  (2011e:60001)}

\bibitem[FT83]{ft1983local}
B.~Fristedt and S.~J. Taylor.
\newblock Constructions of local time for a {M}arkov process.
\newblock {\em Z. Wahrsch. Verw. Gebiete}, 62(1):73--112, 1983. \MR{684210
  (85c:60121)}

\bibitem[GM16a]{gwynne-miller:saw}
E.~{Gwynne} and J.~{Miller}.
\newblock {Convergence of the self-avoiding walk on random quadrangulations to
  SLE$\_{8/3}$ on $\sqrt{8/3}$-Liouville quantum gravity}.
\newblock {\em ArXiv e-prints}, August 2016, \arxiv{1608.00956}.

\bibitem[GM16b]{gwynne-miller:gluing}
E.~{Gwynne} and J.~{Miller}.
\newblock {Metric gluing of Brownian and $\sqrt{8/3}$-Liouville quantum gravity
  surfaces}.
\newblock {\em ArXiv e-prints}, August 2016, \arxiv{1608.00955}.

\bibitem[GM16c]{gwynne-miller:uihpq}
E.~{Gwynne} and J.~{Miller}.
\newblock {Scaling limit of the uniform infinite half-plane quadrangulation in
  the Gromov-Hausdorff-Prokhorov-uniform topology}.
\newblock {\em ArXiv e-prints}, August 2016, \arxiv{1608.00954}.

\bibitem[GPW09]{grevenpfaffelhuberwinter}
A.~Greven, P.~Pfaffelhuber, and A.~Winter.
\newblock Convergence in distribution of random metric measure spaces
  ({$\Lambda$}-coalescent measure trees).
\newblock {\em Probab. Theory Related Fields}, 145(1-2):285--322, 2009.
  \MR{2520129 (2011c:60008)}

\bibitem[Ji{\v{r}}58]{miloslav}
M.~Ji{\v{r}}ina.
\newblock Stochastic branching processes with continuous state space.
\newblock {\em Czechoslovak Math. J.}, 8 (83):292--313, 1958. \MR{0101554 (21
  \#364)}

\bibitem[JS98]{jacquardschaeffer}
B.~Jacquard and G.~Schaeffer.
\newblock A bijective census of nonseparable planar maps.
\newblock {\em J. Combin. Theory Ser. A}, 83(1):1--20, 1998. \MR{1629428
  (99f:05054)}

\bibitem[Kri05]{krikun2005uniform}
M.~Krikun.
\newblock Uniform infinite planar triangulation and related time-reversed
  critical branching process.
\newblock {\em Journal of Mathematical Sciences}, 131(2):5520--5537, 2005.

\bibitem[Kyp06]{kyp2006levy_fluctuations}
A.~E. Kyprianou.
\newblock {\em Introductory lectures on fluctuations of {L}\'evy processes with
  applications}.
\newblock Universitext. Springer-Verlag, Berlin, 2006. \MR{2250061
  (2008a:60003)}

\bibitem[Lam67a]{lamperticsbp}
J.~Lamperti.
\newblock Continuous state branching processes.
\newblock {\em Bull. Amer. Math. Soc.}, 73:382--386, 1967. \MR{0208685 (34
  \#8494)}

\bibitem[Lam67b]{lampertibranchingaslimit}
J.~Lamperti.
\newblock The limit of a sequence of branching processes.
\newblock {\em Z. Wahrscheinlichkeitstheorie und Verw. Gebiete}, 7:271--288,
  1967. \MR{0217893 (36 \#982)}

\bibitem[Law05]{LAW05}
G.~F. Lawler.
\newblock {\em Conformally invariant processes in the plane}, volume 114 of
  {\em Mathematical Surveys and Monographs}.
\newblock American Mathematical Society, Providence, RI, 2005. \MR{2129588
  (2006i:60003)}

\bibitem[{Le }14]{legall2014icm}
J.-F. {Le Gall}.
\newblock {Random geometry on the sphere}.
\newblock {\em ArXiv e-prints}, March 2014, \arxiv{1403.7943}.

\bibitem[{Le }16]{legallsubordination}
J.-F. {Le Gall}.
\newblock {Subordination of trees and the Brownian map}.
\newblock {\em ArXiv e-prints}, May 2016, \arxiv{1605.07601}.

\bibitem[LG99]{legallspatialbranchingbook}
J.-F. Le~Gall.
\newblock {\em Spatial branching processes, random snakes and partial
  differential equations}.
\newblock Lectures in Mathematics ETH Z\"urich. Birkh\"auser Verlag, Basel,
  1999. \MR{1714707 (2001g:60211)}

\bibitem[LG10]{legallgeodesics}
J.-F. Le~Gall.
\newblock Geodesics in large planar maps and in the {B}rownian map.
\newblock {\em Acta Math.}, 205(2):287--360, 2010.
\newblock \arxiv{0804.3012}. \MR{2746349 (2012b:60272)}

\bibitem[LG13]{legalluniqueanduniversal}
J.-F. Le~Gall.
\newblock Uniqueness and universality of the {B}rownian map.
\newblock {\em Ann. Probab.}, 41(4):2880--2960, 2013.
\newblock \arxiv{1105.4842}. \MR{3112934}

\bibitem[LG14]{legalluniversallimit}
J.-F. Le~Gall.
\newblock The {B}rownian map: a universal limit for random planar maps.
\newblock In {\em X{VII}th {I}nternational {C}ongress on {M}athematical
  {P}hysics}, pages 420--428. World Sci. Publ., Hackensack, NJ, 2014.
  \MR{3204495}

\bibitem[LG19]{lg2019disksnake}
J.-F. Le~Gall.
\newblock Brownian disks and the {B}rownian snake.
\newblock {\em Ann. Inst. Henri Poincar\'{e} Probab. Stat.}, 55(1):237--313,
  2019. \MR{3901647}

\bibitem[LGLJ98]{legalllejan1998levytrees}
J.-F. Le~Gall and Y.~Le~Jan.
\newblock Branching processes in {L}\'evy processes: the exploration process.
\newblock {\em Ann. Probab.}, 26(1):213--252, 1998. \MR{1617047 (99d:60096)}

\bibitem[LGM{\etalchar{+}}12]{le2012scaling}
J.-F. Le~Gall, G.~Miermont, et~al.
\newblock Scaling limits of random trees and planar maps.
\newblock {\em Probability and statistical physics in two and more dimensions},
  15:155--211, 2012.

\bibitem[LGP08]{le2008scaling}
J.-F. Le~Gall and F.~Paulin.
\newblock Scaling limits of bipartite planar maps are homeomorphic to the
  2-sphere.
\newblock {\em Geom. Funct. Anal.}, 18(3):893--918, 2008.
\newblock \arxiv{math/0612315}. \MR{2438999 (2010a:60030)}

\bibitem[L{\"o}h13]{gromovprohorovandgromovbox}
W.~L{\"o}hr.
\newblock Equivalence of {G}romov-{P}rohorov- and {G}romov's
  {$\underline\square_\lambda$}-metric on the space of metric measure spaces.
\newblock {\em Electron. Commun. Probab.}, 18:no. 17, 10, 2013. \MR{3037215}

\bibitem[Mie08]{MR2399286}
G.~Miermont.
\newblock On the sphericity of scaling limits of random planar
  quadrangulations.
\newblock {\em Electron. Comm. Probab.}, 13:248--257, 2008.
\newblock \arxiv{0712.3687}. \MR{MR2399286 (2009d:60024)}

\bibitem[Mie13]{miermontlimit}
G.~Miermont.
\newblock The {B}rownian map is the scaling limit of uniform random plane
  quadrangulations.
\newblock {\em Acta Math.}, 210(2):319--401, 2013. \MR{3070569}

\bibitem[{Mie}14]{miermontstflour}
G.~{Miermont}.
\newblock {Aspects of random planar maps}.
\newblock 2014.
\newblock
  \href{http://perso.ens-lyon.fr/gregory.miermont/coursSaint-Flour.pdf}{http://perso.ens-lyon.fr/gregory.miermont/coursSaint-Flour.pdf}.

\bibitem[Mil04]{milnor2004pasting}
J.~Milnor.
\newblock Pasting together {J}ulia sets: a worked out example of mating.
\newblock {\em Experiment. Math.}, 13(1):55--92, 2004. \MR{2065568
  (2005c:37087)}

\bibitem[MM06]{marckert2006limit}
J.-F. Marckert and A.~Mokkadem.
\newblock Limit of normalized quadrangulations: the {B}rownian map.
\newblock {\em Ann. Probab.}, 34(6):2144--2202, 2006.
\newblock \arxiv{math/0403398}. \MR{2294979 (2007m:60092)}

\bibitem[Moo25]{moore1925concerning}
R.~L. Moore.
\newblock Concerning upper semi-continuous collections of continua.
\newblock {\em Trans. Amer. Math. Soc.}, 27(4):416--428, 1925. \MR{1501320}

\bibitem[MS13]{ms2013imag4}
J.~{Miller} and S.~{Sheffield}.
\newblock {Imaginary geometry IV: interior rays, whole-plane reversibility, and
  space-filling trees}.
\newblock {\em ArXiv e-prints}, February 2013, \arxiv{1302.4738}.
\newblock To appear in {Probab. Theory Related Fields}.

\bibitem[MS15a]{qlebm}
J.~{Miller} and S.~{Sheffield}.
\newblock {Liouville quantum gravity and the Brownian map I: The QLE(8/3,0)
  metric}.
\newblock {\em ArXiv e-prints}, July 2015, \arxiv{1507.00719}.

\bibitem[MS15b]{quantum_spheres}
J.~{Miller} and S.~{Sheffield}.
\newblock {Liouville quantum gravity spheres as matings of finite-diameter
  trees}.
\newblock {\em ArXiv e-prints}, June 2015, \arxiv{1506.03804}.

\bibitem[MS16a]{qle_continuity}
J.~{Miller} and S.~{Sheffield}.
\newblock {Liouville quantum gravity and the Brownian map II: geodesics and
  continuity of the embedding}.
\newblock {\em ArXiv e-prints}, May 2016, \arxiv{1605.03563}.

\bibitem[MS16b]{qle_determined}
J.~{Miller} and S.~{Sheffield}.
\newblock {Liouville quantum gravity and the Brownian map III: the conformal
  structure is determined}.
\newblock {\em ArXiv e-prints}, August 2016, \arxiv{1608.05391}.

\bibitem[MS16c]{ms2012imag1}
J.~Miller and S.~Sheffield.
\newblock Imaginary geometry {I}: interacting {SLE}s.
\newblock {\em Probab. Theory Related Fields}, 164(3-4):553--705, 2016.
\newblock \arxiv{1201.1496}. \MR{3477777}

\bibitem[MS16d]{ms2012imag2}
J.~Miller and S.~Sheffield.
\newblock Imaginary geometry {II}: reversibility of {${\rm
  SLE}_\kappa(\rho_1;\rho_2)$} for {$\kappa\in(0,4)$}.
\newblock {\em Ann. Probab.}, 44(3):1647--1722, 2016.
\newblock \arxiv{1201.1497}. \MR{3502592}

\bibitem[MS16e]{ms2012imag3}
J.~Miller and S.~Sheffield.
\newblock Imaginary geometry {III}: reversibility of {$\rm SLE_\kappa$} for
  {$\kappa\in(4,8)$}.
\newblock {\em Ann. of Math. (2)}, 184(2):455--486, 2016.
\newblock \arxiv{1201.1498}. \MR{3548530}

\bibitem[MS16f]{ms2013qle}
J.~Miller and S.~Sheffield.
\newblock Quantum {L}oewner evolution.
\newblock {\em Duke Math. J.}, 165(17):3241--3378, 2016.
\newblock \arxiv{1312.5745}. \MR{3572845}

\bibitem[Mul67]{mullintrees}
R.~C. Mullin.
\newblock On the enumeration of tree-rooted maps.
\newblock {\em Canad. J. Math.}, 19:174--183, 1967. \MR{0205882 (34 \#5708)}

\bibitem[RRT14]{planarityofcompactmetricspaces}
R.~B. Richter, B.~Rooney, and C.~Thomassen.
\newblock Commentary for ``{O}n planarity of compact, locally connected, metric
  spaces'' [mr2835298].
\newblock {\em Combinatorica}, 34(2):253--254, 2014. \MR{3213849}

\bibitem[RT02]{3connectedembeduniquely}
R.~B. Richter and C.~Thomassen.
\newblock 3-connected planar spaces uniquely embed in the sphere.
\newblock {\em Trans. Amer. Math. Soc.}, 354(11):4585--4595 (electronic), 2002.
  \MR{1926890 (2003h:57004)}

\bibitem[RY99]{ry99martingales}
D.~Revuz and M.~Yor.
\newblock {\em Continuous martingales and {B}rownian motion}, volume 293 of
  {\em Grundlehren der Mathematischen Wissenschaften [Fundamental Principles of
  Mathematical Sciences]}.
\newblock Springer-Verlag, Berlin, third edition, 1999. \MR{2000h:60050}

\bibitem[Sat99]{satolevyprocesses}
K.-i. Sato.
\newblock {\em L\'evy processes and infinitely divisible distributions},
  volume~68 of {\em Cambridge Studies in Advanced Mathematics}.
\newblock Cambridge University Press, Cambridge, 1999.
\newblock Translated from the 1990 Japanese original, Revised by the author.
  \MR{1739520 (2003b:60064)}

\bibitem[Sch97]{MR1465581}
G.~Schaeffer.
\newblock Bijective census and random generation of {E}ulerian planar maps with
  prescribed vertex degrees.
\newblock {\em Electron. J. Combin.}, 4(1):Research Paper 20, 14 pp.\
  (electronic), 1997. \MR{1465581 (98g:05074)}

\bibitem[Sch99]{schaeffermaps}
G.~Schaeffer.
\newblock Random sampling of large planar maps and convex polyhedra.
\newblock In {\em Annual {ACM} {S}ymposium on {T}heory of {C}omputing
  ({A}tlanta, {GA}, 1999)}, pages 760--769 (electronic). ACM, New York, 1999.
  \MR{1798101 (2001i:05142)}

\bibitem[Sch00]{S0}
O.~Schramm.
\newblock Scaling limits of loop-erased random walks and uniform spanning
  trees.
\newblock {\em Israel J. Math.}, 118:221--288, 2000.
\newblock \arxiv{math/9904022}. \MR{1776084 (2001m:60227)}

\bibitem[Ser97]{serlet_ldp}
L.~Serlet.
\newblock A large deviation principle for the {B}rownian snake.
\newblock {\em Stochastic Process. Appl.}, 67(1):101--115, 1997. \MR{1445046
  (98j:60041)}

\bibitem[She16a]{she2010zipper}
S.~Sheffield.
\newblock Conformal weldings of random surfaces: {SLE} and the quantum gravity
  zipper.
\newblock {\em Ann. Probab.}, 44(5):3474--3545, 2016.
\newblock \arxiv{1012.4797}. \MR{3551203}

\bibitem[She16b]{sheffield2011quantum}
S.~Sheffield.
\newblock Quantum gravity and inventory accumulation.
\newblock {\em Ann. Probab.}, 44(6):3804--3848, 2016.
\newblock \arxiv{1108.2241}. \MR{3572324}

\bibitem[SW12]{MR2979861}
S.~Sheffield and W.~Werner.
\newblock Conformal loop ensembles: the {M}arkovian characterization and the
  loop-soup construction.
\newblock {\em Ann. of Math. (2)}, 176(3):1827--1917, 2012.
\newblock \arxiv{1006.2374}. \MR{2979861}

\bibitem[Tut62]{tutte1962census}
W.~T. Tutte.
\newblock A census of planar triangulations.
\newblock {\em Canad. J. Math.}, 14:21--38, 1962. \MR{0130841 (24 \#A695)}

\bibitem[Tut68]{MR0218276}
W.~T. Tutte.
\newblock On the enumeration of planar maps.
\newblock {\em Bull. Amer. Math. Soc.}, 74:64--74, 1968. \MR{0218276 (36
  \#1363)}

\bibitem[Vil09]{villanibook}
C.~Villani.
\newblock {\em Optimal transport}, volume 338 of {\em Grundlehren der
  Mathematischen Wissenschaften [Fundamental Principles of Mathematical
  Sciences]}.
\newblock Springer-Verlag, Berlin, 2009.
\newblock Old and new. \MR{2459454 (2010f:49001)}

\bibitem[Wat95]{watabikipeeling}
Y.~Watabiki.
\newblock Construction of non-critical string field theory by transfer matrix
  formalism in dynamical triangulation.
\newblock {\em Nuclear Phys. B}, 441(1-2):119--163, 1995. \MR{1329946
  (96k:81224)}

\end{thebibliography}

\bigskip

\filbreak
\begingroup
\small
\parindent=0pt

\bigskip
\vtop{
\hsize=5.3in
Department of Mathematics\\
Massachusetts Institute of Technology\\
Cambridge, MA, USA } \endgroup \filbreak

\end{document}